\documentclass[11pt, a4paper]{article}
\usepackage{a4wide}
\usepackage{hyperref}
\usepackage[british]{babel}
\usepackage{enumerate, longtable}
\usepackage{amsmath, amscd, amsfonts, amsthm, amssymb, latexsym, comment, stmaryrd, graphicx}
\usepackage[all]{xypic}
\usepackage[T1]{fontenc}
\usepackage[utf8]{inputenc}

\newtheorem{theorem}{Theorem}[section]
\newtheorem{lemma}[theorem]{Lemma}
\newtheorem{definition}[theorem]{Definition}
\newtheorem{remark}[theorem]{Remark}
\newtheorem{example}[theorem]{Example}
\newtheorem{proposition}[theorem]{Proposition}
\newtheorem{corollary}[theorem]{Corollary}

\newtheorem{algorithm}[theorem]{Algorithm}

\newtheorem{exercise}[theorem]{Exercise}
\newtheorem{cexercise}[theorem]{Computer Exercise}

\oddsidemargin-0.7cm 
\evensidemargin-0.7cm 
\topmargin-1.5cm
\textwidth17.5cm 
\textheight25cm

\title{Computational Arithmetic of Modular Forms}
\author{Gabor Wiese}
\date{}

\pagestyle{headings}

\newcommand{\End}{{\rm End}}
\newcommand{\Hom}{{\rm Hom}}

\newcommand{\Div}{{\rm Div}}

\newcommand{\GL}{\mathrm{GL}}

\newcommand{\SL}{\mathrm{SL}}

\newcommand{\id}{\mathrm{id}}
\newcommand{\obj}{\mathrm{obj}}

\DeclareMathOperator{\Sym}{Sym}
\DeclareMathOperator{\Ext}{Ext}
\DeclareMathOperator{\Tor}{Tor}

\DeclareMathOperator{\Image}{im}
\DeclareMathOperator{\Imag}{Im}
\DeclareMathOperator{\Real}{Re}
\DeclareMathOperator{\Gal}{Gal}

\DeclareMathOperator{\Spec}{Spec}

\DeclareMathOperator{\Frob}{Frob}

\DeclareMathOperator{\Tr}{Tr}

\DeclareMathOperator{\Det}{det}
\DeclareMathOperator{\Mat}{Mat}
\DeclareMathOperator{\coker}{coker}

\DeclareMathOperator{\Stab}{Stab}
\DeclareMathOperator{\vol}{vol}

\newcommand{\Ind}{{\rm Ind}}

\newcommand{\Res}{{\rm Res}}

\newcommand{\Coind}{{\rm Coind}}
\newcommand{\PGL}{\mathrm{PGL}}
\newcommand{\PSL}{\mathrm{PSL}}

\newcommand{\im}{\mathrm{im}}

\newcommand{\diam}[1]{{\langle #1 \rangle}}

\newcommand{\donothing}[1]{}

\newcommand{\mat}[4]{
 \left(  \begin{smallmatrix} #1 & #2 \\ #3 & #4 \end{smallmatrix} \right)}

\newcommand{\vect}[2]{
 \left(  \begin{smallmatrix} #1 \\ #2 \end{smallmatrix} \right)}

\newcommand{\calC}{\mathcal{C}}
\newcommand{\calD}{\mathcal{D}}

\newcommand{\calF}{\mathcal{F}}
\newcommand{\calG}{\mathcal{G}}

\newcommand{\calM}{\mathcal{M}}

\newcommand{\calO}{\mathcal{O}}
\newcommand{\calP}{\mathcal{P}}

\newcommand{\calR}{\mathcal{R}}

\newcommand{\cB}{\mathcal{B}}
\newcommand{\cC}{\mathcal{C}}
\newcommand{\cE}{\mathcal{E}}
\newcommand{\cF}{\mathcal{F}}
\newcommand{\cG}{\mathcal{G}}
\newcommand{\cM}{\mathcal{M}}

\newcommand{\cO}{\mathcal{O}}

\newcommand{\fa}{\mathfrak{a}}

\newcommand{\fm}{\mathfrak{m}}
\newcommand{\fn}{\mathfrak{n}}

\newcommand{\fp}{\mathfrak{p}}

\newcommand{\fP}{\mathfrak{P}}

\newcommand{\CC}{\mathbb{C}}

\newcommand{\FF}{\mathbb{F}}

\newcommand{\HH}{\mathbb{H}}

\newcommand{\NN}{\mathbb{N}}

\newcommand{\PP}{\mathbb{P}}
\newcommand{\QQ}{\mathbb{Q}}
\newcommand{\RR}{\mathbb{R}}

\newcommand{\TT}{\mathbb{T}}

\newcommand{\ZZ}{\mathbb{Z}}

\newcommand{\Qbar}{\overline{\QQ}}
\newcommand{\Zbar}{\overline{\ZZ}}

\newcommand{\Fbar}{\overline{\FF}}
\newcommand{\Kbar}{{\overline{K}}}
\newcommand{\Khat}{{\widehat{K}}}
\newcommand{\Khatbar}{{\overline{\widehat{K}}}}

\newcommand{\chibar}{\overline{\chi}}

\newcommand{\zbar}{\overline{z}}
\newcommand{\gbar}{{\overline{g}}}
\newcommand{\fbar}{{\overline{f}}}
\newcommand{\Gammabar}{\overline{\Gamma}}

\newcommand{\matz}{\mathrm{Mat}_2(\ZZ)_{\det \neq 0}}

\newcommand{\conj}{\mathrm{conj}}
\newcommand{\res}{\mathrm{res}}
\newcommand{\Sh}{\mathrm{Sh}}
\newcommand{\cores}{\mathrm{cores}}
\newcommand{\infl}{\mathrm{infl}}
\newcommand{\modules}{\mathrm{modules}}

\newcommand{\tr}{\mathrm{tr}}

\newcommand{\ms}{\hspace*{1cm}}

\DeclareMathOperator{\h}{H}
\DeclareMathOperator{\Z}{Z}
\DeclareMathOperator{\B}{B}
\newcommand{\Hpar}{\h_{\mathrm{par}}}
\newcommand{\Zpar}{\Z_{\mathrm{par}}}

\newcommand{\Sk}[4]{{\mathrm{S}}_{#1}(#2,#3\,;\,#4)}
\newcommand{\Skg}[3]{{\mathrm{S}}_{#1}(#2\,;\,#3)}
\newcommand{\Mk}[4]{{\mathrm{M}}_{#1}(#2,#3\,;\,#4)}
\newcommand{\Mkg}[3]{{\mathrm{M}}_{#1}(#2\,;\,#3)}

\newcommand{\cCMk}[4]{{\mathcal{CM}}_{#1}(#2,#3\,;\,#4)}
\newcommand{\cMk}[4]{{\mathcal{M}}_{#1}(#2,#3\,;\,#4)}
\newcommand{\cBk}[4]{{\mathcal{B}}_{#1}(#2,#3\,;\,#4)}
\newcommand{\cEk}[4]{{\mathcal{E}}_{#1}(#2,#3\,;\,#4)}
\newcommand{\Skone}[3]{{\mathrm{S}}_{#1}(\Gamma_1(#2)\,;\,#3)}
\newcommand{\Mkone}[3]{{\mathrm{M}}_{#1}(\Gamma_1(#2)\,;\,#3)}
\newcommand{\cCMkone}[3]{{\mathcal{CM}}_{#1}(\Gamma_1(#2)\,;\,#3)}
\newcommand{\cMkone}[3]{{\mathcal{M}}_{#1}(\Gamma_1(#2)\,;\,#3)}

\newcommand{\cCM}{\mathcal{CM}}




\begin{document}

\selectlanguage{british}

\maketitle

\abstract{
These course notes are about computing modular forms and some of their arithmetic properties.
Their aim is to explain and prove the modular symbols algorithm in as elementary and as explicit terms as possible,
and to enable the devoted student to implement it over any ring
(such that a sufficient linear algebra theory is available in the chosen computer algebra system).
The chosen approach is based on group cohomology and along the way the needed tools from homological algebra are provided.

MSC (2010): 11-01, 11F11, 11F25, 11F67, 11Y16
}

\tableofcontents

\section*{Preface}
All sections of this course are either labelled as `Theory' or as
`Algorithms and Implementations'.
It is possible to study only the theory parts. However, the algorithmic parts
depend heavily on the developed theory. Of course, if one is principally interested in implementations,
one need not understand each and every proof.
Accordingly, theoretical and computer exercises are provided.

The conception of this course is different from every treatment I
know, in particular, from William Stein's excellent book `Modular
Forms: A Computational Approach' (\cite{Stein}) and from~\cite{Cremona}.
We emphasize the central role of Hecke 
algebras and focus on the use of group
cohomology since on the one hand it can be described in very explicit
and elementary terms and on the other hand already allows the
application of the strong machinery of homological algebra.
We shall not discuss any geometric approaches.

The treatment of the (group cohomological) modular symbols algorithm given in this course is complete.
However, we did not include any treatment of Heilbronn matrices describing Hecke operators
on Manin symbols, which allow a speed-up of Hecke operators.

This course was originally held at the Universit\"at Duisburg-Essen in 2008
and its notes have been slightly reworked for publication in this volume.

{\bf Acknowledgements.}
I would like to thank the anonymous referees for a huge number of helpful suggestions and corrections that surely improved
the text. Thanks are also due to the students who followed the original course, among them Maite Aranés, Adam Mohamed and Ralf Butenuth,
for their helpful feedback. I would also like to thank Mariagiulia De Maria, Daniel Berhanu Mamo, Atin Modi, Luca Notarnicola and Emiliano Torti for
useful corrections.

\section{Motivation and Survey}\label{sec:1}

This section serves as an introduction to the topics of the course. We will briefly review the theory of modular forms and Hecke operators. Then we will define the modular symbols formalism and state a theorem by Eichler and Shimura establishing a link between modular forms and modular symbols. This link is the central ingredient since the modular symbols algorithm for the computation of modular forms is entirely based on it. In this introduction, we shall also be able to give an outline of this algorithm.

\subsection{Theory: Brief review of modular forms and Hecke operators}

\subsubsection*{Congruence subgroups}

We first recall the standard congruence subgroups of $\SL_2(\ZZ)$.
By $N$ we shall always denote a positive integer.

Consider the group homomorphism
$$ \SL_2(\ZZ) \to \SL_2(\ZZ/N\ZZ).$$
By Exercise~\ref{exsln} it is surjective.
Its kernel is called the principal congruence subgroup of level~$N$ and denoted $\Gamma(N)$.

The group $\SL_2(\ZZ/N\ZZ)$ acts naturally on $(\ZZ/N\ZZ)^2$ (by multiplying the matrix with a vector).
We look at the orbit and the stabiliser of $\vect 10$. The orbit is
$$ \SL_2(\ZZ/N\ZZ)\vect 10 = \{ \vect ac \;|\;  a,c \textnormal{ generate }\ZZ/N\ZZ\}$$
because the determinant is~$1$.
We also point out that the orbit of $\vect 10$ can and should be viewed as the set of elements in $(\ZZ/N\ZZ)^2$ which are of precise (additive) order~$N$. 
We now consider the stabiliser of $\vect 10$ and define the group $\Gamma_1(N)$ as the preimage of that stabiliser group in $\SL_2(\ZZ)$.
Explicitly, this means that $\Gamma_1(N)$ consists of those matrices in $\SL_2(\ZZ)$ whose reduction modulo $N$ is of the form $\mat 1*01$.

The group $\SL_2(\ZZ/N\ZZ)$ also acts on $\PP^1(\ZZ/N\ZZ)$, the projective line over $\ZZ/N\ZZ$, which one can define as the tuples $(a:c)$ with $a,c \in \ZZ/N\ZZ$ such that $\langle a,c\rangle = \ZZ/N\ZZ$ modulo the equivalence relation given by multiplication by an element of $(\ZZ/N\ZZ)^\times$.
The action is the natural one (we should actually view $(a:c)$ as a column vector, as above).
The orbit of $(1:0)$ for this action is $\PP^1(\ZZ/N\ZZ)$.
The preimage in $\SL_2(\ZZ)$ of the stabiliser group of $(1:0)$ is called $\Gamma_0(N)$.
Explicitly, it consists of those matrices in $\SL_2(\ZZ)$ whose reduction is of the form $\mat **0*$.
We also point out that the quotient of $\SL_2(\ZZ/N\ZZ)$ modulo the stabiliser of $(1:0)$ corresponds to the set of cyclic subgroups of precise order $N$ in $(\ZZ/N\ZZ)^2$.
These observations are at the base of defining level structures for elliptic curves.

It is clear that $\Gamma_1(N)$ is a normal subgroup of $\Gamma_0(N)$ and that the map
$$ \Gamma_0(N) / \Gamma_1(N) \xrightarrow{\mat abcd \mapsto a \mod N} (\ZZ/N\ZZ)^\times$$
is a group isomorphism.

The quotient $\Gamma_0(N) / \Gamma_1(N)$ will be important in the sequel because it will act on modular forms and modular symbols for $\Gamma_1(N)$.
For that purpose, we shall often consider characters (i.e.\ group homomorphisms) of the form
$$ \chi : (\ZZ/N\ZZ)^\times \to \CC^\times.$$
We shall also often extend $\chi$ to a map $(\ZZ/N\ZZ) \to \CC$ by imposing $\chi(r) = 0$ if $(r,N) \ne 1$.

On the number theory side, the group $(\ZZ/N\ZZ)^\times$ enters as the Galois group of a cyclotomic extension.
More precisely, by class field theory or Exercise~\ref{qzetan} we have the isomorphism
$$ \Gal(\QQ(\zeta_N)/\QQ) \xrightarrow{\Frob_\ell \mapsto \ell} (\ZZ/N\ZZ)^\times$$
for all primes $\ell \nmid N$.
By $\Frob_\ell$ we denote (a lift of) the Frobenius endomorphism $x \mapsto x^\ell$, and by $\zeta_N$ we denote any primitive $N$-th root of unity.
We shall, thus, later on also consider $\chi$ as a character of $\Gal(\QQ(\zeta_N)/\QQ)$. The name
{\em Dirichlet character} (here of {\em modulus $N$}) is common usage for both.

\subsubsection*{Modular forms}

We now recall the definitions of modular forms. Standard references are \cite{DS} and~\cite{CS}, but I still vividly recommend~\cite{DiamondIm},
which gives a concise and yet rather complete introduction. We denote by
$$\HH = \{ z \in \CC | \im(z) > 0\}$$
the upper half plane. The set of cusps is by definition $\PP^1(\QQ) = \QQ \cup \{\infty\}$.
The group $\PSL_2(\ZZ)$ acts on $\HH$ by Möbius transforms; more explicitly, for $M = \mat abcd \in \SL_2(\ZZ)$ and $z \in \HH \cup \PP^1(\QQ)$
one sets
\begin{equation}\label{eq:moebius}
M.z = \frac{az+b}{cz+d}.
\end{equation}

For $M = \mat abcd$ an integer matrix with non-zero determinant,
an integer $k$ and a function $f:\HH \to \CC$, we put
$$ (f|_k M)(z) = (f|M)(z) := f\big(M.z\big) \frac{\Det(M)^{k-1}}{(cz+d)^k}.$$

Fix integers $k\ge 1$ and $N \ge 1$. A function
$$ f: \HH \to \CC$$
given by a convergent power series (the $a_n(f)$ are complex numbers) 
$$f(z) = \sum_{n=0}^\infty a_n(f) (e^{2\pi i z})^n = \sum_{n=0}^\infty a_n(f) q^n
\;\; \text{ with } q(z) = e^{2 \pi i z}$$
is called a {\em modular form of weight $k$ for $\Gamma_1(N)$} if
\begin{enumerate}[(i)]
\item $(f|_k \mat abcd) (z) = f(\frac{az+b}{cz+d})(cz+d)^{-k} = f(z)$ for all 
$\mat abcd \in \Gamma_1(N)$, and
\item the function $(f|_k \mat abcd) (z) = f(\frac{az+b}{cz+d}) (cz+d)^{-k}$
admits a limit when $z$ tends to $i \infty$ (we often just write $\infty$) for all $\mat abcd \in \SL_2(\ZZ)$ 
(this condition is called {\em $f$ is holomorphic at the cusp $a/c$}).
\end{enumerate}
We use the notation $\Mkone kN\CC$. If we replace (ii) by
\begin{enumerate}[(i)]
\item [(ii)'] the function $(f|_k \mat abcd) (z) = f(\frac{az+b}{cz+d}) (cz+d)^{-k}$
is holomorphic and
the limit $f(\frac{az+b}{cz+d}) (cz+d)^{-k}$ is $0$
when $z$ tends to $i \infty$,
\end{enumerate}
then $f$ is called a {\em cusp form}. For these,
we introduce the notation $\Skone kN\CC$.

Let us now suppose that we are given a Dirichlet character~$\chi$
of modulus~$N$ as above. Then we replace (i) as follows:
\begin{enumerate}[(i)]
\item [(i)'] $f(\frac{az+b}{cz+d}) (cz+d)^{-k} = \chi(d) f(z)$ for all 
$\mat abcd \in \Gamma_0(N)$.
\end{enumerate}
Functions satisfying this condition are called {\em modular forms}
(respectively, {\em cusp forms} if they satisfy (ii)') {\em of weight $k$,
character $\chi$ and level $N$}. The notation
$\Mk kN\chi\CC$ (respectively, $\Sk kN\chi\CC$) will be used.

All these are finite dimensional $\CC$-vector spaces. For $k \ge 2$, there are dimension formulae, which one can look up in \cite{Stein}.
We, however, point the reader to the fact that for $k=1$ nearly nothing about the dimension is known (except that it is smaller than the respective dimension for $k=2$; it is believed to be much smaller, but only very weak results are known to date).

\subsubsection*{Hecke operators}

At the base of everything that we will do with modular forms are the
Hecke operators and the diamond operators. One should really define
them more conceptually (e.g.\ geometrically), but this takes some time.
Here is a definition by formulae.

If $a$ is an integer coprime to~$N$, by Exercise~\ref{exsigmaa} we may let $\sigma_a$ be a matrix in $\Gamma_0(N)$ such that
\begin{equation}\label{sigmaa}
\sigma_a \equiv \mat {a^{-1}}00a \mod N.
\end{equation}

We define the {\em diamond operator} $\diam a$ (you see the diamond in the notation, with some phantasy) by the formula
$$ \diam a f = f|_k \sigma_a.$$
If $f \in \Mk kN\chi\CC$, then we have by definition $\diam a f = \chi(a) f$.
The diamond operators give a group action of $(\ZZ/N\ZZ)^\times$ on $\Mkone kN\CC$ and on $\Skone kN\CC$,
and the $\Mk kN\chi\CC$ and $\Sk kN\chi\CC$ are the $\chi$-eigenspaces for this action.
We thus have the isomorphism
$$ \Mkone kN\CC \cong \bigoplus_\chi \Mk kN\chi\CC$$
for $\chi$ running through the characters of $(\ZZ/N\ZZ)^\times$ (and similarly for the cuspidal spaces).

Let $\ell$ be a prime. We let 
\begin{align}\label{setrp}
\calR_\ell & := \{ \mat 1r0\ell | 0 \le r \le \ell-1 \} \cup \{\sigma_\ell \mat \ell 001\}, & \text{ if }\ell\nmid N\\
\calR_\ell & := \{ \mat 1r0\ell | 0 \le r \le \ell-1 \}, & \text{ if }\ell\mid N
\end{align}
We use these sets to define the {\em Hecke operator $T_\ell$} acting
on $f$ as above as follows:
$$ f |_k T_\ell := T_\ell f := \sum_{\delta \in \calR_\ell} f|_k\delta.$$

\begin{lemma}\label{lemhecke}
Suppose $f \in \Mk kN\chi\CC$. 
Recall that we have extended $\chi$ so that $\chi(\ell) = 0$ if $\ell$ divides~$N$. 
We have the formula
$$ a_n(T_\ell f) = a_{\ell n}(f) + \ell^{k-1} \chi(\ell) a_{n/\ell}(f).$$
In the formula, $a_{n/\ell}(f)$ is to be read as $0$ if $\ell$ does not divide $n$. 
\end{lemma}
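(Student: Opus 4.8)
The plan is to compute $T_\ell f = \sum_{\delta \in \calR_\ell} f|_k\delta$ directly from the $q$-expansion of $f$, handling the two kinds of coset representatives in $\calR_\ell$ separately.

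First I would treat the representatives $\mat 1r0\ell$ with $0 \le r \le \ell-1$. Since $\Det\mat 1r0\ell = \ell$ and the bottom row is $(0,\ell)$, the definition of $|_k$ gives $(f|_k\mat 1r0\ell)(z) = \ell^{-1} f\big(\tfrac{z+r}{\ell}\big)$. Inserting $f(w) = \sum_{n\ge0} a_n(f)e^{2\pi i n w}$ with $w = \tfrac{z+r}{\ell}$ and writing $q^{1/\ell} = e^{2\pi i z/\ell}$, one gets $f\big(\tfrac{z+r}{\ell}\big) = \sum_{n\ge0} a_n(f) e^{2\pi i nr/\ell} q^{n/\ell}$. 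Summing over $r$ and using the orthogonality relation that $\frac1\ell\sum_{r=0}^{\ell-1} e^{2\pi i nr/\ell}$ equals $1$ if $\ell\mid n$ and $0$ otherwise, the total contribution of these $\ell$ matrices collapses to $\sum_{m\ge0} a_{\ell m}(f)q^m$. This already produces the first term $a_{\ell n}(f)$ of the claimed formula, and it completes the proof in the case $\ell\mid N$, where $\calR_\ell$ has no further elements and $\chi(\ell)=0$ by our extension convention.

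It remains, for $\ell\nmid N$, to deal with the representative $\sigma_\ell\mat\ell001$. Using that $|_k$ is a right action (a standard cocycle computation) and that $f|_k\sigma_\ell = \diam\ell f = \chi(\ell)f$ because $f\in\Mk kN\chi\CC$, this contribution equals $\chi(\ell)\,(f|_k\mat\ell001)$. Now $\Det\mat\ell001 = \ell$ with bottom row $(0,1)$, so $(f|_k\mat\ell001)(z) = \ell^{k-1}f(\ell z) = \ell^{k-1}\sum_{n\ge0}a_n(f)q^{\ell n}$; reindexing, the coefficient of $q^n$ is $\ell^{k-1}a_{n/\ell}(f)$, interpreted as $0$ unless $\ell\mid n$. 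Multiplying by $\chi(\ell)$ and adding the contribution from the first step gives $a_n(T_\ell f) = a_{\ell n}(f) + \ell^{k-1}\chi(\ell)a_{n/\ell}(f)$, as desired.

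The argument is entirely computational; the only places that require a bit of care are the bookkeeping of the determinant normalisation in the $|_k$-action (so that the $\mat 1r0\ell$-terms come out with the clean factor $\ell^{-1}$ while the $\mat\ell001$-term carries $\ell^{k-1}$) and the invocation of the orthogonality of $\ell$-th roots of unity, which is exactly what singles out the coefficients $a_{\ell n}(f)$. I do not expect a genuine obstacle here; one only needs to note that the rearrangements of series are legitimate, which is immediate from the absolute and locally uniform convergence of the $q$-expansion on $\HH$.
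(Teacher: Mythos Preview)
Your proof is correct and is precisely the standard computation the paper has in mind: it leaves this lemma as Exercise~\ref{exhecke} with a pointer to \cite[Proposition~5.2.2]{DS}, which carries out exactly the same coset-by-coset calculation using orthogonality of $\ell$-th roots of unity and the diamond operator relation $f|_k\sigma_\ell = \chi(\ell)f$.
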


\begin{proof}
Exercise~\ref{exhecke}.
\end{proof}

The Hecke operators for composite $n$ can be defined as follows (we put $T_1$ to be the identity):
\begin{equation}\label{eq:Tn-composite}
\begin{array}{rll}
T_{\ell^{r+1}} &= T_\ell \circ T_{\ell^r} - \ell^{k-1} \diam \ell T_{\ell^{r-1}} & \textnormal{ for all primes~$\ell$ and $r \ge 1$},\\
T_{uv} &= T_u \circ T_v & \textnormal{ for coprime positive integers $u,v$}.
\end{array}
\end{equation}

We derive the very important formula (valid for every $n$)
\begin{equation}\label{aeins}
a_1(T_n f) = a_n(f).
\end{equation}
It is the only formula that we will really need.

From Lemma~\ref{lemhecke} and the above formulae it is also evident that the Hecke operators commute among one another. 
By Exercise~\ref{excommute} eigenspaces for a collection of operators (i.e.\ each element of a given set of Hecke operators acts by scalar multiplication) are respected by all Hecke operators.
Hence, it makes sense to consider modular forms which are eigenvectors for every Hecke operator. These are called {\em Hecke eigenforms}, or often just {\em eigenforms}.
Such an eigenform $f$ is called {\em normalised} if $a_1(f) = 1$.
We shall consider eigenforms in more detail in the following section.

Finally, let us point out the formula (for $\ell$ prime and $\ell \equiv d \mod N$)
\begin{equation}\label{diamondinhecke}
\ell^{k-1} \diam d = T_\ell^2 - T_{\ell^2}.
\end{equation}
Hence, the diamond operators can be expressed as $\ZZ$-linear combinations of Hecke operators.
Note that divisibility is no trouble since we may choose $\ell_1$, $\ell_2$, both congruent to $d$ modulo $N$ satisfying an equation $1 = \ell_1^{k-1}r + \ell_2^{k-1}s$ for appropriate $r,s \in \ZZ$.

\subsubsection*{Hecke algebras and the $q$-pairing}

We now quickly introduce the concept of Hecke algebras. It will be treated in more detail in later sections.
In fact, {\em when we claim to compute modular forms with the modular symbols algorithm, we are really computing Hecke algebras.}
In the couple of lines to follow, we show that the Hecke algebra is the dual of modular forms, and hence all knowledge about modular forms can
- in principle - be derived from the Hecke algebra.

For the moment, we define the {\em Hecke algebra} of $\Mkone kN\CC$ as the sub-$\CC$-algebra
inside the endomorphism ring of the $\CC$-vector space $\Mkone kN\CC$ generated by all Hecke operators and all diamond operators.
We make similar definitions for $\Skone kN\CC$, $\Mk kN\chi\CC$ and $\Sk kN\chi\CC$.
Let us introduce the pieces of notation
$$\TT_\CC(\Mkone kN\CC), \TT_\CC(\Skone{k}{N}{\CC}), \TT_\CC(\Mk{k}{N}{\chi}{\CC}) \text{ and } \TT_\CC(\Sk{k}{N}{\chi}{\CC}),$$
respectively.
We now define a bilinear pairing, which we call the {\em (complex) $q$-pairing}, as
$$ \Mk kN\chi\CC \times \TT_\CC(\Mk kN\chi\CC) \to \CC, \;\;(f,T) \mapsto a_1(Tf)$$
(compare with Equation~\ref{aeins}).

\begin{lemma}\label{qpair}
Suppose $k \ge 1$.
The complex $q$-pairing is perfect, as is the analogous pairing for $\Sk kN\chi\CC$. In particular, 
$$ \Mk kN\chi\CC \cong \Hom_\CC (\TT_\CC(\Mk kN\chi\CC),\CC), \;\; f \mapsto (T \mapsto a_1(Tf))$$
and similarly for $\Sk kN\chi\CC$. For $\Sk kN\chi\CC$, the inverse is given by sending $\phi$ to $\sum_{n=1}^\infty \phi(T_n) q^n$.
\end{lemma}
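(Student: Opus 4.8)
The plan is to invoke the standard fact that a bilinear pairing $V \times W \to \CC$ of finite-dimensional $\CC$-vector spaces is perfect as soon as the two induced linear maps $V \to \Hom_\CC(W,\CC)$ and $W \to \Hom_\CC(V,\CC)$ are both injective, since then $\dim_\CC V = \dim_\CC W$ and both maps are isomorphisms. Write $M := \Mk kN\chi\CC$ and $\TT := \TT_\CC(M)$, and recall that by definition $\TT$ is a \emph{commutative} subalgebra of $\End_\CC(M)$ generated by the Hecke operators $T_n$ together with the diamond operators, all of which commute with one another. The only computational ingredient needed is the identity $a_1(T_n g) = a_n(g)$ of Equation~(\ref{aeins}), valid for every $g \in M$ and every $n \ge 1$.

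First I would check that $\TT \to \Hom_\CC(M,\CC)$, $T \mapsto (f \mapsto a_1(Tf))$, is injective. Suppose $a_1(Tf) = 0$ for all $f \in M$, and fix $f \in M$. For each $n \ge 1$ the element $T_n f$ again lies in $M$, and since $T$ and $T_n$ both lie in the commutative algebra $\TT$ they commute, so
$$ a_n(Tf) = a_1\big(T_n (Tf)\big) = a_1\big(T (T_n f)\big) = 0, $$
the last equality by the hypothesis applied to $T_n f \in M$. Hence every positive $q$-expansion coefficient of $Tf$ vanishes, so $Tf$ is a constant function; but applying the transformation law~(i') to the matrix $\mat 10N1 \in \Gamma_0(N)$ forces such a constant to vanish once $k \ge 1$, so $Tf = 0$. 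As $f$ was arbitrary, $T = 0$. This gives $\dim_\CC \TT \le \dim_\CC M$.

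Next I would check that $M \to \Hom_\CC(\TT,\CC)$, $f \mapsto (T \mapsto a_1(Tf))$, is injective: if $a_1(Tf) = 0$ for all $T \in \TT$, then in particular $a_n(f) = a_1(T_n f) = 0$ for all $n \ge 1$, so $f$ is constant and hence $0$ because $k \ge 1$. Thus $\dim_\CC M \le \dim_\CC \TT$, and combining the two inequalities shows that both induced maps are isomorphisms — which is the asserted perfectness. The same argument works verbatim for $S := \Sk kN\chi\CC$ in place of $M$ (even more easily, since cusp forms have $a_0 = 0$, so the remark on constants is superfluous there). For the explicit inverse in the cuspidal case, given $\phi \in \Hom_\CC(\TT,\CC)$ perfectness yields a unique $f \in S$ with $a_1(Tf) = \phi(T)$ for all $T$; taking $T = T_n$ gives $a_n(f) = \phi(T_n)$, and with $a_0(f) = 0$ this reads exactly $f = \sum_{n=1}^\infty \phi(T_n)\, q^n$.

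I do not anticipate a genuine obstacle: the whole proof is a dimension count. The one point that must not be skipped is the hypothesis $k \ge 1$, which enters precisely through ``a constant modular form of weight $\ge 1$ is zero'' and which is genuinely necessary (for $k = 0$ the constant functions pair to zero against all of $\TT$, so the pairing on $M$ would be degenerate). The small trick that makes everything run is that $T_n$ commutes with every element of $\TT$, which turns the single functional $f \mapsto a_1(-\, f)$ into simultaneous access to all coefficients $a_n(f)$.
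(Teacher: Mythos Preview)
Your proof is correct and follows essentially the same approach as the paper's: both arguments reduce perfectness to non-degeneracy on each side, then use the key identity $a_1(T_n f)=a_n(f)$ together with the commutativity of $\TT$ to kill all positive coefficients, and finally invoke $k\ge 1$ to dispose of constants. Your write-up is in fact slightly more detailed (you spell out the dimension count and give an explicit matrix witnessing that nonzero constants fail the transformation law), but the substance is identical.
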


\begin{proof}
Let us first recall that a pairing over a field is perfect
if and only if it is non-degenerate. That is what we are going to
check. It follows from Equation~\ref{aeins} like this.
If for all $n$ we have $0 = a_1(T_n f) = a_n(f)$, then
$f = 0$ (this is immediately clear for cusp forms; for
general modular forms at the first place we can only conclude
that $f$ is a constant, but since $k \ge 1$, non-zero constants
are not modular forms). Conversely, if
$a_1 (Tf) = 0$ for all $f$, then $a_1(T (T_n f)) = a_1 (T_n T f)
= a_n (Tf) = 0$ for all $f$ and all $n$, whence $Tf = 0$ for all $f$. 
As the Hecke algebra is defined as a subring in the endomorphism
of $\Mk kN\chi\CC$ (resp.\ the cusp forms), we find $T=0$,
proving the non-degeneracy.
\end{proof}

The perfectness of the $q$-pairing is also called the {\em existence of a $q$-expansion principle}.
Due to its central role for this course,
we repeat and emphasize that the Hecke algebra is the linear dual of the space of modular forms.

\begin{lemma}\label{eigenf}
Let $f$ in $\Mkone kN\CC$ be a normalised eigenform. Then
$$ T_n f = a_n(f) f \;\;\; \text{ for all } n \in \NN.$$
Moreover, the natural map from the above duality gives a bijection
\begin{equation*}
\{ \textnormal{Normalised eigenforms in }\Mkone kN\CC\} \leftrightarrow
\Hom_{\CC-\textnormal{algebra}} (\TT_\CC(\Mkone kN\CC),\CC).
\end{equation*}
Similar results hold, of course, also in the presence of~$\chi$.
\end{lemma}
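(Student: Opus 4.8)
The plan is to deduce everything from the perfectness of the $q$-pairing (Lemma~\ref{qpair}) together with the identity $a_1(T_nf)=a_n(f)$ of Equation~\ref{aeins}. As a preliminary I would note that the proof of Lemma~\ref{qpair} goes through verbatim with $\Mkone kN\CC$ in place of $\Mk kN\chi\CC$ — it used only Equation~\ref{aeins} and the fact that nonzero constants are not modular forms for $k\ge 1$ — so that
$$\iota\colon\Mkone kN\CC \longrightarrow \Hom_\CC\big(\TT_\CC(\Mkone kN\CC),\CC\big),\qquad f\longmapsto \phi_f:=\big(T\mapsto a_1(Tf)\big),$$
is a linear isomorphism. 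For the displayed formula, let $f$ be a normalised eigenform and write $T_nf=\lambda_n f$; applying $a_1$ and using Equation~\ref{aeins} with $a_1(f)=1$ yields $\lambda_n=a_1(T_nf)=a_n(f)$, as claimed.

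Next I would upgrade $\phi_f$ to a ring homomorphism when $f$ is a normalised eigenform. The key point is that $\TT_\CC(\Mkone kN\CC)$ is generated as a $\CC$-algebra by the $T_n$, $n\ge1$: the diamond operators already lie in this subalgebra by Equation~\ref{diamondinhecke}. Since each $T_n$ scales $f$ by $a_n(f)$ (by the part just proved), every $T\in\TT_\CC$, being a polynomial in the $T_n$, scales $f$ by a constant, which must equal $\phi_f(T)$ by one more application of $a_1$; that is, $Tf=\phi_f(T)f$ for all $T$. Hence $\phi_f(ST)=a_1(S(Tf))=\phi_f(T)\,a_1(Sf)=\phi_f(S)\phi_f(T)$ and $\phi_f(\id)=a_1(f)=1$, so $\phi_f$ is a $\CC$-algebra homomorphism.

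Conversely, given a $\CC$-algebra homomorphism $\phi\colon\TT_\CC(\Mkone kN\CC)\to\CC$, I would set $f:=\iota^{-1}(\phi)$, so that $a_n(f)=\phi_f(T_n)=\phi(T_n)$ for all $n$ and $a_1(f)=\phi(\id)=1$. To see that $f$ is an eigenform, put $g:=T_nf-a_n(f)f$; for every $T\in\TT_\CC$, using multiplicativity of $\phi$,
$$a_1(Tg)=a_1(TT_nf)-a_n(f)\,a_1(Tf)=\phi(TT_n)-\phi(T_n)\phi(T)=0,$$
so $\phi_g=0$ and hence $g=0$ by injectivity of $\iota$. Thus $T_nf=a_n(f)f$ for all $n$, so $f$ is an eigenvector for every Hecke operator and, again by Equation~\ref{diamondinhecke}, for every diamond operator, i.e.\ a normalised eigenform with $\phi_f=\phi$. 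Combining this with the previous paragraph and the injectivity of $\iota$ shows that $f\mapsto\phi_f$ restricts to a bijection from normalised eigenforms onto $\Hom_{\CC-\textnormal{algebra}}(\TT_\CC(\Mkone kN\CC),\CC)$; the identical argument, now quoting Lemma~\ref{qpair} directly, covers the spaces $\Mk kN\chi\CC$ with character.

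I do not expect a genuine obstacle here: the content is purely formal once Lemma~\ref{qpair} is available. The only two steps that deserve an explicit word are the extension of that lemma from $\Mk kN\chi\CC$ to $\Mkone kN\CC$, and the observation that the $T_n$ generate the whole Hecke algebra — the latter being precisely what allows one to pass from ``$f$ is an eigenvector for every $T_n$'' to ``$\phi_f$ is multiplicative on all of $\TT_\CC$''.
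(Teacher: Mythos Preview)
Your proof is correct and complete. The paper itself leaves this lemma as an exercise (Exercise~\ref{exeigenf}) with only the hint to use the explicit action of Hecke operators on $q$-expansions, and your argument does precisely that via Equation~\ref{aeins} and the perfectness of the $q$-pairing; there is nothing further to compare.
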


\begin{proof}
Exercise~\ref{exeigenf}.
\end{proof}

\subsection{Theory: The modular symbols formalism}

In this section we give a definition of formal modular symbols, as implemented in {\sc Magma} and like the one in \cite{MerelUniversal},
\cite{Cremona} and~\cite{Stein}, except that we do not factor out torsion, but intend a common treatment for all rings.

Contrary to the texts just mentioned, we prefer to work with the group
$$ \PSL_2(\ZZ) = \SL_2(\ZZ) / \langle -1 \rangle,$$
since it will make some of the algebra much simpler and since
it has a very simple description as a free product (see later).
The definitions of modular forms could have been formulated
using $\PSL_2(\ZZ)$ instead of $\SL_2(\ZZ)$, too (Exercise~\ref{expsl}).

We introduce some definitions and pieces of notation to be used in all the course.

\begin{definition}
Let $R$ be a ring, $\Gamma$ a group and $V$ a left $R[\Gamma]$-module.
The $\Gamma$-invariants of~$V$ are by definition 
$$ V^\Gamma = \{ v \in V | g.v = v \; \forall \; g \in \Gamma \} \subseteq V.$$
The $\Gamma$-coinvariants of~$V$ are by definition
$$ V_\Gamma = V / \langle v - g.v |  g \in \Gamma, v \in V \rangle.$$
If $H \le \Gamma$ is a finite subgroup, we define the norm of~$H$ as
$$ N_H = \sum_{h \in H} h \in R[\Gamma].$$
Similarly, if $g \in \Gamma$ is an element of finite order~$n$, we define the norm of~$g$ as
$$ N_g = N_{\langle g \rangle} = \sum_{i=0}^{n-1} g^i \in R[\Gamma].$$
\end{definition}

Please look at the important Exercise~\ref{exgp} for some properties of these definitions.
We shall make use of the results of this exercise in the section on group cohomology.

For the rest of this section, we let $R$ be a commutative ring with unit and $\Gamma$ be a subgroup of finite index in $\PSL_2(\ZZ)$.
For the time being we allow general modules; so we let $V$ be a left $R[\Gamma]$-module.
Recall that $\PSL_2(\ZZ)$ acts on $\HH \cup \PP^1(\QQ)$ by Möbius transformations, as defined earlier.
A generalised version of the definition below appeared in~\cite{MS}.

\begin{definition}\label{defMS}
We define the $R$-modules
$$ \cM_R := R[\{\alpha,\beta\}| \alpha,\beta \in \PP^1(\QQ)]/
\langle \{\alpha,\alpha\}, \{\alpha,\beta\} + \{\beta,\gamma\} + \{\gamma,\alpha\}
| \alpha,\beta,\gamma \in \PP^1(\QQ)\rangle$$
and
$$ \cB_R := R[\PP^1(\QQ)].$$ 
We equip both with the natural left $\Gamma$-action. 
Furthermore, we let
$$ \cM_R(V) := \cM_R \otimes_R V \;\;\;\; \text{ and } \;\;\;\; \cB_R(V) := \cB_R \otimes_R V$$
for the left diagonal $\Gamma$-action.

\begin{enumerate}[(a)]
\item We call the $\Gamma$-coinvariants
$$ \cM_R (\Gamma,V) :=  \cM_R(V)_\Gamma = 
\cM_R(V)/ \langle (x - g x) | g \in \Gamma, x \in \cM_R(V) \rangle$$ 
{\em the space of $(\Gamma,V)$-modular symbols.}

\item We call the $\Gamma$-coinvariants
$$ \cB_R(\Gamma,V) :=  \cB_R(V)_\Gamma = 
\cB_R(V)/ \langle (x - g x) | g \in \Gamma, x \in \cB_R(V) \rangle$$ 
{\em the space of $(\Gamma,V)$-boundary symbols.}

\item We define the {\em boundary map} as the map
$$ \cM_R(\Gamma,V) \to \cB_R(\Gamma,V)$$
which is induced from the map $\cM_R \to \cB_R$ sending $\{\alpha, \beta\}$
to $\{\beta\} - \{\alpha\}$.

\item The kernel of the boundary map is denoted by $\cCM_R(\Gamma,V)$ and is called 
{\em the space of cuspidal $(\Gamma,V)$-modular symbols.}

\item The image of the boundary map inside $\cB_R(\Gamma,V)$ is
denoted by $\cE_R(\Gamma,V)$ and is called
{\em the space of $(\Gamma,V)$-Eisenstein symbols.}
\end{enumerate}
\end{definition}

The reader is now invited to prove that the definition of
$\cM_R(\Gamma,V)$ behaves well with respect to base change (Exercise~\ref{basechange}).

\subsubsection*{The modules $V_n(R)$ and $V_n^\chi(R)$}

Let $R$ be a ring.  We put $V_n(R) = R[X,Y]_n \cong \Sym^{n}(R^2)$ (see Exercise~\ref{exsym}).
By $R[X,Y]_n$ we mean the homogeneous polynomials of degree~$n$ in two variables with coefficients in the ring~$R$.
By $\matz$ we denote the monoid of integral $2\times 2$-matrices with non-zero determinant (for matrix multiplication),
{\em i.e.}, $\matz = \GL_2(\QQ) \cap \ZZ^{2 \times 2}$.
Then $V_n(R)$ is a $\matz$-module in several natural ways.

One can give it the structure of a left $\matz$-module via the polynomials by putting
$$ (\mat abcd .f) (X,Y) = f \big( (X,Y) \mat abcd \big) = f \big( (aX+cY, bX+dY) \big).$$
Merel and Stein, however, consider a different one, and that is the one implemented in {\sc Magma}, namely
$$ (\mat abcd .f) (X,Y) = f \big( (\mat abcd)^\iota \vect XY \big)
= f \big( \mat d{-b}{-c}a \vect XY \big) = f \big( \vect {dX-bY}{-cX+aY} \big).$$
Here, $\iota$ denotes Shimura's main involution whose definition
can be read off from the line above (note that $M^\iota$ is the inverse of $M$ if $M$ has determinant~$1$).
Fortunately, both actions are isomorphic due to the fact that the transpose of $(\mat abcd)^\iota \vect XY$ is equal
to $(X,Y) \sigma^{-1} \mat abcd \sigma$, where $\sigma = \mat 01{-1}0$.
More precisely, we have the isomorphism $V_n(R) \xrightarrow{f \mapsto \sigma^{-1}.f} V_n(R)$, 
where the left hand side module carries "our" action and the right hand side
module carries the other one. By $\sigma^{-1}.f$ we mean "our" $\sigma^{-1}.f$.

Of course, there is also a natural right action by $\matz$, namely
$$ (f.\mat abcd) (\vect XY) = f ( \mat abcd \vect XY ) = f(\vect{aX+bY}{cX+dY}).$$
By the standard inversion trick, also both left actions described above can be turned into right ones.

Let now $(\ZZ/N\ZZ)^\times \to R^\times$ be a Dirichlet character, which we shall also consider as a character
$\chi: \Gamma_0(N) \xrightarrow{\mat abcd \mapsto a} (\ZZ/N\ZZ)^\times \xrightarrow{\chi} R^\times$.
By $R^\chi$ we denote the $R[\Gamma_0(N)]$-module which
is defined to be $R$ with the $\Gamma_0(N)$-action through~$\chi$,
i.e.\ $\mat abcd .r = \chi(a) r = \chi^{-1}(d) r$ for $\mat abcd \in \Gamma_0(N)$ and $r \in R$.

For use with Hecke operators, we extend this action to matrices $\mat abcd \in \ZZ^{2 \times 2}$ which are
congruent to an upper triangular matrix modulo~$N$ (but not necessarily of determinant~$1$). Concretely,
we also put $\mat abcd .r = \chi(a) r$ for $r \in R$ in this situation. Sometimes, however, we want to use the coefficient~$d$
in the action. In order to do so, we let $R^{\iota,\chi}$ be $R$ with the action $\mat abcd .r = \chi(d) r$
for matrices as above.
In particular, the $\Gamma_0(N)$-actions on $R^{\iota,\chi}$ and $R^{\chi^{-1}}$ coincide.

Note that due to $(\ZZ/N\ZZ)^\times$ being an abelian group,
the same formulae as above make $R^\chi$ also into a right $R[\Gamma_0(N)]$-module.
Hence, putting
$$(f \otimes r).\mat abcd = (f|_k \mat abcd) \otimes \mat abcd r$$
makes $\Mkone kN\CC \otimes_\CC \CC^\chi$ into a right $\Gamma_0(N)$-module and we have the
description (Exercise~\ref{exchar})
\begin{equation}\label{eqchar}
\Mk kN\chi\CC = (\Mkone kN\CC \otimes_\CC \CC^\chi)^{(\ZZ/N\ZZ)^\times}
\end{equation}
and similarly for $\Sk kN\chi\CC$.

We let 
$$V_n^\chi(R) := V_n(R) \otimes_R R^\chi \textnormal{ and }V_n^{\iota,\chi}(R) := V_n(R) \otimes_R R^{\iota,\chi}$$
equipped with the natural diagonal left $\Gamma_0(N)$-actions.
Note that unfortunately these modules are in general not $\SL_2(\ZZ)$-modules, 
but we will not need that.
Note, moreover, that if $\chi(-1) = (-1)^n$, then minus the identity
acts trivially on $V_n^\chi(R)$ and $V_n^{\iota,\chi}(R)$, whence we consider these
modules also as $\Gamma_0(N)/\{\pm 1\}$-modules.

\subsubsection*{The modular symbols formalism for standard congruence subgroups}

We now specialise the general set-up on modular symbols that we
have used so far to the precise situation needed for establishing
relations with modular forms.

So we let $N \ge 1$, $k \ge 2$ be integers and fix a character
$\chi: (\ZZ/N\ZZ)^\times \to R^\times$, which we also sometimes
view as a group homomorphism $\Gamma_0(N) \to R^\times$ as above. We
impose that $\chi(-1) = (-1)^k$.

We define 
$$\cMk kN\chi R := \cM_R(\Gamma_0(N)/\{\pm 1\},V_{k-2}^\chi(R)),$$
$$\cCMk kN\chi R := \cCM_R(\Gamma_0(N)/\{\pm 1\},V_{k-2}^\chi(R)),$$
$$\cBk kN\chi R := \cB_R(\Gamma_0(N)/\{\pm 1\},V_{k-2}^\chi(R))$$
and
$$\cEk kN\chi R := \cE_R(\Gamma_0(N)/\{\pm 1\},V_{k-2}^\chi(R)).$$
We make the obvious analogous definitions for $\cMkone kNR$ etc.

Let
\begin{equation}\label{defeta}
\eta := \mat {-1}001. 
\end{equation}
Because of
$$ \eta \mat abcd \eta = \mat a {-b}{-c} d$$
we have
$$ \eta \Gamma_1(N) \eta = \Gamma_1(N) \;\;\; \text{ and } \;\;\;\eta \Gamma_0(N) \eta = \Gamma_0(N).$$
We can use the matrix $\eta$ to define an involution (also denoted by $\eta$) on the various modular symbols spaces. We just use the diagonal action on $\cM_R(V) := \cM_R \otimes_R V$, provided, of course, that $\eta$ acts on~$V$. On $V_{k-2}(R)$ we use the usual $\matz$-action, and on $V_{k-2}^\chi(R) = V_{k-2}(R) \otimes R^\chi$ we let $\eta$ only act on the first factor.
We will denote by the superscript ${}^+$ the subspace invariant under this involution, and by the superscript ${}^-$ the anti-invariant one. We point out that there are other very good
definitions of $+$-spaces and $-$-spaces. For instance, in many applications it can
be of advantage to define the $+$-space as the $\eta$-coinvariants, rather than
the $\eta$-invariants. In particular, for modular symbols, where we are using quotients
and coinvariants all the time, this alternative definition is more suitable. The reader
should just think about the differences between these two definitions.
Note that here we are not following the conventions of \cite{Stein}, p.~141. Our action just seems more natural than adding an extra minus sign.

\subsubsection*{Hecke operators}

The aim of this part is to state the definition of Hecke operators
and diamond operators on formal modular symbols $\cMk kN\chi R$
and $\cCMk kN\chi R$. One immediately sees that it is very similar
to the one on modular forms. One can get a different insight in the
defining formulae by seeing how they are derived from a double coset formulation
in section~\ref{sec:Hecke}.

The definition given here is also explained in detail in \cite{Stein}.
We should also mention the very important fact that one can transfer Hecke 
operators in an explicit way to Manin symbols using Heilbronn matrices.
We shall not do this explicitly in this course. This point is discussed in detail in \cite{Stein} and \cite{MerelUniversal}.

We now give the definition only for $T_\ell$ for a prime~$\ell$ and
the diamond operators. The $T_n$ for composite $n$ can be computed
from those by the formulae already stated in~\eqref{eq:Tn-composite}.
Notice that the $R[\Gamma_0(N)]$-action on $V_{k-2}^\chi(R)$
(for the usual conventions, in particular, $\chi(-1) = (-1)^k$)
extends naturally to an action of the semi-group generated
by $\Gamma_0(N)$ and $\calR_\ell$ (see Equation~\ref{setrp}).
Thus, this semi-group acts on $\cMk kN\chi R$ (and the
cusp space) by the diagonal action on the tensor product.
Let $x \in \cMkone kN R$ or $x \in \cMk kN \chi R$. We put
$$ T_\ell x = \sum_{\delta \in \calR_\ell} \delta.x.$$
If $a$ is an integer coprime to~$N$, we define the diamond operator as
$$ \diam a x = \sigma_a x $$
with $\sigma_a$ as in equation~\eqref{sigmaa}.
When $x = (m \otimes v \otimes 1)_{\Gamma_0(N)/\{\pm 1\}} \in \cMk kN \chi R$ for $m \in \calM_R$ and $v\in V_{k-2}$, we have
$\diam a x = (\sigma_a m \otimes \sigma_a v) \otimes \chi(a^{-1}))_{\Gamma_0(N)/\{\pm 1\}} = x$,
thus $(\sigma_a(m \otimes v) \otimes 1)_{\Gamma_0(N)/\{\pm 1\}} = \chi(a) (m \otimes v \otimes 1)_{\Gamma_0(N)/\{\pm 1\}}$.

As in the section on Hecke operators on modular forms, we define Hecke algebras on modular symbols in a very similar way. We will take the freedom of taking arbitrary base rings (we will do that for modular forms in the next section, too).

Thus for any ring $R$ we let $\TT_R (\cMkone knR)$ be the $R$-subalgebra
of the $R$-endomorphism algebra of the $R$-module $\cMkone knR$ generated by the Hecke
operators $T_n$. For a character $\chi: \ZZ/N\ZZ \to R^\times$, we make
a similar definition. We also make a similar definition for the
cuspidal subspace and the $+$- and $-$-spaces.

The following fact will be obvious from the description of modular symbols
as Manin symbols (see Theorem~\ref{ManinSymbols}), which will be derived in a later chapter.
Here, we already want to use it.

\begin{proposition}\label{factfg}
The $R$-modules $\cMkone kN R$, $\cCMkone kNR$, $\cMk kN\chi R$, $\cCMk kN\chi R$
are finitely presented.
\end{proposition}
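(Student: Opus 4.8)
The plan is to reduce everything to the single statement that the $R$-module $\cM_R(\Gamma,V)$ is finitely presented whenever $\Gamma$ has finite index in $\PSL_2(\ZZ)$ and $V$ is a finitely presented $R$-module, applied to $V = V_{k-2}^\chi(R)$ (which is finitely presented since $V_{k-2}(R) = R[X,Y]_{k-2}$ is free of finite rank $k-1$ and $R^\chi$ is free of rank one). The main input is the description of modular symbols as Manin symbols, i.e. Theorem~\ref{ManinSymbols}, which is quoted here and which the excerpt explicitly permits us to use in advance: it exhibits $\cM_R(\Gamma,V)$ as a quotient of a free module indexed by the finite set of cosets $\Gamma \backslash \PSL_2(\ZZ)$ tensored with $V$, modulo relations coming from the two standard generators $\sigma$ (order $2$) and $\tau$ (order $3$) of the free product decomposition $\PSL_2(\ZZ) \cong C_2 * C_3$. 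I would first set up the notation: let $S = \Gamma \backslash \PSL_2(\ZZ)$, a finite set of cardinality $d = [\PSL_2(\ZZ):\Gamma]$, so that the Manin presentation writes $\cM_R(\Gamma,V)$ as
$$
\bigl( R[S] \otimes_R V \bigr) \big/ \bigl\langle \text{$(1+\sigma)$-relations and $(1+\tau+\tau^2)$-relations} \bigr\rangle .
$$

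The first step is then to observe that the generating module $R[S]\otimes_R V$ is finitely generated: $R[S]$ is free of rank $d$, and $V$ is finitely generated (indeed free of finite rank), so $R[S]\otimes_R V$ is a finitely generated free $R$-module. The second step is to control the relation submodule. Because the relations are parametrized by the finite set $S$ (one $\sigma$-type relation and one $\tau$-type relation per coset, each relation living in a finite-rank free module), the submodule of relations is the image of a map from a finitely generated free module; hence $\cM_R(\Gamma,V)$ is the cokernel of a map between finitely generated free $R$-modules, which is exactly the assertion that it is finitely presented. The third step handles the four variants. For $\cBk kN\chi R = \cB_R(\Gamma,V)$ the argument is even easier: $\cB_R = R[\PP^1(\QQ)]$ has a Manin-type description too, or more simply one notes $\cB_R(\Gamma, V)$ is a quotient of $R[\Gamma\backslash \PP^1(\QQ)] \otimes_R V$ and $\Gamma \backslash \PP^1(\QQ)$ is finite (the cusps of a finite-index subgroup are finite in number), so again it is finitely generated, and then finitely presented over Noetherian $R$ — but to avoid a Noetherian hypothesis one uses the explicit finite presentation of $\cB_R$ by the cusp relations. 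For $\cCMk kN\chi R$, the kernel of the boundary map $\cMk kN\chi R \to \cBk kN\chi R$, one uses that a kernel of a map of finitely presented modules need not be finitely presented in general; so here the cleaner route is to invoke the explicit description of $\cCM$ as a subquotient that itself admits a Manin-symbol presentation, or to observe that over the rings actually used it suffices, but the intended proof is simply: $\cCM$ is again presented by Manin symbols (only the cuspidal ones and the cuspidal relations), a finite package. The same remark covers $\cEk kN\chi R$ as the image of the boundary map, which is a quotient of the finitely presented $\cMk kN\chi R$ and a submodule of $\cBk kN\chi R$; being a quotient of a finitely presented module it is finitely generated, and being cut out inside $\cBk kN\chi R$ by finitely many relations (those defining $\cCM$ as the kernel) it is finitely presented.

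The step I expect to be the genuine obstacle is making the Manin-symbol presentation of the \emph{cuspidal} space $\cCM_R(\Gamma,V)$ literally a finite presentation, uniformly in $R$ — i.e. writing down an explicit finite generating set for the relations without secretly using that $R$ is Noetherian (so that submodules of finitely generated modules would automatically be finitely generated). The honest way to do this is to carry along the short exact sequence $0 \to \cCM_R(\Gamma,V) \to \cM_R(\Gamma,V) \to \cE_R(\Gamma,V) \to 0$ together with $\cE_R(\Gamma,V) \hookrightarrow \cB_R(\Gamma,V)$, present all of $\cM$, $\cB$ explicitly via Manin/boundary symbols, and then deduce a presentation of $\cCM$ by a diagram chase (Schanuel-type bookkeeping: if $0\to K \to P \to Q \to 0$ with $P$ finitely presented and $Q$ finitely presented, then $K$ is finitely generated, and a presentation of $K$ is obtained from presentations of $P$ and $Q$). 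Concretely: $\cE_R(\Gamma,V)$ is a submodule of the finitely presented $\cB_R(\Gamma,V)$ and simultaneously a quotient of the finitely presented $\cM_R(\Gamma,V)$; from the quotient side it is finitely generated, and combining the two one reads off a finite presentation of $\cE$, hence of $K=\cCM$. Everything else is routine: the finiteness of $\Gamma\backslash\PSL_2(\ZZ)$ and of $\Gamma\backslash\PP^1(\QQ)$ for finite-index $\Gamma$, the freeness of $V_{k-2}(R)$ of rank $k-1$, and the freeness of $R^\chi$ of rank one, so that all the ``generating'' modules in sight are finitely generated free $R$-modules and all relation sets are finite.
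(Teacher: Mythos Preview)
Your approach for the full modular symbols spaces $\cMkone kNR$ and $\cMk kN\chi R$ is correct and is exactly what the paper intends: Theorem~\ref{ManinSymbols}(a) exhibits $\cM_R(\Gamma,V)$ as the cokernel of the explicit map $M \oplus M \xrightarrow{(N_\sigma, N_\tau)} M$ with $M = \Ind_\Gamma^{\PSL_2(\ZZ)}(V)$ free of finite rank over~$R$, so it is finitely presented. The paper gives no more detail than the forward reference to that theorem, so here you and the paper agree.

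For the cuspidal spaces $\cCM$ there is a genuine gap in your argument. You correctly single this out as the delicate step, but the lemma you invoke --- that in a short exact sequence $0 \to K \to P \to Q \to 0$ with $P$ and $Q$ finitely presented the kernel $K$ is again finitely presented --- is false over non-Noetherian rings. A counterexample: take $R = k[x,y_1,y_2,\ldots]/(xy_i : i\ge 1)$, $P = R$, $Q = R/(x)$; then $P$ and $Q$ are finitely presented but $K = xR \cong R/(y_1,y_2,\ldots)$ is not. What is true (and is the genuine Schanuel-type statement) is only that $K$ is finitely \emph{generated}. Your attempt to bootstrap via $\cE_R(\Gamma,V)$ runs into the same wall: being simultaneously a quotient of a finitely presented module and a submodule of another does not force finite presentation.

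The paper itself does not engage with this point: it simply declares the proposition ``obvious'' from Theorem~\ref{ManinSymbols}, and the only place it is used (the subsequent Corollary) assumes $R$ Noetherian, where finitely generated and finitely presented coincide and kernels behave. So the intended reading is with an implicit Noetherian hypothesis; your argument does establish that $\cCM$ is finitely generated over arbitrary~$R$, which is all that is actually needed downstream.
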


\begin{corollary}
Let $R$ be Noetherian.
The Hecke algebras $\TT_R(\cMkone kN R)$, $\TT_R(\cCMkone kNR)$, 
$\TT_R(\cMk kN\chi R)$  and $\TT_R(\cCMk kN\chi R)$
are finitely presented $R$-modules.
\end{corollary}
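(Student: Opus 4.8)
The plan is to obtain the Corollary as a purely formal consequence of Proposition~\ref{factfg} together with elementary Noetherian module theory. Throughout, let $M$ denote any one of the four $R$-modules $\cMkone kN R$, $\cCMkone kNR$, $\cMk kN\chi R$, $\cCMk kN\chi R$, and let $\TT = \TT_R(M)$ be the corresponding Hecke algebra, which by definition is an $R$-subalgebra — in particular an $R$-submodule — of $\End_R(M)$. The first step is to observe that, because $R$ is Noetherian, the conclusion of Proposition~\ref{factfg} that $M$ is finitely presented over $R$ is equivalent to saying that $M$ is finitely generated over $R$, and such a module is then a Noetherian $R$-module.

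The main point is to show next that $\End_R(M)$ is itself a finitely generated (equivalently, Noetherian) $R$-module. To this end I would pick a surjection $\pi\colon R^n \twoheadrightarrow M$ and use it to define the $R$-linear map $\End_R(M) \to \Hom_R(R^n,M) \cong M^n$, $\phi \mapsto \phi\circ\pi$. This map is injective: since $\pi$ is surjective, an $R$-endomorphism of $M$ is determined by its values on the $n$ chosen generators. Hence $\End_R(M)$ is isomorphic to an $R$-submodule of $M^n$; as $M$ is a Noetherian $R$-module, so is the finite direct sum $M^n$, and therefore so is its submodule $\End_R(M)$.

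It then follows that $\TT$, being an $R$-submodule of the Noetherian $R$-module $\End_R(M)$, is finitely generated over $R$. Finally, a finitely generated module over a Noetherian ring is automatically finitely presented: the kernel of any presentation $R^m \twoheadrightarrow \TT$ is a submodule of the Noetherian module $R^m$, hence finitely generated. Applying this to each of the four choices of $M$ yields the statement.

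I do not expect a genuine obstacle once Proposition~\ref{factfg} is granted: the only step requiring a moment's thought is the embedding of $\End_R(M)$ into a finite power of $M$, i.e. the fact that a finitely generated module over a Noetherian ring has finitely generated endomorphism module; everything else is the standard toolkit (finitely generated over Noetherian $\Leftrightarrow$ Noetherian module, submodules of Noetherian modules are finitely generated, and finitely generated over Noetherian implies finitely presented).
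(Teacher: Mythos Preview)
Your argument is correct and follows essentially the same route as the paper's proof: use Proposition~\ref{factfg} to get that $M$ is finitely generated, deduce that $\End_R(M)$ is finitely generated over~$R$, conclude that the submodule $\TT$ is finitely generated, and finally use that finitely generated over Noetherian implies finitely presented. The paper states the finite generation of $\End_R(M)$ as a fact, whereas you spell out the embedding $\End_R(M)\hookrightarrow M^n$ via $\phi\mapsto\phi\circ\pi$; this is exactly the right justification and the only step requiring any thought.
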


\begin{proof}
This follows from Proposition~\ref{factfg} since
the endomorphism ring of a finitely generated module is finitely generated and
submodules of finitely generated modules over Noetherian rings are finitely generated.
Furthermore, over a Noetherian ring, finitely generated implies finitely presented.
\end{proof}

This very innocent looking corollary will give - together with the Eichler-Shimura
isomorphism - that coefficient fields of normalised eigenforms are number fields.
We next prove that the formation of Hecke algebras for modular symbols
behaves well with respect to flat base change.
We should have in mind the example $R=\ZZ$ or $R=\ZZ[\chi]:=\ZZ[\chi(n) : n \in \NN]$ (i.e., the ring of integers
of the cyclotomic extension of~$\QQ$ generated by the values of~$\chi$ or, equivalently, $\ZZ[e^{2 \pi i/r}]$ where $r$
is the order of~$\chi$) and $S=\CC$.

\begin{proposition}\label{hamsymbc}
Let $R$ be a Noetherian ring and $R \to S$ a flat ring
homomorphism. 
\begin{enumerate}[(a)]
\item The natural map
$$ \TT_R (\cMkone kNR) \otimes_R S \cong \TT_S (\cMkone kNS)$$
is an isomorphism of $S$-algebras.
\item The natural map
$$ \Hom_R(\TT_R(\cMkone kNR),R) \otimes_R S \cong \Hom_S(\TT_S(\cMkone kNS),S)$$
is an isomorphism of $S$-modules.
\item The map
$$ \Hom_R(\TT_R(\cMkone kNR) ,S) 
\xrightarrow{\phi \mapsto (T \otimes s \mapsto \phi(T)s)} \Hom_S(\TT_R(\cMkone kNR) \otimes_R S,S)$$
is also an isomorphism of $S$-modules.
\item Suppose in addition that $R$ is an integral domain and $S$ a field
extension of the field of fractions of~$R$. Then the natural map
$$ \TT_R(\cMkone kNR) \otimes_R S \to \TT_R(\cMkone kNS) \otimes_R S$$
is an isomorphism of $S$-algebras.
\end{enumerate}
For a character $\chi: (\ZZ/N\ZZ)^\times \to R^\times$, similar results hold.
Similar statements also hold for the cuspidal subspace.
\end{proposition}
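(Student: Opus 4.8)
The plan is to derive all four statements from two standard pieces of (homological) algebra together with the flat base-change behaviour of modular symbols. The first tool: if $M$ is a finitely presented $R$-module, $P$ an arbitrary $R$-module and $R\to S$ flat, then the natural map $\Hom_R(M,P)\otimes_R S \to \Hom_S(M\otimes_R S,\,P\otimes_R S)$ is an isomorphism; in particular, with $P=M$, one gets $\End_R(M)\otimes_R S\cong\End_S(M\otimes_R S)$. The second tool: for any $R$-module $N$ the extension--restriction adjunction gives a natural isomorphism $\Hom_R(N,S)\cong\Hom_S(N\otimes_R S,\,S)$ sending $\phi$ to $(n\otimes s\mapsto\phi(n)s)$, with no finiteness hypothesis. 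Finally, modular symbols commute with base change (Exercise~\ref{basechange}): since $V_{k-2}(S)=V_{k-2}(R)\otimes_R S$ and coinvariants form a cokernel, hence commute with $\otimes_R S$, there is a canonical identification $\cMkone kNS\cong\cMkone kNR\otimes_R S$ under which the Hecke operator $T_n$ on the left corresponds to $T_n\otimes\id_S$ on the right (the defining matrices $\calR_\ell$ are the same over $R$ and over $S$). Write $M:=\cMkone kNR$, which is finitely presented over $R$ by Proposition~\ref{factfg}, and $M_S:=M\otimes_R S\cong\cMkone kNS$.

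For (a): flatness of $S$ turns the inclusion $\TT_R(M)\hookrightarrow\End_R(M)$ into an inclusion $\TT_R(M)\otimes_R S\hookrightarrow\End_R(M)\otimes_R S\cong\End_S(M_S)$. Since $\TT_R(M)\otimes_R S$ is generated as an $S$-algebra by the $T_n\otimes 1$ and the inclusion is a ring map, its image is the $S$-subalgebra of $\End_S(M_S)$ generated by the $T_n\otimes 1 = T_n|_{M_S}$, which is by definition $\TT_S(M_S)$; hence $\TT_R(M)\otimes_R S\xrightarrow{\ \sim\ }\TT_S(M_S)$ as $S$-algebras. For (b): by the corollary following Proposition~\ref{factfg} the $R$-module $\TT_R(M)$ is finitely presented (here $R$ is Noetherian), so the first tool with $N:=\TT_R(M)$, $P:=R$ gives $\Hom_R(\TT_R(M),R)\otimes_R S\cong\Hom_S(\TT_R(M)\otimes_R S,\,S)$; now insert the isomorphism of (a). For (c): apply the second tool with $N:=\TT_R(M)$; the displayed arrow is precisely the adjunction isomorphism, so no finiteness is needed.

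For (d): the stated natural map is $\varphi\otimes\id_S$, where $\varphi:\TT_R(M)\to\TT_R(M_S)$, $T_n\mapsto T_n$, is the $R$-algebra homomorphism induced by $\End_R(M)\to\End_S(M_S)$ — well defined because $T_n$ base-changes compatibly, so any relation among the $T_n$ valid on $M$ is valid on $M_S$ — and surjective since the $T_n$ generate $\TT_R(M_S)$. Tensoring the surjection $\varphi$ with the flat module $S$ keeps it surjective; composing with the surjective multiplication map $\TT_R(M_S)\otimes_R S\twoheadrightarrow\TT_S(M_S)$ (whose image is the $S$-subalgebra generated by the $T_n$) yields the map $t\otimes s\mapsto s\,\varphi(t)$, which is exactly the isomorphism of (a). A composite of two surjections that is an isomorphism forces each factor to be an isomorphism; in particular $\TT_R(M)\otimes_R S\to\TT_R(M_S)\otimes_R S$ is an isomorphism of $S$-algebras.

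I expect the only genuinely non-mechanical point to be this ``two surjections compose to an isomorphism'' observation in (d); everything else is bookkeeping, the recurring theme being that $\TT$ is a subalgebra of an endomorphism ring, so at each step one must track images under base change and use flatness to keep the inclusion $\TT\hookrightarrow\End$ injective. Checking that $T_n$ commutes with base change — used repeatedly above — is routine from the formula $T_\ell x=\sum_{\delta\in\calR_\ell}\delta.x$. The cuspidal spaces $\cCMkone kNR$, and the $\chi$-versions $\cMk kN\chi R$ and $\cCMk kN\chi R$, are handled identically: they too are finitely presented (Proposition~\ref{factfg}) and base-change compatibly, so the same four arguments apply verbatim.
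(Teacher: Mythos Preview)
Your arguments for (a)--(c) are essentially the paper's: the same citation to Eisenbud for $\End_R(M)\otimes_R S\cong\End_S(M_S)$ on finitely presented modules, the same use of flatness to preserve the inclusion $\TT_R(M)\hookrightarrow\End_R(M)$, and the same direct adjunction check for~(c).

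Your proof of~(d), however, is genuinely different from the paper's and somewhat slicker. The paper argues directly: it shows that the kernel $N=\ker(M\to M\otimes_R S)$ consists of $R$-torsion elements, deduces that some $r\in R\setminus\{0\}$ kills the kernel of $\varphi:\TT_R(M)\to\TT_R(M_S)$, and then observes that tensoring with~$S$ (which contains $\Frac(R)$) annihilates this torsion. This is where the paper actually uses the extra hypothesis that $R$ is a domain and $S\supseteq\Frac(R)$. Your factorisation argument bypasses the torsion analysis entirely by sandwiching the map of~(d) between the isomorphism of~(a) and the multiplication surjection $\TT_R(M_S)\otimes_R S\twoheadrightarrow\TT_S(M_S)$; since the composite is bijective and both factors are surjective, each is an isomorphism. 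A pleasant side effect is that your argument never invokes the domain hypothesis on~$R$ nor the condition $S\supseteq\Frac(R)$ beyond what is already needed for~(a); so you have in fact shown~(d) under the standing hypotheses of the proposition (Noetherian~$R$, flat $R\to S$) alone. The paper's approach, by contrast, makes the torsion in $\ker\varphi$ explicit, which is informative in its own right but not needed for the bare statement.
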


\begin{proof}
We only prove the proposition for $M := \cMkone kNR$. The arguments
are exactly the same in the other cases.

(a) By Exercise~\ref{basechange} it suffices to prove
$$ \TT_R (M) \otimes_R S \cong \TT_S (M \otimes_R S).$$
Due to flatness and the finite presentation of~$M$ the natural homomorphism
$$ \End_R(M) \otimes_R S \to \End_S(M\otimes_R S)$$
is an isomorphism (see \cite{Eisenbud}, Prop.~2.10). By definition,
the Hecke algebra $\TT_R(M)$ is an $R$-submodule of $\End_R(M)$. As injections are preserved
by flat morphisms, we obtain the injection
$$\TT_R(M) \otimes_R S \hookrightarrow \End_R(M) \otimes_R S \cong \End_S(M\otimes_R S).$$
The image is equal to $\TT_S(M\otimes_R S)$, since all Hecke operators
are hit, establishing~(a).

(b) follows from the same citation from \cite{Eisenbud} as above. 

(c) Suppose that under the map from Statement~(c)
$\phi \in \Hom_R(\TT_R(M) ,S)$ is mapped to the zero map. 
Then $\phi(T)s=0$ for all $T$ and all $s \in S$. 
In particular with $s=1$ we get $\phi(T)=0$ for all $T$,
whence $\phi$ is the zero map, showing injectivity.
Suppose now that $\psi \in \Hom_S(\TT_R(M) \otimes_R S,S)$ is given. 
Call $\phi$ the composite 
$\TT_R(M) \to \TT_R(M) \otimes_R S \xrightarrow{\psi} S$. Then $\psi$
is the image of~$\phi$, showing surjectivity.

(d) We first define 
$$N := \ker \big( M \xrightarrow{\pi: m \mapsto m \otimes 1} M \otimes_R S\big).$$
We claim that $N$ consists only of $R$-torsion elements.
Let $x \in N$. Then $x \otimes 1 = 0$. If $rx \neq 0$ for all $r \in R - \{0\}$,
then the map $R \xrightarrow {r \mapsto rx} N$ is injective. We call $F$ the image
to indicate that it is a free $R$-module. Consider the exact sequence of $R$-modules:
$$ 0 \to F \to M \to M/F \to 0.$$
From flatness we get the exact sequence
$$ 0 \to F \otimes_R S \to M \otimes_R S \to M/F \otimes_R S \to 0.$$
But, $F \otimes_R S$ is $0$, since it is generated by $x\otimes 1 \in M \otimes_R S$.
However, $F$ is free, whence $F \otimes_R S$ is also~$S$. This contradiction shows
that there is some $r \in R-\{0\}$ with $rx = 0$.

As $N$ is finitely generated, there is some $r \in R-\{0\}$ such that $rN = 0$.
Moreover, $N$ is characterised as the set of elements $x \in M$ such that $rx=0$.
For, we already know that $x \in N$ satisfies $rx = 0$. If, conversely, $rx = 0$
with $x \in M$, then $0 = rx \otimes 1/r = x \otimes 1 \in M \otimes_R S$. 

Every $R$-linear (Hecke) operator $T$ on $M$ clearly restricts to~$N$, 
since $r Tn = T rn=T0=0$.
Suppose now that $T$ acts as~$0$ on~$M \otimes_R S$. 
We claim that then $rT = 0$ on all of~$M$.
Let $m \in M$. We have $0 = T \pi m = \pi Tm$. Thus $Tm \in N$ and, so, 
$rTm = 0$, as claimed. In other words, the kernel of the homomorphism
$\TT_R(M) \to \TT_R(M \otimes_R S)$ is killed by~$r$. 
This homomorphism is surjective, 
since by definition $\TT_R(M \otimes_R S)$ is generated by all Hecke operators
acting on~$M \otimes_R S$. Tensoring with $S$ kills the torsion and the statement follows.
\end{proof}

Some words of warning are necessary. 
It is essential that $R \to S$ is a flat homomorphism. A similar result for
$\ZZ \to \FF_p$ is not true in general. I call this a "faithfulness problem",
since then $\cMkone kN {\FF_p}$ is not a faithful module for 
$\TT_\ZZ(\cMkone kN\CC)  \otimes_\ZZ {\FF_p}$. Some effort goes into finding
$k$ and $N$, where this module is faithful. See, for instance, \cite{faithful}.
Moreover, $\cMkone kN R$ need not be a free $R$-module and can contain torsion.
Please have a look at Exercise~\ref{exhamsymbc} now to find out whether one
can use the $+$- and the $-$-space in the proposition.

\subsection{Theory: The modular symbols algorithm}

\subsubsection*{The Eichler-Shimura theorem}

At the basis of the modular symbols algorithm is the following theorem
by Eichler, which was extended by Shimura. One of our aims in this lecture
is to provide a proof for it. In this introduction, however, we only state
it and indicate how the modular symbols algorithm can be derived from it.

\begin{theorem}[Eichler-Shimura]\label{thmes}
There are isomorphisms respecting the Hecke operators
\begin{enumerate}[(a)]
\item $\Mk kN\chi\CC) \oplus \Sk kN\chi\CC^\vee \cong \cMk kN\chi\CC,$
\item $\Sk kN\chi\CC) \oplus \Sk kN\chi\CC^\vee \cong \cCMk kN\chi\CC,$
\item $\Sk kN\chi\CC \cong \cCMk kN\chi\CC^+.$
\end{enumerate}
Similar isomorphisms hold for modular forms and modular symbols on $\Gamma_1(N)$
and $\Gamma_0(N)$.
\end{theorem}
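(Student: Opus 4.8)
The plan is to translate both sides into group cohomology and then to feed in the classical analytic Eichler--Shimura isomorphism, which relates parabolic cohomology of $\Gamma_1(N)$ with coefficients in $V_{k-2}$ to spaces of modular forms. The first, purely formal, reduction is to dispose of the level and character bookkeeping. The diamond operators give an action of $(\ZZ/N\ZZ)^\times$ on $\cMkone kN\CC$ and on $\cCMkone kN\CC$ whose $\chi$-eigenspaces are by definition $\cMk kN\chi\CC$ and $\cCMk kN\chi\CC$; on the modular forms side the analogous statement is Equation~\eqref{eqchar}. Hence it suffices to prove the three isomorphisms for $\Gamma_1(N)$, compatibly with the diamond operators $\diam a$, and then to recover the versions with a character (in particular the $\Gamma_0(N)$-case, $\chi$ trivial) by passing to $\chi$-eigenspaces, noting that $\eta$ commutes with the diamond operators so that the $+$-construction survives this passage. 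From now on write $\Gamma = \Gamma_1(N)/\{\pm 1\}$ and $V = V_{k-2}(\CC)$; since $T_\ell$ and $\diam a$ are defined by the same coset representatives $\calR_\ell$, $\sigma_a$ on both sides, every map produced below must be checked to respect them.

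Second, the cohomological description of modular symbols, which is developed in the following sections from the Manin symbols presentation (Theorem~\ref{ManinSymbols}) and the free-product structure $\PSL_2(\ZZ) \cong \ZZ/2\ZZ * \ZZ/3\ZZ$, yields Hecke-equivariant isomorphisms
$$ \cMkone kN\CC \cong H^1_{\mathrm{c}}(\Gamma,V), \qquad \cCMkone kN\CC \cong H^1_{\mathrm{par}}(\Gamma,V), $$
where $H^1_{\mathrm{par}}(\Gamma,V) := \ker\bigl(H^1(\Gamma,V) \to \prod_c H^1(\Gamma_c,V)\bigr)$ is parabolic cohomology (the product over the cusps $c$, with $\Gamma_c$ the stabiliser of $c$) and equals the image of the natural map $H^1_{\mathrm{c}}(\Gamma,V) \to H^1(\Gamma,V)$. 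The boundary exact sequence
$$ 0 \to \cCMkone kN\CC \to \cMkone kN\CC \to \cE_\CC(\Gamma,V) \to 0 $$
then presents the modular symbols as an extension of the Eisenstein symbols by the cuspidal symbols, which accounts for the asymmetry between (a) and (b). I would also record at this stage that the involution $\eta$ corresponds under these identifications to the map induced by the orientation-reversing automorphism $z \mapsto -\bar z$ of $\Gamma \backslash \HH$.

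Third comes the analytic input: the Eichler--Shimura map. To $f \in \Skone kN\CC$ one assigns the class of the $1$-cocycle $\gamma \mapsto \int_{z_0}^{\gamma z_0} f(z)(X-zY)^{k-2}\,dz$ (with $z_0 \in \HH$ fixed), to a general modular form the analogous expression, and to complex conjugates the conjugate cocycles; one checks that these are cocycles, that $f$ is cuspidal if and only if the class is parabolic, and that together they induce Hecke-equivariant isomorphisms
$$ H^1_{\mathrm{par}}(\Gamma,V) \cong \Skone kN\CC \oplus \overline{\Skone kN\CC}, \qquad H^1(\Gamma,V) \cong \Mkone kN\CC \oplus \overline{\Skone kN\CC}. $$
Together with the second step, and after matching the Eisenstein symbols $\cE_\CC(\Gamma,V)$ with the Eisenstein subspace of $\Mkone kN\CC$ (via constant terms at the cusps), this gives $\cMkone kN\CC \cong \Mkone kN\CC \oplus \overline{\Skone kN\CC}$ and $\cCMkone kN\CC \cong \Skone kN\CC \oplus \overline{\Skone kN\CC}$. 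Identifying $\overline{\Skone kN\CC}$ with $\Skone kN\CC^\vee$ as a Hecke module through the Petersson pairing turns these into (a) and (b). For (c), one observes that $\eta$, being complex conjugation on $\Gamma \backslash \HH$, interchanges holomorphic and antiholomorphic forms, hence swaps the two summands of $H^1_{\mathrm{par}}(\Gamma,V)$; its $+1$-eigenspace is therefore the graph of this comparison isomorphism, so it is isomorphic, Hecke-equivariantly, to $\Skone kN\CC$. Passing to $\chi$-eigenspaces recovers all versions with a character.

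The main obstacle is surjectivity of the Eichler--Shimura map: that every parabolic class is realised by a modular form. Injectivity is comparatively gentle — a cusp form whose period cocycle is a coboundary yields an exact holomorphic differential with at worst logarithmic singularities at the cusps on the compactified modular curve, which forces it to vanish — but surjectivity is essentially the Hodge-type decomposition $H^1 \cong H^{1,0} \oplus H^{0,1}$ for the relevant local system on the modular curve. Proving it without simply invoking Hodge theory calls for genuine work: either a dimension count against the dimension formulae for $\Mkone kN\CC$ and $\Skone kN\CC$ available for $k \ge 2$, or a direct harmonic-forms (Dolbeault) argument. A secondary, purely technical nuisance pervading all three steps is the verification of Hecke-equivariance, where one must reconcile the coset description of $T_\ell$ through $\calR_\ell$ on modular symbols with the double-coset operator on group cohomology and, via the period integral, with the $T_\ell$ on modular forms of Lemma~\ref{lemhecke}.
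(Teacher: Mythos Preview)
Your plan is essentially the paper's own strategy: reduce to $\Gamma_1(N)$ via diamond operators (Corollary~\ref{esgammanull}), identify modular symbols with group cohomology through the Manin-symbol presentation and the free-product structure of $\PSL_2(\ZZ)$ (Theorem~\ref{compthm}), and then invoke the analytic Eichler--Shimura map, with surjectivity obtained by a dimension count (Proposition~\ref{dimheins}) against the classical dimension formulae. Two small points of divergence are worth noting. First, the paper identifies $\cMkone kN\CC$ with $\h^1(\Gamma,V)$ itself, not with compactly supported cohomology $\h^1_{\mathrm{c}}$; the latter never enters, and your sentence pairing $\cM$ with $\h^1_{\mathrm{c}}$ while simultaneously using $\h^1 \cong \Mkone kN\CC \oplus \overline{\Skone kN\CC}$ is slightly inconsistent. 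Second, your injectivity sketch (exact holomorphic differentials with logarithmic singularities) is replaced in the paper by a cup-product argument: one realises the Petersson product as a bilinear pairing on parabolic cocycles (Propositions~\ref{defpairing} and~\ref{cuppet}) and uses its positive-definiteness together with the antisymmetry $\langle\phi_f,\phi_f\rangle=0$ to kill anything in the kernel. Both routes work, but the paper's avoids any appeal to the geometry of the compactified curve.
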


\begin{proof}
Later in this lecture (Theorems \ref{compthm} and~\ref{esgammaeins}, Corollary~\ref{esgammanull}).
\end{proof}

\begin{corollary}\label{algz1}
Let $R$ be a subring of $\CC$ and $\chi: (\ZZ/N\ZZ)^\times \to R^\times$
a character. Then there is the natural isomorphism
$$ \TT_R(\Mk kN\chi\CC) \cong \TT_R(\cMk kN\chi\CC).$$
A similar result holds cusp forms, and also for $\Gamma_1(N)$ without a character as well as for $\Gamma_0(N)$.
\end{corollary}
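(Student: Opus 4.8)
The plan is to read the statement off from the Eichler--Shimura isomorphism of Theorem~\ref{thmes}, exploiting that each Hecke algebra occurring here is by definition generated over $R$ by the operators $T_n$ (the diamond operators already lie in the $R$-subalgebra generated by the $T_n$, by~\eqref{diamondinhecke}, so the choice of generating set is immaterial), so that a Hecke-equivariant map of the underlying modules automatically induces an $R$-algebra map of Hecke algebras sending $T_n$ to $T_n$.

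First I would fix the Hecke-equivariant decomposition of Theorem~\ref{thmes}(a),
$$ \cMk kN\chi\CC \;\cong\; \Mk kN\chi\CC \,\oplus\, \Sk kN\chi\CC^\vee , $$
and regard $\Mk kN\chi\CC$ as a Hecke-stable direct summand of $\cMk kN\chi\CC$ through it. Every element of $\TT_R(\cMk kN\chi\CC)$ is an $R$-polynomial in the $T_n$ and therefore preserves this summand, so restriction to it gives an $R$-algebra homomorphism
$$ \rho\colon\ \TT_R(\cMk kN\chi\CC)\ \longrightarrow\ \TT_R(\Mk kN\chi\CC), \qquad T_n\mapsto T_n, $$
which is surjective because it hits all the generators. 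This $\rho$ is the asserted natural isomorphism; once it is shown to be injective its inverse is the analogous map $T_n\mapsto T_n$ in the other direction.

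So the work is to prove $\rho$ injective. Suppose $T\in\TT_R(\cMk kN\chi\CC)$ restricts to $0$ on $\Mk kN\chi\CC$. The cusp forms $\Sk kN\chi\CC$ form a Hecke-stable subspace of $\Mk kN\chi\CC$, so $T$ vanishes on $\Sk kN\chi\CC$ as well. Under the isomorphism of Theorem~\ref{thmes}(a) the operators $T_n$ act on the remaining summand $\Sk kN\chi\CC^\vee$ through the transpose of their action on $\Sk kN\chi\CC$. Now transposition
$$ \End_\CC\!\big(\Sk kN\chi\CC\big)\ \longrightarrow\ \End_\CC\!\big(\Sk kN\chi\CC^\vee\big) $$
is $R$-linear and injective (in fact bijective, as $\Sk kN\chi\CC$ is finite-dimensional); it is only an anti-homomorphism in general, but it is multiplicative when restricted to the commutative subalgebra generated by the $T_n$, since $AB=BA$ forces $(AB)^\vee=B^\vee A^\vee=A^\vee B^\vee$. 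Hence the $R$-polynomial in the $T_n$ representing $T$, being zero on $\Sk kN\chi\CC$, is also zero on $\Sk kN\chi\CC^\vee$. Therefore $T=0$ on all of $\cMk kN\chi\CC$, so $\rho$ is injective, hence an isomorphism.

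The cuspidal case is the same argument based on Theorem~\ref{thmes}(b): restriction to the first summand gives a surjection $\TT_R(\cCMk kN\chi\CC)\to\TT_R(\Sk kN\chi\CC)$, and the transposition trick applied to the two dual summands $\Sk kN\chi\CC$ and $\Sk kN\chi\CC^\vee$ yields injectivity; the $\Gamma_1(N)$- and $\Gamma_0(N)$-versions use the corresponding isomorphisms quoted at the end of Theorem~\ref{thmes} and are otherwise identical. The step I expect to need the most care is the bookkeeping around the dual summands: one must be certain that the Hecke action there is genuinely the transpose action, so that ``$T$ kills $\Sk kN\chi\CC$'' really does force ``$T$ kills $\Sk kN\chi\CC^\vee$'', and that passing to transposes loses nothing precisely because the Hecke algebra is commutative. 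Everything else is formal.
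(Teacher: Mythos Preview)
Your proof is correct and follows essentially the same route as the paper: both use the Eichler--Shimura decomposition $\cMk kN\chi\CC \cong \Mk kN\chi\CC \oplus \Sk kN\chi\CC^\vee$ and reduce to the claim that an element of the Hecke algebra annihilating $\Mk kN\chi\CC$ must also annihilate $\Sk kN\chi\CC^\vee$. The paper states this last step without justification, whereas you spell it out via the transposition argument (using commutativity to turn the anti-homomorphism into a homomorphism), which is exactly the right way to fill in that gap.
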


\begin{proof}
We only prove this for the full space of modular forms. The arguments in the other cases are very similar.
Theorem~\ref{thmes} tells us that the $R$-algebra generated by the
Hecke operators inside the endomorphism ring of $\Mk kN\chi\CC$ equals the $R$-algebra 
generated by the Hecke operators inside the endomorphism ring of $\cMk kN\chi\CC$, {\em i.e.}
the assertion to be proved.
To see this, one just needs to see that the algebra generated by all Hecke operators on 
$\Mk kN\chi\CC \oplus \Sk kN\chi\CC^\vee$ 
is the same as the one generated by all Hecke operators on $\Mk kN\chi\CC$, which follows
from the fact that if some Hecke operator $T$ annihilates the full space of modular forms, then
it also annihilates the dual of the cusp space.
\end{proof}

The following corollary of the Eichler-Shimura theorem is of utmost importance
for the theory of modular forms. It says that Hecke algebras of modular forms
have an integral structure (take $R=\ZZ$ or $R =\ZZ [\chi]$). We will say more
on this topic in the next section.

\begin{corollary}\label{algz}
Let $R$ be a subring of $\CC$ and $\chi: (\ZZ/N\ZZ)^\times \to R^\times$
a character. Then the natural map
$$\TT_R(\Mk kN\chi\CC) \otimes_R \CC  \cong \TT_\CC (\Mk kN\chi \CC)$$
is an isomorphism.
A similar result holds cusp forms, and also for $\Gamma_1(N)$ without a character as well as for $\Gamma_0(N)$.
\end{corollary}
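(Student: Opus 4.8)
The plan is to reduce the statement, via the Eichler--Shimura isomorphism, to the corresponding assertion for modular symbols, where the flat base change results of Proposition~\ref{hamsymbc} are available. We may and shall assume $R$ is Noetherian; this covers the cases of interest, $R=\ZZ$ and $R=\ZZ[\chi]$. We first record that $\CC$ is flat as an $R$-module: since $R$ is a subring of the field $\CC$ there are inclusions $R \subseteq \Frac(R) \subseteq \CC$, the first being a localisation (hence flat) and $\CC$ being a vector space over $\Frac(R)$ (hence flat over $\Frac(R)$), and flatness is transitive.

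First I would apply Corollary~\ref{algz1} twice: over the base ring $R$ it gives the natural isomorphism $\TT_R(\Mk kN\chi\CC) \cong \TT_R(\cMk kN\chi\CC)$, and over the base ring $\CC$ (which is itself a subring of $\CC$) it gives $\TT_\CC(\Mk kN\chi\CC) \cong \TT_\CC(\cMk kN\chi\CC)$. Since every map occurring here is the tautological one, carrying a Hecke operator on one space to the Hecke operator of the same index on the other, tensoring the first isomorphism with $\CC$ over $R$ reduces the corollary to establishing the natural isomorphism
$$ \TT_R(\cMk kN\chi\CC) \otimes_R \CC \;\cong\; \TT_\CC(\cMk kN\chi\CC). $$

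To prove this I would bring the $R$-integral modular symbols $\cMk kN\chi R$ into play. By Exercise~\ref{basechange} we have $\cMk kN\chi R \otimes_R \CC \cong \cMk kN\chi\CC$, and by Proposition~\ref{factfg} the module $\cMk kN\chi R$ is finitely presented over $R$. Proposition~\ref{hamsymbc}(a), applied to the flat homomorphism $R \to \CC$, then yields $\TT_R(\cMk kN\chi R) \otimes_R \CC \cong \TT_\CC(\cMk kN\chi\CC)$. On the other hand, $R$ is a Noetherian integral domain (being a subring of the field $\CC$) and $\CC$ is a field extension of $\Frac(R)$, so Proposition~\ref{hamsymbc}(d) gives $\TT_R(\cMk kN\chi R) \otimes_R \CC \cong \TT_R(\cMk kN\chi\CC) \otimes_R \CC$. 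Comparing the last two isomorphisms gives exactly the displayed claim, hence the corollary. The assertions for the cuspidal subspace, and for $\Gamma_1(N)$ and $\Gamma_0(N)$, follow verbatim from the corresponding parts of Corollary~\ref{algz1} and Proposition~\ref{hamsymbc}.

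The main point to get right is not a computation but the bookkeeping of two distinct Hecke algebras over $R$: on the one hand $\TT_R(\cMk kN\chi R)$, the Hecke algebra of the $R$-module of modular symbols with $R$-coefficients, which is finitely generated over $R$ and to which flat base change applies directly; on the other hand $\TT_R(\cMk kN\chi\CC)$, the $R$-subalgebra of $\End_\CC(\cMk kN\chi\CC)$ generated by the Hecke operators, which is the object produced by Eichler--Shimura. Proposition~\ref{hamsymbc}(d) is exactly the bridge between the two; its content in this setting is that the surjection $\TT_R(\cMk kN\chi R) \twoheadrightarrow \TT_R(\cMk kN\chi\CC)$ has kernel annihilated by a nonzero element of $R$ (an operator killing $\cMk kN\chi\CC$ must send $\cMk kN\chi R$ into its $R$-torsion submodule, which is finitely generated over the Noetherian domain $R$ and hence bounded), so that the kernel dies after tensoring with $\CC$. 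One should also check that the isomorphisms of Proposition~\ref{hamsymbc}(a) and (d) are compatible with the canonical maps into the endomorphism rings, so that the composite of the whole chain really is the natural map named in the statement.
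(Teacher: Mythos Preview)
Your proof is correct and follows essentially the same route as the paper's: reduce to modular symbols via Corollary~\ref{algz1}, then combine parts (a) and (d) of Proposition~\ref{hamsymbc} to pass from $\TT_R(\cMk kN\chi\CC)\otimes_R\CC$ to $\TT_\CC(\cMk kN\chi\CC)$ through the intermediary $\TT_R(\cMk kN\chi R)\otimes_R\CC$. In fact you are more explicit than the paper, which cites only part~(d) but really uses both (a) and (d); your remark on the Noetherian hypothesis is also apposite, since Proposition~\ref{hamsymbc} assumes it while the corollary does not.
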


\begin{proof}
We again stick to the full space of modular forms. Tensoring the isomorphism from Corollary~\ref{algz1} with $\CC$ we get
$$\TT_R(\Mk kN\chi\CC) \otimes_R \CC \cong \TT_R(\cMk kN\chi\CC) \otimes_R \CC 
\cong \TT_\CC(\cMk kN\chi\CC) \cong \TT_\CC(\Mk kN\chi\CC),$$
using Proposition~\ref{hamsymbc}~(d) and again Theorem~\ref{thmes}.
\end{proof}

The next corollary is at the base of the modular symbols algorithm, since it describes modular forms in linear algebra
terms involving only modular symbols.

\begin{corollary}\label{cores}
Let $R$ be a subring of $\CC$ and $\chi: (\ZZ/N\ZZ)^\times \to R^\times$
a character. Then we have the isomorphisms
\begin{align*}
\Mk kN\chi\CC  & \cong \Hom_R(\TT_R (\cMk kN\chi R), R) \otimes_R \CC \\
               & \cong \Hom_R(\TT_R (\cMk kN\chi R), \CC) & \textnormal{ and} \\
\Sk kN\chi\CC  & \cong \Hom_R(\TT_R (\cCMk kN\chi R), R) \otimes_R \CC \\
               & \cong \Hom_R(\TT_R (\cCMk kN\chi R), \CC).
\end{align*}
Similar results hold for $\Gamma_1(N)$ without a character and also for $\Gamma_0(N)$.
\end{corollary}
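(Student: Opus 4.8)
The plan is to chain together the results already established. First I would combine Corollary~\ref{algz1} with Lemma~\ref{qpair}. Indeed, Lemma~\ref{qpair} gives the perfect $\CC$-bilinear pairing between $\Mk kN\chi\CC$ and $\TT_\CC(\Mk kN\chi\CC)$, hence an isomorphism
$$ \Mk kN\chi\CC \cong \Hom_\CC(\TT_\CC(\Mk kN\chi\CC), \CC). $$
By Corollary~\ref{algz}, $\TT_\CC(\Mk kN\chi\CC) \cong \TT_R(\Mk kN\chi\CC) \otimes_R \CC$, and by Corollary~\ref{algz1} this is $\TT_R(\cMk kN\chi R) \otimes_R \CC$ (for the last step one also needs that $\TT_R(\Mk kN\chi\CC) \cong \TT_R(\cMk kN\chi\CC)$, which is Corollary~\ref{algz1}, combined with Proposition~\ref{hamsymbc}(d) to pass from $\cMk kN\chi R$ to $\cMk kN\chi\CC$ after tensoring with~$\CC$; alternatively one applies Proposition~\ref{hamsymbc}(a) directly with the flat extension $R \hookrightarrow \CC$, noting $R$ is a subring of~$\CC$ hence $\CC$ is flat over~$R$ when $R$ is a domain). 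Therefore
$$ \Mk kN\chi\CC \cong \Hom_\CC\big(\TT_R(\cMk kN\chi R) \otimes_R \CC, \CC\big). $$

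Next I would convert the right-hand side into the shape asserted in the statement. Applying Proposition~\ref{hamsymbc}(c) with the flat homomorphism $R \to \CC$ and $M = \cMkone kN R$ (or its $\chi$-analogue), we get
$$ \Hom_R(\TT_R(\cMk kN\chi R), \CC) \cong \Hom_\CC\big(\TT_R(\cMk kN\chi R) \otimes_R \CC, \CC\big). $$
Combining this with the previous display gives $\Mk kN\chi\CC \cong \Hom_R(\TT_R(\cMk kN\chi R), \CC)$, which is the second of the two claimed isomorphisms for modular forms. For the first, I would use Proposition~\ref{hamsymbc}(b), namely $\Hom_R(\TT_R(\cMk kN\chi R), R) \otimes_R \CC \cong \Hom_\CC(\TT_\CC(\cMk kN\chi\CC), \CC)$, together with $\TT_\CC(\cMk kN\chi\CC) \cong \TT_\CC(\Mk kN\chi\CC)$ (from Theorem~\ref{thmes} via Corollary~\ref{algz1}) and the $q$-pairing isomorphism of Lemma~\ref{qpair} again; this identifies $\Hom_R(\TT_R(\cMk kN\chi R), R) \otimes_R \CC$ with $\Mk kN\chi\CC$. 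One should double-check that the isomorphism of Proposition~\ref{hamsymbc}(b) composed with the natural map $\Hom_R(-,R)\otimes_R S \to \Hom_R(-,S) \to \Hom_S(-\otimes_R S, S)$ is the evident one, so that the two displayed isomorphisms for $\Mk kN\chi\CC$ are compatible; this is the content of Proposition~\ref{hamsymbc}(b) and~(c) being stated as "natural" maps.

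The cuspidal case is handled verbatim, replacing $\cMk kN\chi R$ by $\cCMk kN\chi R$, $\Mk kN\chi\CC$ by $\Sk kN\chi\CC$, and invoking the cusp-form versions of Lemma~\ref{qpair}, Theorem~\ref{thmes}, Corollaries~\ref{algz1} and~\ref{algz}, and Proposition~\ref{hamsymbc}, all of which are explicitly asserted to hold. The statements for $\Gamma_1(N)$ without a character and for $\Gamma_0(N)$ follow in exactly the same way, since every ingredient has been formulated in those cases as well. The main obstacle I anticipate is purely bookkeeping: ensuring that the Hecke-equivariance and the naturality of all the identifications line up so that one genuinely obtains the two isomorphisms in each line (the one factoring through $\otimes_R \CC$ and the direct $\Hom_R(-,\CC)$), rather than merely an abstract isomorphism of $\CC$-vector spaces of the right dimension. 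Concretely, one wants to track the map $f \mapsto (T \mapsto a_1(Tf))$ through each step; since Corollary~\ref{algz1} identifies the Hecke algebras compatibly with this $q$-expansion map, and Proposition~\ref{hamsymbc}(b),(c) are the natural base-change maps, this tracking goes through without surprises.
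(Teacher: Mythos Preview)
Your proposal is correct and follows essentially the same approach as the paper: the paper's proof is the single sentence ``This follows from Corollaries~\ref{algz1}, \ref{algz}, Proposition~\ref{hamsymbc} and Lemma~\ref{qpair},'' which are precisely the four ingredients you invoke. Your write-up simply unpacks the chain of identifications explicitly, and your observation that $R \hookrightarrow \CC$ is flat (since $R$ is a domain with fraction field contained in~$\CC$) is exactly what makes Proposition~\ref{hamsymbc} applicable.
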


\begin{proof}
This follows from Corollaries~\ref{algz1}, \ref{algz}, Proposition~\ref{hamsymbc} and Lemma~\ref{qpair}.
\end{proof}

Please look at Exercise~\ref{excores} to find out which statement
should be included into this corollary concerning the $+$-spaces.
Here is another important consequence of the Eichler-Shimura theorem.

\begin{corollary}
Let $f = \sum_{n=1}^\infty a_n(f) q^n \in \Skone kN\CC$ be a normalised Hecke ei\-genform. Then $\QQ_f := \QQ(a_n(f) | n \in \NN)$ is
a number field of degree less than or equal to $\dim_\CC \Skone kN\CC$.

If $f$ has Dirichlet character~$\chi$, then $\QQ_f$ is a finite field
extension of $\QQ(\chi)$ of degree less than or equal to $\dim_\CC \Sk kN\chi\CC$.
Here $\QQ(\chi)$ is the extension of $\QQ$ generated by all the values of~$\chi$.
\end{corollary}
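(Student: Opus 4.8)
The plan is to read the result off the integral structure on Hecke algebras provided by the Eichler--Shimura isomorphism. Fix the normalised eigenform $f \in \Skone kN\CC$. By Corollary~\ref{algz1} in its version for cusp forms on $\Gamma_1(N)$ (with $R=\ZZ$), the integral Hecke algebra $\TT_\ZZ(\Skone kN\CC)$ is identified with $\TT_\ZZ(\cCMkone kN\ZZ)$, and the latter is a finitely generated $\ZZ$-module by the corollary to Proposition~\ref{factfg}, since $\ZZ$ is Noetherian. On the other hand $\TT_\ZZ(\Skone kN\CC)$ is by definition a subring of $\End_\CC(\Skone kN\CC)$, hence has no $\ZZ$-torsion; being finitely generated and torsion-free over a principal ideal domain it is free of finite rank, and by Corollary~\ref{algz} (which gives $\TT_\ZZ(\Skone kN\CC)\otimes_\ZZ\CC \cong \TT_\CC(\Skone kN\CC)$) together with the perfectness of the $q$-pairing (Lemma~\ref{qpair}) this rank equals $\dim_\CC \TT_\CC(\Skone kN\CC) = \dim_\CC \Skone kN\CC =: d$.

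Next I bring in the eigenform. By Lemma~\ref{eigenf} the form $f$ corresponds to a $\CC$-algebra homomorphism $\lambda_f \colon \TT_\CC(\Skone kN\CC) \to \CC$ with $\lambda_f(T_n) = a_n(f)$ for all $n$. Restricting $\lambda_f$ to the integral Hecke algebra gives a ring homomorphism $\TT_\ZZ(\Skone kN\CC) \to \CC$ whose image is precisely the subring $\ZZ[a_n(f) : n \in \NN] \subseteq \CC$ generated by the Fourier coefficients of $f$: the operators $T_n$ generate the source as a $\ZZ$-algebra, and any diamond operators occurring among the generators are $\ZZ$-linear combinations of Hecke operators by the denominator-clearing trick explained after Equation~\eqref{diamondinhecke}, so their images already lie in $\ZZ[a_n(f):n]$. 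Thus $\ZZ[a_n(f):n]$ is a quotient $\ZZ$-module of a free module of rank $d$; being a subring of $\CC$ it is torsion-free, hence free of rank at most $d$. Therefore $\QQ_f = \QQ(a_n(f) : n)$ equals $\ZZ[a_n(f):n] \otimes_\ZZ \QQ$, a $\QQ$-algebra of dimension $\le d$ which is a domain and hence a field; so $\QQ_f$ is a number field with $[\QQ_f : \QQ] \le \dim_\CC \Skone kN\CC$.

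For the statement with a character I carry out the same argument over the base ring $\ZZ[\chi] = \ZZ[\chi(n) : n]$, which is module-finite over $\ZZ$ (hence Noetherian) with fraction field $\QQ(\chi)$, using $\Sk kN\chi\CC$ and $\cCMk kN\chi{\ZZ[\chi]}$ in place of the $\Gamma_1(N)$-objects. As before, Corollary~\ref{algz1}, the corollary to Proposition~\ref{factfg}, Corollary~\ref{algz} and Lemma~\ref{qpair} show that $\TT_{\ZZ[\chi]}(\Sk kN\chi\CC)$ is a finitely generated $\ZZ[\chi]$-module whose base change to $\QQ(\chi)$ has dimension $d_\chi := \dim_\CC \Sk kN\chi\CC$ over $\QQ(\chi)$. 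Restricting $\lambda_f$ yields a surjection of $\ZZ[\chi]$-modules onto $\ZZ[\chi][a_n(f):n]$, which contains $\ZZ[\chi]$; tensoring with $\QQ(\chi)$ over $\ZZ[\chi]$ and using right exactness gives a surjection onto $\QQ(\chi)[a_n(f):n]$, a domain of dimension $\le d_\chi$ over the field $\QQ(\chi)$, hence a field containing $\QQ(\chi)$ and all $a_n(f)$, i.e.\ equal to $\QQ_f$. This proves $\QQ(\chi) \subseteq \QQ_f$ and $[\QQ_f : \QQ(\chi)] \le \dim_\CC \Sk kN\chi\CC$.

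There is no genuinely hard step: the whole statement is a repackaging of the earlier corollaries once one has the finitely generated integral Hecke algebra in hand. The points that need care are (i) not to argue torsion-freeness inside $\End_\ZZ(\cCMkone kN\ZZ)$, which can itself have torsion (cf.\ the warning after Proposition~\ref{hamsymbc}), but rather to transport finite generation across the Eichler--Shimura identification of Hecke algebras, where torsion-freeness is automatic; and (ii) in the character case, to ensure that the values of $\chi$ really lie in the image of the restricted eigenvalue map --- which is cleanest if one takes $\ZZ[\chi]$ as the base ring from the outset, the alternative being to invoke Equation~\eqref{diamondinhecke}. Everything else is routine handling of finitely generated modules over Noetherian rings and of flat base change, already encapsulated in Proposition~\ref{hamsymbc}.
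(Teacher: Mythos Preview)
Your approach is the same as the paper's: use the Eichler--Shimura corollaries to see that the Hecke algebra over the base ring is module-finite of the correct rank, then read off the image of the eigenvalue homomorphism as a finite extension. There is one slip: Corollary~\ref{algz1} identifies $\TT_\ZZ(\Skone kN\CC)$ with $\TT_\ZZ(\cCMkone kN\CC)$ (modular symbols with $\CC$-coefficients), not with $\TT_\ZZ(\cCMkone kN\ZZ)$ as you wrote; the corollary to Proposition~\ref{factfg} applies directly only to the latter. This is harmless --- the surjectivity established inside the proof of Proposition~\ref{hamsymbc}~(d) shows that $\TT_\ZZ(\cCMkone kN\CC)$ is a quotient of $\TT_\ZZ(\cCMkone kN\ZZ)$, hence still finitely generated --- but the paper's own proof sidesteps the whole issue by taking $R=\QQ$ (resp.\ $R=\QQ(\chi)$) from the outset, so that the Hecke algebra is already a finite-dimensional vector space over a field and no torsion-freeness discussion is needed.
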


\begin{proof}
It suffices to apply the previous corollaries with $R = \QQ$ or $R = \QQ(\chi)$ and
to remember that normalised Hecke eigenforms correspond to algebra homomorphisms
from the Hecke algebra into~$\CC$.
\end{proof}

\subsubsection*{Sketch of the modular symbols algorithm}

It may now already be quite clear how the modular symbols algorithm for computing
cusp forms proceeds. We give a very short sketch.

\begin{algorithm}\label{algmodsymsketch}
\noindent \underline{Input:} A field $K \subset \CC$, integers $N \ge 1$, $k \ge 2$, $P$,
a character $\chi: (\ZZ/N\ZZ)^\times \to K^\times$.

\noindent \underline{Output:} A basis of the space of cusp forms
$\Sk kN\chi\CC$; each form is given by its standard $q$-expansion with
precision~$P$.

\begin{enumerate}[(1)]
\item create $M := \cCMk kN\chi K$.
\item $L \leftarrow []$ (empty list), $n \leftarrow 1$.
\item repeat
\item \ms compute $T_n$ on $M$.
\item \ms join $T_n$ to the list $L$.
\item \ms $\TT \leftarrow$ the $K$-algebra generated by all $T \in L$.
\item \ms $n \leftarrow n+1$
\item until $\dim_K (\TT) = \dim_\CC \Sk kN\chi\CC$
\item compute a $K$-basis $B$ of $\TT$.
\item compute the basis $B^\vee$ of $\TT^\vee$ dual to~$B$.
\item for $\phi$ in $B^\vee$ do
\item \ms output $\sum_{n=1}^P \phi (T_n) q^n \in K[q]$.
\item end for.
\end{enumerate}
\end{algorithm}

We should make a couple of remarks concerning this algorithm.
Please remember that there are dimension formulae for $\Sk kN\chi\CC$, which can be looked up in~\cite{Stein}.
It is clear that the repeat-until loop will stop, due to Corollary~\ref{cores}.
We can even give an upper bound as to when it stops at the latest.
That is the so-called Sturm bound, which is the content of the following proposition.

\begin{proposition}[Sturm]\label{sturm}
Let $f \in \Mk kN\chi\CC$ such that $a_n(f) = 0$ for all $n \le \frac{k\mu}{12}$,
where $\mu = N \prod_{l \mid N \textnormal{ prime}} (1 + \frac{1}{l})$.

Then $f=0$.
\end{proposition}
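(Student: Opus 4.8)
The standard strategy is to reduce the statement about modular forms (infinitely many vanishing Fourier coefficients) to a statement about holomorphic functions on a compact Riemann surface, where the order of vanishing of a non-zero section of a line bundle is bounded by its degree. Concretely, I would first pass to a modular form on the full group $\SL_2(\ZZ)$. Given $f \in \Mk{k}{N}{\chi}{\CC}$, one forms the product over coset representatives: if $\gamma_1,\dots,\gamma_m$ are representatives for $\Gamma_0(N)\backslash\SL_2(\ZZ)$, put $g := \prod_{i=1}^{m} (f|_k \gamma_i)$. Then $g$ is a modular form on $\SL_2(\ZZ)$ of weight $km$ (the character disappears because the product runs over a full coset system, so it is invariant under all of $\SL_2(\ZZ)$), and $m = [\SL_2(\ZZ):\Gamma_0(N)] = \mu$ in the notation of the statement.

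Next I would translate the hypothesis into vanishing of the $q$-expansion of $g$. The key point is that $f|_k\gamma_i$ has a $q$-expansion in $q^{1/N}$ (or some integer power), with leading term at worst $q^0$, and that $f|_k\id = f$ itself contributes a factor whose $q$-expansion starts no earlier than $q^{\ord} $ where $\ord = \min\{n : a_n(f)\neq 0\}$. Multiplying, the $q$-expansion of $g$ (in the appropriate fractional power of $q$) vanishes to order at least $\ord$ coming from the factor $f$ itself, while the other factors contribute non-negative order. If $a_n(f)=0$ for all $n \le \tfrac{k\mu}{12}$, then $\ord > \tfrac{k\mu}{12}$, so the $q$-expansion of $g$ vanishes to order exceeding $\tfrac{k\mu}{12} = \tfrac{(km)}{12}$ at the cusp $\infty$.

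Finally I would invoke the classical valence formula on $X(1) = \SL_2(\ZZ)\backslash\HH^*$: for a non-zero modular form $h$ of weight $w$ for $\SL_2(\ZZ)$,
$$
\ord_\infty(h) + \tfrac12 \ord_i(h) + \tfrac13 \ord_\rho(h) + \sum_{P \ne i,\rho,\infty} \ord_P(h) = \tfrac{w}{12},
$$
where $\rho = e^{2\pi i/3}$. All terms on the left are non-negative, so in particular $\ord_\infty(h) \le w/12$. Applied to $h = g$ with $w = km$, this forces $g = 0$ as soon as $\ord_\infty(g) > km/12$, which is exactly the situation we are in. Since $\CC[\![q]\!]$ is an integral domain and $g$ is a product of the $f|_k\gamma_i$, one of the factors vanishes; but each factor is $f$ translated by an invertible matrix, hence $f = 0$.

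The main obstacle — and the place where one must be careful rather than hand-wave — is bookkeeping the $q$-expansions of the twisted forms $f|_k\gamma_i$: they live in $\CC[\![q^{1/h}]\!]$ for widths $h$ depending on the cusp, and one needs to check that multiplying them genuinely produces an honest $q$-expansion for $g$ on $\SL_2(\ZZ)$ and that the order of vanishing at $\infty$ is correctly normalised so that the bound $\tfrac{k\mu}{12}$ (and not, say, $\tfrac{k\mu}{12N}$) comes out. One also has to confirm that $g$ is holomorphic at all cusps and is genuinely weight $km$ and level one, i.e. that no stray character or automorphy factor survives; this is where using a full set of coset representatives (rather than an arbitrary collection of matrices) is essential.
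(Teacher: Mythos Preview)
The paper does not actually prove this proposition: its entire proof reads ``Apply Corollary 9.20 of \cite{Stein} with $\fm = (0)$.'' Your outline is the standard argument (norm down to level~$1$ via the product over $\Gamma_0(N)\backslash\SL_2(\ZZ)$, then invoke the valence formula on $X(1)$), and it is essentially what Stein does, specialised to characteristic zero. The steps you describe are correct, including your identification of the delicate point about fractional $q$-expansions; the resolution is exactly as you suggest, namely that the identity coset contributes $f$ itself with integral $q$-order exceeding $k\mu/12$, while every other factor $f|_k\gamma_i$ is holomorphic at $\infty$ and hence has non-negative (possibly fractional) $q$-order, so the product $g$ --- which is genuinely level one and hence has an honest $q$-expansion --- inherits $\ord_\infty(g)>k\mu/12$.
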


\begin{proof}
Apply Corollary 9.20 of \cite{Stein} with $\fm = (0)$.
\end{proof}

\begin{corollary}\label{corsturm}
Let $K, N,\chi$ etc.\ as in the algorithm. Then $\TT_K(\cCMk kN\chi K)$
can be generated as a $K$-vector space by the operators $T_n$ for $1  \le n \le \frac{k \mu}{12}$.
\end{corollary}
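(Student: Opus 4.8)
The plan is to compare the $K$-vector space $\TT := \TT_K(\cCMk kN\chi K)$ with the $K$-subspace $\TT'$ spanned by $T_1,\dots,T_B$, where $B := \lfloor \tfrac{k\mu}{12}\rfloor$, and to show $\TT' = \TT$ by a duality argument against cusp forms. Observe first that $\TT$ is finite-dimensional over the field $K$ (by the corollary to Proposition~\ref{factfg}, a finitely presented module over the Noetherian ring $K$ is finite-dimensional; alternatively this is immediate from Corollary~\ref{cores} since $\Sk kN\chi\CC$ is finite-dimensional). Hence it suffices to rule out the existence of a nonzero $K$-linear functional on $\TT$ that vanishes on $\TT'$.

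So suppose for contradiction that $\TT' \subsetneq \TT$. By finite-dimensional linear algebra there is a nonzero $\phi \in \Hom_K(\TT,K)$ with $\phi(T_n) = 0$ for all $1 \le n \le B$. Composing with the inclusion $K \hookrightarrow \CC$ produces a nonzero element of $\Hom_K(\TT,\CC)$. By Corollary~\ref{cores} we have an isomorphism $\Sk kN\chi\CC \cong \Hom_K(\TT,\CC)$, and tracing the chain of identifications in Corollaries~\ref{algz1} and~\ref{algz} back to the $q$-pairing of Lemma~\ref{qpair} — whose inverse is $\psi \mapsto \sum_{n\ge 1}\psi(T_n)q^n$ — shows that $\phi$ corresponds to the cusp form $f := \sum_{n=1}^\infty \phi(T_n)\,q^n \in \Sk kN\chi\CC$. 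Since $\phi \ne 0$, the form $f$ is nonzero.

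On the other hand $a_n(f) = \phi(T_n) = 0$ for every $n \le B$, hence for every $n \le \tfrac{k\mu}{12}$, so Proposition~\ref{sturm} forces $f = 0$, a contradiction. Therefore $\TT' = \TT$, which is exactly the assertion that the $T_n$ with $1 \le n \le \tfrac{k\mu}{12}$ span $\TT_K(\cCMk kN\chi K)$ as a $K$-vector space. The cases of $\Gamma_1(N)$ without a character and of $\Gamma_0(N)$ are handled identically. The only point requiring care — more bookkeeping than real difficulty — is the explicit identification of $\phi$ with the cusp form whose $q$-expansion coefficients are $\phi(T_n)$, which means following the isomorphisms of Corollary~\ref{cores} through to the concrete shape of the $q$-pairing in Lemma~\ref{qpair}.
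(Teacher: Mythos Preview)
Your proof is correct and is precisely the argument the paper intends (it leaves this corollary as Exercise~\ref{excorsturm}): pair a hypothetical missing direction in the Hecke algebra with a cusp form via the duality of Corollary~\ref{cores}, then kill that cusp form with Sturm's bound. The one step you flag as needing care---that under the chain of identifications the functional $\phi$ really corresponds to $\sum_{n\ge 1}\phi(T_n)q^n$---is exactly right, and it does go through since each link in Corollary~\ref{cores} (namely Proposition~\ref{hamsymbc}(a),(c) and Corollary~\ref{algz1}) respects the Hecke operators and hence transports the explicit inverse of Lemma~\ref{qpair} unchanged.
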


\begin{proof}
Exercise~\ref{excorsturm}.
\end{proof}

We shall see later how to compute eigenforms and how to decompose the space
of modular forms in a "sensible" way.

\subsection{Theory: Number theoretic applications}

We close this survey and motivation section by sketching some number theoretic applications.

\subsubsection*{Galois representations attached to eigenforms}

We mention the sad fact that until 2006 only the one-dimensional representations
of $\Gal(\Qbar/\QQ)$ were well understood. In the case of finite image
one can use the Kronecker-Weber theorem, which asserts that any cyclic
extension of $\QQ$ is contained in a cyclotomic field. This is
generalised by global class field theory to one-dimensional
representations of $\Gal(\Qbar/K)$ for each number field $K$.
Since we now have a proof of Serre's modularity conjecture \cite{Serre}
(a theorem by Khare, Wintenberger \cite{KhareWintenberger}),
we also know a little bit about $2$-dimensional representations
of $\Gal(\Qbar/\QQ)$, but, replacing $\QQ$ by any other number field, all
one has is conjectures.

The great importance of modular forms for modern number theory is due
to the fact that one may attach a $2$-dimensional representation of
the Galois group of the rationals to each normalised cuspidal
eigenform. The following theorem is due to Shimura for $k=2$ and due
to Deligne for $k \ge 2$.

Until the end of this section, we shall use the language of Galois representations
(e.g.\ irreducible, unramified, Frobenius element, cyclotomic character) without
introducing it. It will not be used elsewhere.
The meanwhile quite old lectures by Darmon, Diamond and Taylor
are still an excellent introduction to the subject \cite{DarmonDiamondTaylor}.

\begin{theorem}\label{deligneqp}
Let $k \ge 2$, $N \ge 1$, $p$ a prime,
and $\chi: (\ZZ/N\ZZ)^\times \to \CC^\times$ a character.

Then to any normalised eigenform $f \in \Sk kN\chi\CC$
with $f = \sum_{n\ge 1} a_n(f) q^n$ one can attach a Galois
representation, i.e.\ a continuous group homomorphism,
$$\rho_f: \Gal(\Qbar/\QQ) \to \GL_2(\Qbar_p)$$
such that
\begin{enumerate}[(i)]
\item $\rho_f$ is irreducible,
\item $\det(\rho_f (c)) = -1$ for any complex conjugation $c \in \Gal(\Qbar/\QQ)$
(one says that $\rho_f$ is {\em odd}),
\item for all primes $\ell \nmid Np$ the representation $\rho_f$
is unramified at~$\ell$, 
$$\tr(\rho_f(\Frob_\ell)) = a_\ell(f) \;\; \text{ and } \;\;
\Det(\rho_f(\Frob_\ell)) = \ell^{k-1} \chi(\ell).$$
In the statement, $\Frob_\ell$ denotes a Frobenius element at~$\ell$.
\end{enumerate}
\end{theorem}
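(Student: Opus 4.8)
The plan is to follow Deligne's construction of $\rho_f$ through the $p$-adic étale cohomology of a Kuga--Sato variety, arranged so that the only genuinely modular input is the Eichler--Shimura isomorphism of these notes, while the Galois side is supplied by the standard formalism of étale cohomology. Let $Y_1(N)_{/\QQ}$ be the modular curve parametrising pairs $(E,P)$ with $P$ a point of exact order $N$, carrying the universal elliptic curve $\pi\colon \cE \to Y_1(N)$, and let $W$ be a smooth projective model of the $(k-2)$-fold fibre product $\cE \times_{Y_1(N)} \cdots \times_{Y_1(N)} \cE$, the Kuga--Sato variety. The object to study is $H := H^{k-1}_{\et}(W_{\Qbar},\Qbar_p)$; for $k=2$ one takes simply $H^1_{\et}$ of $X_1(N)_{\Qbar}$, that is, the $p$-adic Tate module of $J_1(N)$. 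It carries a continuous action of $\Gal(\Qbar/\QQ)$ together with a commuting action of all $T_n$ and $\diam{a}$ via algebraic correspondences, so the two actions can first be analysed separately.

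First I would pin down the Hecke-module structure of $H$. Artin's comparison theorem gives a Hecke-equivariant isomorphism $H \otimes_{\Qbar_p}\CC \cong H^{k-1}(W(\CC),\CC)$, and the Leray spectral sequence for $\pi$ identifies the relevant summand of the right-hand side with the parabolic cohomology of $Y_1(N)$ with coefficients in $\Sym^{k-2}$ of the standard local system. By the Eichler--Shimura isomorphism (the $\Gamma_1(N)$, respectively $\chi$-eigenspace, version proved later in these notes) this splits as a direct sum over normalised eigenforms, each eigenform $g$ contributing a two-dimensional piece spanned by classes attached to $g$ and $\overline g$. Localising $H$ at the maximal ideal of the Hecke algebra cut out by $f$ therefore yields a two-dimensional $\Qbar_p$-vector space stable under $\Gal(\Qbar/\QQ)$, and I would take $\rho_f$ to be this representation, twisting by a power of the cyclotomic character if one wishes to normalise conventions on the determinant.

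Then I would address the three listed properties. Unramifiedness at $\ell \nmid Np$ holds because over $\ZZ[1/N]$ the modular curve and $\cE$ admit smooth models, a smooth proper model of $W$ can be chosen over $\ZZ_{(\ell)}$, and smooth proper base change forces inertia at $\ell$ to act trivially on $H$. The formulae $\tr(\rho_f(\Frob_\ell)) = a_\ell(f)$ and $\Det(\rho_f(\Frob_\ell)) = \ell^{k-1}\chi(\ell)$ follow from the Eichler--Shimura congruence relation $T_\ell \equiv \Frob_\ell + \ell^{k-1}\diam{\ell}\,\Frob_\ell^{-1}\pmod{\ell}$ on the reduction of $W$ modulo $\ell$: since $T_\ell$ and $\ell^{k-1}\diam{\ell}$ act on the $f$-part by $a_\ell(f)$ and $\ell^{k-1}\chi(\ell)$, this exhibits $\Frob_\ell$ as a root of $X^2 - a_\ell(f)X + \ell^{k-1}\chi(\ell)$, and a Brauer--Nesbitt argument upgrades this to the full characteristic polynomial. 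Oddness is then immediate, since $\det\rho_f$ is $\chi$ times the $(k-1)$st power of the cyclotomic character and a complex conjugation has determinant $\chi(-1)(-1)^{k-1} = -1$ there (recall $\chi(-1) = (-1)^k$, else $\Sk kN\chi\CC = 0$). Irreducibility is the delicate point: were $\rho_f$ reducible it would be an extension of two Galois characters, so each $a_\ell(f)$ would split as a sum of two Weil numbers that are literally values of those characters at $\Frob_\ell$; comparing with Deligne's bound $|a_\ell(f)| \le 2\ell^{(k-1)/2}$ together with a ramification and Chebotarev analysis (equivalently, invoking Ribet's irreducibility theorem) yields a contradiction with $f$ being cuspidal.

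The hard part is, in fact, essentially all of the non-modular input: constructing the Kuga--Sato variety and a smooth model with controlled reduction, the comparison theorems between étale and Betti cohomology, and above all the Eichler--Shimura congruence relation in weight $\ge 2$, which rests on a detailed study of the reduction of modular curves modulo $\ell$ and of the Frobenius correspondence there. Irreducibility additionally brings in the Ramanujan--Petersson bound, that is, Deligne's proof of the Weil conjectures. None of this lies within the scope of these notes, which is precisely why the theorem is quoted here and used only as a black box.
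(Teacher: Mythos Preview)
Your proposal is a faithful high-level sketch of Deligne's construction, and you correctly identify at the end the real point: the paper does not prove this theorem at all. It is stated in the survey section on number-theoretic applications as a result due to Shimura ($k=2$) and Deligne ($k\ge 2$), and is used purely as a black box to motivate why one cares about computing Hecke eigenforms. No argument is given or intended.

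So there is nothing to compare your proof against. Your outline of the Kuga--Sato approach, the comparison with Eichler--Shimura over~$\CC$, smooth proper base change for unramifiedness, the Eichler--Shimura congruence relation for the trace and determinant formulae, and Ribet's theorem (ultimately resting on the Ramanujan--Petersson bound) for irreducibility is the standard route and is accurate as a road map. Your closing paragraph is exactly right: every one of those ingredients lies outside the scope of these notes, which is why the theorem appears without proof.
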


By choosing a $\rho(\Gal(\Qbar/\QQ))$-stable lattice in $\Qbar_p^2$
and applying reduction and semi-simplification one obtains
the following consequence.

\begin{theorem}\label{delignefp}
Let $k \ge 2$, $N \ge 1$, $p$ a prime,
and $\chi: (\ZZ/N\ZZ)^\times \to \CC^\times$ a character.

Then to any normalised eigenform $f \in \Sk kN\chi\CC$
with $f = \sum_{n\ge 1} a_n(f) q^n$ and to any prime ideal $\fP$
of the ring of integers $\calO_f$ of $\QQ_f = \QQ(a_n(f) : n \in \NN)$ with
residue characteristic~$p$ (and silently a fixed embedding $\calO_f/\fP \hookrightarrow \Fbar_p$), one can attach a Galois
representation, i.e.\ a continuous group homomorphism
(for the discrete topology on $\GL_2(\Fbar_p)$),
$$\rho_f: \Gal(\Qbar/\QQ) \to \GL_2(\Fbar_p)$$
such that
\begin{enumerate}[(i)]
\item $\rho_f$ is semi-simple,
\item $\det(\rho_f (c)) = -1$ for any complex conjugation $c \in \Gal(\Qbar/\QQ)$
(one says that $\rho_f$ is {\em odd}),
\item for all primes $\ell \nmid Np$ the representation $\rho_f$
is unramified at~$\ell$, 
$$\tr(\rho_f(\Frob_\ell)) \equiv a_\ell(f) \mod \fP \;\; \text{ and } \;\;
\det(\rho_f(\Frob_\ell)) \equiv \ell^{k-1} \chibar(\ell) \mod \fP.$$
\end{enumerate}
\end{theorem}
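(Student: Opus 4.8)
The plan is to derive this theorem from Theorem \ref{deligneqp} by a standard lattice-reduction argument. First I would fix the eigenform $f$ and a prime ideal $\fP$ of $\calO_f$ with residue characteristic $p$; let $E$ be a finite extension of $\QQ_p$ containing the completion of $\QQ_f$ at $\fP$, with ring of integers $\calO$, maximal ideal $\fm$, and residue field $k_\fP$ embedded in $\Fbar_p$. Theorem \ref{deligneqp} supplies a continuous representation $\rho_f : \Gal(\Qbar/\QQ) \to \GL_2(\Qbar_p)$, and since $f$ has only finitely many Fourier coefficients generating $\QQ_f$ and the image is topologically generated (in a suitable sense) by Frobenii with traces in $\calO_f$, one checks that $\rho_f$ may be conjugated to take values in $\GL_2(E)$. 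The key classical input is then that any continuous representation of a profinite (hence compact) group on a finite-dimensional $E$-vector space stabilises an $\calO$-lattice: the orbit of a fixed lattice under the compact image is bounded, and its $\calO$-span is a stable lattice. Concretely, $L = \sum_{g} g \cdot \calO^2$ over a set of coset representatives of the (open, finite-index) stabiliser of $\calO^2$ works.

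Having chosen a stable lattice $L \cong \calO^2$, I would reduce modulo $\fm$ to obtain a continuous homomorphism $\Gal(\Qbar/\QQ) \to \GL_2(k_\fP) \hookrightarrow \GL_2(\Fbar_p)$, continuous for the discrete topology because the kernel of reduction is open. This reduction need not be semi-simple, so the next step is to pass to the semi-simplification $\rho_f^{\semis}$; by the Brauer--Nesbitt theorem the semi-simplification is independent of the choice of stable lattice $L$, which is what makes the construction canonical up to isomorphism. Property (i) is then immediate. For (ii), $\det \rho_f^{\semis}(c) = \det \rho_f(c) \bmod \fP$, and since $\det \rho_f(c) = -1 \in \ZZ$ reduces to $-1$ in $\Fbar_p$ (noting $-1 \not\equiv 1$ as $p$ is the residue characteristic and, in characteristic $2$, $-1 = 1$ causes no contradiction with oddness in the usual convention), this transfers directly. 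For (iii): unramifiedness at $\ell \nmid Np$ is inherited since $\rho_f$ is unramified there, and the trace and determinant of Frobenius satisfy $\tr \rho_f^{\semis}(\Frob_\ell) = \tr \rho_f(\Frob_\ell) \bmod \fP = a_\ell(f) \bmod \fP$ and likewise $\det \rho_f^{\semis}(\Frob_\ell) = \ell^{k-1}\chi(\ell) \bmod \fP = \ell^{k-1}\chibar(\ell) \bmod \fP$, where $\chibar$ is the reduction of $\chi$ composed with the fixed embedding.

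The main obstacle is not any single deep step — everything rests on Theorem \ref{deligneqp} plus Brauer--Nesbitt — but rather the bookkeeping of coefficient rings: one must be careful that $\rho_f$, a priori valued in $\GL_2(\Qbar_p)$, can be realised over a single finite extension $E/\QQ_p$ and that the embedding $\calO_f/\fP \hookrightarrow \Fbar_p$ is used consistently so that the congruences in (iii) are literally statements in $\Fbar_p$ rather than in varying residue fields. A secondary subtlety worth a remark is that the isomorphism class of the semi-simplified mod-$\fP$ representation depends only on $f$ and $\fP$ (via the embedding), not on the auxiliary choices of $E$ and of the stable lattice; this is exactly the content of Brauer--Nesbitt together with the fact that a semi-simple representation over a field is determined up to isomorphism by its character, and for $\ell \nmid Np$ these characters are pinned down by the $a_\ell(f) \bmod \fP$ via Chebotarev.
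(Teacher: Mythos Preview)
Your proposal is correct and follows precisely the approach the paper indicates: the paper itself only gives the one-line remark ``By choosing a $\rho(\Gal(\Qbar/\QQ))$-stable lattice in $\Qbar_p^2$ and applying reduction and semi-simplification one obtains the following consequence'' before stating Theorem~\ref{delignefp}, and your plan is a careful unpacking of exactly that sentence, including the Brauer--Nesbitt justification for the well-definedness of the semi-simplification and the bookkeeping needed to descend from $\Qbar_p$ to a finite extension~$E$.
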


\subsubsection*{Translation to number fields}

\begin{proposition}\label{overnf}
Let $f$, $\QQ_f$, $\fP$ and $\rho_f$ be as in Theorem~\ref{delignefp}.
Then the following hold:
\begin{enumerate}[(a)]
\item The image of $\rho_f$ is finite and its image is contained in $\GL_2(\FF_{p^r})$ for some~$r$.
\item The kernel of $\rho_f$ is an open subgroup of $\Gal(\Qbar/\QQ)$ and is hence
of the form $\Gal(\Qbar/K)$ for some Galois number field~$K$. Thus, we can and do
consider $\Gal(K/\QQ)$ as a subgroup of $\GL_2(\FF_{p^r})$.
\item The characteristic polynomial of $\Frob_\ell$ (more precisely, of $\Frob_{\Lambda/\ell}$
for any prime $\Lambda$ of $K$ dividing $\ell$) is equal to $X^2 - a_\ell(f) X + \chi(\ell) \ell^{k-1} \mod \fP$
for all primes $\ell \nmid Np$.
\end{enumerate}
\end{proposition}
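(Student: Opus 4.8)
The plan is to deduce everything directly from Theorem~\ref{delignefp}, using only elementary facts about profinite groups and the Galois correspondence; no new input is needed, and the whole argument is essentially bookkeeping.

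For~(a) and~(b), I would start from the fact that $\Gal(\Qbar/\QQ)$ is profinite, hence compact, and that $\rho_f$ is continuous for the \emph{discrete} topology on $\GL_2(\Fbar_p)$. Therefore $\im(\rho_f)$ is a compact subgroup of a discrete group, hence finite; this proves the first assertion of~(a). Since a finite subset of $\Fbar_p = \bigcup_r \FF_{p^r}$ is contained in a single $\FF_{p^r}$ (take $r$ a common multiple of the degrees over $\FF_p$ of the finitely many matrix entries occurring), we get $\im(\rho_f) \subseteq \GL_2(\FF_{p^r})$, completing~(a). For~(b), the singleton $\{1\}$ is open in the finite discrete group $\im(\rho_f)$, so $\ker(\rho_f) = \rho_f^{-1}(\{1\})$ is an open normal subgroup of $\Gal(\Qbar/\QQ)$. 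By the Galois correspondence for infinite extensions, open normal subgroups are exactly the subgroups $\Gal(\Qbar/K)$ with $K/\QQ$ finite Galois; fixing such a $K$ with $\Gal(\Qbar/K) = \ker(\rho_f)$, the map $\rho_f$ factors as $\Gal(\Qbar/\QQ) \twoheadrightarrow \Gal(K/\QQ) \hookrightarrow \GL_2(\FF_{p^r})$, and we identify $\Gal(K/\QQ)$ with its image.

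For~(c), fix a prime $\ell \nmid Np$. By Theorem~\ref{delignefp}(iii), $\rho_f$ is unramified at~$\ell$, i.e.\ the inertia group at any place of $\Qbar$ above $\ell$ lies in $\ker(\rho_f) = \Gal(\Qbar/K)$; hence $\ell$ is unramified in~$K$ and, for each prime $\Lambda$ of $K$ above $\ell$, the Frobenius element $\Frob_{\Lambda/\ell} \in \Gal(K/\QQ)$ is well-defined. The different $\Frob_{\Lambda/\ell}$ for $\Lambda \mid \ell$ are conjugate in $\Gal(K/\QQ)$, so their images in $\GL_2(\FF_{p^r})$ are conjugate matrices and therefore share a characteristic polynomial; this common polynomial is what the statement calls the characteristic polynomial of $\Frob_\ell$. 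Writing $M := \rho_f(\Frob_{\Lambda/\ell})$, one has $\det(XI - M) = X^2 - \tr(M) X + \Det(M)$, and Theorem~\ref{delignefp}(iii) gives $\tr(M) \equiv a_\ell(f) \bmod \fP$ and $\Det(M) \equiv \ell^{k-1}\chibar(\ell) \bmod \fP$; since $\chibar(\ell) = \chi(\ell) \bmod \fP$, this is exactly $X^2 - a_\ell(f)X + \chi(\ell)\ell^{k-1} \bmod \fP$.

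I do not expect any real obstacle: the content is entirely in Theorem~\ref{delignefp}, and the only thing requiring care is the reduction bookkeeping in~(c) — the entries, trace and determinant of $M$ live in $\FF_{p^r} \subseteq \Fbar_p$, whereas $a_\ell(f)$ and $\chi(\ell)$ live in $\calO_f$, so the congruences must be read through the fixed embedding $\calO_f/\fP \hookrightarrow \Fbar_p$ already in place in Theorem~\ref{delignefp}. With that understood, nothing further is needed.
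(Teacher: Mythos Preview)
Your proof is correct and is exactly the intended argument; the paper itself leaves this proposition as an exercise (Exercise~\ref{exovernf}), and your write-up is precisely the standard solution one expects --- compactness of the absolute Galois group plus discreteness of the target for~(a) and~(b), and the $2\times 2$ characteristic-polynomial identity $X^2 - \tr(M)X + \Det(M)$ combined with Theorem~\ref{delignefp}(iii) for~(c). Your closing remark about reading the congruences through the fixed embedding $\calO_f/\fP \hookrightarrow \Fbar_p$ is the only subtle point, and you have handled it correctly.
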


\begin{proof}
Exercise~\ref{exovernf}.
\end{proof}

To appreciate the information obtained from the $a_\ell(f) \mod \fP$, the reader is invited to do Exercise~\ref{exinfochar} now.

\subsubsection*{Images of Galois representations}

One can also often tell what the Galois group $\Gal(K/\QQ)$ is as an abstract group.
There are not so many possibilities, as we see from the following theorem.

\begin{theorem}[Dickson]
Let $p$ be a prime and $H$ a finite subgroup of $\PGL_2(\Fbar_p)$.
Then a conjugate of $H$ is isomorphic to one of the following groups:
\begin{itemize}
\item finite subgroups of the upper triangular matrices,
\item $\PSL_2(\FF_{p^r})$ or $\PGL_2(\FF_{p^r})$ for $r \in \NN$,
\item dihedral groups $D_r$ for $r \in \NN$ not divisible by $p$,
\item $A_4$, $A_5$ or $S_4$.
\end{itemize}
\end{theorem}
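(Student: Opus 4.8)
The plan is to classify $H$ according to the way its $p$-elements sit inside $\PGL_2(\Fbar_p)$ — equivalently, according to its Sylow $p$-subgroup — by exploiting the action of $H$ on $\PP^1(\Fbar_p)$. First I would record the relevant structure: a nontrivial element of $\PGL_2(\Fbar_p)$ is conjugate either to a diagonal matrix $\mat \lambda 001$ (a semisimple element, of order prime to $p$, with exactly two fixed points on $\PP^1$) or to a unipotent $\mat 1101$ (of order exactly $p$, with a single fixed point). Hence every $p$-subgroup of $\PGL_2(\Fbar_p)$ is, up to conjugation, a finite subgroup of the upper-triangular unipotent group $U \cong (\Fbar_p,+)$; in particular it is elementary abelian and has a unique fixed point on $\PP^1$. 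I would also note that, $H$ being finite, it is conjugate into $\PGL_2(\FF_q)$ for some finite field $\FF_q$ (one containing the finitely many matrix entries of a generating set), which permits one to argue with finite groups when convenient.

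\emph{The tame case and the Borel case.} Suppose first that $p \nmid |H|$. Then every nontrivial element of $H$ has exactly two fixed (``pole'') points on $\PP^1$, and I would run the classical counting argument used for finite subgroups of $\SO(3)$: double-counting the pairs $(g,x)$ with $g \neq 1$ and $gx = x$, and grouping the poles into $H$-orbits, yields $2 - 2/|H| = \sum_O (1 - 1/e_O)$, where $O$ ranges over the $H$-orbits of poles and $e_O$ is the order of a point stabiliser (each stabiliser being cyclic, as a finite subgroup of a torus $\Fbar_p^\times$). This identity forces at most three orbits and pins $H$ down to a cyclic group, a dihedral group $D_r$, or one of $A_4$, $S_4$, $A_5$; the hypothesis $p \nmid |H|$ then gives $p \nmid r$ in the dihedral case and $A_4$, $S_4$, $A_5$ only when $p$ fails to divide $12$, $24$, $60$. (Only necessity is claimed, so realisability of each group need not be checked.) Suppose next that $p \mid |H|$ and $H$ has a nontrivial normal $p$-subgroup $Q$ — for instance a normal Sylow $p$-subgroup. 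Then $H$ fixes the unique point of $\PP^1$ fixed by $Q$, hence is conjugate into the Borel subgroup of upper-triangular matrices, which is the first item on the list.

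\emph{The main case: $p \mid |H|$ and $O_p(H) = 1$.} Let $M \trianglelefteq H$ be the subgroup generated by all Sylow $p$-subgroups of $H$. Since $O_p(H) = 1$ these are not all equal, and two distinct Sylow $p$-subgroups cannot share their fixed point on $\PP^1$ (the two would otherwise lie in the unipotent radical at that point, a common $p$-subgroup properly containing each of them); so after conjugation one of them fixes $\infty$ and another fixes $0$. Thus $M$ is generated by two distinct nontrivial unipotent subgroups and $O_p(M) = 1$. The key structural input — a Dickson-type lemma on subgroups of $\SL_2$ generated by two distinct nontrivial unipotent subgroups — is that $M$ is then conjugate to $\PSL_2(\FF_{q_0})$ for a suitable finite subfield $\FF_{q_0}$, \emph{except} in low-characteristic degeneracies (most conspicuously $p = 2$, where the unipotent subgroups are generated by involutions and two involutions always generate a dihedral group), in which one sees directly that $H$ is dihedral, or an exceptional group arising only through one of the isomorphisms $A_4 \cong \PSL_2(\FF_3)$, $S_4 \cong \PGL_2(\FF_3)$, $A_5 \cong \PSL_2(\FF_5) \cong \PSL_2(\FF_4)$. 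Once $M \cong \PSL_2(\FF_{q_0})$ is established, $H$ normalises $M$, and since $C_{\PGL_2(\Fbar_p)}(M) = 1$ and $N_{\PGL_2(\Fbar_p)}(\PSL_2(\FF_{q_0})) = \PGL_2(\FF_{q_0})$ — a nontrivial field automorphism of $\FF_{q_0}$ not being induced by conjugation inside $\PGL_2(\Fbar_p)$, apart from a few small $q_0$ which one settles by hand — one obtains $\PSL_2(\FF_{q_0}) \le H \le \PGL_2(\FF_{q_0})$, and $[\PGL_2 : \PSL_2] \le 2$ forces $H \in \{\PSL_2(\FF_{q_0}), \PGL_2(\FF_{q_0})\}$.

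\emph{The main obstacle.} The hard part will be the Dickson-type lemma just invoked: to show that a subgroup of $\SL_2(\Fbar_p)$ containing two distinct nontrivial unipotent subgroups, together with the diagonal elements it is thereby forced to contain, already coincides with $\SL_2$ of a finite subfield. This comes down to a concrete $2 \times 2$-matrix computation — from $\mat 1a01$ and $\mat 10b1$ one manufactures further unipotent and diagonal elements and checks that the entries so produced are closed under addition and multiplication, hence form a subfield — but the genuinely delicate point is the clean treatment of the small-characteristic cases where this configuration collapses to a dihedral (or exceptional) group instead of a full $\SL_2$; the case $p = 2$ is especially singular, since there ``unipotent element'' and ``involution'' coincide. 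Everything else is classical counting or routine bookkeeping.
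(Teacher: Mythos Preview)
The paper does not prove this theorem. Dickson's theorem is stated in the survey section on number-theoretic applications as a classical result, attributed to Dickson, and is used only to frame the discussion of images of Galois representations; no proof or sketch is given anywhere in the text. So there is no ``paper's own proof'' to compare your proposal against.

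That said, your sketch is a recognisable outline of one of the standard routes to Dickson's classification: split according to whether $p$ divides $|H|$; in the tame case run the pole-counting argument on $\PP^1(\Fbar_p)$ exactly as for finite rotation groups; when a nontrivial normal $p$-subgroup exists, use its unique fixed point to land in a Borel; and in the remaining case analyse the subgroup generated by the Sylow $p$-subgroups via explicit $2\times 2$ matrix manipulations to recognise $\PSL_2(\FF_{q_0})$. You have correctly identified the genuine content as lying in the last step, and your caveats about small characteristic (especially $p=2$, where unipotents are involutions and two of them generate a dihedral group) are apt. One point to be a bit more careful about: the assertion that $N_{\PGL_2(\Fbar_p)}(\PSL_2(\FF_{q_0})) = \PGL_2(\FF_{q_0})$ is not quite automatic and deserves its own argument, since field automorphisms of $\FF_{q_0}$ do act on $\PSL_2(\FF_{q_0})$ --- you need that such automorphisms are not realised by conjugation in $\PGL_2(\Fbar_p)$, which is true but requires a line of justification beyond ``apart from a few small $q_0$''. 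Apart from that, the strategy is sound as a plan; turning it into a complete proof is mainly a matter of executing the matrix computations in the Dickson lemma carefully.
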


For modular forms there are several results mostly by Ribet concerning the
groups that occur as images \cite{Ribet}.
Roughly speaking, they say that the image is
`as big as possible' for almost all $\fP$ (for a given~$f$). For modular forms
without CM and inner twists (we do not define these notions in this course)
this means that if $G$ is the image, then $G$ modulo scalars
is equal to $\PSL_2(\FF_{p^r})$ or $\PGL_2(\FF_{p^r})$, where $\FF_{p^r}$
is the extension of $\FF_p$ generated by the $a_n(f) \mod \fP$.

An interesting question is to study which groups (i.e.\ which $\PSL_2(\FF_{p^r})$) actually occur.
It would be nice to prove that all of them do, since - surprisingly -
the simple groups $\PSL_2(\FF_{p^r})$ are still resisting a lot to all efforts to
realise them as Galois groups over $\QQ$ in the context of inverse Galois theory.

\subsubsection*{Serre's modularity conjecture}

Serre's modularity conjecture is the following.
Let $p$ be a prime and $\rho: \Gal(\Qbar/\QQ) \to \GL_2(\Fbar_p)$
be a continuous, odd, irreducible representation.

\begin{itemize}
\item Let $N_\rho$ be the (outside of~$p$) conductor of~$\rho$ (defined by
a formula analogous to the formula for the Artin conductor, except
that the local factor for~$p$ is dropped).
\item Let $k_\rho$ be the integer defined by~\cite{Serre}.
\item Let $\chi_\rho$ be the prime-to-$p$ part of $\det \circ \rho$
considered as a character 
$(\ZZ/N_\rho\ZZ)^\times \times (\ZZ/p\ZZ)^\times \to \Fbar_p^\times$.
\end{itemize}

\begin{theorem}[Khare, Wintenberger, Kisin: Serre's Modularity Conjecture]
Let $p$ be a prime and $\rho: \Gal(\Qbar/\QQ) \to \GL_2(\Fbar_p)$
be a continuous, odd, irreducible representation.

Then there exists a normalised eigenform
$$f \in \Sk {k_\rho}{N_\rho}{\chi_\rho}{\CC}$$
such that $\rho$ is isomorphic to the Galois representation
$$\rho_f: \Gal(\Qbar/\QQ) \to \GL_2(\Fbar_p)$$
attached to~$f$ by Theorem~\ref{delignefp}.
\end{theorem}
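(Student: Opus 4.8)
The plan is to follow the strategy of Khare and Wintenberger (with the modularity lifting input of Kisin), which proceeds in two stages: first reduce to the \emph{weak} form of the conjecture --- the bare assertion that $\rho$ arises from \emph{some} normalised eigenform --- and then establish the weak form by an elaborate induction on the residue characteristic. The first stage is essentially classical and already contained, modulo later refinements, in Serre's original paper~\cite{Serre}: granted weak modularity, the optimal level $N_\rho$, weight $k_\rho$ and character $\chi_\rho$ are forced by Ribet's level-lowering theorem together with the weight-optimisation results of Gross, Edixhoven, Coleman--Voloch and others (the cases $p=2$ and $k_\rho=1$ requiring separate care). So after this stage I would only need to produce one eigenform $f$ with $\rho \cong \rho_f$, not the sharp data.

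For the weak form the key idea is a double induction: an outer induction on the prime $p$, and, for fixed $p$, an inner induction that successively ``kills'' ramification at the bad primes $\ell \ne p$ and lowers the weight, so that each case is reduced to one of strictly smaller invariants or smaller residue characteristic. The engine combines three deep ingredients. First, Taylor's \emph{potential modularity} theorems: via Moret-Bailly's theorem on rational points one produces an auxiliary totally real field $F$, arranged to be linearly disjoint in a controlled way from the fixed field of $\ker\rho$, over which $\rho|_{\Gal(\Qbar/F)}$ becomes modular (matched by a congruence to a representation coming from a suitable abelian variety or Hilbert modular form); combined with Brauer's theorem and solvable base change this shows the relevant $p$-adic lift fits into a strictly compatible system of $\lambda$-adic representations. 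Second, the \emph{modularity lifting theorems} of Taylor--Wiles type, in the strong form available after Kisin (and Skinner--Wiles in the ordinary case): a ``nice'' $p$-adic lift --- crystalline of small Hodge--Tate weights, or ordinary, with suitably large image --- of a residually modular $\rho$ is itself modular, and, crucially, the variant comparing two residually modular representations sharing the \emph{same} $p$-adic lift. Third, the \emph{existence of geometric lifts} in the style of Ramakrishna, Böckle, Taylor and Khare--Wintenberger, allowing one to lift $\rho$ to a $p$-adic representation that is crystalline or ordinary at $p$ with prescribed small weight and with ramification at the bad primes controlled or removed. Splicing: lift $\rho$ to a nice $\rho_p$, place it in a compatible system $\{\rho_\lambda\}$, choose another prime $q$ in the system, reduce $\rho_q$ mod $q$ to a residual representation of smaller invariants which is modular by the inner induction hypothesis, apply modularity lifting at $q$ to deduce $\rho_q$ is modular, hence the whole system is modular, hence $\rho = \overline{\rho_p}$ is modular. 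The base of the induction --- the genuinely unavoidable classical input --- is the non-existence of irreducible odd two-dimensional mod-$p$ representations of very small conductor and weight, which follows from Odlyzko's discriminant bounds together with the results of Tate, Serre, Fontaine, Brumer--Kramer and Schoof on abelian varieties and finite flat group schemes of small conductor.

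The hard part will be making the modularity lifting machinery legitimately applicable at every step: one must verify or engineer, by the lifting constructions and the choice of auxiliary primes, that $\rho|_{\Gal(\Qbar/\QQ(\zeta_p))}$ is absolutely irreducible, that the local condition at $p$ is of an allowed type, and that the degenerate configurations at the small primes $p = 2, 3, 5$ are avoided or handled directly; the case $p = 2$, where the pertinent Galois cohomology and deformation theory behave badly, needs its own argument. Beyond this, the bookkeeping of the compatible-system and base-change steps --- tracking fields of definition, the interplay between the two running primes, and the precise matching of local parameters --- is where essentially all the labour resides, and I would expect to spend the bulk of the proof precisely on constructing auxiliary primes and lifts that keep every invocation of a lifting theorem valid, rather than on any single conceptual leap.
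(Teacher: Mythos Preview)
The paper does not prove this theorem at all: it is stated as a deep external result of Khare, Wintenberger and Kisin, cited via~\cite{KhareWintenberger}, and the text immediately moves on to discuss its consequences. These are lecture notes on the modular symbols algorithm, and Serre's modularity conjecture appears only as motivation in the survey section; no proof is attempted or sketched.

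Your outline is a reasonable high-level summary of the actual Khare--Wintenberger strategy (reduction from strong to weak Serre via level-lowering and weight optimisation, potential modularity plus compatible systems, Kisin's modularity lifting, existence of good lifts, induction on the prime with the base case handled by discriminant bounds and results on small-conductor abelian varieties). But it is not something that can be meaningfully compared against the paper, since the paper offers nothing beyond the bare statement and a reference. If the intent of the exercise was to supply a proof where the paper gives one, this theorem is not such a case; if the intent was to produce an independent proof sketch, yours captures the architecture correctly at the level of a survey, though of course each of the ingredients you name is itself a substantial paper.
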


Serre's modularity conjecture implies that we can compute
(in principle, at least) arithmetic properties of all Galois
representations of the type in Serre's conjecture by 
computing the mod~$p$ Hecke eigenforms they come from.
Conceptually, Serre's modularity conjecture gives an explicit description of
all irreducible, odd and continuous `mod $p$' representations
of $\Gal(\Qbar/\QQ)$ and, thus, in a sense generalises class field theory.

Edixhoven et al.\ \cite{Edixhoven} have succeeded in giving
an algorithm which computes the actual Galois representation
attached to a mod~$p$ modular form. Hence, with Serre's conjecture
we have a way of - in principle - obtaining all information on
$2$-dimensional irreducible, odd continuous representations of $\Gal(\Qbar/\QQ)$.

\subsection{Theory: Exercises}

\begin{exercise}\label{exsln} 
\begin{enumerate}[(a)]
\item The group homomorphism
$$ \SL_2(\ZZ) \to \SL_2(\ZZ/N\ZZ)$$
given by reducing the matrices modulo~$N$ is surjective.
\item Check the bijections 
$$\SL_2(\ZZ)/\Gamma_1(N) = \{ \vect a c | \langle a,c \rangle = \ZZ/N\ZZ\}$$
and
$$\SL_2(\ZZ)/\Gamma_0(N) = \PP^1 (\ZZ/N\ZZ),$$
which were given in the beginning.
\end{enumerate}
\end{exercise}

\begin{exercise}\label{qzetan}
Let $N$ be an integer and $\zeta_N \in \CC$ any primitive $N$-th root of unity.
Prove that the map
$$ \Gal(\QQ(\zeta_N)/\QQ) \xrightarrow{\Frob_\ell \mapsto \ell} (\ZZ/N\ZZ)^\times$$
(for all primes $\ell \nmid N$) is an isomorphism. 
\end{exercise}

\begin{exercise}\label{exsigmaa}
Prove that a matrix $\sigma_a$ as in Equation~\ref{sigmaa} exists.
\end{exercise}

\begin{exercise}\label{exhecke}
Prove Lemma~\ref{lemhecke}. See also \cite[Proposition~5.2.2]{DS}.
\end{exercise}

\begin{exercise}\label{excommute}
\begin{enumerate}[(a)]
\item Let $K$ be a field, $V$ a vector space and $T_1,T_2$ two
commuting endomorphisms of~$V$, i.e.\ $T_1 T_2 = T_2 T_1$.
Let $\lambda_1 \in K$ and consider the $\lambda_1$-eigenspace
of~$T_1$, i.e.\ $V_1 = \{ v | T_1 v = \lambda_1 v\}$. 
Prove that $T_2 V_1 \subseteq V_1$.
\item Suppose that $\Mkone Nk\CC$ is non-zero. Prove that it contains
a Hecke eigenform.
\end{enumerate}
\end{exercise}

\begin{exercise}\label{exeigenf}
Prove Lemma~\ref{eigenf}.

Hint: use the action of Hecke operators explicitly described on $q$-expansions.
\end{exercise}

\begin{exercise} \label{expsl}
Check that it makes sense to replace $\SL_2(\ZZ)$ by $\PSL_2(\ZZ)$
in the definition of modular forms.

Hint: for the transformation rule: if $-1$ is not in the congruence subgroup in question, there is nothing to show;
if $-1$ is in it, one has to verify that it acts trivially. Moreover convince yourself that the holomorphy at the cusps does not depend
on replacing a matrix by its negative.
\end{exercise}

\begin{exercise}\label{exgp}
Let $R$ be a ring, $\Gamma$ a group and $V$ a left $R[\Gamma]$-module.
\begin{enumerate}[(a)]
\item Define the augmentation ideal $I_\Gamma$ by the exact sequence
$$ 0 \to I_\Gamma \to R[\Gamma] \xrightarrow{\gamma \mapsto 1} R \to 1.$$
Prove that $I_\Gamma$ is the ideal in $R[\Gamma]$ generated by the
elements $1-g$ for $g \in \Gamma$.
\item Conclude that $V_\Gamma = V / I_\Gamma V$.
\item Conclude that $V_\Gamma \cong R \otimes_{R[\Gamma]} V$.
\item Suppose that $\Gamma = \langle T \rangle$ is a cyclic group
(either finite or infinite (isomorphic to $(\ZZ,+)$)).
Prove that $I_\Gamma$ is the ideal generated by $(1-T)$.
\item Prove that $V^\Gamma \cong \Hom_{R[\Gamma]}(R,V)$.
\end{enumerate}
\end{exercise}

\begin{exercise}\label{basechange}
Let $R$, $\Gamma$ and $V$ as in Definition~\ref{defMS} and let $R \to S$ be a ring homomorphism.
\begin{enumerate}[(a)]
\item Prove that
$$\cM_R(\Gamma,V) \otimes_R S \cong \cM_S(\Gamma,V \otimes_R S).$$
\item Suppose $R \to S$ is flat. Prove a similar statement for the cuspidal subspace.
\item Are similar statements true for the boundary or the Eisenstein space?
What about the $+$- and the $-$-spaces?
\end{enumerate}
\end{exercise}

\begin{exercise}\label{exsym}
Prove that the map
$$ \Sym^{n}(R^2) \to R[X,Y]_n, \;\;\;
\vect {a_1}{b_1} \otimes \dots \otimes \vect{a_n}{b_n} \mapsto (a_1X + b_1Y) \cdots (a_nX + b_nY)$$
is an isomorphism,
where $\Sym^n(R^2)$ is the $n$-th symmetric power of~$R^2$, which is defined as
the quotient of $\underbrace{R^2 \otimes_R \dots \otimes_R R^2}_{n\textnormal{-times}}$
by the span of all elements 
$v_1 \otimes \dots \otimes v_n - v_{\sigma(1)} \otimes \dots \otimes v_{\sigma(n)}$
for all $\sigma$ in the symmetric group on the letters $\{1,2,\dots,n\}$.
\end{exercise}

\begin{exercise}\label{exchar}
Prove Equation~\ref{eqchar}.
\end{exercise}

\begin{exercise}\label{exhamsymbc}
Can one use $+$- or $-$-spaces in Proposition~\ref{hamsymbc}?
What could we say if we defined the $+$-space as $M/(1-\eta) M$ with $M$
standing for some space of modular symbols?
\end{exercise}

\begin{exercise}\label{excores}
Which statements in the spirit of Corollary~\ref{cores}~(b) are true for the
$+$-spaces?
\end{exercise}

\begin{exercise}\label{excorsturm}
Prove Corollary~\ref{corsturm}.
\end{exercise}

\begin{exercise}\label{exovernf}
Prove Proposition~\ref{overnf}.
\end{exercise}

\begin{exercise}\label{exinfochar}
In how far is a conjugacy class in $\GL_2(\FF_{p^r})$ determined by
its characteristic polynomial?
Same question as above for a subgroup $G \subset \GL_2(\FF_{p^r})$.
\end{exercise}

\subsection{Computer exercises}

\begin{cexercise}
\begin{enumerate}[(a)]
\item Create a list $L$ of all primes in between 234325 and 3479854?
How many are there?
\item For $n=2,3,4,5,6,7,997$ compute for each $a \in \ZZ/ n\ZZ$ how often
it appears as a residue in the list $L$.
\end{enumerate}
\end{cexercise}

\begin{cexercise}
In this exercise you verify the validity of the prime number theorem.
\begin{enumerate}[(a)]
\item Write a function {\tt NumberOfPrimes} with the following specifications.
Input: Positive integers $a,b$ with $a \le b$.
Output: The number of primes in $[a,b]$.
\item Write a function {\tt TotalNumberOfPrimes} with the following specifications.
Input: Positive integers $x, s$.
Output: A list $[n_1,n_2,n_3,\dots,n_m]$ such that $n_i$ is the number of primes
between $1$ and $i\cdot s$ and $m$ is the largest integer smaller than or equal to
$x/s$.
\item Compare the output of {\tt TotalNumberOfPrimes} with the predictions
of the prime number theorem: Make a function that returns the list $[r_1,r_2,\dots,r_m]$ 
with $r_i = \frac{si}{\log{si}}$. Make a function that computes the quotient of
two lists of "numbers".
\item Play with these functions. What do you observe?
\end{enumerate}
\end{cexercise}

\begin{cexercise}
Write a function {\tt ValuesInField} with: Input: a unitary polynomial $f$ with integer
coefficients and $K$ a finite field. Output: the set of values of $f$ in $K$.
\end{cexercise}

\begin{cexercise}
\begin{enumerate}[(a)]
\item Write a function {\tt BinaryExpansion} that computes the binary expansion
of a positive integer. Input: positive integer~$n$. Output: list of $0$'s and $1$'s
representing the binary expansion.
\item Write a function {\tt Expo} with: Input: two positive integers $a,b$.
Output $a^b$. You must not use the in-built function $a^b$, but write a sensible
algorithm making use of the binary expansion of~$b$. The only arithmetic operations allowed
are multiplications.
\item Write similar functions using the expansion with respect to a general base~$d$.
\end{enumerate}
\end{cexercise}

\begin{cexercise}
In order to contemplate recursive algorithms, the monks in Hanoi used
to play the following game. First they choose a degree of contemplation,
i.e.\ a positive integer~$n$. Then they create three lists:
$$L_1 := [n,n-1,\dots,2,1]; L_2 := []; L_3 := [];$$
The aim is to exchange $L_1$ and $L_2$. However, the monks may only perform
the following step: Remove the last element from one of the lists and
append it to one of the other lists, subject to the important condition
that in all steps all three lists must be descending.

Contemplate how the monks can achieve their goal. Write a procedure
with input~$n$ that plays the game. After each step,
print the number of the step, the three lists and test whether all
lists are still descending.

[Hint: For recursive procedures, i.e.\ procedures calling themselves,
in {\sc Magma} one must put the command {\tt forward my\_procedure} in front of the
definition of {\tt my\_procedure}.]
\end{cexercise}

\begin{cexercise}
This exercise concerns the normalised cuspidal eigenforms in weight~$2$ and level~$23$.
\begin{enumerate}[(a)]
\item What is the number field $K$ generated by the coefficients of each of the two
forms?
\item Compute the characteristic polynomials of the first 100 Fourier coefficients
of each of the two forms.
\item Write a function that for a given prime $p$ computes the reduction modulo~$p$
of the characteristic polynomials from the previous point and their factorisation.
\item Now use modular symbols over $\FF_p$ for a given $p$. Compare the results.
\item Now do the same for weight~$2$ and level~$37$. In particular, try $p=2$.
What do you observe? What could be the reason for this behaviour?
\end{enumerate}
\end{cexercise}

\begin{cexercise}\label{cexalgmodsymsketch}
Implement Algorithm~\ref{algmodsymsketch}.
\end{cexercise}

\section{Hecke algebras}

An important point made in the previous section is that for computing modular forms,
one computes Hecke algebras. This perspective puts Hecke algebras in its centre.
The present section is written from that point of view.
Starting from Hecke algebras, we define modular forms with coefficients in arbitrary rings,
we study integrality properties and also present results on the structure of Hecke algebras,
which are very useful for studying the arithmetic of modular forms.

It is essential for studying arithmetic properties of modular forms
to have some flexibility for the coefficient rings. For instance, when
studying mod~$p$ Galois representations attached to modular forms, it is
often easier and sometimes necessary to work with modular forms whose $q$-expansions
already lie in a finite field. Moreover, the concept of congruences of
modular forms only gets its seemingly correct framework when working over
rings such as extensions of finite fields or rings like $\ZZ/p^n\ZZ$.

There is a very strong theory of modular forms over a general ring~$R$
that uses algebraic geometry over~$R$. One can, however, already get
very far if one just defines modular forms over~$R$ as the $R$-linear
dual of the $\ZZ$-Hecke algebra of the holomorphic modular forms, i.e.\
by taking $q$-expansions with coefficients in~$R$. In this course we shall only
use this. Precise definitions will be given in a moment. A priori it is
maybe not clear whether non-trivial modular forms with $q$-expansions in the
integers exist at all. The situation is as good as it could possibly be:
the modular forms with $q$-expansion in the integers form a lattice in
the space of all modular forms (at least for $\Gamma_1(N)$
and $\Gamma_0(N)$; if we are working with a Dirichlet character, the
situation is slightly more involved). This is an extremely useful and important
fact, which we shall derive from the corollaries of the Eichler-Shimura 
isomorphism given in the previous section. 

Hecke algebras of modular forms 
over~$R$ are finitely generated $R$-modules. This leads us to a study, 
belonging to the theory
of Commutative Algebra, of finite $R$-algebras, that is, $R$-algebras
that are finitely generated as $R$-modules. We shall prove structure
theorems when $R$ is a discrete valuation ring or a finite field.
Establishing back the connection with modular forms, we will for example 
see that the maximal ideals of Hecke algebras correspond to Galois conjugacy
classes of normalised eigenforms, and, for instance, the notion of a
congruence can be expressed as a maximal prime containing two minimal ones.

\subsection{Theory: Hecke algebras and modular forms over rings}

We start by recalling and slightly extending
the concept of Hecke algebras of modular forms. 
It is of utmost importance for our treatment
of modular forms over general rings and their computation. In fact, as pointed
out a couple of times, we will compute Hecke algebras and not modular forms.
We shall assume that $k \ge 1$ and $N \ge 1$.

As in the introduction, we define the {\em Hecke algebra}
of $\Mkone kN\CC$ as the subring (i.e.\ the $\ZZ$-algebra) 
inside the endomorphism ring of the $\CC$-vector space $\Mkone kN\CC$
generated by all Hecke operators. Remember that due to Formula~\ref{diamondinhecke} 
all diamond operators are contained in the Hecke algebra.
Of course, we make similar definitions for $\Skone kN\CC$ and use the notations
$\TT_\ZZ(\Mkone kN\CC)$ and $\TT_\ZZ(\Skone{k}{N}{\CC})$.

If we are working with modular forms with a character, we essentially
have two possibilities for defining the Hecke algebra, namely,
firstly as above as the $\ZZ$-algebra generated by all Hecke operators
inside the endomorphism ring of the $\CC$-vector space $\Mk kN\chi\CC$
(notation $\TT_\ZZ (\Mk kN\chi\CC)$) or, secondly, as the $\ZZ[\chi]$-algebra
generated by the Hecke operators inside $\End_\CC(\Mk kN\chi\CC)$
(notation $\TT_{\ZZ[\chi]} (\Mk kN\chi\CC)$); similarly for the
cusp forms. Here $\ZZ[\chi]$ is the ring extension of $\ZZ$ generated
by all values of~$\chi$, it is the integer ring of $\QQ(\chi)$.
For two reasons we prefer the second variant. The first reason is
that we needed to work over $\ZZ[\chi]$ (or its extensions) for modular
symbols. The second reason is that on the natural $\ZZ$-structure
inside $\Mkone kN\CC$ the decomposition into $(\ZZ/N\ZZ)^\times$-eigenspaces
can only be made after a base change to $\ZZ[\chi]$. So, the $\CC$-dimension of
$\Mk kN\chi\CC$ equals the $\QQ[\chi]$-dimension
of $\TT_{\QQ[\chi]}(\Mk kN\chi\CC)$ and not the $\QQ$-dimension of
$\TT_{\QQ}(\Mk kN\chi\CC)$.

\begin{lemma}\label{freealg}
\begin{enumerate}[(a)]
\item The $\ZZ$-algebras $\TT_\ZZ (\Mkone kN\CC)$ and $\TT_\ZZ (\Mk kN\chi\CC)$
are free $\ZZ$-modules of finite rank; the same holds for the cuspidal Hecke algebras.
\item The $\ZZ[\chi]$-algebra $\TT_{\ZZ[\chi]} (\Mk kN\chi\CC)$
is a torsion-free finitely generated $\ZZ[\chi]$-module; 
the same holds for the cuspidal Hecke algebra.
\end{enumerate}
\end{lemma}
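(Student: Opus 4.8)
The plan is to reduce everything to two facts: that the relevant spaces of modular forms are finite-dimensional $\CC$-vector spaces, and that the Hecke operators have integer (respectively $\ZZ[\chi]$-)entries when expressed on a suitable basis with $q$-expansions. First I would prove the torsion-freeness, which is immediate: $\TT_\ZZ(\Mkone kN\CC)$ is by definition a subring of $\End_\CC(\Mkone kN\CC)$, and the latter is a $\CC$-vector space, hence torsion-free as an abelian group; a subgroup of a torsion-free group is torsion-free. The same argument applies verbatim to $\TT_\ZZ(\Mk kN\chi\CC)$, to $\TT_{\ZZ[\chi]}(\Mk kN\chi\CC)$ (a submodule of the $\ZZ[\chi]$-module $\End_\CC(\Mk kN\chi\CC)$, which is torsion-free since $\ZZ[\chi]$ is a domain and the endomorphism ring is a $\CC$-vector space), and to all the cuspidal analogues. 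This disposes of the torsion-free claim in both (a) and (b).

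For finite generation, the key input is that $V := \Mkone kN\CC$ (or the cuspidal/character variants) is a finite-dimensional $\CC$-vector space, so $\End_\CC(V)$ has finite $\CC$-dimension, say $d^2$ where $d = \dim_\CC V$. The Hecke algebra $\TT_\ZZ(V)$ sits inside $\End_\CC(V)$. I would argue that $\TT_\ZZ(V)$ is contained in a finitely generated $\ZZ$-submodule of $\End_\CC(V)$. The cleanest route: each Hecke operator $T_n$ acts on $V$, and on the natural integral structure --- here I would invoke the Eichler--Shimura consequences already available in the excerpt, specifically Corollary~\ref{algz} and Corollary~\ref{cores}, which give that $\TT_R(\Mk kN\chi\CC) \otimes_R \CC \cong \TT_\CC(\Mk kN\chi\CC)$ for $R$ a subring of $\CC$, together with the fact (from the Hecke algebras on modular symbols, Proposition~\ref{hamsymbc} and its corollary) that $\TT_\ZZ(\cMkone kN\ZZ)$ is a finitely generated $\ZZ$-module when the base is Noetherian. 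Then $\TT_\ZZ(\Mkone kN\CC)$ is identified with $\TT_\ZZ(\cMkone kN\ZZ)$ modulo its torsion via Corollary~\ref{algz1} (applied with $R = \ZZ$), hence is a quotient of a finitely generated $\ZZ$-module, hence finitely generated. Being torsion-free and finitely generated over $\ZZ$, it is free of finite rank. This gives (a). For (b), the same chain with $R = \ZZ[\chi]$ (a Noetherian ring, being the ring of integers of a number field) and the character version of Corollary~\ref{algz1} and Proposition~\ref{hamsymbc} yields that $\TT_{\ZZ[\chi]}(\Mk kN\chi\CC)$ is a quotient of the finitely generated $\ZZ[\chi]$-module $\TT_{\ZZ[\chi]}(\cMk kN\chi{\ZZ[\chi]})$, hence finitely generated; combined with torsion-freeness this is the claim (note we do not assert freeness in (b), since $\ZZ[\chi]$ need not be a PID and the module need not be free).

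The main obstacle, conceptually, is that the statement looks like it should have an elementary proof not invoking Eichler--Shimura, but in these notes the finite generation of $\TT_\ZZ(\Mkone kN\CC)$ is genuinely derived from the modular symbols side --- this is one of the payoffs of the whole setup --- so I would make sure to cite the chain Corollary~\ref{algz1}, Corollary~\ref{algz}, Proposition~\ref{hamsymbc}(d) correctly rather than attempting a bare-hands argument with $q$-expansions (which would essentially amount to reproving the integrality, i.e.\ that modular forms with integral $q$-expansion form a lattice, the very content flagged in the section introduction as a consequence of Eichler--Shimura). One small point to handle carefully: one must first establish the cuspidal case (where Eichler--Shimura part (b) applies directly) and then deduce the full-space case, or run both in parallel, exactly as Corollary~\ref{algz1} already does; the cuspidal Hecke algebra is a quotient of the full one via restriction, so finite generation of the full algebra immediately gives it for the cuspidal algebra too. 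Finally, for the "free of finite rank" conclusion in (a) I just invoke the structure theorem for finitely generated modules over the PID $\ZZ$ together with torsion-freeness.
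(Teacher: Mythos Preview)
Your proposal is correct and follows essentially the same approach as the paper: finite generation is obtained from the Eichler--Shimura corollaries (ultimately from the finite generation of the Hecke algebra on modular symbols), torsion-freeness from the Hecke algebra sitting inside a $\CC$-vector space, and freeness in part~(a) from the structure theorem over the PID~$\ZZ$. The paper's proof is more terse, citing only Corollary~\ref{algz} as a stand-in for the whole chain you spell out (Corollary~\ref{algz1}, the corollary to Proposition~\ref{factfg}, and Proposition~\ref{hamsymbc}(d)); one small imprecision in your write-up is that Corollary~\ref{algz1} identifies $\TT_\ZZ(\Mkone kN\CC)$ with $\TT_\ZZ(\cMkone kN\CC)$ rather than with $\TT_\ZZ(\cMkone kN\ZZ)$, and it is the proof of Proposition~\ref{hamsymbc}(d) that then exhibits the latter as a quotient of the former --- but you cite that proposition too, so the argument is complete.
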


\begin{proof}
(a) Due to the corollaries of the Eichler-Shimura theorem 
(Corollary~\ref{algz}) we know that
these algebras are finitely generated as $\ZZ$-modules. As they lie
inside a vector space, they are free (using the structure theory
of finitely generated modules over principal ideal domains).

(b) This is like (a), except that $\ZZ[\chi]$ need not be a principal
ideal domain, so that we can only conclude torsion-freeness, but not
freeness.
\end{proof}

\subsubsection*{Modular forms over rings}

Let $k \ge 1$ and $N \ge 1$. Let $R$ be any $\ZZ$-algebra (ring).
We now use the $q$-pairing to define modular (cusp) forms over~$R$.
We let
\begin{align*}
 \Mkone kN R := &\Hom_\ZZ (\TT_\ZZ(\Mkone kN\CC),R)\\
 \cong&\Hom_R (\TT_\ZZ(\Mkone kN\CC) \otimes_\ZZ R,R).
\end{align*}
We stress the fact that $\Hom_R$ denotes the homomorphisms as $R$-modules (and not as $R$-algebras; those will appear later).
The isomorphism is proved precisely as in Proposition~\ref{hamsymbc}~(c),
where we did not use the flatness assumption.
Every element $f$ of $\Mkone kN R$ thus corresponds
to a $\ZZ$-linear function $\Phi: \TT_\ZZ(\Mkone kN\CC) \to R$ and is uniquely
identified by its {\em formal $q$-expansion}
$$f = \sum_n \Phi(T_n) q^n = \sum_n a_n(f) q^n \in R[[q]].$$
We note that $\TT_\ZZ(\Mkone kN\CC)$ acts naturally on
$\Hom_\ZZ (\TT_\ZZ(\Mkone kN\CC),R)$, namely by
\begin{equation}\label{eq:hecke}
(T.\Phi)(S) = \Phi(TS) = \Phi(ST).
\end{equation}
This means that the action of $\TT_\ZZ(\Mkone kN\CC)$ on
$\Mkone kN R$ gives the same formulae as usual on formal
$q$-expansions.
For cusp forms we make the obvious analogous definition, i.e.\
\begin{align*}
 \Skone kN R := &\Hom_\ZZ (\TT_\ZZ(\Skone kN\CC),R)\\
 \cong & \Hom_R (\TT_\ZZ(\Skone kN\CC) \otimes_\ZZ R,R).
\end{align*}

We caution the reader that for modular forms which are not cusp forms
there also ought to be some $0$th coefficient in the formal $q$-expansion, 
for example, for recovering
the classical holomorphic $q$-expansion. Of course, for cusp forms we do not
need to worry.

Now we turn our attention to modular forms with a character.
Let $\chi: (\ZZ/N\ZZ)^\times \to \CC^\times$ be a
Dirichlet character and $\ZZ[\chi] \to R$ a ring homomorphism.
We now proceed analogously to the treatment of modular symbols
for a Dirichlet character. We work with $\ZZ[\chi]$ as the base ring
(and not $\ZZ$). We let
\begin{align*}
\Mk kN\chi R := &\Hom_{\ZZ[\chi]} (\TT_{\ZZ[\chi]}(\Mk kN\chi\CC),R) \\
\cong &\Hom_R (\TT_{\ZZ[\chi]}(\Mk kN\chi\CC) \otimes_{\ZZ[\chi]} R,R)
\end{align*}
and similarly for the cusp forms.

We remark that these definitions of $\Mkone kN\CC$, $\Mk k N \chi \CC$ etc.\
agree with those from section~\ref{sec:1}; thus, it is justified to use the same pieces of notation.
As a special case, we get that 
$\Mkone k N \ZZ$ precisely consists of those holomorphic modular
forms in $\Mkone kN\CC$ whose $q$-expansions take values in~$\ZZ$.

If $\ZZ[\chi] \xrightarrow{\pi} R=\FF$ with $\FF$
a finite field of characteristic~$p$ or $\Fbar_p$, we call
$\Mk k N \chi \FF$ the space of {\em mod $p$ modular forms
of weight $k$, level $N$ and character $\chi$}.
Of course, for the cuspidal space similar statements are made and we use similar notation.

We furthermore extend the notation for Hecke algebras introduced in section~\ref{sec:1} as follows.
If $S$ is an $R$-algebra and $M$ is an $S$-module admitting the action of Hecke operators $T_n$ for $n \in \NN$,
then we let $\TT_R(M)$ be the $R$-subalgebra of $\End_S(M)$ generated by all $T_n$ for $n \in \NN$.

We now study base change properties of modular forms over~$R$.

\begin{proposition}
\begin{enumerate}[(a)]
\item Let $\ZZ \to R \to S$ be ring homomorphisms. Then the following
statements hold.
\begin{enumerate}[(i)]
\item The natural map
$$ \Mkone kNR \otimes_R S \to \Mkone kNS$$
is an isomorphism.
\item The evaluation pairing
$$ \Mkone kNR \times \TT_\ZZ(\Mkone kN\CC) \otimes_\ZZ R \to R$$
 is the $q$-pairing and it is perfect.
\item The Hecke algebra $\TT_R(\Mkone kNR)$ is naturally isomorphic to\\ 
$\TT_\ZZ(\Mkone kN\CC) \otimes_\ZZ R$.
\end{enumerate}
\item If $\ZZ[\chi] \to R \to S$ are flat, then Statement~(i) holds
for $\Mk kN\chi R$.
\item If $\TT_{\ZZ[\chi]} (\Mk kN\chi\CC)$ is a free $\ZZ[\chi]$-module
and $\ZZ[\chi] \to R \to S$ are ring homomorphisms,
statements (i)-(iii) hold for $\Mk kN\chi R$.
\end{enumerate}
\end{proposition}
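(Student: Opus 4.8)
Everything is built on the description $\Mkone kNR=\Hom_\ZZ(\TT,R)$ with $\TT:=\TT_\ZZ(\Mkone kN\CC)$, which by Lemma~\ref{freealg}(a) is a \emph{finite free} $\ZZ$-module and a commutative $\ZZ$-algebra generated by the $T_n$. The plan is to deduce part~(a) from three elementary facts about a commutative algebra that is finite free over a base ring, to observe that part~(c) is literally the same argument with $\ZZ$ replaced by $\ZZ[\chi]$ and $\TT$ by the now-free algebra $\TT':=\TT_{\ZZ[\chi]}(\Mk kN\chi\CC)$, and to treat part~(b) separately by a finitely-presented-module argument, since there $\TT'$ need not be free.

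For part~(a) I would proceed as follows. \emph{(i):} Choosing a $\ZZ$-basis of $\TT$ identifies both $\Hom_\ZZ(\TT,R)\otimes_R S$ and $\Hom_\ZZ(\TT,S)$ with $S^{\,\mathrm{rank}}$, and the natural map $\psi\otimes s\mapsto \bigl(t\mapsto s\cdot(R\to S)(\psi(t))\bigr)$ is the identity on coordinates, hence an isomorphism; compatibility with formal $q$-expansions ($a_n(f\otimes s)=a_n(f)\,s$) is immediate. \emph{(ii):} The tensor--hom adjunction (as in the proof of Proposition~\ref{hamsymbc}(c), no flatness needed) identifies $\Hom_\ZZ(\TT,R)$ with the $R$-linear dual of the finite free $R$-module $\TT_R:=\TT\otimes_\ZZ R$; the evaluation pairing is then the canonical pairing of a finite free module with its dual, which is perfect because $\TT_R$ is reflexive. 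That this pairing is the $q$-pairing is the computation $a_1(T.\Phi)=(T.\Phi)(T_1)=\Phi(T\cdot T_1)=\Phi(T)$, using Equation~\ref{eq:hecke} and $T_1=\id$, together with $a_n(\Phi)=\Phi(T_n)$. \emph{(iii):} The $R$-algebra $\TT_R$ acts on its dual by $(t.\phi)(s):=\phi(ts)$, giving an $R$-algebra homomorphism $\TT_R\to\End_R\bigl(\Hom_\ZZ(\TT,R)\bigr)$; it is injective since if $t.\phi=0$ for all $\phi$ then taking $s=1$ forces $\phi(t)=0$ for all $\phi$, hence $t=0$ by reflexivity of $\TT_R$. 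Via the adjunction the image of $T_n\otimes 1$ is exactly the Hecke operator $T_n$ (again Equation~\ref{eq:hecke}), and since $\TT$—hence $\TT_R$—is generated as an algebra by the $T_n$, the image of $\TT_R$ in $\End_R$ is precisely $\TT_R(\Mkone kNR)$, so $\TT_R(\Mkone kNR)\cong\TT_\ZZ(\Mkone kN\CC)\otimes_\ZZ R$.

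Part~(c) requires no new idea: by hypothesis $\TT'=\TT_{\ZZ[\chi]}(\Mk kN\chi\CC)$ is finite free over $\ZZ[\chi]$, and $\Mk kN\chi R=\Hom_{\ZZ[\chi]}(\TT',R)$, so the three steps above apply verbatim with $(\ZZ,\TT)$ replaced by $(\ZZ[\chi],\TT')$, yielding (i)--(iii) for $\Mk kN\chi R$ and arbitrary $\ZZ[\chi]\to R\to S$.

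For part~(b) freeness is unavailable—by Lemma~\ref{freealg}(b) the algebra $\TT'$ is only finitely generated and torsion-free—but $\ZZ[\chi]$ is Noetherian, so $\TT'$ is finitely presented and so is $\TT'\otimes_{\ZZ[\chi]}R$ over $R$. By tensor--hom adjunction $\Mk kN\chi R=\Hom_{\ZZ[\chi]}(\TT',R)\cong\Hom_R(\TT'\otimes_{\ZZ[\chi]}R,R)$, and since $R\to S$ is flat, \cite{Eisenbud}, Prop.~2.10 gives
\begin{align*}
\Hom_R(\TT'\otimes_{\ZZ[\chi]}R,R)\otimes_R S &\cong \Hom_S\bigl((\TT'\otimes_{\ZZ[\chi]}R)\otimes_R S,\,S\bigr)\\
&= \Hom_S(\TT'\otimes_{\ZZ[\chi]}S,\,S)\;\cong\;\Mk kN\chi S,
\end{align*}
which is statement~(i); one checks the composite of these canonical isomorphisms is the natural map $\Phi\otimes s\mapsto s\cdot\bigl(\Phi\text{ followed by }R\to S\bigr)$. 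This last point—recognising that one must invoke Eisenbud's base-change criterion rather than imitating part~(a), because $\TT_{\ZZ[\chi]}(\Mk kN\chi\CC)$ can fail to be $\ZZ[\chi]$-free—is the only genuinely delicate issue; note that the flatness of $\ZZ[\chi]\to R$ in the hypothesis is not actually needed for~(i), only flatness of $R\to S$.
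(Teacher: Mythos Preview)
Your proposal is correct and follows essentially the same approach as the paper: for (a) you exploit that $\TT_\ZZ$ is finite free over~$\ZZ$ to pass the $\Hom$ through the tensor product, verify perfectness of the dual pairing on a finite free module, and check injectivity of $\TT_R\to\End_R(\Mkone kNR)$ via non-degeneracy; (c) repeats this verbatim over $\ZZ[\chi]$, and (b) invokes \cite{Eisenbud}, Prop.~2.10 for finitely presented modules under flat base change, exactly as the paper does. Your write-up is more explicit than the paper's (which is terse, especially in~(b) and~(c)), and your closing observation---that only flatness of $R\to S$, not of $\ZZ[\chi]\to R$, is actually used in~(b)---is a correct sharpening that the paper's own argument also supports but does not state.
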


\begin{proof}
(a) We use the following general statement, in which $M$ is assumed to
be a free finitely generated $R$-module and $N, T$ are $R$-modules:
$$ \Hom_R (M,N) \otimes_R T \cong \Hom_R (M, N\otimes_R T).$$
To see this, just see $M$ as $\bigoplus R$ and pull the direct sum out
of the $\Hom$, do the tensor product, and put the direct sum back
into the $\Hom$.

(i) Write $\TT_\ZZ$ for $\TT_\ZZ(\Mkone kN\CC)$. It is a free $\ZZ$-module by
Lemma~\ref{freealg}.
We have 
$$\Mkone kNR \otimes_R S = \Hom_\ZZ(\TT_\ZZ,R) \otimes_R S,$$
which by the above is isomorphic to $\Hom_\ZZ(\TT_\ZZ,R \otimes_R S)$ and
hence to $\Mkone kNS$.

(ii) The evaluation pairing $\Hom_\ZZ(\TT_\ZZ,\ZZ) \times \TT_\ZZ \to \ZZ$
is perfect, since $\TT_\ZZ$ is free as a $\ZZ$-module. The result
follows from~(i) by tensoring with~$R$.

(iii) We consider the natural map
$$ \TT_\ZZ \otimes_\ZZ R \to \End_R (\Hom_R(\TT_\ZZ \otimes_\ZZ R, R))$$
and show that it is injective. Its image is by definition $\TT_R(\Mkone kNR)$.
Let $T$ be in the kernel. Then $\phi(T) = 0$ for all 
$\phi \in \Hom_R(\TT_\ZZ \otimes_\ZZ R,R)$.
As the pairing in (ii) is perfect and, in particular, non-degenerate,
$T=0$ follows.

(b) Due to flatness we have 
$$\Hom_R(\TT_{\ZZ[\chi]} \otimes_{\ZZ[\chi]} R,R) \otimes_R S \cong \Hom_S (\TT_{\ZZ[\chi]} \otimes_{\ZZ[\chi]} S, S),$$
as desired.

(c) The same arguments as in~(a) work.
\end{proof}

\subsubsection*{Galois conjugacy classes}

By the definition of the Hecke action in equation~\eqref{eq:hecke}, the normalised Hecke eigenforms in 
the $R$-module $\Mkone k N R$ are precisely 
the $\ZZ$-algebra homomorphisms in
$\Hom_\ZZ (\TT_\ZZ(\Mkone kN\CC),R)$,
where the normalisation means that the identity operator $T_1$ is sent to~$1$.
Such an algebra homomorphism $\Phi$ is often referred to as a
{\em system of eigenvalues}, since the image of each $T_n$
corresponds to an eigenvalue of~$T_n$, namely to $\Phi(T_n) = a_n(f)$
(if $f$ corresponds to $\Phi$).

Let us now consider a perfect field $K$ (if we are working with a Dirichlet character,
we also want that $K$ admits a ring homomorphism $\ZZ[\chi] \to K$). 
Denote by $\Kbar$ an algebraic closure, so that we have
\begin{align*}
\Mkone kN \Kbar& = \Hom_\ZZ (\TT_\ZZ(\Mkone kN\CC),\Kbar)\\
& \cong \Hom_K (\TT_\ZZ(\Mkone kN\CC) \otimes_\ZZ K,\Kbar).
\end{align*}
We can compose any $\Phi \in \Hom_\ZZ (\TT_\ZZ(\Mkone kN\CC),\Kbar)$ by
any field automorphism $\sigma: \Kbar \to \Kbar$ fixing~$K$.
Thus, we obtain an action of the absolute Galois group
$\Gal(\Kbar/K)$ on $\Mkone kN \Kbar$ (on formal $q$-expansions,
we only need to apply $\sigma$ to the coefficients).
All this works similarly for the cuspidal subspace, too.

Like this, we also obtain a $\Gal(\Kbar/K)$-action on the
normalised eigenforms, and can hence speak about {\em Galois
conjugacy classes of eigenforms}.

\begin{proposition}\label{galoisconjugacy}
We have the following bijective correspondences:
\begin{align*}
 \Spec(\TT_K(\cdot)) &\overset{1-1}{\leftrightarrow}
\Hom_{K \textnormal{-alg}}(\TT_K(\cdot),\Kbar)/\Gal(\Kbar/K)\\
&\overset{1-1}{\leftrightarrow} \{\textnormal{ normalised eigenf.\ in $\cdot$ }\}/\Gal(\Kbar/K)
\end{align*}
and with $K = \Kbar$
$$ \Spec(\TT_\Kbar(\cdot)) \overset{1-1}{\leftrightarrow}
\Hom_{\Kbar \textnormal{-alg}}(\TT_\Kbar(\cdot),\Kbar)
\overset{1-1}{\leftrightarrow} \{\textnormal{ normalised eigenforms in $\cdot$ }\}.$$
Here, $\cdot$ stands for either $\Mkone kN\Kbar$, $\Skone kN\Kbar$
or the respective spaces with a Dirichlet character.
\end{proposition}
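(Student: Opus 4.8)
The plan is to unravel all three bijections using only the elementary fact that a finitely generated algebra over a field, here $\TT_K(\cdot)$, is Artinian (it is finite-dimensional as a $K$-vector space by the corollary to Proposition~\ref{factfg} together with Corollary~\ref{algz1}), plus the already-established duality between the space and its Hecke algebra. Write $\TT$ for $\TT_K(\cdot)$ and $\TT_{\Kbar} := \TT \otimes_K \Kbar$.

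First I would treat the case $K = \Kbar$, i.e.\ the displayed second line. Since $\TT_{\Kbar}$ is an Artinian $\Kbar$-algebra, it decomposes as a finite product of local Artinian $\Kbar$-algebras $\prod_i \TT_i$, and $\Spec(\TT_{\Kbar})$ is exactly the finite set of these local factors, i.e.\ the set of maximal ideals. A $\Kbar$-algebra homomorphism $\TT_{\Kbar} \to \Kbar$ must kill all but one factor and then, on the surviving local factor $\TT_i$, must kill its maximal ideal (the nilradical maps to $0$ since $\Kbar$ is reduced, and $\TT_i/\fm_i = \Kbar$ because $\Kbar$ is algebraically closed); conversely each factor gives exactly one such homomorphism. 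This gives the bijection $\Spec(\TT_{\Kbar}) \leftrightarrow \Hom_{\Kbar\text{-alg}}(\TT_{\Kbar},\Kbar)$. For the second bijection on that line, I invoke Lemma~\ref{eigenf} (in the form recalled in the excerpt for general $R$ via equation~\eqref{eq:hecke}): the natural map from the $q$-pairing duality identifies $\Hom_{\Kbar\text{-alg}}(\TT_{\Kbar},\Kbar)$ with the set of normalised eigenforms in $\cdot$, since a normalised eigenform is precisely a system of eigenvalues, i.e.\ a $\Kbar$-algebra homomorphism sending $T_1 \mapsto 1$ (and every algebra homomorphism is automatically normalised).

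Next I would deduce the first displayed line by descending along $\Gal(\Kbar/K)$. The inclusion $\TT \hookrightarrow \TT_{\Kbar}$ (injective because $K \to \Kbar$ is faithfully flat) induces a map $\Spec(\TT_{\Kbar}) \to \Spec(\TT)$; I claim it is surjective with fibres exactly the $\Gal(\Kbar/K)$-orbits. Surjectivity: going up for the integral extension $\TT \subseteq \TT_{\Kbar}$, or more concretely, a maximal ideal $\fm$ of $\TT$ has $\TT/\fm$ a finite field extension of $K$, hence embeds into $\Kbar$, and such an embedding is a point of $\Spec(\TT_{\Kbar})$ above $\fm$. For the fibres: $\Gal(\Kbar/K)$ acts on $\TT_{\Kbar} = \TT \otimes_K \Kbar$ through the second factor, fixing $\TT$, hence permutes $\Spec(\TT_{\Kbar})$ over each point of $\Spec(\TT)$; two points lie over the same $\fm$ iff the corresponding two embeddings $\TT/\fm \hookrightarrow \Kbar$ differ by an element of $\Gal(\Kbar/K)$, which holds precisely when the points are Galois-conjugate. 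This simultaneously identifies $\Hom_{K\text{-alg}}(\TT,\Kbar)$ with $\Hom_{\Kbar\text{-alg}}(\TT_{\Kbar},\Kbar)$ (restriction/extension of scalars) compatibly with the Galois action, and then with the normalised eigenforms modulo $\Gal(\Kbar/K)$ via the previous paragraph.

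**The main obstacle** is not really deep but needs care: one must know that $\TT_K(\cdot)$ is genuinely finite-dimensional over $K$ so that the Artinian decomposition is available, and one must be comfortable that the three "spaces" $\Mkone kN\Kbar$, $\Skone kN\Kbar$ and the character versions all carry a Hecke action for which Lemma~\ref{eigenf} / equation~\eqref{eq:hecke} applies verbatim — for the character case one simply works over $\ZZ[\chi]$ and uses $\TT_{\ZZ[\chi]}$, as set up in the excerpt, and assumes the ring homomorphism $\ZZ[\chi] \to K$ that is hypothesised there. Beyond that, the proof is a formal composition of (i) the structure theory of Artinian algebras over a field, (ii) Galois descent for $\Spec$, and (iii) the $q$-expansion duality already proved; I would present it in exactly that order and keep the character case to a one-line remark that "the same argument applies with $\ZZ$ replaced by $\ZZ[\chi]$ throughout".
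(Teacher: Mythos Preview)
Your proposal is correct and covers all the necessary ingredients. The paper itself defers the proof to an exercise whose hint suggests a slightly more direct route: rather than first passing to $\Kbar$ and then descending, one argues directly over~$K$ by observing that the kernel of any $K$-algebra homomorphism $\TT_K \to \Kbar$ is a prime (hence maximal, by Artinianity) ideal, that conversely each maximal ideal $\fm$ gives a finite extension $\TT_K/\fm$ of~$K$ which embeds into~$\Kbar$, and that two such embeddings with the same kernel differ by an element of $\Gal(\Kbar/K)$ via the isomorphism extension theorem. Your detour through the Artinian decomposition of $\TT_{\Kbar}$ and Galois descent on $\Spec$ is equivalent and perhaps more structural; it has the minor advantage of making the $K=\Kbar$ case fall out first as a clean special case, at the cost of a little extra bookkeeping with base change. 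Either way the eigenform bijection is Lemma~\ref{eigenf}, as you say.
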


We recall that $\Spec$ of a ring is the set
of prime ideals. In the next section we will see that in
$\TT_K(\cdot)$ and $\TT_\Kbar(\cdot)$ all prime ideals are
already maximal (it is an easy consequence of the finite
dimensionality).

\begin{proof}
Exercise~\ref{exgaloisconjugacy}.
\end{proof}

We repeat that the coefficients of any eigenform~$f$
in $\Mk kN\chi \Kbar$ lie in a finite extension of $K$, namely
in $\TT_K(\Mk kN\chi K)/ \fm$, when $\fm$ is the maximal
ideal corresponding to the conjugacy class of~$f$.

Let us note that the above discussion applies to 
$\Kbar=\CC$, $\Kbar = \Qbar$, $\Kbar = \Qbar_p$, as well
as to $\Kbar = \Fbar_p$.
In the next sections we will also take into account the finer
structure of Hecke algebras over $\cO$, or rather over
the completion of $\cO$ at one prime.

\subsubsection{Some commutative algebra}

In this section we leave the special context of modular forms
for a moment and provide quite
useful results from commutative algebra that will be applied
to Hecke algebras in the sequel.

We start with a simple case which we will prove directly.
Let $\TT$ be an {\em Artinian} algebra, i.e.\ an algebra in which
every descending chain
of ideals becomes stationary. Our main example will be finite
dimensional algebras over a field. That those are Artinian is obvious, since in every
proper inclusion of ideals the dimension diminishes.

For any ideal $\fa$ of $\TT$ the sequence $\fa^n$ becomes stationary,
i.e.\ $\fa^n = \fa^{n+1}$ for all $n$ ``big enough''.
Then we will use the notation $\fa^\infty$ for $\fa^n$.

\begin{proposition}\label{propartin}
Let $\TT$ be an Artinian ring.
\begin{enumerate}[(a)]
\item Every prime ideal of $\TT$ is maximal.
\item There are only finitely many maximal ideals in $\TT$.
\item Let $\fm$ be a maximal ideal of~$\TT$. It is the only maximal
ideal containing~$\fm^\infty$.
\item Let $\fm \neq \fn$ be two maximal ideals. 
For any $k \in \NN$ and $k=\infty$ the ideals $\fm^k$ and $\fn^k$
are coprime.
\item The Jacobson radical $\bigcap_{\fm \in \Spec(\TT)} \fm$ is equal
to the nilradical and consists of the nilpotent elements.
\item We have $\bigcap_{\fm \in \Spec(\TT)} \fm^\infty = (0)$.
\item (Chinese Remainder Theorem) The natural map
$$\TT \xrightarrow{a \mapsto (\dots, a + \fm^\infty, \dots)} 
     \prod_{\fm \in \Spec(\TT)} \TT/\fm^\infty$$
is an isomorphism. 
\item For every maximal ideal $\fm$, the ring $\TT/\fm^\infty$ is local with
maximal ideal~$\fm$ and is hence isomorphic to $\TT_\fm,$
the localisation of $\TT$ at~$\fm$.
\end{enumerate}
\end{proposition}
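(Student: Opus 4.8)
The plan is to establish the eight statements in a logical cascade, using only the Artinian hypothesis and elementary ring theory. First I would prove (a): if $\fp$ is a prime ideal, then $\TT/\fp$ is an Artinian domain; in an Artinian domain, for any nonzero $a$ the chain $(a) \supseteq (a^2) \supseteq \cdots$ stabilises, giving $a^n = a^{n+1}b$ for some $b$, hence $1 = ab$ (cancelling $a^n$), so $\TT/\fp$ is a field and $\fp$ is maximal. For (b), suppose there were infinitely many distinct maximal ideals $\fm_1, \fm_2, \ldots$; then $\fm_1 \supsetneq \fm_1 \cap \fm_2 \supsetneq \fm_1 \cap \fm_2 \cap \fm_3 \supsetneq \cdots$ is a strictly descending chain (strictness follows from the fact that distinct maximal ideals are coprime, so one cannot contain the intersection of the others), contradicting the descending chain condition.

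Next I would turn to the radical statements. For (e), the nilradical is always contained in the Jacobson radical; conversely, if $x$ lies in every maximal ideal, consider $(x) \supseteq (x^2) \supseteq \cdots$, which stabilises at some $(x^n) = (x^{n+1})$, so $x^n = x^{n+1}y$; then $e := x^n y$ is idempotent, and if $e \ne 0$ it would be a nonzero idempotent lying in every maximal ideal, forcing $1-e$ to be a non-unit that is not in any maximal ideal — impossible — so $e = 0$ and $x$ is nilpotent. For (c), note $\TT/\fm^\infty$ has the same maximal ideals as those containing $\fm^\infty$; since $\fm^\infty \subseteq \fm$ and $\fm$ is the radical-type limit, any maximal ideal $\fn \supseteq \fm^\infty$ satisfies $\fn \supseteq \fm$ after noting $\fm/\fm^\infty$ is nilpotent in $\TT/\fm^\infty$, hence contained in every prime; thus $\fn = \fm$. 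Statement (d) is then the standard fact that distinct maximal ideals are coprime ($\fm + \fn = \TT$), and coprimality is inherited by powers: $\fm^k + \fn^k \supseteq (\fm + \fn)^{2k-1} = \TT$; the case $k = \infty$ follows since $\fm^\infty \supseteq \fm^k$ is false in the wrong direction — instead use that $\fm^\infty$ and $\fn^\infty$ are still not both contained in a common maximal ideal by (c).

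For (f), by (c) and (e) we have $\bigcap_\fm \fm^\infty \subseteq \bigcap_\fm \fm = \Nil(\TT)$; since $\TT$ is Artinian, the nilradical is nilpotent, say $\Nil(\TT)^m = 0$, and $\Nil(\TT) = \bigcap \fm \supseteq \bigcap \fm^\infty$ while conversely each $\fm^\infty \supseteq \Nil(\TT)^\infty$... more cleanly: $\bigcap_\fm \fm^\infty$ is an ideal contained in $\Nil(\TT)$, hence nilpotent, hence equals $(0)$ once one checks it is idempotent (it equals its own square by the stabilisation property applied compatibly), or simply observe $\left(\bigcap \fm^\infty\right)$ is killed by noting it lies in $\fm^\infty$ for all $\fm$ and use (g). I would prove (g) directly: the map is injective because its kernel is $\bigcap \fm^\infty = (0)$ by (f), and surjective by the Chinese Remainder Theorem using the pairwise coprimality from (d) — here is where I would actually need (d) in the $k = \infty$ case, so the logical ordering matters. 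Finally (h): $\TT/\fm^\infty$ is local by (c) with maximal ideal the image of $\fm$; to identify it with $\TT_\fm$, observe that in the product decomposition (g) every element outside $\fm$ maps to a unit in the factor $\TT/\fm^\infty$ (since it is not in the unique maximal ideal), so the localisation map $\TT \to \TT/\fm^\infty$ factors through $\TT_\fm$ and is an isomorphism by universal properties.

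The main obstacle I anticipate is getting the logical dependencies right around statements (c), (d) with $k=\infty$, (f), and (g): several of these are mutually reinforcing, and one must be careful to prove the $k=\infty$ coprimality in (d) and the vanishing in (f) without circularity — the cleanest route is probably to prove (d) for finite $k$ first, deduce (g) for the finite-power quotients along a descending filtration, pass to the limit, and only then close the loop on (f). The idempotent-extraction argument in (e) is the other technical heart; everything else is bookkeeping.
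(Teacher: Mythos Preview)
Your overall architecture matches the paper's, and parts (a), (b), (c), (g), (h) are essentially the same. There are two points worth flagging.

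First, the circularity you worry about around (d), (f), (g) is real in your plan but dissolves with the paper's ordering. The paper proves (d) for \emph{all} $k$ (including $k=\infty$) uniformly from (c), not by the binomial bound $(\fm+\fn)^{2k-1}$: if $\fm^k + \fn^k$ were contained in a maximal ideal $\fp$, then since $\fm^\infty \subseteq \fm^k$ and $\fn^\infty \subseteq \fn^k$, both $\fm^\infty$ and $\fn^\infty$ lie in $\fp$, so (c) forces $\fm = \fp = \fn$. With (d) in hand for all $k$, (f) is one line: pairwise coprimality gives $\bigcap_\fm \fm^k = \prod_\fm \fm^k = \big(\prod_\fm \fm\big)^k = \big(\bigcap_\fm \fm\big)^k$, and by (e) this is $\Nil(\TT)^k$, which vanishes for large $k$ since the nilradical of an Artinian ring is nilpotent. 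This gives the clean linear chain (c) $\Rightarrow$ (d) $\Rightarrow$ (f) $\Rightarrow$ (g), so your proposed detour through finite-power quotients and limits is unnecessary.

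Second, a small technical slip in (e): the element $e := x^n y$ need not be idempotent. From $x^n = x^{n+1}y$ one iterates to $x^n = x^{2n}y^n$, and then $e := x^n y^n$ satisfies $e^2 = x^{2n}y^{2n} = x^n y^n = e$; your argument then goes through with this corrected $e$. (The paper simply cites the equality of Jacobson radical and nilradical as a standard commutative-algebra fact and does not give this idempotent argument, so your approach here is actually more self-contained.)
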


\begin{proof}
(a) Let $\fp$ be a prime ideal of~$\TT$. The quotient $\TT \twoheadrightarrow \TT/\fp$
is an Artinian integral domain, since ideal chains in $\TT/\fp$ lift to ideal
chains in~$\TT$. Let $0 \neq x \in \TT/\fp$. We have $(x)^n = (x)^{n+1} = (x)^\infty$
for some $n$ big enough. Hence, $x^n = y x^{n+1}$ with some $y \in \TT/\fp$ and
so $xy = 1$, as $\TT/\fp$ is an integral domain.

(b) Assume there are infinitely many maximal ideals, number a countable
subset of them by $\fm_1, \fm_2, \dots$. Form the descending ideal chain
$$ \fm_1 \supset \fm_1 \cap \fm_2 \supset \fm_1\cap\fm_2\cap\fm_3 \supset \dots.$$
This chain becomes stationary, so that for some $n$ we have
$$ \fm_1\cap\dots\cap\fm_n = \fm_1\cap\dots\cap\fm_n\cap\fm_{n+1}.$$
Consequently, $\fm_1\cap\dots\cap\fm_n \subset \fm_{n+1}$. We claim that there
is $i \in \{1,2,\dots,n\}$ with $\fm_i \subset \fm_{n+1}$. Due to the maximality
of $\fm_i$ we obtain the desired contradiction. 
To prove the claim we assume that $\fm_i \not\subseteq \fm_{n+1}$ for all~$i$.
Let $x_i \in \fm_i - \fm_{n+1}$ and $y = x_1\cdot x_2 \cdots x_n$.
Then $y \in \fm_1\cap\dots\cap\fm_n$, but $y \not\in \fm_{n+1}$ due to the primality
of~$\fm_{n+1}$, giving a contradiction.

(c) Let $\fm \in \Spec(\TT)$ be a maximal ideal. Assume
that $\fn$ is a different maximal ideal with $\fm^\infty \subset \fn$.
Choose $x \in \fm$. Some power $x^r \in \fm^\infty$ and, thus,
$x^r \in \fn$. As $\fn$ is prime, $x \in \fn$ follows, implying
$\fm \subseteq \fn$, contradicting the maximality of~$\fm$.

(d) Assume that $I := \fm^k + \fn^k \neq \TT$. Then $I$ is contained in some
maximal ideal $\fp$. Hence, $\fm^\infty$ and $\fn^\infty$ are contained in~$\fp$,
whence by (c), $\fm=\fn=\fp$; contradiction.

(e) It is a standard fact from Commutative Algebra that 
the nilradical (the ideal of nilpotent elements) is the intersection 
of the minimal prime ideals.

(f) For $k \in \NN$ and $k = \infty$, (d) implies
$$ \bigcap_{\fm \in \Spec(\TT)} \fm^k = \prod_{\fm \in \Spec(\TT)} \fm^k
= (\prod_{\fm \in \Spec(\TT)} \fm)^k = (\bigcap_{\fm \in \Spec(\TT)} \fm)^k.$$
By (e) we know that $\bigcap_{\fm \in \Spec(\TT)} \fm$ is the nilradical. It
can be generated by finitely many elements $a_1,\dots,a_n$ all of which are
nilpotent. So a high enough power of $\bigcap_{\fm \in \Spec(\TT)} \fm$ is zero.

(g) The injectivity follows from~(f). It suffices to show that the elements
$(0,\dots,0,1,0,\dots,0)$ are in the image of the map. Suppose the $1$ is at
the place belonging to~$\fm$. Due to coprimeness (d) for any maximal ideal $\fn\neq \fm$
we can find $a_\fn \in \fn^\infty$ and $a_\fm \in \fm^\infty$ such that $1= a_\fm + a_\fn$.
Let $x := \prod_{\fn \in \Spec(\TT), \fn \neq \fm} a_\fn$. 
We have $x \in \prod_{\fn \in \Spec(\TT), \fn \neq \fm} \fn^\infty$ and
$x = \prod_{\fn \in \Spec(\TT), \fn \neq \fm} (1-a_\fm) \equiv 1 \mod \fm$.
Hence, the map sends $x$ to $(0,\dots,0,1,0,\dots,0)$, proving the surjectivity.

(h) By (c), the only maximal ideal of~$\TT$ containing~$\fm^\infty$ is~$\fm$.
Consequently, $\TT/{\fm^\infty}$ is a local ring with maximal ideal the image of~$\fm$.
Let $s \in \TT - \fm$. As $s + \fm^\infty \not\in \fm/\fm^\infty$, the element
$s + \fm^\infty$ is a unit in $\TT/\fm^\infty$. Thus, the map
$$ \TT_\fm \xrightarrow{\frac{y}{s} \mapsto y s^{-1} + \fm^\infty} \TT/\fm^\infty$$
is well-defined. It is clearly surjective. Suppose $\frac{y}{s}$ maps to~$0$.
Since the image of~$s$ is a unit, $y \in \fm^\infty$ follows. 
The element $x$ constructed in~(g) is in $\prod_{\fn \in \Spec(\TT), \fn \neq \fm} \fn^\infty$, but not in~$\fm$.
By (f) and (d), $(0) = \prod_{\fm \in \Spec(\TT)} \fm^\infty$. Thus, $y \cdot x = 0$ and
also $\frac{y}{s} = \frac{yx}{sx} = 0$, proving the injectivity.
\end{proof}

A useful and simple way to rephrase a product decomposition as in~(g) is to
use idempotents. In concrete terms, the idempotents of $\TT$ (as in the proposition)
are precisely the elements of the form $(\dots,x_\fm,\dots)$
with $x_\fm \in \{0,1\} \subseteq \TT/\fm^\infty$.

\begin{definition}
Let $\TT$ be a ring. An {\em idempotent of~$\TT$} is an element~$e$ that satisfies
$e^2=e$.
Two idempotents $e$, $f$ are {\em orthogonal} if $ef=0$.
An idempotent $e$ is {\em primitive} if $e\TT$ is a local ring.
A set of idempotents $\{e_1,\dots, e_n\}$ is said to be {\em complete} if
$1 = \sum_{i=1}^n e_i$.
\end{definition}

In concrete terms for $\TT = \prod_{\fm \in \Spec(\TT)} \TT/\fm^\infty$, a complete
set of primitive pairwise orthogonal idempotents is given by
$$(1,0,\dots,0), (0,1,0,\dots,0), \dots, (0,\dots,0,1,0), (0,\dots,0,1).$$
In Exercise~\ref{exidempotent}, you are asked (among other things)
to prove that in the above case
$\fm^\infty$ is a principal ideal generated by an idempotent.

Below we will present an algorithm for computing a complete set of primitive
pairwise orthogonal idempotents for an Artinian ring.

We now come to a more general setting, namely working with a finite algebra~$\TT$
over a complete local ring instead of a field. We will lift the idempotents 
of the reduction of~$\TT$ (for the maximal ideal of the complete local ring)
to idempotents of~$\TT$ by Hensel's lemma. This gives us a proposition
very similar to Proposition~$\ref{propartin}$.

\begin{proposition}[Hensel's lemma]\label{hensel}
Let $R$ be a ring that is complete with respect to the ideal~$\fm$ and let
$f \in R[X]$ be a polynomial. If
$$ f(a) \equiv 0 \mod (f'(a))^2 \fm$$
for some $a \in R$, then there is $b \in R$ such that
$$ f(b) = 0 \textnormal{ and } b \equiv a \mod f'(a)\fm.$$
If $f'(a)$ is not a zero-divisor, then $b$ is unique with
these properties.
\end{proposition}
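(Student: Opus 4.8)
The plan is to realise $b$ by a Newton-type iteration, made rigorous by successive approximation in the $\fm$-adic topology. Write $c := f'(a)$. The only structural input about $f$ that I need is the algebraic Taylor expansion: for any $f \in R[X]$ there is $G \in R[X,Y]$ with
$$ f(X+Y) = f(X) + f'(X)\,Y + Y^2 G(X,Y),$$
which one verifies on monomials and extends by linearity. Since we cannot divide by $c$, I would look for $b$ of the special shape $b = a + ct$ with $t \in \fm$; any such $b$ automatically satisfies $b \equiv a \bmod c\fm$. Writing the hypothesis as $f(a) = c^2 m_0$ with $m_0 \in \fm$ and substituting $X = a$, $Y = ct$ above, one gets
$$ f(a + ct) = c^2\bigl(m_0 + t + t^2 G(a,ct)\bigr),$$
so it suffices to solve the fixed-point equation $t = \Phi(t)$ for $\Phi(t) := -m_0 - t^2 G(a,ct)$, a map that visibly sends $\fm$ into $\fm$.

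For existence I would set $t_0 := 0$, $t_{n+1} := \Phi(t_n)$, and prove by induction that $t_{n+1} - t_n \in \fm^{n+1}$: the base case is $t_1 - t_0 = -m_0 \in \fm$, and in the inductive step one expands $\Phi(t_n) - \Phi(t_{n-1})$ using $t_n^2 - t_{n-1}^2 = (t_n - t_{n-1})(t_n + t_{n-1})$ and the divisibility $G(a,ct_n) - G(a,ct_{n-1}) \in (t_n - t_{n-1})R$, so that each occurrence of the factor $t_n - t_{n-1}$ contributes a power $\fm^n$ while the surviving factors lie in $\fm$. Hence $(t_n)$ is $\fm$-adically Cauchy, so by completeness of $R$ it converges to some $t \in \fm$, and continuity of the polynomial map $\Phi$ forces $\Phi(t) = t$. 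Then $b := a + ct = a + f'(a)t$ satisfies $f(b) = 0$ and $b \equiv a \bmod f'(a)\fm$, as wanted. (The same estimate shows $\Phi(t) - \Phi(t') \in (t-t')\fm$ for $t,t' \in \fm$, i.e.\ $\Phi$ is an $\fm$-adic contraction, which is the conceptual reason the iteration converges.)

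For uniqueness, assume $c$ is not a zero-divisor. If $b$ satisfies $f(b) = 0$ and $b \equiv a \bmod c\fm$, write $b = a + ct$ with $t \in \fm$; the displayed identity gives $c^2\bigl(m_0 + t + t^2 G(a,ct)\bigr) = 0$, and since $c$, hence $c^2$, is a non-zero-divisor this forces $t = \Phi(t)$. So it is enough that $\Phi$ have at most one fixed point in $\fm$: if $t, t'$ are two, then $t - t' = \Phi(t) - \Phi(t') = \epsilon(t - t')$ for some $\epsilon \in \fm$, and $1 - \epsilon$ is a unit of $R$ (its inverse is the $\fm$-adically convergent geometric series $\sum_{i \ge 0}\epsilon^i$, again by completeness), whence $t = t'$ and $b$ is unique. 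I would also remark that this hypothesis is genuinely needed: if $c$ is a zero-divisor, the vanishing of $c^2$ times the inner factor no longer forces the inner factor to vanish, so several $b$ may occur.

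The step I expect to be the main obstacle is making the Newton iteration honest in the absence of a ring element playing the role of $1/f'(a)$: the reparametrisation $b = a + f'(a)t$ together with the convergence estimate $t_{n+1} - t_n \in \fm^{n+1}$, whose inductive proof is precisely where the quadratic structure of Newton's method and the exact form of the hypothesis $f(a) \equiv 0 \bmod (f'(a))^2\fm$ are used. The remaining ingredients — the Taylor identity, the invertibility of $1 + \fm$, and the passage from Cauchy sequences to limits — are routine once $R$ is complete with respect to $\fm$.
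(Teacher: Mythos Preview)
Your proof is correct and follows the standard Newton-iteration approach. The paper does not give its own argument for this proposition but simply cites \cite{Eisenbud}, Theorem~7.3; your reparametrisation $b = a + f'(a)t$ and contraction estimate $t_{n+1}-t_n \in \fm^{n+1}$ are exactly the ingredients of the proof given there, so your write-up is in effect a careful unpacking of the cited reference.
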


\begin{proof}
\cite{Eisenbud}, Theorem~7.3.
\end{proof}

Recall that the {\em height} of a prime ideal $\fp$ in a ring~$R$ is the supremum among
all $n \in \NN$ such that there are inclusions of prime ideals
$\fp_0 \subsetneq \fp_1 \subsetneq \dots \subsetneq \fp_{n-1} \subsetneq \fp$.
The {\em Krull dimension} of~$R$ is the supremum of the heights of the prime ideals of~$R$.

\begin{proposition}\label{commalg}
Let $\cO$ be an integral domain of characteristic zero which is a finitely
generated $\ZZ$-module. Write $\widehat{\cO}$ for the completion of $\cO$ at a 
maximal prime of~$\cO$ and denote by $\FF$ the residue field
and by $\Khat$ the fraction field of~$\widehat{\cO}$.
Let furthermore $\TT$ be a commutative $\cO$-algebra
which is finitely generated as an $\cO$-module.
For any ring homomorphism $\cO\to S$ write $\TT_S$ for $\TT \otimes_\cO S$.
Then the following statements hold.

\begin{enumerate}[(a)]
\item The Krull dimension of $\TT_{\widehat{\cO}}$ is less than or equal to~$1$,
i.e.\ between any prime ideal and any maximal ideal $\fp \subset \fm$ there is
no other prime ideal.
The maximal ideals of $\TT_{\widehat{\cO}}$ correspond bijectively under
taking pre-images to the maximal ideals of $\TT_\FF$. Primes $\fp$ of
height $0$ (i.e.\ those that do not contain any other prime ideal)
which are properly contained in a prime of height~$1$ (i.e.\ a maximal prime) of
$\TT_{\widehat{\cO}}$ are in bijection with primes of $\TT_\Khat$ under extension
(i.e.\ $\fp \TT_\Khat$), for which the notation $\fp^e$ will be used.

Under the correspondences, one has
$$\TT_{\FF,\fm} \cong \TT_{\widehat{\cO},\fm} \otimes_{\widehat{\cO}} \FF$$
and
$$\TT_{\widehat{\cO}, \fp} \cong \TT_{\Khat,\fp^e}.$$

\item The algebra $\TT_{\widehat{\cO}}$ decomposes as
$$ \TT_{\widehat{\cO}} \cong \prod_\fm \TT_{\widehat{\cO},\fm},$$
where the product runs over the maximal ideals $\fm$ of $\TT_{\widehat{\cO}}$.

\item The algebra $\TT_\FF$ decomposes as
$$ \TT_\FF \cong \prod_\fm \TT_{\FF,\fm},$$
where the product runs over the maximal ideals $\fm$ of $\TT_\FF$.

\item The algebra $\TT_\Khat$ decomposes as
$$ \TT_\Khat \cong \prod_\fp \TT_{\Khat,\fp^e} \cong  \prod_\fp \TT_{\widehat{\cO},\fp} ,$$
where the products run over the minimal prime ideals $\fp$ of
$\TT_{\widehat{\cO}}$ which are contained in a prime ideal of height~$1$.
\end{enumerate}
\end{proposition}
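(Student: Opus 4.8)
The plan is to deduce everything from two tools already available: the Artinian structure theory of Proposition~\ref{propartin} and Hensel's lemma (Proposition~\ref{hensel}), together with the elementary behaviour of prime ideals under integral extensions and localisation over a one-dimensional base. First I would settle the dimension count. Since $\cO$ is a characteristic-zero domain which is module-finite over $\ZZ$, the inclusion $\ZZ\hookrightarrow\cO$ is integral, so $\cO$ has Krull dimension $1$; localising at a maximal prime and completing preserves Krull dimension for Noetherian local rings, so $\dim\widehat{\cO}=1$. As $\TT_{\widehat{\cO}}$ is a finite, hence integral, $\widehat{\cO}$-algebra, going-up gives $\dim\TT_{\widehat{\cO}}\le\dim\widehat{\cO}=1$, which is exactly the assertion that no prime lies strictly between a height-$0$ prime and a maximal ideal. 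Integrality also controls contractions: a maximal ideal of $\TT_{\widehat{\cO}}$ contracts to a maximal ideal of $\widehat{\cO}$, i.e.\ to $\widehat{\fm}$, hence contains $\widehat{\fm}\TT_{\widehat{\cO}}$ and corresponds to a maximal ideal of $\TT_\FF=\TT_{\widehat{\cO}}/\widehat{\fm}\TT_{\widehat{\cO}}$; conversely such ideals pull back to maximal ideals of $\TT_{\widehat{\cO}}$. This gives the maximal-ideal correspondence of part~(a).

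Next come the decompositions. Both $\TT_\FF$ and $\TT_\Khat$ are finite-dimensional algebras over a field (because $\TT$ is $\cO$-finite), hence Artinian, so part~(c) and the first isomorphism of part~(d) are immediate from Proposition~\ref{propartin}(g),(h). For part~(b) I would lift idempotents. The ring $\TT_{\widehat{\cO}}$ is complete for the $\widehat{\fm}\TT_{\widehat{\cO}}$-adic topology (a finite module over the $\widehat{\fm}$-adically complete ring $\widehat{\cO}$) and $\widehat{\fm}\TT_{\widehat{\cO}}$ lies in its Jacobson radical. Starting from the complete set $\{\bar e_\fm\}$ of primitive pairwise orthogonal idempotents of $\TT_\FF$ furnished by Proposition~\ref{propartin}(g), I would apply Proposition~\ref{hensel} to $X^2-X$ at a lift of each $\bar e_\fm$: the derivative $2\bar e_\fm-1$ squares to $1$, hence is a unit, so the hypotheses hold and the idempotent lift $e_\fm\in\TT_{\widehat{\cO}}$ is unique. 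The uniqueness clause then makes orthogonality and completeness survive the lift: $e_\fm e_{\fm'}$ (for $\fm\ne\fm'$) and $1-\sum_\fm e_\fm$ are idempotents reducing to $0$, hence are $0$. Consequently $\TT_{\widehat{\cO}}\cong\prod_\fm e_\fm\TT_{\widehat{\cO}}$, each factor is an $\widehat{\cO}$-finite ring whose reduction $\bar e_\fm\TT_\FF=\TT_{\FF,\fm}$ is local, so by the contraction argument above $e_\fm\TT_{\widehat{\cO}}$ has a unique maximal ideal and equals the localisation $\TT_{\widehat{\cO},\fm}$; reducing mod $\widehat{\fm}$ gives $\TT_{\FF,\fm}\cong\TT_{\widehat{\cO},\fm}\otimes_{\widehat{\cO}}\FF$.

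Finally I would treat the height-$0$ primes and part~(d). Writing $\Khat$ as the localisation of $\widehat{\cO}$ at $\widehat{\cO}\setminus\{0\}$, the ring $\TT_\Khat$ is the corresponding localisation of $\TT_{\widehat{\cO}}$, so $\Spec\TT_\Khat$ is identified with the set of primes $\fp$ of $\TT_{\widehat{\cO}}$ with $\fp\cap\widehat{\cO}=(0)$, and $\TT_{\widehat{\cO},\fp}\cong\TT_{\Khat,\fp^e}$ because every nonzero element of $\widehat{\cO}$ is already a unit in $\TT_{\widehat{\cO},\fp}$. It remains to match this set with the height-$0$ non-maximal primes: a prime contracting to $\widehat{\fm}$ has a domain quotient that is finite over the field $\FF$, hence is maximal, so every non-maximal prime contracts to $(0)$; and conversely, if $\fp\cap\widehat{\cO}=(0)$ then any $\fq\subseteq\fp$ also contracts to $(0)$, so $\fq\TT_\Khat\subseteq\fp\TT_\Khat$ inside the Artinian ring $\TT_\Khat$, where $\fp\TT_\Khat$ is minimal, forcing $\fq=\fp$; thus $\fp$ has height $0$ and is strictly contained in any maximal ideal above it (which contracts to $\widehat{\fm}\ne(0)$). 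Combining this with the Artinian decomposition of $\TT_\Khat$ from Proposition~\ref{propartin} and the identification $\TT_{\Khat,\fp^e}\cong\TT_{\widehat{\cO},\fp}$ yields part~(d).

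The step requiring genuine care, rather than bookkeeping, is the idempotent lifting in part~(b): Hensel's lemma produces only one idempotent at a time, and it is the uniqueness assertion of Proposition~\ref{hensel} — available here precisely because the relevant derivatives are units, not merely non-zero-divisors — that is needed to propagate orthogonality and completeness of the whole system. Everything else reduces to contraction and localisation arguments over the one-dimensional ring $\widehat{\cO}$ together with the already-proved Artinian case.
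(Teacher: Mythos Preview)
Your proof is correct and follows essentially the same approach as the paper: the dimension bound via integrality over the one-dimensional $\widehat{\cO}$, Proposition~\ref{propartin} for parts~(c) and~(d), and Hensel lifting of idempotents for part~(b), with the correspondences in~(a) (which the paper leaves as Exercise~\ref{excorrespondences}) filled in via the standard contraction arguments. The only cosmetic difference is in the orthogonality step: you invoke the uniqueness clause of Hensel's lemma to kill $e_\fm e_{\fm'}$ and $1-\sum_\fm e_\fm$, whereas the paper argues directly that $e_ie_j=(e_ie_j)^d$ lies in every power of the maximal ideal and hence vanishes by separatedness.
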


\begin{proof} 
We first need that $\widehat{\cO}$ has Krull dimension~$1$. This, however,
follows from the fact that $\cO$ has Krull dimension~$1$, as it is an integral extension of~$\ZZ$,
and the correspondence between the prime ideals of a ring and its completion.
As $\TT_{\widehat{\cO}}$ is a finitely generated $\widehat{\cO}$-module, 
$\TT_{\widehat{\cO}}/\fp$ with a prime $\fp$ is an integral domain which is 
a finitely generated $\widehat{\cO}/(\fp \cap \widehat{\cO})$-module.  
Hence, it is either a finite field (when the prime ideal $\fp \cap \widehat{\cO}$ is the
unique maximal ideal of~$\widehat{\cO}$) or a finite
extension of $\widehat{\cO}$ (when $\fp \cap \widehat{\cO}=0$ so that the structure map
$\widehat{\cO} \to \TT_{\widehat{\cO}}/\fp$ is injective). This proves that the height of $\fp$ is less
than or equal to~$1$.  The correspondences and the isomorphisms of Part~(a) are 
the subject of Exercise~\ref{excorrespondences}.

We have already seen Parts~(c) and~(d) in Lemma~\ref{propartin}. Part~(b)
follows from~(c) by applying Hensel's lemma (Proposition~\ref{hensel}) to the idempotents
of the decomposition of~(c). We follow \cite{Eisenbud}, Corollary~7.5, 
for the details. Since $\widehat{\cO}$ is complete with respect 
to some ideal~$\fp$, so is $\TT_{\widehat{\cO}}$ as it is a finitely generated $\widehat{\cO}$-module. Hence, we may use
Hensel's lemma in $\TT_{\widehat{\cO}}$.
Given an idempotent $\overline{e}$ of $\TT_\FF$, we will first show
that it lifts to a unique idempotent of~$\TT_{\widehat{\cO}}$.
Let $e$ be any lift of $\overline{e}$ and let $f(X) = X^2 - X$ be a polynomial 
annihilating~$\overline{e}$.
We have that $f'(e) = 2e-1$ is a unit since
$(2e-1)^2 = 4e^2-4e+1\equiv 1 \mod \fp$.
Hensel's lemma now gives us a unique root $e_1 \in \TT_{\widehat{\cO}}$ of~$f$,
i.e.\ an idempotent, lifting~$\overline{e}$.

We now lift every element of a set of pairwise orthogonal idempotents
of $\TT_\FF$. It now suffices to show that the lifted idempotents are also
pairwise orthogonal (their sum is $1$; otherwise we would get a contradiction
in the correspondences in~(a): there cannot be more idempotents in $\TT_{\widehat{\cO}}$
than in $\TT_\FF$). As their reductions are orthogonal, a product $e_ie_j$
of lifted idempotents is in~$\fp$. Hence, $e_ie_j=e_i^de_j^d \in \fp^d$ for
all~$d$, whence $e_i e_j = 0$, as desired.
\end{proof}

\subsubsection{Commutative algebra of Hecke algebras}

Let $k\ge 1$, $N \ge 1$ and $\chi: (\ZZ/N\ZZ)^\times \to \CC^\times$.
Moreover, let $p$ be a prime, $\cO := \ZZ[\chi]$, $\fP$ a maximal
prime of $\cO$ above~$p$, and let $\FF$
be the residue field of $\cO$ modulo~$\fP$.
We let $\widehat{\cO}$ denote the completion of $\cO$
at~$\fP$. Moreover, the field of fractions of $\widehat{\cO}$
will be denoted by~$\Khat$ and an algebraic closure by $\Khatbar$.
For $\TT_\cO(\Mk kN\chi\CC)$ we only write $\TT_\cO$
for short, and similarly over other rings.
We keep using the fact that $\TT_\cO$ is finitely generated
as an $\cO$-module.
We shall now apply Proposition~\ref{commalg} to $\TT_{\widehat{\cO}}$.

\begin{proposition}\label{prop:pure-one}
The Hecke algebras $\TT_{\cO}$ and $\TT_{\widehat{\cO}}$ are pure of Krull dimension~$1$,
i.e.\ every maximal prime contains some minimal prime ideal.
\end{proposition}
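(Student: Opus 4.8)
The plan is to deduce this from Proposition~\ref{commalg} together with the one extra ingredient that distinguishes Hecke algebras from arbitrary finite $\cO$-algebras, namely the torsion-freeness of Lemma~\ref{freealg}: $\TT_\cO = \TT_\cO(\Mk kN\chi\CC)$ is a finitely generated, torsion-free $\cO$-module (and $\cO = \ZZ[\chi]$; in the case without character, $\cO = \ZZ$). Base changing along the flat map $\cO \to \widehat{\cO}$ and using that $\widehat{\cO}$ is a complete discrete valuation ring, the algebra $\TT_{\widehat{\cO}} = \TT_\cO \otimes_\cO \widehat{\cO}$ is again finitely generated and torsion-free over $\widehat{\cO}$ (a finitely generated torsion-free module over the Dedekind domain $\cO$ is projective, hence its base change to the local ring $\widehat{\cO}$ is free, in particular torsion-free). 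Thus it suffices to prove the following purely commutative-algebraic statement: if $A$ is a Noetherian integral domain of Krull dimension~$1$ which is not a field, and $\TT$ is a commutative ring that is finitely generated and torsion-free as an $A$-module, then every maximal ideal of $\TT$ has height~$1$. I would apply this with $A = \cO$ and with $A = \widehat{\cO}$; in each case $A$ is a $1$-dimensional Noetherian domain that is not a field.

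First I would record that the structure map $A \to \TT$ is injective (torsion-freeness), so $\TT$ is a module-finite, hence integral, ring extension of $A$; therefore $\dim \TT = \dim A = 1$, and every maximal ideal of $\TT$ has height at most~$1$. It remains to exclude height~$0$, i.e.\ to show that no maximal ideal of $\TT$ is a minimal prime. Suppose $\fm \subset \TT$ were simultaneously maximal and minimal. Since $\TT$ is Noetherian, the minimal prime $\fm$ is an associated prime, so $\fm = \Ann_\TT(x)$ for some $x \in \TT \setminus \{0\}$. In particular every element of $\fm \cap A$ annihilates the nonzero element $x$; as $\TT$ is torsion-free over $A$, this forces $\fm \cap A = (0)$. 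Consequently $\TT/\fm$ is a field which is finitely generated as a module over $A/(\fm \cap A) = A$, so $A \hookrightarrow \TT/\fm$ is an integral ring extension with $\TT/\fm$ a field; by the standard fact that a subring of a field over which the field is integral is itself a field, $A$ would be a field, contradicting our hypothesis. Hence every maximal ideal of $\TT$ has height exactly~$1$, equivalently strictly contains a (necessarily minimal) prime ideal, i.e.\ $\TT$ is pure of Krull dimension~$1$.

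Applying this to $\TT_\cO$ and to $\TT_{\widehat{\cO}}$ yields the proposition; for $\TT_{\widehat{\cO}}$ one may alternatively take the bound $\dim \TT_{\widehat{\cO}} \le 1$ and the bijection of maximal ideals from Proposition~\ref{commalg}(a) as given, so that the only thing left is the absence of an isolated closed point, which is exactly the contradiction obtained above. I do not expect a serious obstacle: the argument is a short assembly of standard facts (minimal primes of a Noetherian ring consist of zero-divisors; integral extensions preserve Krull dimension; a domain admitting an integral field extension is a field), with no computation. The one point worth stressing — and the only place where we go beyond Proposition~\ref{commalg} — is that torsion-freeness over $\cO$ is genuinely needed: for a general finite $\cO$-algebra the conclusion fails (e.g.\ $\FF_p$ as a $\ZZ$-algebra has a height-$0$ maximal ideal), and it is precisely Lemma~\ref{freealg} that rules this out for Hecke algebras.
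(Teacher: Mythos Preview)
Your proof is correct and takes a genuinely different route from the paper's. The paper first reduces to $\TT_{\widehat{\cO}}$, invokes the product decomposition $\TT_{\widehat{\cO}} \cong \prod_\fm \TT_{\widehat{\cO},\fm}$ from Proposition~\ref{commalg}, and then, for each local factor $\TT_i$, uses the embedding of $\TT_i$ into a matrix algebra over~$\Khat$ (which is where torsion-freeness enters, implicitly) together with an eigenvector argument to exhibit a prime strictly below the maximal ideal. Your argument bypasses both the local decomposition and the linear-algebra construction: you work directly over~$\cO$ (or~$\widehat{\cO}$) and appeal only to standard commutative algebra --- minimal primes of a Noetherian ring are associated primes, hence annihilators of nonzero elements; torsion-freeness then forces $\fm \cap A = (0)$; and an integral extension of a domain by a field makes the domain a field. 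This is cleaner and more self-contained; the paper's approach has the minor advantage of actually \emph{constructing} the height-$0$ prime (as the kernel of an eigenvalue map), which hints at the connection with eigenforms exploited later, e.g.\ in Proposition~\ref{prop:deligne-serre}. Both arguments ultimately rest on the same indispensable input, namely the torsion-freeness of Lemma~\ref{freealg}, and you are right to flag that this is what separates Hecke algebras from arbitrary finite $\cO$-algebras.
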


\begin{proof}
It suffices to prove that $\TT_{\widehat{\cO}}$ is pure of Krull dimension~$1$ because
completion of $\TT_{{\cO}}$ at a maximal ideal of $\calO$ does not change the Krull dimension.
First note that $\widehat{\cO}$ is pure of Krull dimension~$1$ as it
is an integral extension of~$\ZZ_p$ (and the Krull dimension is an invariant in integral extensions).
With the same reasoning, $\TT_{\widehat{\cO}}$ is of Krull dimension~$1$; we have to see that it is pure.
According to proposition~\ref{commalg}, $\TT_{\widehat{\cO}}$ is the direct product of finite local
$\widehat{\cO}$-algebras~$\TT_i$. As each $\TT_i$ embeds into a finite dimensional matrix algebra over~$\Khat$,
it admits a simultaneous eigenvector (after possibly a finite extension of $\Khat$) for the standard action
of the matrix algebra on the corresponding $\Khat$-vector space and the map $\varphi$ sending
an element of $\TT_i$ to its eigenvalue is non-trivial and its kernel is a prime ideal strictly contained
in the maximal ideal of~$\TT_i$.
To see this, notice that the eigenvalues are integral, i.e.\ lie in the valuation ring of a finite extension of~$\Khat$,
and can hence be reduced modulo the maximal ideal. The kernel of $\varphi$ followed by the reduction map is the required maximal ideal.
This proves that the height of the maximal ideal is~$1$.
\end{proof}

By Proposition~\ref{commalg}, minimal primes of $\TT_{\widehat{\cO}}$
correspond to the maximal primes of $\TT_\Khat$ and hence
to $\Gal(\Khatbar/\Khat)$-conjugacy classes of eigenforms
in $\Mk kN\chi\Khatbar$. By a brute force identification of
$\Khatbar = \Qbar_p$ with $\CC$ we may still think about
these eigenforms as the usual holomorphic ones (the Galois
conjugacy can then still be seen as conjugacy by a decomposition
group above~$p$ inside the absolute Galois group of
the field of fractions of~$\cO$).

Again by Proposition~\ref{commalg}, maximal prime ideals of $\TT_{\widehat{\cO}}$
correspond to the maximal prime ideals of $\TT_\FF$ and hence
to $\Gal(\Fbar/\FF)$-conjugacy classes of eigenforms
in $\Mk kN\chi\Fbar$.

The spectrum of $\TT_{\widehat{\cO}}$ allows one to phrase very
elegantly when conjugacy classes of eigenforms are congruent
modulo a prime above~$p$. Let us first explain what that means.
Normalised eigenforms~$f$ take their coefficients $a_n(f)$ in
rings of integers of number fields ($\TT_\cO / \fm$, when $\fm$
is the kernel of the $\cO$-algebra homomorphism $\TT_\cO \to \CC$,
given by $T_n \mapsto a_n(f)$), so they can be reduced modulo
primes above~$p$ (for which we will often just say ``reduced
modulo~$p$'').
The reduction modulo a prime above~$p$ of the $q$-expansion 
of a modular form $f$ in $\Mk kN\chi\CC$ is the
formal $q$-expansion of an eigenform in $\Mk kN\chi\Fbar$.

If two normalised eigenforms $f,g$ in $\Mk kN\chi\CC$ or
$\Mk kN\chi\Khatbar$ reduce to the same element in
$\Mk kN\chi\Fbar$, we say that they are {\em congruent
modulo~$p$}. 

Due to Exercise~\ref{exconj}, we may speak about {\em reductions
modulo~$p$} of $\Gal(\Khatbar/\Khat)$-conjugacy classes of normalised 
eigenforms to $\Gal(\Fbar/\FF)$-conjugacy classes.
We hence say that two $\Gal(\Khatbar/\Khat)$-con\-ju\-gacy
classes, say corresponding to normalised eigenforms $f,g$,
respectively, minimal ideals $\fp_1$
and $\fp_2$ of $\TT_{\widehat{\cO}}$, are {\em congruent
modulo~$p$}, if they reduce to the same
$\Gal(\Fbar/\FF)$-conjugacy class.

\begin{proposition}\label{conjugacy}
The $\Gal(\Khatbar/\Khat)$-conjugacy classes belonging to minimal
primes $\fp_1$ and $\fp_2$ of $\TT_{\widehat{\cO}}$ are congruent
modulo~$p$ if and only if they are contained in a
common maximal prime $\fm$ of $\TT_{\widehat{\cO}}$.
\end{proposition}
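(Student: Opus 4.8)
The plan is to reduce both sides of the claimed equivalence to the single question: which maximal ideal of $\TT_{\widehat{\cO}}$ lies above a given minimal prime. First I would record the structural fact underlying everything. By Proposition~\ref{commalg}(b) we have $\TT_{\widehat{\cO}} \cong \prod_{\fm} \TT_{\widehat{\cO},\fm}$, a finite product of local $\widehat{\cO}$-algebras indexed by the maximal ideals. Hence a minimal prime $\fp$ of $\TT_{\widehat{\cO}}$ lives in exactly one factor, and is therefore contained in a \emph{unique} maximal ideal of $\TT_{\widehat{\cO}}$, which I will write $\fm(\fp)$. Equivalently and more directly: $\TT_{\widehat{\cO}}/\fp$ is an integral domain which is module-finite over the complete local ring $\widehat{\cO}$, hence is itself local; its maximal ideal $\fq$ has a finite residue field $\kappa_\fp$, a finite extension of $\FF$, and $\fm(\fp)$ is the preimage of $\fq$ under $\TT_{\widehat{\cO}} \twoheadrightarrow \TT_{\widehat{\cO}}/\fp$. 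In particular, $\fp_1$ and $\fp_2$ are contained in a common maximal prime of $\TT_{\widehat{\cO}}$ if and only if $\fm(\fp_1) = \fm(\fp_2)$.

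Next I would identify the reduction modulo $p$ of the conjugacy class attached to $\fp$ with the conjugacy class attached to $\fm(\fp)$. Under Proposition~\ref{galoisconjugacy} applied over $\Khat$, the minimal prime $\fp$ corresponds to the $\Gal(\Khatbar/\Khat)$-conjugacy class of the eigenform $f \in \Mk kN\chi\Khatbar$ whose $n$-th coefficient $a_n(f)$ is the image of $T_n$ in $\TT_{\widehat{\cO}}/\fp$; as noted in the proof of Proposition~\ref{prop:pure-one} these images are integral, so $\TT_{\widehat{\cO}}/\fp$ sits inside the valuation ring of a finite extension of $\Khat$, and reducing the $q$-expansion of $f$ modulo $p$ amounts to composing $T_n \mapsto a_n(f)$ with the residue map $\TT_{\widehat{\cO}}/\fp \to \kappa_\fp \hookrightarrow \Fbar$. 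The resulting $\FF$-algebra homomorphism $\TT_{\widehat{\cO}} \to \Fbar$ factors through $\TT_{\widehat{\cO}}/\fp$, so its kernel is a maximal ideal containing $\fp$, namely $\fm(\fp)$; thus the reduced eigenform represents precisely the $\Gal(\Fbar/\FF)$-conjugacy class corresponding to $\fm(\fp)$ via Proposition~\ref{galoisconjugacy} over $\FF$. That this does not depend on the choices made inside the conjugacy class — i.e.\ that reduction modulo $p$ is well defined on conjugacy classes — is Exercise~\ref{exconj}, which I would cite at this point.

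Combining the two steps closes the argument: $\fp_1$ and $\fp_2$ are congruent modulo $p$ $\iff$ their reductions determine the same $\Gal(\Fbar/\FF)$-conjugacy class $\iff$ $\fm(\fp_1) = \fm(\fp_2)$ $\iff$ $\fp_1$ and $\fp_2$ lie in a common maximal prime of $\TT_{\widehat{\cO}}$. The commutative algebra here — uniqueness of $\fm(\fp)$ — is immediate from Proposition~\ref{commalg}; the point I would be most careful about is the dictionary in the middle step, namely matching the concrete operation ``reduce the $q$-expansion of an eigenform modulo a prime of its coefficient ring'' with the purely ring-theoretic passage to $\TT_{\widehat{\cO}}/\fp$ modulo its maximal ideal, and verifying that the Galois-conjugacy bookkeeping on the characteristic-$0$ side and the characteristic-$p$ side is consistent. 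That is bookkeeping rather than substance, but it is where an error would most plausibly creep in.
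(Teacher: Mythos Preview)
Your proposal is correct and is exactly the kind of definition-unwinding the paper intends: the paper's own proof is simply a pointer to Exercise~\ref{exconjugacy}, whose hint is ``it suffices to write out the definitions.'' Your two steps --- uniqueness of the maximal ideal $\fm(\fp)$ above a minimal prime via the product decomposition of Proposition~\ref{commalg}(b), and the identification of the reduction of the eigenform attached to $\fp$ with the eigenform attached to $\fm(\fp)$ via Proposition~\ref{galoisconjugacy} --- constitute precisely that unwinding.
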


\begin{proof}
Exercise~\ref{exconjugacy}.
\end{proof}

We mention the fact that if $f$ is a newform belonging to the maximal ideal
$\fm$ of the Hecke algebra $\TT := \TT_\QQ(S_k(\Gamma_1(N),\CC))$, then $\TT_\fm$ is isomorphic
to $\QQ_f = \QQ(a_n | n \in \NN)$.
This follows from newform (Atkin-Lehner) theory (see \cite[\S5.6-5.8]{DS}), which implies that the Hecke
algebra on the newspace is diagonalisable, so that it is the direct product of the coefficient fields.

We include here the famous Deligne-Serre lifting lemma \cite[Lemme~6.11]{DeligneSerre}, which we can easily prove with the tools developed so far.

\begin{proposition}[Deligne-Serre lifting lemma]\label{prop:deligne-serre}
Any normalised eigenform $\fbar \in \Skone kN{\Fbar_p}$ is the reduction of a normalised eigenform $f \in \Skone kN\CC$.
\end{proposition}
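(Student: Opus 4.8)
Set $\TT := \TT_\ZZ(\Skone kN\CC)$; by Lemma~\ref{freealg} this is a commutative ring that is free of finite rank as a $\ZZ$-module, hence a finite $\ZZ$-algebra. Recall from the definition of mod-$p$ modular forms (and Proposition~\ref{galoisconjugacy}) that the normalised eigenform $\fbar$ is nothing but a $\ZZ$-algebra homomorphism $\overline{\lambda}\colon \TT \to \Fbar_p$ with $\overline{\lambda}(T_n) = a_n(\fbar)$; its kernel $\fm := \ker\overline{\lambda}$ is a maximal ideal of $\TT$ whose residue field $\TT/\fm$ is a finite field of characteristic $p$. Dually, a normalised (automatically cuspidal) eigenform in $\Skone kN\CC$ is a $\ZZ$-algebra homomorphism $\lambda\colon\TT\to\CC$, and $\fbar$ is the reduction of the eigenform attached to $\lambda$ precisely when, for a suitable prime above $p$, reducing $\lambda$ gives back $\overline{\lambda}$. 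So the whole task is to construct such a $\lambda$.

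The first step is to use that $\TT$ is pure of Krull dimension $1$; this is Proposition~\ref{prop:pure-one}, whose proof uses nothing about the Dirichlet character and applies verbatim to the cuspidal Hecke algebra over $\ZZ$ (which again acts faithfully on a characteristic-$0$ vector space). Consequently $\fm$ strictly contains a minimal prime $\fp$ of $\TT$. Next I would observe that $\TT/\fp$ is a domain, finitely generated as a $\ZZ$-module, and of characteristic $0$: were $p\in\fp$, then $\TT/\fp$ would be a domain finite over $\FF_p$, hence a finite field, so $\fp$ would be maximal, contradicting $\fp\subsetneq\fm$. Thus $\TT/\fp$ is an order in the number field $L := \Frac(\TT/\fp)$, and, being a finitely generated $\ZZ$-module, it sits inside the ring of integers $\cO_L$, which is a finite, hence integral, ring extension of $\TT/\fp$.

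Now comes the construction of $\lambda$. Since $\cO_L$ is integral over $\TT/\fp$, lying-over supplies a prime $\fP$ of $\cO_L$ with $\fP\cap(\TT/\fp)=\fm/\fp$; because $p\in\fm$ this $\fP$ lies over $p$. Fixing any embedding $L\hookrightarrow\CC$ and composing
$$ \lambda\colon \TT \twoheadrightarrow \TT/\fp \hookrightarrow \cO_L \hookrightarrow \CC $$
yields a $\ZZ$-algebra homomorphism, that is, a normalised cuspidal eigenform $f=\sum_n \lambda(T_n)q^n\in\Skone kN\CC$. Its coefficients lie in $\cO_L$, so $f$ can be reduced at $\fP$, giving the system of eigenvalues $\TT\xrightarrow{\lambda}\cO_L\to\cO_L/\fP\hookrightarrow\Fbar_p$. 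By the choice of $\fP$, the kernel of this reduction is exactly $\fm$; hence it and $\overline{\lambda}$ both factor through $\TT/\fm$ and determine two embeddings of the finite field $\TT/\fm$ into $\Fbar_p$, which differ by a power of the Frobenius of $\Fbar_p$. Composing the embedding $\cO_L/\fP\hookrightarrow\Fbar_p$ with that Frobenius power — which is one of the embeddings implicit in the notion of reduction, cf. the parenthetical in Theorem~\ref{delignefp} — makes the reduction of $f$ agree with $\overline{\lambda}$ on all of $\TT$; that is, $f$ reduces to $\fbar$, as required.

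I expect the only non-formal input to be the purity of $\TT$ (Proposition~\ref{prop:pure-one}): it is exactly what forces $\fm$ to lie on a characteristic-$0$ irreducible component of $\TT$, and hence what guarantees the existence of any characteristic-$0$ eigenform reducing to $\fbar$; the remaining steps are standard commutative algebra of finite extensions of $\ZZ$ (lying-over, orders in number fields, conjugacy of embeddings of finite fields). A slicker but essentially equivalent packaging works with $\TT_{\ZZ_p}:=\TT\otimes_\ZZ\ZZ_p$, quoting Propositions~\ref{commalg} and~\ref{conjugacy} after identifying $\Qbar_p$ with $\CC$: there $\fm$ corresponds to a maximal ideal of $\TT_{\FF_p}$, the minimal prime $\fp\subsetneq\fm$ to a minimal prime of $\TT_{\ZZ_p}$ and hence to an eigenform over $\Qbar_p$, and the inclusion $\fp\subset\fm$ is precisely the asserted congruence.
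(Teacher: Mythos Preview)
Your proof is correct and follows essentially the same route as the paper: both identify $\fbar$ with a ring homomorphism $\TT_\ZZ\to\Fbar_p$ with maximal kernel~$\fm$, invoke Proposition~\ref{prop:pure-one} to find a minimal prime $\fp\subsetneq\fm$, and then build $f$ as the composite $\TT_\ZZ\twoheadrightarrow\TT_\ZZ/\fp\hookrightarrow\Zbar\hookrightarrow\CC$. The paper is slightly terser, absorbing your lying-over and Frobenius-adjustment steps into a single choice of reduction map $\Zbar\twoheadrightarrow\Fbar_p$ making a commutative diagram, but the content is the same.
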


\begin{proof}
Let $\TT_\ZZ = \TT_\ZZ(\Skone kN\CC)$.
By definition, $\fbar$ is a ring homomorphism $\TT_\ZZ \to \Fbar_p$ and its kernel is a maximal ideal $\fm$ of $\TT_\ZZ$.
According to Proposition~\ref{prop:pure-one}, the Hecke algebra  is pure of Krull dimension one, hence $\fm$
is of height~$1$, meaning that it strictly contains a minimal prime ideal~$\fp \subset \TT_\ZZ$.
Let $f$ be the composition of the maps in the first line of the diagram:
$$ \xymatrix@=1cm{
\TT_\ZZ \ar@{->>}[r]\ar@{->>}[dr]\ar@{->}[drr]^(.7){\fbar} & \TT_\ZZ/\fp \ar@{^(->}[r]\ar@{->>}[d] & \Zbar \ar@{->>}[d] \ar@{^(->}[r] &\CC\\
&  \TT_\ZZ/\fm \ar@{^(->}[r] & \Fbar_p}$$
where all surjections and all injections are the natural ones,
and the map $\Zbar \twoheadrightarrow \Fbar_p$ is chosen in order to make the diagram commutative.
Note that $\fbar$ is a ring homomorphism and thus a normalised eigenform in $\Skone kN\CC$. By the diagram, its reduction is~$\fbar$.
\end{proof}

\subsection{Algorithms and Implementations: Localisation Algorithms}

Let $K$ be a perfect field, $\Kbar$ an algebraic closure
and $A$ a finite dimensional commutative $K$-algebra. 
In the context of Hecke algebras we would like to
compute a local decomposition of~$A$ as in Proposition~\ref{commalg}.

\subsubsection{Primary spaces}

\begin{definition}
An $A$-module $V$ which is finite dimensional as $K$-vector space is called a {\em primary space} for $A$ if the minimal polynomial for all
$a \in A$ is a prime power in~$K[X]$.
\end{definition}

\begin{lemma}\label{LemDec}
\begin{enumerate}[(a)]
\item $A$ is local if and only if the minimal polynomial of~$a$ (in $K[X]$)
is a prime power for all $a \in A$. 
\item Let $V$ be an $A$-module which is finite dimensional as $K$-vector space and which is a primary space for~$A$.
Then the image of $A$ in $\End_K(V)$ is a local algebra. 
\item Let $V$ be an $A$-module which is finite dimensional as $K$-vector space and let $a_1, \dots, a_n$ be generators
of the algebra $A$. Suppose that for 
$i \in \{1,\dots,n\}$ the minimal polynomial $a_i$ on $V$ is a power of $(X - \lambda_i)$ in $K[X]$
for some $\lambda_i \in K$ (e.g.\ if $K = \Kbar$).
Then the image of $A$ in $\End_K(V)$ is a local algebra. 
\end{enumerate}
\end{lemma}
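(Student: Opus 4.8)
The plan is to prove the three parts in order, obtaining (b) and (c) from (a).

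For part (a), the forward implication is the familiar fact that an Artinian local ring has nilpotent maximal ideal. Concretely, if $A$ is local with maximal ideal $\fm$, then $A$ is Artinian (being finite dimensional over $K$), so by Proposition~\ref{propartin} $\fm$ is the nilradical and $\fm^r=0$ for some $r$; given $a\in A$ I would take the minimal polynomial $p(X)\in K[X]$ of the image $\bar a\in A/\fm$, which is irreducible because $A/\fm$ is a field, and note that $p(a)\in\fm$ is nilpotent, so the minimal polynomial of $a$ divides $p(X)^r$ and is hence a power of $p$. For the converse I would invoke the Artinian decomposition $A\cong\prod_{\fm}A/\fm^\infty$ of Proposition~\ref{propartin}: if $A$ had two distinct maximal ideals, the element corresponding to $(1,0,\dots,0)$ would be an idempotent $e$ with $e\notin\{0,1\}$, whose minimal polynomial is $X^2-X=X(X-1)$ — which has two distinct roots and so is not a prime power, a contradiction. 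The only point to watch is the degenerate case $A=0$ (and $V=0$ in the sequel), which I would simply exclude via the convention that "local" means "having exactly one maximal ideal".

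For part (b), let $B$ be the image of $A$ in $\End_K(V)$, a commutative finite dimensional $K$-algebra. For $b=a|_V\in B$ the minimal polynomial of $b$ (as an element of $B$, equivalently as an endomorphism of $V$) is exactly the minimal polynomial of the action of $a$ on $V$, which is a prime power because $V$ is primary. Hence every element of $B$ has prime-power minimal polynomial, and part (a) applied to $B$ finishes this case.

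For part (c), write $B$ for the image of $A$ in $\End_K(V)$; it is generated as a $K$-algebra by the $b_i:=a_i|_V$, and setting $N_i:=b_i-\lambda_i$ we have that the $N_i$ commute and satisfy $N_i^{e_i}=0$ for some $e_i\ge 1$. Then $B=K[N_1,\dots,N_n]$, and the ideal $I:=(N_1,\dots,N_n)$ is spanned over $K$ by the monomials $N_1^{d_1}\cdots N_n^{d_n}$ with $(d_1,\dots,d_n)\ne(0,\dots,0)$; a pigeonhole argument shows $I^m=0$ as soon as $m>\sum_i(e_i-1)$, so $I$ is a nilpotent ideal. Given $b\in B$, writing it in the monomial spanning set and calling $\lambda\in K$ its constant term gives $b-\lambda\in I$, hence $b-\lambda$ is nilpotent and the minimal polynomial of $b$ is a power of $X-\lambda$. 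Thus every element of $B$ has prime-power minimal polynomial — equivalently, $V$ is a primary space for $A$ — and the conclusion follows from (a) (or from (b)). None of the steps is genuinely difficult; the only real ideas are the observation in (a) that a nontrivial idempotent violates the prime-power condition, and the observation in (c) that an ideal generated by finitely many commuting nilpotents is nilpotent.
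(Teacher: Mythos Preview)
Your proof is correct and follows essentially the same strategy as the paper: the idempotent argument for the converse in~(a), direct application of~(a) for~(b), and showing in~(c) that every element of the image algebra has minimal polynomial a power of a linear factor. The only cosmetic differences are that for the forward direction of~(a) the paper argues via the embedding $K[X]/(f)\hookrightarrow A$ (a subring of a local ring has no nontrivial idempotents), and for~(c) the paper verifies closure under sums and products by explicit binomial manipulations rather than your cleaner observation that the ideal generated by commuting nilpotents is nilpotent.
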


\begin{proof}
(a) Suppose first that $A$ is local and take $a \in A$. 
Let $\phi_a: K[X] \to A$ be the homomorphism of $K$-algebras defined
by sending $X$ to $a$. Let $(f)$ be the kernel with $f$ monic, 
so that by definition $f$ is the minimal polynomial of~$a$. 
Hence, $K[X]/(f) \hookrightarrow A$, whence $K[X]/(f)$ is local, as
it does not contain any non-trivial idempotent.
Thus, $f$ cannot have two different prime factors. 

Conversely, if $A$ were not local, we would have an idempotent
$e \not \in  \{0,1\}$. The minimal polynomial of $e$
is $X(X-1)$, which is not a prime power. 

(b) follows directly.
For (c) one can use the following. Suppose that $(a-\lambda)^r V = 0$
and $(b - \mu)^s V = 0$. Then $((a+b) - (\lambda + \mu))^{r+s} V = 0$,
as one sees by rewriting $((a+b) - (\lambda + \mu)) = (a-\lambda)+(b-\mu)$
and expanding out. From this it also follows that $(ab - \lambda \mu)^{2(r+s)} V = 0$
by rewriting $ab - \lambda \mu = (a-\lambda)(b-\mu) + \lambda(b-\mu) + \mu(a - \lambda)$.
\end{proof}

We warn the reader that algebras such that a set of generators acts primarily 
need not be local, unless they are defined over an algebraically closed
field, as we have seen in Part~(c) above. In Exercise~\ref{exnonlocal}
you are asked to find an example.

The next proposition, however, tells us that an algebra over a field
having a basis consisting of primary elements is local. I found the idea
for that proof in~\cite{Eberly}.

\begin{proposition}\label{primary}
Let $K$ be a field of characteristic~$0$ or a finite field.
Let $A$ be a finite dimensional commutative algebra over~$K$ and let
$a_1,\dots,a_n$ be a $K$-basis of~$A$ with the property that
the minimal polynomial of each~$a_i$ is a power of a prime
polynomial $p_i \in K[X]$.

Then $A$ is local.
\end{proposition}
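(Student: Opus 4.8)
The plan is to reduce to the semisimple case and then, assuming $A$ is not local, to exhibit a nonzero $K$-linear functional on $A$ killing every primary element; since the given primary elements $a_1,\dots,a_n$ form a $K$-basis, this is a contradiction. First I would pass to $\overline A := A/\mathrm{rad}(A)$. As $A$ is a finite dimensional $K$-algebra it is Artinian, so (Proposition~\ref{propartin}) its Jacobson radical equals its nilradical and the maximal ideals of $A$ and of $\overline A$ correspond; hence $A$ is local iff $\overline A$ is. Being reduced and Artinian, $\overline A \cong \prod_{j=1}^{r} L_j$ with each $L_j$ a finite field extension of $K$ (separable, as $K$ is perfect), and $A$ is local exactly when $r=1$. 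The images $\overline a_i$ span $\overline A$ over $K$, and the minimal polynomial of $\overline a_i$ divides $p_i^{e_i}$, hence is again a power of $p_i$, so the $\overline a_i$ are still primary. Suppose, for contradiction, that $r\ge 2$, and let $\pi_j\colon \overline A\to L_j$ be the projections.

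The key computation is a trace identity. Let $x\in\overline A$ be primary with (irreducible) minimal polynomial $p_x$ of degree $d_x$. Then $\pi_j(x)\in L_j$ has minimal polynomial $p_x$ over $K$ (it is a nonconstant monic divisor of the irreducible $p_x$), so the characteristic polynomial of multiplication by $\pi_j(x)$ on $L_j$ is $p_x^{[L_j:K]/d_x}$, whence
$$\Tr_{L_j/K}\bigl(\pi_j(x)\bigr)=\frac{[L_j:K]}{d_x}\,t_x,$$
with $t_x\in K$ the sum of the roots of $p_x$. The right side depends on $j$ only through the integer $[L_j:K]$. In characteristic zero I would then take $\lambda:=[L_2:K]\cdot\Tr_{L_1/K}\circ\pi_1-[L_1:K]\cdot\Tr_{L_2/K}\circ\pi_2\colon\overline A\to K$: by the identity it kills every primary element, in particular every $\overline a_i$, so $\lambda=0$; but evaluating at the idempotent $e_1=(1,0,\dots,0)$ gives $\lambda(e_1)=[L_1:K][L_2:K]\ne 0$ — contradiction, so $r=1$ and $A$ is local.

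Over a finite field $K=\FF_q$ the same $\lambda$ may vanish identically, because the characteristic can divide $[L_j:K]$, so here I would first reduce to the case that all $L_j$ have the same degree. If $x$ is primary, then $d_x=[K(\pi_j(x)):K]$ divides $[L_j:K]$ for all $j$, hence divides $g:=\gcd_j[L_j:K]$; so $\pi_j(x)$ lies in the unique degree-$g$ subfield $F_j\subseteq L_j$. Taking $x=\overline a_i$: the $\pi_j(\overline a_i)$ span $L_j=\pi_j(\overline A)$ and all lie in $F_j$, forcing $F_j=L_j$, i.e.\ $[L_j:K]=g$ for every $j$; thus every $L_j$ equals the single field $L:=\FF_{q^g}$. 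Now take $\lambda:=\Tr_{L/K}\circ\pi_1-\Tr_{L/K}\circ\pi_2\colon\overline A\to K$. For primary $x$ one has $\Tr_{L/K}(\pi_1(x))=(g/d_x)t_x=\Tr_{L/K}(\pi_2(x))$ (equivalently, $\pi_1(x)$ and $\pi_2(x)$ are Galois-conjugate over $K$, and the trace is Galois-invariant), so $\lambda$ again annihilates all $\overline a_i$, hence all of $\overline A$; but $\lambda$ restricted to the first factor $L\times 0\times\dots\times 0\cong L$ is $\Tr_{L/K}\colon L\to K$, which is surjective since $L/K$ is separable, so $\lambda\ne 0$ — contradiction, so $r=1$.

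The main obstacle is exactly this finite-field reduction: when the characteristic divides the $[L_j:K]$ the clean characteristic-zero functional degenerates, and the remedy (descending to the common degree-$\gcd$ subfield) relies on the fact that over a finite field a union of proper subfields never spans the whole field — which fails over $\QQ$ (the quadratic subfields of $\QQ(\sqrt2,\sqrt3)$ already span it), forcing the two characteristics to be treated separately. A secondary point to handle with care is verifying that a primary element has, in \emph{every} simple factor, minimal polynomial equal to the one prime $p_x$ — this is precisely what makes the trace identity uniform in $j$.
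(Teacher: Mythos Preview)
Your proof is correct and rests on the same core idea as the paper's: pass to the semisimple quotient $\overline{A}=\prod_j L_j$, observe that a primary element projects to Galois-conjugate elements in all factors (same minimal polynomial), so a suitable trace is constant across factors; since the basis is primary this forces every element to have constant trace vector, contradicted by an element supported on a single factor with nonzero trace.

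The difference is in packaging. The paper introduces a single auxiliary field $N$, the splitting field of all the~$p_i$, which is simultaneously the normal closure of every~$L_j$ over~$K$; it then applies $\Tr_{N/K}$ componentwise, and the finite-field case is absorbed into the remark that finite extensions of finite fields are already normal, so $N=L_j$ for every~$j$ (hence the $L_j$ are automatically isomorphic). You instead work with the individual traces $\Tr_{L_j/K}$, which forces you to insert the scaling coefficients $[L_1:K],[L_2:K]$ in characteristic~$0$ and to run the separate gcd/unique-subfield argument for finite fields. Your route is more explicit and entirely self-contained; the paper's is slightly more uniform in that the case distinction collapses into the single observation $N=L_j$ over finite fields. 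Your final remark that the gcd reduction genuinely depends on the subfield lattice of finite fields (and fails over~$\QQ$) is exactly the point hiding behind the paper's use of the common normal closure.
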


\begin{proof}
We assume that $A$ is not local and take a decomposition
$\alpha: A \xrightarrow{\sim} \prod_{j=1}^r A_j$ with $r \ge 2$.
Let $K_j$ be the residue field of $A_j$ and consider the finite dimensional $K$-algebra $\overline{A} := \prod_{j=1}^r K_j$.
Write $\overline{a_1},\dots,\overline{a_n}$ for the images of the $a_i$ in~$\overline{A}$.
They form a $K$-basis. In order to have access to the components, also write $\overline{a_i} = (\overline{a_{i,1}},\dots,\overline{a_{i,r}})$.
Since the minimal polynomial of an element in a product is the lowest common multiple of the minimal polynmials
of the components, the assumption implies that, for each $i=1,\dots,r$, the minimal polynomial of
$a_{i,j}$ is independent of~$j$; call it $p_i \in K[X]$.
Let $N/K$ be the splitting field of the polynomials $p_1,\dots,p_r$. This means that $N$ is the normal closure of $K_j$ over~$K$
for any~$j$. As a particular case, note that $N=K_j$ for all $j$ if $K$ is a finite field since finite extensions of finite fields
are automatically normal.
Now consider the trace $\Tr_{N/K}$ and note that $\Tr_{N/K}(\overline{a_i})$ is a diagonal element in~$\overline{A}$ for all $i=1,\dots,r$
since the components $\overline{a_{i,j}}$ are roots of the same minimal polynomial.
Consequently, $\Tr_{N/K}(\overline{a})$ is a diagonal element for all $\overline{a} \in \overline{A}$ since the $\overline{a_i}$
form a $K$-basis of~$\overline{A}$.

In order to come to a contradiction, it now suffices to produce an element the trace of which is not diagonal.
By Exercise~\ref{extrace} there is $x \in K_1$ such that $\Tr_{N/K} (x) \neq 0$.
Then the element $(x,0,\dots,0) \in \overline{A}$ clearly provides an example of an element with non-diagonal trace.
\end{proof}

\begin{lemma}\label{algfield}
Let $A$ be a local finite dimensional commutative algebra over a perfect field~$K$.
Let $a_1,\dots, a_n$ be a set of $K$-algebra generators of~$A$
such that the minimal polynomial of each $a_i$ is a prime polynomial.
Then $A$ is a field.
\end{lemma}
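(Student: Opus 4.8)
The plan is to reduce everything to the single claim that $A$ is \emph{reduced}. Granting that, we are done quickly: a finite-dimensional commutative $K$-algebra is Artinian (a proper inclusion of ideals strictly lowers the $K$-dimension), and for a local Artinian ring Proposition~\ref{propartin} gives $\Nil(A) = \fm$ and $\fm^\infty = (0)$, so $\fm$ is nilpotent; if in addition $A$ is reduced then $\fm = \Nil(A) = (0)$, whence $A = A/\fm$ is a field. So the real content is the reducedness, and this is exactly where perfectness of $K$ is used.

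To prove $A$ is reduced I would base change to an algebraic closure. Put $A_\Kbar := A \otimes_K \Kbar$; since $\Kbar$ is free as a $K$-module, $a \mapsto a \otimes 1$ embeds $A$ into $A_\Kbar$, so it suffices to show $A_\Kbar$ is reduced. Now $A_\Kbar$ is a finite-dimensional commutative $\Kbar$-algebra generated by $b_i := a_i \otimes 1$, and each $b_i$ is a root of $p_i$, the minimal polynomial of $a_i$ over~$K$. Because $K$ is perfect and $p_i$ is irreducible, $p_i$ is separable, hence squarefree in $\Kbar[X]$.

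Next, decompose $A_\Kbar \cong \prod_j A_j$ into local Artinian $\Kbar$-algebras (Proposition~\ref{propartin}). Each $A_j$ is a finite-dimensional local $\Kbar$-algebra, so its residue field is a finite extension of the algebraically closed field $\Kbar$, hence equals $\Kbar$. Fix $j$, let $c_i \in A_j$ be the image of $b_i$, and let $\lambda_i \in \Kbar$ be the image of $c_i$ in $A_j/\fm_j = \Kbar$. Then $c_i - \lambda_i \in \fm_j$ is nilpotent, so $(c_i - \lambda_i)^N = 0$ for some $N$; thus the minimal polynomial of $c_i$ over $\Kbar$ is a power of $X - \lambda_i$. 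On the other hand it divides $p_i$ (since $c_i$ is a root of $p_i$), which is squarefree, so it equals $X - \lambda_i$, i.e.\ $c_i = \lambda_i\cdot 1 \in \Kbar$. As the $b_i$ generate $A_\Kbar$ over $\Kbar$, the $c_i$ generate $A_j$, and hence $A_j = \Kbar$. Therefore $A_\Kbar \cong \prod_j \Kbar$ is reduced, so $A$ is reduced, and $A$ is a field.

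The step I expect to be the crux is the squarefreeness of $p_i$ over $\Kbar$: this is precisely where ``$K$ perfect'' is used and cannot be dropped, as one sees from $L \otimes_K L$ for a purely inseparable extension $L/K$ (e.g.\ $L = \FF_p(t^{1/p})$ over $K = \FF_p(t)$), which is local, not a field, yet generated over $K$ by two elements each with irreducible minimal polynomial $Z^p - t$. Everything else is routine commutative algebra already available in the excerpt. As an alternative I would use induction on the number $n$ of generators: the subalgebra $A' := K[a_1,\dots,a_{n-1}]$ has $\Nil(A') = A' \cap \fm$, which is maximal (its quotient embeds in the field $A/\fm$), so $A'$ is local, and the minimal polynomials of $a_1,\dots,a_{n-1}$ over $A'$ coincide with $p_1,\dots,p_{n-1}$; by induction $A'$ is a field, and then $A = A'[a_n] \cong A'[X]/(g)$ with $g$ the minimal polynomial of $a_n$ over $A'$, which divides $p_n$ and is therefore squarefree in $A'[X] \subseteq \Kbar[X]$ (again using $K$ perfect), making $A$ a product of fields and hence, being local, a field.
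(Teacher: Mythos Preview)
Your argument is correct. Both proofs hinge on the same key observation --- that over a perfect field an irreducible polynomial is separable, hence splits into distinct linear factors over~$\Kbar$ --- and both conclude by showing $A$ is reduced, which for a local Artinian ring forces $\fm = 0$. The execution differs, however: the paper views $A$ as a matrix algebra via the regular representation and argues that the $a_i$, having squarefree minimal polynomials over a separable closure, are simultaneously diagonalisable (as they commute), so every element of~$A$ is diagonalisable and therefore $0$ is the only nilpotent. Your main route stays in commutative algebra: you base change to~$\Kbar$, decompose $A_{\Kbar}$ into local factors via Proposition~\ref{propartin}, and show each factor equals~$\Kbar$ by pinning down the minimal polynomial of each generator as linear. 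The paper's proof is a one-liner once simultaneous diagonalisation is granted; yours avoids linear algebra entirely and makes the role of Proposition~\ref{propartin} more visible. Your inductive alternative is also correct and has the pleasant feature of never leaving the category of fields once the induction starts.
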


\begin{proof}
As the $a_i$ are simultaneously diagonalisable over a separable closure (considering the algebra as a matrix algebra)
due to their minimal polynomials being squarefree (using here the perfectness of $K$), so are sums and products of the~$a_i$.
Hence, $0$ is the only nilpotent element in~$A$. As the maximal
ideal in an Artinian local algebra is the set of nilpotent elements,
the lemma follows.
\end{proof}

\begin{proposition}\label{genmax}
Let $A$ be a local finite dimensional commutative algebra over a perfect field~$K$.
Let $a_1,\dots, a_n$ be a set of $K$-algebra generators of~$A$.
Let $p_i^{e_i}$ be the minimal polynomial of~$a_i$ (see Lemma~\ref{LemDec}).

Then the maximal ideal~$\fm$ of~$A$ is generated by 
$\{p_1(a_1),\dots,p_n(a_n)\}$.
\end{proposition}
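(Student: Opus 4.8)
The plan is to write $J := (p_1(a_1),\dots,p_n(a_n))$ for the ideal of $A$ generated by the listed elements, and to prove the two inclusions $J \subseteq \fm$ and $\fm \subseteq J$ separately. Throughout I use that, by hypothesis (and Lemma~\ref{LemDec}), $p_i$ is a prime polynomial in $K[X]$ and $p_i(X)^{e_i}$ is the minimal polynomial of $a_i$.

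For $J \subseteq \fm$: since the evaluation map $K[X]\to A$, $X\mapsto a_i$, is a ring homomorphism and $p_i(X)^{e_i}$ is the minimal polynomial of $a_i$, evaluating at $a_i$ gives $p_i(a_i)^{e_i}=0$. As $A\neq 0$, each generator $p_i(a_i)$ of $J$ is therefore nilpotent, in particular a non-unit; since $A$ is local its non-units are exactly $\fm$, so $p_i(a_i)\in\fm$ for all $i$ and hence $J\subseteq\fm$.

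For $\fm\subseteq J$: I would pass to the quotient $\bar A := A/J$. Because $J\subseteq\fm\subsetneq A$, the ring $\bar A$ is nonzero, is a finite-dimensional commutative $K$-algebra, and is local with maximal ideal $\fm/J$. Writing $\bar a_i$ for the image of $a_i$, the relation $p_i(\bar a_i)=0$ holds in $\bar A$ by construction of $J$, so the minimal polynomial of $\bar a_i$ over $K$ divides $p_i(X)$; being monic and, since $\bar A\neq 0$, not the constant polynomial $1$, and $p_i$ being prime, this minimal polynomial must equal $p_i$ itself. Thus $\bar A$ is a local finite-dimensional commutative algebra over the perfect field $K$ generated by $\bar a_1,\dots,\bar a_n$, each of which has prime minimal polynomial, so Lemma~\ref{algfield} shows $\bar A$ is a field. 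Hence $\fm/J=0$, i.e.\ $\fm\subseteq J$, and combining the two inclusions gives $\fm = J$.

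I do not expect a genuine obstacle. The only point requiring a little care is the identification of the minimal polynomial of $\bar a_i$ with $p_i$ itself rather than merely a proper divisor, which rests on $p_i$ being irreducible together with $\bar A\neq 0$; after that the statement follows by a direct application of Lemma~\ref{LemDec} and Lemma~\ref{algfield}.
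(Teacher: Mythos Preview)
Your proof is correct and follows essentially the same route as the paper's: form the ideal generated by the $p_i(a_i)$, pass to the quotient, observe that the images $\bar a_i$ have prime minimal polynomial~$p_i$, and invoke Lemma~\ref{algfield} to conclude the quotient is a field. You are in fact more careful than the paper in two places --- you explicitly verify $J\subseteq\fm$ (so $\bar A\neq 0$ and is local) and you justify why the minimal polynomial of $\bar a_i$ is $p_i$ itself rather than a proper divisor --- whereas the paper leaves these points implicit.
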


\begin{proof}
Let $\fa$ be the ideal generated by $\{p_1(a_1),\dots,p_n(a_n)\}$.
The quotient $A/\fa$ is generated by the images of the~$a_i$,
call them $\overline{a_i}$.
As $p_i(a_i) \in \fa$, it follows $p_i(\overline{a_i}) = 0$, whence
the minimal polynomial of $\overline{a_i}$ equals the prime polynomial~$p_i$.
By Lemma~\ref{algfield}, we know that $A/\fa$ is a field, whence $\fa$ is the maximal ideal.
\end{proof}

\subsubsection{Algorithm for computing common primary spaces}

It may help to think about finite dimensional commutative algebras over
a field as algebras of matrices. Then the localisation statements
of this section just mean choosing a basis such that one obtains block matrices.

By a {\em common primary space} for commuting matrices we mean a direct summand
of the underlying vector space on which the minimal polynomials of the given matrices
are prime powers. By Proposition~\ref{primary}, a common primary
space of a basis of a matrix algebra is a local factor of the algebra.

By a {\em generalised eigenspace} for commuting matrices we mean a vector
subspace of the underlying vector space on which the minimal polynomial of the given matrices
are irreducible. Allowing base changes to extension fields, the matrices
restricted to the generalised eigenspace are diagonalisable.

In this section we present a straight forward algorithm for
computing common primary spaces and common generalised eigenspaces.

\begin{algorithm}\label{algPrimary}
\underline{Input}: list {\tt ops} of commuting operators acting
on the $K$-vector space~$V$.

\underline{Output}: list of the common primary spaces inside~$V$
for all operators in {\tt ops}.

\begin{enumerate}[(1)]
\itemsep=0cm plus 0pt minus 0pt
\item List := [V];
\item for $T$ in {\tt ops} do
\item \ms newList := [];
\item \ms for $W$ in List do
\item \ms \ms compute the minimal polynomial $f \in K[X]$ of $T$
restricted to $W$.
\item \ms \ms factor $f$ over $K$ into its prime powers $f(X) = \prod_{i=1}^n p_i(X)^{e_i}$.
\item \ms \ms if $n$ equals $1$, then
\item \ms \ms \ms append $W$ to newList,
\item \ms \ms else for i := 1 to n do
\item \ms \ms \ms compute $\widetilde{W}$ as the kernel of
      $p_i(T|_W)^\alpha$ with $\alpha = {e_i}$ for common primary spaces
 or $\alpha = 1$ for common generalised eigenspaces.
\item \ms \ms \ms append $\widetilde{W}$ to newList.
\item \ms \ms end for; end if;
\item \ms end for;
\item \ms List := newList;
\item end for;
\item return List and stop. 
\end{enumerate}
\end{algorithm}

\subsubsection{Algorithm for computing idempotents}

Using Algorithm~\ref{algPrimary} it is possible to compute a complete
set of orthogonal primitive idempotents for~$A$. We now sketch a direct algorithm.

\begin{algorithm}\label{algIdempotents}
\underline{Input}: matrix~$M$.

\underline{Output}: complete set of orthogonal primitive idempotents
for the matrix algebra generated by $M$ and $1$.

\begin{enumerate}[(1)]
\itemsep=0cm plus 0pt minus 0pt
\item compute the minimal polynomial $f$ of~$M$.
\item factor it $f = (\prod_{i=1}^n p_i^{e_i}) X^e$ over $K$ with
$p_i$ distinct irreducible polynomials different from~$X$.
\item List := [];
\item for $i = 1$ to $n$ do
\item \ms $g := f / p_i^{e_i}$;
\item \ms $M_1 := g(M)$. If we think about $M_1$ in block form,
then there is only one non-empty block on the diagonal, the rest is zero.
In the next steps this block is replaced by the identity.
\item \ms compute the minimal polynomial $h$ of $M_1$.
\item \ms strip possible factors $X$ from~$h$ and normalise $h$ so that $h(0)=1$.
\item \ms append $1-h(M_1)$ to List. Note that $h(M_1)$ is the identity matrix
except at the block corresponding to $p_i$, which is zero. Thus $1-h(M_1)$
is the idempotent being zero everywhere and being the identity in the block
corresponding to~$p_i$.
\item end for;
\item if $e > 0$ then
\item \ms append $1-\sum_{e \in \textnormal{ List}} e$ to List.
\item end if;
\item return List and stop. 
\end{enumerate}
\end{algorithm}

The algorithm for computing a complete set of orthogonal primitive idempotents
for a commutative matrix algebra consists of multiplying together the
idempotents of every matrix in a basis. See Computer Exercise~\ref{cexidempotents}.

\subsection{Theoretical exercises}

\begin{exercise}
Use your knowledge on modular forms to prove that a modular
form $f = \sum_{n=0}^\infty a_n(f) q^n$ of weight $k \ge 1$ and 
level $N$ (and Dirichlet character~$\chi$)
is uniquely determined by $\sum_{n=1}^\infty a_n(f) q^n$.
\end{exercise}

\begin{exercise}\label{exgaloisconjugacy}
Prove Proposition~\ref{galoisconjugacy}.

Hint: use that the kernel of a ring homomorphism into an integral domain is a prime ideal; moreover, use that
all prime ideals in the Hecke algebra in the exercise are maximal; finally, use that field
homomorphisms can be extended to separable extensions (using here that $K$ is perfect).
\end{exercise}

\begin{exercise}\label{exidempotent}
Let $\TT$ be an Artinian ring.
\begin{enumerate}[(a)] 
\item Let $\fm$ be a maximal ideal of~$\TT$.
Prove that $\fm^\infty$ is a principal ideal generated by an
idempotent. Call it~$e_\fm$.
\item Prove that the idempotents $1 - e_\fm$ and $1 - e_\fn$ for different
maximal ideals $\fm$ and $\fn$ are orthogonal.
\item Prove that the set $\{1-e_\fm | \fm \in \Spec(\TT)\}$ forms a complete set
of pairwise orthogonal idempotents.
\end{enumerate}
Hint: see \cite[\S8]{AM}.
\end{exercise}

\begin{exercise}\label{excorrespondences}
Prove the correspondences and the isomorphisms from Part~(a)
of Proposition~\ref{commalg}.

Hint: you only need basic reasonings from commutative algebra.
\end{exercise}

\begin{exercise}\label{exconj}
Let $f,g \in \Mk kN\chi\Khatbar$ be normalised eigenforms that we assume to be
$\Gal(\Khatbar/\Kbar)$-conjugate. Prove that their reductions modulo~$p$
are $\Gal(\Fbar/\FF)$-conjugate.
\end{exercise}

\begin{exercise}\label{exconjugacy}
Prove Proposition~\ref{conjugacy}.

Hint: it suffices to write out the definitions.
\end{exercise}

\begin{exercise}\label{exnonlocal}
Find a non-local algebra~$A$ over a field~$K$ (of your choice) such that~$A$ 
is generated as a $K$-algebra by $a_1,\dots,a_n$ having the property
that the minimal polynomial of each $a_i$ is a power of an irreducible
polynomial in~$K[X]$.
\end{exercise}

\begin{exercise}\label{extrace}
Let $K$ be a field of characteristic~$0$ or a finite field.
Let $L$ be a finite extension of~$K$ with Galois closure $N$ over~$K$.
Show that there is an element $x \in L$ with $\Tr_{N/K} (x) \neq 0$.
\end{exercise}

\begin{exercise}\label{exlowertriangular}
Let $A$ be a commutative matrix algebra over a perfect field~$K$.
Suppose that the minimal polynomial of each element of a generating
set is the power of a prime polynomial (i.e.\ it is primary).

Show that there exist base change matrices such that the base
changed algebra consists only of lower triangular matrices. You may
and you may have to extend scalars to a finite extension of~$K$.
In Computer Exercise~\ref{cexlowertriangular} you are asked to find and
implement an algorithm computing such base change matrices.
\end{exercise}

\subsection{Computer exercises}

\begin{cexercise}
Change Algorithm~\ref{algmodsymsketch} 
(see Computer Exercise~\ref{cexalgmodsymsketch}) so that
it works for modular forms over a given ring~$R$.
\end{cexercise}

\begin{cexercise}\label{cexmaxid}
Let $A$ be a commutative matrix algebra over a perfect field~$K$.
\begin{enumerate}[(a)]
\item Write an algorithm to test whether $A$ is local.
\item Suppose $A$ is local. Write an algorithm to compute its
maximal ideal.
\end{enumerate}
\end{cexercise}

\begin{cexercise}
Let $A$ be a commutative algebra over a field~$K$.
The regular representation is defined as the image
of the injection
$$ A \to \End_K(A), \;\;\; a \mapsto (b \mapsto a\cdot b).$$
Write a function computing the regular representation.
\end{cexercise}

\begin{cexercise}
Implement Algorithm~\ref{algPrimary}. Also write a function that
returns the local factors as matrix algebras (possibly using
regular representations).
\end{cexercise}

\begin{cexercise}\label{cexidempotents}
\begin{enumerate}[(a)]
\item Implement Algorithm~\ref{algIdempotents}.
\item Let $S$ be a set of idempotents. 
Write a function selecting a subset of~$S$ consisting of pairwise
orthogonal idempotents such that the subset spans~$S$ (all idempotents
in~$S$ can be obtained as sums of elements in the subset).
\item Write a function computing a complete set of pairwise orthogonal
idempotents for a commutative matrix algebra~$A$ over a field by multiplying
together the idempotents of the matrices in a basis and selecting a
subset as in~(b).
\item Use Computer Exercise~\ref{cexmaxid} to compute the maximal ideals
of~$A$.
\end{enumerate}
\end{cexercise}

\begin{cexercise}
Let $A$ be a commutative matrix algebra over a perfect field~$K$.
Suppose that $A$ is a field (for instance obtained as the quotient
of a local $A$ by its maximal ideal computed in Computer Exercise~\ref{cexmaxid}).
Write a function returning an irreducible polynomial~$p$
such that $A$ is $K[X]/(p)$.

If possible, the algorithm should not use factorisations of polynomials.
It is a practical realisation of Kronecker's primitive element theorem.
\end{cexercise}

\begin{cexercise}\label{cexlowertriangular}
Let $A$ be a commutative matrix algebra over a perfect field~$K$.
Suppose that the minimal polynomial of each element of a generating
set is the power of a prime polynomial (i.e.\ it is primary).

Write a function computing base change matrices such that the base
changed algebra consists only of lower triangular matrices
(cf.\ Exercise~\ref{exlowertriangular}).
\end{cexercise}

\section{Homological algebra}

In this section we provide the tools from homological algebra that will be necessary
for the modular symbols algorithm (in its group cohomological version).
A good reference is~\cite{Weibel}.

We will be sloppy about categories. When we write category below,
we really mean abelian category, since we obviously need the existence of
kernels, images, quotients etc. For what we have in mind, we should really understand the word
category not in its precise mathematical sense but as a placeholder for 
$R-\modules$, or (co-)chain complexes of $R-\modules$ and other categories
from everyday life.

\subsection{Theory: Categories and Functors}

\begin{definition}
A {\em category} $\cC$ consists of the following data:
\begin{itemize}
\item a class $\obj(\cC)$ of {\em objects},
\item a set $\Hom_\cC(A,B)$ of {\em morphisms} for every ordered pair $(A,B)$
of objects,
\item an {\em identity morphism} $\id_A \in \Hom_\cC(A,A)$ for every object $A$, and
\item a {\em composition function}
$$ \Hom_\cC(A,B) \times \Hom_\cC(B,C) \to \Hom_\cC(A,C), \;\; (f,g) \mapsto g \circ f$$
for every ordered triple $(A,B,C)$ of objects
\end{itemize}
such that
\begin{itemize}
\item (Associativity) $(h \circ g) \circ f = h \circ (g \circ f)$ for all
$f \in \Hom_\cC(A,B)$, $g \in \Hom_\cC(B,C)$, $h \in \Hom_\cC (C,D)$ and
\item (Unit Axiom) $\id_B \circ f = f = f \circ \id_A$ for $f \in \Hom_\cC(A,B)$.
\end{itemize}
\end{definition}

\begin{example}
Examples of categories are
\begin{itemize}
\item Sets: objects are sets, morphisms are maps.
\item Let $R$ be a not necessarily commutative ring. Left $R$-modules ($R-\modules$): objects are $R$-modules, morphisms are $R$-module homomorphisms.
This is the category we are going to work with most of the time. Note that the category
of $\ZZ$-modules is the category of abelian groups.
\item Right $R$-modules ($\modules-R$): as above.
\end{itemize}
\end{example}

\begin{definition}
Let $\cC$ and $\calD$ be categories. A {\em covariant/contravariant functor} $F: \cC \to \calD$ is
\begin{itemize}
\item a rule $\obj(\cC) \to \obj(\calD), \; C \mapsto F(C)$ and
\item a rule 
$\begin{cases}
\textnormal{covariant:} & \Hom_\cC(C_1,C_2) \to \Hom_\calD(F(C_1),F(C_2)), \; f \mapsto F(f)\\
\textnormal{contravariant:} & \Hom_\cC(C_1,C_2) \to \Hom_\calD(F(C_2),F(C_1)), \; f \mapsto F(f)\\
\end{cases}$
\end{itemize}
such that
\begin{itemize}
\item $F(\id_C) = \id_{F(C)}$ and
\item $\begin{cases}
\textnormal{covariant:} & F(g \circ f) = F(g) \circ F(f)\\
\textnormal{contravariant:} & F(g \circ f) = F(f) \circ F(g)\\
\end{cases}$
\end{itemize}
\end{definition}

\begin{example}
\begin{itemize}
\item Let $M \in \obj(R-\modules)$. Define
$$\Hom_R(M,\cdot): R-\modules \to \ZZ-\modules, \;\; A \mapsto \Hom_R(M,A).$$
This is a covariant functor.
\item Let $M \in \obj(R-\modules)$. Define
$$\Hom_R(\cdot,M): R-\modules \to \ZZ-\modules, \;\; A \mapsto \Hom_R(A,M).$$
This is a contravariant functor.
\item Let $M \in \obj(R-\modules)$. Define
$$\cdot \otimes_R M: \modules-R \to \ZZ-\modules, \;\; A \mapsto A \otimes_R M.$$
This is a covariant functor.
\item Let $M \in \obj(\modules-R)$. Define
$$M \otimes_R \cdot: R-\modules \to \ZZ-\modules, \;\; A \mapsto M \otimes_R A.$$
This is a covariant functor.
\end{itemize}
\end{example}

\begin{definition}
Let $\cC$ and $\calD$ be categories and  $F,G: \cC \to \calD$ be both covariant or both contra\-variant functors.
A {\em natural transformation} $\alpha: F \Rightarrow G$ is a collection of morphisms
$\alpha= (\alpha_C)_{C \in \cC}: F(C) \to G(C)$ in~$\calD$ for $C \in \cC$ such that
for all morphisms $f:C_1 \to C_2$ in~$\cC$ the following diagram commutes:\\
\begin{tabular}{cc}
covariant: & contravariant:\\
$\xymatrix@=0.5cm{
F(C_1) \ar@{->}[r]^{F(f)} \ar@{->}[d]_{\alpha_{C_1}} & F(C_2) \ar@{->}[d]^{\alpha_{C_2}}\\
G(C_1) \ar@{->}[r]^{G(f)}  & G(C_2) }$ &
$\xymatrix@=0.5cm{
F(C_1) \ar@{<-}[r]^{F(f)} \ar@{->}[d]_{\alpha_{C_1}} & F(C_2) \ar@{->}[d]_{\alpha_{C_2}}\\
G(C_1) \ar@{<-}[r]^{G(f)}  & G(C_2). }$
\end{tabular}
\end{definition}

\begin{example}\label{nattrans}
Let $R$ be a not necessarily commutative ring and let $A,B \in \obj(R-\modules)$
together with a morphism $A \to B$.
Then there are natural transformations
$\Hom_R(B,\cdot) \Rightarrow \Hom_R(A,\cdot)$ and 
$\Hom_R(\cdot,A) \Rightarrow \Hom_R(\cdot,B)$ as well as
$\cdot \otimes_R A \Rightarrow \cdot \otimes_R B$ and
$A \otimes_R \cdot \Rightarrow B \otimes_R \cdot$.
\end{example}

\begin{proof}
Exercise~\ref{exnattrans}.
\end{proof}

\begin{definition}
\begin{itemize}
\item A covariant functor $F: \cC \to \calD$ is called {\em left-exact}, if for every exact sequence
$$ 0 \to A \to B \to C$$
the sequence
$$ 0 \to F(A) \to F(B) \to F(C)$$
is also exact.
\item A contravariant functor $F: \cC \to \calD$ is called {\em left-exact}, if for every exact sequence
$$ A \to B \to C \to 0$$
the sequence
$$ 0 \to F(C) \to F(B) \to F(A)$$
is also exact.
\item A covariant functor $F: \cC \to \calD$ is called {\em right-exact}, if for every exact sequence
$$ A \to B \to C \to 0$$
the sequence
$$ F(A) \to F(B) \to F(C) \to 0$$
is also exact.
\item A contravariant functor $F: \cC \to \calD$ is called {\em right-exact}, if for every exact sequence
$$ 0 \to A \to B \to C$$
the sequence
$$ F(C) \to F(B) \to F(A) \to 0$$
is also exact.
\item A covariant or contravariant functor is {\em exact} if it is both left-exact and right-exact.
\end{itemize}
\end{definition}

\begin{example}\label{HomTensor}
Both functors $\Hom_R(\cdot,M)$ and $\Hom_R(M,\cdot)$ for $M \in \obj(R-\modules)$
are left-exact.
Both functors $\cdot \otimes_R M$ for $M \in \obj(R-\modules)$
and $M \otimes_R \cdot$ for $M \in \obj(\modules-R)$
are right-exact.
\end{example}

\begin{proof}
Exercise~\ref{exHomTensor}.
\end{proof}

\begin{definition}
Let $R$ be a not necessarily commutative ring.
A left $R$-module $P$ is called {\em projective} if the functor
$\Hom_R(P,\cdot)$ is exact.
A left $R$-module $I$ is called {\em injective} if the functor
$\Hom_R(\cdot,I)$ is exact.
\end{definition}

\begin{lemma}\label{lemprojective}
Let $R$ be a not necessarily commutative ring and let $P$ be a left $R$-module.
Then $P$ is projective if and only if $P$ is a direct summand of some
free $R$-module.
In particular, free modules are projective.
\end{lemma}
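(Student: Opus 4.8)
The plan is to route everything through the \emph{lifting characterisation} of projectivity and to treat free modules as the base case. Since $\Hom_R(P,\cdot)$ is always left-exact (Example~\ref{HomTensor}), saying that it is exact amounts to saying that it sends surjections to surjections; spelling this out, $P$ is projective if and only if for every surjection $g\colon B \twoheadrightarrow C$ of left $R$-modules and every morphism $\phi\colon P \to C$ there exists $\psi\colon P \to B$ with $g\circ\psi = \phi$ (given an arbitrary surjection $g$, apply exactness to $0 \to \ker g \to B \to C \to 0$). Call this the lifting property. I would first check that any free module $F$ has it: choosing a basis $\{e_i\}_{i\in I}$ and, for given $g$ and $\phi$, elements $b_i \in B$ with $g(b_i) = \phi(e_i)$, the rule $e_i \mapsto b_i$ extends uniquely to an $R$-linear map $\psi\colon F \to B$, and $g\circ\psi$ agrees with $\phi$ on a basis, hence everywhere; so free modules are projective, which is the ``in particular''.

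For the forward implication, assume $P$ is projective. Take $F$ to be the free $R$-module on the underlying set of $P$, equipped with the canonical surjection $\pi\colon F \twoheadrightarrow P$. Applying the lifting property to $\pi$ and to $\id_P\colon P\to P$ produces a section $s\colon P \to F$ with $\pi\circ s = \id_P$; then $F = s(P)\oplus\ker\pi$ with $s(P)\cong P$, so $P$ is a direct summand of the free module $F$.

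For the converse, assume $F = \iota(P)\oplus Q$ with $\iota\colon P \hookrightarrow F$ the inclusion into a free module and $p\colon F \twoheadrightarrow P$ the projection, so that $p\circ\iota = \id_P$. Given a surjection $g\colon B\twoheadrightarrow C$ and a morphism $\phi\colon P\to C$, lift $\phi\circ p\colon F\to C$ along $g$ to some $\widetilde\psi\colon F\to B$ (possible since $F$ is free, hence projective by the first step); then $\psi := \widetilde\psi\circ\iota$ satisfies $g\circ\psi = \phi\circ p\circ\iota = \phi$. So $P$ has the lifting property and is therefore projective.

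The only step that needs genuine care is the first one: matching the functorial definition of ``projective'' adopted here with the concrete lifting property, via left-exactness of $\Hom_R(P,\cdot)$. After that, each direction is a two-line diagram chase, and the assertion about free modules is exactly the base case established along the way.
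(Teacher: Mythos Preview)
Your proof is correct and follows essentially the same approach as the paper, which defers to Exercise~\ref{exprojective} with the hint to surject a free module onto~$P$ and split the surjection using projectivity. You have filled in all the details the exercise leaves implicit, including the converse direction and the reduction of the functorial definition to the lifting property via left-exactness of $\Hom_R(P,\cdot)$.
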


\begin{proof}
Exercise~\ref{exprojective}.
\end{proof}

\subsection{Theory: Complexes and Cohomology}

\begin{definition}
A {\em (right) chain complex} $C_\bullet$ in the category $R-\modules$ is a collection of
objects $C_n \in \obj(R-\modules)$ for $n \ge m$ for some $m \in \ZZ$ together with homomorphisms
$C_{n+1} \xrightarrow{\partial_{n+1}} C_n$, i.e.\
$$ \cdots \to C_{n+1} \xrightarrow{\partial_{n+1}} C_{n} \xrightarrow{\partial_{n}}
C_{n-1} \to \cdots \to C_{m+2} \xrightarrow{\partial_{m+2}} C_{m+1} \xrightarrow{\partial_{m+1}} C_m
\xrightarrow{\partial_m} 0,$$
such that
$$ \partial_{n} \circ \partial_{n+1} = 0 $$
for all $n \ge m$.
The group of {\em $n$-cycles} of this chain complex is defined as
$$ \Z_n(C_\bullet) = \ker(\partial_n).$$
The group of {\em $n$-boundaries} of this chain complex is defined as
$$ \B_n(C_\bullet) = \Image(\partial_{n+1}).$$
The {\em $n$-th homology group} of this chain complex is defined as
$$ \h_n(C_\bullet) = \ker(\partial_n)/\Image(\partial_{n+1}).$$
The chain complex $C_\bullet$ is {\em exact} if $\h_n(C_\bullet) = 0$ for all~$n$.
If $C_\bullet$ is exact and $m = -1$, one often says that $C_\bullet$ is a
{\em resolution} of~$C_{-1}$.

A {\em morphism of right chain complexes} $\phi_\bullet: C_\bullet \to D_\bullet$ is a collection of
homomorphisms $\phi_n: C_n \to D_n$ for $n \in \NN_0$ such that all the diagrams
$$ \begin{CD}
C_{n+1} @>{\partial_{n+1}}>> C_n \\
@V{\phi_{n+1}}VV @V{\phi_n}VV \\
D_{n+1} @>{\partial_{n+1}}>> D_n
\end{CD} $$
are commutative.

If all $\phi_n$ are injective, we regard $C_\bullet$ as a sub-chain complex of $D_\bullet$.
If all $\phi_n$ are surjective, we regard $D_\bullet$ as a quotient complex of $C_\bullet$.
\end{definition}

\begin{definition}
A {\em (right) cochain complex} $C^\bullet$ in the category $R-\modules$ is a collection of
objects $C^n \in \obj(R-\modules)$ for $n \ge m$ for some $m \in \ZZ$ together with homomorphisms
$C^n \xrightarrow{\partial^{n+1}} C^{n+1}$, i.e.\
$$ 0 \xrightarrow{\partial^m} C^m \xrightarrow{\partial^{m+1}} C^{m+1}
 \xrightarrow{\partial^{m+2}} C^{m+2} \to \cdots \to
C^{n-1} \xrightarrow{\partial^{n}} C^{n} \xrightarrow{\partial^{n+1}} C^{n+1} \to \cdots,$$
such that
$$ \partial^{n+1} \circ \partial^{n} = 0 $$
for all $n \ge m$.
The group of {\em $n$-cocycles} of this cochain complex is defined as
$$ \Z^n(C_\bullet) = \ker(\partial^{n+1}).$$
The group of {\em $n$-coboundaries} of this cochain complex is defined as
$$ \B^n(C_\bullet) = \Image(\partial_{n}).$$
The {\em $n$-th cohomology group} of this cochain complex is defined as
$$ \h^n(C^\bullet) = \ker(\partial^{n+1})/\Image(\partial^{n}).$$
The cochain complex $C^\bullet$ is {\em exact} if $\h^n(C_\bullet) = 0$ for all~$n$.
If $C^\bullet$ is exact and $m = -1$, one often says that $C^\bullet$ is a
{\em resolution} of~$C^{-1}$.

A {\em morphism of right cochain complexes} $\phi^\bullet: C^\bullet \to D^\bullet$ is a collection of
homomorphisms $\phi^n: C^n \to D^n$ for $n \in \NN_0$ such that all the diagrams
$$ \begin{CD}
C^n @>{\partial^{n+1}}>> C^{n+1} \\
@V{\phi^{n}}VV @V{\phi^{n+1}}VV \\
D^n @>{\partial^{n+1}}>> D^{n+1}
\end{CD} $$
are commutative.

If all $\phi^n$ are injective, we regard $C^\bullet$ as a sub-chain complex of $D^\bullet$.
If all $\phi^n$ are surjective, we regard $D^\bullet$ as a quotient complex of $C^\bullet$.
\end{definition}

In Exercise~\ref{excomplexes} you are asked to define kernels, cokernels and images
of morphisms of cochain complexes and to show that morphisms of cochain complexes
induce natural maps on the cohomology groups. In fact, cochain complexes of $R$-modules
form an abelian category.

\subsection*{Example: standard resolution of a group}

Let $G$ be a group and $R$ a commutative ring.
Write $G^n$ for the $n$-fold direct product $G \times  \dots \times G$ and
equip $R[G^n]$ with the diagonal $R[G]$-action.
We describe the {\em standard resolution} $F(G)_\bullet$ of 
$R$ by free $R[G]$-modules:
$$ \xymatrix@=1cm{
  0  \ar@{<-}[r] \ar@{<-}@/^1pc/[rr]^{\partial_0}
& R \ar@{<-}[r]^(.4){\epsilon}
& F(G)_0 := R[G] \ar@{<-}[r]^{\partial_1}
& F(G)_1 := R[G^2] \ar@{<-}[r]^(.7){\partial_2}
& \cdots,}$$
where we put (the ``hat'' means that we leave out that element):
$$ \partial_n := \sum_{i=0}^{n} (-1)^i d_i  \;\;\text{ and } \;\;
d_i (g_0, \dots, g_n) := (g_0, \dots, \hat{g_i}, \dots, g_n).$$
The map $\epsilon$ is the usual augmentation map defined by sending
$g \in G$ to~$1 \in R$.
By `standard resolution' we refer to the straight maps. We have included
the bended arrow $\partial_0$, which is $0$ by definition, because it will be needed
in the definition of group cohomology (Definition~\ref{defi:gp-coh}).
In Exercise~\ref{exstandardresolution} you are asked to check that
the standard resolution is indeed a resolution, i.e.\ that the above
complex is exact.

\subsection*{Example: bar resolution of a group}

We continue to treat the standard resolution $R$ by $R[G]$-modules, but we will write
it differently. \cite{Weibel} calls the following the
{\em unnormalised bar resolution} of~$G$. We shall simply say {\em bar resolution}.
If we let $h_r := g_{r-1}^{-1} g_r$, then we get the identity
$$(g_0, g_1, g_2, \dots, g_n) = g_0 . (1, h_1, h_1 h_2, \dots, h_1 h_2 \dots h_n ) =: 
    g_0 . [h_1|h_2| \dots h_n].$$
The symbols $[h_1|h_2| \dots |h_n]$ 
with arbitrary $h_i \in G$ hence form an $R[G]$-basis of~$F(G)_n$, and
one has $F(G)_n = R[G] \otimes_R (\text{free $R$-module on } [h_1|h_2| \dots |h_n])$.
One computes the action of $\partial_n$ on this basis and gets
$\partial_n = \sum_{i=0}^n (-1)^i d_i$ where
$$
  d_i([h_1| \dots |h_n]) = 
\begin{cases}
h_1 \text{[} h_2| \dots |h_n \text{]}                       & i = 0 \\
\text{[} h_1| \dots |h_i h_{i+1}| \dots |h_n \text{]}   & 0 < i < n \\
\text{[}h_1| \dots |h_{n-1}\text{]}                   & i = n. 
\end{cases} $$
We will from now on, if confusion is unlikely, simply write $(h_1,\dots,h_n)$
instead of $[h_1|\dots|h_n]$.

\subsection*{Example: resolution of a cyclic group}\label{seccyclic}

Let $G=\langle T \rangle$ be an infinite cyclic group (i.e.\ a group isomorphic
to $(\ZZ,+)$). Here is a very simple resolution of~$R$ by free $R[G]$-modules:
\begin{equation}\label{eq:res-cyclic-infinite}
0 \to R[G] \xrightarrow{T-1} R[G] \xrightarrow{\epsilon} R \to 0.
\end{equation}
Let now $G=\langle \sigma \rangle$ be a finite cyclic group of order~$n$ and let $N_\sigma := \sum_{i=0}^{n-1} \sigma^i$.
Here is a resolution of~$R$ by free $R[G]$-modules:
\begin{equation}\label{eq:res-cyclic-finite}
 \cdots \to R[G] \xrightarrow{N_\sigma} R[G] \xrightarrow{1-\sigma} R[G]
\xrightarrow{N_\sigma} R[G]  \xrightarrow{1-\sigma} R[G] \xrightarrow{\epsilon} R \to 0.
\end{equation}
In Exercise~\ref{excyclic} you are asked to verify the exactness of these two sequences.

\subsection*{Group cohomology}

A standard reference for group cohomology is~\cite{Brown}.

\begin{definition}\label{defi:gp-coh}
Let $R$ be a ring, $G$ a group and $M$ a left $R[G]$-module.
Recall that $F(G)_\bullet$ denotes the standard resolution of~$R$ by
free $R[G]$-modules.
\begin{enumerate}[(a)]
\item Let $M$ be a left $R[G]$-module.
When we apply the functor $\Hom_{R[G]}(\cdot,M)$ to the standard
resolution $F(G)_\bullet$ cut off at~$0$ (i.e.\ $F(G)_1 \xrightarrow{\partial_1} F(G)_0
\xrightarrow{\partial_0} 0$), we get the cochain complex $\Hom_{R[G]}(F(G)_\bullet,M)$: 
$$ \cdots \to \Hom_{R[G]}(F(G)_{n-1},M) \xrightarrow{\partial^{n}} \Hom_{R[G]}(F(G)_{n},M) 
\xrightarrow{\partial^{n+1}} \Hom_{R[G]}(F(G)_{n+1},M) \to \cdots .$$
Define the $n$-th cohomology group of~$G$ with values in the $G$-module~$M$ as
$$ \h^n(G,M) := \h^n(\Hom_{R[G]}(F(G)_\bullet,M)).$$

\item Let $M$ be a right $R[G]$-module.
When we apply the functor $M \otimes_{R[G]} \cdot $ to the standard
resolution $F(G)_\bullet$ cut off at~$0$ we get the chain complex 
$M \otimes_{R[G]} F(G)_\bullet$ : 
$$ \cdots \to M \otimes_{R[G]} F(G)_{n+1} \xrightarrow{\partial_{n+1}} M \otimes_{R[G]} F(G)_{n}
\xrightarrow{\partial_{n}} M \otimes_{R[G]} F(G)_{n-1} \to \cdots .$$

Define the $n$-th homology group of~$G$ with values in the $G$-module~$M$ as
$$ \h_n(G,M) := \h_n(M \otimes_{R[G]} F(G)_\bullet).$$
\end{enumerate}
\end{definition}

In this lecture we shall only use group cohomology.
As a motivation for looking at group cohomology in this lecture, we
can already point out that
$$ \h^1(\Gamma_1(N),V_{k-2}(R)) \cong \cM_k(\Gamma_1(N),R),$$
provided that $6$ is invertible in~$R$ (see Theorem~\ref{compthm}).
The reader is invited to compute explicit descriptions of $\h^0$, $\h_0$ and $\h^1$
in Exercise~\ref{exsmallh}.

\subsection{Theory: Cohomological Techniques}

The cohomology of groups fits into a general machinery, namely that of derived
functor cohomology. Derived functors are universal cohomological $\delta$-functors
and many properties of them can be derived in a purely formal way from
the universality. What this means will be explained in this section. We omit all
proofs.

\begin{definition}
Let $\cC$ and $\calD$ be (abelian) categories (for instance, $\cC$ the right cochain complexes
of $R-\modules$ and $\calD = R-\modules$).
A {\em positive covariant cohomological $\delta$-functor}
between $\cC$ and $\calD$ is a collection
of functors $\h^n: \cC \to \calD$ for $n \ge 0$ together with {\em connecting morphisms}
$$\delta^n: \h^{n} (C) \to \h^{n+1} (A)$$ 
which are defined for every short exact sequence
$0 \to A \to B \to C \to 0$ in~$\cC$
such that the following hold:

\begin{enumerate}[(a)]
\item (Positivity) $\h^n$ is the zero functor if $n < 0$.
\item
For every short exact sequence 
$0 \to A \to B \to C \to 0$ in~$\cC$ there is the {\em long exact sequence} in~$\calD$:
$$ \cdots \to \h^{n-1}(C) \xrightarrow{\delta^{n-1}} \h^n (A) \to \h^n(B) \to \h^n(C)
\xrightarrow{\delta^{n}} \h^{n+1} (A) \to \cdots,$$
where the maps $\h^n (A) \to \h^n(B) \to \h^n(C)$ are those that are induced from
the homomorphisms in the exact sequence $0 \to A \to B \to C \to 0$.
\item
For every commutative diagram in~$\cC$
$$ \begin{CD}
0 @>>> A @>>> B @>>> C @>>> 0 \\
&& @VfVV @VgVV @VhVV \\
0 @>>> A' @>>> B' @>>> C' @>>> 0 
\end{CD} $$
with exact rows the following diagram in $\calD$ commutes, too:
$$ \begin{CD}
\h^{n-1} (C) @>{\delta^{n-1}}>> \h^{n} (A) @>>> \h^{n} (B) @>>> \h^{n} (C) @>{\delta^{n}}>> \h^{n+1} (A) \\
@V{\h^{n-1} (h)}VV @V{\h^{n} (f)}VV @V{\h^{n} (g)}VV @V{\h^{n} (h)}VV @V{\h^{n+1} (f)}VV\\
\h^{n-1} (C') @>{\delta^{n-1}}>> \h^{n} (A') @>>> \h^{n} (B') @>>> \h^{n} (C') @>{\delta^{n}}>> \h^{n+1} (A')
\end{CD} $$
\end{enumerate}
\end{definition}

\begin{theorem}\label{thm:deltafun}
Let $R$ be a ring (not necessarily commutative).
Let $\cC$ stand for the category of cochain complexes of left $R$-modules.
Then the cohomology functors
$$ \h^n: \cC \to R-\modules, \;\;\; C^\bullet \mapsto \h^n(C^\bullet)$$
form a cohomological $\delta$-functor.
\end{theorem}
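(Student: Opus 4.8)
The plan is to reduce the statement to a few standard diagram chases, the heart of which is the snake-lemma construction of the connecting morphism. First I would record that each $\h^n$ really is a functor $\cC \to R-\modules$: if $\phi^\bullet: C^\bullet \to D^\bullet$ is a morphism of cochain complexes, then from $\phi^{n+1}\circ\partial^{n+1} = \partial^{n+1}\circ\phi^n$ one sees that $\phi^n$ maps $\Z^n(C^\bullet) = \Ker(\partial^{n+1})$ into $\Z^n(D^\bullet)$ and $\B^n(C^\bullet) = \Image(\partial^n)$ into $\B^n(D^\bullet)$, hence induces an $R$-linear map $\h^n(\phi^\bullet): \h^n(C^\bullet)\to\h^n(D^\bullet)$; the relations $\h^n(\id)=\id$ and $\h^n(\psi\circ\phi)=\h^n(\psi)\circ\h^n(\phi)$ follow at once. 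Positivity is built in, since by convention $\h^n$ is the zero functor for $n<0$.

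The main work is the connecting morphism. Let $0\to A^\bullet \xrightarrow{i^\bullet} B^\bullet \xrightarrow{p^\bullet} C^\bullet \to 0$ be a short exact sequence in $\cC$, i.e.\ each $0\to A^n \xrightarrow{i^n} B^n \xrightarrow{p^n} C^n\to 0$ is exact. I would define $\delta^n: \h^n(C^\bullet)\to\h^{n+1}(A^\bullet)$ by the usual recipe: given a cocycle $c\in\Z^n(C^\bullet)$, lift it along the surjection $p^n$ to some $b\in B^n$; since $p^{n+1}(\partial^{n+1}b) = \partial^{n+1}p^n(b) = \partial^{n+1}c = 0$, exactness provides a unique $a\in A^{n+1}$ with $i^{n+1}(a) = \partial^{n+1}b$; from $i^{n+2}(\partial^{n+2}a) = \partial^{n+2}i^{n+1}(a) = \partial^{n+2}\partial^{n+1}b = 0$ and the injectivity of $i^{n+2}$ one gets $a\in\Z^{n+1}(A^\bullet)$, and one sets $\delta^n([c]) := [a]$. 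The verifications that this is independent of the lift $b$, independent of the representative $c$ of its class, and $R$-linear, are the standard chases; I would carry them out but not belabour them.

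It then remains to prove exactness of the long sequence
$$ \cdots \to \h^{n-1}(C^\bullet) \xrightarrow{\delta^{n-1}} \h^n(A^\bullet) \to \h^n(B^\bullet) \to \h^n(C^\bullet) \xrightarrow{\delta^n} \h^{n+1}(A^\bullet) \to \cdots, $$
whose maps among the $\h^n$'s are those induced functorially by $i^\bullet$ and $p^\bullet$, and to verify the naturality squares in part~(c). The former is the usual sequence of diagram chases — exactness at the $A$-, $B$- and $C$-nodes — which can be packaged as one application of the snake lemma to a ladder assembled from the rows $0\to A^n\to B^n\to C^n\to 0$ and the differentials. For the latter, a morphism of short exact sequences is compatible with lifting along the $p$'s, applying the differentials, and taking preimages along the $i$'s, so tracing the definition of $\delta^n$ through both rows shows the square commutes. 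I expect the exactness verification for the long sequence to be the most tedious step, though it is purely mechanical and presents no real obstacle; full details are in \cite{Weibel}.
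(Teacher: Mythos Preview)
Your proposal is correct and follows exactly the approach the paper indicates: the paper's proof merely says ``This theorem is proved by some `diagram chasing' starting from the snake lemma'' and refers to Chapter~1 of \cite{Weibel} for details. You have supplied precisely those details --- the snake-lemma construction of $\delta^n$, the exactness checks, and the naturality squares --- so your write-up is in fact more explicit than what the paper gives.
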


\begin{proof}
This theorem is proved by some 'diagram chasing' starting from the
snake lemma. See Chapter~1 of~\cite{Weibel} for details.
\end{proof}

It is not difficult to conclude that group cohomology also forms
a cohomological $\delta$-functor.

\begin{proposition}\label{gpcohfunctor}
Let $R$ be a commutative ring and $G$ a group.
\begin{enumerate}[(a)]
\item The functor from $R[G]-\modules$ to cochain complexes of $R[G]-\modules$
which associates to a left $R[G]$-module~$M$ the cochain complex
$\Hom_{R[G]} (F(G)_\bullet,M)$ with $F(G)_\bullet$ the bar resolution of $R$
by free $R[G]$-modules is exact, i.e.\ it takes an exact sequence
$0 \to A \to B \to C \to 0$ of $R[G]$-modules to the exact sequence
$$0 \to \Hom_{R[G]}(F(G)_\bullet,A) \to \Hom_{R[G]}(F(G)_\bullet,B) 
    \to \Hom_{R[G]}(F(G)_\bullet,C) \to 0$$
of cochain complexes.
\item The functors 
$$ \h^n(G,\cdot): R[G]-\modules \to R-\modules, \;\;\; M \mapsto \h^n(G,M)$$
form a positive cohomological $\delta$-functor.
\end{enumerate}
\end{proposition}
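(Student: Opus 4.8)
The proof rests entirely on the observation that every term $F(G)_n$ of the (bar) standard resolution is a \emph{free}, hence projective, $R[G]$-module, combined with the $\delta$-functor structure on cohomology of cochain complexes already recorded in Theorem~\ref{thm:deltafun}.

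For part~(a): fix $n \ge 0$. Since $F(G)_n$ is free (by Lemma~\ref{lemprojective} it is projective), the functor $\Hom_{R[G]}(F(G)_n, \cdot) : R[G]\text{-}\modules \to R\text{-}\modules$ is exact; that is, a short exact sequence $0 \to A \to B \to C \to 0$ of $R[G]$-modules is carried to a short exact sequence
$$ 0 \to \Hom_{R[G]}(F(G)_n, A) \to \Hom_{R[G]}(F(G)_n, B) \to \Hom_{R[G]}(F(G)_n, C) \to 0. $$
These assemble, as $n$ varies, into a sequence of cochain complexes
$$ 0 \to \Hom_{R[G]}(F(G)_\bullet, A) \to \Hom_{R[G]}(F(G)_\bullet, B) \to \Hom_{R[G]}(F(G)_\bullet, C) \to 0, $$
and a sequence of cochain complexes is exact precisely when it is exact in each degree (kernels, cokernels and images of morphisms of cochain complexes being computed degreewise, cf.\ Exercise~\ref{excomplexes}). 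Hence the functor $M \mapsto \Hom_{R[G]}(F(G)_\bullet, M)$ is exact. One should also remark that the maps involved are genuinely morphisms of cochain complexes, i.e.\ commute with the $\partial^n$; this is automatic because $\Hom_{R[G]}(\cdot, -)$ is a bifunctor, so the differentials $\partial^n$, being induced by $\partial_n : F(G)_n \to F(G)_{n-1}$, are natural in the module argument.

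For part~(b): by Theorem~\ref{thm:deltafun}, the cohomology functors $\h^n : \mathcal{C} \to R\text{-}\modules$ on the category $\mathcal{C}$ of cochain complexes of $R$-modules (here applied to cochain complexes of $R[G]$-modules, which are in particular cochain complexes of $R$-modules) form a positive cohomological $\delta$-functor. The functor of part~(a), call it $\mathcal{F} : R[G]\text{-}\modules \to \mathcal{C}$, is exact, so it sends a short exact sequence of $R[G]$-modules to a short exact sequence in $\mathcal{C}$; composing, $\h^n(G, \cdot) = \h^n \circ \mathcal{F}$ inherits the connecting morphisms $\delta^n$ (namely those of Theorem~\ref{thm:deltafun} applied to $\mathcal{F}(0 \to A \to B \to C \to 0)$), the long exact sequence, and the naturality square in axiom~(c) — each of these is just the corresponding datum for $\h^n$ transported along $\mathcal{F}$, using that $\mathcal{F}$ is a functor and carries commutative diagrams with exact rows to commutative diagrams with exact rows. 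Positivity ($\h^n(G, \cdot) = 0$ for $n < 0$) holds because the standard resolution is concentrated in degrees $\ge 0$, so $\Hom_{R[G]}(F(G)_\bullet, M)$ has no terms in negative degree and the same is true of its cohomology.

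\textbf{Main obstacle.} There is no serious obstacle: the content is the exactness in part~(a), which is pure formal nonsense once one knows $F(G)_n$ is free. The only point requiring a little care is the bookkeeping that exactness of a sequence of cochain complexes is a degreewise condition and that all maps in sight are chain maps natural in the module variable — i.e.\ checking that $\mathcal{F}$ really lands in $\mathcal{C}$ and is exact there, rather than merely exact degreewise as a collection of $\Hom$-functors.
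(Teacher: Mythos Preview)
Your proof is correct and follows precisely the approach the paper intends: the paper defers this to Exercise~\ref{exgpcohfunctor}, whose hint says to use that free modules are projective for~(a), and to deduce~(b) from~(a) combined with Theorem~\ref{thm:deltafun}. You have executed exactly this, with the added care of verifying that the degreewise maps assemble into morphisms of cochain complexes and that exactness of such a sequence is a degreewise condition.
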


\begin{proof}
Exercise~\ref{exgpcohfunctor}.
\end{proof}

We will now come to universal $\delta$-functors. Important examples of such
(among them group cohomology)
are obtained from injective resolutions. Although the following discussion
is valid in any abelian category (with enough injectives), we restrict to
$R-\modules$ for a not necessarily commutative ring~$R$.

\begin{definition}
Let $R$ be a not necessarily commutative ring and let $M \in \obj(R-\modules)$.

A {\em projective resolution} of~$M$ is a resolution
$$ \cdots \to P_2 \xrightarrow{\partial_2} P_1 \xrightarrow{\partial_1} P_0 \to M \to 0,$$
i.e.\ an exact chain complex, in which all the $P_n$ for $n \ge 0$ are projective
$R$-modules.

An {\em injective resolution} of~$M$ is a resolution
$$ 0 \to M \to I^0 \xrightarrow{\partial^1} I^1 \xrightarrow{\partial^2} I^2 \to \cdots,$$
i.e.\ an exact cochain complex, in which all the $I^n$ for $n \ge 0$ are injective
$R$-modules.
\end{definition}

We state the following lemma as a fact. It is easy for projective resolutions
and requires work for injective ones (see e.g.\ \cite{Eisenbud}).

\begin{lemma}
Injective and projective resolutions exist in the category of $R$-modules,
where $R$ is any ring (not necessarily commutative).
\end{lemma}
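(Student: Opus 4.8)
The statement to prove is the existence of injective and projective resolutions in the category of $R$-modules for an arbitrary (not necessarily commutative) ring $R$.

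\medskip

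The plan is to treat the two halves separately, since projective resolutions are elementary while injective resolutions require genuine work (as the paper itself flags). For \textbf{projective resolutions}, the key observation is that every module is a quotient of a free module: given $M \in \obj(R-\modules)$, choose a generating set (for instance all of $M$) and form the free module $P_0 = \bigoplus_{m \in M} R$ with the surjection $\epsilon: P_0 \twoheadrightarrow M$ sending the basis element indexed by $m$ to $m$. Set $M_1 := \ker(\epsilon)$, repeat to get a free $P_1 \twoheadrightarrow M_1$, and splice: the composite $P_1 \twoheadrightarrow M_1 \hookrightarrow P_0$ is the differential $\partial_1$. Iterating produces an exact chain complex $\cdots \to P_1 \to P_0 \to M \to 0$ with each $P_n$ free, hence projective by Lemma~\ref{lemprojective}. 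Exactness at each spot is immediate from the construction ($\Image \partial_{n+1} = M_{n+1} = \ker \partial_n$). This half needs no hypotheses on $R$ beyond being a ring with unit.

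\medskip

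For \textbf{injective resolutions}, the same splicing trick works \emph{provided} one knows the dual fact: every $R$-module embeds into an injective $R$-module (``enough injectives''). Granting this, given $M$, embed $M \hookrightarrow I^0$ with $I^0$ injective, form the cokernel $C^0 := I^0/M$, embed $C^0 \hookrightarrow I^1$, and let $\partial^1: I^0 \to I^1$ be the composite $I^0 \twoheadrightarrow C^0 \hookrightarrow I^1$; iterate. The resulting cochain complex $0 \to M \to I^0 \to I^1 \to \cdots$ is exact by construction, with all $I^n$ injective. So everything reduces to the embedding statement, which is \textbf{the main obstacle}. I would prove it via the standard two-step argument: first establish \emph{Baer's criterion} (an $R$-module $I$ is injective iff every $R$-homomorphism from a left ideal $\fa \subseteq R$ into $I$ extends to $R$), proved by a Zorn's lemma exhaustion over partial extensions; then handle $\ZZ$-modules first, where divisible abelian groups are injective (immediate from Baer's criterion since ideals of $\ZZ$ are principal), and every abelian group embeds into a divisible one (write it as a quotient of a free $\ZZ$-module $\bigoplus \ZZ \hookrightarrow \bigoplus \QQ$ and push out). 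Finally, transport to $R$-modules using the adjunction: $\Hom_\ZZ(R, -)$ is right adjoint to the forgetful functor, hence sends injective $\ZZ$-modules to injective $R$-modules, and for any $R$-module $M$ the unit map $M \to \Hom_\ZZ(R, M)$ composed with $\Hom_\ZZ(R, M) \hookrightarrow \Hom_\ZZ(R, D)$ (for $M \hookrightarrow D$ a $\ZZ$-injective envelope) gives the desired embedding of $M$ into an injective $R$-module.

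\medskip

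Since the paper explicitly says this lemma is stated ``as a fact'' and refers to \cite{Eisenbud} for the injective case, in practice I would present the projective construction in full (it is short) and for the injective case either cite \cite{Eisenbud} directly or sketch the Baer's-criterion-plus-adjunction argument above, emphasising that the only nontrivial input is that $\ZZ$ (equivalently, any PID) has enough injectives via divisibility. No result from later in the paper is needed; only Lemma~\ref{lemprojective} (free $\Rightarrow$ projective) is used, and that is available.
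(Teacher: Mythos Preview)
Your proposal is correct and in fact goes well beyond what the paper does: the paper provides no proof at all, explicitly stating the lemma ``as a fact'' with the remark that the projective case is easy and the injective case requires work, referring to \cite{Eisenbud}. Your sketch of both halves---the free-cover iteration for projective resolutions and the Baer's-criterion-plus-adjunction route for enough injectives---is the standard argument and exactly what such a citation would unpack.
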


Note that applying a left exact covariant functor $\cF$ to an injective resolution 
$$0 \to M \to I^0 \to I^1 \to I^2 \to \cdots$$
of~$M$ gives rise to a cochain complex
$$ 0 \to \cF(M) \to \cF(I^0) \to \cF(I^1) \to \cF(I^2) \to \cdots,$$
of which only the part $0 \to \cF(M) \to \cF(I^0) \to \cF(I^1)$ need be exact.
This means that the $\h^0$ of the (cut off at~$0$) cochain complex
$\cF(I^0) \to \cF(I^1) \to \cF(I^2) \to \cdots$ is equal to $\cF(M)$.

\begin{definition}
Let $R$ be a not necessarily commutative ring. 
\begin{enumerate}[(a)]
\item Let $\cF$ be a left exact covariant functor
on the category of $R$-modules (mapping for instance to $\ZZ-\modules$).

The {\em right derived functors} $R^n\cF(\cdot)$ of~$\cF$ are the functors
on the category of $R-\modules$ defined as follows.
For $M \in \obj(R-\modules)$ choose an injective resolution
$0 \to M \to I^0 \to I^1 \to \cdots$ and let
$$ R^n\cF(M) := \h^n \big(\cF(I^0) \to \cF(I^1) \to \cF(I^2) \to \cdots\big).$$

\item Let $\cG$ be a left exact contravariant functor
on the category of $R$-modules.

The {\em right derived functors} $R^n\cG(\cdot)$ of~$\cG$ are the functors
on the category of $R-\modules$ defined as follows.
For $M \in \obj(R-\modules)$ choose a projective resolution
$\cdots \to P_1 \to P_0 \to M \to 0$ and let
$$ R^n\cG(M) := \h^n \big(\cG(P_0) \to \cG(P_1) \to \cG(P_2) \to \cdots\big).$$
\end{enumerate}
\end{definition}

We state the following lemma without a proof. It is a simple consequence of
the injectivity respectively projectivity of the modules in the resolution.

\begin{lemma}
The right derived functors do not depend on the choice of the resolution
and they form a cohomological $\delta$-functor.
\end{lemma}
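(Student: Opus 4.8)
The plan is to prove the two assertions in turn, both resting on the \emph{comparison theorem} for resolutions: any morphism of $R$-modules lifts to a morphism of injective (resp.\ projective) resolutions, uniquely up to chain homotopy. I would state and use this for injective resolutions, which is the case relevant to a left exact covariant $\cF$; the projective/contravariant case (for $\cG$) is formally dual, and I would just remark on this.

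\emph{Independence of the resolution.} Let $0 \to M \to I^\bullet$ and $0 \to M \to J^\bullet$ be two injective resolutions of $M$. First I would invoke the comparison theorem to obtain chain maps $\phi\colon I^\bullet \to J^\bullet$ and $\psi\colon J^\bullet \to I^\bullet$ lifting $\id_M$, noting that $\psi\phi$ and $\phi\psi$ are chain homotopic to the respective identities (since $\id_{I^\bullet}$ is itself a lift of $\id_M$, and lifts are unique up to homotopy). The crucial point is then that applying the additive functor $\cF$ term by term carries chain maps to chain maps and chain homotopies to chain homotopies: the defining identity $s^{n+1}\partial^{n+1} + \partial^{n} s^{n} = \phi^n - \mathrm{id}$ involves only addition and composition and is preserved by $\cF$. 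Hence $\cF(\psi)\cF(\phi)$ and $\cF(\phi)\cF(\psi)$ are homotopic to the identities, so $\cF(\phi)$ and $\cF(\psi)$ induce mutually inverse isomorphisms $\h^n(\cF(I^\bullet)) \cong \h^n(\cF(J^\bullet))$. Finally I would check these isomorphisms are canonical: any two lifts of $\id_M$ are homotopic, hence induce the same map on cohomology. This makes $R^n\cF(M)$ well-defined independently of the resolution, and functoriality in $M$ follows the same way, by lifting an arbitrary morphism $M \to M'$ to the chosen resolutions.

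\emph{The $\delta$-functor structure.} Given a short exact sequence $0 \to A \to B \to C \to 0$, I would first apply the Horseshoe Lemma to build injective resolutions $I_A^\bullet$, $I_B^\bullet$, $I_C^\bullet$ fitting into a short exact sequence of complexes $0 \to I_A^\bullet \to I_B^\bullet \to I_C^\bullet \to 0$ that is split in each degree (one may arrange $I_B^n = I_A^n \oplus I_C^n$). Since additive functors preserve direct sums, applying $\cF$ degreewise yields a short exact sequence of cochain complexes
$$ 0 \to \cF(I_A^\bullet) \to \cF(I_B^\bullet) \to \cF(I_C^\bullet) \to 0. $$
Now I would invoke Theorem~\ref{thm:deltafun} --- cohomology of cochain complexes of $R$-modules is a cohomological $\delta$-functor --- to obtain the long exact sequence
$$ \cdots \to \h^n(\cF(I_A^\bullet)) \to \h^n(\cF(I_B^\bullet)) \to \h^n(\cF(I_C^\bullet)) \xrightarrow{\delta^n} \h^{n+1}(\cF(I_A^\bullet)) \to \cdots, $$
which is precisely the long exact sequence in $R^\bullet\cF$ and which defines the connecting maps $\delta^n$. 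Positivity ($R^n\cF = 0$ for $n<0$) is immediate since the complexes sit in non-negative degrees. For the naturality axiom, given a morphism of short exact sequences I would lift it compatibly to a morphism of the chosen horseshoe resolutions (comparison theorem again), apply $\cF$, and use the naturality clause of Theorem~\ref{thm:deltafun}; independence from the auxiliary choices reduces once more to uniqueness of lifts up to homotopy together with the first part.

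\emph{Main obstacle.} There is no single hard idea here; the work is bookkeeping. The delicate point will be verifying that every isomorphism and every connecting map built through an auxiliary resolution is genuinely independent of that choice and compatible with the others --- so that $R^n\cF$ is an honest functor and $\delta$ is natural. Each such verification collapses to ``lifts of a module map to resolutions are unique up to chain homotopy'', applied repeatedly; arranging the homotopies so that they splice together coherently (notably in the naturality axiom) is the fiddly part.
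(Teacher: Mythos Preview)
Your argument is correct and is the standard textbook proof (comparison theorem plus Horseshoe Lemma). The paper, however, does not actually prove this lemma: it simply states it without proof, remarking only that ``it is a simple consequence of the injectivity respectively projectivity of the modules in the resolution.'' So you have supplied considerably more than the paper does, and what you supplied is right.
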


Of course, one can also define left derived functors of right exact functors.
An important example is the $\Tor$-functor which is obtained by deriving the
tensor product functor in a way dual to~$\Ext$ (see below).
As already mentioned, the importance of right and left derived functors comes
from their universality.

\begin{definition}\label{defi:deltafunctor}
\begin{enumerate}[(a)]
\item
Let $(\h^n)_n$ and $(T^n)_n$ be cohomological $\delta$-functors.
A {\em morphism of cohomological $\delta$-functors} is a collection
of natural transformations $\eta^n: \h^n \Rightarrow T^n$ that commute
with the connecting homomorphisms~$\delta$, i.e.\
for every short exact sequence $0 \to A \to B \to C \to 0$ and every~$n$
the diagram
$$ \begin{CD}
\h^n(C) @>{\delta}>> \h^{n+1}(A) \\
 @V{\eta_C^n}VV @V{\eta_A^{n+1}}VV \\
T^n(C) @>{\delta}>> T^{n+1}(A) 
\end{CD} $$
commutes.

\item The cohomological $\delta$-functor $(\h^n)_n$ is {\em universal}
if for every cohomological $\delta$-functor $(T^n)_n$
and every natural transformation $\eta^0: \h^0(\cdot) \Rightarrow T^0(\cdot)$
there is a unique natural transformation $\eta^n: \h^n(\cdot) \Rightarrow T^n(\cdot)$
for all~$n\ge 1$ such that the $\eta^n$ form a morphism of 
cohomological $\delta$-functors between $(\h^n)_n$ and $(T^n)_n$.
\end{enumerate}
\end{definition}

For the proof of the following central result we refer to \cite{Weibel}, Chapter~2.

\begin{theorem}
Let $R$ be a not necessarily commutative ring and
let $\cF$ be a left exact covariant or contravariant functor
on the category of $R$-modules (mapping for instance to $\ZZ-\modules$).

The {\em right derived functors} $(R^n\cF(\cdot))_n$ of~$\cF$ 
form a \underline{universal} cohomological $\delta$-functor.
\end{theorem}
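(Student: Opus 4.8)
The plan is to prove universality directly from the effaceability (vanishing on injectives) of the right derived functors, following the standard argument. The key point is: a positive cohomological $\delta$-functor $(\h^n)_n$ is universal as soon as each $\h^n$ for $n \ge 1$ is \emph{effaceable}, meaning that for every object $M$ there is a monomorphism $M \hookrightarrow I$ with $\h^n(I) = 0$; for right derived functors this is immediate because one may embed $M$ into an injective module $I$ (resp., for the contravariant case, cover $M$ by a projective $P$), and $R^n\cF(I) = 0$ for $n \ge 1$ since $0 \to I \to I \to 0 \to \cdots$ is itself an injective resolution of $I$. So the first step is to record this vanishing.

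Next I would construct the natural transformations $\eta^n$ by induction on $n$, given $\eta^0 \colon \h^0 \Rightarrow T^0$. Fix $M$ and embed it in an injective: $0 \to M \to I \to C \to 0$, where $C$ is the cokernel. Applying both $\delta$-functors and using $\h^1(I) = 0$, the long exact sequence for $(\h^n)_n$ gives a surjection $\h^0(C) \twoheadrightarrow \h^1(M)$ with kernel the image of $\h^0(I)$; comparing with the long exact sequence for $(T^n)_n$ via $\eta^0$ on $M$, $I$, $C$, one sees that $\eta^0_C$ followed by the connecting map $\h^0(C) \to \cdots$ — more precisely, the composite $\h^0(C) \xrightarrow{\eta^0_C} T^0(C) \xrightarrow{\delta} T^1(M)$ — kills the image of $\h^0(I)$ and hence factors uniquely through $\h^1(M)$. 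This defines $\eta^1_M$. One then checks this is independent of the chosen embedding (any two embeddings can be compared through a common refinement, using naturality of $\eta^0$), that it is natural in $M$ (a morphism $M \to M'$ can be lifted to a morphism of chosen short exact sequences since $I'$ is injective), and that it commutes with $\delta$. Iterating — at stage $n$, use the short exact sequence $0 \to M \to I \to C \to 0$ with $\h^n(I) = 0$ to transport $\eta^{n-1}_C$ to $\eta^n_M$ — produces the whole family $(\eta^n)_{n \ge 1}$. Uniqueness follows by the same induction: any morphism of $\delta$-functors extending $\eta^0$ must satisfy exactly the factorization property that defines $\eta^n_M$.

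For the contravariant case the argument is dual: replace injective embeddings by projective covers $0 \to K \to P \to M \to 0$, use $R^n\cG(P) = 0$ for $n \ge 1$, and run the same dimension-shifting induction with the arrows reversed.

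The main obstacle, and the only part requiring genuine care, is the well-definedness and naturality of the $\eta^n$ — i.e.\ checking that the construction does not depend on the auxiliary short exact sequence and is compatible with morphisms and with the connecting maps. This is the usual dimension-shifting diagram chase; it is entirely formal but a little lengthy, and it is exactly the content for which I would cite \cite{Weibel}, Chapter~2, rather than reproduce it in full. I would therefore present the effaceability observation and the inductive construction in outline, and refer to \cite{Weibel} for the verification of the compatibilities and for uniqueness.
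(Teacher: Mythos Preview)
Your proposal is correct and in fact more detailed than the paper's own treatment: the paper simply cites \cite{Weibel}, Chapter~2, without any further argument, whereas you sketch the standard effaceability/dimension-shifting proof before deferring to the same reference for the diagram chases.
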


\begin{example}
\begin{enumerate}[(a)]
\item Let $R$ be a commutative ring and $G$ a group.
The functor 
$$(\cdot)^G: R[G]-\modules \to R-\modules, \;\;\; M \mapsto M^G$$
is left exact and covariant, hence we can form its right derived functors
$R^n(\cdot)^G$. Since we have the special case $(R^0(\cdot)^G)(M) = M^G$,
universality gives a morphism of cohomological $\delta$-functors
$R^n(\cdot)^G \Rightarrow H^n(G,\cdot)$. We shall see that this is an isomorphism.
\item Let $R$ be a not necessarily commutative ring. We have seen that
the functors $\Hom_R(\cdot,M)$ and $\Hom_R(M,\cdot)$ are left exact.
We write
$$ \Ext_R^n (\cdot,M) := R^n \Hom_R(\cdot,M) \;\;\; \text{ and }\;\;\; 
\Ext_R^n (M,\cdot) := R^n \Hom_R(M,\cdot).$$
See Theorem~\ref{extbalanced} below.
\item Many cohomology theories in (algebraic) geometry are also
of a right derived functor nature. For instance, let $X$ be a topological
space and consider the category of sheaves of abelian groups on~$X$.
The global sections functor $\cF \mapsto \cF(X) = \h^0(X,\cF)$ is left exact
and its right derived functors $R^n(\h^0(X,\cdot))$ can be formed.
They are usually denoted by $\h^n(X,\cdot)$ and they define 'sheaf cohomology'
on~$X$. Etale cohomology is an elaboration of this based on a generalisation
of topological spaces.
\end{enumerate}
\end{example}

\subsection*{Universal properties of group cohomology}

\begin{theorem}\label{extbalanced}
Let $R$ be a not necessarily commutative ring.
The $\Ext$-functor is {\em balanced}. This means that for any two
$R$-modules $M,N$ there are isomorphisms
$$ (\Ext^n_R (\cdot,N))(M) \cong (\Ext^n_R (M,\cdot))(N) =: \Ext_R^n(M,N).$$
\end{theorem}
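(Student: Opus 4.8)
The plan is to prove that $\Ext$ is balanced by the standard double-complex argument, which exhibits both $(\Ext^n_R(\cdot,N))(M)$ and $(\Ext^n_R(M,\cdot))(N)$ as the cohomology of one and the same object. First I would fix a projective resolution $P_\bullet \to M \to 0$ and an injective resolution $0 \to N \to I^\bullet$, and form the first-quadrant double complex $C^{p,q} := \Hom_R(P_p, I^q)$ with the two differentials induced (with the appropriate sign convention) from $\partial_p^P$ and $\partial^q_I$. The key object is the total complex $\operatorname{Tot}(C^{\bullet,\bullet})$, and the whole proof amounts to computing its cohomology in two ways by running the two spectral sequences of the double complex (or, more elementarily, by the two associated filtrations and a diagram-chase/"acyclic assembly" argument as in \cite{Weibel}, Chapter~2 or~5).

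The two computations go as follows. Filtering by columns: for fixed~$p$, the row $q \mapsto \Hom_R(P_p, I^q)$ computes $\Hom_R(P_p, \h^\bullet(I^\bullet))$ because $\Hom_R(P_p,\cdot)$ is exact ($P_p$ projective), and $\h^\bullet(I^\bullet)$ is concentrated in degree~$0$ where it equals~$N$; so the $E_1$-page is the complex $q=0$: $\Hom_R(P_\bullet, N)$ and zero elsewhere, whence the total cohomology is $\h^n(\Hom_R(P_\bullet,N)) = (\Ext^n_R(\cdot,N))(M)$. Filtering by rows instead: for fixed~$q$, the column $p \mapsto \Hom_R(P_p, I^q)$ computes $\Hom_R(\h_\bullet(P_\bullet), I^q)$ because $\Hom_R(\cdot, I^q)$ is exact ($I^q$ injective), and $\h_\bullet(P_\bullet)$ is concentrated in degree~$0$ equal to~$M$; so this time the surviving complex is $\Hom_R(M, I^\bullet)$, giving total cohomology $\h^n(\Hom_R(M, I^\bullet)) = (\Ext^n_R(M,\cdot))(N)$. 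Equating the two gives the desired natural isomorphism.

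Two small points need care. First, the exactness inputs: $\Hom_R(P,\cdot)$ exact for $P$ projective and $\Hom_R(\cdot,I)$ exact for $I$ injective are exactly the defining properties recalled in the excerpt's \emph{Definition} of projective and injective modules, and the existence of the two resolutions is the \emph{Lemma} stated just before; so nothing outside the excerpt is needed beyond the spectral sequence (or acyclic assembly) machinery of \cite{Weibel}, which the paper explicitly defers to. Second, naturality in both $M$ and $N$: this follows from the comparison theorem for resolutions (any map of modules lifts to a map of projective resolutions, resp.\ extends to a map of injective resolutions, uniquely up to homotopy), so the construction of $\operatorname{Tot}(C^{\bullet,\bullet})$ and hence the isomorphism is functorial. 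I would only sketch this, since it is routine.

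The main obstacle is purely expository rather than mathematical: the cleanest rigorous route uses spectral sequences of a double complex, which have not been developed in these notes, so in a self-contained write-up one should instead invoke the "acyclic assembly lemma" / "Cartan--Eilenberg" style dimension-shifting argument (comparing $\h^n(\operatorname{Tot})$ with each edge complex by an explicit diagram chase), and it is this bookkeeping — tracking the two filtrations and verifying the edge isomorphisms degree by degree — that is the only delicate part. Since the paper has announced "we omit all proofs" for this stretch, I would keep the argument at the level of a sketch and refer to \cite{Weibel} for the assembly lemma.
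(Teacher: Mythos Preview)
Your proposal is correct and is precisely the standard double-complex/acyclic-assembly argument that the paper defers to: the paper's own ``proof'' is simply the citation \cite{Weibel}, Theorem~2.7.6, and what you have sketched is exactly Weibel's proof of that theorem. There is nothing to add.
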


\begin{proof}
\cite{Weibel}, Theorem~2.7.6.
\end{proof}

\begin{corollary}\label{gpcoh}
Let $R$ be a commutative ring and $G$ a group. For every $R[G]$-module~$M$
there are isomorphisms
$$ H^n(G,M) \cong \Ext_{R[G]}^n(R,M) \cong (R^n(\cdot)^G)(M)$$
and the functors $(H^n(G,\cdot))_n$ form a universal cohomological $\delta$-functor.
Moreover, apart from the standard resolution of $R$ by free $R[G]$-modules,
any resolution of $R$ by projective $R[G]$-modules may be used to
compute $H^n(G,M)$.
\end{corollary}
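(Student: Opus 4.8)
The statement is a formal consequence of the balancedness of $\Ext$ (Theorem~\ref{extbalanced}) together with the identification of $G$-invariants as a $\Hom$-functor, so the work is almost entirely a matter of assembling already-established facts in the right order.

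First I would record the elementary identification $\Hom_{R[G]}(R,M)\cong M^G$, natural in the $R[G]$-module $M$; this is Exercise~\ref{exgp}(e) with $\Gamma=G$. Thus the left exact covariant functor $(\cdot)^G$ \emph{is} the functor $\Hom_{R[G]}(R,\cdot)$, and passing to right derived functors gives an equality $(R^n(\cdot)^G)(M)=(R^n\Hom_{R[G]}(R,\cdot))(M)=:\Ext^n_{R[G]}(R,M)$, which is precisely the second displayed isomorphism. Here $\Ext^n_{R[G]}(R,M)$ is, by the definition of right derived functors of a covariant functor, computed from an \emph{injective} resolution of $M$.

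Next I would invoke balancedness: by Theorem~\ref{extbalanced}, $\Ext^n_{R[G]}(R,M)$ may equally be computed as $(\Ext^n_{R[G]}(\cdot,M))(R)$, i.e.\ from a \emph{projective} resolution $P_\bullet\to R$ of $R$ as $H^n(\Hom_{R[G]}(P_\bullet,M))$. The standard resolution $F(G)_\bullet$ is a resolution of $R$ by free $R[G]$-modules, hence by projective ones (Lemma~\ref{lemprojective}); taking $P_\bullet=F(G)_\bullet$ therefore yields
$$\Ext^n_{R[G]}(R,M)\;\cong\;H^n\big(\Hom_{R[G]}(F(G)_\bullet,M)\big)\;=\;H^n(G,M),$$
the last equality being exactly Definition~\ref{defi:gp-coh}. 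This is the first displayed isomorphism. The clause ``any projective resolution of $R$ may be used'' is then immediate from the independence of right derived functors on the chosen resolution: $(\Ext^n_{R[G]}(\cdot,M))(R)$ is computed by \emph{any} projective resolution $P_\bullet\to R$ as $H^n(\Hom_{R[G]}(P_\bullet,M))$, and we have just seen this common value is $H^n(G,M)$.

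Finally, for universality I would argue that $(H^n(G,\cdot))_n$ is already known to be a positive cohomological $\delta$-functor (Proposition~\ref{gpcohfunctor}(b)), and that the isomorphisms above are natural in $M$ and compatible with the connecting morphisms, hence assemble into an isomorphism of $\delta$-functors $(H^n(G,\cdot))_n\cong(R^n(\cdot)^G)_n$; since the right derived functors form a \emph{universal} cohomological $\delta$-functor (by the quoted universality theorem for right derived functors), so do the $H^n(G,\cdot)$. The only point that genuinely needs care — and the one place I would lean on a cited construction rather than verify by hand — is that the balancedness isomorphism commutes with the $\delta$'s, i.e.\ is a morphism of $\delta$-functors and not merely an objectwise isomorphism; this is what the double-complex argument $\Hom_{R[G]}(P_\bullet,I^\bullet)$ (with $P_\bullet\to R$ projective, $M\to I^\bullet$ injective) in the proof of Theorem~\ref{extbalanced} supplies, and I would simply invoke it.
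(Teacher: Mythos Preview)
Your proof is correct and follows essentially the same route as the paper's: identify $(\cdot)^G$ with $\Hom_{R[G]}(R,\cdot)$, use balancedness of $\Ext$ (Theorem~\ref{extbalanced}) to pass between the covariant and contravariant versions, and recognise the contravariant side computed on the standard free resolution as $H^n(G,M)$ by definition. If anything you are slightly more careful than the paper in flagging that the balancedness isomorphism must be compatible with the connecting maps in order to transfer universality, and in pointing to the double-complex argument as the source of that compatibility.
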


\begin{proof}
We may compute
$\Ext_{R[G]}^n(\cdot,M) (R)$ by any resolution of~$R$ by projective $R[G]$-modules.
Our standard resolution is such a resolution, since any free module is projective.
Hence, $H^n(G,M) \cong \Ext_{R[G]}^n(\cdot,M)(R)$.
The key is now that $\Ext$ is balanced (Theorem~\ref{extbalanced}), since it
gives $H^n(G,M) \cong \Ext_{R[G]}^n(R,\cdot)(M) \cong R^n(\cdot)^G(M) \cong \Ext^n_{R[G]}(R,M)$.
As the $\Ext$-functor is universal (being a right derived functor), also
$H^n(G,\cdot)$ is universal. For the last statement we recall that
right derived functors do not depend on the chosen projective respectively
injective resolution.
\end{proof}

You are invited to look at Exercise~\ref{exfree} now.

\subsection{Theory: Generalities on Group Cohomology}

We now apply the universality of the $\delta$-functor
of group cohomology. Let $\phi: H \to G$ be a group
homomorphism and $A$ an $R[G]$-module. 
Via~$\phi$ we may
consider $A$ also as an $R[H]$-module and
$\res^0: \h^0(G,\cdot) \to \h^0(H,\cdot)$ is a natural
transformation. By the universality of $\h^\bullet(G,\cdot)$ we get
natural transformations
$$\res^n: \h^n(G,\cdot) \to \h^n(H,\cdot).$$
These maps are called {\em restrictions}. See Exercise~\ref{excochains}
for a description in terms of cochains. Very often
$\phi$ is just the embedding map of a subgroup.

Assume now that $H$ is a normal subgroup of~$G$ and $A$
is an $R[G]$-module. Then we can consider
$\phi: G \to G/H$ and the restriction above gives
natural transformations
$\res^n: \h^n(G/H,(\cdot)^H) \to \h^n(G,(\cdot)^H)$.
We define the {\em inflation maps} to be
$$\infl^n: \h^n(G/H,A^H) \xrightarrow{\res^n} \h^n(G,A^H)
\longrightarrow \h^n(G,A)$$
where the last arrow is induced from the natural inclusion $A^H \hookrightarrow A$.

Under the same assumptions, conjugation by~$g \in G$ preserves~$H$
and we have the isomorphism $H^0(H,A) = A^H \xrightarrow{a \mapsto ga} A^H = H^0(H,A)$. 
Hence by universality we obtain natural maps $H^n(H,A) \to H^n(H,A)$ for every
$g \in G$. One even gets an $R[G]$-action on $\h^n(H,A)$.
As $h \in H$ is clearly the identity on $\h^0(H,A)$, the above action 
is in fact also an $R[G/H]$-action.

Let now $H \le G$ be a subgroup of finite index.
Then the norm $N_{G/H} := \sum_{\{g_i\}} \in R[G]$ with $\{g_i\}$
a system of representatives of $G/H$ gives a natural
transformation
$\cores^0: \h^0(H,\cdot) \to \h^0(G,\cdot)$
where $\cdot$ is an $R[G]$-module. By universality
we obtain
$$\cores^n: \h^n(H,\cdot) \to \h^n(G,\cdot),$$
the {\em corestriction (transfer)} maps.

The inflation map, the $R[G/H]$-action and the corestriction
can be explicitly described in terms of cochains of the bar resolution
(see Exercise~\ref{excochains}).

It is clear that $\cores^0 \circ \res^0$ is multiplication
by the index $(G:H)$. By universality, also $\cores^n \circ \res^n$ is multiplication
by the index $(G:H)$.
Hence we have proved the first part of the following proposition.

\begin{proposition}\label{corres}
\begin{enumerate}[(a)]
\item Let $H < G$ be a subgroup of finite index $(G:H)$.
For all $i$ and all $R[G]$-modules $M$ one has the equality
$$  \cores_H^G \circ \res_H^G = (G:H) $$
on all $\h^i(G,M)$.
\item Let $G$ be a finite group of order~$n$ and $R$
a ring in which $n$ is invertible. Then
$\h^i(G,M) = 0$ for all $i\ge 1$ and all $R[G]$-modules~$M$.
\end{enumerate}
\end{proposition}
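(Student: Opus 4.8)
Part~(a) has already been established in the discussion preceding the statement, via the universality of the $\delta$-functor $\h^\bullet(G,\cdot)$: one checks that $\cores^0 \circ \res^0$ is multiplication by $(G:H)$ directly from the definitions (the norm element $N_{G/H}$ applied to an invariant vector, summed over coset representatives), and then universality propagates this equality to all degrees $i \ge 1$, since both $\cores^n \circ \res^n$ and multiplication by $(G:H)$ are morphisms of $\delta$-functors agreeing in degree~$0$. So the only thing left to write for~(a) is a sentence recalling this.

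For part~(b), the plan is to apply part~(a) with $H = \{1\}$ the trivial subgroup, so that $(G:H) = n = |G|$. First I would observe that for the trivial group, the cohomology vanishes in all positive degrees: indeed $\h^i(\{1\},M) = \h^i\big(\Hom_{R}(F(\{1\})_\bullet, M)\big)$, and the standard resolution $F(\{1\})_\bullet$ of $R$ by free $R[\{1\}] = R$-modules can be taken to be $R$ itself concentrated in degree~$0$ (equivalently, $R$ is already projective as an $R$-module, so it is its own projective resolution, and by Corollary~\ref{gpcoh} any projective resolution computes the cohomology). Hence $\h^i(\{1\},M) = 0$ for $i \ge 1$. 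Therefore the restriction map $\res^i_{\{1\}}: \h^i(G,M) \to \h^i(\{1\},M) = 0$ is the zero map, and so is $\cores^i_{\{1\}} \circ \res^i_{\{1\}}$.

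Now by part~(a), $\cores^i_{\{1\}} \circ \res^i_{\{1\}} = (G:\{1\}) = n$ as an endomorphism of $\h^i(G,M)$. Combining the two facts, multiplication by $n$ is the zero endomorphism of $\h^i(G,M)$ for all $i \ge 1$. But $R$ is a ring in which $n$ is invertible, and $\h^i(G,M)$ is an $R$-module (indeed an $R$-module on which multiplication by the scalar $n \in R$ is simultaneously invertible, since $n$ is a unit, and zero, since we just showed this); hence $\h^i(G,M) = n^{-1} \cdot n \cdot \h^i(G,M) = n^{-1} \cdot 0 = 0$. This completes the argument.

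There is no real obstacle here; the only point requiring a moment's care is the identification of the cohomology of the trivial group with the module itself in degree~$0$ and zero in higher degrees, which follows immediately from the fact that $R$ is a projective (even free) $R[\{1\}]$-module and Corollary~\ref{gpcoh} (any projective resolution of $R$ computes $\h^\bullet(G,-)$), so one takes the length-zero resolution $0 \to R \to R \to 0$. Everything else is the sandwiching of multiplication-by-$n$ between being zero (from the vanishing of the target of $\res$) and being invertible (from the hypothesis on $R$).
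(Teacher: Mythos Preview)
Your proof is correct and follows essentially the same route as the paper: part~(a) is deferred to the preceding discussion via universality, and part~(b) is obtained by taking $H=\{1\}$, noting $\h^i(\{1\},M)=0$ for $i\ge 1$, and concluding that multiplication by~$n$ is both zero and invertible on $\h^i(G,M)$. The only difference is that you spell out explicitly why the cohomology of the trivial group vanishes in positive degrees, which the paper simply asserts.
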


\begin{proof}
Part~(b) is an easy consequence with $H=1$, since
$$ \h^i(G,M) \xrightarrow{\res_H^G} \h^i(1,M) \xrightarrow{\cores_H^G} \h^i(G,M) $$
is the zero map (as $\h^i(1,M)=0$ for $i\ge 1$), but it also is multiplication by~$n$.
\end{proof}

The following exact sequence turns out to be very important for
our purposes.

\begin{theorem}[Hochschild-Serre]\label{thm:hochschild-serre}
Let $H \le G$ be a normal subgroup and $A$ an $R[G]$-module.
There is the exact sequence:
$$0 \to \h^1(G/H,A^H) \xrightarrow{\infl} \h^1(G,A) \xrightarrow{\res}
\h^1(G,A)^{G/H} \to \h^2(G/H,A^H) \xrightarrow{\infl} \h^2(G,A).$$
\end{theorem}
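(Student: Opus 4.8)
The plan is to carry out everything at the level of inhomogeneous cochains coming from the bar resolution: by the very definition of group cohomology (via the standard/bar resolution), $\h^n(G,A)$, $\h^n(H,A)$ and $\h^n(G/H,A^H)$ are the cohomology groups of the complexes $C^\bullet(\Gamma,M)$, where $C^n(\Gamma,M)=\Maps(\Gamma^n,M)$ with the usual differential~$\partial$. In these models $\infl$ takes $\bar\varphi\colon (G/H)^n\to A^H$ to the cochain $(g_1,\dots,g_n)\mapsto\bar\varphi(\bar g_1,\dots,\bar g_n)$, $\res$ is honest restriction of a function on $G^n$ to $H^n$, and the $R[G/H]$-action on $\h^n(H,A)$ is the one induced by $(g\ast\varphi)(h_1,\dots,h_n)=g\,\varphi(g^{-1}h_1g,\dots,g^{-1}h_ng)$ (Exercise~\ref{excochains}); as usual the middle term of the sequence is $\h^1(H,A)^{G/H}$. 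The statement is then the classical inflation--restriction sequence, prolonged by one transgression map, all checked by hand.

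First I would dispatch the two ``easy'' exactness points. If $\bar\varphi\colon G/H\to A^H$ is a $1$-cocycle with $\infl(\bar\varphi)=\partial a$, then evaluating at $h\in H$ gives $0=\bar\varphi(\bar 1)=ha-a$, so $a\in A^H$ and $\bar\varphi=\partial a$ already in $C^\bullet(G/H,A^H)$; hence $\infl$ is injective. Clearly $\res\circ\infl=0$, since an inflated cochain is constant on $H$. Conversely, if a $1$-cocycle $\varphi\colon G\to A$ satisfies $\varphi|_H=(\partial a)|_H$, then $\varphi':=\varphi-\partial a$ is a cohomologous cocycle vanishing on $H$; the cocycle identity forces $\varphi'(gh)=\varphi'(g)$ for $h\in H$ and, writing $hg=g\,(g^{-1}hg)$ with $g^{-1}hg\in H$, also $\varphi'(hg)=\varphi'(g)$, whence $h\varphi'(g)=\varphi'(g)$; thus $\varphi'$ takes values in $A^H$ and is constant on left $H$-cosets, i.e.\ $\varphi'=\infl(\bar\varphi')$ for a $1$-cocycle $\bar\varphi'$ on $G/H$ over $A^H$, and $[\varphi]=\infl([\bar\varphi'])$. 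This gives exactness at $\h^1(G/H,A^H)$ and at $\h^1(G,A)$.

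The core of the argument is the construction of the transgression $\tau\colon\h^1(H,A)^{G/H}\to\h^2(G/H,A^H)$. Given a $1$-cocycle $f\colon H\to A$ whose class is $G/H$-invariant, I would pick any set-theoretic extension $\tilde f\colon G\to A$ of $f$, normalised by $\tilde f(1)=0$. Then $\partial\tilde f$ is a $2$-cocycle on $G$ that vanishes on $H\times H$ (because $f$ is a cocycle on $H$). The $G/H$-invariance of $[f]$ provides, for each $g\in G$, an element $b_g\in A$ with $g\ast f-f=\partial b_g$, and the point is to use the $b_g$ to correct $\tilde f$ to a new lift $\tilde f'$ for which $\partial\tilde f'$ factors through $(G/H)\times(G/H)$ with values in $A^H$; writing $\partial\tilde f'=\infl(\bar c)$, the resulting cochain $\bar c\in C^2(G/H,A^H)$ is automatically a cocycle, and one sets $\tau([f]):=[\bar c]$. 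I expect this to be the main obstacle: one must show that the correction making $\partial\tilde f$ descend to $G/H$ with values in $A^H$ actually exists --- this is precisely where $G/H$-invariance of $[f]$ is indispensable, without it there is a genuine obstruction --- and that the class $[\bar c]\in\h^2(G/H,A^H)$ is independent of the choices of $\tilde f$, of the $b_g$, and of the representative $f$, and is $R$-linear in $[f]$. (Passing to normalised cochains, which compute the same cohomology, keeps these verifications clean.)

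With $\tau$ available, exactness at the last two spots is a matter of unwinding. For $\ker\tau=\im\res$: if $[f]=\res[\psi]$ with $\psi$ a $1$-cocycle on $G$, take $\tilde f=\psi$, so $\partial\tilde f=0$ and $\tau([f])=0$; conversely, $\tau([f])=0$ means $\partial\tilde f'=\partial(\infl\bar b)$ for some $\bar b\in C^1(G/H,A^H)$, so $\tilde f'-\infl\bar b$ is a genuine $1$-cocycle on $G$ restricting to $f$ on $H$. For $\ker\infl=\im\tau$ at $\h^2(G/H,A^H)$: $\infl\circ\tau=0$ because $\infl(\tau[f])$ is represented by $\partial\tilde f'$, a coboundary on $G$; conversely, if $\bar c$ is a $2$-cocycle on $G/H$ over $A^H$ with $\infl(\bar c)=\partial\tilde f$ for some $\tilde f\in C^1(G,A)$, then (after normalising $\bar c$) $f:=\tilde f|_H$ is a $1$-cocycle on $H$ (its coboundary is $\infl(\bar c)|_{H\times H}=0$), a short check shows $[f]$ is $G/H$-invariant, and by construction $\tau([f])=[\bar c]$. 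This closes the five-term sequence.
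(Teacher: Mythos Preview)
Your proof is correct in outline and takes a genuinely different route from the paper. The paper does not carry out any cochain verification: it invokes Grothendieck's theorem on the spectral sequence of a composite of functors, applied to $A\mapsto A^H\mapsto (A^H)^{G/H}$, to obtain the Hochschild--Serre spectral sequence $E_2^{p,q}=\h^p(G/H,\h^q(H,A))\Rightarrow\h^{p+q}(G,A)$, and then simply cites the general five-term low-degree exact sequence attached to any first-quadrant spectral sequence. Your approach is the explicit alternative the paper alludes to in passing (``it is, however, possible to verify the exactness on cochains explicitly''): you work entirely at the bar-cochain level, construct the transgression by hand as the obstruction $\partial\tilde f'$ to extending an $H$-cocycle to a $G$-cocycle, and verify each exactness statement directly. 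The spectral-sequence route is more conceptual and yields strictly more (the full spectral sequence, not just its edge); your route is elementary and self-contained, requiring no spectral-sequence machinery, though the well-definedness of $\tau$ does demand the careful bookkeeping you flag. Incidentally, you correctly read the middle term as $\h^1(H,A)^{G/H}$; the $\h^1(G,A)^{G/H}$ in the displayed statement is a typo.
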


\begin{proof}
We only sketch the proof for those who know spectral sequences.
It is, however, possible to verify the exactness on cochains
explicitly (after having defined the missing map appropriately).
Grothendieck's theorem on spectral sequences (\cite{Weibel}, 6.8.2)
associates to the composition of functors
$$ (A \mapsto A^H \mapsto (A^H)^{G/H}) = (A \mapsto A^G)$$
the spectral sequence
$$ E^{p,q}_2: H^p(G/H,H^q(H,A)) \Rightarrow H^{p+q}(G,A).$$
The statement of the theorem is then just the $5$-term sequence that
one can associate with every spectral sequence of this type.
\end{proof}

\subsection*{Coinduced modules and Shapiro's Lemma}

Let $H < G$ be a subgroup and $A$ be a left $R[H]$-module.
The $R[G]$-module
$$\Coind_H^G (A) := \Hom_{R[H]} (R[G], A)$$ 
is called the {\em coinduction} or the {\em coinduced module} 
from $H$ to $G$ of $A$.
We make $\Coind_H^G (A)$ into a left $R[G]$-module by
$$ (g.\phi)(g') = \phi(g'g) \;\; \forall\, g,g' \in G,\, \phi \in \Hom_{R[H]} (R[G], A).$$

\begin{proposition}[Shapiro's Lemma]\label{shapiro}
For all $n \ge 0$, the map
$$ \Sh: \h^n (G, \Coind_H^G (A)) \to \h^n (H, A)$$
given on cochains is given by
$$ c \mapsto ((h_1,\dots,h_n) \to (c(h_1,\dots,h_n))(1_G))$$
is an isomorphism.
\end{proposition}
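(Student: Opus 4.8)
The plan is to prove Shapiro's Lemma by exhibiting the map $\Sh$ as a composition of two natural isomorphisms, invoking the universality of group cohomology that we have already established in Corollary~\ref{gpcoh}. The key observation is that on cochains, $\Sh$ in degree~$0$ is precisely the evaluation-at-$1_G$ map $\Coind_H^G(A)^G = \Hom_{R[H]}(R[G],A)^G \to A^H$, and one checks directly that this is an isomorphism of $R$-modules: a $G$-equivariant homomorphism $R[G] \to A$ is determined by its value on $1_G$, and $G$-equivariance together with $H$-equivariance forces that value to lie in $A^H$; conversely every element of $A^H$ extends uniquely. So $\Sh^0$ is a natural isomorphism of functors $\h^0(G,\Coind_H^G(\cdot)) \Rightarrow \h^0(H,\cdot)$ from $R[H]-\modules$ to $R-\modules$.

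Next I would observe that both sides, as functors of the $R[H]$-module $A$, are universal cohomological $\delta$-functors. For the right-hand side this is Corollary~\ref{gpcoh}. For the left-hand side the point is that $\Coind_H^G(\cdot) = \Hom_{R[H]}(R[G],\cdot)$ is an exact functor from $R[H]-\modules$ to $R[G]-\modules$ (since $R[G]$ is free, hence projective, as a left $R[H]$-module, so $\Hom_{R[H]}(R[G],\cdot)$ is exact), and moreover it carries injective $R[H]$-modules to injective $R[G]$-modules (this is the standard adjunction argument: $\Hom_{R[G]}(\cdot,\Coind_H^G(I)) \cong \Hom_{R[H]}(\Res(\cdot),I)$ is exact when $I$ is injective, because restriction is exact). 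Consequently, applying $\Coind_H^G$ to an injective resolution of $A$ over $R[H]$ yields an injective resolution of $\Coind_H^G(A)$ over $R[G]$, and computing $\h^n(G,\Coind_H^G(A)) = R^n((\cdot)^G)(\Coind_H^G(A))$ with that resolution shows that $A \mapsto \h^n(G,\Coind_H^G(A))$ is the right derived functor of $A \mapsto \Coind_H^G(A)^G \cong A^H$, hence a universal $\delta$-functor.

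By the uniqueness clause in the definition of universality (Definition~\ref{defi:deltafunctor}), the natural isomorphism $\Sh^0$ in degree~$0$ extends uniquely to an isomorphism of $\delta$-functors $\Sh^n: \h^n(G,\Coind_H^G(\cdot)) \Rightarrow \h^n(H,\cdot)$ in all degrees. It remains to check that this abstract $\Sh^n$ agrees with the explicit cochain formula in the statement. For this I would verify that the cochain map $c \mapsto \big((h_1,\dots,h_n) \mapsto c(h_1,\dots,h_n)(1_G)\big)$ genuinely defines a morphism of cochain complexes $\Hom_{R[G]}(F(G)_\bullet, \Coind_H^G(A)) \to \Hom_{R[H]}(F(H)_\bullet, A)$ — a short computation with the bar differential, using that the formula for $d_i$ only involves the group elements $h_j$ and that evaluation at $1_G$ commutes with the relevant substitutions — and that it reduces to $\Sh^0$ in degree~$0$ and is compatible with connecting homomorphisms. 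Since it then induces a morphism of $\delta$-functors restricting to $\Sh^0$, uniqueness forces it to coincide with the abstract $\Sh^n$, which we already know is an isomorphism.

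I expect the main obstacle to be the bookkeeping in this last step: pinning down that the displayed cochain formula really is a chain map commuting with the Hochschild--Serre-style connecting maps, so that the universality machine applies to \emph{it} rather than merely to \emph{some} isomorphism. The verification that $\Coind_H^G$ preserves injectives and is exact is routine given the adjunction $\Hom_{R[G]}(M,\Coind_H^G(A)) \cong \Hom_{R[H]}(M,A)$, and the degree-zero identification is immediate; the friction is entirely in matching the explicit map with the abstract one. An alternative, if one prefers to avoid $\delta$-functor uniqueness, is to construct an explicit contracting homotopy or an explicit quasi-isomorphism between $\Coind_H^G(F(G)_\bullet$-applied complex$)$ and the $H$-cochain complex, but the $\delta$-functor route is cleaner given what the excerpt has already developed.
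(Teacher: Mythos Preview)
Your argument is correct and is precisely the route the paper intends: the paper's own proof is deferred to Exercise~\ref{exshapiro}, whose hint points to \cite[(6.3.2)]{Weibel} for the abstract $\delta$-functor argument (exactness of $\Coind_H^G$ via freeness of $R[G]$ over $R[H]$, preservation of injectives via the adjunction $\Hom_{R[G]}(M,\Coind_H^G A)\cong\Hom_{R[H]}(\Res M,A)$, then universality) and to \cite[III.6.2]{Brown} for the explicit cochain map. Your verification that evaluation at~$1_G$ is a chain map --- the key point being $c(h_2,\dots,h_{n+1})(h_1)=h_1\cdot c(h_2,\dots,h_{n+1})(1_G)$ by the $R[H]$-linearity built into $\Coind_H^G(A)$ --- is exactly what is needed; the phrase ``Hochschild--Serre-style connecting maps'' is a slight misnomer (you mean the ordinary connecting maps of the long exact sequence), but the substance is right.
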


\begin{proof}
Exercise~\ref{exshapiro}.
\end{proof}

\subsection*{Mackey's formula and stabilisers}

If $H \le G$ are groups and $V$ is an $R[G]$-module, we denote by
$\Res_H^G(V)$ the module~$V$ considered as an $R[H]$-module if we want to stress that the module is
obtained by restriction. In later sections, we will often silently restrict modules to subgroups.

\begin{proposition}\label{propmackey}
Let $R$ be a ring, $G$ be a group and $H,K$ subgroups of~$G$. 
Let furthermore $V$ be an $R[H]$-module. {\em Mackey's formula} is the isomorphism
$$ \Res_K^G \Coind_H^G V \cong \prod_{g \in H\backslash G / K}
\Coind_{K\cap g^{-1}Hg}^K {}^g (\Res^H_{H\cap gKg^{-1}} V).$$
Here ${}^g (\Res^H_{H\cap gKg^{-1}} V)$ denotes the
$R[K \cap g^{-1}Hg]$-module obtained from $V$ via the 
conjugated action $g^{-1}hg ._g v := h. v$ for $v \in V$
and $h \in H$ such that $g^{-1}hg \in K$.
\end{proposition}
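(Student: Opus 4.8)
The plan is to reduce the statement to a decomposition of the $R[G]$-module $R[G]$ as an $R[K]$-module (on the right) and an $R[H]$-module (on the left), and then apply the adjunction/naturality of $\Hom$. First I would recall the definition $\Coind_H^G V = \Hom_{R[H]}(R[G],V)$, where $R[G]$ is viewed as a left $R[H]$-module (for forming $\Hom_{R[H]}$) and as a left $R[G]$-module via right translation $g.\phi(g') = \phi(g'g)$; restricting the latter action to $K$ gives the left-hand side. The key combinatorial input is the double coset decomposition $G = \coprod_{g \in H\backslash G/K} HgK$, which yields an $(R[H],R[K])$-bimodule decomposition $R[G] = \bigoplus_{g} R[HgK]$. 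For a fixed representative $g$, the bimodule $R[HgK]$ is isomorphic, as an $(R[H],R[K])$-bimodule, to $R[H] \otimes_{R[H \cap gKg^{-1}]} R[gKg^{-1}] \cdot$ — more cleanly: left multiplication by $h$ and right multiplication by $k$ on $HgK$ make $R[HgK] \cong R[H] \otimes_{R[H\cap gKg^{-1}]} {}^g R[K]$ where ${}^gR[K]$ means $R[K]$ with the left $R[H\cap gKg^{-1}]$-action pulled back along conjugation $h \mapsto g^{-1}hg$.

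Next I would plug this into $\Hom_{R[H]}(-, V)$, which turns the direct sum in the first variable into a product:
$$\Hom_{R[H]}\Bigl(\bigoplus_g R[HgK], V\Bigr) \cong \prod_g \Hom_{R[H]}(R[HgK], V).$$
For each $g$, using $R[HgK] \cong R[H]\otimes_{R[H\cap gKg^{-1}]} {}^gR[K]$ and the tensor-hom adjunction, one gets
$$\Hom_{R[H]}\bigl(R[H]\otimes_{R[H\cap gKg^{-1}]} {}^gR[K], V\bigr) \cong \Hom_{R[H\cap gKg^{-1}]}\bigl({}^gR[K], V\bigr),$$
and the right-hand side is exactly $\Coind_{K\cap g^{-1}Hg}^K\bigl({}^g(\Res^H_{H\cap gKg^{-1}} V)\bigr)$ once one tracks that right multiplication by $K$ corresponds, after the conjugation twist, to the $K$-action on the coinduced module, and that $g^{-1}(H\cap gKg^{-1})g = K \cap g^{-1}Hg$ is the relevant subgroup of $K$. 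Assembling these identifications over all $g$ gives the claimed isomorphism, and one checks it is $K$-equivariant by unwinding the translation actions.

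The main obstacle, and the step requiring genuine care rather than formal nonsense, is getting the conjugation twists and the identification of subgroups exactly right: one must verify that the $R[K]$-module structure induced on the $g$-th factor, coming from right translation on $R[HgK]$, coincides precisely with the stated coinduction from $K \cap g^{-1}Hg$ of the conjugated module ${}^g(\Res^H_{H\cap gKg^{-1}}V)$, with the conjugation action defined by $g^{-1}hg._g v := h.v$. Everything else — the double coset decomposition, the bimodule splitting, and the two adjunctions — is standard and I would present it concisely, since the paper explicitly allows being "sloppy about categories" and treats these as everyday manipulations. A clean alternative I might mention is to first prove the dual statement for $\Ind = R[G]\otimes_{R[H]}(-)$ (where the bimodule decomposition drops straight in) and then dualize, but since only coinduction is used in the sequel I would carry out the $\Coind$ version directly.
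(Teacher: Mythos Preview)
Your approach is correct and essentially the same as the paper's: both hinge on the double coset decomposition $G=\coprod_g HgK$ and the identification of each piece with a coinduced module via the bijection $(H\cap gKg^{-1})\backslash gKg^{-1}\cong H\backslash HgK$. The paper packages this more tersely---it simply restricts $\phi\in\Hom_{R[H]}(R[G],V)$ to each $R[gKg^{-1}]$ and invokes the coset bijection directly, rather than writing $R[HgK]$ as a tensor product and appealing to tensor--hom adjunction---but this is a difference of presentation, not of substance.
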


\begin{proof}
We consider the commutative diagram
$$ \xymatrix@=0.5cm{
\Res_K^G \Hom_{R[H]}(R[G],V) \ar@{->}[r]  \ar@{->}[dr] &
\prod_{g \in H\backslash G / K}
\Hom_{R[K\cap g^{-1}Hg]}(R[K], {}^g (\Res^H_{H\cap gKg^{-1}} V)) \ar@{->}^\sim[d] \\
&\prod_{g \in H\backslash G / K}
 \Hom_{R[H\cap gKg^{-1}]}(R[gKg^{-1}], \Res^H_{H\cap gKg^{-1}} V)).}$$
The vertical arrow is just given by conjugation and is clearly
an isomorphism.
The diagonal map is the product of the natural restrictions.
From the bijection
$$ \big(H \cap gKg^{-1}\big) \backslash gKg^{-1} 
\xrightarrow{gkg^{-1} \mapsto Hgk} H \backslash HgK$$
it is clear that also the diagonal map is an isomorphism,
proving the proposition.
\end{proof}

From Shapiro's Lemma \ref{shapiro} we directly get the following.

\begin{corollary}\label{cormackey}
In the situation of Proposition~\ref{propmackey} one has
\begin{align*}
\h^i(K,\Coind_H^G V) 
& \cong \prod_{g \in H\backslash G / K}
\h^i(K \cap g^{-1}Hg,  {}^g (\Res^H_{H\cap gKg^{-1}} V) \\
& \cong \prod_{g \in H\backslash G / K}
\h^i(H \cap gKg^{-1},  \Res^H_{H \cap gKg^{-1}} V) 
\end{align*}
for all $i \in \NN$.
\end{corollary}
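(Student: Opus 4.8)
The plan is to deduce the statement from Mackey's formula (Proposition~\ref{propmackey}) together with Shapiro's Lemma (Proposition~\ref{shapiro}); the only extra inputs are that the cohomology functor $\h^i(K,-)$ commutes with products and that a group isomorphism equipped with a compatible identification of modules induces an isomorphism on group cohomology.

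First I would take the $R[K]$-module isomorphism supplied by Mackey's formula,
$$ \Res_K^G \Coind_H^G V \;\cong\; \prod_{g \in H\backslash G / K} \Coind_{K\cap g^{-1}Hg}^K {}^g (\Res^H_{H\cap gKg^{-1}} V),$$
and apply $\h^i(K,-)$ to it. Since $\h^i(K,-)$ is computed by applying $\Hom_{R[K]}(F(K)_n,-)$ to the bar resolution $F(K)_\bullet$ and then passing to kernels and cokernels — and $\Hom$ out of a fixed module as well as the formation of kernels and cokernels commute with products — we obtain
$$ \h^i\big(K,\Coind_H^G V\big) \;\cong\; \prod_{g \in H\backslash G / K} \h^i\big(K,\Coind_{K\cap g^{-1}Hg}^K {}^g (\Res^H_{H\cap gKg^{-1}} V)\big),$$
where on the left the $K$-module structure on $\Coind_H^G V$ is understood to be the one obtained by restriction, i.e.\ $\Res_K^G \Coind_H^G V$.

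Next I would apply Shapiro's Lemma to each factor, with the subgroup $K\cap g^{-1}Hg \le K$ and the $R[K\cap g^{-1}Hg]$-module $W_g := {}^g (\Res^H_{H\cap gKg^{-1}} V)$: Proposition~\ref{shapiro} yields $\h^i(K,\Coind_{K\cap g^{-1}Hg}^K W_g) \cong \h^i(K\cap g^{-1}Hg, W_g)$, and substituting this into the product gives the first asserted isomorphism. For the second one, conjugation by~$g$ gives a group isomorphism $c_g: H\cap gKg^{-1} \xrightarrow{\sim} K\cap g^{-1}Hg$, $h \mapsto g^{-1}hg$, and by the definition of the conjugated action in Proposition~\ref{propmackey} (namely $g^{-1}hg ._g v = h.v$) the module $W_g$, pulled back along $c_g$, is precisely $\Res^H_{H\cap gKg^{-1}} V$ as an $R[H\cap gKg^{-1}]$-module. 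A group isomorphism together with such an equivariant identification of modules induces an isomorphism of the associated bar complexes, hence $\h^i(K\cap g^{-1}Hg, W_g) \cong \h^i(H\cap gKg^{-1}, \Res^H_{H\cap gKg^{-1}} V)$; plugging this into the product finishes the proof.

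I expect the only slightly delicate point to be this last matching: one must check that the bijection $W_g \to \Res^H_{H\cap gKg^{-1}} V$ of underlying $R$-modules is equivariant along $c_g$, so that it genuinely descends to cohomology. Everything else is formal bookkeeping. If one prefers to avoid invoking compatibility of cohomology with products in the case of an infinite double-coset space, one may instead note that in every intended application $K$ (or $H$) has finite index in $G$, so the product over $H\backslash G/K$ is finite and the compatibility reduces to additivity of cohomology.
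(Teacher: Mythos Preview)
Your proof is correct and follows essentially the same approach as the paper, which simply remarks that the corollary follows directly from Shapiro's Lemma applied to Mackey's formula. You have spelled out the details (compatibility with products, the conjugation isomorphism for the second line) that the paper leaves implicit.
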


\subsection{Theoretical exercises}

\begin{exercise}\label{exnattrans}
Check the statements made in Example~\ref{nattrans}.
\end{exercise}

\begin{exercise}\label{exHomTensor}
Verify the statements of Example~\ref{HomTensor}.
\end{exercise}

\begin{exercise}\label{exprojective}
Prove Lemma~\ref{lemprojective}.

Hint: take a free $R$-module $F$ which surjects onto~$P$, {\em i.e.} $\pi: F \twoheadrightarrow P$,
and use the definition of $P$ being projective to show that the surjection admits a split $s: P \to F$, meaning that
$\pi \circ s$ is the identity on~$P$. This is then equivalent to the assertion.
\end{exercise}

\begin{exercise}\label{excomplexes}
Let $\phi^\bullet: C^\bullet \to D^\bullet$ be a morphism of cochain complexes.
\begin{enumerate}[(a)]
\item Show that $\ker(\phi^\bullet)$ is a cochain complex and is a subcomplex of $C^\bullet$ in a 
natural way.
\item Show that $\Image(\phi^\bullet)$ is a cochain complex and is a subcomplex of $D^\bullet$ in a 
natural way.
\item Show that $\coker(\phi^\bullet)$ is a cochain complex and is a quotient of $D^\bullet$ in a 
natural way.
\item Show that $\phi^\bullet$ induces homomorphisms 
$\h^n(C^\bullet) \xrightarrow{\h^n(\phi^\bullet)} \h^n(D^\bullet)$
for all $n \in \NN$.
\end{enumerate}
\end{exercise}

\begin{exercise}\label{exstandardresolution}
Check the exactness of the standard resolution of a group~$G$.
\end{exercise}

\begin{exercise}\label{excyclic}
Check the exactness of the resolutions \eqref{eq:res-cyclic-infinite} and \eqref{eq:res-cyclic-finite} for an infinite and a finite cyclic group,
respectively.
\end{exercise}

\begin{exercise}\label{exsmallh}
Let $R$, $G$, $M$ be as in the definition of group (co-)homology.
\begin{enumerate}[(a)]
\item Prove $\h^0(G,M) \cong M^G$, the $G$-invariants of~$M$.
\item Prove $\h_0(G,M) \cong M_G$, the $G$-coinvariants of~$M$.
\item Prove the explicit descriptions:
\begin{align*}
\Z^1(G,M) &= \{ f: G \to M \text{ map } |\;  f(gh) = g.f(h) + f(g) \; \forall g,h \in G \},\\
\B^1(G,M) &= \{ f: G \to M \text{ map } |\;  \exists m \in M : f(g) = (1-g)m \; \forall g \in G\},\\
\h^1(G,M) &= Z^1(G,M) / B^1(G,M).
\end{align*}
In particular, if the action of $G$ on~$M$ is trivial, the boundaries $B^1(G,M)$ are zero,
and one has:
$$ \h^1(G,M) = \Hom_{\textnormal{group}}(G,M).$$
\end{enumerate}
\end{exercise}

\begin{exercise}\label{exgpcohfunctor}
Prove Proposition~\ref{gpcohfunctor}.

Hint: for (a), use that free modules are projective.
(b) follows from (a) together with Theorem~\ref{thm:deltafun} or, alternatively, by direct calculation.
See also \cite[III.6.1]{Brown}.
\end{exercise}

\begin{exercise}\label{exfree}
Let $R$ be a commutative ring.
\begin{enumerate}[(a)]
\item
Let $G= \langle T \rangle$ be a free cyclic group and $M$ any $R[G]$-module. Prove
$$\h^0(G, M) = M^G, \;\;\;\h^1(G, M) = M / (1-T)M \;\;\;\text{ and }\;\;\; \h^i(G,M)= 0$$
for all $i \ge 2$.
\item For a finite cyclic group~$G = \langle \sigma \rangle$ of order~$n$ and any $R[G]$-module~$M$ prove that
\begin{align*}
 \h^0(G,M)  &\cong M^G,            &\h^1(G, M) &\cong \{m \in M \;|\; N_\sigma m = 0\} / (1-\sigma)M, \\
 \h^2(G, M) &\cong M^G/N_\sigma M, &\h^i(G,M)  &\cong \h^{i+2}(G,M) \textnormal{ for all $i \ge 1$.}
\end{align*}
\end{enumerate}
\end{exercise}

\begin{exercise}\label{excochains}
Let $R$ be a commutative ring.
\begin{enumerate}[(a)]
\item Let $\phi: H \to G$ be a group homomorphism and $A$ an $R[G]$-module.
Prove that the restriction maps $\res^n: H^n(G,A) \to H^n(H,A)$
are given in terms of cochains of the bar resolution by composing
the cochains by~$\phi$. 
\item Let $H$ be a normal subgroup of~$G$.
Describe the inflation maps in terms of cochains of the bar resolution.
\item Let $H$ be a normal subgroup of~$G$ and $A$ an $R[G]$-module.
Describe the $R[G/H]$-action on $H^n(H,A)$ in terms of cochains of the bar resolution.
\item Let now $H \le G$ be a subgroup of finite index.
Describe the corestriction maps in terms of cochains of the bar resolution.
\end{enumerate}
\end{exercise}

\begin{exercise}\label{exshapiro}
Prove Shapiro's lemma, i.e.\ Proposition~\ref{shapiro}.

Hint: see \cite[(6.3.2)]{Weibel} for an abstract proof; see also \cite[III.6.2]{Brown} for the underlying map.
\end{exercise}

\section{Cohomology of $\PSL_2(\ZZ)$}

In this section, we shall calculate the cohomology of the group $\PSL_2(\ZZ)$ and important properties thereof.
This will be at the basis of our treatment of Manin symbols in the following section.
The key in this is the description of $\PSL_2(\ZZ)$ as a free product of two cyclic groups.

\subsection{Theory: The standard fundamental domain for $\PSL_2(\ZZ)$}

We define the matrices of $\SL_2(\ZZ)$
$$\sigma := \mat 0{-1}10, \;\;\; \tau := \mat {-1}1{-1}0,\;\;\;
T = \mat 1101 = \tau \sigma.$$
By the definition of the action of $\SL_2(\ZZ)$ on $\HH$ in equation~\ref{eq:moebius}, we have for all $z \in \HH$:
$$\sigma. z = \frac{-1}{z}, \;\;\; \tau. z := 1-\frac{1}{z} ,\;\;\;
T.z = z+1.$$
These matrices have the following conceptual meaning:
$$ \langle \pm \sigma \rangle = \Stab_{\SL_2(\ZZ)} (i), \; 
\langle \pm \tau \rangle = \Stab_{\SL_2(\ZZ)} (\zeta_6)\; \text{ and } \;
\langle \pm T \rangle = \Stab_{\SL_2(\ZZ)} (\infty)$$
with $\zeta_6 = e^{2\pi i/6}$.
From now on we will often represent
classes of matrices in $\PSL_2(\ZZ)$ by matrices in $\SL_2(\ZZ)$.
The orders of $\sigma$ and $\tau$ in $\PSL_2(\ZZ)$ are $2$ and~$3$,
respectively. These statements are checked by calculation.
Exercise~\ref{exfiniteorder} is recommended at this point.

Even though in this section our interest concerns the full group $\SL_2(\ZZ)$,
we give the definition of fundamental domain for general subgroups of $\SL_2(\ZZ)$ of finite index.

\begin{definition}\label{defi:fd}
Let $\Gamma \le \SL_2(\ZZ)$ be a subgroup of finite index.
A {\em fundamental domain} for the action of~$\Gamma$ on~$\HH$
is a subset $\cF \subset \HH$ such that the following hold:
\begin{enumerate}[(i)]
\item $\cF$ is open.
\item For every $z \in \HH$, there is $\gamma \in \Gamma$ such that $\gamma.z \in \overline{\cF}$.
\item If $\gamma.z \in \cF$ for $z \in \cF$ and $\gamma \in \Gamma$, then one has $\gamma = \pm \mat 1001$.
\end{enumerate}
\end{definition}

In other words, a fundamental domain is an open set, which is small enough not to contain any two points that
are equivalent under the operation by~$\Gamma$, and which is big enough that every point in the upper half plane
is equivalent to some point in the closure of the fundamental domain.

\begin{proposition}\label{prop:fd}
The set
$$ \cF := \{ z \in \HH \;| \; |z| > 1 \textnormal{ and } -\frac{1}{2} < \Real(z) < \frac{1}{2} \}$$
is a fundamental domain for the action of $\SL_2(\ZZ)$ on~$\HH$.
\end{proposition}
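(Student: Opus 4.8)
The plan is to verify the three defining properties of a fundamental domain for $\cF$ directly, using the classical ``reduction'' argument for $\SL_2(\ZZ)$ acting on $\HH$.

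\textbf{Property (iii) (separation).} First I would check that no two distinct points of $\cF$ are $\SL_2(\ZZ)$-equivalent. Suppose $\gamma = \mat abcd$ and $z \in \cF$ with $\gamma.z \in \cF$. The key identity is $\Imag(\gamma.z) = \Imag(z)/|cz+d|^2$. Since $z \in \cF$ one has $\Imag(z) > \frac{\sqrt 3}{2}$, and I would show $|cz+d| \geq 1$ for all $z \in \cF$ unless $c = 0$; more precisely, for $c \neq 0$, $|cz+d|^2 = c^2|z|^2 + 2cd\Real(z) + d^2 > c^2 + d^2 - |cd| \geq 1$ using $|z| > 1$ and $|\Real(z)| < \frac12$. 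Applying the same inequality to $\gamma^{-1}$ acting on $\gamma.z$, symmetry forces $|cz+d| = 1$, which combined with the strict inequalities forces $c = 0$, hence $\gamma = \pm\mat 1b01$ is a translation; but $|\Real(z) - \Real(\gamma.z)| = |b| < 1$ forces $b = 0$. So $\gamma = \pm I$.

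\textbf{Property (ii) (covering by the closure).} Given $z \in \HH$, I would use the standard argument: among all $\gamma \in \SL_2(\ZZ)$ the quantity $\Imag(\gamma.z) = \Imag(z)/|cz+d|^2$ attains a maximum, because the set of values $|cz+d|$ for $(c,d) \in \ZZ^2$ is discrete and bounded below away from $0$ (the lattice $\ZZ z + \ZZ$ is discrete in $\CC$). Pick $\gamma$ maximising $\Imag(\gamma.z)$; then translate by a suitable power of $T$ so that $w := T^n\gamma.z$ has $-\frac12 \leq \Real(w) \leq \frac12$. I claim $|w| \geq 1$: if $|w| < 1$ then $\Imag(\sigma.w) = \Imag(w)/|w|^2 > \Imag(w) = \Imag(\gamma.z)$, contradicting maximality (note $\sigma$ doesn't change the imaginary part inequality argument since $\Imag$ only went up). Hence $w \in \overline{\cF}$.

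\textbf{Property (i) (openness).} This is immediate: $\cF$ is defined by the strict inequalities $|z| > 1$, $\Real(z) > -\frac12$, $\Real(z) < \frac12$, each an open condition, so $\cF$ is open in $\HH$.

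\textbf{Main obstacle.} None of the steps is genuinely hard; the only point requiring care is the inequality $|cz+d|^2 > 1$ for $z \in \cF$, $c \neq 0$, and making sure the chain of strict/non-strict inequalities in Property (iii) is handled correctly so that one really concludes $\gamma = \pm I$ rather than merely that $\gamma$ stabilises some boundary point. I would write that estimate out carefully, treating the cases $d = 0$ and $d \neq 0$ (and $|d|=|c|=1$) separately, and I would also make explicit the discreteness fact used in Property (ii), namely that $\{|cz+d| : (c,d) \in \ZZ^2 \setminus \{0\}\}$ has a positive minimum for fixed $z \in \HH$.
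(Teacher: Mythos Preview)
Your proposal is correct and follows essentially the same approach as the paper: the paper also proves (ii) via maximising $\Imag(\gamma.z)$ (packaged as Lemma~\ref{lem:fd}), then translating by $T^n$ and invoking $\sigma$ to force $|w|\ge 1$; and it proves (iii) via exactly your chain of inequalities $|cz+d|^2 \ge c^2 - |cd| + d^2 \ge 1$ (packaged as Lemma~\ref{lem:fd2}), using the symmetry between $z$ and $\gamma.z$ to conclude $c=0$. The only organisational difference is that the paper factors the argument through two auxiliary lemmas, whereas you do it in one pass.
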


It is clear that $\cF$ is open. For~(ii), we use the following lemma.

\begin{lemma}\label{lem:fd}
Let $z \in \HH$. The orbit $\SL_2(\ZZ).z$ contains a point $\gamma.z$
with maximal imaginary part (geometrically also called `height'), {\em i.e.}
$$ \Imag (\gamma.z) \ge \Imag(g.z)  \;\; \forall g \in \SL_2(\ZZ).$$
A point $z \in \HH$ is of maximal height
if $|c z + d| \ge 1$ for all coprime $c,d \in \ZZ$.
\end{lemma}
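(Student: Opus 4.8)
The lemma has two assertions: (1) that the orbit $\SL_2(\ZZ).z$ contains a point of maximal imaginary part, and (2) that such a maximal-height point is characterised (or at least recognised) by the inequality $|cz+d| \ge 1$ for all coprime $c, d \in \ZZ$. The key computational fact underlying everything is the transformation law for the imaginary part under a Möbius transformation: for $\gamma = \mat abcd \in \SL_2(\ZZ)$ and $z \in \HH$ one has
$$ \Imag(\gamma.z) = \frac{\Imag(z)}{|cz+d|^2}, $$
which follows from a direct computation using $ad - bc = 1$ (write $\gamma.z = \frac{az+b}{cz+d}$, multiply numerator and denominator by $\overline{cz+d}$, and take imaginary parts). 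I would establish this identity first as the starting point.

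For assertion~(1), I would argue that only finitely many pairs $(c,d) \in \ZZ^2$ satisfy $|cz+d| \le 1$ for a fixed $z \in \HH$: indeed $|cz+d|^2 = c^2 \Imag(z)^2 + (c\Real(z)+d)^2$, so $c$ is bounded (since $\Imag(z) > 0$), and for each such $c$ only finitely many $d$ work. Every pair $(c,d)$ occurring as the bottom row of a matrix in $\SL_2(\ZZ)$ is coprime, and conversely any coprime pair extends to such a matrix (Bézout). Hence $\{|cz+d|^2 : \mat abcd \in \SL_2(\ZZ)\}$ attains its infimum, which is a strictly positive minimum; choosing $\gamma$ realising this minimum maximises $\Imag(\gamma.z)$ by the transformation law. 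This proves~(1).

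For assertion~(2), suppose $z$ is of maximal height in its orbit. Applying the transformation law to an arbitrary $g = \mat abcd \in \SL_2(\ZZ)$ gives $\Imag(g.z) = \Imag(z)/|cz+d|^2 \le \Imag(z)$, hence $|cz+d| \ge 1$; and as $(c,d)$ ranges over all bottom rows of $\SL_2(\ZZ)$ it ranges exactly over all coprime pairs, giving the claimed inequality. (The converse direction — that $|cz+d|\ge 1$ for all coprime $c,d$ forces $z$ to be of maximal height — is actually immediate from the same identity, so the statement is really an equivalence; I would note this.)

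The main obstacle here is essentially bookkeeping rather than anything deep: one must be careful about the correspondence between ``coprime pairs $(c,d)$'' and ``bottom rows of integer matrices of determinant~$1$'', making sure the Bézout argument is stated cleanly, and one must handle the finiteness argument for the minimum with enough care that the infimum is genuinely attained (this is where the positivity $\Imag(z) > 0$ is used twice). Everything else is a one-line computation or a direct unwinding of definitions. I would present the transformation law, then the finiteness/minimum argument for~(1), then the short deduction of~(2).
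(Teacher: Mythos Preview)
Your proposal is correct and follows essentially the same approach as the paper: both start from the transformation law $\Imag(\gamma.z) = \Imag(z)/|cz+d|^2$, observe that only finitely many integer pairs $(c,d)$ satisfy $|cz+d| \le 1$ (using $\Imag(z)>0$), pick a coprime pair minimising $|cz+d|$, and extend it to a matrix in $\SL_2(\ZZ)$ via B\'ezout. Your treatment is slightly more explicit about the correspondence between coprime pairs and bottom rows, and you note the equivalence in~(2), but the argument is the same.
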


\begin{proof}
We have the simple formula $\Imag (\gamma.z) = \frac{\Imag(z)}{|cz+d|^2}$.
It implies
$$ \Imag (z) \le \Imag(\gamma.z) \Leftrightarrow |cz + d| \le 1.$$
For fixed $z = x+iy$ with $x,y \in \RR$, consider the inequality
$$1 \ge |cz + d|^2 = (cx+d)^2 + c^2y^2.$$
This expression admits only finitely many solutions $c,d \in \ZZ$.
Among these finitely many, we may choose a coprime pair $(c,d)$ with minimal $|cz + d|$.
Due to the coprimeness, there are $a,b \in \ZZ$ such that the matrix $M := \mat abcd$ belongs to $\SL_2(\ZZ)$.
It is now clear that $M.z$ has maximal height.
\end{proof}

We next use a simple trick to show~(ii) in Definition~\ref{defi:fd} for $\calF$.
Let $z \in \HH$. By Lemma~\ref{lem:fd}, we choose $\gamma \in \SL_2(\ZZ)$ such that $\gamma.z$ has maximal height.
We now `transport' $\gamma.z$ via an appropriate translation $T^n$ in such a way that
$-1/2 \le \Real(T^n \gamma . z) < 1/2$. The height is obviously left invariant.
Now we have $|T^n \gamma . z| \ge 1$ because otherwise the height of
$T^n \gamma . z$ would not be maximal. For, if $|T^n \gamma . z + 0| < 1$ then applying $\sigma$ (corresponding to reflection on the unit circle)
would make the height strictly bigger.
More precisely, we have the following result.

\begin{lemma}\label{lem:fd2}
Every point of maximal height in $\HH$ can be translated into the closure of the fundamental domain~$\overline{\cF}$.
Conversely, $\overline{\cF}$ only contains points of maximal height.
\end{lemma}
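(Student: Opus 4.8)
The statement has two halves, and I would treat them separately. For the first half---\emph{every point of maximal height can be translated into $\overline{\cF}$}---I would essentially record what the preceding paragraph already sketched, tightening it into a lemma-proof. Start with $z \in \HH$ of maximal height, meaning $|cz+d| \ge 1$ for all coprime pairs $(c,d)$, as established in Lemma~\ref{lem:fd}. Pick $n \in \ZZ$ so that $z' := T^n z$ satisfies $-\tfrac12 \le \Real(z') < \tfrac12$; since $T^n$ acts by horizontal translation, $\Imag(z') = \Imag(z)$, so $z'$ still has maximal height (the set of heights in the orbit is unchanged). It remains to check $|z'| \ge 1$. If instead $|z'| < 1$, apply $\sigma$: then $\Imag(\sigma.z') = \Imag(z')/|z'|^2 > \Imag(z')$, contradicting maximality of the height. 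Hence $z' \in \overline{\cF}$, which is the claim. The only mild subtlety is the half-open versus closed conditions defining $\overline{\cF}$ (whether we land on $|z'|=1$ or $\Real(z')=\pm\tfrac12$), but since we are only asked to land in the \emph{closure}, the non-strict inequalities $|z'|\ge 1$ and $-\tfrac12\le\Real(z')\le\tfrac12$ suffice and no boundary bookkeeping is needed.

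For the converse---\emph{$\overline{\cF}$ contains only points of maximal height}---I would argue directly that if $z \in \overline{\cF}$ then $|cz+d| \ge 1$ for every coprime pair $(c,d) \in \ZZ^2$, and then invoke the second sentence of Lemma~\ref{lem:fd} to conclude $z$ has maximal height. Write $z = x+iy$ with $-\tfrac12 \le x \le \tfrac12$ and $x^2+y^2 \ge 1$. For $c = 0$ we have $|d| \ge 1$ for any nonzero integer $d$ (and $c=d=0$ is excluded by coprimeness), so that case is immediate. For $c \neq 0$, compute
\begin{equation*}
|cz+d|^2 = (cx+d)^2 + c^2 y^2 = c^2(x^2+y^2) + 2cdx + d^2 \ge c^2 + 2cdx + d^2,
\end{equation*}
using $x^2+y^2\ge1$. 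Now $2cdx + d^2 \ge -|cd| + d^2$ since $|x|\le\tfrac12$, so
\begin{equation*}
|cz+d|^2 \ge c^2 - |cd| + d^2 = \left(|c| - \tfrac{|d|}{2}\right)^2 + \tfrac34 d^2 \ge \tfrac34 d^2,
\end{equation*}
which handles $d \neq 0$; and by the symmetric grouping $c^2 - |cd| + d^2 = (|d|-\tfrac{|c|}{2})^2 + \tfrac34 c^2 \ge \tfrac34 c^2$ it also handles $c \neq 0$. Since $(c,d)$ coprime forces $c$ or $d$ nonzero, in every case $|cz+d|^2 \ge \tfrac34$. This lower bound of $\tfrac34$ is not quite $1$, so I would sharpen: when both $c,d$ are nonzero, $c^2-|cd|+d^2 = (|c|-|d|)^2 + |cd| \ge |cd| \ge 1$; and when exactly one of them is zero the bound is $c^2 \ge 1$ or $d^2 \ge 1$. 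Hence $|cz+d| \ge 1$ in all cases, and Lemma~\ref{lem:fd} gives that $z$ has maximal height.

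\textbf{Main obstacle.} Nothing here is deep---the whole proof is elementary inequality manipulation---so the only real ``obstacle'' is getting a genuinely sharp bound rather than the easy-but-insufficient $\tfrac34$. The clean resolution is the identity $c^2 - |cd| + d^2 = (|c|-|d|)^2 + |cd|$, which makes the bound $\ge 1$ transparent in the generic case $cd \neq 0$, with the degenerate cases $c=0$ or $d=0$ dispatched trivially using coprimeness. I would present the converse using exactly this algebraic identity to keep the argument short and self-contained.
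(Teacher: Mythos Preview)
Your proposal is correct and follows essentially the same route as the paper: the first half is exactly the translation-then-$\sigma$ argument the paper records from the preceding discussion, and for the converse the paper uses precisely the chain $|cz+d|^2 = c^2|z|^2 + 2cdx + d^2 \ge c^2 - |cd| + d^2 = (|c|-|d|)^2 + |cd| \ge 1$ that you arrive at. Your detour through the $\tfrac34$ bound is unnecessary---the paper writes the identity $(|c|-|d|)^2 + |cd|$ immediately and dispatches all cases in one line---so in your final write-up you can drop that intermediate step.
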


\begin{proof}
The first part was proved in the preceding discussion. The second one follows from the calculation
\begin{multline}\label{eq:fd2}
|cz+d|^2 = (cx+d)^2+c^2y^2 = c^2 |z|^2 + 2cdx + d^2 \\
\ge c^2|z|^2  - |cd| + d^2 \ge c^2  - |cd| + d^2 \ge (|c|-|d|)^2 + |cd| \ge 1
\end{multline}
for all coprime integers $c,d$ and $z=x+iy \in \HH$ with $x,y \in \RR$.
\end{proof}

\begin{proof}[End of the proof of Proposition~\ref{prop:fd}.]
Let $z \in \cF$ and $\gamma := \mat abcd  \in \SL_2(\ZZ)$ such that $\gamma.z \in \calF$.
By Lemma~\ref{lem:fd2}, $z$ and $\gamma.z$ both have maximal height, whence $|cz+d| = 1$.
Hence the inequalities in equation~\ref{eq:fd2} are equalities, implying $c=0$.
Thus, $\gamma = \pm T^n$ for some $n \in \ZZ$. But only $n=0$ is compatible with the
assumption $\gamma.z \in \calF$. This proves (iii) in Definition~\ref{defi:fd} for $\calF$.
\end{proof}

\begin{proposition}\label{prop:gensl2z}
The group $\SL_2(\ZZ)$ is generated by the matrices $\sigma$ and~$\tau$.
\end{proposition}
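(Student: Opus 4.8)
The plan is to exploit the fundamental domain $\cF$ together with the ``transport into $\overline{\cF}$'' technique already used in the proof of Proposition~\ref{prop:fd}. Set $\Gamma' := \langle \sigma,\tau\rangle \le \SL_2(\ZZ)$; since $\sigma^2 = -I$ and $T = \tau\sigma$, the subgroup $\Gamma'$ contains $-I$ and all translations $T^n = \mat 1n01$. Thus it is enough to show that an arbitrary $M \in \SL_2(\ZZ)$ lies in $\Gamma'$. Fix a point $z_0 \in \cF$.

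First I would produce $\gamma \in \Gamma'$ with $\gamma M.z_0 \in \overline{\cF}$. Using $\Imag(g.w) = \Imag(w)/|cw+d|^2$ and the fact (from the proof of Lemma~\ref{lem:fd}) that $\ZZ z_0 + \ZZ$ is a lattice in $\CC$, the heights $\Imag(\gamma M.z_0)$ for $\gamma \in \Gamma'$ are bounded above and only finitely many of them exceed $\Imag(M.z_0)$, so a maximal one is attained, say at $\gamma_1 \in \Gamma'$. Replacing $\gamma_1$ by $\gamma := T^n\gamma_1$ for a suitable $n$ (translations leave heights unchanged) I may assume $-\frac{1}{2} \le \Real(\gamma M.z_0) < \frac{1}{2}$. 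Writing $w := \gamma M.z_0$, maximality of the height forces $|w| \ge 1$, for otherwise $\sigma.w$ would have strictly larger height while $\sigma\gamma \in \Gamma'$. Hence $w \in \overline{\cF}$.

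The final step --- and the genuinely delicate one --- is to deduce $\gamma M = \pm I$. Both $z_0$ and $w = (\gamma M).z_0$ have maximal height in their common orbit (Lemma~\ref{lem:fd2}), so with $\gamma M = \mat abcd$ we get $|cz_0+d| = 1$, which means the chain of inequalities in~\eqref{eq:fd2} is an equality throughout. The place where openness of $\cF$ enters is that, since $z_0 \in \cF$, the inequalities $|z_0| > 1$ and $|\Real(z_0)| < \frac{1}{2}$ are strict, so the estimate $c^2|z_0|^2 \ge c^2$ is strict unless $c = 0$; therefore $c = 0$, whence $d = \pm 1$ and $\gamma M = \pm T^b$. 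Then $w = z_0 + b$, and $-\frac{1}{2} < \Real(z_0) < \frac{1}{2}$ forces $b = 0$, i.e.\ $\gamma M = \pm I$ and $M = \pm\gamma^{-1} \in \Gamma'$. The main obstacle to watch out for is precisely this: one only knows $w \in \overline{\cF}$, not $w \in \cF$, so Definition~\ref{defi:fd}(iii) cannot be quoted directly; it is the openness of $\cF$ at the base point $z_0$ (not at $w$) that supplies the strict inequalities. An alternative, purely matrix-theoretic proof would run a Euclidean-type reduction on the lower-left entry of $M$ using left multiplication by powers of $T$ and by $\sigma$, but the argument above fits better the development of this section.
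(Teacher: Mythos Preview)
Your proof is correct and follows essentially the same strategy as the paper: define $\Gamma' = \langle \sigma,\tau\rangle$, transport $M.z_0$ into $\overline{\cF}$ using an element of $\Gamma'$, and conclude $\gamma M = \pm I$ from the fundamental-domain property. In fact you are more careful than the paper on one point: the paper simply asserts $(\gamma M).z \in \cF$ and invokes Definition~\ref{defi:fd}(iii), whereas the transport argument only yields $(\gamma M).z \in \overline{\cF}$. Your observation that the strict inequalities at the \emph{base point} $z_0 \in \cF$ (rather than at $w$) force equality throughout~\eqref{eq:fd2} and hence $c=0$ is exactly the right way to close this gap.
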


\begin{proof}
Let $\Gamma := \langle \sigma,\tau \rangle$ be the subgroup of $\SL_2(\ZZ)$ generated by $\sigma$ and~$T$.

We prove that for any $z \in \HH$ there is $\gamma \in \Gamma$ such that $\gamma.z \in \overline{\calF}$.
For that, note that the orbit $\Gamma.z$ contains a point $\gamma.z$ for $\gamma \in \Gamma$ of maximal height as it is a subset of $\SL_2(\ZZ).z$,
for which we have seen that statement.
As $\Gamma$ contains $T=\tau \sigma$, we can translate $\gamma.z$ so as to have real part in between $-\frac{1}{2}$ and $\frac{1}{2}$.
As $\Gamma$ also contains $\sigma$, the absolute value of the new point has to be at least~$1$ because other $\sigma$ would make the height bigger.

In order to conclude, choose any point $z \in \cF$ and let $M \in \SL_2(\ZZ)$.
We consider the point $M.z$ and `transport' it back into $\calF$ via a matrix $\gamma \in \Gamma$.
We thus have $(\gamma M).z \in \calF$. As $\calF$ is a fundamental domain for $\SL_2(\ZZ)$, it follows $\gamma M = \pm 1$,
showing $M \in \Gamma$.
\end{proof}

An alternative algorithmic proof is provided in Algorithm~\ref{algpsl} below.

\subsection{Theory: $\PSL_2(\ZZ)$ as a free product}

We now apply the knowledge about the (existence of the) fundamental domain for $\PSL_2(\ZZ)$
to derive that $\PSL_2(\ZZ)$ is a free product.

\begin{definition}
Let $G$ and $H$ be two groups. The {\em free product $G * H$}
of $G$ and $H$ is the group having as elements
all the possible {\em words}, i.e.\ sequences of symbols,
$a_1 a_2 \dots a_n$ with $a_i \in G-\{1\}$ or $a_i \in H-\{1\}$ such that
elements from $G$ and $H$ alternate (i.e.\ if $a_i \in G$,
then $a_{i+1} \in H$ and vice versa) together with the empty word, which
we denote by~$1$.
The integer~$n$ is called the {\em length} of the group element (word)
$w=a_1 a_2 \dots a_n$ and denoted by~$l(w)$. We put $l(1) = 0$ for the
empty word.

The group operation in $G*H$ is concatenation of words followed by `reduction' (in order to obtain
a new word obeying to the rules). The reduction ruls are: for all words $v,w$, all $g_1,g_2 \in G$ and all $h_1,h_2 \in H$:
\begin{itemize}
\item $v1w=vw$,
\item $vg_1g_2w=v(g_1g_2)w$ (i.e.\ the multiplication of $g_1$ and $g_2$ in $G$ is carried out),
\item $vh_1h_2w=v(h_1h_2)w$ (i.e.\ the multiplication of $h_1$ and $h_2$ in $H$ is carried out).
\end{itemize}
\end{definition}

In Exercise~\ref{exfreegroup} you are asked to verify that $G*H$
is indeed a group and to prove a universal property.
Alternatively, if $G$ is given by the set of generators $\calG_G$ together with relations $\calR_G$ and similarly for the group~$H$,
then the free product $G * H$ can be described as the group generated by $\calG_G \cup \calG_H$ with relations $\calR_G \cup \calR_H$.

\begin{theorem}\label{thm:freepr}
Let $\calP$  be the free product $\langle \sigma \rangle * \langle \tau \rangle$ of
the cyclic groups $\langle \sigma \rangle$ of order~$2$ and $\langle \tau \rangle$ of order~$3$.

Then $\calP$ is isomorphic to~$\PSL_2(\ZZ)$.
In particular, as an abstract group, $\PSL_2(\ZZ)$ can be represented by
generators and relations as $\langle \sigma, \tau \, | \, \sigma^2=\tau^3=1 \rangle$.
\end{theorem}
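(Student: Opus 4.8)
The plan is to construct an explicit homomorphism $\calP \to \PSL_2(\ZZ)$ and prove it is an isomorphism using the fundamental domain from Proposition~\ref{prop:fd}. First I would use the universal property of the free product (Exercise~\ref{exfreegroup}): since $\sigma$ has order~$2$ and $\tau$ has order~$3$ in $\PSL_2(\ZZ)$ (as noted, checked by direct calculation), the inclusions $\langle \sigma \rangle \hookrightarrow \PSL_2(\ZZ)$ and $\langle \tau \rangle \hookrightarrow \PSL_2(\ZZ)$ induce a group homomorphism $\Phi: \calP \to \PSL_2(\ZZ)$. Surjectivity of $\Phi$ is immediate from Proposition~\ref{prop:gensl2z}, which says $\sigma$ and $\tau$ generate $\SL_2(\ZZ)$, hence their images generate $\PSL_2(\ZZ)$.

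The heart of the matter is injectivity, and this is where I expect the main obstacle to lie. The plan is a ``ping-pong'' style argument driven by the geometry of the action on $\HH$. Let $w = a_1 a_2 \cdots a_n$ be a nonempty reduced word in $\calP$, with each $a_i$ equal to $\sigma$ or to $\tau^{\pm 1}$, alternating between the two factors. I want to show $\Phi(w) \neq 1$ in $\PSL_2(\ZZ)$, equivalently that $\Phi(w)$ does not act as the identity on $\HH$. Fix a base point $z_0 \in \cF$ well inside the fundamental domain, say $z_0 = 2i$ or a point high up the imaginary axis, and track the location of $a_n . z_0$, then $a_{n-1} a_n . z_0$, and so on. The key geometric facts are: $\tau$ and $\tau^2$ move $\cF$ to regions to the ``lower left'' and ``lower right'' near the corner $\zeta_6$ (more precisely, into translates of $\cF$ adjacent along the arcs of the unit circle), while $\sigma$ maps $\cF$ into the region inside the unit disc; in particular neither the images under $\sigma$ nor under $\tau^{\pm 1}$ of a point of $\cF$ lie back in $\cF$, and the alternation of the two types of generator forces the successive partial products to land in a strictly ``descending'' sequence of translates $\gamma_i \overline{\cF}$ that never returns to $\overline{\cF}$ until possibly the very end. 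The upshot is that $\Phi(w) . z_0 \notin \cF$, so $\Phi(w) \neq \pm 1$.

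Concretely, I would set this up as an induction on the length $n$, proving the sharper statement that for a reduced word $w$ of length $n$, the element $\Phi(w)$ carries $\cF$ into a specific half-plane or disc region disjoint from $\cF$ (with the region depending only on the leading letter $a_1$). The base cases $n = 1$ are the statements ``$\sigma \cF \cap \cF = \emptyset$'', ``$\tau \cF \cap \cF = \emptyset$'', ``$\tau^2 \cF \cap \cF = \emptyset$'', all verified from the explicit description of $\cF$ and the Möbius action: e.g. $|z| > 1$ on $\cF$ forces $|\sigma . z| = 1/|z| < 1$. The inductive step writes $w = a_1 w'$ with $w'$ reduced of length $n-1$ starting with a letter of the factor not containing $a_1$; by induction $\Phi(w') \cF$ lies in a region $U$ adjacent to but outside $\cF$, and one checks that $a_1$ maps $U$ into another such region, still disjoint from $\cF$. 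The bookkeeping of which region maps where (there are essentially three ``sides'' of $\cF$ being permuted) is the routine-but-fiddly part; once it is in place, injectivity follows, and combined with surjectivity we get $\calP \cong \PSL_2(\ZZ)$. The presentation $\langle \sigma, \tau \mid \sigma^2 = \tau^3 = 1 \rangle$ is then just the standard presentation of a free product of two cyclic groups, as recalled after the definition of free product.
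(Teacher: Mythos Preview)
Your outline is sound: the ping-pong style argument you sketch is a classical and valid route to injectivity. The paper, however, takes a different geometric tack. Rather than tracking where a reduced word sends $\cF$ or a base point, it tracks the geodesic arc $B$ from $\zeta_6$ to~$i$ on $\partial\cF$ and shows that the composite $\calP \twoheadrightarrow \PSL_2(\ZZ) \to \{\text{paths in }\HH\}$, $\gamma \mapsto \gamma.B$, is injective. The computational core is Lemma~\ref{lem:freepr}, a single scalar estimate $\Imag(\tau^2\gamma.i) < 1$ for all words $\gamma$ that are trivial or begin with~$\sigma$; combined with the fundamental-domain property this forces distinct words to give distinct arcs. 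Conceptually, the images $\gamma.B$ assemble into a tree, and injectivity is the absence of cycles.

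One caution on your route: the ping-pong sets are less obvious than your sketch suggests. The natural guess $X_\sigma = \{|z|<1\}$, $X_\tau = \{|z|>1\}$ fails, since $\tau$ does not carry all of $\{|z|<1\}$ outside the unit disc (try $z$ close to~$\zeta_6$ inside the disc). Correct regions exist, but specifying them and verifying the inclusions is precisely the ``fiddly bookkeeping'' you defer; it is not hard, but it is also not a one-liner. The paper's arc-based argument trades that region-chasing for the single inequality of Lemma~\ref{lem:freepr}, which is proved by a short matrix computation. Both approaches rest on the same tessellation of~$\HH$; the paper's is somewhat cleaner to execute and makes the underlying tree explicit, while yours is closer to the textbook ping-pong lemma and would transfer more readily to other free products.
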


In the proof, we will need the following statement, which we separate because it is entirely computational.

\begin{lemma}\label{lem:freepr}
Let $\gamma \in \calP$ be $1$ or any word starting in~$\sigma$ on the left, {\em i.e.} $\sigma \tau^{e_1} \sigma \tau^{e_2} \dots$.
Then $\Imag(\tau^2 \gamma.i) < 1$.
\end{lemma}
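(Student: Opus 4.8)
The plan is to convert the stated inequality into a statement about $\Real(\gamma.i)$ and then establish that by a two-sided induction on the length of the reduced word $\gamma$ in $\calP = \langle\sigma\rangle*\langle\tau\rangle$.

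First I would unwind the left-hand side. From $\tau^2 = \mat 0{-1}1{-1}$ and the formula $\Imag(M.z) = \Imag(z)/|cz+d|^2$ used in the proof of Lemma~\ref{lem:fd}, one has $\Imag(\tau^2\gamma.i) = \Imag(\gamma.i)/|\gamma.i-1|^2$. Moreover $i$ has maximal height in its $\SL_2(\ZZ)$-orbit by Lemma~\ref{lem:fd}, since $|ci+d|^2 = c^2+d^2 \ge 1$ for every non-zero coprime pair $(c,d)$; hence $\Imag(\gamma.i) \le \Imag(i) = 1$ for \emph{every} $\gamma$. So it is enough to prove $|\gamma.i-1| > 1$: then $\Imag(\tau^2\gamma.i) = \Imag(\gamma.i)/|\gamma.i-1|^2 < \Imag(\gamma.i) \le 1$. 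Finally, since $\gamma.i \in \HH$ gives $|\gamma.i|^2 > 0$ and $|\gamma.i-1|^2 - 1 = |\gamma.i|^2 - 2\Real(\gamma.i)$, the inequality $|\gamma.i-1|>1$ is implied by $\Real(\gamma.i) \le 0$.

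Thus the heart of the matter is: if $\gamma \in \calP$ is $1$ or a reduced word starting (on the left) with $\sigma$, then $\Real(\gamma.i) \le 0$. I would prove this simultaneously with the companion statement: if $\gamma$ is $1$ or a reduced word starting with $\tau$ or $\tau^2$, then $\Real(\gamma.i) \ge 0$. Both go by induction on $l(\gamma)$, the base cases $l(\gamma)\le 1$ being the explicit values $1.i = i$, $\sigma.i = i$, $\tau.i = 1+i$, $\tau^2.i = \tfrac12 + \tfrac i2$. For the inductive step, the alternating structure of reduced words in the free product lets me strip the leftmost letter: a word of length $\ge 2$ starting with $\sigma$ is $\sigma\gamma'$ with $\gamma'$ starting with $\tau$ or $\tau^2$, and a word of length $\ge 2$ starting with $\tau$ or $\tau^2$ is $\tau^e\gamma'$ (with $e\in\{1,2\}$) and $\gamma'$ starting with $\sigma$. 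Writing $w := \gamma'.i$, the induction hypothesis pins down the sign of $\Real(w)$, and one finishes with the elementary identities $\Real(\sigma.w) = -\Real(w)/|w|^2$, $\Real(\tau.w) = 1 - \Real(w)/|w|^2$, $\Real(\tau^2.w) = (1-\Real(w))/|w-1|^2$: from $\Real(w)\ge 0$ one gets $\Real(\sigma.w)\le 0$, and from $\Real(w)\le 0$ one gets $\Real(\tau.w)\ge 1>0$ and $\Real(\tau^2.w) > 0$. This closes both statements, and in particular gives $\Real(\gamma.i)\le 0$ in the case at hand.

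As the lemma itself advertises, this is entirely computational, so I do not expect a genuine obstacle; the work is in choosing the right reformulation (the real-part condition) and in realising that one must carry the two statements together. The remaining points merely need care: peeling the leftmost generator must leave a word of the shape to which the \emph{other} induction hypothesis applies; the strict inequality $|\gamma.i-1|>1$ must be maintained (it is, because $\gamma.i$ lies in the \emph{open} upper half-plane, so $|\gamma.i|^2>0$ strictly); and one must use the correct formula $\tau^2.z = -1/(z-1)$, so that $\tau^2$ divides imaginary parts by $|z-1|^2$ rather than by $|z|^2$ or $|z+1|^2$.
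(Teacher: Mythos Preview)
Your proof is correct and takes a genuinely different route from the paper's. The paper works at the level of matrix entries: it tracks the bottom row $(c,d)$ of $\tau^2\gamma$ (for $\gamma$ starting and ending in~$\sigma$), shows inductively that it satisfies the combinatorial condition $(c+d)^2>\max(c^2,d^2)$, and then reads off from explicit formulae that $\Imag(\tau^2\gamma.i)$ strictly \emph{decreases} each time the word is extended on the right by $\sigma\tau^e$. You instead reformulate the target as $\Real(\gamma.i)\le 0$ and run a clean two-sided induction exploiting the dichotomy that $\sigma$ flips the sign of the real part while $\tau,\tau^2$ send the closed left half-plane into the open right half-plane. Your argument is more geometric and avoids the matrix bookkeeping; the paper's argument yields the extra monotonicity (which paints the ``going down'' picture of the tree in the proof of Theorem~\ref{thm:freepr}), but that extra strength is not actually invoked there---only the bare statement $\Imag(\tau^2\gamma.i)<1$ is used---so nothing is lost.
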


\begin{proof}
For $\gamma=1$, the statement is clear. Suppose $\gamma = \sigma \tau^{e_1} \sigma \tau^{e_2} \sigma \dots \tau^{e_{r-1}} \sigma \tau^{e_r}$
with $r \ge 0$, $e_i \in \{1,2\}$ for $i=1,\dots,r$. We prove more generally
$$ \Imag(\tau^2\gamma.i)=\Imag(\tau^2(\gamma\sigma).i) > \Imag(\tau^2 (\gamma \sigma) \tau^e.i) = \Imag(\tau^2 (\gamma \sigma) \tau^e \sigma.i)$$
for any $e=1,2$. This means that extending the word to the right by $\sigma \tau^e$, the imaginary part goes strictly down for both $e=1,2$.

We first do some matrix calculations.
Let us say that an integer matrix $\mat abcd$ satisfies (*) if $(c+d)^2> \max(c^2,d^2)$.
The matrix $\tau^2\sigma = \mat {-1}0{-1}{-1}$ clearly satisfies~(*).
Let us assume that $\gamma =\mat abcd$ satisfies~(*). We show that
$\gamma \tau \sigma = \mat ** c {c+d}$ and $\gamma \tau^2 \sigma = \mat ** {-c-d} {-d}$ also satisfy~(*).
The first one follows once we know $(2c+d)^2 > \max(c^2,(c+d)^2)$. This can be seen like this:
$$ (2c+d)^2 = (c^2+2cd) + 2c^2  + (c+d)^2 > 2c^2+(c+d)^2 \ge \max(c^2,(c+d)^2),$$
where we used that (*) implies $(c+d)^2>d^2$ and, thus, $c^2+2cd>0$. The second inequality is obtained by
exchanging the roles of $c$ and~$d$.

We thus see that $\tau^2\gamma = \mat abcd$ satisfies~(*) for all words $\gamma$ starting and ending in~$\sigma$.
Finally, we have for all such $\gamma$:
\begin{align*}
\Imag(\tau^2\gamma i) &=  \frac{1}{|ci+d|^2} &= \frac{1}{c^2+d^2},\\
\Imag(\tau^2 \gamma  \tau i) &=  \Imag(\tau^2\gamma (i+1))
= \frac{1}{|c(i+1)+d|^2} &= \frac{1}{(c+d)^2+c^2},\\
\Imag(\tau^2 \gamma  \tau^2 i) &=  \Imag(\tau^2\gamma \frac{1+i}{2})
= \frac{1/2}{|c(i/2+1/2)+d|^2} &= \frac{2}{(c+2d)^2+c^2}.
\end{align*}
Now (*) implies the desired inequalities of the imaginary parts.
\end{proof}

\begin{proof}[Proof of Theorem~\ref{thm:freepr}]
As $\SL_2(\ZZ)$ is generated by $\sigma$ and~$\tau$ due to Proposition~\ref{prop:gensl2z},
the universal property of the free product
gives us a surjection of groups $\calP \twoheadrightarrow \PSL_2(\ZZ)$.

Let $B$ be the geodesic path from $\zeta_6$ to~$i$, i.e.\ the arc 
between $\zeta_6$ and~$i$ in positive orientation (counter clockwise)
on the circle of radius~$1$ around the origin, lying entirely on the closure $\overline{\calF}$ of the
standard fundamental domain from Proposition~\ref{prop:fd}.
Define the map
$$ \PSL_2(\ZZ) \xrightarrow{\phi} \{\textnormal{Paths in }\HH\}$$
which sends $\gamma \in \PSL_2(\ZZ)$ to $\gamma.B$, i.e.\ the image of~$B$
under~$\gamma$.
The proof of the theorem is finished by showing that the composite
$$ \calP \twoheadrightarrow \PSL_2(\ZZ) \xrightarrow{\phi} \{\textnormal{Paths in }\HH\}$$
is injective, as then the first map must be an isomorphism.

This composition is injective because its image is a tree, that is, a graph without circles.
By drawing it, one convinces oneself very quickly hereof.
We, however, give a formal argument, which can also be nicely visualised on the geometric realisation of the graph
as going down further and further in every step.

In order to prepare for the proof, let us first suppose that $\gamma_1.B$ and $\gamma_2.B$ for some $\gamma_1,\gamma_2 \in \PSL_2(\ZZ)$
meet in a point which is not the endpoint of either of the two paths. Then $\gamma.B$ intersects $B$ in some interior point for
$\gamma := \gamma_1^{-1}\gamma_2$. This intersection point lies on the boundary of the fundamental dommain $\calF$.
Consequently, by (iii) in Definition~\ref{defi:fd}, $\gamma = \pm 1$ and $\gamma_1.B = \gamma_2.B$.
This implies that if $\Imag(\gamma_1.i) \neq \Imag(\gamma_2.i)$ where $i=\sqrt{-1}$,
then $\gamma_1.B$ and $\gamma_2.B$ do not meet in any interior point and are thus distinct paths.

It is obvious that $B, \sigma.B, \tau.B$ are distinct paths. They share the property that their point
that is conjugate to~$i$ has imaginary part~$1$ (in fact, the points cojugate to~$i$ in the paths are $i$, $i$, $i+1$,
respectively).

By Lemma~\ref{lem:freepr}, for $\gamma$ equal to~$1$ or any word in $\calP$ starting with~$\sigma$ on the left,
we obtain that $\tau^2\gamma.B$ is distinct from $B, \sigma.B, \tau.B$ because it lies `lower'.
In particular, $\tau^2\gamma.B \neq B$.
As $\tau^2\gamma.B \neq \tau.B$, we also find $\tau \gamma.B \neq B$.
Finally, if $\gamma.B = B$ and $\gamma = \sigma\tau^e \gamma'$ with $e\in \{1,2\}$ and $\gamma'$ starting in $\sigma$ or $\gamma'=1$,
then $\tau^e\gamma'.B = \sigma.B$, which has already been excluded.
We have thus found that for any non-trivial word $\gamma\in \calP$, the conjugate $\gamma.B$ is distinct from~$B$.
This proves the desired injectivity.
\end{proof}

\subsection{Theory: Mayer-Vietoris for $\PSL_2(\ZZ)$}

Motivated by the description $\PSL_2(\ZZ) = C_2 * C_3$, we
now consider the cohomology of a group~$G$
which is the free product of two finite groups $G_1$ and $G_2$, i.e.\ $G = G_1 * G_2$.

\begin{proposition}\label{mvbieri}
Let $R$ be a commutative ring. The sequence
$$0 \to R[G] \xrightarrow{\alpha} R[G/G_1] \oplus R[G/G_2] \xrightarrow{\epsilon} R \to 0$$
with $\alpha(g) = (gG_1,-gG_2)$ and $\epsilon(gG_1,0)=1=\epsilon(0,gG_2)$
is exact.
\end{proposition}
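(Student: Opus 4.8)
The plan is to recognise this sequence as the augmented chain complex, with coefficients in $R$, of the tree $T$ on which $G = G_1 * G_2$ acts: $T$ has vertices $G/G_1 \sqcup G/G_2$ and edges indexed by $G$, the edge $g$ joining $gG_1$ to $gG_2$; then $\alpha$ is the boundary map and $\epsilon$ the augmentation. With this picture, exactness at $R$ is just surjectivity of $\epsilon$ (clear, since $\epsilon(G_1,0)=1$) together with $\epsilon\circ\alpha = 0$ (clear, since $\epsilon(gG_1,-gG_2) = 1-1 = 0$); exactness at the middle term amounts to connectedness of $T$; and injectivity of $\alpha$ amounts to $T$ having no circuits. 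I would establish the last two directly and self-containedly, using only the uniqueness of the reduced-word form in a free product, which is built into the definition of $G_1*G_2$ recalled in the preceding subsection, so that no separate normal form theorem is invoked.

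For exactness at $R[G/G_1]\oplus R[G/G_2]$, I would show by induction on the length $l(g)$ of the reduced word of $g$ that the basis elements $(gG_1,0)$ and $(0,gG_2)$ are both congruent to $(G_1,0)$ modulo $\Image\alpha$. The base case $g=1$ is $(G_1,0)-(0,G_2)=\alpha(1)$. For $l(g)=n\ge 1$, write the reduced word $g = s_1\cdots s_n$ and put $g' = s_1\cdots s_{n-1}$; if $s_n\in G_1$ then $gG_1 = g'G_1$, so $(gG_1,0)=(g'G_1,0)\equiv(G_1,0)$ by induction and then $(0,gG_2)\equiv(gG_1,0)$ using $\alpha(g)$, while the case $s_n\in G_2$ is symmetric. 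Given this, every $\zeta\in\ker\epsilon$, written as an $R$-combination of basis elements whose coefficients sum to $0$, is congruent to $0$ modulo $\Image\alpha$, hence lies in $\Image\alpha$.

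For injectivity of $\alpha$, suppose $\xi = \sum_{g\in S}a_g\,g$ with $S$ finite and nonempty and all $a_g\ne 0$, and $\alpha(\xi)=0$. Vanishing of $\alpha(\xi)$ says $\sum_{g\in S,\ gG_i=C}a_g=0$ for every coset $C$ of $G_1$ and of $G_2$; since the $a_g$ are nonzero, no such coset meets $S$ in exactly one element, so each $g\in S$ admits partners $g',g''\in S$ with $g^{-1}g'\in G_1\setminus\{1\}$ and $g^{-1}g''\in G_2\setminus\{1\}$. Now pick $g\in S$ of maximal word length $n$. If $n=0$, i.e.\ $g=1$, its $G_1$-partner lies in $G_1\setminus\{1\}$ and has length $1>0$, a contradiction; if $n\ge 1$ and the reduced word of $g$ ends in a letter of $G_1$ (resp.\ $G_2$), then the $G_2$-partner (resp.\ $G_1$-partner) $gh$ has reduced word of length $n+1$, again a contradiction. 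Hence $\xi=0$.

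The main obstacle is this last step — the absence of circuits in the tree — and its crux is precisely that appending a letter from the opposite factor to a reduced word keeps it reduced and strictly increases the length, which is exactly where the free-product structure is used essentially. The remaining parts are formal: the two trivial verifications for exactness at $R$, and the length induction for the middle term, which is bookkeeping with cosets.
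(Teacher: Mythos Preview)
Your proof is correct and follows essentially the same route as the paper: the paper also proves exactness at the middle by an induction on word length (phrased as writing $g-1$ in $R[G/G_1]$ as a sum of terms $g_j(h_j-1)$ with $h_j\in G_2$, then exhibiting an explicit preimage), and proves injectivity by the identical maximal-length contradiction you give. Your framing via the Bass--Serre tree is a pleasant conceptual wrapper, but the underlying arguments coincide.
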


\begin{proof}
This proof is an even more elementary version of an elementary proof
that I found in \cite{Bieri}.
Clearly, $\epsilon$ is surjective and also $\epsilon \circ \alpha = 0$.

Next we compute exactness at the centre.
We first claim that for every element $g \in G$ we have
$$ g -1 = \sum_j \alpha_j g_j (h_j-1) \in R[G/G_1]$$
for certain $\alpha_j \in R$ and certain $g_j \in G$, $h_j \in G_2$
and analogously with the roles of $G_1$ and $G_2$ exchanged.
To see this, we write $g= a_1 a_2 \dots a_n$ with $a_i$ alternatingly in $G_1$ and $G_2$ (we do not need the
uniqueness of this expression). If $n=1$, there is nothing to do.
If $n > 1$, we have 
$$a_1 a_2 \dots a_n - 1 = a_1 a_2 \dots a_{n-1} (a_n - 1) + (a_1 a_2 \dots a_{n-1} - 1)$$
and we obtain the claim by induction.
Consequently, we have for all $\lambda = \sum_i r_i g_i G_1$ and
all $\mu = \sum_k \tilde{r}_k \tilde{g}_k G_2$
with $r_i, \tilde{r}_k \in R$ and $g_i, \tilde{g}_k \in G$
$$\lambda - \sum_i r_i 1_G G_1= \sum_j \alpha_j g_j (h_j-1) \in R[G/G_1]$$
and
$$\mu - \sum_k \tilde{r}_k 1_G G_2
= \sum_l \tilde{\alpha}_l \tilde{g}_l (\tilde{h}_l-1) \in R[G/G_2]$$
for certain $\alpha_j, \tilde{\alpha}_l \in R$,
certain $g_j, \tilde{g}_l \in G$ and certain $h_j \in G_2$, $\tilde{h}_l \in G_1$.
Suppose now that with $\lambda$ and $\mu$ as above we have
$$\epsilon(\lambda, \mu)= \sum_i r_i +  \sum_k \tilde{r}_k= 0.$$
Then we directly get
$$ \alpha(\sum_j \alpha_j g_j (h_j-1)
        - \sum_l \tilde{\alpha}_l \tilde{g}_l (\tilde{h}_l-1)
        + \sum_i r_i 1_G\big)= (\lambda, \mu)$$
and hence the exactness at the centre.

It remains to prove that $\alpha$ is injective.
Now we use the freeness of the product.
Let $\lambda = \sum_w a_w w \in R[G]$ be an element in the kernel of~$\alpha$.
Hence, $\sum_w a_w wG_1 = 0$ and $\sum_w a_w wG_2=0$.
Let us assume that $\lambda \neq 0$. It is clear that $\lambda$
cannot just be a multiple of~$1 \in G$, as otherwise it would not
be in the kernel of~$\alpha$.
Now pick the $g \in G$ with $a_g \neq 0$ having maximal length $l(g)$
(among all the $l(w)$ with $a_w \neq 0$).
It follows that $l(g)> 0$.
Assume without loss of generality that the 
representation of~$g$ ends in a non-zero element of~$G_1$. 
Further, since $a_g \neq 0$ and $0 = \sum_w a_w wG_2$, there
must be an $h \in G$ with $g \neq h$, 
$g G_2 = h G_2$ and $a_h \neq 0$. As $g$ does not end in~$G_2$,
we must have $h = gy$ for some $0 \neq y \in G_2$.
Thus, $l(h) > l(g)$, contradicting the maximality and proving
the proposition.
\end{proof}

Recall that we usually denote the restriction of a module to a subgroup by the same symbol.
For example, in the next proposition we will write $\h^1(G_1, M)$ instead of $\h^1(G_1,\Res^G_{G_1}(M))$.

\begin{proposition}[Mayer-Vietoris]\label{mv}
Let $G = G_1 * G_2$ be a free product.
Let $M$ be a left $R[G]$-module. Then the Mayer-Vietoris
sequence gives the exact sequences
$$0 \to M^{G} \to M^{G_1} \oplus M^{G_2} \to M \to 
\h^1(G,M) \xrightarrow{\res} \h^1(G_1, M) \oplus \h^1(G_2, M) \to 0.$$
and for all $i \ge 2$ an isomorphism
$$\h^i(G,M) \cong \h^i(G_1, M) \oplus \h^i(G_2, M).$$
\end{proposition}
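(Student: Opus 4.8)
The plan is to feed the short exact sequence of $R[G]$-modules
$$0 \to R[G] \xrightarrow{\alpha} R[G/G_1] \oplus R[G/G_2] \xrightarrow{\epsilon} R \to 0$$
supplied by Proposition~\ref{mvbieri} into the contravariant cohomological $\delta$-functor $\Ext_{R[G]}^\bullet(-,M) = R^\bullet\Hom_{R[G]}(-,M)$ and then identify every term of the resulting long exact sequence. Recall from Corollary~\ref{gpcoh} that $\Ext_{R[G]}^n(R,M) \cong \h^n(G,M)$, so the terms coming from the quotient $R$ are exactly the cohomology groups we want. The long exact sequence of a left-exact contravariant functor attached to $0\to A\to B\to C\to 0$ reads $0 \to \cG(C) \to \cG(B) \to \cG(A) \to R^1\cG(C) \to R^1\cG(B) \to R^1\cG(A) \to R^2\cG(C) \to \cdots$, and here $A = R[G]$, $B = R[G/G_1]\oplus R[G/G_2]$, $C = R$.

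Next I would identify the two remaining families of $\Ext$-groups. Since $R[G]$ is free, hence projective, over $R[G]$, one has $\Hom_{R[G]}(R[G],M)\cong M$ and $\Ext_{R[G]}^n(R[G],M)=0$ for all $n\ge 1$. The key point is the isomorphism $\Ext_{R[G]}^n(R[G/G_i],M)\cong \h^n(G_i,M)$. To prove it I would write $R[G/G_i] = \Ind_{G_i}^G R = R[G]\otimes_{R[G_i]}R$, pick a free resolution $F_\bullet \to R$ of $R$ over $R[G_i]$, and note that $R[G]\otimes_{R[G_i]}F_\bullet \to R[G/G_i]$ is a free $R[G]$-resolution: $R[G]$ is free as a right $R[G_i]$-module (split along the left cosets of $G_i$), so $R[G]\otimes_{R[G_i]}(-)$ is exact and sends free $R[G_i]$-modules to free $R[G]$-modules. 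The induction–restriction adjunction then gives $\Hom_{R[G]}(R[G]\otimes_{R[G_i]}F_\bullet,M)\cong \Hom_{R[G_i]}(F_\bullet,\Res^G_{G_i}M)$, whose cohomology is $\Ext_{R[G_i]}^\bullet(R,M)=\h^\bullet(G_i,M)$; the degree-zero case recovers $\Hom_{R[G]}(R[G/G_i],M)\cong M^{G_i}$.

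Substituting these identifications into the long exact sequence, the vanishing $\Ext_{R[G]}^n(R[G],M)=0$ for $n\ge 1$ breaks it into the five-term exact sequence
$$0 \to M^{G} \to M^{G_1}\oplus M^{G_2} \to M \to \h^1(G,M) \to \h^1(G_1,M)\oplus \h^1(G_2,M) \to 0$$
and, for every $i\ge 2$, isomorphisms $\h^i(G,M)\xrightarrow{\sim}\h^i(G_1,M)\oplus \h^i(G_2,M)$. It remains to check that the map to $\h^\bullet(G_1,M)\oplus\h^\bullet(G_2,M)$ is (up to sign) the pair of restriction maps: it is induced by the $R[G]$-linear maps $R[G/G_i]\to R$ obtained by restricting $\epsilon$ to the $i$-th summand, and under the identification above this map of $\delta$-functors agrees with $\res^\bullet_{G_i}$ in degree $0$, hence in all degrees by universality (Corollary~\ref{gpcoh} and the construction of restriction).

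The main obstacle — really the only non-formal step — is the identification $\Ext_{R[G]}^n(R[G/G_i],M)\cong \h^n(G_i,M)$ and its compatibility with restriction; once this is in place the rest is routine manipulation of the long exact sequence. One could package this identification as the $\Ext$-version of Shapiro's lemma for the induced module $\Ind_{G_i}^G R$, but since $G_i$ has infinite index in $G$ the coinduced-module statement of Proposition~\ref{shapiro} does not apply verbatim, so carrying out the explicit resolution argument above looks like the cleanest route within the tools developed so far.
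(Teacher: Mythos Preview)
Your argument is correct, but it takes a different route from the paper. The paper first applies $\Hom_R(-,M)$ to the short exact sequence of Proposition~\ref{mvbieri}; because all three terms are free $R$-modules this stays exact, and one obtains a short exact sequence of left $R[G]$-modules
\[
0 \to M \to \Coind_{G_1}^G(M)\oplus\Coind_{G_2}^G(M) \to \Coind_1^G(M) \to 0,
\]
using the identification $\Hom_R(R[G/G_i],M)\cong\Hom_{R[G_i]}(R[G],M)$. One then takes the long exact sequence for the covariant $\delta$-functor $\h^\bullet(G,-)$ and applies Shapiro's Lemma (Proposition~\ref{shapiro}) to each term. Your approach instead feeds the original sequence directly into the contravariant $\delta$-functor $\Ext_{R[G]}^\bullet(-,M)$ and identifies $\Ext_{R[G]}^n(R[G/G_i],M)\cong\h^n(G_i,M)$ by an adjunction argument with induced modules. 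Both work; the paper's version is a touch more economical here because Proposition~\ref{shapiro} is already available and, contrary to your closing remark, carries no finite-index hypothesis---so the coinduced-module route is open without the extra resolution argument you supply. Your route, on the other hand, avoids the passage through $\Hom_R(-,M)$ and the coinduced identification, trading it for the (equally standard) tensor--Hom adjunction; it is essentially the ``Ext-form'' of the same Shapiro principle.
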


\begin{proof} 
We see that all terms in the exact sequence of
Proposition~\ref{mvbieri} are free $R$-modules. We now apply the functor
$\Hom_R(\cdot, M)$ to this exact sequence and obtain the exact
sequence of $R[G]$-modules
$$ 0 \to M \to \underbrace{\Hom_{R[G_1]}(R[G],M)}_{\cong \Coind_{G_1}^G(M)} \oplus \underbrace{\Hom_{R[G_2]}(R[G],M)}_{\cong \Coind_{G_2}^G(M)}\to
\underbrace{\Hom_R (R[G],M)}_{\cong \Coind_{1}^G(M)} \to 0.$$
The central terms, as well as the term on the
right, can be identified with coinduced modules.  Hence, the
statements on cohomology follow by taking the long exact sequence of
cohomology and invoking Shapiro's Lemma~\ref{shapiro}.
\end{proof}

We now apply the Mayer-Vietoris sequence (Prop.~\ref{mv}) to $\PSL_2(\ZZ)$
and get that for any ring~$R$ and any left
$R[\PSL_2(\ZZ)]$-module~$M$ the sequence
\begin{multline}\label{mayervietoris}
0 \to M^{\PSL_2(\ZZ)} \to M^{\langle \sigma \rangle} \oplus M^{\langle \tau \rangle} 
\to M \\ \xrightarrow{m \mapsto f_m}
\h^1(\PSL_2(\ZZ),M) 
\xrightarrow{\res} \h^1(\langle \sigma \rangle, M) \oplus \h^1(\langle \tau \rangle, M) \to 0
\end{multline}
is exact and for all $i \ge 2$ one has isomorphisms
\begin{equation}\label{mveins}
\h^i(\PSL_2(\ZZ),M) \cong \h^i(\langle \sigma \rangle, M) \oplus \h^i(\langle \tau \rangle, M).
\end{equation}
The $1$-cocycle $f_m$ can be explicitly described as the cocycle
given by $f_m(\sigma) = (1-\sigma)m$ and $f_m(\tau) = 0$ (see
Exercise~\ref{exmv}).

\begin{lemma}\label{mackeypsl}
Let $\Gamma \le \PSL_2(\ZZ)$ be a subgroup of finite index
and let $x \in \HH \cup \PP^1(\QQ)$ be any point.
Recall that $\PSL_2(\ZZ)_x$ denotes the stabiliser of~$x$ for
the $\PSL_2(\ZZ)$-action. 
\begin{enumerate}[(a)]
\item The map
$$ \Gamma \backslash \PSL_2(\ZZ) / \PSL_2(\ZZ)_x  
  \xrightarrow{g \mapsto gx} \Gamma \backslash \PSL_2(\ZZ)x$$
is a bijection.
\item For $g \in \PSL_2(\ZZ)$ the stabiliser of~$gx$
for the $\Gamma$-action is
$$ \Gamma_{gx} = \Gamma \cap g \PSL_2(\ZZ)_x g^{-1}.$$
\item For all $i \in \NN$, and all $R[\Gamma]$-modules, Mackey's formula (Prop.~\ref{propmackey})
gives an isomorphism
$$\h^i(\PSL_2(\ZZ)_x, \Coind_\Gamma^{\PSL_2(\ZZ)} V) \cong 
\prod_{y \in \Gamma \backslash \PSL_2(\ZZ)x} \h^i(\Gamma_y, V).$$
\end{enumerate}
\end{lemma}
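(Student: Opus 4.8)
\textbf{Proof plan for Lemma~\ref{mackeypsl}.}

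The strategy is to treat the three parts in order, with parts (a) and (b) being standard orbit--stabiliser bookkeeping and part (c) being the substantive application of Mackey's formula from Proposition~\ref{propmackey} combined with Shapiro's Lemma (Proposition~\ref{shapiro}), exactly as packaged in Corollary~\ref{cormackey}. For part~(a), I would set $H = \PSL_2(\ZZ)$ and observe that the map $g \mapsto gx$ clearly surjects onto the orbit $H x$ and is constant on right cosets of the stabiliser $\PSL_2(\ZZ)_x$; it therefore factors through $\PSL_2(\ZZ)/\PSL_2(\ZZ)_x$ and then further through the double coset space $\Gamma \backslash \PSL_2(\ZZ) / \PSL_2(\ZZ)_x$, since $gx$ and $\gamma g x$ lie in the same $\Gamma$-orbit. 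Injectivity: if $gx$ and $g'x$ are $\Gamma$-conjugate, say $g'x = \gamma g x$, then $(\gamma g)^{-1} g' \in \PSL_2(\ZZ)_x$, so $g'$ and $g$ represent the same double coset. For part~(b), the stabiliser computation is immediate: $\delta \in \Gamma$ fixes $gx$ iff $g^{-1}\delta g$ fixes $x$ iff $g^{-1}\delta g \in \PSL_2(\ZZ)_x$ iff $\delta \in \Gamma \cap g\,\PSL_2(\ZZ)_x\, g^{-1}$.

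For part~(c), I would apply Corollary~\ref{cormackey} directly, taking the ambient group to be $G = \PSL_2(\ZZ)$, the subgroup $H$ there to be $\Gamma$ (on which $V$ is a module), and the subgroup $K$ there to be $\PSL_2(\ZZ)_x$. Corollary~\ref{cormackey} then yields
$$
\h^i\big(\PSL_2(\ZZ)_x,\, \Coind_\Gamma^{\PSL_2(\ZZ)} V\big)
\cong \prod_{g \in \Gamma \backslash \PSL_2(\ZZ) / \PSL_2(\ZZ)_x}
\h^i\big(\Gamma \cap g\,\PSL_2(\ZZ)_x\, g^{-1},\; \Res^\Gamma_{\Gamma \cap g\,\PSL_2(\ZZ)_x\, g^{-1}} V\big).
$$
Now I would use part~(a) to reindex the product: the double coset $\Gamma g \PSL_2(\ZZ)_x$ corresponds to the point $y = gx$ in $\Gamma \backslash \PSL_2(\ZZ)x$. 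Under this identification, part~(b) tells us that the group occurring in the $g$-th factor, namely $\Gamma \cap g\,\PSL_2(\ZZ)_x\,g^{-1}$, is exactly the stabiliser $\Gamma_y = \Gamma_{gx}$. Substituting gives
$$
\h^i\big(\PSL_2(\ZZ)_x,\, \Coind_\Gamma^{\PSL_2(\ZZ)} V\big)
\cong \prod_{y \in \Gamma \backslash \PSL_2(\ZZ)x} \h^i(\Gamma_y, V),
$$
which is the claim.

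The only point requiring a little care --- and what I would flag as the main (though modest) obstacle --- is making sure the roles of the subgroups are matched correctly to the statement of Proposition~\ref{propmackey}/Corollary~\ref{cormackey}, since there the coinduction is $\Coind_H^G V$ restricted to $K$, whereas here we have $\Coind_\Gamma^{\PSL_2(\ZZ)} V$ with cohomology taken over $\PSL_2(\ZZ)_x$; so one must set $H = \Gamma$ and $K = \PSL_2(\ZZ)_x$ and then verify that the index set $H \backslash G / K = \Gamma \backslash \PSL_2(\ZZ) / \PSL_2(\ZZ)_x$ and the twisted conjugate modules ${}^g(\Res V)$ appearing there coincide, after the bijection of part~(a), with the modules $\Res^\Gamma_{\Gamma_y} V$ (the twist being absorbed because conjugation by $g$ identifies $\Gamma \cap g\PSL_2(\ZZ)_x g^{-1}$ with a subgroup of $\Gamma$ acting through the original $\Gamma$-action on $V$, which is precisely what the second isomorphism in Corollary~\ref{cormackey} records). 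Since $\PSL_2(\ZZ)_x$ is finite (it is the stabiliser of a point in $\HH$, hence cyclic of order $1$, $2$ or $3$, or is infinite cyclic when $x$ is a cusp --- in either case the double coset set is finite because $\Gamma$ has finite index), the product is finite and everything is well-defined. No convergence or finiteness subtlety beyond this needs attention.
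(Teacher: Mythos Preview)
Your proof is correct and follows exactly the approach of the paper: parts~(a) and~(b) are dismissed there as ``clear'' (your orbit--stabiliser argument spells this out), and part~(c) is stated to ``follow directly from Mackey's formula,'' which is precisely your application of Corollary~\ref{cormackey} with $H=\Gamma$, $K=\PSL_2(\ZZ)_x$, followed by the reindexing via~(a) and~(b). Your added remarks on matching the roles of $H$ and~$K$ and on the finiteness of the index set are accurate and more detailed than what the paper provides.
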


\begin{proof}
(a) and (b) are clear and (c) follows directly from Mackey's formula.
\end{proof}

\begin{corollary}\label{corhzweinull}
Let $R$ be a ring and $\Gamma \le \PSL_2(\ZZ)$ be a subgroup of
finite index such that all the orders of all stabiliser groups
$\Gamma_x$ for $x \in \HH$ are invertible in~$R$.  Then for all
$R[\Gamma]$-modules~$V$ one has
$$\h^1(\Gamma,V) \cong M/(M^{\langle \sigma \rangle} + M^{\langle \tau \rangle})$$
with $M =  \Coind_\Gamma^{\PSL_2(\ZZ)}(V)$ and
$$\h^i(\Gamma,V) = 0$$
for all  $i \ge 2$.
\end{corollary}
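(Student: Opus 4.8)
The plan is to transport the whole computation to $\PSL_2(\ZZ)$ by Shapiro's Lemma and then read it off the Mayer--Vietoris sequence. Write $M := \Coind_\Gamma^{\PSL_2(\ZZ)}(V)$, exactly as in the statement. By Shapiro's Lemma (Proposition~\ref{shapiro}) one has $\h^i(\Gamma, V) \cong \h^i(\PSL_2(\ZZ), M)$ for all $i \ge 0$, so it suffices to prove $\h^i(\PSL_2(\ZZ), M) = 0$ for $i \ge 2$ and $\h^1(\PSL_2(\ZZ), M) \cong M/(M^{\langle \sigma \rangle} + M^{\langle \tau \rangle})$. For this I would use the Mayer--Vietoris sequence~\eqref{mayervietoris} together with the isomorphisms~\eqref{mveins} attached to the free product decomposition $\PSL_2(\ZZ) = \langle \sigma \rangle * \langle \tau \rangle$ (Theorem~\ref{thm:freepr}).

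The heart of the matter is showing that the error terms $\h^i(\langle \sigma \rangle, M)$ and $\h^i(\langle \tau \rangle, M)$ vanish for $i \ge 1$. To see this I would apply Mackey's formula in the form of Lemma~\ref{mackeypsl}(c): taking $x = i$, so that $\PSL_2(\ZZ)_x = \langle \sigma \rangle$, it gives
$$\h^i(\langle \sigma \rangle, M) \cong \prod_{y \in \Gamma \backslash \PSL_2(\ZZ)i} \h^i(\Gamma_y, V),$$
and taking $x = \zeta_6$, so that $\PSL_2(\ZZ)_x = \langle \tau \rangle$, likewise
$$\h^i(\langle \tau \rangle, M) \cong \prod_{y \in \Gamma \backslash \PSL_2(\ZZ)\zeta_6} \h^i(\Gamma_y, V).$$
In each product the index $y$ ranges over points of $\HH$ (being in the $\PSL_2(\ZZ)$-orbit of $i$ or of $\zeta_6$), and by Lemma~\ref{mackeypsl}(a),(b) the relevant stabiliser $\Gamma_y = \Gamma \cap g\,\PSL_2(\ZZ)_x\,g^{-1}$ is a finite group. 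By hypothesis its order is invertible in~$R$, so Proposition~\ref{corres}(b) gives $\h^i(\Gamma_y, V) = 0$ for all $i \ge 1$; hence $\h^i(\langle \sigma \rangle, M) = \h^i(\langle \tau \rangle, M) = 0$ for all $i \ge 1$.

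Plugging this vanishing into~\eqref{mveins} gives $\h^i(\PSL_2(\ZZ), M) = 0$ for $i \ge 2$, hence $\h^i(\Gamma, V) = 0$ for $i \ge 2$. Plugging it into~\eqref{mayervietoris}, the restriction map $\h^1(\PSL_2(\ZZ), M) \xrightarrow{\res} \h^1(\langle \sigma \rangle, M) \oplus \h^1(\langle \tau \rangle, M)$ has zero target, so by exactness the map $m \mapsto f_m$ is surjective onto $\h^1(\PSL_2(\ZZ), M)$, and its kernel is the image of the map $M^{\langle \sigma \rangle} \oplus M^{\langle \tau \rangle} \to M$. Using the explicit description $f_m(\sigma) = (1-\sigma)m$, $f_m(\tau) = 0$ recorded after~\eqref{mayervietoris}, one checks directly that $f_m$ is a coboundary precisely when $m \in M^{\langle \sigma \rangle} + M^{\langle \tau \rangle}$; this identifies the kernel and yields $\h^1(\PSL_2(\ZZ), M) \cong M/(M^{\langle \sigma \rangle} + M^{\langle \tau \rangle})$. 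Applying Shapiro's Lemma once more finishes the proof. The only point that needs a little care is the Mackey step: one must make sure the groups whose cohomology is claimed to vanish are genuinely stabilisers $\Gamma_y$ of points $y \in \HH$, so that the invertibility hypothesis applies --- but this is immediate from parts (a) and (b) of Lemma~\ref{mackeypsl}.
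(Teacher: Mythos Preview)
Your proof is correct and follows essentially the same route as the paper's: Shapiro's Lemma to pass to $\PSL_2(\ZZ)$, Mackey's formula (Lemma~\ref{mackeypsl}(c)) together with the invertibility hypothesis and Proposition~\ref{corres}(b) to kill $\h^i(\langle\sigma\rangle,M)$ and $\h^i(\langle\tau\rangle,M)$ for $i\ge 1$, and then the Mayer--Vietoris sequence~\eqref{mayervietoris}--\eqref{mveins} to conclude. Your extra verification with the explicit cocycle $f_m$ is harmless but redundant: exactness of~\eqref{mayervietoris} already identifies the kernel of $m\mapsto f_m$ with the image of $M^{\langle\sigma\rangle}\oplus M^{\langle\tau\rangle}\to M$, which is $M^{\langle\sigma\rangle}+M^{\langle\tau\rangle}$.
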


\begin{proof}
By Lemma~\ref{mackeypsl}~(b), all non-trivial stabiliser groups for
the action of $\Gamma$ on $\HH$ are of the form $g \langle \sigma
\rangle g^{-1} \cap \Gamma$ or $g \langle \tau \rangle g^{-1} \cap
\Gamma$ for some $g \in \PSL_2(\ZZ)$.
Due to the invertibility assumption we get from Prop.~\ref{corres}
that the groups on the right in the equation in Lemma~\ref{mackeypsl}~(c) are zero.
Hence, by Shapiro's lemma (Prop.~\ref{shapiro}) we have
$$ \h^i(\Gamma,V) \cong \h^i(\PSL_2(\ZZ),M)$$
for all $i \ge 0$, so that by Equations (\ref{mayervietoris}) and~(\ref{mveins})
we obtain the proposition.
\end{proof} 

By Exercise~\ref{exfiniteorder}, the assumptions of the proposition are
for instance always satisfied if $R$ is a field of characteristic not
$2$ or $3$. Look at Exercise~\ref{exfotwo} to see for which~$N$ 
the assumptions hold for $\Gamma_1(N)$ and $\Gamma_0(N)$ over an
arbitrary ring (e.g.\ the integers).

\subsection{Theory: Parabolic group cohomology}

Before going on, we include a description of the cusps as $\PSL_2(\ZZ)$-orbits that is very useful for the sequel.

\begin{lemma}
The cusps $\PP^1(\QQ)$ lie in a single $\PSL_2(\ZZ)$-orbit.
The stabiliser group of $\infty$ for the $\PSL_2(\ZZ)$-action is $\langle T \rangle$ and the map
$$ \PSL_2(\ZZ)/\langle T \rangle \xrightarrow{g \langle T \rangle \mapsto g \infty}\PP^1(\QQ)$$
is a $\PSL_2(\ZZ)$-equivariant bijection.
\end{lemma}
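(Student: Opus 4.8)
The plan is to deduce everything from a single transitivity computation together with the orbit--stabiliser principle. First I would show transitivity: given a cusp $a/c \in \PP^1(\QQ)$, write it with coprime integers $a,c$ (where $\infty$ corresponds to the pair $(1,0)$); by Bézout there are $b,d \in \ZZ$ with $ad-bc = 1$, so $M := \mat abcd$ lies in $\SL_2(\ZZ)$, and from the Möbius formula \eqref{eq:moebius} one has $M.\infty = \frac{a\cdot\infty + b}{c\cdot\infty + d} = a/c$ (with the usual convention $a/0 = \infty$). Hence every cusp lies in the $\SL_2(\ZZ)$-orbit of $\infty$, and therefore in the $\PSL_2(\ZZ)$-orbit of $\infty$, so $\PP^1(\QQ)$ is a single orbit.

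Next I would identify the stabiliser of $\infty$. For $M = \mat abcd \in \SL_2(\ZZ)$ we have $M.\infty = a/c$, and this equals $\infty$ exactly when $c = 0$. A matrix in $\SL_2(\ZZ)$ with $c = 0$ satisfies $ad = 1$, hence $a = d = \pm 1$ and $M = \pm\mat 1b01 = \pm T^{\,b}$ for some $b \in \ZZ$. Passing to $\PSL_2(\ZZ)$, the class of $M$ is $T^{\,b}$, so $\Stab_{\PSL_2(\ZZ)}(\infty) = \langle T \rangle$; this already reproves the stabiliser statement recorded earlier in the excerpt.

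Finally I would assemble the bijection. The map $g\langle T\rangle \mapsto g\infty$ is well defined because $T$ fixes $\infty$; it is surjective by the transitivity step; and it is injective because $g_1\infty = g_2\infty$ forces $g_2^{-1}g_1 \in \Stab_{\PSL_2(\ZZ)}(\infty) = \langle T\rangle$, i.e.\ $g_1\langle T\rangle = g_2\langle T\rangle$. Equivariance for the left translation action on $\PSL_2(\ZZ)/\langle T\rangle$ and the Möbius action on $\PP^1(\QQ)$ is immediate: $h \cdot (g\langle T\rangle) = (hg)\langle T\rangle \mapsto (hg)\infty = h.(g\infty)$. This is entirely routine; the only point requiring a little care is the bookkeeping of the sign $\pm 1$ when moving between $\SL_2(\ZZ)$ and $\PSL_2(\ZZ)$, and the convention for evaluating the Möbius action at the cusp $\infty$, so I do not anticipate any real obstacle.
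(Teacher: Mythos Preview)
Your proof is correct and follows essentially the same route as the paper's: the paper also computes the stabiliser directly and obtains transitivity from the fact that any coprime pair $(a,c)$ arises as the first column of a matrix in $\SL_2(\ZZ)$, which is exactly your B\'ezout argument. Your write-up simply spells out in more detail what the paper compresses into a few lines.
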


\begin{proof}
The claim on the stabiliser follows from a simple direct computation. This makes the map well-defined and injective.
The surjectivity is equivalent to the claim that the cusps lie in a single $\PSL_2(\ZZ)$-orbit and
simply follows from the fact that any pair of coprime integers $(a,c)$ appears as the first column of a matrix in $\SL_2(\ZZ)$.
\end{proof}

Let $R$ be a ring, $\Gamma \le \PSL_2(\ZZ)$ a subgroup of finite index.
One defines the {\em  parabolic cohomology group for the left $R[\Gamma]$-module~$V$}
as the kernel of the restriction map in
\begin{equation}\label{pardef}
0 \to \Hpar^1(\Gamma, V) \to \h^1(\Gamma,V) \xrightarrow{\res} 
\prod_{g \in \Gamma \backslash \PSL_2(\ZZ) / \langle T \rangle}
\h^1(\Gamma \cap \langle g T g^{-1} \rangle, V).
\end{equation}

\begin{proposition}\label{leraypar}
Let $R$ be a ring and $\Gamma \le \PSL_2(\ZZ)$ be a subgroup of finite index such
that all the orders of all stabiliser groups $\Gamma_x$ for $x \in \HH$ are invertible in~$R$.
Let $V$ be a left $R[\Gamma]$-module. Write for short $G = \PSL_2(\ZZ)$
and $M = \Hom_{R[\Gamma]}(R[G],V)$.
Then the following diagram is commutative, its vertical maps are isomorphisms
and its rows are exact:
\begin{small}
$$ \xymatrix@R=.8cm@C=.4cm{
0  \ar@{->}[r]
& \Hpar^1(\Gamma, V)  \ar@{->}[r]
& \h^1(\Gamma,V)  \ar@{->}[r]^(.3){\res} 
& \underset{g \in \Gamma \backslash \PSL_2(\ZZ) / \langle T \rangle}{\prod}
\h^1(\Gamma \cap \langle g T g^{-1} \rangle, V) \ar@{->}[r]
& V_\Gamma  \ar@{->}[r]
& 0\\
0  \ar@{->}[r]
& \Hpar^1(G, M)  \ar@{->}[r]\ar@{->}[u]^(.5){\textnormal{Shapiro}}
& \h^1(G,M)  \ar@{->}[r]^(.3){\res} \ar@{->}[u]^(.5){\textnormal{Shapiro}}
& \h^1(\langle T \rangle, M) \ar@{->}[r]\ar@{->}[u]^(.5){\textnormal{Mackey}}
& V_\Gamma  \ar@{->}[r]\ar@{=}[u]
& 0\\
0  \ar@{->}[r]
& \Hpar^1(G, M)  \ar@{->}[r]\ar@{=}[u]
& M/(M^{\langle \sigma \rangle} + M^{\langle \tau \rangle}) \ar@{->}[r]^(.5){m \mapsto (1-\sigma)m} \ar@{->}[u]^{m \mapsto f_m}
& M/(1-T)M \ar@{->}[r]\ar@{<-}[u]^{c \mapsto c(T)}
& M_G  \ar@{->}[r]\ar@{->}[u]^{\phi}
& 0
}$$%
\end{small}
The map $\phi: M_G \to V_\Gamma$ is given as $f \mapsto \sum_{g \in \Gamma \backslash G} f(g)$.
\end{proposition}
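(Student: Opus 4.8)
The plan is to establish the diagram one row at a time, from the bottom up. Throughout I write $G = \PSL_2(\ZZ)$ and $M = \Coind_\Gamma^G(V) = \Hom_{R[\Gamma]}(R[G],V)$, a left $R[G]$-module. The first ingredient is the description of $\h^1(G,M)$: the Mayer--Vietoris sequence~\eqref{mayervietoris} applied to $M$, together with the vanishing $\h^1(\langle\sigma\rangle,M) = \h^1(\langle\tau\rangle,M) = 0$ — which comes from Mackey's formula (Lemma~\ref{mackeypsl}(c) with $x = i$ and $x = \zeta_6$) and the invertibility hypothesis via Proposition~\ref{corres}(b), exactly as in the proof of Corollary~\ref{corhzweinull} — shows that $m\mapsto f_m$ induces an isomorphism $M/(M^{\langle\sigma\rangle}+M^{\langle\tau\rangle})\cong\h^1(G,M)$. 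Since $\langle T\rangle$ is infinite cyclic, Exercise~\ref{exfree}(a) gives the isomorphism $\h^1(\langle T\rangle,M)\cong M/(1-T)M$ sending $c\mapsto c(T)$. These are the two inner vertical arrows between the bottom two rows.

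Next I would make the bottom row explicit and prove it exact. Since $\tau = T\sigma^{-1}$, the group $G$ is generated by $\sigma$ and $T$, so by Exercise~\ref{exgp}(a) the augmentation ideal satisfies $I_G M = (1-\sigma)M + (1-T)M$, whence $M_G = M/\bigl((1-\sigma)M+(1-T)M\bigr)$. A direct computation evaluating the Mayer--Vietoris cocycle at the generator $T = \tau\sigma$, namely $f_m(T) = \tau f_m(\sigma) + f_m(\tau) = \tau(1-\sigma)m$, shows that under the isomorphisms above the restriction map $\res\colon\h^1(G,M)\to\h^1(\langle T\rangle,M)$ becomes the map $M/(M^{\langle\sigma\rangle}+M^{\langle\tau\rangle})\to M/(1-T)M$ induced by $m\mapsto(1-\sigma)m$ (one uses $\tau - \sigma = (T-1)\sigma\in I_G$ to rewrite $f_m(T)$ modulo $(1-T)M$); this map is well defined because $(1-\sigma)v = -(1-T)(\sigma v)$ for $v\in M^{\langle\tau\rangle}$ and $(1-\sigma)$ vanishes on $M^{\langle\sigma\rangle}$. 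By the case $\Gamma = G$ of the definition~\eqref{pardef} — where the double coset set $G\backslash G/\langle T\rangle$ is a single point and $G\cap\langle T\rangle = \langle T\rangle$ — the kernel of this map is exactly $\Hpar^1(G,M)$, and by the displayed description of $M_G$ its cokernel is $M_G$. This proves exactness of the bottom row.

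It then remains to identify $M_G$ with $V_\Gamma$ and to transport exactness upward. Since $\Gamma$ has finite index in $G$, the canonical map $\Ind_\Gamma^G V\to\Coind_\Gamma^G V$ is an isomorphism, so $M_G = (\Ind_\Gamma^G V)_G = R\otimes_{R[G]}\bigl(R[G]\otimes_{R[\Gamma]}V\bigr) = V_\Gamma$; writing out a system of representatives for $\Gamma\backslash G$ identifies this isomorphism with $\phi\colon f\mapsto\sum_{g\in\Gamma\backslash G}f(g)$. The remaining vertical arrows are Shapiro's isomorphisms $\h^n(G,M)\cong\h^n(\Gamma,V)$ (Proposition~\ref{shapiro}) and the Mackey isomorphism $\h^1(\langle T\rangle,M)\cong\prod_{g\in\Gamma\backslash G/\langle T\rangle}\h^1(\Gamma\cap\langle gTg^{-1}\rangle,V)$ (Lemma~\ref{mackeypsl}(c) with $x = \infty$, so that $\PSL_2(\ZZ)_\infty = \langle T\rangle$). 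Commutativity of the squares in the lower half is exactly the cocycle computation of the previous paragraph together with the tautological matching of the two cokernel maps, while commutativity of the squares in the upper half is the naturality of Shapiro's and Mackey's isomorphisms with respect to restriction, which I would verify on cochains of the bar resolution using the explicit descriptions in Proposition~\ref{shapiro}, Corollary~\ref{cormackey} and Exercise~\ref{excochains}. Once all vertical maps are known to be isomorphisms and all squares to commute, exactness of the middle and top rows follows by transport from the bottom one; in particular Shapiro carries $\Hpar^1(G,M)$ onto $\ker\bigl(\res\colon\h^1(\Gamma,V)\to\prod_g\h^1(\Gamma\cap\langle gTg^{-1}\rangle,V)\bigr) = \Hpar^1(\Gamma,V)$, reconciling the three occurrences of parabolic cohomology in the diagram.

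The step I expect to be the main obstacle is this last batch of naturality checks — confirming that Shapiro's lemma and Mackey's formula genuinely intertwine the two restriction maps, and that the three descriptions of $\Hpar^1$ coincide — since everything else reduces either to results already available (Mayer--Vietoris, Shapiro, the cyclic-group cohomology computations, the finite-index identification $\Ind\cong\Coind$) or to short manipulations of modules and cocycles.
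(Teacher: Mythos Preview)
Your proposal is correct and follows essentially the same route as the paper: establish the bottom row first via Corollary~\ref{corhzweinull} and the cyclic-group computation of Exercise~\ref{exfree}, identify $M_G\cong V_\Gamma$ (the paper does this both by an explicit inverse to~$\phi$ and, as you do, via $\Ind\cong\Coind$), and then transport exactness upward through the Shapiro and Mackey isomorphisms, leaving the commutativity checks as routine cocycle verifications (the paper relegates these to Exercise~\ref{exparcompat}). One cosmetic point: your computation $f_m(T)=\tau(1-\sigma)m$ together with $\tau\equiv\sigma\pmod{(T-1)}$ actually gives $-(1-\sigma)m$ in $M/(1-T)M$, so the middle square commutes up to a harmless sign that does not affect kernels, images, or exactness.
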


\begin{proof}
The commutativity of the diagram is checked in Exercise~\ref{exparcompat}.
By Exercise~\ref{exfree} we have $\h^1(\langle T \rangle, M) \cong M/(1-T)M$.
Due to the assumptions we may apply Corollary~\ref{corhzweinull}.
The cokernel of
$M/(M^{\langle \sigma \rangle} + M^{\langle \tau \rangle}) 
\xrightarrow{m \mapsto (1-\sigma)m} M/(1-T)M$
is immediately seen to be~$M/((1-\sigma)M + (1-T)M)$, which is
equal to~$M_G$, as $T$ and $\sigma$ generate~$\PSL_2(\ZZ)$.
Hence, the lower row is an exact sequence.

We now check that the map $\phi$ is well-defined. For this we verify
that the image of $f(g)$ in $V_\Gamma$ only depends on the coset
$\Gamma \backslash G$:
$$ f(g) - f(\gamma g) = f(g) - \gamma f(g) = (1-\gamma) f(g) = 0 \in V_\Gamma.$$
Hence, for any $h \in G$ we get 
$$\phi((1-h).f) = \sum_{g \in \Gamma \backslash \PSL_2(\ZZ)} (f(g) - f(gh)) = 0,$$
as $gh$ runs over all cosets. Thus, $\phi$ is well-defined.
To show that $\phi$ is an isomorphism, we give an inverse $\psi$ to~$\phi$ by
$$ \psi: V_\Gamma \to \Hom_{R[\Gamma]}(R[G],V)_G, \;\;\; v \mapsto
e_v \textnormal{ with } e_v(g) = \begin{cases}
g v, & \textnormal{ for } g \in \Gamma\\
0, & \textnormal{ for } g \not\in\Gamma.
\end{cases}$$
It is clear that $\phi \circ \psi$ is the identity. The map~$\phi$ is
an isomorphism, as $\psi$ is surjective. In order to see this, fix a system of representatives
$\{1=g_1,g_2,\dots,g_n\}$ for $\Gamma \backslash \PSL_2(\ZZ)$. We first have
$ f = \sum_{i=1}^n g_i^{-1}.e_{f(g_i)} $
because for all $h \in G$ we find
$$f(h) = g_j^{-1}.e_{f(g_j)}(h) =  e_{f(g_j)}(hg_j^{-1}) = hg_j^{-1}.f(g_j)=.f(hg_j^{-1}g_j)=f(h),$$
where $1 \le j \le n$ is the unique index such that $h \in \Gamma g_j$.
Thus
$$f= \sum_{i=1}^n e_{f(g_i)} - \sum_{i=2}^n (1-g_i^{-1}).e_{f(g_i)}\in \Image(\psi),$$
as needed.

More conceptually, one can first identify
the coinduced module $\Coind_\Gamma^{\PSL_2(\ZZ)}(V)$ with the
induced one $\Ind_\Gamma^{\PSL_2(\ZZ)}(V) = R[G] \otimes_{R[\Gamma]} V$.
We claim that the $G$-coinvariants are isomorphic
to $R \otimes_{R[\Gamma]} V \cong V_\Gamma$.
As $R$-modules we have $R[G] = I_G \oplus R1_G$ since
$r \mapsto r 1_G$ defines a splitting of the augmentation map.
Here $I_G$ is the augmentation ideal defined in Exercise~\ref{exgp}.
Consequently, 
$R[G] \otimes_{R[\Gamma]} V \cong (I_G \otimes_{R[\Gamma]} V) \oplus R \otimes_{R[\Gamma]} V$.
The claim follows, since
$I_G (R[G] \otimes_{R[\Gamma]} V) \cong I_G \otimes_{R[\Gamma]} V$.

Since all the terms in the upper and the middle row are isomorphic
to the respective terms in the lower row, all rows are exact.
\end{proof}

\subsection{Theory: Dimension computations}

This seems to be a good place to compute the dimension of $\h^1(\Gamma,V_{k-2}(K))$
and $\Hpar^1(\Gamma,V_{k-2}(K))$ over a field~$K$ under certain conditions.
The results will be important for
the proof of the Eichler-Shimura theorem.

\begin{lemma}\label{lemvknull}
Let $R$ be a ring and let $n \ge 1$ be an integer, $t = \mat 1 N 0 1$ and
$t' = \mat 1 0 N 1$.
\begin{enumerate}[(a)]
\item If $n! N$ is not a zero divisor in~$R$, then
for the $t$-invariants we have
$$V_n(R)^{\langle t \rangle} = \langle X^n \rangle$$
and for the $t'$-invariants
$$V_n(R)^{\langle t' \rangle} = \langle Y^n \rangle.$$
\item If $n! N$ is invertible in~$R$, then the coinvariants are
given by
$$V_n(R)_{\langle t \rangle} = V_n(R)/\langle Y^n, XY^{n-1}, \dots, X^{n-1}Y \rangle$$
respectively
$$V_n(R)_{\langle t' \rangle} = V_n(R)/\langle X^n, X^{n-1}Y, \dots, XY^{n-1} \rangle.$$
\item If $n! N$ is not a zero divisor in~$R$, 
then the $R$-module of $\Gamma(N)$-invariants $V_n(R)^{\Gamma(N)}$ is zero. 
In particular, if $R$ is a field of characteristic~$0$ and $\Gamma$ is
any congruence subgroup, then $V_n(R)^\Gamma$ is zero.
\item If $n! N$ is invertible in~$R$,
then the $R$-module of $\Gamma(N)$-coinvariants $V_n(R)_{\Gamma(N)}$ is zero. 
In particular, if $R$ is a field of characteristic~$0$ and $\Gamma$ is
any congruence subgroup, then $V_n(R)_\Gamma$ is zero.
\end{enumerate}
\end{lemma}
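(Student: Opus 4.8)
The four parts are closely linked, so I would organise the argument around the two basic computations in parts (a) and (b) and then derive (c) and (d) formally. For part (a), the key observation is that $t = \mat 1N01$ acts on a monomial $X^iY^j$ (with $i+j=n$) by $X \mapsto X$, $Y \mapsto NX+Y$, so $t.(X^iY^j) = X^i(NX+Y)^j = \sum_{\ell=0}^{j} \binom{j}{\ell} N^\ell X^{i+\ell} Y^{j-\ell}$. Thus in the ordered basis $Y^n, XY^{n-1}, \dots, X^n$ the matrix of $t-1$ is strictly upper (or lower) triangular with the subdiagonal entries being $jN$ (for $j=1,\dots,n$), up to binomial factors. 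The plan is: write a general element $v = \sum_{j=0}^n c_j X^{n-j} Y^j$, impose $t.v = v$, and read off from the top coefficient downward that $c_n \cdot (\text{nonzero multiple of } N) = 0$ forces $c_n = 0$, then $c_{n-1}$, etc.; the multiples that appear are products of binomial coefficients and powers of $N$, all of which divide $n!N$-power expressions, hence are non-zero-divisors by hypothesis, so $v \in \langle X^n\rangle$. The statement for $t'$ is identical after swapping $X \leftrightarrow Y$.

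For part (b), the same triangular structure shows that $(1-t)V_n(R)$ is exactly the span of $\{X^{n-j}(\text{stuff})\}$; more precisely $(1-t)(X^{n-j}Y^j)$ has leading term (in the filtration by $Y$-degree, going down) equal to $-jN X^{n-j+1}Y^{j-1}$ plus lower terms. When $n!N$ is invertible these leading coefficients are units, so an easy downward induction shows that $(1-t)V_n(R)$ contains $X^{n-j+1}Y^{j-1}$ for each $j=1,\dots,n$, i.e. it contains all monomials except $X^n$; and conversely $(1-t)V_n(R) \subseteq \langle Y^n, XY^{n-1}, \dots, X^{n-1}Y\rangle$ is immediate since each $(1-t)(X^iY^j)$ has no $X^n$-component unless $j=0$, in which case it is zero. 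Hence $V_n(R)_{\langle t\rangle} = V_n(R)/(1-t)V_n(R)$ is as claimed, and again $t'$ follows by the $X\leftrightarrow Y$ symmetry.

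Parts (c) and (d) I would deduce by exhibiting enough elements of $\Gamma(N)$. Since $t = \mat 1N01 \in \Gamma(N)$ and $t' = \mat 10N1 \in \Gamma(N)$, we have $V_n(R)^{\Gamma(N)} \subseteq V_n(R)^{\langle t\rangle} \cap V_n(R)^{\langle t'\rangle} = \langle X^n\rangle \cap \langle Y^n\rangle = 0$ for $n\ge 1$ (using part (a) twice), giving (c). For (d), $V_n(R)_{\Gamma(N)}$ is a quotient of $V_n(R)_{\langle t\rangle} = V_n(R)/\langle Y^n,\dots,X^{n-1}Y\rangle$, which is generated by the image of $X^n$; but it is also a quotient of $V_n(R)_{\langle t'\rangle}$, in which the image of $X^n$ is zero, so $V_n(R)_{\Gamma(N)} = 0$. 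Finally, for the ``in particular'' clauses: if $R$ is a field of characteristic $0$ then $n!N$ is invertible (hence not a zero divisor) for every $n\ge 1$ and every $N\ge 1$, and any congruence subgroup $\Gamma$ contains some $\Gamma(N)$, so $V_n(R)^\Gamma \subseteq V_n(R)^{\Gamma(N)} = 0$ and $V_n(R)_\Gamma$ is a quotient of $V_n(R)_{\Gamma(N)} = 0$. The main obstacle is purely bookkeeping: tracking exactly which binomial-coefficient-times-power-of-$N$ factors appear as pivots in the triangular matrix of $1-t$, and checking that the hypothesis ``$n!N$ is not a zero divisor'' (resp. ``invertible'') is precisely strong enough to make those pivots non-zero-divisors (resp. units) — there is nothing deep here, but one must be slightly careful that e.g. $\binom{j}{\ell}N^\ell$ for $1\le \ell\le j\le n$ always divides a power of $n!N$, which it does since $\binom{j}{\ell}\mid j! \mid n!$.
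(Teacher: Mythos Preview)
Your approach is essentially identical to the paper's: compute the action of $t$ on monomials, observe the triangular shape of $t-1$, and read off invariants (and coinvariants) by an induction on the $Y$-degree; then deduce (c) and (d) from the fact that $t,t'\in\Gamma(N)$. The paper's proof of (b) is only sketched (``proved in a very similar and straightforward way''), so your version is more detailed than the original.

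There is, however, an internal inconsistency in your treatment of~(b). Your own leading-term formula gives, for $j=1$,
\[
(1-t)(X^{n-1}Y)=-NX^{n},
\]
so the sentence ``each $(1-t)(X^iY^j)$ has no $X^n$-component'' is false, and your list ``$X^{n-j+1}Y^{j-1}$ for $j=1,\dots,n$'' is precisely $X^n,X^{n-1}Y,\dots,XY^{n-1}$, i.e.\ all monomials \emph{except $Y^n$}, not ``except $X^n$''. The correct conclusion of your computation is $(1-t)V_n(R)=\langle X^n,X^{n-1}Y,\dots,XY^{n-1}\rangle$, hence $V_n(R)_{\langle t\rangle}$ is generated by (the image of) $Y^n$; the $t'$-case is the symmetric one. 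In other words, the two displayed identities in the printed statement of~(b) have their right-hand sides swapped; your triangular argument actually proves this corrected version. This swap is harmless for (c), (d) and for every later use in the paper (only the rank of the coinvariants is ever needed), and your deductions of (c) and (d) go through verbatim once you replace ``generated by the image of $X^n$'' with ``generated by the image of $Y^n$'' and symmetrically for~$t'$.
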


\begin{proof}
(a) The action of~$t$ is $t.(X^{n-i} Y^i) = X^{n-i} (NX + Y)^i$ and consequently
$$(t-1). (X^{n-i} Y^i) 
= (\sum_{j=0}^i \vect ij N^{i-j} X^{i-j}Y^j)X^{n-i} - X^{n-i}Y^i
= \sum_{j=0}^{i-1} r_{i,j} X^{n-j}Y^j$$
with $r_{i,j} =N^{i-j} \vect{i}{j}$, which is not a zero divisor, 
respectively invertible, by assumption.
For $x = \sum_{i=0}^n a_i X^{n-i}Y^i$ we have
\begin{multline*} (t-1).x 
= \sum_{i=0}^n a_i \sum_{j=0}^{i-1}r_{i,j} X^{n-j}Y^j
= \sum_{j=0}^{n-1} X^{n-j}Y^j (\sum_{i=j+1}^n a_i r_{i,j})\\
= XY^{n-1} a_nr_{n,n-1} + X^2Y^{n-2} (a_n r_{n,n-2} + a_{n-1}r_{n-1,n-2}) + \dots.
\end{multline*}
If $(t-1).x = 0$, we conclude for $j = n-1$ that $a_n = 0$. 
Next, for $j = n-2$ it follows that $a_{n-1} = 0$,
and so on, until $a_1 = 0$. This proves the statement on
the $t$-invariants. The one on the $t'$-invariants follows from symmetry. 

(b) The claims on the coinvariants are proved in a very similar and
straightforward way. 

(c) and (d) As $\Gamma(N)$ contains the matrices $t$ and $t'$, this follows
from Parts~(a) and~(b).
\end{proof}

\begin{proposition}\label{dimheins}
Let $K$ be a field of characteristic~$0$ and $\Gamma \le \SL_2(\ZZ)$ be
a congruence subgroup of finite index~$\mu$ such that $\Gamma_y = \{1\}$ for all~$y \in \HH$
(e.g.\ $\Gamma = \Gamma_1(N)$ with $N \ge 4$). We can and do consider $\Gamma$ as a subgroup of $\PSL_2(\ZZ)$.

Then
$$ \dim_K \h^1(\Gamma,V_{k-2}(K)) = (k-1) \frac{\mu}{6} + \delta_{k,2}$$
and
$$ \dim_K \Hpar^1(\Gamma,V_{k-2}(K)) = (k-1) \frac{\mu}{6} - \nu_\infty + 2 \delta_{k,2},$$
where $\nu_\infty$ is the number of cusps of~$\Gamma$, i.e.\ the cardinality of $\Gamma \backslash \PP^1(\QQ)$,
and $\delta_{k,2} = \begin{cases}1 & \textnormal{if } k=2\\0 & \textnormal{otherwise.}\end{cases}$
\end{proposition}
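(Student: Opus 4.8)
The plan is to reduce the computation to the Mayer--Vietoris sequence \eqref{mayervietoris} for $G = \PSL_2(\ZZ)$ and the parabolic diagram of Proposition~\ref{leraypar}, and then to extract the two dimensions by Euler-characteristic (alternating-sum) counts. Throughout write $V = V_{k-2}(K)$, $M = \Coind_\Gamma^G(V) = \Hom_{K[\Gamma]}(K[G],V)$, and let $\mu$ denote the index of $\Gamma$ inside $\PSL_2(\ZZ)$. Since $\mu < \infty$ and $\dim_K V = k-1$, the module $M$ is finite-dimensional of dimension $\mu(k-1)$, so every module appearing below is finite-dimensional and alternating sums of $K$-dimensions along exact sequences vanish. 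The hypothesis $\Gamma_y = \{1\}$ for all $y \in \HH$ says exactly that $\Gamma$ is torsion-free; in particular all stabiliser orders are invertible in the characteristic-zero field $K$, so Corollary~\ref{corhzweinull} and Proposition~\ref{leraypar} apply.

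First I would compute $\dim_K \h^1(\Gamma,V)$. Combining Corollary~\ref{corhzweinull} with the Mayer--Vietoris sequence \eqref{mayervietoris} and Shapiro's Lemma (using $\h^1(\langle\sigma\rangle,M) = \h^1(\langle\tau\rangle,M) = 0$, valid because $2$ and $3$ are invertible in $K$), one obtains the exact sequence
$$0 \to V^\Gamma \to M^{\langle\sigma\rangle}\oplus M^{\langle\tau\rangle} \to M \to \h^1(\Gamma,V) \to 0,$$
where I have already used $M^G = \h^0(\Gamma,V) = V^\Gamma$. Mackey's formula in the form of Lemma~\ref{mackeypsl}(c) gives $M^{\langle\sigma\rangle} \cong \prod_{y} V^{\Gamma_y}$ with $y$ running over $\Gamma\backslash\PSL_2(\ZZ)/\langle\sigma\rangle$, and similarly for $\tau$. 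Since $\Gamma$ is torsion-free, each stabiliser $\Gamma_y = \Gamma \cap g\langle\sigma\rangle g^{-1}$ is trivial, so $V^{\Gamma_y} = V$; and since then $\langle\sigma\rangle$ acts freely on the right on $\Gamma\backslash\PSL_2(\ZZ)$, that double-coset set has exactly $\mu/2$ elements (in particular $6 \mid \mu$), and likewise $|\Gamma\backslash\PSL_2(\ZZ)/\langle\tau\rangle| = \mu/3$. Hence $\dim_K M^{\langle\sigma\rangle} = \tfrac{\mu}{2}(k-1)$ and $\dim_K M^{\langle\tau\rangle} = \tfrac{\mu}{3}(k-1)$, and the alternating sum of the displayed sequence yields
$$\dim_K \h^1(\Gamma,V) = \dim_K V^\Gamma + \mu(k-1)\bigl(1 - \tfrac12 - \tfrac13\bigr) = \dim_K V^\Gamma + (k-1)\frac{\mu}{6}.$$
Finally $\dim_K V^\Gamma = \delta_{k,2}$: for $k \ge 3$ it vanishes by Lemma~\ref{lemvknull}(c), and for $k = 2$ the module $V_0(K) = K$ carries the trivial action, so $V^\Gamma = K$. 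This is the first formula. (Alternatively: being torsion-free in the free product $\PSL_2(\ZZ) = \langle\sigma\rangle * \langle\tau\rangle$, $\Gamma$ is itself a free group, necessarily of rank $1 + \mu/6$, from which the formula also drops out via the two-term free resolution of $K$ over $K[\Gamma]$.)

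For the parabolic cohomology I would use the top row of Proposition~\ref{leraypar},
$$0 \to \Hpar^1(\Gamma,V) \to \h^1(\Gamma,V) \xrightarrow{\res} \prod_{g \in \Gamma\backslash\PSL_2(\ZZ)/\langle T\rangle} \h^1\bigl(\Gamma\cap\langle gTg^{-1}\rangle, V\bigr) \to V_\Gamma \to 0,$$
and take the alternating sum again. The indexing set has $\nu_\infty$ elements by the bijection $\Gamma\backslash\PSL_2(\ZZ)/\langle T\rangle \cong \Gamma\backslash\PP^1(\QQ)$. Each group $\Gamma\cap\langle gTg^{-1}\rangle$ is infinite cyclic, generated by a $\PSL_2(\ZZ)$-conjugate $gT^{w}g^{-1}$ of some power of $T$ ($w \ge 1$); by Exercise~\ref{exfree}(a) its $\h^1$ with values in $V$ is the module of coinvariants, and conjugating the situation of Lemma~\ref{lemvknull}(b) (with $N = w$, $n = k-2$, the case $k=2$ being trivial) shows this coinvariant space is $1$-dimensional. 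So the product has dimension $\nu_\infty$, while $\dim_K V_\Gamma = \delta_{k,2}$ by Lemma~\ref{lemvknull}(d) (and again the trivial case $k=2$). The alternating sum then gives
$$\dim_K \Hpar^1(\Gamma,V) = \dim_K\h^1(\Gamma,V) - \nu_\infty + \dim_K V_\Gamma = (k-1)\frac{\mu}{6} + \delta_{k,2} - \nu_\infty + \delta_{k,2},$$
which is the second formula.

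The computations with exact sequences are routine; the points requiring care are (i) the double-coset cardinalities $|\Gamma\backslash\PSL_2(\ZZ)/\langle\sigma\rangle| = \mu/2$ and $|\Gamma\backslash\PSL_2(\ZZ)/\langle\tau\rangle| = \mu/3$, which rest squarely on the torsion-freeness of $\Gamma$, and (ii) the conjugation argument identifying $\h^1(\Gamma\cap\langle gTg^{-1}\rangle, V_{k-2}(K))$ with a coinvariant module of the kind treated in Lemma~\ref{lemvknull}(b), together with keeping the exceptional weight-$2$ contributions $\delta_{k,2}$ correctly tracked throughout.
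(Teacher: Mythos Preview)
Your proof is correct and follows essentially the same approach as the paper's: the Mayer--Vietoris sequence \eqref{mayervietoris} together with Mackey's formula (Lemma~\ref{mackeypsl}) and Lemma~\ref{lemvknull} for the first formula, and the top row of Proposition~\ref{leraypar} with Lemma~\ref{lemvknull}(b),(d) for the second. The only cosmetic differences are that the paper computes $\dim_K M^{\PSL_2(\ZZ)}$ directly rather than identifying it with $V^\Gamma$ via Shapiro, and that your conjugation remark for $\h^1(\Gamma\cap\langle gTg^{-1}\rangle,V)$ is stated a bit more carefully than the paper's own phrasing.
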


\begin{proof}
Let $M = \Coind_\Gamma^{\PSL_2(\ZZ)} (V_{k-2}(K))$. This module has dimension
$(k-1)\mu$. From the Mayer-Vietoris exact sequence
$$0 \to M^{\PSL_2(\ZZ)} \to M^{\langle \sigma \rangle} \oplus M^{\langle \tau \rangle}
 \to M \to \h^1(\PSL_2(\ZZ),M) \to 0,$$
we obtain
$$ \dim \h^1(\Gamma,V_{k-2}(K)) = \dim M + \dim M^{\PSL_2(\ZZ)} 
- \dim \h^0 (\langle \sigma \rangle,M) - \dim \h^0 (\langle \tau \rangle,M).$$
Recall the left $\PSL_2(\ZZ)$-action on $\Hom_{K[\Gamma]}(K[\PSL_2(\ZZ)],V_{k-2}(K))$,
which is given by $(g.\phi)(h) = \phi(hg)$. It follows directly that
every function in the $K$-vector space $\Hom_{K[\Gamma]}(K[\PSL_2(\ZZ)],V_{k-2}(K))^{\PSL_2(\ZZ)}$
is constant and equal to its value at~$1$.
The $\Gamma$-invariance, however, imposes additionally
that this constant lies in $V_{k-2}(K)^\Gamma$.
Hence, by Lemma~\ref{lemvknull}, $\dim M^{\PSL_2(\ZZ)} = \delta_{k,2}$.
The term $\h^0 (\langle \sigma \rangle,M)$ is handled by Mackey's formula:
$$\dim \h^0 (\langle \sigma \rangle,M) 
= \sum_{x \in \Gamma \backslash \PSL_2(\ZZ).i} \dim V_{k-2}(K)^{\Gamma_x}
= (k-1) \#(\Gamma \backslash \PSL_2(\ZZ).i) = (k-1) \frac{\mu}{2},$$
since all $\Gamma_x$ are trivial by assumption and there are hence precisely
$\mu/2$ points in $Y_\Gamma$ lying over~$i$ in $Y_{\SL_2(\ZZ)}$.
By the same argument we get
$$\dim \h^0 (\langle \tau \rangle,M) = \frac{\mu}{3}.$$
Putting these together gives the first formula:
$$ \dim_K \h^1(\Gamma,V_{k-2}(K)) = (k-1) (\mu - \frac{\mu}{2} - \frac{\mu}{3}) + \delta_{k,2} =
(k-1)\frac{\mu}{6} + \delta_{k,2}.$$

The second formula can be read off from the diagram in Proposition~\ref{leraypar}.
It gives directly
\begin{multline*}
 \dim \Hpar^1(\Gamma,V_{k-2}(K)) =
 \dim \h^1(\Gamma,V_{k-2}(K)) + \dim V_{k-2}(K)_\Gamma \\ 
- \sum_{g \in \Gamma \backslash \PSL_2(\ZZ) / \langle T \rangle}
\dim \h^1(\Gamma \cap \langle g T g^{-1} \rangle, V_{k-2}(K)).
\end{multline*}
All the groups $\Gamma \cap \langle g T g^{-1} \rangle$ are of the form
$\langle T^n \rangle$ for some $n \ge 1$. Since they are cyclic, we have
$$\dim \h^1(\Gamma \cap \langle g T g^{-1} \rangle, V_{k-2}(K)) = 
\dim V_{k-2}(K)_{\langle T^n \rangle} = 1$$
by Lemma~\ref{lemvknull}. As the set $\Gamma \backslash \PSL_2(\ZZ) / \langle T \rangle$
is the set of cusps of~$\Gamma$, we conclude
$$ \sum_{g \in \Gamma \backslash \PSL_2(\ZZ) / \langle T \rangle}
\dim \h^1(\Gamma \cap \langle g T g^{-1} \rangle, V_{k-2}(K)) = \nu_\infty.$$
Moreover, also by Lemma~\ref{lemvknull}, $\dim V_{k-2}(K)_\Gamma = \delta_{k,2}$.
Putting everything together yields the formula
$$ \dim \Hpar^1(\Gamma,V_{k-2}(K)) = (k-1)\frac{\mu}{6} + 2 \delta_{k,2} - \nu_\infty,$$
as claimed.
\end{proof}

\begin{remark}\label{remdimcoh}
One can derive a formula for the dimension
even if $\Gamma$ is not torsion-free. One only needs to compute the dimensions 
$V_{k-2}(K)^{\langle \sigma \rangle}$ and
$V_{k-2}(K)^{\langle \tau \rangle}$ and to modify the above proof slightly.
\end{remark}

\subsection{Theoretical exercises}

\begin{exercise}\label{exfreegroup}
\begin{enumerate}[(a)]
\item
Verify that $G*H$ is a group.
\item
Prove the universal property represented by the commutative diagram
$$ \xymatrix@=0.5cm{
& P \\
G\ar@{^(->}^{\eta_G}[ur]\ar@{^(->}_{\iota_G}[dr] && H \ar@{^(->}_{\eta_H}[ul]\ar@{^(->}^{\iota_H}[dl]\\
& G * H.\ar@{=>}^\phi[uu] & }$$
More precisely, let $\iota_G: G \to G*H$ and $\iota_H:H \to G*H$ be the natural inclusions.
Let $P$ be any group together with group injections
$\eta_G : G \to P$ and $\eta_H:H \to P$, then there is a unique
group homomorphism $\phi: G*H \to P$ such that
$\eta_G = \phi \circ \iota_G$ and $\eta_H = \phi \circ \iota_H$.
\end{enumerate}
\end{exercise}

\begin{exercise}\label{exfiniteorder}
\begin{enumerate}[(a)]
\item Let $M \in \SL_n(\ZZ)$ be an element of finite order~$m$.
Determine the primes that may divide~$m$.
[Hint: Look at the characteristic polynomial of $M$.]
\item Determine all conjugacy classes of elements of finite
order in $\PSL_2(\ZZ)$.
\end{enumerate}
\end{exercise}

\begin{exercise}\label{exfotwo}
\begin{enumerate}[(a)]
\item Determine the $N \ge 1$ for which $\Gamma_1(N)$
has no element of finite order apart from the identity.
[Hint: You should get $N \ge 4$.]
\item Determine the $N \ge 1$ for which $\Gamma_0(N)$
has no element of order~$4$. Also determine the cases in which
there is no element of order~$6$.
\end{enumerate}
\end{exercise}

\begin{exercise}\label{exmv}
\begin{enumerate}[(a)]
\item Prove that the explicit description of~$f_m$ in the Mayer-Vietoris sequence
(Equation~\ref{mayervietoris}) satisfies the properties required for the $0$-th connecting homomorphism
in Definition~\ref{defi:deltafunctor}.

Hint: Prove that if $f_m$ is a boundary, then $m \in M^{\langle \sigma \rangle} + M^{\langle \tau \rangle}$.
Moreover, prove that a $1$-cocycle in $\h^1(\PSL_2(\ZZ),M)$ which becomes a coboundary when restricted to either 
$\langle \sigma \rangle$ or $\langle \tau \rangle$ can be changed by a coboundary to be of the form~$f_m$ for
some $m \in M$.

\item Let $0 \to A \to B \to C \to 0$ be an exact sequence of $G$-modules
for some group~$G$.
Let $c \in C^G$ and write it as a class $b+A \in B/A \cong C$. As it is $G$-invariant,
we have $0=(1-g)c=(1-g)(b+A)$, whence $(1-g)b \in A$ for all $g \in G$.
Define the $1$-cocycle $\delta^0(c)$ as the map $G \to A$ sending $g$ to $(1-g)b\in A$.

Prove that $\delta^0$ satisfies the properties required for the $0$-th connecting homomorphism
in Definition~\ref{defi:deltafunctor}.

Note that the connecting homomorphisms are not unique (one can, e.g.\ replace them by their negatives).

\item As an alternative approach to~(a), you may apply~(b) to the exact sequence from which the Mayer-Vietoris
sequence is derived as the associated long cohomology sequence in Proposition~\ref{mv}.

\end{enumerate}
\end{exercise}

\begin{exercise}\label{exparcompat}
Verify the commutativity of the diagram in Proposition~\ref{leraypar}.
\end{exercise}

\subsection{Computer exercises}

\begin{cexercise}\label{cexpeins}
Let $N \ge 1$. Compute a list of the elements of $\PP^1(\ZZ/N\ZZ)$.
Compute a list of the cusps of $\Gamma_0(N)$ and $\Gamma_1(N)$
(cf.\ \cite{Stein}, p.~60). 
I recommend to use the decomposition of $\PP^1(\ZZ/N\ZZ)$ into $\PP^1(\ZZ/p^n\ZZ)$.
\end{cexercise}

\begin{cexercise}\label{cexdirichlet}
Let $K$ be some field. Let $\chi: (\ZZ/N\ZZ)^\times \to K^\times$ be
a Dirichlet character of modulus~$N$. For given $N$ and $K$, compute the
group of all Dirichlet characters. Every Dirichlet character should be
implemented as a map $\phi: \ZZ \to K^\times$ such that $\phi(a) = 0$ 
for all $a \in \ZZ$ with $(a,N) \neq 1$ and $\phi(a) = \chi(a \mod N)$
otherwise.
\end{cexercise}

\section{Modular symbols and Manin symbols}

\subsection{Theory: Manin symbols}

This section is an extended version of a specialisation of parts of my article~\cite{MS} to the group $\PSL_2(\ZZ)$.
Manin symbols provide an alternative description of modular symbols. 
See Definition~\ref{defi:manin} below.
We shall use this description for the comparison with
group cohomology and for implementing the modular symbols formalism.
We stay in the general setting over a ring~$R$.

\begin{proposition}\label{hparnull}
The sequence of $R$-modules
$$0 \to R[\PSL_2(\ZZ)]N_\sigma + R[\PSL_2(\ZZ)]N_\tau 
 \to R[\PSL_2(\ZZ)] \xrightarrow{g \,\mapsto\, g(1-\sigma)\infty} R[\PP^1(\QQ)] 
\xrightarrow{g\infty \,\mapsto\, 1} R \to 0$$
is exact. Here we are considering $R[\PSL_2(\ZZ)]$ as a right $R[\PSL_2(\ZZ)]$-module.
\end{proposition}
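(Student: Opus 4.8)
The plan is to verify exactness of this four-term sequence at each of the three interior spots, working from right to left so that the harder injectivity claim comes last. Exactness at $R$ is trivial: the map $R[\PP^1(\QQ)] \to R$ sending $g\infty \mapsto 1$ is the augmentation on the transitive $\PSL_2(\ZZ)$-set $\PP^1(\QQ)$, hence is surjective. For exactness at $R[\PP^1(\QQ)]$, first note that $g(1-\sigma)\infty = g\infty - g\sigma\infty = g\infty - g\cdot 0$, so the image of $R[\PSL_2(\ZZ)]$ is spanned by all differences $\{b\} - \{a\}$ with $a,b \in \PP^1(\QQ)$; such differences visibly lie in the kernel of the augmentation, and conversely they span it (since $\PP^1(\QQ)$ is a single orbit, any augmentation-zero element $\sum r_i\{x_i\}$ with $\sum r_i = 0$ can be rewritten as a combination of differences). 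So the middle-right spot is exact. This part is essentially a bookkeeping exercise with the already-recorded fact that the cusps form one $\PSL_2(\ZZ)$-orbit.

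The core of the proposition is exactness at $R[\PSL_2(\ZZ)]$, i.e.\ that the kernel of $g \mapsto g(1-\sigma)\infty$ is exactly the right ideal $R[\PSL_2(\ZZ)]N_\sigma + R[\PSL_2(\ZZ)]N_\tau$. Here I would lean on the free-product structure $\PSL_2(\ZZ) = \langle\sigma\rangle * \langle\tau\rangle$ from Theorem~\ref{thm:freepr}, together with the identification $\PP^1(\QQ) \cong \PSL_2(\ZZ)/\langle T\rangle$ where $T = \tau\sigma$. One inclusion is a direct computation: $N_\sigma(1-\sigma)\infty = (1+\sigma)(1-\sigma)\infty = (1-\sigma^2)\infty = 0$ since $\sigma^2 = 1$; and $N_\tau(1-\sigma)\infty = (1+\tau+\tau^2)(\infty - 0)$, which one checks is zero because $\tau$ fixes the cusp $\zeta_6$... more precisely, one computes $(1+\tau+\tau^2)\infty$ and $(1+\tau+\tau^2)\cdot 0$ as elements of $R[\PP^1(\QQ)]$ and sees they agree (both equal the sum over the three cusps in a $\langle\tau\rangle$-orbit, by a short matrix calculation). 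For the reverse inclusion --- that the kernel is no bigger --- the cleanest route is to observe that the cokernel of $\big(R[\PSL_2(\ZZ)]N_\sigma + R[\PSL_2(\ZZ)]N_\tau\big) \hookrightarrow R[\PSL_2(\ZZ)]$ maps onto the augmentation kernel $\ker(R[\PP^1(\QQ)] \to R)$, and then to show this map is injective by a dimension/rank count or, better, by exhibiting an explicit $R$-module splitting using a normal form for words in the free product.

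The main obstacle I anticipate is precisely this reverse inclusion: controlling the kernel requires knowing that $R[\PSL_2(\ZZ)]/(R[\PSL_2(\ZZ)]N_\sigma + R[\PSL_2(\ZZ)]N_\tau)$ is "not too large", and the honest way to see this is to use the resolutions of the cyclic groups $\langle\sigma\rangle$ and $\langle\tau\rangle$ (equations~\eqref{eq:res-cyclic-finite}) assembled via the Mayer--Vietoris sequence of Proposition~\ref{mvbieri}. Indeed, Proposition~\ref{mvbieri} gives the exact sequence $0 \to R[\PSL_2(\ZZ)] \to R[\PSL_2(\ZZ)/\langle\sigma\rangle] \oplus R[\PSL_2(\ZZ)/\langle\tau\rangle] \to R \to 0$, and splicing in the standard presentations $R[G/\langle\sigma\rangle] \cong R[G]/R[G]N_\sigma$ (wait --- rather $R[G]N_\sigma \cong$ image, via $g \mapsto g\langle\sigma\rangle$ having kernel generated appropriately) realizes the quotient $R[G]/(R[G]N_\sigma + R[G]N_\tau)$ as an extension built from $R[\PP^1(\QQ)]$ and $R$. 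So the strategy is: build the claimed exact sequence as a consequence of Proposition~\ref{mvbieri} plus the elementary cyclic-group resolutions, identifying $R[G]/R[G]N_\sigma$ with the augmentation ideal $I_{\langle\sigma\rangle}\backslash$-type module and tracking how $(1-\sigma)$ and $(1-\tau)$ act. Once the diagram is set up, exactness falls out of the long exact sequence, and the explicit description of the first map as $g \mapsto g(1-\sigma)\infty$ is then just a matter of chasing the identification $\PP^1(\QQ) = G/\langle T\rangle$ through the connecting maps.
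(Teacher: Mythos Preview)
Your plan lines up with the paper's proof in its overall architecture: both reduce the hard step (exactness at $R[\PSL_2(\ZZ)]$) to the free-product structure via Proposition~\ref{mvbieri} combined with the cyclic-group resolutions. Your treatment of the two right-hand spots is fine, and your verification of the forward inclusion $R[G]N_\sigma + R[G]N_\tau \subseteq \ker$ is correct (the $\zeta_6$ aside is a distraction --- $\zeta_6$ lives in~$\HH$, not among the cusps --- but your subsequent orbit computation $(1+\tau+\tau^2)\infty = (1+\tau+\tau^2)\cdot 0$ in $R[\PP^1(\QQ)]$ is the right one).

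Where your sketch is thin is exactly where the paper supplies two concrete ingredients you have not isolated. First, you need the equalities
\[
R[G]N_\sigma = \ker_{R[G]}(1-\sigma), \qquad R[G](1-\sigma) = \ker_{R[G]} N_\sigma
\]
(and likewise for~$\tau$). These are \emph{not} automatic for a general $\langle\sigma\rangle$-module over an arbitrary ring~$R$: they say precisely that $\h^i(\langle\sigma\rangle, R[G]) = 0$ for $i\ge 1$. The paper obtains this by writing $R[G] \cong \Coind_1^{\langle\sigma\rangle}(R[\langle\sigma\rangle\backslash G])$ and invoking Shapiro's lemma. Your parenthetical ``$R[G/\langle\sigma\rangle] \cong R[G]/R[G]N_\sigma$ (wait --- rather\dots)'' is exactly the spot where this identification is needed and where you have not nailed it down; note that $R[G/\langle\sigma\rangle]$ is $R[G]/R[G](1-\sigma)$, and relating this to $N_\sigma$ is what the Shapiro step buys you.

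Second, once those identities are in hand, the paper's reverse-inclusion argument is a short explicit manipulation rather than a rank count or abstract diagram chase: from Proposition~\ref{mvbieri} one extracts $R[G](1-\sigma)\cap R[G](1-\tau)=0$; then given $x(1-\sigma)=y(1-T)$ one rewrites $1-T = (1-\tau) - T(1-\sigma)$ (using $T=\tau\sigma$) to force $y(1-\tau)\in R[G](1-\sigma)\cap R[G](1-\tau)=0$, whence $y\in R[G]N_\tau$; a further line with $N_\tau T = N_\tau\sigma$ then gives $x-y\in R[G]N_\sigma$. Your proposed ``dimension/rank count'' would not work over a general ring~$R$, and the ``explicit $R$-module splitting via normal forms'' is viable but considerably more laborious than this direct computation.
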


\begin{proof}
Let $H$ be a finite subgroup of a group~$G$ and let
$H \backslash G = \{g_i \;|\; i \in I\}$ stand for a fixed system of representatives of the cosets.
We write $R[H \backslash G]$ for the free $R$-module on the set of representatives.
The map
$$ \Hom_R(R[H], R[H \backslash G]) \to R[G],\;\;\; f \mapsto \sum_{h \in H} h. f(h)$$
is an isomorphism.
Indeed, suppose that for $f \in \Hom_R(R[H], R[H \backslash G])$ we have
$$ 0 = \sum_{h \in H} h.(f(h)) = \sum_{h \in H} h.(\sum_{i \in I} a_{h,i} g_i)
=  \sum_{h \in H} (\sum_{i \in I} a_{h,i} hg_i), $$
then $a_{h,i}=0$ for all $h\in H$ and all $i\in I$ (since the elements $hg_i$ are all distinct),
whence $f=0$. For the surjectivity, note that all elements in $R[G]$ can be written as
(finite) sums of the form $\sum_{h \in H} \sum_{i \in I} a_{h,i} hg_i$ because any element in $G$
is of the form $hg_i$ for a unique $h\in H$ and a unique $i \in I$.

This yields via Shapiro's lemma that
$$H^i(\langle \sigma \rangle, R[\PSL_2(\ZZ)]) = 
H^i(\langle 1 \rangle, R[\langle \sigma \rangle \backslash \PSL_2(\ZZ)]) = 0$$
for all $i \ge 1$, and similarly for $\langle \tau \rangle$.
The resolution for a finite cyclic group \eqref{eq:res-cyclic-finite} gives
\begin{align*}
 R[\PSL_2(\ZZ)]N_\sigma &= \ker_{R[\PSL_2(\ZZ)]} (1-\sigma) 
                             = R[\PSL_2(\ZZ)]^{\langle \sigma \rangle},\\
 R[\PSL_2(\ZZ)]N_\tau &= \ker_{R[\PSL_2(\ZZ)]} (1-\tau)
                             = R[\PSL_2(\ZZ)]^{\langle \tau \rangle}, \\
 R[\PSL_2(\ZZ)](1-\sigma) &= \ker_{R[\PSL_2(\ZZ)]} N_\sigma \;\;\; \text{ and } \\
 R[\PSL_2(\ZZ)](1-\tau) &= \ker_{R[\PSL_2(\ZZ)]} N_\tau.
\end{align*}
By Proposition~\ref{mvbieri}, we have the exact sequence
$$ 0 \to R[\PSL_2(\ZZ)] \to R[\PSL_2(\ZZ)]_{\langle \sigma \rangle} \oplus R[\PSL_2(\ZZ)]_{\langle \tau \rangle}
 \to R \to 0.$$
The injectivity of the first map in the exact sequence (which we recall
is a consequence of $\PSL_2(\ZZ) = \langle \sigma \rangle * \langle \tau \rangle$) leads to
$$ R[\PSL_2(\ZZ)](1-\sigma) \cap R[\PSL_2(\ZZ)](1-\tau) = 0.$$

Sending $g$ to $g\infty$ yields a bijection between
$R[\PSL_2(\ZZ)]/R[\PSL_2(\ZZ)](1-T)$ and $R[\PP^1(\QQ)]$.
In order to prove the exactness at $R[\PSL_2(\ZZ)]$,
we show that the equality
$ x(1-\sigma) = y (1-T)$ for $x,y \in R[\PSL_2(\ZZ)]$ yields that
$x$ belongs to $R[\PSL_2(\ZZ)]^{\langle \sigma \rangle} + R[\PSL_2(\ZZ)]^{\langle \tau \rangle}$.

Note that $x(1-\sigma) = y(1-T) = y(1-\tau) -yT(1-\sigma)$ because of the equality $\tau = T\sigma$.
This yields $x(1-\sigma) +yT(1-\sigma) = y(1-\tau)$. This expression, however,
is equal to zero. Hence, there exists a $z \in R[\PSL_2(\ZZ)]$ satisfying $y = zN_\tau$.
We have $N_\tau T = N_\tau \sigma$ because of $T = \tau \sigma$.
Consequently, we get
$$ y(1-T) = z N_\tau(1-T) = z N_\tau (1-\sigma) = y(1-\sigma).$$
The equality $x(1-\sigma) = y(1-\sigma)$ implies that 
$x-y$ belongs to $R[\PSL_2(\ZZ)]^{\langle \sigma \rangle}$. 
Since $y \in R[\PSL_2(\ZZ)]^{\langle \tau \rangle}$,
we see get that $x = (x-y) + y$ lies in $R[\PSL_2(\ZZ)]^{\langle \sigma \rangle} + R[\PSL_2(\ZZ)]^{\langle \tau \rangle}$,
as required.

It remains to prove the exactness at $R[\PP^1(\QQ)]$.
The kernel of $R[\PSL_2(\ZZ)] \xrightarrow{g \mapsto 1} R$ is the augmentation ideal,
which is generated by all elements of the $1-g$ for $g \in \PSL_2(\ZZ)$. Noticing further that we can write
$$ 1-\alpha\beta = \alpha.(1-\beta)+(1-\alpha)$$
for $\alpha,\beta \in \PSL_2(\ZZ)$, the fact that $\sigma$ and $T=\tau\sigma$ generate~$\PSL_2(\ZZ)$ implies
that the kernel of $R[\PSL_2(\ZZ)] \xrightarrow{g \mapsto 1} R$ equals
$$R[\PSL_2(\ZZ)](1-\sigma) + R[\PSL_2(\ZZ)](1-T)$$ 
inside $R[\PSL_2(\ZZ)]$
It suffices to take the quotient by $R[\PSL_2(\ZZ)](1-T)$ to obtain the desired exactness.
\end{proof}

\begin{lemma}\label{mrlem}
The sequence of $R$-modules
$$ 0 \to \cM_R \xrightarrow{\{\alpha,\beta\} \mapsto \beta - \alpha} R[\PP^1(\QQ)] 
\xrightarrow{\alpha \mapsto 1} R \to 0$$
is exact.
\end{lemma}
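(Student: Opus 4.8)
The plan is to verify the three things an exact sequence requires: surjectivity of the augmentation $R[\PP^1(\QQ)] \to R$, equality of the image of $f\colon \{\alpha,\beta\} \mapsto \{\beta\} - \{\alpha\}$ with the kernel of the augmentation, and injectivity of $f$. The first two are essentially bookkeeping; the only point with content is injectivity, and the hard part will be seeing that the defining relations of $\cM_R$ collapse it to a copy of the augmentation ideal and nothing further. I would handle this by producing an explicit two-sided inverse.

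First I would record the easy points. The augmentation $\{\alpha\} \mapsto 1$ is surjective since $\{\infty\} \mapsto 1$. The map $f$ is well defined because the generating relations of $\cM_R$ go to $0$: indeed $\{\alpha,\alpha\} \mapsto \{\alpha\}-\{\alpha\} = 0$ and $\{\alpha,\beta\}+\{\beta,\gamma\}+\{\gamma,\alpha\} \mapsto (\{\beta\}-\{\alpha\}) + (\{\gamma\}-\{\beta\}) + (\{\alpha\}-\{\gamma\}) = 0$. Its image obviously lies in the augmentation ideal $I := \ker\big(R[\PP^1(\QQ)] \to R\big)$. Conversely, $I$ is the free $R$-module on the differences $\{\beta\} - \{\infty\}$ with $\beta \in \PP^1(\QQ) \setminus \{\infty\}$ (the standard description of the augmentation ideal of a free module on a pointed set), and each such difference equals $f(\{\infty,\beta\})$; hence $\Image f = I$, which is exactness at $R[\PP^1(\QQ)]$.

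It then remains to show $f$ is injective. Working inside $\cM_R$, the triangle relation with third vertex $\infty$ gives first $\{\alpha,\infty\} = -\{\infty,\alpha\}$ (apply it to the triple $\alpha,\infty,\alpha$ and use $\{\alpha,\alpha\}=0$), and then $\{\alpha,\beta\} = \{\infty,\beta\} - \{\infty,\alpha\}$ for all $\alpha,\beta \in \PP^1(\QQ)$ (apply it to the triple $\alpha,\beta,\infty$). Now define $g\colon I \to \cM_R$ on the basis of $I$ by $g(\{\beta\}-\{\infty\}) := \{\infty,\beta\}$; this is well defined precisely because $I$ is free on those elements. Then $f\circ g = \mathrm{id}_I$ is immediate, while $g\circ f = \mathrm{id}_{\cM_R}$ follows from the displayed identity, since $g(f(\{\alpha,\beta\})) = g\big((\{\beta\}-\{\infty\}) - (\{\alpha\}-\{\infty\})\big) = \{\infty,\beta\} - \{\infty,\alpha\} = \{\alpha,\beta\}$. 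Hence $f$ is an isomorphism of $\cM_R$ onto $I$, which simultaneously yields injectivity and re-proves exactness in the middle, and the proof is complete. The only delicate step, the injectivity, is thus dispatched by the explicit inverse $g$, with no case analysis beyond the two elementary manipulations of triangle relations above.
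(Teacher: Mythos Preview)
Your proof is correct and follows essentially the same approach as the paper: both reduce to the observation that the triangle relation forces $\{\alpha,\beta\} = \{\infty,\beta\} - \{\infty,\alpha\}$, so that $\cM_R$ is spanned by the symbols $\{\infty,\alpha\}$, and then exploit the freeness of the augmentation ideal on the elements $\{\alpha\}-\{\infty\}$. The only difference is packaging: the paper writes an arbitrary element of $\cM_R$ in the normal form $\sum_{\alpha\neq\infty} r_\alpha\{\infty,\alpha\}$ and reads off injectivity from its image, whereas you encode the same computation as an explicit two-sided inverse $g$.
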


\begin{proof}
Note that, using the relations defining~$\cM_R$, any element in $\cM_R$ can be written
$\sum_{\alpha \neq \infty} r_\alpha \{\infty, \alpha\}$ with $r_\alpha \in R$.
This element is mapped to $\sum_{\alpha \neq \infty} r_\alpha \alpha - (\sum_{\alpha \neq \infty} r_\alpha) \infty$.
If this expression equals zero, all coefficients $r_\alpha$ have to be zero.
This shows the injectivity of the first map.

Let $\sum_\alpha r_\alpha \alpha \in R[\PP^1(\QQ)]$ be an element in the kernel
of the second map. Then $\sum_\alpha r_\alpha = 0$, so that we can write
$$\sum_\alpha r_\alpha \alpha = \sum_{\alpha \neq \infty} r_\alpha \alpha - 
(\sum_{\alpha \neq \infty} r_\alpha) \infty$$
to obtain an element in the image of the first map.
\end{proof}

\begin{proposition}\label{propker}
The homomorphism of $R$-modules
$$ R[\PSL_2(\ZZ)] \xrightarrow{\phi} \cM_R,\;\;\;
g \mapsto \{g.0,g.\infty\}$$
is surjective with kernel
$R[\PSL_2(\ZZ)]N_\sigma + R[\PSL_2(\ZZ)]N_\tau$. 
\end{proposition}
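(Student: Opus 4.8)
The strategy is to combine the two exact sequences already established, namely Proposition~\ref{hparnull} and Lemma~\ref{mrlem}, by comparing them term by term. From Proposition~\ref{hparnull} we have the exact sequence
$$0 \to R[\PSL_2(\ZZ)]N_\sigma + R[\PSL_2(\ZZ)]N_\tau
 \to R[\PSL_2(\ZZ)] \xrightarrow{g \,\mapsto\, g(1-\sigma)\infty} R[\PP^1(\QQ)]
\xrightarrow{g\infty \,\mapsto\, 1} R \to 0,$$
and from Lemma~\ref{mrlem} we have
$$ 0 \to \cM_R \xrightarrow{\{\alpha,\beta\} \mapsto \beta - \alpha} R[\PP^1(\QQ)]
\xrightarrow{\alpha \mapsto 1} R \to 0.$$
The key observation is that the map $g \mapsto g(1-\sigma)\infty = g\infty - g\sigma\infty = g\infty - g.0$ in the first sequence has image inside the kernel of $R[\PP^1(\QQ)] \to R$, which by Lemma~\ref{mrlem} is exactly the image of $\cM_R$ under $\{\alpha,\beta\} \mapsto \beta - \alpha$. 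Since that map from $\cM_R$ is injective, it identifies $\cM_R$ with $\ker(R[\PP^1(\QQ)] \to R)$, and hence the composite
$$ R[\PSL_2(\ZZ)] \xrightarrow{g \,\mapsto\, g(1-\sigma)\infty} \ker\big(R[\PP^1(\QQ)] \to R\big) \xrightarrow{\;\sim\;} \cM_R $$
is precisely a map $R[\PSL_2(\ZZ)] \to \cM_R$.

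First I would check that this composite is exactly the map $\phi$ of the statement, i.e.\ that $g \mapsto g(1-\sigma)\infty$ followed by the inverse of $\{\alpha,\beta\} \mapsto \beta-\alpha$ sends $g$ to $\{g.0, g.\infty\}$. Indeed $g(1-\sigma)\infty = g\infty - g\sigma\infty$, and since $\sigma\infty = 0$ this equals $g\infty - g.0$, which is the image of $\{g.0, g.\infty\}$ under $\{\alpha,\beta\}\mapsto\beta-\alpha$; so the identification is immediate.

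Next, surjectivity of $\phi$ follows because $R[\PSL_2(\ZZ)] \to \ker(R[\PP^1(\QQ)] \to R)$ is surjective by the exactness at $R[\PP^1(\QQ)]$ in Proposition~\ref{hparnull} (the image of $R[\PSL_2(\ZZ)]$ is exactly the kernel of the map to $R$), and the identification with $\cM_R$ is an isomorphism. Finally, for the kernel: since $\{\alpha,\beta\}\mapsto\beta-\alpha$ is injective, $\ker(\phi)$ equals the kernel of $g \mapsto g(1-\sigma)\infty$ as a map into $R[\PP^1(\QQ)]$, which by the exactness at $R[\PSL_2(\ZZ)]$ in Proposition~\ref{hparnull} is exactly $R[\PSL_2(\ZZ)]N_\sigma + R[\PSL_2(\ZZ)]N_\tau$.

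I do not expect any serious obstacle here: the entire content has been front-loaded into Proposition~\ref{hparnull} and Lemma~\ref{mrlem}. The only point requiring a moment's care is the bookkeeping with the identification $\cM_R \cong \ker(R[\PP^1(\QQ)] \to R)$ and verifying that the diagram
$$\xymatrix@=1cm{
R[\PSL_2(\ZZ)] \ar@{->}[r]^{\phi} \ar@{->}[dr]_{g \mapsto g(1-\sigma)\infty} & \cM_R \ar@{^(->}[d]^{\{\alpha,\beta\}\mapsto\beta-\alpha} \\
& R[\PP^1(\QQ)]
}$$
commutes, which it does by the one-line computation above. With that in hand, surjectivity and the kernel computation both transfer directly from Proposition~\ref{hparnull}.
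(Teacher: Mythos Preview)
Your proposal is correct and takes essentially the same approach as the paper, which simply states that the result follows from Proposition~\ref{hparnull} and Lemma~\ref{mrlem}. You have carefully unpacked exactly what that one-line citation means: identify $\cM_R$ with $\ker(R[\PP^1(\QQ)] \to R)$ via Lemma~\ref{mrlem}, check that under this identification the map $g \mapsto g(1-\sigma)\infty$ becomes $\phi$, and then read off surjectivity and the kernel from the exactness in Proposition~\ref{hparnull}.
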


\begin{proof}
This follows from Proposition~\ref{hparnull} and Lemma~\ref{mrlem}.
\end{proof}

We have now provided all the input required to prove the description of modular symbols in terms of
Manin symbols. For this we need the notion of an induced module. In homology
it plays the role that the coinduced module plays in cohomology.

\begin{definition}
Let $R$ be a ring, $G$ a group, $H \le G$ a subgroup and $V$ a left
$R[H]$-module. The {\em induced module} of $V$ from $H$ to~$G$ is defined as
$$ \Ind_H^G (V) := R[G] \otimes_{R[H]} V,$$
where we view $R[G]$ as a right $R[H]$-module via the natural action.
The induced module is a left $R[G]$-module via the natural left
action of $G$ on~$R[G]$.
\end{definition}

In case of $H$ having a finite index in~$G$
(as in our standard example $\Gamma_1(N) \le \PSL_2(\ZZ)$), the induced module is
isomorphic to the coinduced one:

\begin{lemma}\label{indcoind}
Let $R$ be a ring, $G$ a group, $H \le G$ a subgroup of finite index and $V$ a left
$R[H]$-module. 
\begin{enumerate}[(a)]
\item $\Ind_H^G(V)$ and $\Coind_H^G(V)$ are isomorphic as left $R[G]$-modules.
\item Equip $(R[G] \otimes_R V)$ with the diagonal left $H$-action
$h.(g \otimes v) = h g \otimes h.v$ and the right
$G$-action $(g\otimes v).\tilde{g} = g\tilde{g} \otimes v$.
Consider the induced module $\Ind_H^G (V)$ as a
right $R[G]$-module by inverting the left action in the definition.
Then
$$ \Ind_H^G(V) \to (R[G] \otimes_R V)_H,\;\;\;
   g\otimes v \mapsto g^{-1} \otimes v$$
is an isomorphism of right $R[G]$-modules.
\end{enumerate}
\end{lemma}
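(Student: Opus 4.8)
The plan is to prove the two parts of Lemma~\ref{indcoind} separately, deriving~(a) from~(b).

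For part~(b), I would first check that the two maps
$$ \Phi: \Ind_H^G(V) = R[G] \otimes_{R[H]} V \to (R[G] \otimes_R V)_H, \;\;\; g \otimes v \mapsto g^{-1} \otimes v$$
and
$$ \Psi: (R[G] \otimes_R V)_H \to \Ind_H^G(V), \;\;\; g \otimes v \mapsto g^{-1} \otimes v$$
are well defined. For $\Phi$, one needs that $gh \otimes v$ and $g \otimes h.v$ (for $h \in H$) have the same image: indeed $(gh)^{-1} \otimes v = h^{-1}g^{-1} \otimes v$ and applying the element $h \in H$ of the diagonal left $H$-action gives $h.(h^{-1}g^{-1} \otimes v) = g^{-1} \otimes h.v$, so both represent the same class in the coinvariants. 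For $\Psi$, one needs that the $H$-coinvariance relation is respected, i.e.\ that $g \otimes v$ and $h.(g \otimes v) = hg \otimes h.v$ map to the same element of $R[G] \otimes_{R[H]} V$; this reads $g^{-1} \otimes v = (hg)^{-1} \otimes h.v = g^{-1}h^{-1} \otimes h.v$, which holds by the defining relation of the tensor product over $R[H]$. Then $\Phi$ and $\Psi$ are visibly mutually inverse, and one checks $\Phi$ is right $R[G]$-linear: the right $G$-action on $\Ind_H^G(V)$ was defined by inverting the left action, so $(g\otimes v).\tilde g = \tilde g^{-1} g \otimes v$, which maps under $\Phi$ to $g^{-1}\tilde g \otimes v = (g^{-1}\otimes v).\tilde g$, matching the right $G$-action on $R[G]\otimes_R V$. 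This proves~(b).

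For part~(a), I would produce an explicit $R[G]$-isomorphism $\Coind_H^G(V) = \Hom_{R[H]}(R[G],V) \to \Ind_H^G(V) = R[G]\otimes_{R[H]} V$. Fix a system $\{g_i\}_{i \in I}$ of representatives for $H\backslash G$, with $I$ finite. The map sends $f \in \Hom_{R[H]}(R[G],V)$ to $\sum_{i \in I} g_i^{-1} \otimes f(g_i)$; one checks this is independent of the choice of representatives (replacing $g_i$ by $h g_i$ changes the summand $g_i^{-1}\otimes f(g_i)$ to $g_i^{-1}h^{-1} \otimes f(hg_i) = g_i^{-1}h^{-1}\otimes h.f(g_i) = g_i^{-1}\otimes f(g_i)$ using $R[H]$-linearity of $f$ and the tensor relation) and is $R[G]$-linear and bijective by the standard argument (its inverse sends $g\otimes v$ to the unique $R[H]$-linear $f$ supported on $Hg^{-1}$ with $f(g^{-1}) = v$). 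In fact this is a special case of the standard fact that $\Ind$ and $\Coind$ agree for finite-index subgroups; alternatively one can combine~(b) with the dual computation showing $(R[G]\otimes_R V)_H \cong \Coind_H^G(V)$, but writing the isomorphism down directly is cleanest.

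The routine but slightly delicate point — the ``main obstacle'' — is bookkeeping the three different actions in part~(b) consistently: $R[G]$ carries a left $G$-action (used to form $\Ind$ over $R[H]$), the module $R[G]\otimes_R V$ carries simultaneously a \emph{diagonal left} $H$-action and a \emph{right} $G$-action, and $\Ind_H^G(V)$ is being viewed as a right $R[G]$-module via inversion of its native left action. One must be careful that left and right actions do not get conflated when verifying well-definedness and equivariance; once the conventions are pinned down, each verification is a one-line manipulation of the tensor relation $gh \otimes v = g \otimes h.v$. No deep input is needed — the lemma is purely formal once Shapiro-style representative arguments are set up, so the proof is short.
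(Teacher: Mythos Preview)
Your proposal is correct. The paper leaves this lemma as Exercise~\ref{exindcoind} without giving a proof, and your argument is exactly the standard verification one expects: explicit mutual inverses for~(b) with careful tracking of the three actions, and the classical representative-sum map $f \mapsto \sum_i g_i^{-1} \otimes f(g_i)$ for~(a). One cosmetic remark: your opening sentence promises to derive~(a) from~(b), but you then (rightly) prove~(a) directly; you may want to adjust the wording of the plan to match.
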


\begin{proof}
Exercise~\ref{exindcoind}.
\end{proof}

\begin{definition}\label{defi:manin}
Let $\Gamma \subseteq \PSL_2(\ZZ)$ be a finite index subgroup, $V$ a left $R[\Gamma]$-module and consider
$M = \Ind_\Gamma^{\PSL_2(\ZZ)} (V)$, which we identify with the
right $R[\PSL_2(\ZZ)]$-module
$(R[\PSL_2(\ZZ)] \otimes_R V)_\Gamma$ as in Lemma~\ref{indcoind}~(b).

Elements in $M / (M N_\sigma + M N_\tau)$ are called {\em Manin symbols} over~$R$ (for the subgroup $\Gamma \subseteq \PSL_2(\ZZ)$ and the
left $R[\Gamma]$-module~$V$).
\end{definition}

\begin{theorem}\label{ManinSymbols}
In the setting of Definition~\ref{defi:manin}, the following statements hold:
\begin{enumerate}[(a)]
\item The homomorphism $\phi$ from Proposition~\ref{propker} induces the
exact sequence of $R$-modules
$$ 0 \to M N_\sigma + M N_\tau \to M \to \cM_R(\Gamma,V) \to 0,$$
and the homomorphism $M \to \cM_R(\Gamma,V)$ is given by 
$g \otimes v \mapsto \{g.0, g.\infty\} \otimes v$.

In other words, this map induces an isomorphism between Manin symbols over~$R$ 
(for the subgroup $\Gamma \subseteq \PSL_2(\ZZ)$ and the left $R[\Gamma]$-module~$V$)
and the modular symbols module $\cM_R(\Gamma,V)$.
\item The homomorphism $R[\PSL_2(\ZZ)] \to R[\PP^1(\QQ)]$ sending $g$ to $g.\infty$
induces the exact sequence of $R$-modules
$$ 0 \to M(1-T) \to M \to \cB_R(\Gamma,V) \to 0.$$
\item The identifications of (a) and~(b) imply the isomorphism
$$ \cCM_R(\Gamma,V) \cong \ker\big(
M/ (M N_\sigma + M N_\tau ) \xrightarrow{m \mapsto m(1-\sigma)} M/M(1-T) \big).$$
\end{enumerate}
\end{theorem}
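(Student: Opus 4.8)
The three parts build on each other, and the strategy is to convert the $R$-module statements about $\cM_R(\Gamma,V)$ and $\cB_R(\Gamma,V)$ into statements about the right $R[\PSL_2(\ZZ)]$-module $R[\PSL_2(\ZZ)]$ by applying the exact functor $(R[\PSL_2(\ZZ)]\otimes_R V)_\Gamma \cong -\otimes_{R[\PSL_2(\ZZ)]} M$ (cf.\ Lemma~\ref{indcoind}), and then to invoke the two exact sequences already established, namely Proposition~\ref{hparnull} and Lemma~\ref{mrlem}, together with Proposition~\ref{propker}.

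First, for~(a): by Proposition~\ref{propker} the map $\phi\colon R[\PSL_2(\ZZ)] \to \cM_R$, $g \mapsto \{g.0, g.\infty\}$, is surjective with kernel $R[\PSL_2(\ZZ)]N_\sigma + R[\PSL_2(\ZZ)]N_\tau$. The plan is to tensor the short exact sequence
$$ 0 \to R[\PSL_2(\ZZ)]N_\sigma + R[\PSL_2(\ZZ)]N_\tau \to R[\PSL_2(\ZZ)] \to \cM_R \to 0$$
of right $R[\PSL_2(\ZZ)]$-modules with the left module $M = (R[\PSL_2(\ZZ)]\otimes_R V)_\Gamma$ over $R[\PSL_2(\ZZ)]$. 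One has to be slightly careful: $-\otimes_{R[\PSL_2(\ZZ)]} M$ is only right exact, so one gets exactness of
$$ \big(R[\PSL_2(\ZZ)]N_\sigma + R[\PSL_2(\ZZ)]N_\tau\big)\otimes_{R[\PSL_2(\ZZ)]} M \to M \to \cM_R(\Gamma,V) \to 0,$$
using $R[\PSL_2(\ZZ)]\otimes_{R[\PSL_2(\ZZ)]} M \cong M$ and $\cM_R \otimes_{R[\PSL_2(\ZZ)]} M \cong \cM_R(\Gamma,V)$ (the latter because $\cM_R(V)_\Gamma = \cM_R \otimes_R V$ taken $\Gamma$-coinvariantly is exactly $\cM_R \otimes_{R[\PSL_2(\ZZ)]} M$). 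The image of the first arrow is visibly $M N_\sigma + M N_\tau$ (the submodule of $M$ generated by right multiplication by $N_\sigma$ and $N_\tau$). This yields the short exact sequence
$$ 0 \to M N_\sigma + M N_\tau \to M \to \cM_R(\Gamma,V) \to 0,$$
and tracing through the identifications shows the map sends $g \otimes v \mapsto \{g.0, g.\infty\}\otimes v$, which is precisely the claimed isomorphism of Manin symbols with modular symbols. The one point requiring a small argument is that the image of $\big(R[\PSL_2(\ZZ)]N_\sigma + R[\PSL_2(\ZZ)]N_\tau\big)\otimes_{R[\PSL_2(\ZZ)]} M \to M$ equals $MN_\sigma + MN_\tau$ and injects; injectivity follows since this image is the kernel of $M \to \cM_R(\Gamma,V)$, which one identifies directly from the surjection $M \twoheadrightarrow \cM_R(\Gamma,V)$.

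Part~(b) is entirely analogous but easier: the Lemma preceding Prop.~\ref{hparnull} (the cusps lie in one $\PSL_2(\ZZ)$-orbit with stabiliser $\langle T\rangle$) gives $R[\PSL_2(\ZZ)]/R[\PSL_2(\ZZ)](1-T) \cong R[\PP^1(\QQ)]$, whence after $-\otimes_{R[\PSL_2(\ZZ)]}M$ one obtains the exact sequence $0 \to M(1-T) \to M \to \cB_R(\Gamma,V) \to 0$, the map being induced by $g \mapsto g.\infty$. For part~(c), the plan is to combine (a) and~(b): the boundary map $\cM_R(\Gamma,V) \to \cB_R(\Gamma,V)$ of Definition~\ref{defMS} sends $\{\alpha,\beta\} \mapsto \{\beta\}-\{\alpha\}$, so in the Manin-symbol picture it is induced by $R[\PSL_2(\ZZ)] \to R[\PP^1(\QQ)]$, $g \mapsto g.\infty - g.0$. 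Now $g.\infty - g.0$ as an element of $R[\PP^1(\QQ)] \cong R[\PSL_2(\ZZ)]/R[\PSL_2(\ZZ)](1-T)$ corresponds to $g(1 - \sigma)$ modulo $R[\PSL_2(\ZZ)](1-T)$ — this is exactly the map $g \mapsto g(1-\sigma)\infty$ appearing in Prop.~\ref{hparnull}, and one checks $g.0 = g\sigma.\infty$ since $\sigma.\infty = 0$. Therefore on Manin symbols the boundary map is $m \mapsto m(1-\sigma)$ from $M/(MN_\sigma + MN_\tau)$ to $M/M(1-T)$, and $\cCM_R(\Gamma,V)$, being by definition its kernel, is the kernel of this map, which is the assertion. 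The main obstacle — though it is more bookkeeping than difficulty — is to verify that the various induced maps genuinely agree at the level of generators, in particular the computation $g.0 = g\sigma.\infty$ and the compatibility of the boundary map with $1-\sigma$ modulo $1-T$; everything else is a mechanical consequence of right-exactness of $-\otimes_{R[\PSL_2(\ZZ)]}M$ applied to the sequences of Proposition~\ref{hparnull}, Lemma~\ref{mrlem}, and Proposition~\ref{propker}.
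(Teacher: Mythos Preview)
Your proposal is correct and follows essentially the same route as the paper. The only cosmetic difference is that the paper organises the passage from $R[\PSL_2(\ZZ)]$ to $M$ in two steps—first tensoring with $V$ over~$R$ (which preserves exactness, the modules being $R$-free), then taking left $\Gamma$-coinvariants—whereas you perform both at once via $-\otimes_{R[\PSL_2(\ZZ)]}M$; these are the same functor, and in both cases only right exactness is used, so the kernel is identified as the \emph{image} $MN_\sigma + MN_\tau$ rather than the full tensor product. Your treatment of~(c) is in fact more explicit than the paper's, which simply points back to the proof of Proposition~\ref{hparnull}.
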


\begin{proof}
(a) Proposition~\ref{propker} gives the exact sequence
$$ 0 \to R[\PSL_2(\ZZ)]N_\sigma + R[\PSL_2(\ZZ)]N_\tau \to R[\PSL_2(\ZZ)] \to \cM_R \to 0,$$
which we tensor with $V$ over $R$, yielding the exact sequence of left $R[\Gamma]$-modules
$$ 0 \to (R[\PSL_2(\ZZ)] \otimes_R V) N_\sigma +  (R[\PSL_2(\ZZ)] \otimes_R V) N_\tau 
\to (R[\PSL_2(\ZZ)] \otimes_R V) \to \cM_R(V) \to 0.$$
Passing to left $\Gamma$-coinvariants yields~(a) because $M N_\sigma$ and $M N_\tau$ are the images
of $(R[\PSL_2(\ZZ)] \otimes_R V) N_\sigma$ and $(R[\PSL_2(\ZZ)] \otimes_R V) N_\tau$ inside $M$,
respectively.
(b) is clear from the definition and (c) has already been observed
in the proof of Proposition~\ref{hparnull}.
\end{proof}

In the literature on Manin symbols one usually finds a more explicit
version of the induced module. This is the contents of the following
proposition. It establishes the link with the main theorem
on Manin symbols in \cite{Stein}, namely Theorem~8.4.

Since in the following proposition left and right actions
are involved, we sometimes indicate left (co-)invariants by
using left subscripts (resp.\ superscripts) and right
(co-)invariants by right ones.

\begin{proposition}\label{indprop}
Let $\chi: (\ZZ/N\ZZ)^\times \to R^\times$ be a character such that
$\chi(-1) = (-1)^k$.
Consider the $R$-module 
$$X := R[\Gamma_1(N) \backslash \SL_2(\ZZ)] \otimes_R V_{k-2}(R) \otimes_R R^\chi$$
equipped with the right $\SL_2(\ZZ)$-action
$(\Gamma_1(N) h \otimes V \otimes r)g = (\Gamma_1(N) hg \otimes g^{-1}v \otimes r)$
and with the left $\Gamma_1(N) \backslash \Gamma_0(N)$-action 
$g (\Gamma_1(N) h \otimes v \otimes r) = (\Gamma_1(N) g h \otimes v \otimes \chi(g)r)$. 

Then 
$$ X \cong \Ind_{\Gamma_1(N)}^{\SL_2(\ZZ)}(V_k^\chi(R))$$
as a right $R[\SL_2(\ZZ)]$-module and a left $R[\Gamma_1(N) \backslash \Gamma_0(N)]$-module.
Moreover,
$${}_{\Gamma_1(N) \backslash \Gamma_0(N)} X \cong \Ind_{\Gamma_0(N)}^{\SL_2(\ZZ)}(V_k^\chi(R)).$$ 
If $N \ge 3$, then the latter module is isomorphic
to $\Ind_{\Gamma_0(N)/\{\pm 1\}}^{\PSL_2(\ZZ)}(V_k^\chi(R))$. 
\end{proposition}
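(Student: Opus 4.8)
The plan is to prove the three assertions in turn, writing for brevity $G=\SL_2(\ZZ)$, $\Gamma=\Gamma_1(N)$, $\Gamma'=\Gamma_0(N)$ and $V:=V_{k-2}(R)\otimes_R R^\chi$, i.e.\ $V_{k-2}^\chi(R)$ (written $V_k^\chi(R)$ in the statement by an evident slip), and fixing from the start the convention of Lemma~\ref{indcoind}(b) that an induced module is turned into a right module by inverting its defining left $R[G]$-action. The running observation is that $\Gamma$ acts trivially on the factor $R^\chi$ (since $\chi(\gamma)=1$ for $\gamma\in\Gamma$), so that restricted to $\Gamma$ the module $V$ is just $V_{k-2}(R)$ with a free $R$-factor attached; the factor $R^\chi$ contributes only the residual left action of $\Gamma\backslash\Gamma'\cong\Gamma'/\Gamma$.

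For the first isomorphism I would write down the two explicit $R$-linear maps
\begin{align*}
\Phi\colon \Ind_\Gamma^G(V)\to X,&\quad g\otimes v\otimes r\mapsto \Gamma g^{-1}\otimes g.v\otimes r,\\
\Psi\colon X\to\Ind_\Gamma^G(V),&\quad \Gamma h\otimes v\otimes r\mapsto h^{-1}\otimes h.v\otimes r,
\end{align*}
(where $g.v$ denotes the left $G$-action on $V_{k-2}(R)$ underlying the right action of the statement) and then check, each by a one-line tensor manipulation: that $\Psi$ does not depend on the coset representative $h$ and that $\Phi$ descends across $\otimes_{R[\Gamma]}$ (both using $\chi|_\Gamma=1$); that $\Phi$ and $\Psi$ are mutually inverse; that $\Psi$ intertwines the stated right $G$-action $(\Gamma h\otimes v\otimes r).\tilde g=\Gamma h\tilde g\otimes\tilde g^{-1}.v\otimes r$ on $X$ with the right $G$-action $(x\otimes w).\tilde g=\tilde g^{-1}x\otimes w$ on $\Ind_\Gamma^G(V)$; and that $\Psi$ intertwines the stated left $\Gamma\backslash\Gamma'$-action on $X$ with the residual action $g\cdot(x\otimes w):=xg^{-1}\otimes g.w$ of $\Gamma'$ on $\Ind_\Gamma^G(V)$. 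The only point that is not purely formal is the well-definedness of this residual action on $R[G]\otimes_{R[\Gamma]}V$: for $\gamma\in\Gamma$ and $g\in\Gamma'$ one rewrites $\gamma g^{-1}=g^{-1}(g\gamma g^{-1})$ with $g\gamma g^{-1}\in\Gamma$, which is exactly where the normality $\Gamma\trianglelefteq\Gamma'$ enters (and also why the residual action factors through $\Gamma'/\Gamma$).

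For the second isomorphism I would take $\Gamma\backslash\Gamma'$-coinvariants of the identification just obtained, so it suffices to see $\bigl(\Ind_\Gamma^G(V)\bigr)_{\Gamma'/\Gamma}\cong\Ind_{\Gamma'}^G(V)$ for the residual action. Presenting $\Ind_\Gamma^G(V)$ as $R[G]\otimes_R V$ modulo the span of the $x\gamma\otimes v-x\otimes\gamma v$ ($\gamma\in\Gamma$), the further coinvariance relations are $x\otimes v-xg^{-1}\otimes g.v$ ($g\in\Gamma'$), i.e., after substituting $xg$ for $x$, the relations $xg\otimes v-x\otimes g.v$ ($g\in\Gamma'$); since $\Gamma\subseteq\Gamma'$ these contain the original ones, so the combined quotient of $R[G]\otimes_R V$ is precisely $R[G]\otimes_{R[\Gamma']}V=\Ind_{\Gamma'}^G(V)$, and right $G$-equivariance is immediate because left multiplication on $R[G]$ preserves all these relations. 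For the third isomorphism I would first check that $-I$ acts trivially on $V$: it acts on $V_{k-2}(R)$ by $(-1)^{k-2}$ and on $R^\chi$ by $\chi(-1)=(-1)^k$, hence by $(-1)^{2k-2}=1$ on the tensor product, so $V$ is naturally an $R[\Gamma'/\{\pm 1\}]$-module. Then in $\Ind_{\Gamma'}^G(V)=R[G]\otimes_{R[\Gamma']}V$ one has $x(-I)\otimes v=x\otimes(-I).v=x\otimes v$, so the tensor product factors through $R[G]/\langle x-x(-I)\rangle\cong R[\PSL_2(\ZZ)]$, and associativity of the tensor product yields $R[G]\otimes_{R[\Gamma']}V\cong R[\PSL_2(\ZZ)]\otimes_{R[\Gamma'/\{\pm 1\}]}V=\Ind_{\Gamma'/\{\pm 1\}}^{\PSL_2(\ZZ)}(V)$, compatibly with the right $G$- resp.\ $\PSL_2(\ZZ)$-action (the latter pulled back along $G\twoheadrightarrow\PSL_2(\ZZ)$); this part in fact uses only $-I\in\Gamma_0(N)$ together with $\chi(-1)=(-1)^k$.

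I expect the main obstacle to be organisational rather than mathematical: in the first step one must juggle the left $\Gamma\backslash\Gamma'$-action, the right $G$-action and the $\chi$-twist at the same time, and the computation becomes unreadable unless the conventions (especially the inverted left action of Lemma~\ref{indcoind}, and whether one works with $\Ind_\Gamma^G(V)$ or its incarnation $(R[G]\otimes_R V)_\Gamma$) are pinned down at the outset. Each individual verification is short; the care lies in carrying them out consistently.
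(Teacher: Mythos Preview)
Your proposal is correct and follows essentially the same route as the paper. The paper factors your map $\Phi$ into two steps: first the identification $\Ind_\Gamma^G(V)\cong (R[G]\otimes_R V)_\Gamma$, $g\otimes v\mapsto g^{-1}\otimes v$ of Lemma~\ref{indcoind}(b), and then the untwisting map $g\otimes v\otimes r\mapsto \Gamma g\otimes g^{-1}v\otimes r$ to~$X$; composing these gives precisely your $\Phi(g\otimes v\otimes r)=\Gamma g^{-1}\otimes g.v\otimes r$. For the second statement the paper simply invokes ${}_{\Gamma'/\Gamma}\bigl({}_\Gamma M\bigr)={}_{\Gamma'}M$, which is your coinvariants computation, and for the third it argues exactly as you do via the trivial action of $-1$ on $V_{k-2}^\chi(R)$. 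Your write-up is in fact a little more explicit than the paper's about the residual $\Gamma'/\Gamma$-action and where normality enters; the paper leaves those as tacit.
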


\begin{proof}
Mapping 
$g \otimes v \otimes r$ to $g \otimes g^{-1}v \otimes r$
defines an isomorphism of right $R[\SL_2(\ZZ)]$-modules and of left 
$R[\Gamma_1(N) \backslash \Gamma_0(N)]$-modules
$$ {}_{\Gamma_1(N)} (R[\SL_2(\ZZ)] \otimes_R V_{k-2}(R) \otimes_R R^\chi) \to X.$$
As we have seen above, the left hand side module is naturally isomorphic
to the induced module $\Ind_{\Gamma_1(N)}^{\SL_2(\ZZ)}(V_k^\chi(R))$ (equipped with its right
$R[\SL_2(\ZZ)]$-action described before). This establishes the first statement. 
The second one follows from
${}_{\Gamma_1(N) \backslash \Gamma_0(N)} \big( {}_{\Gamma_1(N)} M \big ) = {}_{\Gamma_0(N)} M$ for
any $\Gamma_0(N)$-module~$M$. 
The third statement is due to the fact that
${}_{\langle -1 \rangle} (R[\SL_2(\ZZ)] \otimes_R V_{k-2}^\chi(R))$
is naturally isomorphic to
$R[\PSL_2(\ZZ)] \otimes_R V_{k-2}^\chi(R)$, since $-1$ acts
trivially on the second factor, as the assumption assures that
$-1 \in \Gamma_0(N)$ but $-1 \not\in \Gamma_1(N)$. 
\end{proof}

For one more description of the induced module 
$\Ind_{\Gamma_0(N)/\{\pm 1\}}^{\PSL_2(\ZZ)}(V_k^\chi(R))$
see Exercise~\ref{exmaninp}.
It is this description that uses up the least memory in an implementation.
Now all the prerequisites have been provided for implementing
Manin symbols (say for $\Gamma_0(N)$ and a character). This is the
task of Computer Exercise~\ref{cexmanin}.

\subsection{Theory: Manin symbols and group cohomology}

Let $\Gamma \le \PSL_2(\ZZ)$ be a subgroup of finite index,
and $V$ a left $R[\Gamma]$-module for a ring~$R$.

\begin{theorem}\label{compthm}
Suppose that the orders of all stabiliser subgroups of~$\Gamma$ 
for the action on~$\HH$ are invertible in~$R$.
Then we have isomorphisms:
$$ \h^1(\Gamma,V) \cong \cM_R(\Gamma,V)$$
and
$$ \Hpar^1(\Gamma,V) \cong \cCM_R(\Gamma,V).$$
\end{theorem}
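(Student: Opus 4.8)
The plan is to push both sides through the common intermediary
$$M := \Coind_\Gamma^{\PSL_2(\ZZ)}(V) = \Hom_{R[\Gamma]}(R[\PSL_2(\ZZ)],V),$$
a left $R[\PSL_2(\ZZ)]$-module which by Lemma~\ref{indcoind} is isomorphic to $\Ind_\Gamma^{\PSL_2(\ZZ)}(V)$. Fixing such an $R$-linear isomorphism $\phi$ and recalling that in the realisation used for Manin symbols the $\PSL_2(\ZZ)$-action on $\Ind_\Gamma^{\PSL_2(\ZZ)}(V)$ is turned into a \emph{right} action by $g \mapsto g^{-1}$ (Lemma~\ref{indcoind}(b)), we get $\phi(m'.g) = g^{-1}.\phi(m')$, where the left-hand dot is the right action on $M' := \Ind_\Gamma^{\PSL_2(\ZZ)}(V)$ and the right-hand dot the left action on $M$.

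First I would settle the cohomology side. The stabilisers $\Gamma_x$, $x \in \HH$, are precisely the subgroups whose orders the hypothesis declares invertible, so Corollary~\ref{corhzweinull} applies verbatim and gives $\h^1(\Gamma,V) \cong M/(M^{\langle\sigma\rangle}+M^{\langle\tau\rangle})$; moreover its proof (via Shapiro's lemma and Mackey's formula, Lemma~\ref{mackeypsl}) shows $\h^2(\langle\sigma\rangle,M)=\h^2(\langle\tau\rangle,M)=0$, so by Exercise~\ref{exfree} we have $N_\sigma M = M^{\langle\sigma\rangle}$ and $N_\tau M = M^{\langle\tau\rangle}$. Likewise Proposition~\ref{leraypar} (same hypothesis) identifies $\Hpar^1(\Gamma,V)$, through the Shapiro columns of its diagram, with $\Hpar^1(\PSL_2(\ZZ),M)$, which by the exact bottom row of that diagram is $\ker\big(M/(M^{\langle\sigma\rangle}+M^{\langle\tau\rangle}) \xrightarrow{m \mapsto (1-\sigma)m} M/(1-T)M\big)$.

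Next I would read off the modular symbols side from Theorem~\ref{ManinSymbols}: with $M'$ viewed as a right $R[\PSL_2(\ZZ)]$-module, part~(a) gives $\cM_R(\Gamma,V) \cong M'/(M'N_\sigma+M'N_\tau)$ and part~(c) gives $\cCM_R(\Gamma,V) \cong \ker\big(M'/(M'N_\sigma+M'N_\tau) \xrightarrow{m \mapsto m(1-\sigma)} M'/M'(1-T)\big)$. Now transport these along $\phi$. Since $\sigma^2=\tau^3=1$ one computes $\phi(m'N_\sigma)=N_\sigma\phi(m')$ and $\phi(m'N_\tau)=N_\tau\phi(m')$, so $\phi$ carries $M'N_\sigma+M'N_\tau$ onto $N_\sigma M+N_\tau M = M^{\langle\sigma\rangle}+M^{\langle\tau\rangle}$; similarly $\phi(m'(1-\sigma))=(1-\sigma)\phi(m')$, and $\phi$ carries $M'(1-T)$ onto $(1-T^{-1})M = (1-T)M$ (the last equality because $(1-T)(-T^{-1}m)=(1-T^{-1})m$, and symmetrically). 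Hence $\phi$ induces $\cM_R(\Gamma,V)\cong M/(M^{\langle\sigma\rangle}+M^{\langle\tau\rangle})\cong\h^1(\Gamma,V)$, and on kernels $\cCM_R(\Gamma,V)\cong\ker\big(M/(M^{\langle\sigma\rangle}+M^{\langle\tau\rangle})\xrightarrow{1-\sigma}M/(1-T)M\big)\cong\Hpar^1(\Gamma,V)$.

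The reductions of each side to a quotient of $M$ are already packaged in Corollary~\ref{corhzweinull} and Proposition~\ref{leraypar}, so those are routine; the step I expect to be the main obstacle is the bookkeeping of the last paragraph — keeping the left/right conventions straight, checking that the isomorphism of Lemma~\ref{indcoind} genuinely intertwines $N_g$ with $N_g$ and $(1-T)$ with $(1-T)$ up to the harmless replacement $T\leftrightarrow T^{-1}$, and, above all, verifying that the boundary map $\cM_R(\Gamma,V)\to\cB_R(\Gamma,V)$ of Theorem~\ref{ManinSymbols}(c) corresponds under $\phi$ to the restriction-to-cusp-groups map appearing in Proposition~\ref{leraypar}, so that the two descriptions of $\cCM_R(\Gamma,V)$ and of $\Hpar^1(\Gamma,V)$ are literally the \emph{same} kernel rather than merely abstractly isomorphic $R$-modules. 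Since all maps involved are functorial in $V$, the resulting isomorphisms are automatically compatible with the Hecke action, although that is not needed for the statement.
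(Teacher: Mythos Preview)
Your proposal is correct and follows essentially the same route as the paper: both identify $\Ind_\Gamma^{\PSL_2(\ZZ)}(V)\cong\Coind_\Gamma^{\PSL_2(\ZZ)}(V)=:M$, invoke Corollary~\ref{corhzweinull} and Theorem~\ref{ManinSymbols} to reduce each side to a quotient of~$M$, use the vanishing of $\h^2(\langle\sigma\rangle,M)$ and $\h^2(\langle\tau\rangle,M)$ (via Lemma~\ref{mackeypsl}) to get $M^{\langle\sigma\rangle}=N_\sigma M$ and $M^{\langle\tau\rangle}=N_\tau M$, and then observe that the boundary maps in Theorem~\ref{ManinSymbols}(c) and Proposition~\ref{leraypar} coincide. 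The paper's proof is terser and leaves the left/right bookkeeping you carefully unwind in your last paragraph implicit, but the argument is the same.
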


\begin{proof}
As before, set $M=\Ind_\Gamma^{\PSL_2(\ZZ)} (V)$ and recall that this module is isomorphic to $\Coind_\Gamma^{\PSL_2(\ZZ)} (V)$.
To see the first statement, in view of Theorem~\ref{ManinSymbols}
and the corollary of the Mayer-Vietoris exact sequence (Corollary~\ref{corhzweinull}), it suffices to
show $M^{\langle \sigma \rangle} = MN_\sigma$ and $M^{\langle \tau \rangle} = MN_\tau$.
By the resolution of~$R$ for a cyclic group in~\eqref{eq:res-cyclic-finite}, the quotient $M^{\langle \sigma \rangle} / MN_\sigma$
is equal to $H^2(\langle \sigma \rangle,M)$, but this one is zero by the application of Mackey's formula done in Lemma~\ref{mackeypsl}~(c).
The same argument works with $\tau$ instead of~$\sigma$.

The passage to the parabolic/cuspidal subspaces is immediate because the boundary map with source $M$ has the same explicit description in both
cases (see Theorem~\ref{ManinSymbols}~(c) and Proposition~\ref{leraypar}).
\end{proof}

\subsection{Algorithms and Implementations: Conversion between Manin and mo\-dular symbols}

We now use the Euclidean Algorithm to represent any element
$g \in \PSL_2(\ZZ)$ in terms of $\sigma$ and~$T$.

\begin{algorithm}\label{algpsl}
\underline{Input}: A matrix~$M = \mat abcd$ with integer entries and determinant~$1$.

\underline{Output}: A list of matrices $[A_1,A_2,\dots,A_n]$
where all $A_i \in \{T^n | n \in \ZZ\} \cup \{\sigma\}$ and $\sigma$ and $T^n$ alternate.

\begin{enumerate}[(1)]
\itemsep=0cm plus 0pt minus 0pt

\item create an empty list {\tt output}.
\item if  $|c| > |a|$ then
\item \ms  append $\sigma$ to {\tt output}.
\item \ms $M := \sigma M$.
\item end if;
\item while $c \neq 0$ do
\item \ms $q := a \textnormal{ div } c$.
\item \ms  append $T^q$ to {\tt output}.
\item \ms  append $\sigma$ to {\tt output}.
\item \ms  $M := \sigma T^{-q} M$.
\item end while;
\item if $M \not\in \{\mat 1001,\mat {-1}00{-1}\}$ then $\;\;\;$
[At this point $M \in \{ \mat 1*01, \mat {-1}*0{-1}\}$.]
\item \ms  append $M$ to {\tt output}.
\item end if;
\item return {\tt output}.
\end{enumerate}
\end{algorithm}

This algorithm gives a constructive proof of the fact (Proposition~\ref{prop:gensl2z}) that $\PSL_2(\ZZ)$
is generated by $\sigma$ and~$T$, and hence also by $\sigma$ and~$\tau$.
Note, however, that the algorithm does not necessarily give the shortest
such representation. See Exercise~\ref{exctdfrac} for a relation to
continued fractions.

We can use the algorithm to make a conversion between modular symbols and
Manin symbols, as follows. Suppose we are given the modular symbols
$\{\alpha,\infty\}$ (this is no loss of generality, as we can represent
$\{\alpha,\beta\} = \{\alpha,\infty\} - \{\beta,\infty\}$).
Suppose $\alpha$ is given as $g \infty$ with some $g \in \SL_2(\ZZ)$
(i.e.\ representing the cusp as a fraction $\frac{a}{c}$ with $(a,c)=1$,
then we can find $b,d$ by the Euclidean Algorithm such that $g=\mat abcd \in \SL_2(\ZZ)$
satisfies the requirements).
We now use Algorithm~\ref{algpsl} to represent $g$ as $\sigma T^{a_1} \sigma T^{a_2} \sigma
\dots T^{a_n} \sigma$ (for example). Then we have
$$ \{\alpha,\infty\} = \sigma T^{a_1} \sigma T^{a_2} \sigma \dots T^{a_n} \{0,\infty\}
+ \sigma T^{a_1} \sigma T^{a_2} \sigma \dots T^{a_{n-1}} \{0,\infty\} +  \dots +
\sigma T^{a_1} \{0,\infty\} + \{0,\infty\}.$$
If $g$ does not end in~$\sigma$ but $T^{a_n}$, then we must drop $T^{a_n}$ from the
above formula (since $T$ stabilises~$\infty$).
If $g$ starts in~$T^{a_1}$ (instead of~$\sigma$), then we must drop
the last summand.

Since we are in weight~$2$ (i.e.\ trivial module~$V$), the space of Manin symbols is a quotient
of $R[\PSL_2(\ZZ)]/\Gamma$ (see Definition~\ref{defi:manin}).
The Manin symbol corresponding to the above example chosen for the modular symbol $\{\alpha,\infty\}$ is then simply represented by
the formal sum
\begin{equation}\label{eq:maninex}
\sigma T^{a_1} \sigma T^{a_2} \sigma \dots T^{a_n} 
+ \sigma T^{a_1} \sigma T^{a_2} \sigma \dots T^{a_{n-1}}  + \dots +
\sigma T^{a_1}  + 1. 
\end{equation}
If the module $V$ is not trivial, a modular symbol would typically look like $\{\alpha,\infty\} \otimes v$ for $v \in V$
and the corresponding Manin symbol would be the formal sum in~\eqref{eq:maninex} tensored with~$v$.

In Computer Exercise~\ref{cexconv} you are asked to implement a conversion
between Manin and modular symbols.

\subsection{Theoretical exercises}

\begin{exercise}\label{exindcoind}
Prove Lemma~\ref{indcoind}.
\end{exercise}

\begin{exercise}\label{exmaninp}
Assume the set-up of Proposition~\ref{indprop}.
Describe a right $\PSL_2(\ZZ)$-action on
$$ Y := R[\PP^1(\ZZ/N\ZZ)] \otimes_R V_{k-2}(R) \otimes_R R^\chi$$
and an isomorphism
$$ {}_{\Gamma_1(N)\backslash \Gamma_0(N)} X \to Y$$
of right $\PSL_2(\ZZ)$-modules.
\end{exercise}

\begin{exercise}\label{exctdfrac}
Provide a relationship between Algorithm~\ref{algpsl} and continued fractions.
\end{exercise}

\subsection{Computer exercises}

\begin{cexercise}\label{cexmanin}
Use the description of Exercise~\ref{exmaninp} and your results from
Computer Exercises \ref{cexpeins} and~\ref{cexdirichlet} to implement Manin symbols
for $\Gamma_0(N)$ and a character over a field.
As a first approach you may use the trivial character only.
\end{cexercise}

\begin{cexercise}\label{cexconv}
\begin{enumerate}[(a)]
\item Write an algorithm to represent any element of the group $\PSL_2(\ZZ)$ in terms
of $\sigma$ and~$T$.
\item Write an algorithm that represents any modular symbol $\{\alpha,\beta\}$
as a Manin symbol (inside the vector space created in Computer Exercise~\ref{cexmanin}).
\end{enumerate}
\end{cexercise}

\section{Eichler-Shimura}

This section is devoted to proving the theorem by Eichler and Shimura that is at the
basis of the modular symbols algorithm and its group cohomological variant.
The standard reference for the Eichler-Shimura theorem is~\cite[\S 8.2]{Shimura}.
In the entire section, let $k \ge 2$ be an integer.

\subsection{Theory: Petersson scalar product}

Recall the standard fundamental domain for $\SL_2(\ZZ)$
$$ \cF = \{ z=x+iy \in \HH \,| \, |z| > 1, |x| <  \frac{1}{2}\}$$
from Proposition~\ref{prop:fd}.
Every subgroup $\Gamma \le \SL_2(\ZZ)$ of finite index has a fundamental
domain, for example, $\bigcup_{\gamma \in \Gammabar \backslash \PSL_2(\ZZ)} \gamma \cF$
for any choice of system of representatives of the cosets $\Gammabar \backslash \PSL_2(\ZZ)$,
where we put $\Gammabar = \Gamma/(\langle \pm 1 \rangle \cap \Gamma)$.

\begin{lemma}\label{lmdiff}
\begin{enumerate}[(a)]
\item Let $\gamma  \in \GL_2(\RR)^+$ be a real matrix with positive determinant.
Let $f \in \Mkg k\Gamma\CC$ and $g \in \Skg k\Gamma\CC$. We have with
$z \in \HH$
$$ f(\gamma z) \overline{g(\gamma z)} (\gamma z - \gamma \zbar)^k = 
\det(\gamma)^{2-k} f|_\gamma (z) \overline{g|_\gamma (z)} (z - \zbar)^k$$
for all $\gamma \in \SL_2(\RR)$. The function
$G(z) := f(z) \overline{g(z)} (z-\zbar)^k$ is bounded on~$\HH$.
\item We have $d\gamma z = \frac{\det(\gamma)}{(cz+d)^2} dz$ for all $\gamma \in \GL_2(\RR)^+$.
\item The differential form $\frac{dz \wedge d\zbar}{(z-\zbar)^2}$ is
$\GL_2(\RR)^+$-invariant. In terms of $z=x+iy$ we have
$\frac{dz \wedge d\zbar}{(z-\zbar)^2} = \frac{i}{2} \frac{dx \wedge dy}{y^2}$.
\item Let $\Gamma \le \SL_2(\ZZ)$ be a subgroup
with finite index $\mu = (\PSL_2(\ZZ):\Gammabar)$.
The volume of any fundamental domain $\cF_\Gamma$ for~$\Gamma$ with
respect to the differential form $\frac{2 dz \wedge d\zbar}{i(z-\zbar)^2}$, i.e.\
$$ \vol (\cF_\Gamma) = \int_{\cF_\Gamma} \frac{2dz \wedge d\zbar}{i(z-\zbar)^2},$$
is equal to $\mu \frac{\pi}{3}$.
\end{enumerate}
\end{lemma}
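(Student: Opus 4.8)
\textbf{Proof plan for Lemma~\ref{lmdiff}.}

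The four parts are largely independent routine computations, so the plan is to dispatch them in the order (b), (c), (a), (d), since the later parts use the earlier ones. First I would prove (b): for $\gamma = \mat abcd \in \GL_2(\RR)^+$ one writes $\gamma z = \frac{az+b}{cz+d}$ and differentiates, getting $\frac{d}{dz}\gamma z = \frac{a(cz+d) - c(az+b)}{(cz+d)^2} = \frac{ad-bc}{(cz+d)^2} = \frac{\det(\gamma)}{(cz+d)^2}$; taking differentials gives the claim, and conjugating gives the analogous statement for $d\overline{\gamma z}$ (using $\overline{\det(\gamma)} = \det(\gamma)$ as $\gamma$ is real). For (c), I would combine (b) with the identity $\gamma z - \gamma\zbar = \frac{(az+b)(c\zbar+d) - (a\zbar+b)(cz+d)}{(cz+d)(c\zbar+d)} = \frac{\det(\gamma)(z-\zbar)}{(cz+d)(c\zbar+d)}$; substituting into $\frac{d(\gamma z)\wedge d(\gamma\zbar)}{(\gamma z - \gamma\zbar)^2}$, the factor $\det(\gamma)^2 (cz+d)^{-2}(c\zbar+d)^{-2}$ from the numerator cancels exactly against $\det(\gamma)^2 (cz+d)^{-2}(c\zbar+d)^{-2}$ from the denominator, proving invariance. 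The coordinate expression follows from $z - \zbar = 2iy$, $dz\wedge d\zbar = (dx+i\,dy)\wedge(dx-i\,dy) = -2i\,dx\wedge dy$, hence $\frac{dz\wedge d\zbar}{(z-\zbar)^2} = \frac{-2i\,dx\wedge dy}{-4y^2} = \frac{i}{2}\frac{dx\wedge dy}{y^2}$.

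For (a), the key identity $\gamma z - \gamma\zbar = \frac{\det(\gamma)(z-\zbar)}{(cz+d)(c\zbar+d)}$ already appeared above. I would write out $f(\gamma z)\overline{g(\gamma z)}(\gamma z - \gamma\zbar)^k$, use the definition $f|_\gamma(z) = f(\gamma z)\det(\gamma)^{k-1}(cz+d)^{-k}$ — and note $\overline{g|_\gamma(z)} = \overline{g(\gamma z)}\det(\gamma)^{k-1}(c\zbar+d)^{-k}$ since $\gamma$ is real — so $f(\gamma z)\overline{g(\gamma z)} = f|_\gamma(z)\overline{g|_\gamma(z)}\det(\gamma)^{2-2k}(cz+d)^k(c\zbar+d)^k$, while $(\gamma z - \gamma\zbar)^k = \det(\gamma)^k(cz+d)^{-k}(c\zbar+d)^{-k}(z-\zbar)^k$. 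Multiplying, the powers of $(cz+d)$ and $(c\zbar+d)$ cancel and the determinant exponent is $2-2k+k = 2-k$, giving the stated formula. For the boundedness of $G(z) = f(z)\overline{g(z)}(z-\zbar)^k = (2i)^k f(z)\overline{g(z)} y^k$: by the transformation formula just proved with $\gamma \in \Gamma$ (where $\det\gamma = 1$), $G$ is $\Gamma$-invariant, so it suffices to bound it on a fundamental domain, and since a fundamental domain is a finite union of translates of $\cF$, on $\cF$ itself. On $\cF$ the only non-compact direction is $y \to \infty$; there the cusp condition gives $g(z) = O(e^{-2\pi y/h})$ for the relevant width $h$ (as $g$ is a cusp form) and $f(z)$ is at most $O(1)$ or polynomially bounded by its $q$-expansion, so $y^k f(z)\overline{g(z)} \to 0$; at the finitely many other cusps of $\Gamma$ one argues the same way after applying a scaling matrix. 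Hence $G$ is bounded.

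Part (d) is the main obstacle, since it is the one genuine computation rather than an algebraic identity. The plan is: first establish it for $\Gamma = \SL_2(\ZZ)$ by direct integration over the standard domain $\cF$ of Proposition~\ref{prop:fd}, $\int_\cF \frac{dx\,dy}{y^2} = \int_{-1/2}^{1/2}\!\int_{\sqrt{1-x^2}}^{\infty} \frac{dy}{y^2}\,dx = \int_{-1/2}^{1/2}\frac{dx}{\sqrt{1-x^2}} = [\arcsin x]_{-1/2}^{1/2} = \frac{\pi}{3}$, and then observing that the differential form in the statement equals $\frac{dx\wedge dy}{y^2}$ by part (c), so $\vol(\cF) = \frac{\pi}{3}$. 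For general $\Gamma$ of index $\mu$, a fundamental domain can be taken as $\bigcup_{\gamma} \gamma\cF$ over a system of coset representatives, and by the $\GL_2(\RR)^+$-invariance from (c) each $\gamma\cF$ has the same volume $\frac{\pi}{3}$; the boundary pieces have measure zero, so $\vol(\cF_\Gamma) = \mu\cdot\frac{\pi}{3}$. Finally I would remark that the value does not depend on the choice of fundamental domain, since any two differ by a set of measure zero together with $\Gamma$-translates, and the form is $\Gamma$-invariant. The only subtlety worth care is the convergence of the improper integral at the cusp (the $y\to\infty$ tail), which is immediate here because $\int^\infty y^{-2}\,dy$ converges.
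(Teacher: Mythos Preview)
Your proposal is correct; parts (a)--(c) match the paper's arguments essentially verbatim. The one place you diverge is part~(d): you compute $\int_\cF \frac{dx\,dy}{y^2}$ directly as an iterated integral, obtaining $\int_{-1/2}^{1/2}(1-x^2)^{-1/2}\,dx = \pi/3$, whereas the paper proceeds via Stokes' theorem with the primitive $\omega = -\frac{dz}{z-\zbar}$, reducing to a contour integral around $\partial\cF$ in which the vertical sides cancel (being $T$-translates) and only the arc from $\zeta_3$ to $\zeta_6$ contributes. Your route is the standard textbook computation and is arguably more elementary; the paper's Stokes approach is a warm-up for the same technique used later in Proposition~\ref{petexplicit}, where integrals over $\cF_\Gamma$ are again converted to boundary integrals. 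One minor point on the boundedness in~(a): your reduction ``to $\cF$ itself'' is slightly imprecise, since the translates $\gamma_i\cF$ comprising $\cF_\Gamma$ approach \emph{different} cusps $\gamma_i\infty$ rather than $\infty$; the paper handles this uniformly by noting that $f|_{\gamma_i}\cdot g|_{\gamma_i}$ is the slash of the weight-$2k$ cusp form $fg$, so its Fourier expansion at $\infty$ has no constant term for every coset representative~$\gamma_i$. Your subsequent remark about ``other cusps after applying a scaling matrix'' captures the same idea, just less cleanly.
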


\begin{proof}
(a) The first statement is computed as follows:
\begin{align*}
& f(\gamma z) \overline{g(\gamma z)} (\gamma z - \gamma \zbar)^k \\
 =& \det(\gamma)^{2(1-k)}(f|_\gamma (z) (cz+d)^k) \overline{(g|_\gamma(z) (cz +d)^k)} 
(\frac{az+b}{cz+d} - \frac{a\zbar+b}{c\zbar + d})^k\\
=&\det(\gamma)^{2-2k}f|_\gamma (z) \overline{g|_\gamma(z)} ((az+b)(c\zbar+d)-(a\zbar+b)(cz+d))^k\\
=& \det(\gamma)^{2-k} f|_\gamma (z) \overline{g|_\gamma (z)} (z - \zbar)^k,
\end{align*}
where we write $\gamma = \mat abcd$.
By the preceding computation, the function $G(z)$ is invariant under
$\gamma \in \Gamma$. Hence, it suffices to check that
$|G(z)|$ is bounded on the closure of any fundamental domain $\cF_\Gamma$ for~$\Gamma$.
For this, it is enough to verify for every $\gamma$ in a system of
representatives of $\Gamma \backslash \SL_2(\ZZ)$ that any of the functions $G(\gamma z)$
is bounded on the closure of the standard fundamental domain~$\cF$.
By the preceding computation, we also have 
$G(\gamma z) = f|_\gamma (z) \overline{g|_\gamma (z)} (z - \zbar)^k$ for
$\gamma \in \SL_2(\ZZ)$.
Note that $f(z) g(z)$ is a cusp form in $\Skg {2k}\Gamma\CC$, in particular,
for every $\gamma \in \SL_2(\ZZ)$ the function 
$f|_\gamma (z) g |_\gamma (z)$ has a Fourier expansion in~$\infty$ of
the form $\sum_{n=1}^\infty a_n e^{2 \pi i z n}$. This series converges
absolutely and uniformly on compact subsets of~$\HH$, in particular, for any $C > 1$
$$ K_\gamma := \sum_{n=1}^\infty |a_n e^{2 \pi i (x+iC) n}|
      = \sum_{n=1}^\infty |a_n| e^{-2 \pi C n} $$
is a positive real number, depending on $\gamma$ (in a system of representatives
$\Gamma \backslash \SL_2(\ZZ)$).
We have with $ z = x + iy$ and $y \ge C$
\begin{align*}
 |G(\gamma z)| \le (2 y)^k \sum_{n=1}^\infty |a_n| e^{-2 \pi y n}
&= (2 y)^k e^{-2 \pi y} \sum_{n=1}^\infty |a_n| e^{-2 \pi y (n-1)} \\
& \le (2 y)^k e^{-2 \pi y} \sum_{n=1}^\infty |a_n| e^{-2 \pi C (n-1)} \\
& \le (2 y)^k e^{-2 \pi y} K_\gamma e^{2 \pi C}.
\end{align*}
This tends to~$0$ if $y$ tends to~$\infty$. Consequently, the function~$G(\gamma z)$
is bounded on the closure of the standard fundamental domain, as desired.

(b) Again writing $\gamma = \mat abcd$ we have
$$\frac{d\gamma z}{dz}= \frac{d\frac{az+b}{cz+d}}{dz} = \frac{1}{(cz+d)^2} (a(cz+d)-(az+b)c) 
= \frac{\det(\gamma)}{(cz+d)^2},$$
which gives the claim.

(c) This is again a simple computation:
\begin{align*}
(\gamma z - \gamma \zbar)^{-2} d\gamma z \wedge d \gamma \zbar 
&= \det(\gamma)^2(\frac{az+b}{cz+d} - \frac{a\zbar+b}{c\zbar + d})^{-2} (cz+d)^{-2}(c\zbar+d)^{-2} dz\wedge d\zbar\\
&= (z-\zbar)^{-2} dz \wedge d\zbar,
\end{align*}
using~(b). The last statement is
$$ \frac{dz \wedge d\zbar}{(z-\zbar)^2} = \frac{(dx +idy)\wedge(dx - i dy)}{(2iy)^2} 
= \frac{-2i dx \wedge dy}{-4y^2} = \frac{i dx \wedge dy}{2y^2}.$$

(d) Due to the $\Gamma$-invariance, it suffices to show
$$ \int_\cF \frac{dz \wedge d\zbar}{(z - \zbar)^2} = \frac{i\pi}{6}.$$
Let $\omega = - \frac{dz}{z-\zbar}$. The total derivative of~$\omega$ is
$$d \omega = ((z-\zbar)^{-2} dz - (z - \zbar)^{-2} d \zbar) \wedge dz = \frac{dz \wedge d \zbar}{(z-\zbar)^2}.$$
Hence, Stokes' theorem yields
$$ \int_\cF \frac{dz \wedge d\zbar}{(z - \zbar)^2} = - \int_{\partial \cF} \frac{dz}{z-\zbar},$$
where $\partial \cF$ is the positively oriented border of~$\cF$, which we describe
concretely as the path $A$ from $\infty$ to~$\zeta_3$ on the vertical line, followed by the path
$C$ from $\zeta_3$ to~$\zeta_6$ on the unit circle and finally followed by $-TA$.
Hence with $z =x+iy$ we have
$$ \int_\cF \frac{dz \wedge d\zbar}{(z - \zbar)^2} 
= -\frac{1}{2i} \big(\int_A \frac{dz}{y} - \int_{TA} \frac{dz}{y} + \int_C \frac{dz}{y} \big)
= -\frac{1}{2i} \int_C \frac{dz}{y},$$
since $dz = dTz$. Using the obvious parametrisation of~$C$ we obtain
\begin{multline*}
 -\frac{1}{2i} \int_C \frac{dz}{y} 
= -\frac{1}{2i} \int_{2\pi/3}^{2\pi/6} \frac{1}{\Imag(e^{i\phi})} \frac{d e^{i\phi}}{d\phi}d\phi
= -\frac{1}{2} \int_{2\pi/3}^{2\pi/6} \frac{e^{i\phi}}{\Imag(e^{i\phi})} d\phi\\
= -\frac{1}{2} \int_{2\pi/3}^{2\pi/6} (\frac{\cos(\phi)}{\sin(\phi)}+ i) d\phi
= -\frac{i}{2} (\frac{2\pi}{6} - \frac{2\pi}{3}) = \frac{i\pi}{6},
\end{multline*}
since $\sin$ is symmetric around $\pi/2$ and $\cos$ is antisymmetric, so that
the integral over $\frac{\cos(\phi)}{\sin(\phi)}$ cancels.
\end{proof}

\begin{definition}
Let $\Gamma \le \SL_2(\ZZ)$ be a subgroup of finite index and 
$\mu := (\PSL_2(\ZZ):\Gammabar)$ be the index of
$\Gammabar = \Gamma/(\langle \pm 1 \rangle \cap \Gamma)$ in
$\PSL_2(\ZZ)$.
We define the {\em Petersson pairing} as
\begin{align*}
 \Mkg k\Gamma\CC \times \Skg k\Gamma\CC \to & \CC\\
(f,g) \mapsto &\frac{-1}{(2i)^{k-1}\mu} \int_{\cF_\Gamma} f(z) \overline{g(z)} (z-\zbar)^k 
\frac{dz \wedge d\zbar}{(z-\zbar)^2} \\
= &\frac{1}{\mu} \int_{\cF_\Gamma} f(z) \overline{g(z)} y^{k-2}
dx \wedge dy =: (f,g),
\end{align*}
where $\cF_\Gamma$ is any fundamental domain for~$\Gamma$.
\end{definition}

\begin{proposition}
\begin{enumerate}[(a)]
\item The integral in the Petersson pairing converges. It does not
depend on the choice of the fundamental domain $\cF_\Gamma$.
\item The Petersson pairing is a sesqui-linear pairing (linear in the first
and anti-linear in the second variable).
\item The restriction of the Petersson pairing to $\Skg k\Gamma\CC$ is a positive
definite scalar product (the {\em Petersson scalar product}).
\item If $f,g$ are modular (cusp) forms for the group~$\Gamma$ and $\Gamma' \le \Gamma$
is a subgroup of finite index, then the Petersson pairing of $f$ and~$g$
with respect to~$\Gamma$ gives the same value as the one with respect to~$\Gamma'$.
\end{enumerate}
\end{proposition}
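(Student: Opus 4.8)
The plan is to prove all four parts by a sequence of largely computational arguments, most of which reduce to the transformation formulas collected in Lemma~\ref{lmdiff}. First I would address (a): convergence of the integral. Writing $G(z) = f(z)\overline{g(z)}(z-\zbar)^k$, Lemma~\ref{lmdiff}(a) tells us $G$ is bounded on $\HH$ and $\Gamma$-invariant, while $\frac{dz \wedge d\zbar}{(z-\zbar)^2}$ is $\GL_2(\RR)^+$-invariant by Lemma~\ref{lmdiff}(c). Hence the integrand $f(z)\overline{g(z)}(z-\zbar)^k \frac{dz\wedge d\zbar}{(z-\zbar)^2}$ is the product of a bounded function and an invariant differential form, and the integral is taken over a fundamental domain of finite volume ($\mu\pi/3$ by Lemma~\ref{lmdiff}(d)); therefore it converges absolutely. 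Independence of the choice of $\cF_\Gamma$ follows because any two fundamental domains differ, up to measure-zero boundary pieces, by a partition into $\Gamma$-translates, and both $G$ and the differential form are $\Gamma$-invariant, so the contributions match.

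For (b), sesqui-linearity is immediate: the map $(f,g)\mapsto f(z)\overline{g(z)}$ is $\CC$-linear in $f$ and conjugate-linear in $g$ pointwise, and integration preserves this. For (c), I would first note that for $f=g$ a nonzero cusp form, the integrand in the second form of the pairing is $\frac{1}{\mu}|f(z)|^2 y^{k-2}$, which is a nonnegative real function (here $k\ge 2$ so $y^{k-2}\ge 0$), strictly positive on a nonempty open set since $f$ is holomorphic and not identically zero; hence $(f,f) > 0$. Combined with (b), this shows the restriction to $\Skg k\Gamma\CC$ is a positive definite Hermitian form.

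The main point, and the part requiring genuine care, is (d): compatibility under passage to a finite-index subgroup $\Gamma' \le \Gamma$. Here the key observation is that if $\mu = (\PSL_2(\ZZ):\Gammabar)$ and $\mu' = (\PSL_2(\ZZ):\Gammabar')$, then $(\Gammabar:\Gammabar') = \mu'/\mu =: d$, and a fundamental domain $\cF_{\Gamma'}$ for $\Gamma'$ can be taken as $\bigcup_{i=1}^{d}\gamma_i \cF_\Gamma$ where $\{\gamma_i\}$ is a system of representatives for $\Gammabar'\backslash\Gammabar$. Since the integrand $\omega := f(z)\overline{g(z)}(z-\zbar)^k\frac{dz\wedge d\zbar}{(z-\zbar)^2}$ is $\Gamma$-invariant (again Lemma~\ref{lmdiff}(a),(c)), each translate $\gamma_i\cF_\Gamma$ contributes the same value $\int_{\cF_\Gamma}\omega$, so $\int_{\cF_{\Gamma'}}\omega = d\int_{\cF_\Gamma}\omega$. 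The normalising factor $\frac{-1}{(2i)^{k-1}\mu'}$ for $\Gamma'$ equals $\frac{1}{d}\cdot\frac{-1}{(2i)^{k-1}\mu}$, and the two factors of $d$ cancel, yielding equality of the two Petersson pairings. The only subtlety to handle carefully is that $f$ and $g$, being modular forms for $\Gamma$, are a fortiori modular forms for $\Gamma'$, so that both sides are well-defined; and one must be slightly careful about the relation between $\Gamma$ and $\Gammabar$ (i.e. whether $-1\in\Gamma$) when counting cosets, but since the differential form and $G$ descend to $\Hbar/\Gammabar$ this causes no real trouble.
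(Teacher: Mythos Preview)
Your proposal is correct and follows essentially the same approach as the paper: boundedness of $G(z)=f(z)\overline{g(z)}(z-\zbar)^k$ plus finite volume gives convergence, $\Gamma$-invariance gives independence of the fundamental domain, positivity of $|f(z)|^2 y^{k-2}$ gives (c), and for (d) both you and the paper build $\cF_{\Gamma'}$ as a union of $\Gammabar$-translates of $\cF_\Gamma$ and observe that each translate contributes the same integral, with the factor $d=(\Gammabar:\Gammabar')$ cancelling against the change in normalisation $\mu'/\mu$. Your write-up is in fact slightly more explicit than the paper's (which leaves the cancellation of $d$ implicit), but there is no substantive difference in strategy.
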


\begin{proof}
(a) By Lemma~\ref{lmdiff} the integral converges, since the function
$$G(z) := f(z) \overline{g(z)} (z-\zbar)^k$$
is bounded on~$\cF_\Gamma$ and the
volume of $\cF_\Gamma$ for the measure in question is finite.
The integral does not depend on the choice of the fundamental domain
by the invariance of~$G(z)$ under~$\Gamma$.

(b) is clear.

(c) We have
$$(f,f) =\frac{1}{\mu}\int_{\cF_\Gamma} |f(z)|^2 y^{k-2}dx \wedge dy,$$
which is clearly non-negative. It is~$0$ if and only if $f$ is the zero function,
showing that the product is positive definite.

(d) If $\cF_\Gamma$ is a fundamental domain for~$\Gamma$, then
$\bigcup_{\gamma \in \Gamma' \backslash \Gamma} \gamma \cF_\Gamma$ is a fundamental
domain for~$\Gamma'$ (for any choice of representatives of $\Gamma' \backslash \Gamma$).
But on every $\gamma \cF_\Gamma$ the integral takes the same value.
\end{proof}

\begin{proposition}\label{petexplicit}
Let $f,g \in \Skg k\Gamma\CC$. We have
$$ (f,g) = \frac{-1}{(2i)^{k-1}\mu}\sum_{\gamma \in \Gammabar\backslash \PSL_2(\ZZ)} 
\int_{\zeta_3}^i \int_\infty^0 f|_\gamma (z) \overline{g|_\gamma(z)} (z-\zbar)^{k-2} dz d\zbar.$$
\end{proposition}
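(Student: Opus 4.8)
\textbf{Proof plan for Proposition~\ref{petexplicit}.}

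The plan is to start from the definition of the Petersson pairing and reduce the integral over an arbitrary fundamental domain to a sum over cosets of integrals over the standard fundamental domain~$\cF$, and then to further decompose~$\cF$ using Stokes' theorem in two variables in the spirit of the proof of Lemma~\ref{lmdiff}~(d). More precisely, first I would take the particular fundamental domain $\cF_\Gamma = \bigcup_{\gamma \in \Gammabar \backslash \PSL_2(\ZZ)} \gamma \cF$ for~$\Gamma$, legitimate by part~(a) of the preceding proposition. Using Lemma~\ref{lmdiff}~(a) and~(c), the integrand $f(z)\overline{g(z)}(z-\zbar)^k \frac{dz\wedge d\zbar}{(z-\zbar)^2}$ transforms under a substitution $z \mapsto \gamma z$ (with $\gamma \in \SL_2(\ZZ)$, so $\det\gamma = 1$) into $f|_\gamma(z)\overline{g|_\gamma(z)}(z-\zbar)^{k-2}\,dz \wedge d\zbar$; hence
$$ (f,g) = \frac{-1}{(2i)^{k-1}\mu} \sum_{\gamma \in \Gammabar\backslash\PSL_2(\ZZ)} \int_\cF f|_\gamma(z)\overline{g|_\gamma(z)}(z-\zbar)^{k-2}\,dz\wedge d\zbar.$$

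Next I would treat each summand. Fix $\gamma$ and write $h(z) := f|_\gamma(z)\overline{g|_\gamma(z)}$, which is (anti-)holomorphic in the appropriate sense and, crucially, $T$-periodic because $f|_\gamma g|_\gamma$ has a Fourier expansion at~$\infty$ (as noted in the proof of Lemma~\ref{lmdiff}~(a)); more precisely $f|_{\gamma}|_T = f|_{\gamma T}$ and $\gamma T$ lies in the same coset as $\gamma$ only after we are careful, but the Fourier expansion argument shows $h(z+1) = \overline{g|_\gamma(z+1)} f|_\gamma(z+1)$ agrees with $h(z)$ on the relevant level — this point needs checking. The idea is then to find a $1$-form $\omega$ on a neighbourhood of $\overline{\cF}$ with $d\omega = h(z)(z-\zbar)^{k-2}\,dz\wedge d\zbar$; the natural candidate is an iterated integral, $\omega = \left(\int_{\infty}^{z} h(w)(w-\zbar)^{k-2}\,dw\right) d\zbar$ up to sign and orientation bookkeeping, whose exterior derivative recovers the integrand since the $\zbar$-derivative of the inner integral contributes the $(z-\zbar)^{k-2}$-boundary term plus a $d\zbar\wedge d\zbar = 0$ term. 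Applying Stokes' theorem, $\int_\cF d\omega = \int_{\partial\cF}\omega$, and decomposing $\partial\cF$ into the vertical path~$A$ from~$\infty$ to~$\zeta_3$, the arc~$C$ from $\zeta_3$ to~$\zeta_6$, and $-TA$ exactly as in Lemma~\ref{lmdiff}~(d), the contributions of~$A$ and~$-TA$ cancel by the $T$-periodicity of~$h$ (here one uses $dz = d(Tz)$ and that $T$ translates~$A$ to the opposite vertical edge), leaving only the integral over the arc~$C$ from $\zeta_3$ to~$i$ and its reflection, which after reparametrising and using $\sigma$-symmetry of the arc becomes the stated double integral $\int_{\zeta_3}^i \int_\infty^0 f|_\gamma(z)\overline{g|_\gamma(z)}(z-\zbar)^{k-2}\,dz\,d\zbar$.

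The main obstacle I anticipate is the careful bookkeeping of orientations, signs, and the precise path of integration in the $z$-variable inside the inner integral: one must verify that the primitive $\omega$ is well-defined (the inner integral from $\infty$ must converge, which follows from the cusp-form decay in Lemma~\ref{lmdiff}~(a)), that the cancellation along the vertical edges genuinely uses only periodicity and not more, and that the arc~$C$ from $\zeta_3$ to~$\zeta_6$ splits at~$i$ into two halves related by~$\sigma$ so that one obtains the limits $\zeta_3$ to~$i$ in the final formula rather than $\zeta_3$ to~$\zeta_6$. A secondary subtlety is making sure the coset sum is over $\Gammabar\backslash\PSL_2(\ZZ)$ consistently and that replacing $\gamma$ by $\gamma T$ within a coset does not change the summand — this is exactly where the $T$-periodicity of $f|_\gamma \overline{g|_\gamma}$ is needed a second time, and it dovetails with the edge cancellation. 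Once these orientation and periodicity points are pinned down, the computation is essentially the one already carried out in Lemma~\ref{lmdiff}~(d), applied termwise.
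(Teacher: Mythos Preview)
Your plan has the right architecture---fundamental-domain decomposition, a Stokes primitive, boundary cancellation---but the mechanism you propose for the edge cancellation does not work, and this is the heart of the proof, not a bookkeeping detail. For a \emph{fixed} coset representative~$\gamma$, the function $f|_\gamma(z)\overline{g|_\gamma(z)}$ is not $T$-periodic: one has $f|_\gamma|_T=f|_{\gamma T}$, and $\gamma T$ lies in the same left coset $\Gamma\gamma$ only when $\gamma T\gamma^{-1}\in\Gamma$, which fails for generic~$\gamma$. (The Fourier expansion you cite from Lemma~\ref{lmdiff}(a) is in $e^{2\pi i z/h}$ with $h$ the cusp width, not in $e^{2\pi i z}$.) So the contributions of $A$ and $-TA$ do not cancel termwise, and the same obstruction blocks your ``$\sigma$-symmetry'' fold on the arc. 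Your secondary remark that ``replacing $\gamma$ by $\gamma T$ within a coset does not change the summand'' is likewise based on the false premise that $\gamma$ and $\gamma T$ are in the same coset.

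The correct mechanism, which the paper uses, has two ingredients you are missing. First, right multiplication by $T$ (resp.~$\sigma$) \emph{permutes} the cosets $\Gamma\gamma$, so after summing over~$\gamma$ one has $\sum_\gamma\omega_\gamma=\sum_\gamma\omega_{\gamma T}=\sum_\gamma\omega_{\gamma\sigma}$; the cancellation is global, not termwise. Second, and this is where the limits $\int_\infty^0$ in the statement come from: when one transforms $\int_{MC}\omega_\gamma$ back to an integral over~$C$ via $z\mapsto Mz$, one obtains $\omega_{\gamma M}$ \emph{plus} a base-point correction $\int_C\int_\infty^{M^{-1}\infty}f|_{\gamma M}(u)\overline{g|_{\gamma M}(z)}(u-\zbar)^{k-2}\,du\,d\zbar$, because the inner integral starts at~$\infty$. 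For $M=T$ this correction vanishes since $T^{-1}\infty=\infty$; for $M=\sigma$ it survives with $\sigma^{-1}\infty=0$, giving precisely the $\int_\infty^0$ in the formula over the half-arc~$B$ from $\zeta_3$ to~$i$. One further application of the coset permutation $\gamma\mapsto\gamma\sigma$ then yields the statement. A smaller point: your primitive should be $\omega_\gamma=\big(\int_\infty^z f|_\gamma(u)(u-\zbar)^{k-2}\,du\big)\overline{g|_\gamma(z)}\,d\zbar$, with the holomorphic $f|_\gamma$ alone inside the inner integral; your version with $h(w)=f|_\gamma(w)\overline{g|_\gamma(w)}$ in the $dw$-integral is not holomorphic in~$w$ and does not differentiate to the required $2$-form.
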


\begin{proof}
Let us write for short $G_\gamma (z,\zbar) = f|_\gamma (z) \overline{g|_\gamma(z)} (z-\zbar)^k$
for $\gamma \in \SL_2(\ZZ)$.
Then
$$ -(2i)^{k-1}\mu (f,g) 
= \int_{\bigcup_\gamma \gamma\cF} G(z,\zbar) \frac{dz \wedge d\zbar}{(z-\zbar)^2}
= \sum_\gamma \int_\cF G_\gamma(z,\zbar) \frac{dz \wedge d\zbar}{(z-\zbar)^2}$$
by Lemma~\ref{lmdiff}, where the union resp.\ sum runs over a fixed system
of coset representatives of $\Gammabar \backslash \PSL_2(\ZZ)$; by our
observations, everything is independent of this choice.
Consider the differential form
$$ \omega_\gamma := 
\big( \int_\infty^z f|_\gamma(u)(u-\zbar)^{k-2}du \big) \overline{g|_\gamma(z)} d\zbar.$$
Note that the integral converges since $f$ is a cusp form.
The total derivative of $\omega_\gamma$ is
$d\omega_\gamma = G_\gamma (z,\zbar) \frac{dz \wedge d\zbar}{(z-\zbar)^2}$.
Consequently, Stokes' theorem gives
$$ \sum_\gamma \int_\cF G_\gamma(z,\zbar) \frac{dz \wedge d\zbar}{(z-\zbar)^2} =
\sum_\gamma \int_{\partial \cF} \big( \int_\infty^z f|_\gamma(u)(u-\zbar)^{k-2}du \big) \overline{g|_\gamma(z)} d\zbar,$$
where as above $\partial \cF$ is the positively oriented border of the standard
fundamental domain~$\cF$, which we describe as the path~$A$ along the vertical line
from $\infty$ to $\zeta_3$, followed by the path~$B$ from $\zeta_3$ to~$i$ along
the unit circle, followed by $-\sigma B$ and by~$-TA$.

We now make a small calculation. Let for this $C$ be any (piecewise
continuously differentiable) path in~$\HH$ and $M \in \SL_2(\ZZ)$:
\begin{align*}
&\int_{M C} \int_\infty^z f|_\gamma(u) \overline{g|_\gamma(z)} (u-\zbar)^{k-2}du d\zbar\\
=& \int_C \int_\infty^{Mz} f|_\gamma(u) \overline{g|_\gamma(Mz)} (u-M\zbar)^{k-2} du \frac{dM\zbar}{d\zbar} d\zbar \\
=& \int_C \int_{M^{-1}\infty}^z f|_{\gamma M}(u) \overline{g|_{\gamma M}(z)} (u-\zbar)^{k-2} du d\zbar\\
=& \int_C \int_\infty^z f|_{\gamma M}(u) \overline{g|_{\gamma M}(z)} (u-\zbar)^{k-2} du d\zbar
- \int_C \int_\infty^{M^{-1}\infty} f|_{\gamma M}(u) \overline{g|_{\gamma M}(z)} (u-\zbar)^{k-2} du d\zbar.
\end{align*}
This gives
\begin{multline*}
\int_{C - MC} \int_{\infty}^z f|_\gamma(u) \overline{g|_\gamma(z)} (u-\zbar)^{k-2}du d\zbar = \\
\int_C \int_\infty^z (G_\gamma(u,\zbar)-G_{\gamma M}(u,\zbar))du d\zbar
+ \int_C \int_\infty^{M^{-1}\infty} G_{\gamma M}(u,\zbar) du d\zbar.
\end{multline*}
Continuing with the main calculation, we have
\begin{align*}
& -(2i)^{k-1}\mu(f,g) \\
=& \sum_\gamma \big[ \int_A \int_\infty^z (G_\gamma (u,\zbar) - G_{\gamma T} (u,\zbar)) du d\zbar
+ \int_A \int_\infty^{T^{-1}\infty} G_{\gamma T} (u,\zbar) du d\zbar \big] \\
+& \sum_\gamma \big[ \int_B \int_\infty^z (G_\gamma (u,\zbar) - G_{\gamma \sigma} (u,\zbar)) du d\zbar
+ \int_B \int_\infty^{\sigma^{-1}\infty} G_{\gamma \sigma} (u,\zbar) du d\zbar \big]\\
=& \sum_\gamma \int_B \int_\infty^0 G_{\gamma \sigma}(u,\zbar) du d\zbar,
\end{align*}
using $T^{-1} \infty = \infty$, $\sigma^{-1} \infty = 0$ and the fact that the $\gamma T$
and $\gamma \sigma$ are just permutations of the cosets.
\end{proof}

\subsection{Theory: The Eichler-Shimura map}

Let $\Gamma \le \SL_2(\ZZ)$ be a subgroup of finite index.
We fix some $z_0 \in \HH$.
For $f \in \Mkg k\Gamma\CC$ with $k \ge 2$ and $\gamma,\delta$ in $\ZZ^{2 \times2}$ with positive determinant, let
$$ I_f(\gamma z_0,\delta z_0) := \int_{\gamma z_0}^{\delta z_0} f(z) (Xz+Y)^{k-2} dz \in V_{k-2}(\CC).$$

The integral is to be taken coefficient wise.
Note that it is independent of the chosen path since we are integrating holomorphic functions.

\begin{lemma}\label{esmaplem}
For any $z_0 \in \HH$ and any matrices $\gamma,\delta \in \ZZ^{2 \times 2}$ with
positive determinant we have
$$ I_f(z_0,\gamma \delta z_0) = I_f(z_0,\gamma z_0) + I_f(\gamma z_0,\gamma \delta z_0)$$
and
$$ I_f(\gamma z_0,\gamma \delta z_0) = \det(\gamma)^{2-k} (\gamma.\big(I_{f|_\gamma} (z_0,\delta z_0)\big) )
 = (\det(\gamma)^{-1} \gamma).\big(I_{f|_\gamma} (z_0,\delta z_0)\big).  $$
\end{lemma}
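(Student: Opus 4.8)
The first identity is immediate and I would dispose of it in one line: the $V_{k-2}(\CC)$-valued function $z \mapsto f(z)(Xz+Y)^{k-2}$ is holomorphic, so its integral along a path from $z_0$ to $\gamma\delta z_0$ is independent of the path and may be computed along a concatenation of a path from $z_0$ to $\gamma z_0$ with one from $\gamma z_0$ to $\gamma\delta z_0$; this gives $I_f(z_0,\gamma\delta z_0) = I_f(z_0,\gamma z_0) + I_f(\gamma z_0,\gamma\delta z_0)$.

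For the second identity the plan is to substitute $z = \gamma w$ in the integral defining $I_f(\gamma z_0,\gamma\delta z_0)$, so that $w$ runs from $z_0$ to $\delta z_0$. Writing $\gamma = \mat abcd$, three ingredients enter. First, $dz = \frac{\det(\gamma)}{(cw+d)^2}\,dw$ by Lemma~\ref{lmdiff}(b). Second, the defining relation of the slash operator gives $f(\gamma w) = (f|_\gamma)(w)\,\frac{(cw+d)^k}{\det(\gamma)^{k-1}}$. Third — the key algebraic point — one checks directly that
$$ X(\gamma w) + Y = \frac{(aw+b)X + (cw+d)Y}{cw+d} = \frac{\gamma.(Xw+Y)}{cw+d},$$
where $\gamma$ acts on $V_1(\CC)$ (hence, the action being a ring action, on every $V_n(\CC)$) by ``our'' $\matz$-action $\mat abcd.P(X,Y) = P(aX+cY,bX+dY)$. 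Raising to the $(k-2)$-nd power and substituting everything, the factors $(cw+d)^{\pm}$ cancel ($k - (k-2) - 2 = 0$), the powers of $\det(\gamma)$ combine to $\det(\gamma)^{2-k}$, and one is left with
$$ I_f(\gamma z_0,\gamma\delta z_0) = \det(\gamma)^{2-k}\int_{z_0}^{\delta z_0} (f|_\gamma)(w)\,\gamma.\big((Xw+Y)^{k-2}\big)\,dw.$$
Since $\gamma$ acts $\CC$-linearly on the finite-dimensional space $V_{k-2}(\CC)$ and the integral is taken coefficient-wise, $\gamma$ commutes with $\int$, yielding $\det(\gamma)^{2-k}\,\gamma.\big(I_{f|_\gamma}(z_0,\delta z_0)\big)$.

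Finally I would identify $\det(\gamma)^{2-k}\gamma$ with $\det(\gamma)^{-1}\gamma$ as operators on $V_{k-2}(\CC)$: the scalar matrix $\lambda I$ acts on $V_n(\CC)$ by $P(X,Y)\mapsto P(\lambda X,\lambda Y) = \lambda^n P(X,Y)$, so $(\det(\gamma)^{-1}\gamma).v = \det(\gamma)^{-(k-2)}(\gamma.v) = \det(\gamma)^{2-k}(\gamma.v)$ for $v \in V_{k-2}(\CC)$, which gives the second displayed equality in the lemma. I do not expect a genuine obstacle here: beyond bookkeeping of the exponents, the only step requiring a moment's care is the displayed identity for $X(\gamma w)+Y$ — and it is precisely this compatibility between the Möbius action on $\HH$ and the polynomial action on $V_{k-2}$ that makes the Eichler--Shimura integral equivariant, so it is worth stating cleanly.
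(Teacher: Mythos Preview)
Your proof is correct and follows essentially the same route as the paper's own argument: substitute $z=\gamma w$, use Lemma~\ref{lmdiff}(b) for the Jacobian, unwind the slash operator, and recognise $(cw+d)(X\gamma w+Y)$ as $\gamma.(Xw+Y)$ so that all powers of $(cw+d)$ cancel. The paper omits your final remark explaining why $\det(\gamma)^{2-k}\gamma$ and $\det(\gamma)^{-1}\gamma$ agree on $V_{k-2}(\CC)$, but otherwise the computations are line-for-line the same.
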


\begin{proof}
The first statement is clear. Write $\gamma = \mat abcd$.
Recall that by Lemma~\ref{lmdiff}~(b), we have $d\gamma z = \frac{\det(\gamma)}{(cz+d)^2} dz.$
We compute further
\begin{align*}
I_f(\gamma z_0,\gamma\delta z_0) & = \int_{\gamma z_0}^{\gamma \delta z_0} f(z) (Xz+Y)^{k-2} dz\\
& = \int_{z_0}^{\delta z_0} f(\gamma z) (X\gamma z+Y)^{k-2} \frac{d\gamma z}{dz} dz\\
&= \det(\gamma)^{2-k}\int_{z_0}^{\delta z_0} f|_\gamma (z) (cz+d)^{k-2} (X\frac{az+b}{cz+d}+Y)^{k-2} dz\\
&= \det(\gamma)^{2-k}\int_{z_0}^{\delta z_0} f|_\gamma (z) (X(az+b)+Y(cz+d))^{k-2} dz\\
&= \det(\gamma)^{2-k}\int_{z_0}^{\delta z_0} f|_\gamma (z) ((Xa+Yc)z + (Xb+Yd))^{k-2} dz\\
&= \det(\gamma)^{2-k}\int_{z_0}^{\delta z_0} f|_\gamma (z) (\gamma.(Xz+Y))^{k-2} dz\\
&= \det(\gamma)^{2-k}\cdot \gamma .\big(\int_{z_0}^{\delta z_0} f|_\gamma (z) (Xz+Y)^{k-2} dz\big)\\
&= \det(\gamma)^{2-k}\cdot \gamma .\big(I_{f|_\gamma } (z_0,\delta z_0)\big).
\end{align*}
We recall that for a polynomial $P(X,Y)$ we have the action
$$(g.P)(X,Y) = P((X,Y) \mat abcd) = P(Xa+Yc,Xb+Yd).$$
\end{proof}

\begin{definition}
The space of {\em antiholomorphic cusp forms} $\overline{\Skg k\Gamma\CC}$
consists of the functions $z \mapsto \fbar(z) := \overline{f(z)}$
with $f \in \Skg k\Gamma\CC$.
\end{definition}

We can consider an antiholomorphic cusp form as a power series in~$\zbar$. For instance,
if $f(z)=\sum_{n=1}^\infty a_n e^{2 \pi i n z}$, then $\overline{f(z)} = \sum_{n=1}^\infty \overline{a_n} e^{2 \pi i n (-\zbar)}
= \tilde{f}(-\zbar)$, where $\tilde{f}(z)=\sum_{n=1}^\infty \overline{a_n} e^{2 \pi i n z}$.
Note that
\begin{equation}\label{eq:int-bar}
 \int_\alpha \overline{F(z)} d\zbar 
= \int_0^1 \overline{F(\alpha(t))} \frac{d\overline{\alpha}}{dt} dt
= \int_0^1 \overline{F(\alpha(t))} \overline{\frac{d\alpha}{dt}} dt
= \overline{\int_0^1 F(\alpha(t)) \frac{d\alpha}{dt} dt}
= \overline{\int_\alpha F(z) dz}
\end{equation}
for any piecewise analytic path $\alpha:[0,1] \to \CC$ and any integrable
complex valued function~$F$.
This means for $f \in \Skg k\Gamma\CC$:
$$ \overline{I_f(\gamma z_0, \delta z_0)} = 
\int_{\gamma z_0}^{\delta z_0} \overline{f(z)} (X\overline{z}+Y)^{k-2} d\overline{z} \in V_{k-2}(\CC).$$

\begin{proposition}\label{esmap}
Let $k \ge 2$ and $\Gamma \le \SL_2(\ZZ)$ be a subgroup of finite
index and fix $z_0,z_1 \in \HH$.
\begin{enumerate}[(a)]
\item The {\em Eichler-Shimura map}
\begin{align*}
\Mkg k\Gamma\CC \oplus \overline{\Skg k\Gamma\CC} & \to \h^1(\Gamma,V_{k-2}(\CC)),\\
(f,\gbar) & \mapsto (\gamma \mapsto I_f(z_0,\gamma z_0) + \overline{I_g(z_1,\gamma z_1)})
\end{align*}
is a well-defined homomorphism of $\CC$-vector spaces.
It does not depend on the choice of $z_0$ and~$z_1$.
\item The {\em induced Eichler-Shimura map}
\begin{align*}
\Mkg k\Gamma\CC \oplus \overline{\Skg k\Gamma\CC} & \to \h^1(\SL_2(\ZZ),\Hom_{\CC[\Gamma]}(\CC[\SL_2(\ZZ)],V_{k-2}(\CC))),\\
(f,\gbar) & \mapsto (a \mapsto (b \mapsto I_f(b z_0,ba z_0) + \overline{I_g(b z_1,ba z_1)}))
\end{align*}
is a well-defined homomorphism of $\CC$-vector spaces.
It does not depend on the choice of $z_0$ and~$z_1$.
Via the map from Shapiro's lemma, this homomorphism coincides with
the one from~(a).
\end{enumerate}
\end{proposition}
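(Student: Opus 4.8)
\textbf{Proof plan for Proposition~\ref{esmap}.}

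The plan is to treat the two parts in parallel, since (b) is formally just the ``Shapiro-transported'' version of (a). First I would check that each map lands in cocycles. Set $c_f(\gamma) := I_f(z_0,\gamma z_0)$. Using the cocycle relation $I_f(z_0,\gamma\delta z_0) = I_f(z_0,\gamma z_0) + I_f(\gamma z_0,\gamma\delta z_0)$ from Lemma~\ref{esmaplem} together with the transformation formula $I_f(\gamma z_0,\gamma\delta z_0) = \gamma.\big(I_{f|_\gamma}(z_0,\delta z_0)\big)$ and the invariance $f|_\gamma = f$ for $\gamma \in \Gamma$ (here $\det\gamma = 1$, so the $\det$ factor is trivial), one gets
$$ c_f(\gamma\delta) = c_f(\gamma) + \gamma.c_f(\delta),$$
which is exactly the $1$-cocycle condition from Exercise~\ref{exsmallh}. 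The antiholomorphic part $c_{\gbar}(\gamma) := \overline{I_g(z_1,\gamma z_1)}$ is handled identically: conjugating the identities of Lemma~\ref{esmaplem} (and using \eqref{eq:int-bar} to commute conjugation past the integral) gives the same relation, because complex conjugation is $\mathbb{Q}$-linear and $V_{k-2}(\CC)$ has the $\CC[\Gamma]$-action through the \emph{real} matrix entries. So $c_f + c_{\gbar} \in \Z^1(\Gamma,V_{k-2}(\CC))$, and $\CC$-linearity in $(f,\gbar)$ is clear from linearity of the integral.

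Second, I would prove independence of $z_0$ (and $z_1$). Replacing $z_0$ by $z_0'$ changes $c_f(\gamma)$ by
$$ I_f(z_0',\gamma z_0') - I_f(z_0,\gamma z_0) = I_f(z_0',z_0) + I_f(\gamma z_0, \gamma z_0') = I_f(z_0',z_0) - \gamma.I_f(z_0',z_0),$$
where I used $I_f(\gamma z_0,\gamma z_0') = \gamma.I_{f|_\gamma}(z_0,z_0') = \gamma.I_f(z_0,z_0')$ again and additivity of $I_f$. Setting $m := I_f(z_0',z_0) \in V_{k-2}(\CC)$, this difference is $\gamma \mapsto (1-\gamma)m$, i.e.\ a coboundary (Exercise~\ref{exsmallh}), so the class in $\h^1$ is unchanged. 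The same computation with conjugates handles the dependence on $z_1$. This shows the map of~(a) is well-defined into cohomology.

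Third, for part~(b) I would run the identical argument verbatim but with the base point ``$z_0$'' replaced by the family $\{b z_0\}_{b \in \SL_2(\ZZ)}$: define $C_f(a)(b) := I_f(bz_0, ba z_0)$; the cocycle check in the variable $a$ and the coboundary computation for changing $z_0$ go through exactly as above (now $b$ ranges over $\SL_2(\ZZ)$ but the needed identities in Lemma~\ref{esmaplem} hold for all matrices of positive determinant). One must also check that $b \mapsto C_f(a)(b)$ is genuinely an element of $\Coind_\Gamma^{\SL_2(\ZZ)} V_{k-2}(\CC) = \Hom_{\CC[\Gamma]}(\CC[\SL_2(\ZZ)],V_{k-2}(\CC))$, i.e.\ that it is $\Gamma$-equivariant: for $\delta \in \Gamma$, $C_f(a)(\delta b) = I_f(\delta b z_0, \delta b a z_0) = \delta.I_{f|_\delta}(bz_0,baz_0) = \delta.C_f(a)(b)$, using $f|_\delta = f$. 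Finally, the compatibility with Shapiro's lemma: recall (Proposition~\ref{shapiro}) that $\Sh$ sends a cochain $c$ on $\SL_2(\ZZ)$ with values in $\Coind_\Gamma^{\SL_2(\ZZ)}V$ to $\gamma \mapsto c(\gamma)(1)$. Applying this to $C_f$ gives $\gamma \mapsto C_f(\gamma)(1) = I_f(z_0,\gamma z_0) = c_f(\gamma)$, and likewise for the antiholomorphic piece, so $\Sh$ carries the class of~(b) to the class of~(a), as claimed.

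The routine parts here are genuinely routine given Lemma~\ref{esmaplem}; the one place to be slightly careful is the interaction of complex conjugation with the $\CC[\Gamma]$-module structure on $V_{k-2}(\CC)$ in the antiholomorphic summand --- one must note that conjugation is only $\RR$-linear, yet it still commutes with the $\Gamma$-action because that action is defined over $\ZZ$ (the matrix entries are integers), so $\overline{\gamma.v} = \gamma.\overline{v}$, and the coboundary/cocycle identities survive conjugation. That is the only conceptual subtlety; everything else is bookkeeping with the integral identities already established.
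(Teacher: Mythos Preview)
Your proposal is correct and follows essentially the same approach as the paper's proof: both use Lemma~\ref{esmaplem} to verify the cocycle condition and to show that a change of base point alters the cocycle by a coboundary, and both check the Shapiro compatibility by evaluating at $b=1$. Your explicit remark about why complex conjugation commutes with the $\Gamma$-action on $V_{k-2}(\CC)$ (because the action is defined over $\ZZ$) is a useful addition that the paper leaves implicit; conversely, the paper spells out the cocycle computation for~(b) in terms of the coinduced-module action $(a_1.\phi)(b)=\phi(ba_1)$ a bit more explicitly than your ``goes through exactly as above'', but your sketch is adequate since the needed identity is just additivity of $I_f$ plus the definition of that action.
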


\begin{proof}
(a) For checking that the map is well-defined,
it suffices to compute that $\gamma \mapsto I_f(z_0,\gamma z_0)$ is
a $1$-cocycle:
$$ I_f(z_0,\gamma \delta z_0) 
= I_f(z_0,\gamma z_0) + I_f(\gamma z_0 , \gamma \delta z_0)
= I_f(z_0,\gamma z_0) + \gamma.I_f( z_0 , \delta z_0),$$
using Lemma~\ref{esmaplem} and $f|_\gamma = f$ since $\gamma \in \Gamma$.

The independence of the base point is seen as follows.
Let $\tilde{z}_0$ be any base point.
$$ I_f(\tilde{z}_0,\gamma \tilde{z}_0) = I_f (\tilde{z}_0, z_0) + I_f(z_0, \gamma z_0) + I_f(\gamma z_0, \gamma \tilde{z}_0)
= I_f(z_0,\gamma z_0) + (1-\gamma) I_f(\tilde{z}_0,z_0).$$
The difference of the cocycles $(\gamma \mapsto I_f(\tilde{z}_0,\gamma \tilde{z}_0))$
and $(\gamma \mapsto I_f(z_0,\gamma z_0))$ is hence the coboundary
$(\gamma \mapsto (1-\gamma)I_f(\tilde{z}_0,z_0))$.

(b) We first check that the map $(b \mapsto I_f(b z_0,ba z_0) + \overline{I_g(b z_1,ba z_1)})$
is indeed in the coinduced module
$\Hom_{\CC[\Gamma]}(\CC[\SL_2(\ZZ)],V_{k-2}(\CC))$.
For that let $\gamma \in \Gamma$. We have
$$ I_f(\gamma b z_0,\gamma ba z_0) = \gamma.(I_f(b z_0,ba z_0))$$
by Lemma~\ref{esmaplem}, as desired.
The map $\phi(a) := (b \mapsto I_f(b z_0,ba z_0) + \overline{I_g(b z_1,ba z_1)})$
is a cocycle:
\begin{align*}
\phi(a_1 a_2)(b) &= I_f(b z_0,ba_1 a_2 z_0) + \overline{I_g(b z_1,ba_1a_2 z_1)} \\
                 &=I_f(b z_0,ba_1 z_0) + I_f(ba_1 z_0,ba_1 a_2 z_0) 
+ \overline{I_g(b z_1,ba_1 z_1)} + \overline{I_g(ba_1 z_1,ba_1 a_2 z_1)} \\
& = \phi(a_1)(b) + \phi(a_2) (b a_1) = \phi(a_1)(b) + (a_1.(\phi(a_2)))(b), 
\end{align*}
by the definition of the left action of $\SL_2(\ZZ)$ on the coinduced module.
Note that the map in Shapiro's lemma in our situation is given by
$$ \phi \mapsto (\gamma \mapsto \phi(\gamma)(1) = I_f(z_0,\gamma z_0)) + \overline{I_g(z_1,\gamma z_1))} ,$$
which shows that the maps from (a) and (b) coincide.
The independence from the base point in~(b) now follows
from the independence in~(a).
\end{proof}

Next we identify the cohomology of $\SL_2(\ZZ)$ with the one of $\PSL_2(\ZZ)$.

\begin{proposition}
Let $\Gamma \le \SL_2(\ZZ)$ be a subgroup of finite index and let $R$
be a ring in which $2$ is invertible. Let $V$ be a left $R[\Gamma]$-module.
Assume that either $-1  \not\in \Gamma$ or $-1 \in \Gamma$ acts trivially on~$V$.
Then the inflation map
$$\h^1(\PSL_2(\ZZ),\Hom_{R[\Gammabar]}(R[\PSL_2(\ZZ)],V)) \xrightarrow{\infl} \h^1(\SL_2(\ZZ),\Hom_{R[\Gamma]}(R[\SL_2(\ZZ)],V))$$
is an isomorphism. We shall identify these two $R$-modules from now on.
\end{proposition}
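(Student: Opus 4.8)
The strategy is to apply the Hochschild--Serre exact sequence (Theorem~\ref{thm:hochschild-serre}) to the central normal subgroup $H := \langle -1 \rangle$ of $G := \SL_2(\ZZ)$, for which $G/H = \PSL_2(\ZZ)$, with coefficient module $A := \Hom_{R[\Gamma]}(R[\SL_2(\ZZ)],V) = \Coind_\Gamma^{\SL_2(\ZZ)}(V)$, regarded as a left $R[G]$-module by $(g.\phi)(g') = \phi(g'g)$. Since $H$ has order~$2$ and $2$ is invertible in~$R$, Proposition~\ref{corres}~(b) gives $\h^1(H,A) = 0$; as the term of the five-term sequence of Theorem~\ref{thm:hochschild-serre} sitting between $\h^1(G,A)$ and $\h^2(G/H,A^H)$ is a subquotient of $\h^1(H,A)$ (namely $\h^1(H,A)^{G/H}$), this forces the inflation map
$$\infl \colon \h^1\big(\PSL_2(\ZZ), A^H\big) \longrightarrow \h^1\big(\SL_2(\ZZ), A\big)$$
to be an isomorphism. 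It therefore remains to identify the $R[\PSL_2(\ZZ)]$-module $A^H$ with $\Hom_{R[\Gammabar]}(R[\PSL_2(\ZZ)],V) = \Coind_{\Gammabar}^{\PSL_2(\ZZ)}(V)$ in a way compatible with inflation.

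Write $\pi \colon \SL_2(\ZZ) \twoheadrightarrow \PSL_2(\ZZ)$ for the projection. Because $-1$ is central, the condition $(-1).\phi = \phi$ on $\phi \in A$ reads $\phi(-g) = \phi(g)$ for all $g$, i.e.\ $\phi = \overline{\phi} \circ \pi$ for a unique map $\overline{\phi} \colon \PSL_2(\ZZ) \to V$. The $R[\Gamma]$-equivariance $\phi(\gamma g) = \gamma.\phi(g)$ then becomes $\overline{\phi}(\bar\gamma\,\bar g) = \bar\gamma.\overline{\phi}(\bar g)$ for $\bar\gamma \in \Gammabar$, once we know that the action of $\gamma \in \Gamma$ on $V$ depends only on its class $\bar\gamma$. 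This is exactly the point at which the hypothesis enters: if $-1 \notin \Gamma$, the projection $\Gamma \to \Gammabar$ is an isomorphism and there is nothing to prove; if $-1 \in \Gamma$ acts trivially on~$V$, then the two lifts $\gamma$ and $-\gamma$ of $\bar\gamma$ act identically on~$V$, so the $\Gamma$-action descends to an $R[\Gammabar]$-module structure on~$V$. Thus $\phi \mapsto \overline{\phi}$ is a well-defined $R$-linear bijection $A^H \to \Coind_{\Gammabar}^{\PSL_2(\ZZ)}(V)$, with inverse $\overline{\phi} \mapsto \overline{\phi}\circ\pi$.

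Finally one checks $\PSL_2(\ZZ)$-equivariance: for $\bar h \in \PSL_2(\ZZ)$ with a lift $h \in \SL_2(\ZZ)$, the action of $\bar h$ on $A^H$ is induced by~$h$, and $(h.\phi)(g) = \phi(gh)$ corresponds under $\phi \mapsto \overline{\phi}$ to $\overline{\phi}(\bar g\,\bar h) = (\bar h.\overline{\phi})(\bar g)$. Since inflation $\h^1(\PSL_2(\ZZ), A^H) \to \h^1(\SL_2(\ZZ), A)$ is, by its definition in terms of restriction along $\pi$ composed with the map induced by $A^H \hookrightarrow A$, visibly transported by this identification to the map in the statement, the claim follows. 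The argument has no essential difficulty: the only point requiring real care is the case distinction guaranteeing that the $\Gamma$-action on $V$ factors through $\Gammabar$, together with the routine verification that the identification $A^H \cong \Coind_{\Gammabar}^{\PSL_2(\ZZ)}(V)$ intertwines the two inflation maps.
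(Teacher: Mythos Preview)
Your proof is correct and follows essentially the same approach as the paper: both apply the Hochschild--Serre five-term sequence with $H=\langle -1\rangle$, use Proposition~\ref{corres} together with the invertibility of~$2$ to kill $\h^1(H,A)$, and identify $A^{\langle -1\rangle}$ with $\Coind_{\Gammabar}^{\PSL_2(\ZZ)}(V)$ via the same case distinction on whether $-1 \in \Gamma$. You are somewhat more explicit than the paper about the $\PSL_2(\ZZ)$-equivariance of this identification and its compatibility with inflation, but the underlying argument is the same.
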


\begin{proof}
If $-1 \not\in \Gamma$, then $\Gamma \cong \Gammabar$ and
$\Hom_{R[\Gamma]}(R[\SL_2(\ZZ)],V)^{\langle -1 \rangle}$ consists of all the
functions satisfying $f(g) = f(-g)$ for all $g \in \SL_2(\ZZ)$, which are precisely
the functions in $\Hom_{R[\Gammabar]}(R[\PSL_2(\ZZ)],V)$.

If $-1 \in \Gamma$ and $-1$ acts trivially on~$V$, then $f(-g) = (-1).f(g) = f(g)$
and so $-1$ already acts trivially on $\Hom_{R[\Gamma]}(R[\SL_2(\ZZ)],V)$. This
$R[\SL_2(\ZZ)]$-module is then naturally isomorphic to $\Hom_{R[\Gammabar]}(R[\PSL_2(\ZZ)],V)$
since any function is uniquely determined on its classes modulo $\langle -1 \rangle$.

Due to the invertibility of~$2$, the Hochschild-Serre exact sequence (Theorem~\ref{thm:hochschild-serre})
shows that inflation indeed gives the desired isomorphism because the third term
$\h^1(\langle \pm 1\rangle, \Hom_{R[\Gamma]}(R[\SL_2(\ZZ)],V))$ in the inflation-restriction sequence is zero
(see Proposition~\ref{corres}).
\end{proof}

\begin{proposition}\label{esres}
The kernel of the Eichler-Shimura map composed with the restriction
$$\Mkg k\Gamma\CC \oplus \overline{\Skg k\Gamma\CC}  
\to \h^1(\Gamma,V_{k-2}(\CC)) \to 
\prod_{c \in \Gamma \backslash \PP^1(\QQ)} \h^1(\Gamma_c,V_{k-2}(\CC))$$
is equal to $\Skg k\Gamma\CC \oplus \overline{\Skg k\Gamma\CC}$.
In particular, the image of $\Skg k\Gamma\CC \oplus \overline{\Skg k\Gamma\CC}$
under the Eichler-Shimura map lies in the parabolic cohomology
$\Hpar^1(\Gamma, V_{k-2}(\CC))$.
\end{proposition}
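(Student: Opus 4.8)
The plan is to analyse the restriction of the Eichler--Shimura cocycle to each cusp-stabiliser $\Gamma_c$ separately, reduce by conjugation to the case $c=\infty$, and then extract a single coordinate of $V_{k-2}(\CC)$ which detects exactly the constant term of $f$ at that cusp. Recall first that $\Hpar^1(\Gamma,V_{k-2}(\CC))$ is by definition (equation~\ref{pardef}) the kernel of precisely the restriction map appearing in the statement, once one identifies $\Gamma\backslash\PSL_2(\ZZ)/\langle T\rangle$ with $\Gamma\backslash\PP^1(\QQ)$ and $\Gamma\cap\langle gTg^{-1}\rangle$ with the stabiliser $\Gamma_{g\infty}$; so the parabolicity claim will be immediate once we show the kernel equals $\Skg k\Gamma\CC\oplus\overline{\Skg k\Gamma\CC}$. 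Fix a cusp $c=g_c.\infty$ with $g_c\in\SL_2(\ZZ)$; in $\PSL_2(\ZZ)$ the stabiliser $\Gamma_c$ is the infinite cyclic group generated by $\delta_c:=g_cT^{h_c}g_c^{-1}$, where $h_c\ge 1$ is the width. By Exercise~\ref{exfree} a $1$-cocycle restricted to $\langle\delta_c\rangle$ is a coboundary iff its value at $\delta_c$ lies in $(1-\delta_c)V_{k-2}(\CC)$. Using base-point independence (Proposition~\ref{esmap}) with base point $g_cw_0$ and Lemma~\ref{esmaplem} (note $\det g_c=1$) one gets $I_f(g_cw_0,\delta_cg_cw_0)=g_c.\bigl(I_{f|g_c}(w_0,T^{h_c}w_0)\bigr)$, and likewise for $g$, while $(1-\delta_c)V_{k-2}(\CC)=g_c.\bigl((1-T^{h_c})V_{k-2}(\CC)\bigr)$. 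Hence the restriction of the Eichler--Shimura cocycle of $(f,\gbar)$ to $\Gamma_c$ is a coboundary iff
$$ I_{f|g_c}(w_0,T^{h_c}w_0)+\overline{I_{g|g_c}(w_1,T^{h_c}w_1)}\in(1-T^{h_c})V_{k-2}(\CC),$$
where $f|g_c$, $g|g_c$ are modular (resp.\ cusp) forms for $g_c^{-1}\Gamma g_c$, fixed by $T^{h_c}$, whose constant terms at $\infty$ are the values of $f$, $g$ at $c$.

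The two computational ingredients I would then establish are elementary. First, a direct computation with the single unipotent Jordan block $T^{h_c}=\mat 1{h_c}01$ (acting by $(X,Y)\mapsto(X,h_cX+Y)$) shows that $(1-T^{h_c})V_{k-2}(\CC)$ is exactly the hyperplane of polynomials with vanishing $Y^{k-2}$-coefficient, spanned by $X^{k-2},X^{k-3}Y,\dots,XY^{k-3}$. Second, since the $Y^{k-2}$-coefficient of $(Xz+Y)^{k-2}$ is the constant $1$, the $Y^{k-2}$-coefficient of $I_{f|g_c}(w_0,T^{h_c}w_0)=\int_{w_0}^{w_0+h_c}(f|g_c)(z)(Xz+Y)^{k-2}\,dz$ equals $\int_{w_0}^{w_0+h_c}(f|g_c)(z)\,dz=a_0h_c$, where $a_0$ is the constant term of the $q$-expansion of $f|g_c$ (the Fourier terms of positive index integrate to zero over a full period); symmetrically the $Y^{k-2}$-coefficient of $\overline{I_{g|g_c}(w_1,T^{h_c}w_1)}$ is $\overline{b_0h_c}$ with $b_0$ the constant term of $g|g_c$, and $b_0=0$ since $g$ is a cusp form.

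Combining these, the restriction of the cocycle of $(f,\gbar)$ to $\Gamma_c$ is a coboundary iff $a_0h_c=0$, i.e.\ iff $f$ vanishes at $c$. Letting $c$ run over all cusps, $(f,\gbar)$ lies in the kernel iff $f$ vanishes at every cusp, i.e.\ iff $f\in\Skg k\Gamma\CC$; since $\gbar$ is already a cusp form this is exactly the assertion that the kernel is $\Skg k\Gamma\CC\oplus\overline{\Skg k\Gamma\CC}$, and the parabolicity statement follows as noted. (For $k$ odd with $-1\in\Gamma$ all these spaces vanish, so the claim is trivial there.)

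The bookkeeping I expect to be the main obstacle is keeping the complex conjugation coherent through the conjugation by $g_c$: one uses that $g_c$ and $T^{h_c}$ are real matrices, so conjugation commutes with their actions on $V_{k-2}(\CC)$, which is what makes both the identity $\overline{I_{g|g_c}(w_1,T^{h_c}w_1)}-\overline{I_{g|g_c}(w_1',T^{h_c}w_1')}=(1-T^{h_c})\overline{I_{g|g_c}(w_1,w_1')}$ and the reduction $(1-\delta_c)V_{k-2}(\CC)=g_c.\bigl((1-T^{h_c})V_{k-2}(\CC)\bigr)$ go through. There is also a slicker route to the inclusion $\Skg k\Gamma\CC\oplus\overline{\Skg k\Gamma\CC}\subseteq\ker$ that bypasses the coordinate computation: for $f$ a cusp form, $I_{f|g_c}(w_0,T^{h_c}w_0)\to 0$ in $V_{k-2}(\CC)$ as $\Imag(w_0)\to\infty$ (exponential decay of $f|g_c$ against polynomial growth along a path of bounded length), while Lemma~\ref{esmaplem} shows the class of $I_{f|g_c}(w_0,T^{h_c}w_0)$ in $V_{k-2}(\CC)/(1-T^{h_c})V_{k-2}(\CC)$ is independent of $w_0$, hence zero, and the same for $\overline{I_{g|g_c}}$; but for the reverse inclusion I do not see how to avoid ingredients one and two.
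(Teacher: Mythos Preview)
Your proposal is correct and follows essentially the same strategy as the paper: reduce to each cusp, conjugate by $g_c$ to bring the stabiliser to $\langle T^{h_c}\rangle$, identify $V_{k-2}(\CC)/(1-T^{h_c})V_{k-2}(\CC)\cong\CC$ via the $Y^{k-2}$-coefficient, and show the image is (width)${}\times{}$(constant term). The only organisational difference is that the paper first splits $f|g_c=a_0+g$ into constant plus cuspidal part, then disposes of the cuspidal part by moving the base point to the cusp $g_c\infty$ (using convergence since $g$ vanishes there) before computing the $a_0$-contribution explicitly; your direct extraction of the $Y^{k-2}$-coefficient via $\int_{w_0}^{w_0+h_c}(f|g_c)(z)\,dz=a_0h_c$ avoids that base-point-at-the-cusp manoeuvre and is slightly more elementary, though the underlying content is the same.
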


\begin{proof}
In order to simplify the notation of the proof, we shall only prove the case of a modular form $f \in \Mkg k\Gamma\CC$.
The statement for anti-homolorphic forms is proved in the same way.
The composition maps the modular form~$f$ to the $1$-cocycle (for $\gamma \in \Gamma_c$)
$$ \gamma \mapsto \int_{z_0}^{\gamma z_0} f(z) (Xz+Y)^{k-2} dz$$
with a fixed base point~$z_0 \in \HH$. The aim is now to move the
base point to the cusps. We cannot just replace $z_0$ by~$\infty$,
as then the integral might not converge any more (it converges on cusp
forms).
Let $c = M\infty$ be any cusp with $M = \mat abcd \in \SL_2(\ZZ)$.
We then have
$\Gamma_c = \langle MTM^{-1} \rangle \cap \Gamma = \langle MT^rM^{-1} \rangle$
for some $r \ge 1$.
Since $f$ is holomorphic in the cusps, we have
$$ f|_M(z) = \sum_{n=0}^\infty a_n e^{2\pi i n/r z} = a_0 + g(z)$$
and thus
$$ f(z) = a_0|_{M^{-1}}(z) + g|_{M^{-1}}(z) = \frac{a_0}{(-cz+a)^k} + g|_{M^{-1}}(z).$$
Now we compute the cocycle evaluated at $\gamma = MT^rM^{-1}$:
$$\int_{z_0}^{\gamma z_0} f(z) (Xz+Y)^{k-2} dz 
= a_0 \int_{z_0}^{\gamma z_0} \frac{(Xz+Y)^{k-2}}{(-cz+a)^k} dz +
\int_{z_0}^ {\gamma z_0} g|_{M^{-1}}(z)(Xz+Y)^{k-2} dz. $$
Before we continue by evaluating the right summand, we remark that the integral
$$ I_{g |_{M^{-1}}} (z_0, M \infty) 
= \int_{z_0}^{M\infty} g|_{M^{-1}}(z) (Xz+Y)^{k-2} dz
= M. \int_{M^{-1}z_0}^{\infty} g(z) (Xz+Y)^{k-2} dz$$
converges. We have
\begin{align*}
\int_{z_0}^{\gamma z_0} g|_{M^{-1}}(z)(Xz+Y)^{k-2} dz
&= (\int_{z_0}^{M \infty} + \int_{\gamma M\infty}^{\gamma z_0}) g|_{M^{-1}}(z)(Xz+Y)^{k-2} dz\\
&= (1-\gamma). \int_{z_0}^{M \infty} g|_{M^{-1}}(z) (Xz+Y)^{k-2} dz
\end{align*}
since $g|_{M^{-1}\gamma}(z) = g|_{T^rM^{-1}}(z) = g|_{M^{-1}}(z)$.
The $1$-cocycle
$\gamma \mapsto \int_{z_0}^{\gamma z_0} g|_{M^{-1}}(z)(Xz+Y)^{k-2} dz$
is thus a $1$-coboundary. Consequently, the class of the image of~$f$
is equal to the class of the $1$-cocycle
$$ \gamma \mapsto a_0 \int_{z_0}^{\gamma z_0} \frac{(Xz+Y)^{k-2}}{(-cz+a)^k} dz.$$

We have the isomorphism (as always for cyclic groups)
$$ \h^1(\Gamma_c,V_{k-2}(\CC)) \xrightarrow{\phi \mapsto \phi(MT^rM^{-1})} V_{k-2}(\CC)_{\Gamma_c}.$$
Furthermore, we have the isomorphism
$$ V_{k-2}(\CC)_{\Gamma_c} \xrightarrow {P \mapsto M^{-1}P} 
V_{k-2}(\CC)_{\langle T^r \rangle} \xrightarrow{P \mapsto P(0,1)} \CC$$
with polyomials~$P(X,Y)$.
Note that the last map is an isomorphism by the explicit description of
$V_{k-2}(\CC)_{\langle T^r \rangle}$.
Under the composition the image of the cocycle coming from the modular form~$f$
is
\begin{multline*} a_0 M^{-1}. \int_{z_0}^{\gamma z_0} \frac{(Xz+Y)^{k-2}}{(-cz+a)^k} dz (0,1)
= a_0 \int_{z_0}^{\gamma z_0} \frac{(Xz+Y)^{k-2}}{(-cz+a)^k} dz (-c,a)\\
= a_0 \int_{z_0}^{\gamma z_0} \frac{1}{(-cz+a)^2} dz
= a_0 \int_{M^{-1}z_0}^{T^rM^{-1}z_0} dz
= a_0 (M^{-1}z_0 + r - M^{-1}z_0) = r a_0,
\end{multline*}
as $(0,1) M^{-1} = (0,1) \mat d{-b}{-c}a = (-c,a)$.
This expression is zero if and only if $a_0 = 0$, i.e.\ if and only if $f$
vanishes at the cusp~$c$.

A similar argument works for anti-holomorphic cusp forms.
\end{proof}

\subsection{Theory: Cup product and Petersson scalar product}

This part owes much to the treatment of the Petersson scalar product by Haberland in~\cite{Haberland}
(see also \cite[\S 12]{CS}).

\begin{definition}\label{def:cup}
Let $G$ be a group and $M$ and $N$ be two left $R[G]$-modules. We
equip $M \otimes_R N$ with the diagonal left $R[G]$-action. Let $m,n \ge 0$.
Then we define the {\em cup product}
$$ \cup: \h^n(G,M) \otimes_R \h^m(G,N) \to \h^{n+m}(G,M \otimes_R N)$$
by
$$ \phi \cup \psi := ((g_1,\dots,g_n,g_{n+1},\dots,g_{n+m}) 
\mapsto \phi(g_1,\dots,g_n) \otimes (g_1 \cdots g_n).\psi(g_{n+1},\dots,g_{n+m})$$
on cochains of the bar resolution.
\end{definition}

This description can be derived easily from the natural one on the standard resolution.
For instance, \cite[\S5.3]{Brown} gives the above formula up to a sign (which does not matter in our application anyway
because we work in fixed degree).
In Exercise~\ref{excupwd} it is checked that the cup product is well-defined.

\begin{lemma}\label{lem:swap}
Keep the notation of Definition~\ref{def:cup} and let $\phi \in \h^n(G,M)$ and $\psi \in \h^m(G,N)$.
Then
$$\phi \cup \psi = (-1)^{mn} \psi \cup \phi$$
via the natural isomorphism $M \otimes_R N \cong N \otimes_R M$.
\end{lemma}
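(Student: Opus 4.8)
The plan is to verify the graded-commutativity of the cup product directly on cochains of the bar resolution, using the explicit formula from Definition~\ref{def:cup}. Since the cup product is already known to be well-defined on cohomology (Exercise~\ref{excupwd}), it suffices to exhibit, for a given $n$-cocycle $\phi$ representing a class in $\h^n(G,M)$ and an $m$-cocycle $\psi$ representing a class in $\h^m(G,N)$, an explicit $(n+m-1)$-cochain $\theta$ whose coboundary is $\phi \cup \psi - (-1)^{mn}\, \psi \cup \psi$, where on the right we silently apply the swap isomorphism $M \otimes_R N \xrightarrow{\sim} N \otimes_R M$. So first I would write out both $\phi \cup \psi$ and $\psi \cup \phi$ as functions on $(n+m)$-tuples $(g_1,\dots,g_{n+m})$ according to the formula, keeping careful track of which group elements act on which tensor factor.

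Next I would introduce, for each $0 \le p \le n+m$, the ``partial'' cochain obtained by cyclically shuffling the first $p$ arguments past the remaining ones; concretely one sets
$$
\theta_p(g_1,\dots,g_{n+m-1}) := \pm\, \phi(\dots) \otimes (\cdots).\psi(\dots),
$$
with the split point moved by one, and then computes $\delta \theta_p$ using the bar-resolution differential $d_i$ recalled in the excerpt. Telescoping the resulting identities $\delta\theta_p = \theta_{p-1}^{\text{(shift)}} \pm \theta_p^{\text{(shift)}}$ (which use that $\phi$ and $\psi$ are cocycles to kill the ``internal'' faces) yields $\phi \cup \psi - (-1)^{mn}\psi\cup\phi = \delta\big(\sum_p \pm \theta_p\big)$. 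The sign $(-1)^{mn}$ emerges as the product of the $m+n-1$ elementary transpositions needed to move a block of length $n$ past a block of length $m$, i.e.\ from the parity of the permutation. Alternatively, and perhaps more cleanly, I would cite that this is the standard graded-commutativity of cup product, e.g.\ \cite[\S3.2 or the chapter on products]{Weibel}, and only indicate the cochain-level homotopy; since the paper works in a fixed degree the precise normalisation of the sign in Definition~\ref{def:cup} is immaterial, so a reference plus a one-line remark would be acceptable here.

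The main obstacle is purely bookkeeping: tracking the $G$-actions on the two tensor factors through the shuffle and getting the intermediate signs to telescope correctly. There is no conceptual difficulty — the cup product is well-defined by the earlier exercise, the bar differential is explicit, and everything is a finite computation — but writing the homotopy $\theta$ with all indices and signs correct is fiddly. In a complete write-up I would either grind through the $n=m=1$ and general cases, or, more economically, reduce to the known statement in \cite{Weibel} and observe that our sign conventions differ at most by an overall sign that does not affect any later use (in particular the application to the Petersson product, where only a fixed bidegree occurs).
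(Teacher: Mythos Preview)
Your proposal is correct and, in its ``cite a standard reference'' branch, essentially identical to what the paper does: the paper's proof is simply ``Exercise~\ref{ex:cup}'', whose hint is \cite[(5.3.6)]{Brown}. Your alternative of citing \cite{Weibel} is the same move with a different textbook.

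Your more ambitious branch --- writing down an explicit cochain homotopy $\theta = \sum_p \pm \theta_p$ on the bar resolution --- is the standard proof underlying that reference and is more than the paper asks for. The sketch is on the right track (shuffle the split point, use the cocycle conditions to kill internal faces, telescope), but as written it is only a plan: the formula for $\theta_p$ is left entirely in dots, and the sign bookkeeping is where all the content lies. If you actually carry it out, be aware that the cup product in Definition~\ref{def:cup} uses the \emph{inhomogeneous} bar cochains with the twist $g_1\cdots g_n.\psi(\dots)$, so the homotopy has to respect that twist; the cleanest route is to pass to homogeneous cochains (the standard resolution $F(G)_\bullet$), where the Alexander--Whitney and Eilenberg--Zilber maps give the homotopy with transparent signs, and then translate back. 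One small typo: in your displayed target you wrote $\psi\cup\psi$ where you mean $\psi\cup\phi$.
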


\begin{proof}
Exercise~\ref{ex:cup}.
\end{proof}

We are now going to formulate a pairing on cohomology, which will turn out
to be a version of the Petersson scalar product. We could introduce compactly
supported cohomology for writing it in more conceptual terms, but have decided
not to do this in order not to increase the amount of new material even more.

\begin{definition}
Let $M$ be an $R[\PSL_2(\ZZ)]$-module.
The {\em parabolic $1$-cocycles} are defined as
$$ \Zpar^1(\Gamma,M) = \ker(\Z^1(\Gamma,M) \xrightarrow{\res} 
\prod_{g \in \Gamma \backslash \PSL_2(\ZZ) / \langle T \rangle}
\Z^1(\Gamma \cap \langle g T g^{-1} \rangle, M) ).$$
\end{definition}

\begin{proposition}\label{defpairing}
Let $R$ be a ring in which $6$ is invertible. Let $M$ and $N$ be left $R[\PSL_2(\ZZ)]$-modules together with a $R[\PSL_2(\ZZ)]$-module homomorphism
$\pi: M \otimes_R N \to R$
where we equip $M \otimes_R N$ with the diagonal action. Write $G$ for $\PSL_2(\ZZ)$.
We define a pairing
$$ \langle,\rangle: \Z^1(G,M) \times \Z^1(G,N) \to R$$
as follows: Let $(\phi,\psi)$ be a pair of $1$-cocycles.
Form their cup product $\rho := \pi_*(\phi \cup \psi)$ in $\Z^2(G,R)$ via
$\Z^2(G,M \otimes_R N) \xrightarrow{\pi_*} \Z^2(G,R)$. As
$\h^2(G,R)$ is zero (Corollary~\ref{corhzweinull}), $\rho$ must be a $2$-coboundary, i.e.\
there is $a: G \to R$ (depending on $(\phi,\psi)$) such that
$$\rho (g,h) = \pi(\phi(g) \otimes g.\psi(h)) = a(h) - a(gh) + a(g).$$
We define the pairing by 
$$\langle\phi,\psi\rangle := a(T).$$

\begin{enumerate}[(a)]
\item The pairing is well-defined and bilinear. It can be expressed as
$$ \langle \phi,\psi \rangle = -\rho(\tau,\sigma) + \frac{1}{2} \rho(\sigma,\sigma) +
\frac{1}{3} (\rho(\tau,\tau)+\rho(\tau,\tau^2)).$$

\item If $\phi \in \Zpar^1(G,M)$, then $\rho(\tau,\sigma)=\rho(\sigma,\sigma)$ and
$$ \langle \phi,\psi \rangle = - \frac{1}{2} \rho(\sigma,\sigma) + 
\frac{1}{3}(\rho(\tau,\tau) + \rho(\tau,\tau^2)).$$
Moreover, $\langle \phi,\psi \rangle$ only depends on the class of~$\psi$
in $\h^1(G,N)$.

\item If $\psi \in \Zpar^1(G,N)$, then $\rho(\tau,\sigma) = \rho(\tau,\tau^2)$ and
$$ \langle \phi,\psi \rangle = \frac{1}{2} \rho(\sigma,\sigma) +
\frac{1}{3}\rho(\tau,\tau) - \frac{2}{3} \rho(\tau,\tau^2).$$
Moreover, $\langle \phi,\psi \rangle$ only depends on the class of~$\phi$
in $\h^1(G,M)$.

\item If $\phi \in \Zpar^1(G,M)$ and $\psi \in \Zpar^1(G,N)$, 
then $\rho(\sigma,\sigma) = \rho(\tau,\tau^2)$
and
$$ \langle \phi,\psi \rangle = -\frac{1}{6} \rho(\sigma,\sigma) +
\frac{1}{3}\rho(\tau,\tau).$$
\end{enumerate}
\end{proposition}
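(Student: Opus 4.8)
The plan is to establish the four parts in order, since (b)--(d) all rest on the explicit formula of~(a). For well-definedness, note first that the $2$-cochain $\rho=\pi_*(\phi\cup\psi)$ is normalised: $\rho(1,h)=\rho(g,1)=0$ because a $1$-cocycle vanishes at~$1$, so in particular $\rho(1,1)=0$. Corollary~\ref{corhzweinull} (applicable since $6$, hence $2$ and $3$, are invertible in~$R$) gives $\h^2(G,R)=0$, so a primitive $a\colon G\to R$ with $\rho=da$ exists; the crucial point is that it is \emph{unique}, because two primitives differ by an element of $\Z^1(G,R)=\Hom_{\mathrm{grp}}\big(G,(R,+)\big)$, which vanishes as $G^{\mathrm{ab}}\cong C_2\times C_3$ and $2,3$ are invertible. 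Thus $a(T)$ is well defined, and $a$ depends additively on~$\rho$ (again by uniqueness), so the pairing is bilinear. Finally $a(1)=\rho(1,1)=0$.

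For the formula in~(a), I would solve the defining relation $a(gh)=a(g)+a(h)-\rho(g,h)$ using the presentation $G=\langle\sigma,\tau\mid\sigma^2=\tau^3=1\rangle$ from Theorem~\ref{thm:freepr} and the identity $T=\tau\sigma$: from $\sigma^2=1$ one gets $a(\sigma)=\frac12\rho(\sigma,\sigma)$; from $\tau^3=1$, writing $\tau^3=\tau\cdot\tau^2$, one gets $a(\tau)=\frac13\big(\rho(\tau,\tau)+\rho(\tau,\tau^2)\big)$; and then $a(T)=a(\tau)+a(\sigma)-\rho(\tau,\sigma)$ is precisely the claimed expression.

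The parabolic refinements come down to one identity among the values of~$\rho$ in each case. For~(b): $\phi\in\Zpar^1(G,M)$ is equivalent to $\phi(T)=0$ (a cocycle on the cyclic group $\langle T\rangle$ is determined by its value at~$T$), so $\phi(\tau)=-\tau.\phi(\sigma)$; then, using the diagonal $G$-action on $M\otimes_R N$, the $R[G]$-linearity of~$\pi$ and the triviality of the $G$-action on~$R$, one finds $\rho(\tau,\sigma)=-\pi\big(\phi(\sigma)\otimes\psi(\sigma)\big)$, and the cocycle relation for~$\psi$ at $\sigma^2=1$ (which gives $\sigma.\psi(\sigma)=-\psi(\sigma)$) shows $\rho(\sigma,\sigma)=-\pi\big(\phi(\sigma)\otimes\psi(\sigma)\big)$ as well; hence $\rho(\tau,\sigma)=\rho(\sigma,\sigma)$, and substituting into~(a) gives the formula. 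Dependence only on the class of~$\psi$ follows from bilinearity once one checks that a coboundary $\psi=d\beta$ yields $\rho=-d\theta$ with $\theta(g)=\pi\big(\phi(g)\otimes g.\beta\big)$, so that $\langle\phi,d\beta\rangle=-\theta(T)=-\pi\big(\phi(T)\otimes T.\beta\big)=0$ when $\phi$ is parabolic. Part~(c) is the mirror image: $\psi(T)=0$ forces $\psi(\tau^2)=\psi(\sigma)$ (from $\psi(\tau^3)=0$), whence $\rho(\tau,\tau^2)=\rho(\tau,\sigma)$ and the formula follows from~(a); one can deduce~(c), class-independence included, from~(b) by interchanging the two factors via Lemma~\ref{lem:swap}. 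Part~(d) is then immediate: with both $\phi$ and $\psi$ parabolic, (b) gives $\rho(\tau,\sigma)=\rho(\sigma,\sigma)$ and (c) gives $\rho(\tau,\sigma)=\rho(\tau,\tau^2)$, hence $\rho(\sigma,\sigma)=\rho(\tau,\tau^2)$; inserting this into the formula of~(b) collapses the $\rho(\tau,\tau^2)$ term and leaves $\langle\phi,\psi\rangle=-\frac16\rho(\sigma,\sigma)+\frac13\rho(\tau,\tau)$.

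The work here is essentially bookkeeping: keeping the diagonal action, the $R[G]$-linearity of $\pi$, the triviality of the action on~$R$, and the cocycle identity for $\phi$ and $\psi$ at $\sigma$, $\tau$, $\tau^2$, $T$ all straight, and being careful with the sign conventions in the bar-resolution coboundary and cup product. The one genuinely substantive input beyond routine manipulation is the vanishing $\Hom_{\mathrm{grp}}(\PSL_2(\ZZ),(R,+))=0$, which is exactly what makes the primitive~$a$, and hence the entire pairing, canonical; I expect this, and the verification that the parabolicity of $\phi$ (resp.~$\psi$) translates into the single relation $\rho(\tau,\sigma)=\rho(\sigma,\sigma)$ (resp.~$\rho(\tau,\sigma)=\rho(\tau,\tau^2)$), to be the only non-mechanical steps.
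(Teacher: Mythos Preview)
Your proof is correct and follows essentially the same computations as the paper: the derivation of $a(\sigma)$, $a(\tau)$, $a(T)$ from the relations $\sigma^2=\tau^3=1$ and $T=\tau\sigma$, the verification $\rho(\tau,\sigma)=\rho(\sigma,\sigma)$ (resp.\ $\rho(\tau,\sigma)=\rho(\tau,\tau^2)$) under the parabolicity hypotheses, and the class-independence via an explicit primitive for $\rho$ when one argument is a coboundary all match the paper's proof line by line. The only noteworthy difference is your treatment of well-definedness: the paper first derives the formula for $a(T)$ in terms of $\rho$-values and then observes that this makes the choice of primitive irrelevant, whereas you argue directly that the primitive $a$ is unique because $\Z^1(G,R)=\Hom(G^{\mathrm{ab}},R)=\Hom(C_2\times C_3,R)=0$ under the invertibility of~$6$; this is a slightly more conceptual route to the same conclusion, and your suggestion to obtain~(c) from~(b) via Lemma~\ref{lem:swap} is a legitimate shortcut that the paper forgoes in favour of repeating the direct computation.
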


\begin{proof}
(a) We first have
$$ 0 = \pi(\phi(1) \otimes \psi(1)) = \rho(1,1) = a(1) - a(1) + a(1) = a(1),$$
since $\phi$ and $\psi$ are $1$-cocycles. Recall that the value of a
$1$-cocycle at~$1$ is always~$0$ due to $\phi(1) = \phi(1\cdot 1) = \phi(1) + \phi(1)$.
Furthermore, we have
\begin{align*}
\rho(\tau,\sigma) &= a(\sigma) - a(T) + a(\tau)\\
\rho(\sigma,\sigma) &= a(\sigma) - a(1) + a(\sigma) = 2 a(\sigma)\\
\rho(\tau,\tau^2) &= a(\tau^2) - a(1) + a(\tau) = a(\tau) + a(\tau^2)\\
\rho(\tau,\tau) &= a(\tau) - a(\tau^2) + a(\tau) = 2a(\tau) - a(\tau^2)
\end{align*}
Hence, we get
$a(T) = - \rho(\tau,\sigma) + a(\sigma) + a(\tau)$ and
$a(\sigma) = \frac{1}{2} \rho(\sigma,\sigma)$ as well as
$a(\tau) = \frac{1}{3}(\rho(\tau,\tau) + \rho(\tau,\tau^2))$,
from which the claimed formula follows. The formula also shows the
independence of the choice of~$a$ and the bilinearity.

(b) Now assume $\phi(T) = 0$. Using $T = \tau \sigma$ we obtain
\begin{multline*}
\rho(\tau,\sigma) = \pi(\phi(\tau) \otimes \tau \psi(\sigma))
= - \pi(\tau.\phi(\sigma) \otimes \tau \psi(\sigma))\\
= - \pi(\phi(\sigma)\otimes \psi(\sigma))
= \pi((\phi(\sigma)\otimes \sigma \psi(\sigma))) = \rho(\sigma,\sigma)
\end{multline*}
because $0 = \phi(T) = \phi(\tau\sigma) = \tau.\phi(\sigma) + \phi(\tau)$
and $0 = \psi(1) = \psi(\sigma^2) = \sigma.\psi(\sigma) + \psi(\sigma)$.
This yields the formula.

We now show that the pairing does not depend on the choice of
$1$-cocycle in the class of~$\psi$. To see this, let $\psi(g) = (g-1)n$ with $n\in N$
be a $1$-coboundary. Put $b(g) := \pi(-\phi(g) \otimes gn)$. Then,
using $\phi(gh)=g(\phi(h))+\phi(g)$, one immediately checks the equality
$$ \rho(g,h) = \pi(\phi(g)\otimes g(h-1)n) = g.b(h)-b(gh)+b(g).$$
Hence, $(\phi,\psi)$ is mapped to $b(T) = \pi(-\phi(T) \otimes T n) = \pi(0 \otimes Tn) = 0$.

(c) Let now $\psi(T) = 0$. Then $0=\psi(T) = \psi(\tau\sigma) = \tau\psi(\sigma) + \psi(\tau)$
and $0=\psi(\tau^3) = \tau \psi(\tau^2) + \psi(\tau)$, whence
$\tau\psi(\tau^2) = \tau \psi(\sigma)$. Consequently,
$$\rho(\tau,\sigma) = \pi(\phi(\tau) \otimes \tau \psi(\sigma))
= \pi(\phi(\tau) \otimes \tau \psi(\tau^2)) =  \rho(\tau,\tau^2),$$
implying the formula.

The pairing does not depend on the choice
of $1$-cocycle in the class of~$\phi$. Let $\phi(g) = (g-1)m$ be a $1$-coboundary
and put $c(g) := \pi(m \otimes \psi(g))$. Then the equality
$$ \rho(g,h) = \pi((g-1)m \otimes g\psi(h)) = g.c(h)-c(gh)+c(g)$$
holds. Hence, $(\phi,\psi)$ is mapped to $c(T) = \pi(m \otimes \psi(T)) = \pi(m \otimes 0) = 0$.

(d) Suppose now that $\phi(T)=0=\psi(T)$, then by what we have just seen
$$\rho(\tau,\sigma)=\rho(\sigma,\sigma)=\rho(\tau,\tau^2).$$
This implies the claimed formula.
\end{proof}

Our next aim is to specialise this pairing to the cocycles coming from modular
forms under the Eichler-Shimura map. We must first define a pairing
on the modules used in the cohomology groups.

On the modules $\Sym^{k-2}(R^2)$ we now define the {\em symplectic pairing}
over any ring~$R$ in which $(k-2)!$ is invertible.
Let $n=k-2$ for simplicity. The pairing for $n=0$ is just the
multiplication on~$R$. We now define the pairing for $n=1$ as
$$ R^2 \times R^2 \to R, \;\;\; \vect ac \bullet \vect bd := \det \mat abcd.$$
For any $g \in \SL_2(\ZZ)$ we have
$$ g{\vect a c} \bullet g{\vect b d}
= {\det g \mat abcd} = \det \mat abcd
= {\vect a c} \bullet {\vect b d}.$$
As the next step, we define a pairing on the $n$-th tensor power
of~$R^2$
$$(R^2 \otimes_R \dots \otimes_R R^2) \times (R^2 \otimes_R \dots \otimes_R R^2) \to R$$
by
$$ (\vect {a_1} {c_1} \otimes \dots \otimes \vect{a_n}{c_n}) \bullet
(\vect {b_1} {d_1} \otimes \dots \otimes \vect{b_n}{d_n})
:= \prod_{i=1}^n \vect {a_i} {c_i} \bullet \vect {b_i} {d_i}.$$
This pairing is still invariant under the $\SL_2(\ZZ)$-action.

Now we use the assumption on the invertibility of~$n!$ in order
to embed $\Sym^n(R^2)$ as an $R[S_n]$-module in the $n$-th tensor power,
where the action of the symmetric group~$S_n$ is on the indices. We have
that the map (in fact, $1/n!$ times the norm)
$$ \Sym^n(R^2) \to R^2 \otimes_R \dots \otimes_R R^2, \;\;\;
[ \vect {a_1}{c_1} \otimes \dots \otimes \vect{a_n}{c_n} ] \mapsto
\frac{1}{n!} \sum_{\sigma \in S_n} \vect {a_{\sigma(1)}} {c_{\sigma(1)}} 
\otimes \dots \otimes \vect{a_{\sigma(n)}}{c_{\sigma(n)}}$$
is injective (one can use Tate cohomology groups to see this) as the
order of $S_n$ is invertible in the ring.

Finally, we define the pairing on $\Sym^n(R^2)$ as the restriction of the
pairing on the $n$-th tensor power to the image of $\Sym^n(R^2)$ under
the embedding that we just described. This pairing is, of course,
still $\SL_2(\ZZ)$-invariant.

We point to the important special case 
$${\vect a c}^{\otimes(k-2)} \bullet{\vect b d}^{\otimes(k-2)}=(ad-bc)^{k-2}.$$
Hence, after the identification $\Sym^{k-2}(R^2) \cong V_{k-2}(R)$ from
Exercise~\ref{exsym}, the resulting pairing on $V_{k-2}(R)$ has
the property
$$ (aX+cY)^{k-2} \bullet (bX+dY)^{k-2} \mapsto (ad-bc)^{k-2}.$$
This pairing extends to a paring on coinduced modules
\begin{equation}\label{eq:coind-pair}
 \pi: \Hom_{R[\Gamma]}(R[\PSL_2(\ZZ)],V_{k-2}(R)) \otimes_R \Hom_{R[\Gamma]}(R[\PSL_2(\ZZ)],V_{k-2}(R)) \to R
\end{equation}
by mapping $(\alpha,\beta)$ to 
$\sum_{\gamma \in \Gamma \backslash \PSL_2(\ZZ)} \alpha(\gamma) \bullet \beta(\gamma)$.

\begin{proposition}\label{cuppet}
Let $k \ge 2$. Assume $-1 \not\in \Gamma$ (whence we view $\Gamma$ as a subgroup of $\PSL_2(\ZZ)$).
Let $f,g \in \Skg k\Gamma\CC$ be cusp forms. Denote by
$\phi_f$ the $1$-cocycle associated with~$f$
under the Eichler-Shimura map for the base point $z_0 = \infty$, i.e.\
$$ \phi_f (a) = (b \mapsto I_f(b \infty,ba \infty)) \in
\Z^1(\PSL_2(\ZZ),\Coind_\Gamma^{\PSL_2(\ZZ)} (V_{k-2}(\CC))).$$
Further denote
$$ \overline{\phi_f} (a) = (b \mapsto \overline{I_f(b \infty,ba \infty)}) \in
\Z^1(\PSL_2(\ZZ),\Coind_\Gamma^{\PSL_2(\ZZ)} (V_{k-2}(\CC))).$$
Similarly, denote by $\psi_g$ the $1$-cocycle
associated with~$g$ for the base point $z_1 = \zeta_6$.
Define a bilinear pairing as in Proposition~\ref{defpairing}
$$\langle,\rangle: \big(\Z^1(\PSL_2(\ZZ), \Coind_\Gamma^{\PSL_2(\ZZ)} (V_{k-2}(\CC))) \big)^2 \to \CC$$
with the product on the coinduced modules described in~\eqref{eq:coind-pair}.
Then the equation
$$ \langle \phi_f, \overline{\psi_g} \rangle = (2i)^{k-1}\mu(f,g) $$
holds where $(f,g)$ denotes the Petersson scalar product and $\mu$ the index
of $\Gamma$ in $\PSL_2(\ZZ)$.
\end{proposition}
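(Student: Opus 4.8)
The plan is to feed the two cocycles into the bilinear pairing of Proposition~\ref{defpairing} and then, after a Haberland-style manipulation of the resulting double integral, to recognise it as the explicit expression for the Petersson product given in Proposition~\ref{petexplicit}. First I would note that $\phi_f$ is in fact a \emph{parabolic} cocycle: since its base point $z_0=\infty$ is a cusp fixed by $T$, one has $\phi_f(T^r)(b)=I_f(b\infty,bT^r\infty)=I_f(b\infty,b\infty)=0$ for all $r$ and all $b$ (the integrals converge because $f$ is a cusp form), so $\phi_f\in\Zpar^1(\PSL_2(\ZZ),\Coind_\Gamma^{\PSL_2(\ZZ)}(V_{k-2}(\CC)))$. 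Hence Proposition~\ref{defpairing}~(b) applies and yields, with $\rho:=\pi_*(\phi_f\cup\overline{\psi_g})$,
$$\langle \phi_f,\overline{\psi_g}\rangle = -\tfrac12\rho(\sigma,\sigma)+\tfrac13\big(\rho(\tau,\tau)+\rho(\tau,\tau^2)\big).$$
Next I would exploit the second base point: because $\tau$ fixes $z_1=\zeta_6$, the cup-product formula of Definition~\ref{def:cup} gives $(\tau.\overline{\psi_g}(\tau))(b)=\overline{\psi_g}(\tau)(b\tau)=\overline{I_g(b\tau\zeta_6,b\tau^2\zeta_6)}=\overline{I_g(b\zeta_6,b\zeta_6)}=0$ for all $b$, and likewise $(\tau.\overline{\psi_g}(\tau^2))(b)=\overline{I_g(b\zeta_6,b\tau^3\zeta_6)}=0$; thus $\rho(\tau,\tau)=\rho(\tau,\tau^2)=0$ and $\langle\phi_f,\overline{\psi_g}\rangle=-\tfrac12\rho(\sigma,\sigma)$.

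It then remains to compute $\rho(\sigma,\sigma)=\sum_{\gamma\in\Gamma\backslash\PSL_2(\ZZ)}\phi_f(\sigma)(\gamma)\bullet(\sigma.\overline{\psi_g}(\sigma))(\gamma)$. Using $\sigma\infty=0$ and $\sigma\zeta_6=\zeta_3$, the $\gamma$-th term is $I_f(\gamma\infty,\gamma 0)\bullet\overline{I_g(\gamma\zeta_3,\gamma\zeta_6)}$. The heart of the argument is a chain of rewritings, each already carried out in the proofs of Lemma~\ref{esmaplem}, Lemma~\ref{lmdiff} and Proposition~\ref{petexplicit}: (i) apply Lemma~\ref{esmaplem} to get $I_f(\gamma\infty,\gamma 0)=\gamma.I_{f|_\gamma}(\infty,0)$ and $I_g(\gamma\zeta_3,\gamma\zeta_6)=\gamma.I_{g|_\gamma}(\zeta_3,\zeta_6)$, so that $f|_\gamma$ and $g|_\gamma$ depend only on the coset; (ii) use that $\bullet$ is $\SL_2(\ZZ)$-invariant with integral structure constants to cancel the two $\gamma$-actions against each other and commute $\bullet$ past the complex conjugation, and pull the two (convergent) integrals out of the bilinear map, rewriting the $g$-integral as $\overline{I_{g|_\gamma}(\zeta_3,\zeta_6)}=\int_{\zeta_3}^{\zeta_6}\overline{g|_\gamma(z)}(\bar zX+Y)^{k-2}d\bar z$ via \eqref{eq:int-bar}; (iii) insert the special value $(uX+Y)^{k-2}\bullet(\bar zX+Y)^{k-2}=(u-\bar z)^{k-2}$. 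This turns the $\gamma$-th term into $\int_\infty^0\int_{\zeta_3}^{\zeta_6}f|_\gamma(u)\overline{g|_\gamma(z)}(u-\bar z)^{k-2}\,d\bar z\,du$.

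Finally I would split the arc from $\zeta_3$ to $\zeta_6$ at $i$. The half from $\zeta_3$ to $i$ already matches Proposition~\ref{petexplicit}. For the half from $i$ to $\zeta_6$ I would substitute $z\mapsto\sigma z$ (and $u\mapsto\sigma u$ in the inner integral): this sends $f|_\gamma,g|_\gamma$ to $f|_{\gamma\sigma},g|_{\gamma\sigma}$, the arc $i\to\zeta_6$ to $i\to\zeta_3$, and $\infty\leftrightarrow 0$, while the powers of the automorphy factor $(cz+d)=z$ for $\sigma$ cancel exactly against the Jacobians $d\sigma z$, $d\sigma u$ and the factor $(u-\bar z)^{k-2}$ (the computation of Lemma~\ref{lmdiff}~(b)--(c) specialised to $\sigma$). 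After relabelling $\gamma\sigma$ as $\gamma$ in the coset sum and reversing both orientations, this half equals the first one; hence $\rho(\sigma,\sigma)=2\sum_{\gamma}\int_\infty^0\int_{\zeta_3}^{i}f|_\gamma(u)\overline{g|_\gamma(z)}(u-\bar z)^{k-2}\,d\bar z\,du=-2(2i)^{k-1}\mu\,(f,g)$ by Proposition~\ref{petexplicit}, so $\langle\phi_f,\overline{\psi_g}\rangle=-\tfrac12\rho(\sigma,\sigma)=(2i)^{k-1}\mu\,(f,g)$. The main obstacle is precisely steps (ii)--(iii) together with the orientation and Jacobian bookkeeping in the folding step: one must track very carefully where the conjugation, the $V_{k-2}(\CC)$-action and the automorphy factors go — as in Haberland's original argument — to be certain the powers of $(cz+d)$ and the factor $2$ come out exactly right.
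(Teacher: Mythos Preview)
Your proof is correct and follows essentially the same route as the paper: use parabolicity of $\phi_f$ to invoke Proposition~\ref{defpairing}(b), kill $\rho(\tau,\tau)$ and $\rho(\tau,\tau^2)$ via $\tau\zeta_6=\zeta_6$, then expand $\rho(\sigma,\sigma)$ as the double integral, fold the arc $\zeta_3\to\zeta_6$ at~$i$ using the substitution $z\mapsto\sigma z$ and the reindexing $\gamma\mapsto\gamma\sigma$, and compare with Proposition~\ref{petexplicit}. The paper's own proof is organised slightly differently (it first writes down a general formula for $\rho(a,b)$ and then specialises), but the computation and all the key observations are identical to yours.
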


\begin{proof}
Note that the choice of base point~$\infty$ is on the one hand well-defined
(the integral converges, as it is taken over a cusp form) and on the other hand
it ensures that $\phi_f(T) = \overline{\phi_f}(T) = 0$.
But note that $\psi_g$ is not a parabolic cocycle in general since the chosen base point is not~$\infty$ even
though $g$ is also a cusp form.

Now consider $\langle \phi_f,\overline{\psi_g} \rangle$.
Let $\rho(a,b) := \pi(\phi_f(a) \otimes a \overline{\psi_g}(b))$, where $\pi$ is from~\eqref{eq:coind-pair}.
We describe $\rho(a,b)$:
\begin{align*}
 \rho(a,b) & = \sum_\gamma 
\big( \int_{\gamma \infty}^{\gamma a \infty} f(z) (Xz+Y)^{k-2} dz\big) \bullet
\big( \int_{\gamma a \zeta_6}^{\gamma ab \zeta_6} \overline{g(z)} (X\zbar+Y)^{k-2} d\zbar\big)\\
&= \sum_\gamma
  \int_{\gamma a \zeta_6}^{\gamma ab \zeta_6} \int_{\gamma \infty}^{\gamma a \infty} 
 f(z) \overline{g(z)} \big( (Xz+Y)^{k-2} \bullet (X\zbar+Y)^{k-2} \big) dz d\zbar \\
&= \sum_\gamma 
  \int_{\gamma a \zeta_6}^{\gamma ab \zeta_6} \int_{\gamma \infty}^{\gamma a \infty} 
f(z)\overline{g(z)} (z-\zbar)^{k-2} dz d\zbar \\
&= \sum_\gamma 
  \int_{a \zeta_6}^{ab \zeta_6} \int_{\infty}^{a \infty} 
f|_\gamma (z)\overline{g|_\gamma (z)} (z-\zbar)^{k-2} dz d\zbar.
\end{align*}
where the sums run over a system of representatives of $\Gamma \backslash \PSL_2(\ZZ)$.
We obtain
\begin{align*}
&\rho(\sigma,\sigma)\\
 =& \sum_\gamma
  \int_{\sigma \zeta_6}^{\sigma^2 \zeta_6} \int_{\infty}^{\sigma \infty} 
f|_\gamma (z)\overline{g|_\gamma (z)} (z-\zbar)^{k-2} dz d\zbar\\ 
=& \sum_\gamma 
  \int_{\zeta_3}^{\zeta_6} \int_{\infty}^{0}
f|_\gamma (z)\overline{g|_\gamma (z)} (z-\zbar)^{k-2} dz d\zbar,\\
=& \sum_\gamma \big[ \int_{\zeta_3}^i \int_\infty^0 f|_\gamma (z) \overline{g|_\gamma (z)} (z-\zbar)^{k-2} dz d\zbar
+ \int_{\sigma \zeta_3}^{\sigma i} \int_{\sigma \infty}^{\sigma 0} f|_\gamma (z)
\overline{g|_\gamma (z)} (z-\zbar)^{k-2} dz d\zbar \big]\\
=& \sum_\gamma \big[\int_{\zeta_3}^i \int_\infty^0 f|_\gamma (z)\overline{g|_\gamma (z)} (z-\zbar)^{k-2} dz d\zbar
+ \int_{\zeta_3}^{i} \int_{\infty}^{0} f|_{\gamma\sigma} (z)\overline{g|_{\gamma\sigma} (z)} (z-\zbar)^{k-2} dz d\zbar \big]\\
=& 2 \sum_\gamma \int_{\zeta_3}^i \int_\infty^0 f|_\gamma (z)\overline{g|_\gamma (z)} (z-\zbar)^{k-2} dz d\zbar,
\end{align*}
and
\begin{align*}
\rho(\tau,\tau) =& \sum_\gamma  
\int_{\tau \zeta_6}^{\tau^2 \zeta_6} \int_{\infty}^{\tau \infty} 
 f|_\gamma (z)\overline{g|_\gamma (z)} (z-\zbar)^{k-2} dz d\zbar = 0\\
\rho(\tau,\tau^2) =& \sum_\gamma 
\int_{\tau \zeta_6}^{\tau^3 \zeta_6} \int_{\infty}^{\tau \infty}
f|_\gamma (z)\overline{g|_\gamma (z)} (z-\zbar)^{k-2} dz d\zbar = 0,
\end{align*}
since $\tau$ stabilises~$\zeta_6$.
It now suffices to compare with the formulas computed before
(Propositions \ref{defpairing} and~\ref{petexplicit}) to obtain the claimed formula.
\end{proof}

\subsection{Theory: The Eichler-Shimura theorem}

We can now, finally, prove that the Eichler-Shimura map is an isomorphism.
It should be pointed out again that the cohomology groups can be replaced by
modular symbols according to Theorem~\ref{compthm}.

\begin{theorem}[Eichler-Shimura]\label{esgammaeins}
Let $N \ge 4$ and $k\ge 2$. The Eichler-Shimura map and the induced Eichler-Shimura
map (Proposition~\ref{esmap}) are isomorphisms for $\Gamma = \Gamma_1(N)$.
The image of $\Skone kN\CC \oplus \overline{\Skone kN\CC}$ is isomorphic
to the parabolic subspace.
\end{theorem}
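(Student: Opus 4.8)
\emph{Reductions.} Since $N \ge 4$, the group $\Gamma_1(N)$ is torsion free and does not contain $-1$, so every running hypothesis used below is satisfied ($2,3,6$ invertible in $\CC$; all stabilisers $\Gamma_y$ trivial; $-1 \notin \Gamma_1(N)$). By Proposition~\ref{esmap}~(b) the induced Eichler--Shimura map agrees with the ordinary one under Shapiro's lemma, so it suffices to handle the ordinary map, and Theorem~\ref{compthm} then translates everything into $\cMkone kN\CC$ and $\cCMkone kN\CC$ if one prefers. Thus I would aim to prove that $\mathrm{ES}\colon \Mkone kN\CC \oplus \overline{\Skone kN\CC} \to \h^1(\Gamma_1(N),V_{k-2}(\CC))$ is an isomorphism taking $\Skone kN\CC \oplus \overline{\Skone kN\CC}$ onto $\Hpar^1(\Gamma_1(N),V_{k-2}(\CC))$. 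From Proposition~\ref{esres} I already get $\ker(\mathrm{ES}) \subseteq \ker(\res\circ\mathrm{ES}) = \Skone kN\CC \oplus \overline{\Skone kN\CC}$ and that $\mathrm{ES}$ carries $\Skone kN\CC \oplus \overline{\Skone kN\CC}$ into $\Hpar^1$. So the theorem reduces to: (I) $\mathrm{ES}$ is injective on $\Skone kN\CC \oplus \overline{\Skone kN\CC}$; and (II) the identities $\dim_\CC \Mkone kN\CC + \dim_\CC \Skone kN\CC = (k-1)\frac{\mu}{6} + \delta_{k,2}$ and $2\dim_\CC\Skone kN\CC = (k-1)\frac{\mu}{6} - \nu_\infty + 2\delta_{k,2}$, where $\mu = (\PSL_2(\ZZ):\Gamma_1(N))$ and $\nu_\infty$ is the number of cusps. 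By Proposition~\ref{dimheins} these make source and target equidimensional, so (I) then forces bijectivity onto $\h^1$ and, by restriction, onto $\Hpar^1$.

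\emph{The numerical input (II)} I would read off from the classical dimension formulas for $\Mkone kN\CC$ and $\Skone kN\CC$ (recalled in Section~\ref{sec:1}, cf.\ \cite{Stein}) together with the genus relation $\frac{\mu}{6} = 2g-2+\nu_\infty$ for the torsion-free group $\Gamma_1(N)$ --- a short bookkeeping check, carried out separately for $k=2$ and for $k\ge 3$, with no surprises.

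\emph{Injectivity (I).} The plan is to pair Eichler--Shimura classes against each other using the cup-product pairing $\langle,\rangle$ of Proposition~\ref{defpairing}, with $\pi$ the symplectic pairing on coinduced modules of \eqref{eq:coind-pair}. The ingredients are: (a) Proposition~\ref{cuppet}, $\langle\phi_f,\overline{\psi_g}\rangle = (2i)^{k-1}\mu(f,g)$ with $(\,,\,)$ the Petersson product; (b) its analogue for two \emph{holomorphic} cusp forms, $\langle\phi_f,\phi_g\rangle = 0$ for all $f,g\in\Skone kN\CC$ --- whence also $\langle\overline{\phi_f},\overline{\phi_g}\rangle = \overline{\langle\phi_f,\phi_g\rangle} = 0$, the pairing being assembled from $\QQ$-bilinear data; (c) Proposition~\ref{defpairing}~(b),(c): if one of $\phi,\psi$ is a parabolic cocycle, then $\langle\phi,\psi\rangle$ depends only on the cohomology class of the other, and in particular vanishes when that other one is a coboundary; (d) by base-point independence (Proposition~\ref{esmap}~(a)) the cocycles $\phi_f$ (base point $\infty$, well defined because $f$ is a cusp form) and $\psi_f$ (base point $\zeta_6$) are cohomologous, and $\phi_f(T)=0$, so $\phi_f$ and $\overline{\phi_f}$ are parabolic cocycles. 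Now suppose $\mathrm{ES}(f,\overline{g})=0$ with $f,g\in\Skone kN\CC$; then the cocycle $\phi_f + \overline{\psi_g}$ represents $0$, hence is a coboundary. Fix $h\in\Skone kN\CC$ and pair it against the parabolic cocycle $\overline{\phi_h}$: by (c) the value is $0$, while expanding bilinearly and using (a),(b),(c),(d) rewrites it as $\langle\phi_f,\overline{\psi_h}\rangle + \langle\overline{\phi_g},\overline{\phi_h}\rangle = (2i)^{k-1}\mu(f,h) + 0$. Thus $(f,h)=0$ for every $h$, so $f=0$ by positive-definiteness of the Petersson product; since then $\mathrm{ES}(0,\overline g)=\mathrm{ES}(f,\overline g)=0$, the cocycle $\overline{\psi_g}$ is a coboundary, so $\langle\phi_h,\overline{\psi_g}\rangle = 0$ by (c), while it equals $(2i)^{k-1}\mu(h,g)$ by (a), forcing $g=0$. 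This gives (I).

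\emph{Main obstacle.} The real work is (b), the vanishing of the ``holomorphic $\times$ holomorphic'' cup product; it is precisely what lets one decouple the holomorphic and anti-holomorphic contributions in (I), and I expect it to require a computation running parallel to that of Proposition~\ref{cuppet} --- evaluating $\rho(\sigma,\sigma)$, $\rho(\tau,\tau)$, $\rho(\tau,\tau^2)$ for the pair $(\phi_f,\phi_g)$ and feeding them into the formula of Proposition~\ref{defpairing}~(d) --- the crucial simplification being that both inner differentials are now holomorphic, so that the resulting iterated integral collapses. Everything else is quoted from Propositions~\ref{esres},~\ref{esmap},~\ref{cuppet},~\ref{dimheins},~\ref{defpairing} and Theorem~\ref{compthm}, or is the dimension count of (II).
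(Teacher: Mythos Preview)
Your strategy coincides with the paper's: reduce to the cuspidal subspace via Proposition~\ref{esres}, match dimensions via Proposition~\ref{dimheins} and the classical dimension formulae, and prove injectivity using the cup-product pairing of Proposition~\ref{defpairing} together with Proposition~\ref{cuppet}. The one substantive difference is how you decouple the holomorphic and anti-holomorphic parts. You need your claim~(b), the vanishing $\langle\phi_f,\phi_g\rangle=0$ for \emph{all} pairs $f,g$ of cusp forms, and you rightly flag it as the main obstacle. The paper sidesteps this computation entirely by precomposing with the isomorphism $(f,\bar g)\mapsto (f+g,\overline{f}-\overline{g})$ on $\Skone kN\CC\oplus\overline{\Skone kN\CC}$: if the image of $(f+g,\overline{f}-\overline{g})$ vanishes in cohomology, then $2\Real(\phi_f)+2i\Imag(\phi_g)=0$, whence $\phi_f+\overline{\phi_f}$ and $\phi_g-\overline{\phi_g}$ are \emph{separately} coboundaries (one is real, the other purely imaginary). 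Pairing $\phi_f$ against $\phi_f+\overline{\phi_f}$ then only requires the \emph{diagonal} vanishing $\langle\phi_f,\phi_f\rangle=0$, which the paper gets from the graded-commutativity of the cup product (Lemma~\ref{lem:swap}) rather than from any integral computation. So what your approach buys is a slightly more direct unwinding of the kernel condition, at the cost of having to prove~(b); what the paper's trick buys is that~(b) is never needed in its off-diagonal form.
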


\begin{proof}
We first assert that the dimensions of both sides of the Eichler-Shimura
map agree and also that twice the dimension of the space of cusp forms
equals the dimension of the parabolic subspace. The dimension of the
cohomology group and its parabolic subspace was computed in
Proposition~\ref{dimheins}.
For the dimension of the left-hand side we refer to~\cite[\S 6.2]{Stein}.

Suppose that $(f,g)$ are in the kernel of the Eichler-Shimura map.
Then by Proposition~\ref{esres} it follows that $f$ and $g$ are both cuspidal.
Hence, it suffices to prove that the restriction
of the Eichler-Shimura map to $\Skone kN\CC \oplus \overline{\Skone kN\CC}$
is injective. In order to do this we choose $z_0=z_1=\infty$ as
base points for the Eichler-Shimura map, which is possible as the integrals
converge on cusp forms (as in Proposition~\ref{esmap} one sees that this
choice of base point does not change the cohomology class).
As in Proposition~\ref{cuppet}, we write $\phi_f$ for
the $1$-cocycle associated with a cusp form~$f$ for the base point~$\infty$.

We now make use of the pairing from Proposition~\ref{cuppet}
on $$\Z^1(\PSL_2(\ZZ),\Coind_\Gamma^{\PSL_2(\ZZ)} (V_{k-2}(\CC))),$$
where we put $\Gamma := \Gamma_1(N)$ for short.
This pairing induces a $\CC$-valued pairing $\langle \;, \;\rangle$
on
$$\Hpar^1(\PSL_2(\ZZ),\Coind_\Gamma^{\PSL_2(\ZZ)} (V_{k-2}(\CC))).$$
Next observe that the map
$$ \Skone kN\CC \oplus \overline{\Skone kN\CC} \xrightarrow{(f,\gbar) \mapsto (f+g,\fbar-\gbar)} \Skone kN\CC \oplus \overline{\Skone kN\CC}$$
is an isomorphism.
Let $f,g \in \Skone kN\CC$ be cusp forms and assume now that $(f+g,\fbar-\gbar)$ is sent to the zero-class in
$\Hpar^1(\PSL_2(\ZZ),\Coind_\Gamma^{\PSL_2(\ZZ)} (V_{k-2}(\CC)))$.
In that cohomology space, we thus have
$$ 0 = \phi_f + \phi_g + \overline{\phi_f} - \overline{\phi_g} = (\phi_f + \overline{\phi_f}) + (\phi_g - \overline{\phi_g})
= 2\Real(\phi_f) + 2i \Imag(\phi_g).$$
We conclude that the cohomology classes of $\phi_f + \overline{\phi_f}$ and $\phi_g - \overline{\phi_g}$ are both zero.

Now we apply the pairing as follows:
$$ 0=\langle \phi_f, \phi_f + \overline{\phi_f} \rangle = \langle \phi_f, \phi_f \rangle + \langle \phi_f, \overline{\phi_f} \rangle
= (2i)^{k-1} \mu (f,f)$$
where we used $\langle \phi_f, \phi_f \rangle = 0$ because of Lemma~\ref{lem:swap} (since the pairing is
given by the cup product), as well as Proposition~\ref{cuppet}.
Hence, $(f,f)=0$ and, thus, $f=0$ since the Petersson scalar product is positive definite.
Similar arguments with $0=\langle \phi_g, \phi_g - \overline{\phi_g} \rangle$ show $g=0$.
This proves the injectivity.
\end{proof}

\begin{remark}
The Eichler-Shimura map is in fact an isomorphism for all subgroups~$\Gamma$
of $\SL_2(\ZZ)$ of finite index. The proof is the same, but must use more involved
dimension formulae for the cohomology group (see Remark~\ref{remdimcoh}) and
modular forms.

In Corollary~\ref{esgammanull} we will see that there also is an Eichler-Shimura
isomorphism with a Dirichlet character.
\end{remark}

\subsection{Theoretical exercises}

\begin{exercise}\label{excupwd}
Check that the cup product is well-defined.

Hint: this is a standard exercise that can be found in many textbooks (e.g.\ \cite{Brown}).
\end{exercise}

\begin{exercise}\label{ex:cup}
Prove Lemma~\ref{lem:swap}.

Hint: \cite[(5.3.6)]{Brown}.
\end{exercise}

\section{Hecke operators}\label{sec:Hecke}

In this section we introduce Hecke operators on group cohomology using the double cosets approach
and we prove that the Eichler-Shimura isomorphism is compatible with the Hecke action on group
cohomology and modular forms.

\subsection{Theory: Hecke rings}

\begin{definition}
Let $N,n \in \NN$. We define
\begin{align*}
 \Delta_0^n(N) & = \{\mat abcd \in M_2(\ZZ) | \mat abcd \equiv \mat **0* \mod N, (a,N)=1, \det \mat abcd = n\},\\
 \Delta_1^n(N) & = \{\mat abcd \in M_2(\ZZ) | \mat abcd \equiv \mat 1*0* \mod N, \det \mat abcd = n\},\\
 \Delta_0(N) & = \bigcup_{n \in \NN} \Delta_0^n(N),\\
 \Delta_1(N) & = \bigcup_{n \in \NN} \Delta_1^n(N).
\end{align*}
\end{definition}

From now on, let
$(\Delta,\Gamma) = (\Delta_1(N),\Gamma_1(N))$ or $(\Delta,\Gamma) = (\Delta_0(N),\Gamma_0(N))$.

\begin{lemma}
Let $\alpha \in \Delta$. We put
$$ \Gamma_\alpha = \Gamma \cap \alpha^{-1} \Gamma \alpha \textnormal{ and }
\Gamma^\alpha = \Gamma \cap \alpha \Gamma \alpha^{-1}.$$
Then $\Gamma_\alpha$ has finite index in $\Gamma$ and $\alpha^{-1}\Gamma\alpha$
(one says that $\Gamma$ and $\alpha^{-1}\Gamma\alpha$ are commensurable), and also
$\Gamma^\alpha$ has finite index in $\Gamma$ and $\alpha\Gamma\alpha^{-1}$
(hence, $\Gamma$ and $\alpha \Gamma \alpha^{-1}$ are commensurable).
\end{lemma}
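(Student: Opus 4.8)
The plan is to prove commensurability by realising $\Gamma_\alpha$ and $\Gamma^\alpha$ as intersections of $\Gamma$ with a suitable congruence subgroup, and then invoking the elementary fact that congruence subgroups have finite index in $\SL_2(\ZZ)$. The key observation is that although $\alpha \in \Delta$ is not in $\SL_2(\ZZ)$ (it has determinant $n$, possibly $> 1$), conjugation by $\alpha$ still sends a sufficiently deep principal congruence subgroup into $\SL_2(\ZZ)$. Concretely, first I would show that $\alpha^{-1} \Gamma(Nn) \alpha \subseteq \SL_2(\ZZ)$: if $\gamma \in \Gamma(Nn)$, write $\gamma = 1 + Nn\cdot\delta$ with $\delta \in M_2(\ZZ)$; then $\alpha^{-1}\gamma\alpha = 1 + Nn\, \alpha^{-1}\delta\alpha$, and since $n\alpha^{-1} = \alpha^{\mathrm{adj}} \in M_2(\ZZ)$ (the adjugate), the matrix $n\,\alpha^{-1}\delta\alpha = \alpha^{\mathrm{adj}}\delta\alpha$ has integer entries; hence $\alpha^{-1}\gamma\alpha = 1 + N\,(\alpha^{\mathrm{adj}}\delta\alpha) \in \SL_2(\ZZ)$, and in fact it lies in $\Gamma(N)$.

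Next I would use this to control the index. Since $\alpha^{-1}\Gamma(Nn)\alpha \subseteq \Gamma(N) \subseteq \Gamma$ (as $\Gamma$ is $\Gamma_0(N)$ or $\Gamma_1(N)$, both of which contain $\Gamma(N)$), and also $\Gamma(Nn) \subseteq \Gamma$, we get $\Gamma(Nn) \subseteq \Gamma \cap \alpha^{-1}\Gamma\alpha = \Gamma_\alpha$. Because $\Gamma(Nn)$ has finite index in $\SL_2(\ZZ)$ (this is the surjectivity of $\SL_2(\ZZ) \to \SL_2(\ZZ/Nn\ZZ)$, quoted as Exercise~\ref{exsln}, together with finiteness of $\SL_2(\ZZ/Nn\ZZ)$), it has finite index in $\Gamma$ a fortiori, so $\Gamma_\alpha$ has finite index in $\Gamma$. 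To get that $\Gamma_\alpha$ has finite index in $\alpha^{-1}\Gamma\alpha$ as well, note that $\alpha \Gamma_\alpha \alpha^{-1} = \alpha\Gamma\alpha^{-1} \cap \Gamma = \Gamma^\alpha$; conjugation by $\alpha$ is a group isomorphism $\alpha^{-1}\Gamma\alpha \to \Gamma\alpha\alpha^{-1}$-conjugate which carries $\Gamma_\alpha$ onto $\Gamma^\alpha \le \Gamma$, so the index $[\alpha^{-1}\Gamma\alpha : \Gamma_\alpha]$ equals $[\Gamma : \Gamma^\alpha]$, and it remains to bound the latter.

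For $\Gamma^\alpha$ one runs the symmetric argument: replacing $\alpha$ by its adjugate (equivalently, applying the above to $\alpha^{-1}$ up to the scalar $n$, or simply observing that $\alpha \Gamma(Nn) \alpha^{-1} \subseteq \Gamma(N)$ by the same adjugate computation with the roles of the two factors swapped), one finds $\Gamma(Nn) \subseteq \Gamma^\alpha$, hence $\Gamma^\alpha$ has finite index in $\Gamma$, and then also in $\alpha\Gamma\alpha^{-1}$ by conjugating back. The main obstacle — really the only point requiring care — is the verification that conjugation by a non-invertible-over-$\ZZ$ matrix maps $\Gamma(Nn)$ into $\SL_2(\ZZ)$; once the adjugate trick $n\alpha^{-1} \in M_2(\ZZ)$ is in hand, everything else is bookkeeping with indices of subgroups. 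I would close by remarking that this simultaneously establishes the commensurability assertions stated in parentheses, since commensurability of $\Gamma$ with $\alpha^{-1}\Gamma\alpha$ (resp.\ $\alpha\Gamma\alpha^{-1}$) is exactly the conjunction of the two finite-index statements just proved.
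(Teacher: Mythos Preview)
Your proposal is correct and follows essentially the same approach as the paper: both use the adjugate trick to show $\alpha^{\pm 1}\Gamma(Nn)\alpha^{\mp 1} \subseteq \Gamma(N)$, deduce $\Gamma(Nn) \subseteq \Gamma_\alpha$ (and $\Gamma^\alpha$), and conclude finite index from the finite index of $\Gamma(Nn)$ in $\SL_2(\ZZ)$. One small bookkeeping slip: from $\alpha^{-1}\Gamma(Nn)\alpha \subseteq \Gamma$ you actually get $\Gamma(Nn) \subseteq \alpha\Gamma\alpha^{-1}$, hence $\Gamma(Nn) \subseteq \Gamma^\alpha$ rather than $\Gamma_\alpha$; the symmetric computation $\alpha\Gamma(Nn)\alpha^{-1} \subseteq \Gamma(N)$ is what gives $\Gamma(Nn) \subseteq \Gamma_\alpha$ --- but since you invoke both directions anyway, this is harmless.
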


\begin{proof}
Let $n = \det \alpha$. One checks by matrix calculation that
$$\alpha^{-1}\Gamma(Nn)\alpha \subset \Gamma(N).$$
Thus,
$$ \Gamma(Nn) \subset \alpha^{-1} \Gamma(N) \alpha \subset \alpha^{-1} \Gamma \alpha.$$
Hence, we have $\Gamma(Nn) \subset \Gamma_\alpha$ and the first claim follows.
For the second claim, one proceeds similarly.
\end{proof}

\begin{example}
Let $\Gamma = \Gamma_0(N)$ and $p$ a prime. The most important case for the
sequel is $\alpha = \mat 100p$. An elementary calculation shows
$\Gamma^\alpha = \Gamma_0(Np)$.
\end{example}

\begin{definition}
Let $\alpha \in \Delta$. We consider the diagram
$$\xymatrix@=1cm{
  \Gamma_\alpha \backslash \HH \ar@{->}[r]^\alpha_{\tau \mapsto \alpha \tau} \ar@{->}[d]^{\pi_\alpha} &
  \Gamma^\alpha \backslash \HH \ar@{->}[d]^{\pi^\alpha} \\
  \Gamma \backslash \HH & \Gamma \backslash \HH,
}$$
in which $\pi^\alpha$ and $\pi_\alpha$ are the natural projections. One checks
that this is well defined by using
$\alpha \Gamma_\alpha \alpha^{-1} = \Gamma^\alpha$.

The {\em group of divisors} $\Div(S)$ on a Riemann surface~$S$ consists of all formal $\ZZ$-linear combinations of points of~$S$.
For a morphism $\pi: S \to T$ of Riemann surfaces, define the {\em pull-back} $\pi^*: \Div(T) \to \Div(S)$ and the
{\em push-forward} $\pi_*:\Div(S) \to \Div(T)$ uniquely by the rules $\pi^*(t) = \sum_{s \in S: \pi(s)=t} s$ and $\pi_*(s)=\pi(s)$
for points $t \in T$ and $s \in S$.

The {\em modular correspondence} or {\em Hecke correspondence} $\tau_\alpha$ is
defined as
$$ \tau_\alpha: \Div(Y_\Gamma) \xrightarrow{\pi_\alpha^*}
\Div(Y_{\Gamma_\alpha}) \xrightarrow{\alpha_*} \Div(Y_{\Gamma^\alpha}) \xrightarrow{\pi^\alpha_*} \Div(Y_\Gamma).$$
\end{definition}

These modular correspondences will be described more explicitly in a moment.
First a lemma:

\begin{lemma}\label{lem:doub}
Let $\alpha_i \in \Gamma$ for $i \in I$ with some index set~$I$. Then we have
$$ \Gamma = \bigsqcup_{i \in I} \Gamma_\alpha \alpha_i \; \Leftrightarrow \;
\Gamma \alpha \Gamma = \bigsqcup_{i \in I} \Gamma \alpha \alpha_i.$$ 
\end{lemma}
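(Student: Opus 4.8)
The plan is to prove the equivalence by establishing a bijection between the two coset decompositions, exploiting the standard relationship between double cosets and the single cosets they contain. Recall that $\Gamma_\alpha = \Gamma \cap \alpha^{-1}\Gamma\alpha$, so that $\alpha \Gamma_\alpha \alpha^{-1} = \Gamma^\alpha \subseteq \Gamma$. The key observation is that left multiplication by $\alpha$ sets up a correspondence between right cosets $\Gamma_\alpha \gamma$ inside $\Gamma$ and right cosets $\Gamma \alpha \gamma$ inside the double coset $\Gamma\alpha\Gamma$.

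First I would show that the map $\Gamma_\alpha \gamma \mapsto \Gamma \alpha \gamma$ is well-defined and injective on right cosets. For well-definedness: if $\Gamma_\alpha \gamma = \Gamma_\alpha \gamma'$, then $\gamma' \gamma^{-1} \in \Gamma_\alpha \subseteq \alpha^{-1}\Gamma\alpha$, hence $\alpha \gamma' \gamma^{-1} \alpha^{-1} \in \Gamma$, so $\Gamma \alpha \gamma' = \Gamma \alpha \gamma$. For injectivity: if $\Gamma \alpha \gamma = \Gamma \alpha \gamma'$ with $\gamma, \gamma' \in \Gamma$, then $\alpha \gamma' \gamma^{-1} \alpha^{-1} \in \Gamma$, and since also $\gamma' \gamma^{-1} \in \Gamma$, we get $\gamma' \gamma^{-1} \in \Gamma \cap \alpha^{-1}\Gamma\alpha = \Gamma_\alpha$, i.e.\ $\Gamma_\alpha \gamma = \Gamma_\alpha \gamma'$. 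Surjectivity onto the set of right $\Gamma$-cosets contained in $\Gamma\alpha\Gamma$ is immediate: any such coset has the form $\Gamma \alpha \gamma$ with $\gamma \in \Gamma$ (write a general element of $\Gamma\alpha\Gamma$ as $\gamma_1 \alpha \gamma_2$ and note $\Gamma \gamma_1 \alpha \gamma_2 = \Gamma \alpha \gamma_2$).

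With this bijection in hand, both directions of the claimed equivalence follow formally. If $\Gamma = \bigsqcup_{i} \Gamma_\alpha \alpha_i$, then applying the bijection gives that the distinct cosets $\Gamma \alpha \alpha_i$ exhaust all right $\Gamma$-cosets in $\Gamma\alpha\Gamma$ and are pairwise disjoint, i.e.\ $\Gamma\alpha\Gamma = \bigsqcup_i \Gamma\alpha\alpha_i$. Conversely, if $\Gamma\alpha\Gamma = \bigsqcup_i \Gamma\alpha\alpha_i$ with $\alpha_i \in \Gamma$, then since every right $\Gamma$-coset in $\Gamma\alpha\Gamma$ is of the form $\Gamma\alpha\alpha_i$ for a unique $i$, the preimages $\Gamma_\alpha \alpha_i$ under the bijection are pairwise disjoint and exhaust $\Gamma$, giving $\Gamma = \bigsqcup_i \Gamma_\alpha \alpha_i$.

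I do not anticipate a genuine obstacle here; the proof is essentially a bookkeeping argument about cosets. The only point requiring a modicum of care is keeping straight that the $\alpha_i$ lie in $\Gamma$ (this is part of the hypothesis and is what makes the displayed decompositions refinements of decompositions of $\Gamma$ and $\Gamma\alpha\Gamma$ respectively), and verifying both the well-definedness \emph{and} the injectivity of the coset map, since these use the two inclusions defining $\Gamma_\alpha = \Gamma \cap \alpha^{-1}\Gamma\alpha$ in opposite ways. Everything else is a direct unwinding of definitions.
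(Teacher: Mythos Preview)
Your proof is correct and supplies exactly the details the paper omits: the paper's own proof reads in its entirety ``This is proved by quite a straight forward calculation.'' Your bijection $\Gamma_\alpha\gamma \leftrightarrow \Gamma\alpha\gamma$ is the standard way to carry out that calculation, and your verification of well-definedness and injectivity using the two inclusions $\Gamma_\alpha \subseteq \Gamma$ and $\Gamma_\alpha \subseteq \alpha^{-1}\Gamma\alpha$ is clean and complete.
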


\begin{proof}
This is proved by quite a straight forward calculation.
\end{proof}

\begin{corollary}
Let $\alpha \in \Delta$ and $\Gamma \alpha \Gamma = \bigsqcup_{i \in I} \Gamma \alpha \alpha_i$.
Then the Hecke correspondence $\tau_\alpha: \Div(Y_\Gamma) \to \Div(Y_\Gamma)$ is given
by
$\tau \mapsto \sum_{i \in I} \alpha \alpha_i \tau$ 
for representatives $\tau \in \HH$.
\end{corollary}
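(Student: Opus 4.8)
The plan is to unwind the three maps $\pi_\alpha^*$, $\alpha_*$, $\pi^\alpha_*$ composing $\tau_\alpha$ and to use Lemma~\ref{lem:doub} to translate the geometric coset decomposition $\Gamma = \bigsqcup_{i\in I}\Gamma_\alpha\alpha_i$ into the double-coset decomposition $\Gamma\alpha\Gamma = \bigsqcup_{i\in I}\Gamma\alpha\alpha_i$. So the very first step is to observe that a decomposition into right $\Gamma_\alpha$-cosets $\Gamma = \bigsqcup_{i\in I}\Gamma_\alpha\alpha_i$ always exists (with finite $I$, by the preceding lemma giving $[\Gamma:\Gamma_\alpha]<\infty$), and that by Lemma~\ref{lem:doub} this is equivalent to $\Gamma\alpha\Gamma = \bigsqcup_{i\in I}\Gamma\alpha\alpha_i$; this is the indexing set that appears in the statement.

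Next I would compute each map on a point. Take $\tau\in\HH$ representing a point $[\tau]\in Y_\Gamma = \Gamma\backslash\HH$. For the pull-back $\pi_\alpha^*:\Div(Y_\Gamma)\to\Div(Y_{\Gamma_\alpha})$, since $\pi_\alpha$ is the natural projection $\Gamma_\alpha\backslash\HH\to\Gamma\backslash\HH$, its fibre over $[\tau]$ is the set of $\Gamma_\alpha$-orbits contained in the $\Gamma$-orbit of $\tau$, i.e. $\{[\alpha_i\tau]_{\Gamma_\alpha} : i\in I\}$ using the coset representatives $\alpha_i$ of $\Gamma_\alpha\backslash\Gamma$ (here one checks these are pairwise distinct as points of $Y_{\Gamma_\alpha}$, which is exactly the statement that $\Gamma_\alpha\alpha_i$ are distinct cosets, possibly noting that $\Gamma_\alpha$ contains $-1$ or adjusting by $\pm1$ as in the conventions of the excerpt). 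Then $\alpha_*$ sends $[\alpha_i\tau]_{\Gamma_\alpha}\mapsto[\alpha\alpha_i\tau]_{\Gamma^\alpha}$, which is well-defined precisely because $\alpha\Gamma_\alpha\alpha^{-1}=\Gamma^\alpha$ as already checked in the definition. Finally the push-forward $\pi^\alpha_*:\Div(Y_{\Gamma^\alpha})\to\Div(Y_\Gamma)$ sends a point to its image, so $[\alpha\alpha_i\tau]_{\Gamma^\alpha}\mapsto[\alpha\alpha_i\tau]_\Gamma$. Composing, $\tau_\alpha([\tau]) = \sum_{i\in I}[\alpha\alpha_i\tau]_\Gamma$, which is the asserted formula $\tau\mapsto\sum_{i\in I}\alpha\alpha_i\tau$.

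The main obstacle, such as it is, is purely bookkeeping about well-definedness and multiplicities: one must check that the $\pi_\alpha$-fibre over a generic $[\tau]$ has exactly $|I|$ points counted correctly (so that $\pi_\alpha^*[\tau]=\sum_i[\alpha_i\tau]_{\Gamma_\alpha}$ with the right coefficients), that no collapsing occurs under $\alpha_*$ or $\pi^\alpha_*$ beyond what is recorded by the coset structure, and that the $\pm1$ conventions implicit in writing $Y_\Gamma=\Gamma\backslash\HH$ are handled consistently. I would phrase the verification of the fibre identification via the bijection $\Gamma_\alpha\backslash\Gamma\xrightarrow{\sim}\pi_\alpha^{-1}([\tau])$, $\Gamma_\alpha\gamma\mapsto[\gamma\tau]_{\Gamma_\alpha}$, valid for $\tau$ with trivial stabiliser and extended by the usual divisor conventions in general; combined with Lemma~\ref{lem:doub} this immediately yields the claimed description. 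I would keep the proof to a few lines, citing Lemma~\ref{lem:doub} for the passage between the two decompositions and referring to the definitions of $\pi^*$ and $\pi_*$ given just above for the explicit action on points.
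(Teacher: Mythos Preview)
Your proposal is correct and follows exactly the approach the paper intends: the paper's proof is the single line ``It suffices to check the definition using Lemma~\ref{lem:doub}.'' You have spelled out precisely that check---using Lemma~\ref{lem:doub} to pass from $\Gamma=\bigsqcup_i\Gamma_\alpha\alpha_i$ to $\Gamma\alpha\Gamma=\bigsqcup_i\Gamma\alpha\alpha_i$, and then unwinding $\pi^\alpha_*\circ\alpha_*\circ\pi_\alpha^*$ on a point---in rather more detail than the paper deems necessary.
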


\begin{proof}
It suffices to check the definition using Lemma~\ref{lem:doub}.
\end{proof}

\begin{remark}
We have $\Delta^n = \bigcup_{\alpha \in \Delta, \det \alpha = n} \Gamma \alpha \Gamma$
and one can choose finitely many $\alpha_i$ for $i \in I$ such that
$\Delta^n = \bigsqcup_{i \in I} \Gamma \alpha_i \Gamma$.
\end{remark}

\begin{definition}
Let $\Delta^n = \bigsqcup_{i \in I} \Gamma \alpha_i \Gamma$.
The Hecke operator $T_n$ on $\Div(Y_\Gamma)$ is defined as
$$ T_n = \sum_{i \in I} \tau_{\alpha_i}.$$
\end{definition}

Let us recall from equation~\eqref{sigmaa} the matrix  $\sigma_a \in \Gamma_0(N)$ (for $(a,N)=1$) which satisfies
$$\sigma_a \equiv \mat {a^{-1}}00a \mod N.$$

\begin{proposition}\label{zerlegung}
\begin{enumerate}[(a)]
\item We have the decomposition
$$\Delta_0^n(N) = \bigsqcup_a \bigsqcup_b \Gamma_0(N)\mat ab0d,$$
where $a$ runs through the positive integers with
$a \mid n$ and $(a,N) = 1$ and $b$ runs through the integers such that $0 \le b < d := n/a$.
\item We have the decomposition
$$\Delta_1^n(N) = \bigsqcup_a \bigsqcup_b \Gamma_1(N)\sigma_a \mat ab0d$$
with $a,b,d$ as in~(a).
\end{enumerate}
\end{proposition}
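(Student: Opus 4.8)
The plan is to establish part~(a) by an explicit Hermite-type reduction carried out \emph{inside} $\Gamma_0(N)$, and then to deduce part~(b) from part~(a) using the decomposition of $\Gamma_0(N)$ into $\Gamma_1(N)$-cosets afforded by the matrices $\sigma_u$.

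For the existence half of~(a) I would take $M = \mat{p}{q}{r}{s} \in \Delta_0^n(N)$, so $ps - qr = n$, $N \mid r$ and $\gcd(p,N)=1$, and put $g := \gcd(p,r)$. The key elementary observation is that $\gcd(g,N) = 1$, since any common prime of $g$ and $N$ would divide $p$ and $N$. Writing $p = g p_1$, $r = g r_1$ with $\gcd(p_1,r_1)=1$, one then gets $N \mid r_1$ (as $N \mid g r_1$ and $\gcd(N,g)=1$). Completing $(r_1,-p_1)$ to a matrix $\gamma = \mat{x}{y}{r_1}{-p_1} \in \SL_2(\ZZ)$, the condition $N \mid r_1$ forces $\gamma \in \Gamma_0(N)$, and the bottom-left entry of $\gamma M$ is $r_1 p - p_1 r = 0$. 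Hence $\gamma M = \mat{a_0}{b_0}{0}{d_0}$ with $a_0 d_0 = n$; multiplying on the left by $-I \in \Gamma_0(N)$ if necessary makes $a_0 > 0$, hence also $d_0 > 0$, and left multiplication by a suitable $\mat{1}{m}{0}{1} \in \Gamma_0(N)$ brings $b_0$ into the range $0 \le b < d_0$. Reducing the resulting equality $\gamma M = \mat{a}{b}{0}{d}$ modulo $N$ and using that the top-left entries of $\gamma$ and $M$ are units modulo $N$ gives $\gcd(a,N)=1$. (For $N=1$ this is just the Hermite normal form.)

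For disjointness in~(a): if $\Gamma_0(N)\mat{a}{b}{0}{d} = \Gamma_0(N)\mat{a'}{b'}{0}{d'}$, then some $\mat{x}{y}{z}{w}\in\Gamma_0(N)$ carries the first matrix to the second; comparing bottom-left entries forces $z=0$ (since $a>0$), hence $xw=1$ with $x=w=1$ by positivity, and then $a=a'$, $d=d'$ and $b\equiv b'\pmod d$, so $b=b'$. For part~(b) I would first note that, $\Gamma_1(N)$ being normal in $\Gamma_0(N)$ and $\sigma_u$ having top-left entry $\equiv u^{-1}\pmod N$, the matrices $\sigma_u$ for $u\in(\ZZ/N\ZZ)^\times$ form a full set of coset representatives, so $\Gamma_0(N)=\bigsqcup_u \Gamma_1(N)\sigma_u$; combined with~(a) this yields $\Delta_0^n(N)=\bigsqcup_{a,b}\bigsqcup_u \Gamma_1(N)\sigma_u\mat{a}{b}{0}{d}$. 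Since $\Delta_1^n(N)$ is a union of left $\Gamma_1(N)$-cosets (left multiplication by $\Gamma_1(N)$ preserves being $\equiv\mat{1}{*}{0}{*}\pmod N$), it is precisely the union of those cosets whose representative $\sigma_u\mat{a}{b}{0}{d}\equiv\mat{u^{-1}a}{*}{0}{*}\pmod N$ has top-left entry $\equiv 1$, i.e.\ with $u\equiv a\pmod N$; as $\gcd(a,N)=1$ there is a unique such $u$, and for it we may take $\sigma_u=\sigma_a$. The disjointness of the resulting decomposition is inherited from part~(a) by reducing modulo $\Gamma_0(N)$.

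The only step that is not pure bookkeeping is the existence part of~(a): one must perform the upper-triangularization within $\Gamma_0(N)$ rather than merely within $\SL_2(\ZZ)$, and the whole point is the coprimality $\gcd(\gcd(p,r),N)=1$, which simultaneously forces $N\mid r_1$ and places the completing matrix $\gamma$ in $\Gamma_0(N)$.
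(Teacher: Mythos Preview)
Your proof is correct and complete. The paper itself omits the argument entirely, writing only ``This proof is elementary,'' so there is nothing to compare against; you have supplied precisely the kind of Hermite-reduction-inside-$\Gamma_0(N)$ argument that one would expect, and your deduction of~(b) from~(a) via the coset decomposition $\Gamma_0(N)=\bigsqcup_u \Gamma_1(N)\sigma_u$ is clean and efficient.
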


\begin{proof}
This proof is elementary.
\end{proof}

Note that due to $\sigma_a \in \Gamma_0(N)$, the matrices $\sigma_a  \mat ab0d$ used in part~(b)
also work in part~(a). One can thus use the same representatives regardless if one works with $\Gamma_0(N)$
or $\Gamma_1(N)$.
Note also that for $n=\ell$ a prime, these representatives are exactly the elements of $\calR_\ell$
from equation \eqref{setrp}.

Next, we turn to the important description of the Hecke algebra as a double coset algebra.

\begin{definition}
The {\em Hecke ring} $R(\Delta,\Gamma)$ is the free abelian group on the
double cosets $\Gamma \alpha \Gamma$ for $\alpha \in \Delta$.
\end{definition}

As our next aim we would like to define a multiplication, which then
also justifies the name "ring".
First let
$\Gamma \alpha \Gamma = \bigsqcup_{i=1}^n \Gamma \alpha_i$ und
$\Gamma \beta \Gamma = \bigsqcup_{j=1}^m \Gamma \beta_j$.
We just start computing.
$$ \Gamma \alpha \Gamma \cdot \Gamma \beta \Gamma = 
\bigcup_j \Gamma \alpha \Gamma \beta_j = \bigcup_{i,j} \Gamma \alpha_i \beta_j.$$
This union is not necessarily disjoint.
The left hand side can be written as a disjoint union of double cosets
$\bigsqcup_{k=1}^r \Gamma \gamma_k \Gamma$. Each of these double
cosets is again of the form
$$\Gamma \gamma_k \Gamma = \bigsqcup_{l=1}^{n_k} \Gamma \gamma_{k,l}.$$
We obtain in summary
$$ \Gamma \alpha \Gamma \cdot \Gamma \beta \Gamma = \bigcup_{i,j} \Gamma \alpha_i \beta_j
= \bigsqcup_k \bigsqcup_l \Gamma \gamma_{k,l}.$$
We will now introduce a piece of notation for the multiplicity with which every
coset on the right appears in the centre of the above equality. For fixed $k$ we define for every~$l$
$$ m_{k,l} = \# \{ (i,j) | \Gamma \gamma_{k,l} = \Gamma \alpha_i \beta_j \}.$$

The important point is the following lemma.

\begin{lemma}\label{lemmkl}
The number $m_{k,l}$ is independent of~$l$. We put $m_k := m_{k,l}$.
\end{lemma}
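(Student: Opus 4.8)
The plan is to exploit the fact that, for fixed $k$, the group $\Gamma$ acts on the right on the finite set $\{\Gamma\gamma_{k,1},\dots,\Gamma\gamma_{k,n_k}\}$ of left cosets making up $\Gamma\gamma_k\Gamma$ (since $\Gamma\gamma_k\Gamma\gamma = \Gamma\gamma_k\Gamma$ for $\gamma\in\Gamma$), and that this action is transitive. So it suffices to produce, for each $\gamma\in\Gamma$ with $\Gamma\gamma_{k,l}\gamma = \Gamma\gamma_{k,l'}$, a bijection between the fibres $F_l := \{(i,j) : \Gamma\alpha_i\beta_j = \Gamma\gamma_{k,l}\}$ and $F_{l'}$; since $m_{k,l} = \#F_l$, transitivity will then force $m_{k,l}$ to be independent of $l$, and we may set $m_k := m_{k,l}$.

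To build the bijection I would first record two elementary permutation actions. Because $\Gamma\beta\Gamma\gamma = \Gamma\beta\Gamma$, right multiplication by $\gamma\in\Gamma$ permutes the left cosets $\Gamma\beta_1,\dots,\Gamma\beta_m$; write this permutation as $j\mapsto j\cdot\gamma$, so that $\beta_j\gamma = \eta_{j,\gamma}\,\beta_{j\cdot\gamma}$ for a uniquely determined $\eta_{j,\gamma}\in\Gamma$. Likewise, right multiplication by any $\eta\in\Gamma$ permutes the left cosets $\Gamma\alpha_1,\dots,\Gamma\alpha_n$; write this as $i\mapsto i\cdot\eta$. Now define $\Phi_\gamma(i,j) := (\,i\cdot\eta_{j,\gamma},\ j\cdot\gamma\,)$. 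The coset identity
\[
\Gamma\alpha_{i\cdot\eta_{j,\gamma}}\,\beta_{j\cdot\gamma} = \Gamma\alpha_i\,\eta_{j,\gamma}\,\beta_{j\cdot\gamma} = \Gamma\alpha_i\beta_j\gamma = \Gamma\gamma_{k,l}\gamma = \Gamma\gamma_{k,l'}
\]
shows that $\Phi_\gamma$ maps $F_l$ into $F_{l'}$.

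Finally I would check that $\Phi_\gamma$ is a bijection with inverse $\Phi_{\gamma^{-1}}$. This is a short bookkeeping computation: from $\beta_j\gamma = \eta_{j,\gamma}\beta_{j\cdot\gamma}$ one gets $\beta_{j\cdot\gamma}\gamma^{-1} = \eta_{j,\gamma}^{-1}\beta_j$, and uniqueness of the decomposition forces $(j\cdot\gamma)\cdot\gamma^{-1} = j$ together with $\eta_{j\cdot\gamma,\gamma^{-1}} = \eta_{j,\gamma}^{-1}$; since also $i\mapsto (i\cdot\eta)\cdot\eta^{-1}$ is the identity, one recovers $\Phi_{\gamma^{-1}}(\Phi_\gamma(i,j)) = (i,j)$, and symmetrically $\Phi_\gamma(\Phi_{\gamma^{-1}}(i,j))=(i,j)$. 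I expect this compatibility check to be the only mildly delicate point — not because anything deep happens, but because $\eta_{j,\gamma}$ depends on both $j$ and $\gamma$, so the two permutation actions are intertwined and one has to keep the indices straight. Once the bijections $F_l\cong F_{l'}$ are established and transitivity of the $\Gamma$-action on $\{\Gamma\gamma_{k,l}\}_l$ is invoked, the lemma follows at once.
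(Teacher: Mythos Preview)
Your argument is correct and is precisely the standard combinatorial proof the paper alludes to when it says ``the proof is combinatorial and quite straight forward'' without giving details. The only thing to add is that your map $\Phi_\gamma$ is actually a bijection of the entire index set $\{1,\dots,n\}\times\{1,\dots,m\}$ (not just of $F_l$), which makes the inverse check slightly cleaner: once $\Phi_{\gamma^{-1}}\circ\Phi_\gamma=\id$ holds globally, the restriction to $F_l$ is automatically a bijection onto its image $F_{l'}$.
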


\begin{proof}
The proof is combinatorial and quite straight forward.
\end{proof}

\begin{definition}
We define the multiplication on $R(\Delta,\Gamma)$ by
$$ \Gamma \alpha \Gamma \cdot \Gamma \beta \Gamma 
= \sum_{k=1}^n m_k \Gamma\gamma_k\Gamma,$$
using the preceding notation.
\end{definition}

In Exercise~\ref{exheckering} you are asked to check that the Hecke
ring is indeed a ring.
The definition of the multiplication makes sense,
as it gives for Hecke correspondences:
$$ \tau_\alpha \circ \tau_\beta = \sum_{k=1}^n m_k \tau_{\gamma_k}.$$

\begin{definition}
For $\alpha \in \Delta$ let $\tau_\alpha = \Gamma \alpha \Gamma$.
We define (as above)
$$T_n = \sum_{\alpha} \tau_\alpha \in R(\Delta,\Gamma),$$
where the sum runs over a set of $\alpha$ such that
$\Delta^n = \bigsqcup_\alpha \Gamma \alpha \Gamma$.
For $a \mid d$ and $(d,N) = 1$ we let
$$ T(a,d) = \Gamma \sigma_a \mat a00d \Gamma \in R(\Delta,\Gamma).$$
\end{definition}

From Exercise~\ref{aufghecke}, we obtain the the following important corollary.

\begin{corollary}
We have $T_m T_n = T_n T_m$ and hence $R(\Delta,\Gamma)$ is a
commutative ring.
\end{corollary}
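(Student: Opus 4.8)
The plan is to deduce commutativity of $R(\Delta,\Gamma)$ from the assertion of Exercise~\ref{aufghecke}, which (in the standard treatment, e.g.\ \cite{DS}, \S5.5, or \cite{Shimura}, \S3.1) expresses the products $T_m T_n$ in terms of the operators $T(a,d)$ and smaller $T_\ell$'s by means of an explicit, symmetric formula. First I would recall that multiplication in the Hecke ring is defined (via Lemma~\ref{lemmkl}) so that it agrees with composition of Hecke correspondences $\tau_\alpha \circ \tau_\beta$; hence it suffices to check the relations among the $T_n$ at the level of the correspondences, or equivalently at the level of the double cosets. The key structural input is the decomposition $\Delta^n = \bigsqcup_\alpha \Gamma\alpha\Gamma$ together with Proposition~\ref{zerlegung}, which gives the explicit coset representatives $\sigma_a\mat ab0d$ with $ad=n$, $(a,N)=1$, $0\le b<d$.

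The main step is the multiplicativity/recursion package: for $(mn',N)=1$ (or in general, splitting off the part coprime to $N$) one has $T_m T_n = \sum_{a\mid (m,n),\,(a,N)=1} a\, T(a,a) T_{mn/a^2}$, and in particular $T_m T_n = T_{mn}$ when $(m,n)=1$, while $T_{\ell^{r+1}} = T_\ell T_{\ell^r} - \ell\, T(\ell,\ell) T_{\ell^{r-1}}$ for a prime $\ell\nmid N$ (with the last term absent for $\ell\mid N$). I would derive these by counting: writing out $T_m T_n$ as $\sum_{i,j}\Gamma\alpha_i\beta_j$ with $\alpha_i,\beta_j$ the representatives of Proposition~\ref{zerlegung}, one groups the resulting cosets by their elementary divisors and tracks multiplicities using Lemma~\ref{lemmkl}; the operators $T(a,d)$ arise precisely as the double cosets with elementary divisor type $(a,d)$. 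Crucially the $T(a,d)$ are central in $R(\Delta,\Gamma)$: the double coset $\Gamma\sigma_a\mat a00d\Gamma$ has the single coset representative $\sigma_a\mat a00d$ up to $\Gamma$ on the relevant congruence subgroup (this is where $\sigma_a\in\Gamma_0(N)$ and the defining congruence conditions are used), so $T(a,d)$ acts as a "scalar-like" operator commuting with everything — this is the content I would extract from Exercise~\ref{aufghecke}.

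Granting the recursion, commutativity is then a formal induction. The ring $R(\Delta,\Gamma)$ is generated by the $T_\ell$ for primes $\ell$ together with the central elements $T(a,a)$; since any two generators either are both central, or one is a $T_\ell$ and the other central, or are $T_\ell$ and $T_{\ell'}$ with $\ell\neq\ell'$ (which commute because $T_\ell T_{\ell'} = T_{\ell\ell'} = T_{\ell'}T_\ell$ by coprime multiplicativity) or powers of the same $T_\ell$ (which visibly commute), the whole ring is commutative. Equivalently, and more cleanly, one checks directly that the formula for $T_m T_n$ obtained above is symmetric in $m$ and $n$ — it is manifestly so, being a sum over $a\mid(m,n)$ of $a\,T(a,a)T_{mn/a^2}$ with $mn$ and $(m,n)$ symmetric — which gives $T_m T_n = T_n T_m$ at once.

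The main obstacle is purely bookkeeping: verifying that the coset-counting in the product $\bigsqcup_{i}\Gamma\alpha_i \cdot \bigsqcup_j\Gamma\beta_j$ reproduces exactly the claimed linear combination of $T(a,a)$'s and $T_{mn/a^2}$'s, i.e.\ getting the multiplicities $m_k$ right. This is where one must be careful that the congruence conditions defining $\Delta_0(N)$ resp.\ $\Delta_1(N)$ are preserved under the relevant manipulations and that the role of $\sigma_a$ is tracked consistently (so that the $\Gamma_1(N)$-case genuinely reduces to the $\Gamma_0(N)$-style computation, as noted after Proposition~\ref{zerlegung}). Everything else — reducing from $R(\Delta,\Gamma)$ to correspondences, the final induction — is formal once Exercise~\ref{aufghecke} is in hand, so I would state the recursion as the cited consequence of that exercise and spend the bulk of the write-up only on the short deduction of symmetry/commutativity from it.
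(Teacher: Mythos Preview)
Your proposal is correct and follows essentially the same route as the paper: the corollary is deduced directly from Exercise~\ref{aufghecke}, whose formula $T_m T_n = \sum_{d\mid(m,n),\,(d,N)=1} d\,T(d,d)\,T_{mn/d^2}$ is visibly symmetric in~$m$ and~$n$. One small imprecision: you assert centrality for general $T(a,d)$, but only the diagonal $T(a,a)$ (where $\sigma_a\mat a00a$ is a scalar times an element of $\Gamma_0(N)$, hence a single coset) are needed and are actually central---for $a\neq d$ the double coset has more than one left coset.
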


\subsection{Theory: Hecke operators on modular forms}

In this section we again let
$(\Delta,\Gamma) = (\Delta_0(N),\Gamma_0(N))$
or $(\Delta_1(N),\Gamma_1(N))$.
We now define an action of the Hecke ring $R(\Delta,\Gamma)$ on modular forms.

\begin{definition}
Let $\alpha \in \Delta$.
Suppose $\Gamma \alpha \Gamma = \bigsqcup_{i=1}^n \Gamma \alpha_i$ and let $f \in M_k(\Gamma)$.
We put
$$ f.\tau_\alpha := \sum_{i=1}^n f |_{\alpha_i}.$$
\end{definition}

\begin{lemma}
The function $f.\tau_\alpha$ again lies in $M_k(\Gamma)$.
\end{lemma}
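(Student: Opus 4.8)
Write $g := f.\tau_\alpha = \sum_{i=1}^n f|_k\alpha_i$, where $\Gamma\alpha\Gamma = \bigsqcup_{i=1}^n \Gamma\alpha_i$. The plan is to check the two defining properties of a modular form for $\Gamma$: the transformation law $g|_k\gamma = g$ for all $\gamma\in\Gamma$, and holomorphy on $\HH$ together with holomorphy at every cusp. I would first record the standard well-definedness observation that $\sum_i f|_k\alpha_i$ is independent of the chosen system of representatives of $\Gamma\backslash\Gamma\alpha\Gamma$: if $\alpha_i' = \gamma_i\alpha_i$ with $\gamma_i\in\Gamma$, then $f|_k\alpha_i' = (f|_k\gamma_i)|_k\alpha_i = f|_k\alpha_i$ since $f$ is $\Gamma$-invariant (here one uses that $|_k$ is a genuine right action, the determinant factors in the slash definition making the cocycle relation work out).

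Granting this, the transformation law is immediate. For $\gamma\in\Gamma$ we have $g|_k\gamma = \sum_i f|_k(\alpha_i\gamma)$, and right multiplication by $\gamma$ permutes the left cosets $\Gamma\alpha_i$ among themselves, because $C\mapsto C\gamma$ is a bijection on the set of left cosets contained in $\Gamma\alpha\Gamma\gamma = \Gamma\alpha\Gamma$. Hence $\{\alpha_i\gamma\}_i$ is again a full set of representatives and $\sum_i f|_k(\alpha_i\gamma) = g$ by well-definedness. Holomorphy on $\HH$ is clear: each $\alpha_i$ has nonzero determinant, so it acts on $\HH$ by a holomorphic automorphism and $cz+d$ has no zero on $\HH$, so each $f|_k\alpha_i$ is holomorphic on $\HH$, and so is $g$. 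Since $\mat 1101\in\Gamma$, the transformation law gives $g(z+1)=g(z)$, so $g$ has a Fourier expansion $\sum_n a_n q^n$; it remains to prove $a_n = 0$ for $n<0$ and, more generally, that $g$ is holomorphic at all cusps.

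For the cusp condition it suffices to show each $f|_k\alpha_i$ is holomorphic at every cusp. Fix $\beta\in\SL_2(\ZZ)$; we must show $(f|_k\alpha_i)|_k\beta = f|_k(\alpha_i\beta)$ is holomorphic at $\infty$. Now $\delta := \alpha_i\beta$ lies in $\matz$, and by column reduction (Hermite normal form) over $\SL_2(\ZZ)$ one may write $\delta = \epsilon\cdot\mat ab0d$ with $\epsilon\in\SL_2(\ZZ)$, integers $a,d>0$, $ad = \det\delta$, $0\le b<d$ — exactly the type of decomposition occurring in Proposition~\ref{zerlegung}. Then $f|_k\delta = (f|_k\epsilon)|_k\mat ab0d$. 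The form $f|_k\epsilon$ is holomorphic at $\infty$: this is condition~(ii) in the definition of a modular form applied to $\epsilon\in\SL_2(\ZZ)$, and since $\Gamma\supseteq\Gamma(L)$ for some $L$ one checks that $f|_k\epsilon$ is periodic of some period $h$, so $f|_k\epsilon(z) = \sum_{m\ge 0} c_m e^{2\pi i m z/h}$. Slashing an expansion of this shape by $\mat ab0d$ multiplies it by the constant $a^{k-1}d^{-1}$ and substitutes $z\mapsto (az+b)/d$, yielding again a Fourier series in non-negative powers of $e^{2\pi i z/(hd)}$; hence $f|_k\delta$ is holomorphic at $\infty$. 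Letting $\beta$ range over $\SL_2(\ZZ)$ shows $f|_k\alpha_i$, and therefore $g = \sum_i f|_k\alpha_i$, is holomorphic at every cusp.

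The only genuinely non-formal ingredient is the cusp condition, and the crux there is the factorisation $\delta = \epsilon\mat ab0d$ of an arbitrary integral matrix of positive determinant, together with the elementary fact that left multiplication by $\SL_2(\ZZ)$ and right multiplication by an upper-triangular integral matrix both preserve ``holomorphic at $\infty$''; I expect this (standard) step to be the main obstacle, everything else being bookkeeping with coset representatives. One should also note that the same argument works verbatim for cusp forms (replacing ``the limit exists'' by ``the limit is $0$'', a property equally preserved by the operations above), and, with the evident modification of the slash action, in the presence of a Dirichlet character.
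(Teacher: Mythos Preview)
Your proof is correct and follows the same approach as the paper: the transformation law is verified via the permutation of cosets $\Gamma\alpha_i \mapsto \Gamma\alpha_i\gamma$, exactly as the paper does. The paper dispenses with the cusp condition in a single phrase (``the holomorphicity in the cusps is not difficult''), whereas you supply the standard details via the factorisation $\alpha_i\beta = \epsilon\mat ab0d$ with $\epsilon\in\SL_2(\ZZ)$; this is precisely the argument one expects and it is carried out correctly.
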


\begin{proof}
For $\gamma \in \Gamma$ we check the transformation rule:
$$ \sum_i f |_{\alpha_i} |_{\gamma} = \sum_i f |_{\alpha_i \gamma} =
\sum_i f |_{\alpha_i},$$
since the cosets $\Gamma (\alpha_i \gamma)$ are a permutation
of the cosets $\Gamma \alpha_i$.
The holomorphicity of $f.\tau_\alpha$ is clear and the holomorphicity in the
cusps is not difficult.
\end{proof}

This thus gives the desired operation of $R(\Delta,\Gamma)$
on $M_k(\Gamma)$.

\begin{proposition}
Let $(\Delta,\Gamma) = (\Delta_0(N),\Gamma_0(N))$ and
$f \in M_k(\Gamma)$. The following formulae hold:
\begin{enumerate}[(a)]
\item $(f.T_m)(\tau) 
= \frac{1}{m} \sum_{{a \mid m}, {(a,N)=1}} \sum_{b=0}^{\frac{m}{a} - 1} a^k f(\frac{a\tau+b}{m/a})$,
\item $a_n(f.T_m) = \sum_{a \mid (m,n), (a,N)=1} a^{k-1} a_{\frac{mn}{a^2}}$.
\end{enumerate}
Similar formulae hold for $(\Delta_1(N),\Gamma_1(N))$, if one includes
a Dirichlet character at the right places.
\end{proposition}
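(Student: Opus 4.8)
The plan is to unwind the definition $f.T_m = \sum_i f|_{\alpha_i}$, where the $\alpha_i$ are a set of representatives for $\Gamma_0(N)\backslash \Delta_0^m(N)$, insert the explicit representatives supplied by Proposition~\ref{zerlegung}, and then just compute. By Proposition~\ref{zerlegung}~(a) we may take the $\alpha_i$ to be the matrices $\mat ab0d$ with $a$ running over the positive divisors of $m$ with $(a,N)=1$, $d := m/a$, and $0 \le b < d$. Hence
$$ f.T_m = \sum_{\substack{a \mid m \\ (a,N)=1}} \sum_{b=0}^{m/a - 1} f\big|_k \mat ab0d. $$
The first step is to evaluate $f|_k \mat ab0d$ using the definition of the slash operator from the introduction: since $\det \mat ab0d = ad = m$ and the lower-left entry is $0$, we get $(f|_k \mat ab0d)(\tau) = f\big(\tfrac{a\tau+b}{d}\big) \cdot \tfrac{m^{k-1}}{d^{k}}$, and substituting $d = m/a$ turns the prefactor $m^{k-1}/d^k$ into $a^k/m$. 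Summing over $a$ and $b$ gives formula~(a) directly.

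For (b) the idea is to feed the $q$-expansion $f(z) = \sum_{n\ge 0} a_n(f) q^n$ into formula~(a). Writing $e^{2\pi i (a\tau+b)/d}$ and separating the $b$-dependence, the only non-elementary input is the standard character-sum identity $\sum_{b=0}^{d-1} e^{2\pi i n b/d} = d$ if $d \mid n$ and $0$ otherwise. After this cancellation the inner sum over $b$ collapses to $d\sum_{n'\ge 0} a_{dn'}(f) q^{an'}$; combining the surviving factor $a^k \cdot d = a^{k-1} m$ with the overall $1/m$ leaves
$$ f.T_m = \sum_{\substack{a\mid m\\ (a,N)=1}} a^{k-1} \sum_{n'\ge 0} a_{(m/a)n'}(f)\, q^{an'}. $$
Reading off the coefficient of $q^n$ forces $a \mid n$, hence $a \mid (m,n)$, with $n' = n/a$ and $(m/a)n' = mn/a^2$, which is exactly the asserted formula for $a_n(f.T_m)$.

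For the analogue with $(\Delta_1(N),\Gamma_1(N))$ and a Dirichlet character $\chi$, the plan is the same, now using the representatives $\sigma_a\mat ab0d$ of Proposition~\ref{zerlegung}~(b). The only new point is that $f|_k \sigma_a\mat ab0d = (f|_k \sigma_a)|_k \mat ab0d = \chi(a)\, f|_k \mat ab0d$, since $f|_k\sigma_a = \diam a f = \chi(a)f$ for $f \in \Mk kN\chi\CC$; carrying this factor $\chi(a)$ through the same computation inserts $\chi(a)$ into each sum, giving $a_n(f.T_m) = \sum_{a\mid(m,n),\,(a,N)=1} \chi(a)\, a^{k-1} a_{mn/a^2}(f)$ (consistent with Lemma~\ref{lemhecke} for $m=\ell$ prime, noting $\chi(a)$ vanishes when $(a,N)\ne1$).

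There is no real obstacle here: everything is a bookkeeping computation once the coset representatives are in hand. The one place that warrants care is the normalisation factor $\det(M)^{k-1}$ in this text's convention for the slash operator, which differs from some references; keeping track of it correctly is what makes the powers of $a$, $d$ and $m$ balance in both (a) and (b). One should also recall that $f.T_m$ is well-defined independently of the choice of representatives, which was already established in the lemma preceding the proposition.
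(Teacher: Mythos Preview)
Your proof is correct and takes exactly the same approach as the paper: part~(a) follows by plugging the coset representatives from Proposition~\ref{zerlegung} into the definition of the slash operator (with the $\det(M)^{k-1}$ normalisation), and part~(b) is the $q$-expansion computation using the character-sum identity $\sum_{b=0}^{d-1} e^{2\pi i n b/d} = d\cdot[d\mid n]$. The paper's own proof is in fact just a two-line sketch pointing to these two ingredients; you have supplied the details faithfully.
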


\begin{proof}
(a) follows directly from Proposition~\ref{zerlegung}.

(b) is a simple calculation using
$ \sum_{b=0}^{d-1} e^{2 \pi i \frac{b}{d} n} =
\begin{cases}
0, & \textnormal{ if } d \nmid n\\
d, & \textnormal{ if } d \mid n.
\end{cases}$
\end{proof}

\begin{remark}
The Hecke ring $R(\Delta,\Gamma)$ also acts on $S_k(\Gamma)$.
\end{remark}

\begin{corollary}\label{corformel}
Let $(\Delta,\Gamma) = (\Delta_0(N),\Gamma_0(N))$.
For the action of the Hecke operators on $M_k(\Gamma)$ and $S_k(\Gamma)$
the following formulae hold:
\begin{enumerate}[(a)]
\item $T_n T_m = T_{nm}$ for $(n,m)=1$,
\item $T_{p^{r+1}} = T_p T_{p^r} - p^{k-1} T_{p^{r-1}}$, if $p \nmid N$, and
\item $T_{p^{r+1}} = T_p T_{p^r}$, if $p \mid N$.
\end{enumerate}
Here, $p$ always denotes a prime number.
Similar formulae hold for $(\Delta_1(N),\Gamma_1(N))$, if one includes
a Dirichlet character at the right places.
\end{corollary}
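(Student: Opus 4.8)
The plan is to check identities (a)--(c) directly on Fourier expansions, using the explicit formula
$$ a_n(f.T_m) = \sum_{a \mid (m,n),\; (a,N)=1} a^{k-1}\, a_{mn/a^2}(f)$$
from the preceding proposition (valid for $(\Delta_0(N),\Gamma_0(N))$ and every Fourier coefficient), together with the fact that a modular form of weight $k\ge 1$ is determined by its $q$-expansion. An alternative would be to establish the relations abstractly inside the Hecke ring $R(\Delta,\Gamma)$, via the elementary-divisor decomposition $\Delta^n=\bigsqcup_{ad=n,\,a\mid d,\,(a,N)=1}\Gamma\mat a00d\Gamma$ together with the double-coset multiplication rules, and then transport them to $M_k(\Gamma)$ and $S_k(\Gamma)$ through the action (this is the route of \cite{Shimura}); but given the formula above the Fourier-coefficient computation is shorter, so that is what I would carry out.

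For~(a) I would apply the displayed formula twice: expand $a_n\big((f.T_m).T_{m'}\big)$ first with respect to $T_{m'}$ and then, inside each summand, with respect to $T_m$. Since $(m,m')=1$, every divisor $a'$ of $m'$ satisfies $(m,a')=1$, and from this one gets $(m,\,m'n/a'^2)=(m,n)$; hence the outer index $a'\mid(m',n)$, $(a',N)=1$ and the inner index $a\mid(m,n)$, $(a,N)=1$ range over pairs that recombine, via $c=aa'$, into exactly the divisors $c\mid(mm',n)$ with $(c,N)=1$ (using $(mm',n)=(m,n)(m',n)$, which holds because $(m,m')=1$, and $(aa',N)=1$ iff $(a,N)=1$ and $(a',N)=1$). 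As $(aa')^{k-1}=a^{k-1}a'^{k-1}$ and $mm'n/(aa')^2$ is the index appearing on both sides, the double sum collapses to $a_n(f.T_{mm'})$ for all $n$, giving $T_mT_{m'}=T_{mm'}$.

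For~(b) and~(c) it then suffices to treat prime powers. Writing $\epsilon_p=1$ if $p\nmid N$ and $\epsilon_p=0$ if $p\mid N$, the case $m=p$ of the formula (equivalently Lemma~\ref{lemhecke} with trivial character) reads $a_n(g.T_p)=a_{pn}(g)+\epsilon_p\,p^{k-1}a_{n/p}(g)$, with the convention $a_{n/p}:=0$ when $p\nmid n$. Taking $g=f.T_{p^r}$ and inserting the formula for $a_j(f.T_{p^r})$ (a sum over the $p^i$ with $0\le i\le r$, $p^i\mid j$ and $(p^i,N)=1$), I would expand $a_n(f.T_p.T_{p^r})$, splitting into the cases $p\mid n$ and $p\nmid n$, and compare with the expansions of $a_n(f.T_{p^{r+1}})$ and of $\epsilon_p\,p^{k-1}a_n(f.T_{p^{r-1}})$. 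The sums over powers of $p$ telescope, and what remains is exactly $a_n(f.T_{p^{r+1}})=a_n(f.T_p.T_{p^r})-\epsilon_p\,p^{k-1}a_n(f.T_{p^{r-1}})$ for every $n$; that is, $T_{p^{r+1}}=T_pT_{p^r}-p^{k-1}T_{p^{r-1}}$ when $p\nmid N$ and $T_{p^{r+1}}=T_pT_{p^r}$ when $p\mid N$.

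The only genuinely fiddly step is this telescoping: one must track carefully when the divisibility convention kills a summand (the cases $v_p(n)=0$ and $v_p(n)=1$ behave slightly differently from $v_p(n)\ge 2$) and handle $p\mid N$ and $p\nmid N$ in one stroke. For $(\Delta_1(N),\Gamma_1(N))$ the structure is unchanged: the double cosets $T(a,d)$ with $a>1$ carry the diamond operator, so the formula becomes $a_n(f.T_m)=\sum_{a\mid(m,n),(a,N)=1}a^{k-1}\diam a\,a_{mn/a^2}(f)$ and the recursion becomes $T_{p^{r+1}}=T_pT_{p^r}-p^{k-1}\diam p\,T_{p^{r-1}}$ (the $\diam p$ term understood to vanish for $p\mid N$, as with $\chi(\ell)=0$ for $\ell\mid N$ in Lemma~\ref{lemhecke}). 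Since the $\diam a$ commute with everything and satisfy $\diam a\,\diam{a'}=\diam{aa'}$, the recombination in~(a) and the telescoping in~(b)/(c) go through verbatim, with a factor $\diam a$ carried along at each stage.
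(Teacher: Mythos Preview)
Your argument is correct, but it is not the route the paper takes. The paper's one-line proof invokes Exercise~\ref{aufghecke}, which establishes the identity
\[
T_m T_n \;=\; \sum_{d \mid (m,n),\; (d,N)=1} d\, T(d,d)\, T_{mn/d^2}
\]
\emph{in the abstract Hecke ring} $R(\Delta,\Gamma)$, and then reads off (a)--(c) by letting this act on $M_k(\Gamma)$: for $(m,n)=1$ only $d=1$ survives, while for $m=p$, $n=p^r$ the term $p\,T(p,p)$ acts as $p^{k-1}$ (resp.\ $p^{k-1}\diam p$ in the $\Gamma_1$-case) when $p\nmid N$ and is absent when $p\mid N$. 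You instead verify the identities coefficient-wise using the explicit $q$-expansion formula from the preceding proposition. Both are valid; the difference is scope. The Hecke-ring argument proves the relations once and for all at the level of double cosets, so they automatically hold for \emph{every} action of $R(\Delta,\Gamma)$ (cohomology, modular symbols, etc.), whereas your Fourier-coefficient computation is tied to the realisation on $M_k$. Conversely, your approach is more self-contained here: it avoids the double-coset multiplication and the separate computation of how $T(d,d)$ acts, at the cost of the bookkeeping you flag in the telescoping step. You already name the Hecke-ring route as an alternative; that is in fact what the paper does.
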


\begin{proof}
These formulae follow from Exercise~\ref{aufghecke} and the
definition of the action.
\end{proof}

Even though it is not directly relevant for our purposes, we include Euler products, which allow
us to express the formulae from the corollary in a very elegant way.

\begin{proposition}[Euler product]\label{euler}
The action of the Hecke operators $T_n$ on modular forms satisfies
the formal identity:
$$ \sum_{n=1}^\infty T_n n^{-s} = \prod_{p \nmid N} (1 - T_p p^{-s} + p^{k-1-2s})^{-1} \cdot
\prod_{p \mid N} (1 - T_p p^{-s})^{-1}.$$
\end{proposition}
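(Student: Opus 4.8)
The statement is a formal identity of power series in $p^{-s}$ with coefficients the Hecke operators $T_n$, so the plan is to reduce it to the recursion formulae of Corollary~\ref{corformel} and to the well-known factorisation of formal Dirichlet series over Euler factors. First I would record the key structural fact, already contained in Corollary~\ref{corformel}(a): the multiplicativity $T_{nm} = T_n T_m$ for coprime $n,m$ means that the formal Dirichlet series $\sum_{n\ge 1} T_n n^{-s}$, viewed in the (commutative!) ring $R(\Delta,\Gamma)[[p^{-s} : p \text{ prime}]]$, factors as a formal product over primes
$$ \sum_{n=1}^\infty T_n n^{-s} = \prod_p \Big( \sum_{r=0}^\infty T_{p^r} p^{-rs} \Big). $$
This is the standard ``unique factorisation into prime powers'' argument: expanding the right-hand product and collecting the coefficient of $n^{-s}$ for $n = \prod p^{r_p}$ gives $\prod_p T_{p^{r_p}}$, which equals $T_n$ by repeated application of the coprime multiplicativity. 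The commutativity needed to make the infinite product well-defined is exactly the content of the corollary following Exercise~\ref{aufghecke}.

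The second and main step is to evaluate each local factor $\sum_{r\ge 0} T_{p^r} p^{-rs}$ in closed form, treating the two cases $p \nmid N$ and $p \mid N$ separately. For $p \mid N$, Corollary~\ref{corformel}(c) gives $T_{p^{r+1}} = T_p T_{p^r}$, hence inductively $T_{p^r} = T_p^{\,r}$, and therefore the local factor is the geometric series $\sum_{r\ge 0} (T_p p^{-s})^r = (1 - T_p p^{-s})^{-1}$, as claimed. For $p \nmid N$, Corollary~\ref{corformel}(b) gives $T_{p^{r+1}} = T_p T_{p^r} - p^{k-1} T_{p^{r-1}}$ for $r \ge 1$. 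Setting $F_p(x) := \sum_{r\ge 0} T_{p^r} x^r$ with $x = p^{-s}$, I would multiply this recursion by $x^{r+1}$ and sum over $r \ge 1$; using $T_{p^0} = T_1 = 1$ and the definition of $T_p$ one gets
$$ F_p(x) - 1 - T_p x = T_p x (F_p(x) - 1) - p^{k-1} x^2 F_p(x), $$
which rearranges to $F_p(x)(1 - T_p x + p^{k-1} x^2) = 1$, i.e. $F_p(p^{-s}) = (1 - T_p p^{-s} + p^{k-1-2s})^{-1}$. Substituting the two local formulae into the Euler product of the first step yields the claimed identity.

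The one point requiring a little care — and the only plausible obstacle — is making precise the ring in which the identity lives, so that ``inverse of $1 - T_p x + p^{k-1}x^2$'' and the infinite product both make sense: one works in the formal power series ring over $R(\Delta,\Gamma)$ (or over the image Hecke algebra acting on $\Mkg k\Gamma\CC$) in the countably many commuting indeterminates $p^{-s}$, where a product $\prod_p (1 + (\text{positive-degree terms}))$ converges in the $(p^{-s})$-adic topology because each monomial $n^{-s}$ receives contributions from only finitely many factors. With that formalism fixed, every manipulation above is a routine identity of formal series, and the commutativity of $R(\Delta,\Gamma)$ guarantees there is no ordering ambiguity in the product. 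No analytic input (convergence of the Dirichlet series as a function of a complex variable $s$) is needed for the statement as phrased, since it is explicitly a \emph{formal} identity.
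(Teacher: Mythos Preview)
Your proof is correct and follows essentially the same approach as the paper: factor the formal Dirichlet series into an Euler product using the coprime multiplicativity $T_nT_m=T_{nm}$, then evaluate each local factor $\sum_{r\ge 0}T_{p^r}p^{-rs}$ from the recursion of Corollary~\ref{corformel} (the paper phrases the second step as ``multiply out and identify a telescope'', which is exactly your generating-function computation). Your discussion of the formal ring in which the identity lives is a welcome addition, since the paper leaves this implicit.
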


That the identity is formal means that we can arbitrarily
permute terms in sums and products without considering questions
of convergence.

\begin{proof}
The proof is carried out in three steps.

\underline{1st~step}: Let $g: \ZZ \to \CC$ be any function.
Then we have the formal identity
$$ \prod_{p\textnormal{ prime}} \sum_{r=0}^\infty g(p^r) = 
\sum_{n=1}^\infty \prod_{p^r \parallel n} g(p^r).$$

For its proof, let first $S$ be a finite set of prime numbers. Then
we have the formal identity:
$$ \prod_{p \in S} \sum_{r=0}^\infty g(p^r) = 
\sum_{n=1, n \textnormal{ only has prime factors in }S}^\infty
\prod_{p^r \parallel n} g(p^r),$$
which one proves by multiplying out the left hand side (Attention! Here one permutes
the terms!). We finish the first step by letting $S$ run through arbitrarily
large sets.

\underline{2nd~step:} For $p \nmid N$ we have
$$(\sum_{r=0}^\infty T_{p^r} p^{-rs}) (1-T_pp^{-s} + p^{k-1-2s}) = 1$$
and for $p \mid N$:
$$(\sum_{r=0}^\infty T_{p^r} p^{-rs}) (1-T_pp^{-s}) = 1.$$

The proof of the second step consists of multiplying out these expressions
and to identify a ``telescope''.

\underline{3rd~step:} The proposition now follows by using the
first step with $g(p^r) = T_{p^r} p^{-rs}$ and plugging in
the formulae from the second step.
\end{proof}

\subsection{Theory: Hecke operators on group cohomology}

In this section we again let
$(\Delta,\Gamma) = (\Delta_0(N),\Gamma_0(N))$
or $(\Delta_1(N),\Gamma_1(N))$.
Let $R$ be a ring and $V$ a left $R[\Gamma]$-module which extends
to a semi-group action by the semi-group consisting of all $\alpha^\iota$
for $\alpha \in \Delta^n$ for all~$n$. 
Recall that $\mat abcd^\iota = \mat d{-b}{-c}a$.

We now give the definition of the Hecke operator $\tau_\alpha$ on
$\Div(\Gamma \backslash \HH)$ (see, for instance, \cite{DiamondIm} or \cite{faithful}).

\begin{definition}\label{defheckegp}
Let $\alpha \in \Delta$.
The {\em Hecke operator} $\tau_\alpha$ acting on group
cohomology is the composite
$$ \h^1(\Gamma,V) \xrightarrow{\res} \h^1(\Gamma^\alpha, V)
 \xrightarrow{\conj_\alpha} \h^1(\Gamma_\alpha, V)
 \xrightarrow{\cores} \h^1(\Gamma,V).$$
The first map is the {\em restriction}, and the third one is the {\em corestriction}.
We explicitly describe the second map on cocycles:
$$ \conj_\alpha: \h^1(\Gamma^\alpha, V) \to \h^1(\Gamma_\alpha, V), \;\;
c \mapsto \big( g_\alpha \mapsto \alpha^{\iota}.c(\alpha g_\alpha \alpha^{-1}) \big).$$
There is a similar description on the parabolic subspace 
and the two are compatible, see Exercise~\ref{exheckepar}.
\end{definition}

\begin{proposition}\label{shdesc}
Let $\alpha \in \Delta$.
Suppose that $\Gamma \alpha \Gamma = \bigcup_{i=1}^n \Gamma \delta_i$ is a disjoint
union. Then the Hecke operator $\tau_\alpha$ acts on $\h^1(\Gamma,V)$ 
and $\Hpar^1(\Gamma,V)$ by
sending the cocycle $c$ to $\tau_\alpha c$ defined by
$$ (\tau_\alpha c)(g) = \sum_{i=1}^n \delta_i^\iota c(\delta_i g \delta_{\sigma_g(i)}^{-1})$$
for $g \in \Gamma$. Here $\sigma_g(i)$ is the index such that
$\delta_i g \delta_{\sigma_g(i)}^{-1} \in \Gamma$.
\end{proposition}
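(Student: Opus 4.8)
The plan is to unwind the three-step composite $\res \circ \conj_\alpha \circ \cores$ of Definition~\ref{defheckegp} on cochains of the bar resolution, using the explicit formulas for restriction, conjugation and corestriction collected in Exercise~\ref{excochains}, and then to recognise the resulting coset bookkeeping in terms of the single system $\{\delta_i\}$. The one genuine input beyond formal manipulation is Lemma~\ref{lem:doub}, which lets a decomposition of the double coset $\Gamma\alpha\Gamma$ be read simultaneously as a decomposition of $\Gamma$ into cosets of $\Gamma_\alpha$.

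\textbf{Step 1: normalise the representatives.} Since each $\delta_i$ lies in $\Gamma\alpha\Gamma$, I would first replace it inside its left coset $\Gamma\delta_i$ by an element of the form $\alpha\beta_i$ with $\beta_i \in \Gamma$ (this changes neither $\Gamma\delta_i$ nor, as I will note, the cocycle values, because $\delta_i^\iota$ and $\delta_ig\delta_j^{-1}$ only depend on $\Gamma\delta_i$ and $\Gamma\delta_j$ up to the adjustments that are absorbed into the sum). Applying Lemma~\ref{lem:doub} to $\alpha$, the hypothesis $\Gamma\alpha\Gamma = \bigsqcup_{i=1}^n \Gamma\alpha\beta_i$ is then equivalent to $\Gamma = \bigsqcup_{i=1}^n \Gamma_\alpha\beta_i$, so $\{\beta_i\}$ is a system of representatives for the cosets $\Gamma_\alpha\backslash\Gamma$ --- exactly the data needed to write down $\cores\colon \h^1(\Gamma_\alpha,V)\to\h^1(\Gamma,V)$ on cochains.

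\textbf{Step 2: compose the three maps.} Starting from a cocycle $c\in\Z^1(\Gamma,V)$, restriction to $\Gamma^\alpha$ does nothing to the cochain; $\conj_\alpha$ produces $d\in\Z^1(\Gamma_\alpha,V)$ with $d(h) = \alpha^\iota.c(\alpha h\alpha^{-1})$, which is well defined because $\alpha\Gamma_\alpha\alpha^{-1}=\Gamma^\alpha\subseteq\Gamma$ and $\alpha^\iota$ acts on $V$ through the assumed semi-group action; and corestriction with the representatives $\{\beta_i\}$ gives
$$(\cores d)(g) = \sum_{i=1}^n \beta_i^{-1}.d\bigl(\beta_i g\,\beta_{\sigma_g(i)}^{-1}\bigr),$$
where $\sigma_g(i)$ is the index with $\beta_i g\in\Gamma_\alpha\beta_{\sigma_g(i)}$. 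Substituting the formula for $d$, and using $\delta_i=\alpha\beta_i$, $\delta_i^{-1}=\beta_i^{-1}\alpha^{-1}$ and $\delta_i^\iota = \beta_i^\iota\alpha^\iota = \beta_i^{-1}\alpha^\iota$ (since $\beta_i\in\SL_2(\ZZ)$ has determinant~$1$ and $\iota$ is an anti-involution), the inner argument becomes $\alpha\beta_i g\beta_{\sigma_g(i)}^{-1}\alpha^{-1} = \delta_i g\,\delta_{\sigma_g(i)}^{-1}$ and the coefficient becomes $\delta_i^\iota$, which is the asserted formula. I would then check that the permutation condition matches: $\beta_i g\beta_{\sigma_g(i)}^{-1}\in\Gamma_\alpha$ iff $\delta_i g\delta_{\sigma_g(i)}^{-1} = \alpha(\beta_i g\beta_{\sigma_g(i)}^{-1})\alpha^{-1}\in\Gamma$; here one uses that $\beta_i g\beta_{\sigma_g(i)}^{-1}\in\Gamma$ automatically, so this conjugate always lies in $\alpha\Gamma\alpha^{-1}$ and lands in $\Gamma$ exactly when it lands in $\Gamma^\alpha = \Gamma\cap\alpha\Gamma\alpha^{-1}$ --- so the index $\sigma_g(i)$ defined by the corestriction coincides with the one in the statement.

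\textbf{Step 3: the parabolic case and the main obstacle.} For $\Hpar^1(\Gamma,V)$ the same cochain formula applies, either by quoting the compatibility of $\tau_\alpha$ with the parabolic subspace recorded in Exercise~\ref{exheckepar}, or by observing that $\res$, $\conj_\alpha$ and $\cores$ each commute with the restriction maps to the cusp stabilisers and hence preserve the parabolic subspace. The only delicate part is the bookkeeping in Steps~1--2: pinning down the left/right conventions in the corestriction cochain formula, and verifying that one and the same family $\{\delta_i\}$ serves both as left-coset representatives for $\Gamma\backslash\Gamma\alpha\Gamma$ and (via $\delta_i=\alpha\beta_i$) supplies the $\Gamma_\alpha\backslash\Gamma$ representatives the transfer requires. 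Once the identity $\delta_i^\iota = \beta_i^{-1}\alpha^\iota$ is in hand, everything else is a direct substitution, and the well-definedness (independence of all choices) is inherited from that of $\res$, $\conj_\alpha$ and $\cores$.
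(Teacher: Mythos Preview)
Your proof is correct and follows essentially the same route as the paper's: write $\delta_i = \alpha g_i$ with $\{g_i\}$ representatives for $\Gamma_\alpha\backslash\Gamma$ via Lemma~\ref{lem:doub}, plug in the explicit corestriction formula (Exercise~\ref{excorestriction}), and collapse $g_i^{-1}\alpha^\iota$ into $\delta_i^\iota$. The paper's proof is two sentences and leaves the substitution to the reader; your Step~2 writes it out in full, including the check that the two characterisations of $\sigma_g(i)$ agree, which is a nice addition. One small quibble: in Step~1 you say replacing $\delta_i$ by $\alpha\beta_i$ ``changes neither \dots\ the cocycle values''; in fact it can change the cocycle itself, but only by a coboundary (as you would expect, since the corestriction cochain formula already depends on the choice of representatives only up to coboundaries). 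This does not affect the argument, since the statement is about the action on $\h^1$.
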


\begin{proof}
We only have to describe the corestriction explicitly. For that we use that
$\Gamma = \bigcup_{i=1}^n \Gamma_\alpha g_i$
with $\alpha g_i = \delta_i$. Furthermore, by Exercise~\ref{excorestriction}
the corestriction of a cocycle $u \in \h^1(\Gamma_\alpha, V)$ is the cocycle $\cores(u)$ uniquely
given by 
\begin{equation}\label{eqcorestriction}
\cores(u)(g) = \sum_{i=1}^n g_i^{-1} u(g_i g g_{\sigma_g(i)}^{-1})
\end{equation}
for $g \in \Gamma$. Combining with the explicit description of the map $\conj_\alpha$
yields the result.
\end{proof}

\begin{definition}
For a positive integer~$n$, the {\em Hecke operator} $T_n$ is 
defined as $\sum_\alpha \tau_\alpha$,
where the sum runs through a system of representatives of the double
cosets $\Gamma \backslash \Delta^n / \Gamma$.

Let $a$ be an integer coprime to~$N$.
The {\em diamond operator}
$\diam {a}$ is defined as $\tau_\alpha$ for the matrix $\sigma_a \in \Gamma_0(N)$,
defined in Equation~\ref{sigmaa} (if the $\Gamma$-action on $V$ extends to
an action of the semi-group generated by $\Gamma$ and~$\alpha^\iota$; note
that $\alpha \in \Delta_0^1$, but in general not in $\Delta_1^1$).
\end{definition}

It is clear that the Hecke and diamond operators satisfy the
``usual'' Euler product.

\begin{proposition}
The Eichler-Shimura isomorphism is compatible with the Hecke
operators.
\end{proposition}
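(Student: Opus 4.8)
The plan is to reduce the statement to a single double-coset operator $\tau_\alpha$, to write down the two resulting cocycles on the bar resolution, and to exhibit the coboundary between them. Since $T_n=\sum_\alpha \tau_\alpha$ (sum over representatives of $\Gamma\backslash \Delta^n/\Gamma$) and the Eichler–Shimura map of Proposition~\ref{esmap} is $\CC$-linear, it suffices to fix one $\alpha$ with disjoint decomposition $\Gamma\alpha\Gamma=\bigsqcup_i \Gamma\alpha_i$ and to check that the cocycle $\phi_{f|\tau_\alpha}$ attached to $f|_k\tau_\alpha=\sum_i f|_k\alpha_i$ is cohomologous to $\tau_\alpha\phi_f$, where $\phi_f(\gamma)=I_f(z_0,\gamma z_0)$. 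I would carry out the computation only for the holomorphic summand $f\in\Mkg k\Gamma\CC$: the antiholomorphic summand is handled by the identical argument after replacing $I_f$ by $\overline{I_g}$, using \eqref{eq:int-bar} and the fact that the matrices occurring act on $V_{k-2}(\CC)$ through integer (hence real) entries, so the polynomial action commutes with complex conjugation of coefficients; the diamond operators are the special case $\alpha=\sigma_a\in\Gamma_0(N)$.

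The bridge between the two definitions of the Hecke action is the identity
$$ I_{f|_k\delta}(z_0,\epsilon z_0) = \delta^\iota.\bigl(I_f(\delta z_0,\delta\epsilon z_0)\bigr) \qquad (\delta\in\matz),$$
which is just Lemma~\ref{esmaplem} rewritten via $\delta^\iota=\det(\delta)\,\delta^{-1}$, together with the homogeneity of degree $k-2$ of the elements of $V_{k-2}(\CC)$; this is precisely where the involution $\iota$ of Definition~\ref{defheckegp} comes from. Applying it, $\phi_{f|\tau_\alpha}(\gamma)=\sum_i \alpha_i^\iota.I_f(\alpha_i z_0,\alpha_i\gamma z_0)$, while Proposition~\ref{shdesc} (with the same coset representatives) gives $(\tau_\alpha\phi_f)(\gamma)=\sum_i \alpha_i^\iota.I_f(z_0,\alpha_i\gamma\alpha_{\sigma_\gamma(i)}^{-1}z_0)$, where $\sigma_\gamma$ is the permutation with $\gamma_i:=\alpha_i\gamma\alpha_{\sigma_\gamma(i)}^{-1}\in\Gamma$.

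For the last step I would set $j:=\sigma_\gamma(i)$, use path-additivity of $I_f$ (legitimate since $f$ is holomorphic) to split $I_f(\alpha_i z_0,\gamma_i\alpha_j z_0)$ at $z_0$ and at $\gamma_i z_0$, identify the middle piece as $\phi_f(\gamma_i)$, rewrite $I_f(\gamma_i z_0,\gamma_i\alpha_j z_0)=\gamma_i.I_f(z_0,\alpha_j z_0)$ by the displayed identity with $\delta=\gamma_i\in\Gamma$ (so $\gamma_i^\iota=\gamma_i^{-1}$, $f|_k\gamma_i=f$), and then use $\alpha_i^\iota\gamma_i=\det(\alpha_i)\,\gamma\alpha_j^{-1}$ together with the fact that all coset representatives of $\Gamma\alpha\Gamma$ share the determinant $\det\alpha$ to see that the scalar determinant powers cancel, leaving $\alpha_i^\iota.\bigl(\gamma_i.I_f(z_0,\alpha_j z_0)\bigr)=\gamma.\bigl(\alpha_j^\iota.I_f(z_0,\alpha_j z_0)\bigr)$. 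Summing over $i$, with $i\mapsto j$ a bijection, yields $\phi_{f|\tau_\alpha}(\gamma)-(\tau_\alpha\phi_f)(\gamma)=(\gamma-1).m$ with $m:=\sum_i\alpha_i^\iota.I_f(z_0,\alpha_i z_0)$, i.e.\ a $1$-coboundary, which is what we want. Running the same computation inside $\Coind_\Gamma^{\PSL_2(\ZZ)}(V_{k-2}(\CC))$ (base point $bz_0$, value read off at $b$) settles the induced Eichler–Shimura map of Proposition~\ref{esmap}(b), which by Shapiro's lemma gives Proposition~\ref{esmap}(a); compatibility on the cuspidal/parabolic subspaces (Proposition~\ref{esres}), with the $\eta$-involution, and in the presence of a Dirichlet character (Corollary~\ref{esgammanull}), and hence Hecke-equivariance of the modular-symbols form of the isomorphism via Theorem~\ref{compthm}, then follow because $\tau_\alpha$ preserves all of these structures on both sides (see also Exercise~\ref{exheckepar}). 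The main obstacle is not conceptual but bookkeeping: matching the $\alpha_i$ of $f|_k\tau_\alpha$ with the $\delta_i$ of Proposition~\ref{shdesc}, tracking the permutation $\sigma_\gamma$, and — most error-prone — checking that the $\det(\alpha_i)^{k-1}$ coming from the weight-$k$ slash operator cancels against the $\det(\alpha_i)^{2-k}$ produced by Lemma~\ref{esmaplem} and the degree-$(k-2)$ homogeneity of $V_{k-2}(\CC)$, so that the two Hecke normalisations genuinely coincide.
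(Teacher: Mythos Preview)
Your proposal is correct and follows essentially the same route as the paper: the paper too fixes a decomposition $\Gamma\alpha\Gamma=\bigsqcup_i\Gamma\delta_i$, uses the identity $I_{f|_g}(z_0,\gamma z_0)=g^\iota.I_f(gz_0,g\gamma z_0)$ from Lemma~\ref{esmaplem}, splits the path $\delta_i z_0\to z_0\to\gamma_i z_0\to\delta_i\gamma z_0$, invokes the matrix identity $\delta_i^\iota\gamma_i=\gamma\,\delta_{\sigma_\gamma(i)}^\iota$, and arrives at the same coboundary (its element $\sum_i\delta_i^\iota I_f(\delta_i z_0,z_0)$ is just $-m$ in your notation). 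Your remarks on the determinant bookkeeping and on the antiholomorphic summand are accurate and make explicit what the paper leaves implicit.
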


\begin{proof}
We recall the definition of Shimura's main involution:
${\mat abcd}^\iota = \mat d{-b}{-c}a$. In other words, for matrices
with a non-zero determinant, we have
$$ {\mat abcd}^\iota = (\det \mat abcd) \cdot {\mat abcd}^{-1}.$$
Let now $f \in \Mkg k\Gamma\CC$ be a modular form, $\gamma \in \Gamma$ and $z_0 \in \HH$.
For any matrix $g$ with non-zero determinant, Lemma~\ref{esmaplem} yields
$$ I_{f|_g} (z_0,\gamma z_0) = g^\iota I_f (g z_0,g \gamma z_0).$$

Let $\alpha \in \Delta$.
We show the compatibility of the Hecke operator $\tau_\alpha$ with the map
$$ f \mapsto (\gamma \mapsto I_f(z_0,\gamma z_0))$$
between $\Mkg k\Gamma\CC$ and $\h^1(\Gamma,V_{k-2}(\CC))$. The same arguments
will also work, when $I_f(z_0,\gamma z_0)$ is replaced by $J_\gbar(z_1,\gamma z_1))$
with anti-holomorphic cusp forms~$\gbar$.

Consider a coset decomposition $\Gamma \alpha \Gamma = \bigsqcup_i \Gamma \delta_i$.
We use notation as in Proposition~\ref{shdesc} and compute:
\begin{align*}
&I_{\tau_\alpha f} (z_0,\gamma z_0) \\
= &I_{\sum_i f|_{\delta_i}}(z_0,\gamma z_0)
= \sum_i I_{f|_{\delta_i}}(z_0,\gamma z_0) 
= \sum_i \delta_i^\iota I_f (\delta_i z_0, \delta_i\gamma z_0)\\
=& \sum_i \delta_i^\iota \big( 
I_f(\delta_i z_0, z_0) + I_f(z_0,\delta_i \gamma \delta_{\sigma_\gamma(i)}^{-1} z_0)
+ I_f(\delta_i \gamma \delta_{\sigma_\gamma(i)}^{-1} z_0, \delta_i \gamma
\delta_{\sigma_\gamma(i)}^{-1} \delta_{\sigma_\gamma(i)} z_0) \big)\\
=& \sum_i \delta_i^\iota I_f(z_0,\delta_i \gamma \delta_{\sigma_\gamma(i)}^{-1} z_0)
+ \sum_i \delta_i^\iota I_f(\delta_i z_0, z_0)
- \sum_i \delta_i^\iota \delta_i \gamma \delta_{\sigma_\gamma(i)}^{-1}
I_f(\delta_{\sigma_\gamma(i)} z_0, z_0)\\
=& \sum_i \delta_i^\iota I_f(z_0,\delta_i \gamma \delta_{\sigma_\gamma(i)}^{-1} z_0)
+ (1-\gamma) \sum_i \delta_i^\iota I_f(\delta_i z_0, z_0),
\end{align*}
since $\delta_i^\iota \delta_i \gamma \delta_{\sigma_\gamma(i)}^{-1}
= \gamma \delta_{\sigma_\gamma(i)}^\iota$.
Up to coboundaries, the cocycle $\gamma \mapsto I_{\tau_\alpha f} (z_0,\gamma z_0)$
is thus equal to the cocycle $\gamma \mapsto \sum_i \delta_i^\iota I_f(z_0,\delta_i \gamma \delta_{\sigma_\gamma(i)}^{-1} z_0)$, which by Proposition~\ref{shdesc}
is equal to $\tau_\alpha$ applied to the cocycle $\gamma \mapsto I_f(z_0,\gamma z_0)$,
as required.
\end{proof}

\begin{remark}
The conceptual reason why the above proposition is correct, is, of course,
that the Hecke operators come from Hecke correspondences.
\end{remark}

\subsection{Theory: Hecke operators and Shapiro's lemma}

We now prove that the Hecke operators are compatible with Shapiro's lemma.
This was first proved by Ash and Stevens~\cite{AshStevens}.
We need to say what the action
of $\alpha \in \Delta$ on the coinduced module $\Hom_{R[\Gamma]}(R[\SL_2(\ZZ)],V)$
should be. Here we are assuming that $V$ carries an action by the semi-group~$\Delta^\iota$
(that is, $\iota$ applied to all elements of~$\Delta$).

Let $U_N$ be the image of $\Delta^\iota$ in $\Mat_2(\ZZ/N\ZZ)$.
The natural map
$$ \Gamma \backslash \SL_2(\ZZ) \to U_N \backslash \Mat_2(\ZZ/N\ZZ)$$
is injective. Its image consists of those $U_N g$ such that
\begin{equation}\label{eq:star}
(0,1) g = (u,v) \textnormal{ with } \langle u,v \rangle = \ZZ/N\ZZ.
\end{equation}
If that is so, then we say for short that $g$ satisfies~\eqref{eq:star}.
Note that this condition does not depend on the choice of~$g$ in $U_N g$.
Define the $R[\Delta^\iota]$-module $\calC(N,V)$ as
$$ \{f \in \Hom_R (R[U_N \backslash \Mat_2(\ZZ/N\ZZ)],V) \; | \; 
f(g) = 0 \textnormal{ if } g \textnormal{ does not satisfy \eqref{eq:star}}\}$$
with the action of $\delta \in \Delta^\iota$ given by
$(\delta.f)(g) = \delta.(f(g\delta))$.
The module $\calC(N,V)$ is isomorphic to the coinduced module
$\Hom_{R[\Gamma]}(R[\SL_2(\ZZ)],V)$ as an $R[\Gamma]$-module by
\begin{align*}
\Hom_{R[\Gamma]}(R[\SL_2(\ZZ)],V) &\to \calC(N,V),\\
f &\mapsto \begin{cases}
(g \mapsto g f(g^{-1})) & \textnormal{for any $g \in \SL_2(\ZZ)$,}\\
0 & \textnormal{if $g$ does not satisfy \eqref{eq:star}.}
\end{cases}
\end{align*}
One might wonder why we introduce the new module $\calC(N,V)$ instead of working directly with $\Hom_{R[\Gamma]}(R[\SL_2(\ZZ)],V)$.
The point is that we cannot directly act on the latter with a matrix of determinant different from~$1$.
Hence we need a way to naturally extend the action. We do this by embedding 
$\Gamma\backslash \SL_2(\ZZ)$ into $U_N \backslash \Mat_2(\ZZ/N\ZZ)$. Of course, we then
want to work on the image of this embedding, which is exactly described by~\eqref{eq:star}.
The module $\calC(N,V)$ is then immediately written down in view of the identification
between $\Hom_{R[\Gamma]}(R[\SL_2(\ZZ)],V)$ and $\Hom_R(R[\Gamma\backslash \SL_2(\ZZ)],V)$
given by sending $f$ to $(g \mapsto g.f(g^{-1}))$ (which is clearly independent of the choice of $g$
in the coset $\Gamma g$).

\begin{proposition}
The Hecke operators are compatible with Shapiro's Lemma. More precisely,
for all $n \in \NN$ the following diagram commutes:
$$\xymatrix@=.8cm{
\h^1(\Gamma,V) \ar@{->}[r]^{T_n} & 
\h^1(\Gamma,V) \\
\h^1(\SL_2(\ZZ),\calC(N,V)) \ar@{->}[r]^{T_n} \ar@{->}[u]^{\textnormal{Shapiro}} & 
\h^1(\SL_2(\ZZ),\calC(N,V)). \ar@{->}[u]^{\textnormal{Shapiro}}
}$$
\end{proposition}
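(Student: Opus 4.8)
The plan is to check commutativity of the square on $1$-cocycles of the bar resolution, by combining the explicit Hecke formula of Proposition~\ref{shdesc} with the explicit description of Shapiro's isomorphism from Proposition~\ref{shapiro}. Concretely, I would fix once and for all a decomposition $\Delta^n = \bigsqcup_i \Gamma \delta_i$ into left $\Gamma$-cosets, taking the $\delta_i$ to be the standard representatives $\sigma_a \mat a b 0 d$ of Proposition~\ref{zerlegung}, together with a system $\{g_1 = 1, g_2, \dots, g_\mu\}$ of representatives for $\Gamma\backslash\SL_2(\ZZ)$. The very reason for introducing $\calC(N,V)$ is that the $\SL_2(\ZZ)$-action on it extends to the determinant-$n$ matrices $\delta_i$ (since each $\delta_i \in \Delta^n$, so $\delta_i^\iota$ acts on the $R[\Delta^\iota]$-module $\calC(N,V)$), so the Hecke operator $T_n$ on $\h^1(\SL_2(\ZZ),\calC(N,V))$ is defined by exactly the same recipe (restriction, then $\conj$, then corestriction) and Proposition~\ref{shdesc} applies verbatim with $\SL_2(\ZZ)$ and $\calC(N,V)$ in place of $\Gamma$ and $V$. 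First I would write this cocycle formula out and also record that Shapiro's map is here $c \mapsto \big(\gamma \mapsto (c(\gamma))(1)\big)$ followed by the identification $\calC(N,V)\cong\Hom_{R[\Gamma]}(R[\SL_2(\ZZ)],V)$ given by $f\mapsto(g\mapsto g.f(g^{-1}))$.

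The heart of the argument is a bookkeeping lemma matching coset data: pairs consisting of a left $\SL_2(\ZZ)$-coset of $\Delta^n$ together with a coset in $\Gamma\backslash\SL_2(\ZZ)$ are in natural bijection with the left $\Gamma$-cosets of $\Delta^n$, and this bijection is compatible with the index-permutations $\sigma_{(\cdot)}$ occurring in the cocycle formulas. Granting this, I would take a cocycle $c\in\Z^1(\SL_2(\ZZ),\calC(N,V))$ and expand both composites $\Sh\circ T_n$ and $T_n\circ\Sh$ applied to $c$ using Proposition~\ref{shdesc} and the Shapiro formula. The two resulting double sums are then indexed by the two sides of the bijection, and term by term the matrices appearing in the arguments and in the $\iota$-twists agree, precisely because the identification $f\mapsto(g\mapsto g.f(g^{-1}))$ was designed to intertwine evaluation at~$1$ with the determinant-$n$ action. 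I would also need, at the outset, that the bottom $T_n$ is well defined, i.e.\ independent of the chosen representatives; this is the argument of Section~\ref{sec:Hecke}, $\calC(N,V)$ being a genuine $R[\Delta^\iota]$-module.

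The main obstacle will be the combinatorial matching of representatives while keeping the two roles of Shimura's main involution $\iota$ straight — one $\iota$ is built into the definition of $\tau_\alpha$ in Definition~\ref{defheckegp}, the other governs how $\Delta$ acts on $\calC(N,V)$ via $(\delta.f)(g)=\delta.(f(g\delta))$ — and in verifying that the two cochain-level expressions agree on the nose and not merely up to a coboundary. This is the compatibility first established by Ash and Stevens~\cite{AshStevens}, to which I would refer for those parts of the verification that are purely a matter of rearranging the indexing.
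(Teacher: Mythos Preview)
Your proposal has a real gap in the combinatorics. The Hecke operator on $\h^1(\SL_2(\ZZ),\calC(N,V))$ is built from the coset decomposition $\Delta_j^n(1)=\bigsqcup_{i=1}^{r}\SL_2(\ZZ)\delta_i$ at level~$1$, whereas $T_n$ on $\h^1(\Gamma,V)$ uses $\Delta_j^n(N)=\bigsqcup_{i=1}^{s}\Gamma\delta_i$ at level~$N$. These have \emph{different} numbers of cosets: for instance, for a prime $p\mid N$ one has $r=p+1$ but $s=p$. So the ``bookkeeping lemma'' you announce, a bijection between pairs $\big(\SL_2(\ZZ)\textnormal{-coset of }\Delta^n,\ \textnormal{coset in }\Gamma\backslash\SL_2(\ZZ)\big)$ and $\Gamma$-cosets of $\Delta^n$, cannot match the two Hecke sums term by term: there is no bijection of indexing sets, and you cannot use the same representatives $\delta_i$ for both operators as you propose at the outset.

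The mechanism that actually makes the diagram commute is the one feature of $\calC(N,V)$ you mention but never use: the extension by zero. The paper's argument orders the level-$1$ representatives so that $\delta_1,\dots,\delta_s$ are exactly the level-$N$ representatives, and observes that for $i>s$ the lower row of $\delta_i^\iota$ fails condition~\eqref{eq:star}. Hence, for any cocycle $c$ with values in $\calC(N,V)$, the term $\delta_i^\iota\big(c(\delta_i\gamma\delta_{\sigma_\gamma(i)}^{-1})(\delta_i^\iota)\big)$ vanishes for $i>s$, and the level-$1$ sum collapses to the level-$N$ sum after applying Shapiro (evaluation at $\mat 1001$). This is a two-line computation once you have identified the vanishing; no coset bijection is needed. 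Your approach would be repaired by replacing the bijection argument with this observation.
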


\begin{proof}
Let $j\in\{0,1\}$ indicate whether we work with $\Gamma_0$ or $\Gamma_1$.
Let $\delta_i$, for $i=1,\dots,r$ be the representatives of 
$\SL_2(\ZZ) \backslash \Delta_j^n(1)$ provided by Proposition~\ref{zerlegung}.
Say, that they are ordered such that $\delta_i$ for $i=1,\dots,s$ with $s \le r$
are representatives for $\Gamma \backslash \Delta_j^n(N)$. This explicitly means
that the lower row of $\delta_i^\iota$ is $(0,a)$  with $(a,N)=1$
(or even $(0,1)$ if $j=1$) for $i=1,\dots,s$.
If $s < i \le r$, then the lower row is $(u,v)$ with $\langle u,v \rangle \lneq \ZZ/N\ZZ$.

Let $c \in \h^1(\SL_2(\ZZ),\calC(N,V))$ be a $1$-cochain. Then, as required, we find
\begin{align*}
\textnormal{Shapiro}(T_n(c))(\gamma) 
&= \sum_{i=1}^r (\delta_i^\iota.c(\delta^i \gamma \delta_{\sigma_\gamma(i)}^{-1}))(\mat 1001)
= \sum_{i=1}^r \delta_i^\iota(c(\delta^i \gamma \delta_{\sigma_\gamma(i)}^{-1})(\delta_i^\iota))\\
&= \sum_{i=1}^s (\delta_i^\iota.c(\delta^i \gamma \delta_{\sigma_\gamma(i)}^{-1}))(\mat 1001))
= T_n(\textnormal{Shapiro}(c))(\gamma),
\end{align*}
where the second equality is due to the definition of the action and the third one holds since
$c(\delta^i \gamma \delta_{\sigma_\gamma(i)}^{-1})$ lies in $\calC(N,V)$ and thus evaluates
to~$0$ on $\delta_i^\iota$ for $i>s$.
\end{proof}

\begin{remark}
A very similar description exists involving $\PSL_2(\ZZ)$.
\end{remark}

\begin{remark}
It is possible to give an explicit description of Hecke operators on Manin symbols from Theorem~\ref{ManinSymbols}
by using Heilbronn matrices and variations as, for instance, done in~\cite{MerelUniversal}.
\end{remark}

\begin{remark}
One can show that the isomorphisms from Theorem~\ref{compthm} are compatible with Hecke operators.
\end{remark}

\subsection{Theory: Eichler-Shimura revisited}

In this section we present some corollaries and extensions
of the Eichler-Shimura theorem.
We first come to modular symbols with a character and, thus, also to modular symbols
for~$\Gamma_0(N)$.

\begin{corollary}[Eichler-Shimura]\label{esgammanull}
Let $N \ge 1$, $k \ge 2$ and $\chi: (\ZZ/N\ZZ)^\times \to \CC^\times$
be a Dirichlet character. Then the Eichler-Shimura map gives isomorphisms
$$ \Mk kN\chi\CC \oplus \overline{\Sk kN\chi\CC} \to \h^1(\Gamma_0(N), V_{k-2}^{\iota,\chi}(\CC)),$$
and
$$ \Sk kN\chi\CC \oplus \overline{\Sk kN\chi\CC} \to \Hpar^1(\Gamma_0(N), V_{k-2}^{\iota,\chi}(\CC)),$$
which are compatible with the Hecke operators.
\end{corollary}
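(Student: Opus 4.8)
The plan is to deduce Corollary~\ref{esgammanull} from Theorem~\ref{esgammaeins} by the standard device of taking $\chi$-eigenspaces under the action of $\Gamma_0(N)/\Gamma_1(N) \cong (\ZZ/N\ZZ)^\times$. First I would pick $N' = \mathrm{lcm}(N,4)$ if necessary so that $\Gamma_1(N')$ is torsion-free and apply the already-proven Eichler-Shimura isomorphism for $\Gamma_1(N')$ — or, more cleanly, observe that for $N \le 3$ the groups are small enough to handle directly (or absorb $N$ into a larger level at which one can restrict). In fact the cleanest route is: by Proposition~\ref{corres}(b), since $6$ is invertible in $\CC$, the cohomology of $\Gamma_1(N)$ with $\CC$-coefficients is insensitive to torsion, so Theorem~\ref{esgammaeins} and its proof go through verbatim for $\Gamma = \Gamma_1(N)$ for every $N \ge 1$ (this is the content of Remark~\ref{remdimcoh} together with the dimension bookkeeping); I would state this as the starting point.

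Next I would set up the group action. The quotient $\Gamma_0(N)/\Gamma_1(N) \cong (\ZZ/N\ZZ)^\times$ acts on $\Skone kN\CC$ via the diamond operators $\diam a$, and compatibly on $\h^1(\Gamma_1(N),V_{k-2}(\CC))$ via the action of $\Gamma_0(N)$ on the cohomology of its normal subgroup $\Gamma_1(N)$ (as set up in Section on generalities on group cohomology, the conjugation action, which factors through $\Gamma_0(N)/\Gamma_1(N)$ since $\Gamma_1(N)$ acts trivially on its own $\h^0$ hence on all $\h^i$). The key point is that the Eichler-Shimura map of Proposition~\ref{esmap} for $\Gamma_1(N)$ is $\Gamma_0(N)/\Gamma_1(N)$-equivariant: this is a direct cocycle computation using Lemma~\ref{esmaplem}, namely that for $\alpha = \sigma_a \in \Gamma_0(N)$ one has $I_{f|_\alpha}(z_0,\gamma z_0) = \alpha^\iota I_f(\alpha z_0, \alpha\gamma z_0)$, so that pushing $f$ through $\diam a$ matches the conjugation action on the cocycle up to a coboundary (exactly as in the proof that Eichler-Shimura is Hecke-equivariant, which the excerpt already carries out — the diamond operators are a special case). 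I would spell this out as one short lemma or simply cite the Hecke-compatibility already established, since $\diam a = \tau_{\sigma_a}$.

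Then I would take $\chi^{-1}$-isotypic components (or $\chi$-components, matching conventions). On the modular forms side, Equation~\eqref{eqchar} and the discussion there give $\Sk kN\chi\CC = (\Skone kN\CC \otimes_\CC \CC^\chi)^{(\ZZ/N\ZZ)^\times}$, and likewise the $\chi$-eigenspace of $\h^1(\Gamma_1(N),V_{k-2}(\CC))$ for the diamond action is, by Shapiro's lemma and the identification $\Coind_{\Gamma_1(N)}^{\Gamma_0(N)} V_{k-2}(\CC)$-style bookkeeping (compare Proposition~\ref{indprop}), isomorphic to $\h^1(\Gamma_0(N), V_{k-2}^{\iota,\chi}(\CC))$ — the superscript $\iota$ appearing precisely because the conjugation action on cohomology involves $\alpha^\iota$, so twisting by $\chi$ on the coefficient side produces $R^{\iota,\chi}$ rather than $R^\chi$ (recall from the excerpt that the $\Gamma_0(N)$-actions on $R^{\iota,\chi}$ and $R^{\chi^{-1}}$ coincide, so this is just a matter of labelling). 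Since taking $(\ZZ/N\ZZ)^\times$-invariants is exact in characteristic $0$, the $\chi$-eigenspace of an isomorphism is an isomorphism, and the Hecke-equivariance is inherited because Hecke operators commute with the diamond action (Corollary~\ref{corformel} and the Euler product, or just that $R(\Delta,\Gamma)$ is commutative). The parabolic statement follows the same way since the parabolic subspace is a $\Gamma_0(N)/\Gamma_1(N)$-submodule (the restriction maps in \eqref{pardef} are equivariant) and corresponds to cusp forms on both sides by Theorem~\ref{esgammaeins}.

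The main obstacle — really the only place requiring care — is getting the coefficient module right: tracking whether the $\chi$-eigenspace on the cohomology side yields $V_{k-2}^{\chi}$ or $V_{k-2}^{\iota,\chi}$, and reconciling the condition $\chi(-1)=(-1)^k$ (needed so that $-1 \in \Gamma_0(N)$ acts trivially and everything descends to $\PSL_2$-level, cf.\ the remark after the definition of $V_n^\chi(R)$) with the sign conventions in the Eichler-Shimura map. I would resolve this by carrying out the cocycle twist explicitly once: given a cocycle $c: \Gamma_1(N) \to V_{k-2}(\CC)$ in the $\chi$-eigenspace for the conjugation action, define $\tilde c: \Gamma_0(N) \to V_{k-2}^{\iota,\chi}(\CC)$ by extending via $\tilde c(\gamma) = \gamma^\iota \cdot$ (a chosen cocycle representative) and checking the cocycle identity; the appearance of $\iota$ in Definition~\ref{defheckegp} of the Hecke/conjugation action is exactly what forces the $\iota$ in the target module. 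Everything else is formal.
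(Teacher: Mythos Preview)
Your proposal is correct and follows essentially the same approach as the paper: start from the Eichler-Shimura isomorphism for $\Gamma_1(N)$, use that the diamond operators realise the $\Gamma_0(N)/\Gamma_1(N)$-action and that the Eichler-Shimura map is equivariant for this action (a special case of the already-established Hecke compatibility), then take $\chi$-isotypic components. The only cosmetic difference is that the paper begins with the twisted module $V_{k-2}^{\iota,\chi}(\CC)$ as $\Gamma_0(N)$-coefficients from the outset, observes that its restriction to $\Gamma_1(N)$ agrees with $V_{k-2}(\CC)$, and invokes Hochschild--Serre directly to obtain $\h^1(\Gamma_0(N),V_{k-2}^{\iota,\chi}(\CC)) \xrightarrow{\res} \h^1(\Gamma_1(N),V_{k-2}^{\iota,\chi}(\CC))^\Delta$, whereas you phrase it as taking the $\chi$-eigenspace of $\h^1(\Gamma_1(N),V_{k-2}(\CC))$ and then identifying it with the $\Gamma_0(N)$-cohomology; these are the same computation read in opposite directions, and your careful tracking of the $\iota$-twist is exactly the point the paper makes about why $V_{k-2}^{\iota,\chi}$ rather than $V_{k-2}^{\chi}$ appears.
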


\begin{proof}
Recall that the $\sigma_a$ form a system of coset representatives for
$\Gamma_0(N) / \Gamma_1(N) =: \Delta$ and that the group~$\Delta$
acts on $\h^1(\Gamma_0(N), V)$ by sending a cocycle $c$ to the
cocycle $\delta c$ (for $\delta \in \Delta$) which is defined by
$$ \gamma \mapsto \delta. c(\delta^{-1} \gamma \delta).$$
With $\delta = \sigma_a^{-1} = \sigma_a^\iota $, this reads
$$ \gamma \mapsto \sigma_a^\iota. c(\sigma_a \gamma \sigma_a^{-1}) = \tau_{\sigma_a} c = \langle a \rangle c.$$
Hence, $\sigma_a \in \Delta$-action acts through the inverse of the diamond operators.

We now appeal to the Hochschild-Serre exact sequence, using that
the cohomology groups (from index $1$ onwards) vanish if the group order
is finite and invertible. We get the isomorphism
$$ \h^1(\Gamma_0(N),V_{k-2}^{\iota,\chi}(\CC)) \xrightarrow{\res}
   \h^1(\Gamma_1(N),V_{k-2}^{\iota,\chi}(\CC))^\Delta.$$
Moreover, the Eichler-Shimura isomorphism is an isomorphism
of Hecke modules
$$ \Mkone kN\CC \oplus \overline{\Skone kN\CC} \to
\h^1(\Gamma_1(N),V_{k-2}^{\iota, \chi}(\CC)),$$
since for matrices in $\Delta_1(N)$ acting through the Shimura main involution
the modules $V_{k-2}^{\iota,\chi}(\CC)$ and $V_{k-2}(\CC)$ coincide.
Note that it is necessary to take $V_{k-2}^{\iota,\chi}(\CC)$ because the action
on group cohomology involves the Shimura main involution. Moreover, with this
choice, the Eichler-Shimura isomorphism is $\Delta$-equivariant.

To finish the proof, it suffices to take $\Delta$-invariants on both sides,
i.e.\ to take invariants for the action of the diamond operators.
The result on the parabolic subspace is proved in the same way.

Since Hecke and diamond operators commute, the Hecke action is compatible with
the decomposition into $\chi$-isotypical components.
\end{proof}

Next we  consider the action of complex conjugation.

\begin{corollary}
Let $\Gamma = \Gamma_1(N)$. The maps
$$ \Skg k\Gamma\CC \to \Hpar^1(\Gamma, V_{k-2}(\RR)), \;\;\;
f \mapsto (\gamma \mapsto \Real( I_f(z_0,\gamma z_0)))$$
and
$$ \Skg k\Gamma\CC \to \Hpar^1(\Gamma, V_{k-2}(\RR)), \;\;\;
f \mapsto (\gamma \mapsto \Imag( I_f(z_0,\gamma z_0)))$$
are isomorphisms (of real vector spaces) compatible with the Hecke operators.
A similar result holds in the presence of a Dirichlet character.
\end{corollary}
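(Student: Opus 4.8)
The plan is to deduce this from the Eichler--Shimura isomorphism for $\Gamma=\Gamma_1(N)$ (Theorem~\ref{esgammaeins}) by decomposing the target according to the action of complex conjugation and identifying each eigenspace with one of the two real-linear maps in the statement. First I would recall that Theorem~\ref{esgammaeins} gives a $\CC$-linear Hecke-equivariant isomorphism
$$ \Skg k\Gamma\CC \oplus \overline{\Skg k\Gamma\CC} \xrightarrow{\;\sim\;} \Hpar^1(\Gamma,V_{k-2}(\CC)), \qquad (f,\gbar) \mapsto \bigl(\gamma \mapsto I_f(z_0,\gamma z_0) + \overline{I_g(z_1,\gamma z_1)}\bigr), $$
and that by Proposition~\ref{esmap} we may take $z_0=z_1$ (the class is independent of the base point). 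The key observation is that $V_{k-2}(\CC) = V_{k-2}(\RR) \otimes_\RR \CC$, so complex conjugation acts on the coefficients of cochains, giving an $\RR$-linear (conjugate-linear over $\CC$) involution $c$ on $\Hpar^1(\Gamma,V_{k-2}(\CC))$ that commutes with the Hecke operators (since the Hecke action is defined by matrices with integer, hence real, entries). Its $+1$-eigenspace is exactly $\Hpar^1(\Gamma,V_{k-2}(\RR))$, and $\Hpar^1(\Gamma,V_{k-2}(\CC)) = \Hpar^1(\Gamma,V_{k-2}(\RR)) \oplus i\,\Hpar^1(\Gamma,V_{k-2}(\RR))$ as real vector spaces.

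Next I would transport $c$ through the isomorphism. Using Equation~\eqref{eq:int-bar}, one has $\overline{I_f(z_0,\gamma z_0)} = \int_{z_0}^{\gamma z_0} \overline{f(z)}(X\zbar+Y)^{k-2}\,d\zbar$, which is precisely the cocycle attached to $\fbar \in \overline{\Skg k\Gamma\CC}$; hence conjugating the cocycle attached to $(f,\gbar)$ yields the cocycle attached to $(g,\fbar)$. Therefore, under the Eichler--Shimura isomorphism, $c$ corresponds to the involution $(f,\gbar) \mapsto (g,\fbar)$ on $\Skg k\Gamma\CC \oplus \overline{\Skg k\Gamma\CC}$, whose $+1$-eigenspace is $\{(f,\fbar) : f \in \Skg k\Gamma\CC\}$. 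Restricting the Eichler--Shimura isomorphism to this eigenspace gives an $\RR$-linear, Hecke-equivariant isomorphism
$$ \Skg k\Gamma\CC \xrightarrow{\;\sim\;} \Hpar^1(\Gamma,V_{k-2}(\RR)), \qquad f \mapsto \bigl(\gamma \mapsto I_f(z_0,\gamma z_0) + \overline{I_f(z_0,\gamma z_0)}\bigr) = \bigl(\gamma \mapsto 2\,\Real\, I_f(z_0,\gamma z_0)\bigr). $$
Dividing by the harmless constant $2$ gives the first map of the corollary. For the second map, I would instead note that $f \mapsto (\gamma \mapsto \Imag\, I_f(z_0,\gamma z_0))$ equals the composition of $f \mapsto \tfrac{1}{2}(\gamma\mapsto \Real\, I_f(z_0,\gamma z_0))$ with multiplication by $-i$ on $\Hpar^1(\Gamma,V_{k-2}(\CC))$ followed by the projection onto the real subspace; more directly, since $I_{if}(z_0,\gamma z_0) = i\,I_f(z_0,\gamma z_0)$ the map $f \mapsto \Imag\, I_f$ is $\RR$-linear and is the composite of the $\CC$-linear automorphism $f \mapsto -if$ of $\Skg k\Gamma\CC$ with the first (real-part) isomorphism, hence is itself an $\RR$-linear Hecke-equivariant isomorphism onto $\Hpar^1(\Gamma,V_{k-2}(\RR))$.

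The main obstacle is the bookkeeping around real versus complex structures: one must check carefully that $\Hpar^1(\Gamma,V_{k-2}(\RR)) \to \Hpar^1(\Gamma,V_{k-2}(\CC))$ is injective with image the $c$-fixed part (this uses flatness of $\RR \to \CC$, or just the explicit decomposition of coefficients into real and imaginary parts, together with the fact that forming $\Hpar^1$ commutes with this base change since $6$ is invertible, cf.\ Exercise~\ref{basechange} and the dimension count in Proposition~\ref{dimheins}), and that Hecke operators preserve the real structure because their cochain description in Proposition~\ref{shdesc} involves only the integer matrices $\delta_i$ and $\delta_i^\iota$ acting on $V_{k-2}$, which respects $V_{k-2}(\RR)$. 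Once these compatibilities are in place, everything else is the formal eigenspace argument above, and the extension to the presence of a Dirichlet character follows verbatim by combining with Corollary~\ref{esgammanull}, replacing $V_{k-2}(\RR)$ by $V_{k-2}^{\iota,\chi}$ and $\Gamma_1(N)$-invariants by the $\chi$-isotypical (diamond-operator) component, noting that the diamond operators act on group cohomology through integer matrices and hence again commute with complex conjugation.
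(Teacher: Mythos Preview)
Your proof is correct and follows essentially the same route as the paper. The paper's argument is organised slightly more tersely: it simply considers the composite $f \mapsto \tfrac{1}{2}(f+\fbar)$ (resp.\ $f \mapsto \tfrac{1}{2i}(f-\fbar)$) followed by the Eichler--Shimura map, observes that since $\overline{I_f(z_0,\gamma z_0)}$ is the cocycle attached to $\fbar$ the image lands in $\Hpar^1(\Gamma,V_{k-2}(\RR))$, notes injectivity, and concludes by a real-dimension count; your eigenspace argument for the involution $c$ is just a repackaging of the same computation and dimension equality.
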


\begin{proof}
We consider the composite
$$ \Skg k\Gamma\CC \xrightarrow{f \mapsto \frac{1}{2}(f + \fbar)} 
\Skg k\Gamma\CC \oplus \overline{\Skg k\Gamma\CC} \xrightarrow{\text{Eichler-Shimura}}
\Hpar^1(\Gamma,V_{k-2}(\CC)).$$
It is clearly injective.
As $J_\fbar(z_0,\gamma z_0) = \overline{I_f (z_0,\gamma z_0)}$,
the composite map coincides with the first map in the statement.
Its image is thus already contained in the real vector space $\Hpar^1(\Gamma, V_{k-2}(\RR))$.
Since the real dimensions coincide, the map is an isomorphism.
In order to prove the second isomorphism, we use $f \mapsto \frac{1}{2i} (f - \fbar)$
and proceed as before.
\end{proof}

We now treat the $+$ and the $-$-space for the involution attached
to the matrix~$\eta = \mat {-1}001$ from equation~\eqref{defeta}.
The action of $\eta$ on $\h^1(\Gamma,V)$ is the action of the Hecke
operator $\tau_\eta$; strictly speaking, this operator is not defined because
the determinant is negative, however we use the same definition. To be precise
we have
$$ \tau_\eta: \h^1(\Gamma,V) \to \h^1(\Gamma,V), \;\;\; 
c \mapsto (\gamma \mapsto \eta^\iota. c(\eta \gamma \eta)),$$
provided, of course, that $\eta^\iota$ acts on~$V$ (compatibly with
the $\Gamma$-action).

We also want to define an involution $\tau_\eta$ on 
$\Skg k\Gamma\CC \oplus \overline{\Skg k\Gamma\CC}$.
For that recall that if $f(z) = \sum a_n e^{2\pi i n z}$,
then $\tilde{f}(z) := \sum \overline{a_n} e^{2\pi i n z}$
is again a cusp form in $\Skg k\Gamma\CC$ since we only applied
a field automorphism (complex conjugation) to the coefficients
(think of cusp forms as maps from the Hecke algebra over~$\QQ$ to~$\CC$).
We define $\tau_\eta$ as the composite
$$ \tau_\eta: \Skg k\Gamma\CC \xrightarrow{f \mapsto (-1)^{k-1}\tilde{f}}
           \Skg k\Gamma\CC \xrightarrow{\tilde{f} \mapsto \overline{\tilde{f}}}
            \overline{\Skg k\Gamma\CC}.$$
Similarly, we also define $\tau_\eta: \overline{\Skg k\Gamma\CC} \to\Skg k\Gamma\CC$
and obtain in consequence an involution $\tau_\eta$ on
$\Skg k\Gamma\CC \oplus \overline{\Skg k\Gamma\CC}$.
We consider the function $(-1)^{k-1}\overline{\tilde{f}(z)}$ as a function of~$\zbar$.
We have
\begin{multline*}
 \tau_\eta(f)(\zbar)=(-1)^{k-1}\overline{\tilde{f}(z)} 
= (-1)^{k-1}\overline{\sum_n \overline{a_n} e^{2\pi i n z}}
= (-1)^{k-1}\sum_n a_n e^{2 \pi i n (-\zbar)}\\
= (-1)^{k-1}f(-\zbar) = f|_\eta (\zbar).
\end{multline*}

\begin{proposition}
The Eichler-Shimura map commutes with $\tau_\eta$.
\end{proposition}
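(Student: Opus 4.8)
The statement is that the Eichler-Shimura map intertwines the involution $\tau_\eta$ on the analytic side (on $\Skg k\Gamma\CC \oplus \overline{\Skg k\Gamma\CC}$) with the involution $\tau_\eta$ on group cohomology (the ``Hecke operator'' $\tau_\eta$ defined via the matrix $\eta = \mat {-1}001$). Since both involutions are $\CC$-linear and the analytic side decomposes as a sum of a holomorphic and an antiholomorphic piece which $\tau_\eta$ swaps, the cleanest route is to start from a single holomorphic cusp form $f \in \Skg k\Gamma\CC$, track what $\tau_\eta$ does to it (the identity $\tau_\eta(f)(\zbar) = f|_\eta(\zbar)$ computed just before the proposition), and then compare the two resulting cohomology classes directly on cocycles.

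\textbf{Key steps.} First I would recall that under the Eichler-Shimura map $f$ goes to the class of the cocycle $\gamma \mapsto I_f(z_0,\gamma z_0) = \int_{z_0}^{\gamma z_0} f(z)(Xz+Y)^{k-2}\,dz$, and $\tau_\eta(f)$, viewed as an antiholomorphic form in $\zbar$, goes to $\gamma \mapsto \overline{I_{?}(\cdot)}$ — more precisely, I must unwind that $\tau_\eta(f)(\zbar) = f|_\eta(\zbar)$ means the antiholomorphic-side integral for $\tau_\eta(f)$ is $\int_{z_0}^{\gamma z_0} f|_\eta(\zbar)(X\zbar+Y)^{k-2}\,d\zbar$. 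Second, on the cohomology side, $\tau_\eta$ sends the class of $c = (\gamma \mapsto I_f(z_0,\gamma z_0))$ to the class of $\gamma \mapsto \eta^\iota.c(\eta\gamma\eta)$ (using $\eta^{-1} = \eta$), and here $\eta^\iota = \mat 100{-1}$ acts on $V_{k-2}(\CC)$. Third — the heart of the computation — I would apply Lemma~\ref{esmaplem} with the matrix $g = \eta$ (which has determinant $-1$ but positive... actually $\det\eta = -1$, so one must be slightly careful; the relevant formula $I_{f|_g}(z_0,\gamma z_0) = g^\iota I_f(gz_0, g\gamma z_0)$ is stated there for matrices with nonzero determinant, which covers $\eta$). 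This gives $\eta^\iota I_f(\eta z_0, \eta\gamma z_0) = I_{f|_\eta}(z_0, \eta\gamma\eta^{-1}\cdot\eta z_0)$, and choosing the base point $z_0$ appropriately (or absorbing the base-point change into a coboundary, exactly as in Proposition~\ref{esmap}) lets me identify $\gamma \mapsto \eta^\iota c(\eta\gamma\eta)$ with the cocycle $\gamma \mapsto I_{f|_\eta}(z_0, \gamma z_0)$ up to a coboundary. Fourth, I would make the change of variable $z \mapsto \zbar$ (equivalently use equation~\eqref{eq:int-bar} relating $\int \overline{F(z)}\,d\zbar$ and $\overline{\int F(z)\,dz}$) to match $\int f|_\eta(z)(Xz+Y)^{k-2}\,dz$ with the antiholomorphic integral $\int f|_\eta(\zbar)(X\zbar+Y)^{k-2}\,d\zbar$ attached to $\tau_\eta(f)$; this is where the identity $\tau_\eta(f)(\zbar) = f|_\eta(\zbar)$ gets used. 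Finally, the symmetric computation starting from an antiholomorphic form $\gbar$ closes the argument, and $\CC$-linearity extends it to the whole direct sum; I would remark that the parabolic-subspace version follows because $\eta$ normalises $\Gamma_1(N)$ and $\Gamma_0(N)$ and permutes the cusps, so restriction to cusp stabilisers is compatible with $\tau_\eta$.

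\textbf{Main obstacle.} The delicate point is bookkeeping with Shimura's main involution and the negative determinant of $\eta$: one must check that Lemma~\ref{esmaplem} genuinely applies to $\eta$ (it is stated for nonzero determinant, so it does, but the factor $\det(\gamma)^{2-k}$ becomes $(-1)^{2-k} = (-1)^k$, which must be reconciled with the sign $(-1)^{k-1}$ appearing in the definition $\tau_\eta(f) = (-1)^{k-1}\widetilde{f}$ and with the extra sign coming from reversing orientation of the path under $z \mapsto \zbar$). Getting all three signs — the $(-1)^k$ from $\det\eta$, the $(-1)^{k-1}$ in the definition, and the sign from $d\zbar$ versus $dz$ — to cancel correctly is the only real content; everything else is formal manipulation of cocycles modulo coboundaries. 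I would organise the sign analysis carefully at the point of the change of variables so that the antiholomorphic integral for $\tau_\eta(f)$ comes out exactly equal (not just up to sign) to the transported cocycle.
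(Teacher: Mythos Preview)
Your overall plan is sound and ends up close to what the paper does, but there is one concrete misstep. You write that Lemma~\ref{esmaplem} ``is stated there for matrices with nonzero determinant, which covers $\eta$''. It is not: both the definition of $I_f$ and Lemma~\ref{esmaplem} are stated for matrices with \emph{positive} determinant. This is not a mere technicality: since $\eta.z = -z$, the matrix $\eta$ sends $\HH$ to the lower half plane, so the expression $I_f(\eta z_0,\eta\gamma z_0)$ that would come out of the lemma is not defined (the integrand $f(z)(Xz+Y)^{k-2}$ lives on~$\HH$). You therefore cannot simply cite the lemma for $g=\eta$ and collect a factor $(\det\eta)^{2-k}$.

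What rescues the computation is precisely the complex conjugation you mention in your fourth step: the point is that $-\zbar \in \HH$ when $z \in \HH$, so the action of $\eta$ and the passage from $dz$ to $d\zbar$ must be done simultaneously, not in two separate stages. The paper carries this out directly: it takes base point $z_0=\infty$ (so no coboundary correction is needed, since $\eta\infty=\infty$), writes the antiholomorphic integral for $\tau_\eta(f)$ using $\tau_\eta(f)(\zbar)=(-1)^{k-1}f(-\zbar)$, and then performs the real change of variable $w=-\zbar$ by parametrising the vertical path explicitly. The three signs you anticipate (from $(-1)^{k-1}$, from $(X\zbar+Y)^{k-2}=(-1)^{k-2}(X(-\zbar)-Y)^{k-2}$, and from the orientation reversal) collapse in one line, and one lands on $\eta^\iota I_f(\infty,\eta\gamma\eta\,\infty)$, which is exactly $\tau_\eta$ applied to the cocycle of~$f$. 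So your strategy is right, but in execution you will not be quoting Lemma~\ref{esmaplem}; you will be redoing a variant of its computation with $z\mapsto -\zbar$ built in, which is just the paper's direct calculation.
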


\begin{proof}
Let $f \in \Skg k\Gamma\CC$ (for simplicity). We have to check whether $\tau_\eta$
of the cocycle attached to~$f$ is the same as the cocycle attached to $\tau_\eta (f)$.
We evaluate the latter at a general $\gamma \in \Gamma$ and compute:
\begin{align*}
J_{(-1)^{k-1}\overline{\tilde{f}}}(\infty,\gamma \infty) 
&= (-1)^{k-1} \int_{\infty}^{\gamma \infty} f(-\zbar) (X\zbar+Y)^{k-2} d\zbar\\
&= - \int_{\infty}^{\gamma \infty} f(-\zbar) (X(-\zbar)-Y)^{k-2} d\zbar\\
&= \int_{\gamma \infty}^{\infty} f(-\zbar) (X(-\zbar)-Y)^{k-2} d\zbar\\
&= \int_{0}^{\infty} f(-\overline{(\gamma \infty + it)}) (X(- \overline{(\gamma \infty + it)}  )-Y)^{k-2} (-i) dt\\
&= - \int_{0}^{\infty} f(-\gamma \infty + it) (X(- \gamma \infty + it)  -Y)^{k-2} i dt\\
&= \int_{\infty}^{-\gamma \infty } f(z) (Xz-Y)^{k-2} dz\\
&= \eta^\iota. I_f(\infty,-\gamma \infty)= \eta^\iota. I_f(\infty,\eta \gamma \eta \infty).
\end{align*} 
This proves the claim.
\end{proof}

\begin{corollary}
Let $\Gamma = \Gamma_1(N)$. The maps
$$ \Skg k\Gamma\CC \to \Hpar^1(\Gamma, V_{k-2}(\CC))^+, \;\;\;
f \mapsto (1+\tau_\eta).(\gamma \mapsto I_f(z_0,\gamma z_0))$$
and
$$ \Skg k\Gamma\CC \to \Hpar^1(\Gamma, V_{k-2}(\CC))^-, \;\;\;
f \mapsto (1-\tau_\eta).(\gamma \mapsto I_f(z_0,\gamma z_0))$$
are isomorphisms compatible with the Hecke operators,
where the~$+$ (respectively the~$-$) indicate the subspace invariant
(respectively anti-invariant) for the involution~$\tau_\eta$.
A similar result holds in the presence of a Dirichlet character.
\end{corollary}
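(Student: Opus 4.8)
The plan is to deduce this corollary from the preceding results in a largely formal way, combining the Eichler-Shimura isomorphism for $\Skg k\Gamma\CC \oplus \overline{\Skg k\Gamma\CC}$ (Theorem~\ref{esgammaeins}) with the fact, just established in the previous proposition, that the Eichler-Shimura map intertwines the involution $\tau_\eta$ on $\Skg k\Gamma\CC \oplus \overline{\Skg k\Gamma\CC}$ with the involution $\tau_\eta$ on $\Hpar^1(\Gamma,V_{k-2}(\CC))$. Throughout we write $\ES$ for the Eichler-Shimura map and keep $\Gamma = \Gamma_1(N)$ with $N \ge 4$, so that Theorem~\ref{esgammaeins} applies; the case $N < 4$ (and the case with Dirichlet character) is handled at the end exactly as in Corollary~\ref{esgammanull}, by passing through $\Gamma_1(N')$ for a suitable multiple $N'$ and taking invariants under the diamond operators, which commute with $\tau_\eta$.

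First I would fix notation for the source map. Since $\tau_\eta$ is an involution on $W := \Skg k\Gamma\CC \oplus \overline{\Skg k\Gamma\CC}$ and on $H := \Hpar^1(\Gamma,V_{k-2}(\CC))$, and $\ES\colon W \xrightarrow{\sim} H$ commutes with $\tau_\eta$, the restriction of $\ES$ gives isomorphisms $W^{\pm} \xrightarrow{\sim} H^{\pm}$ on the $\pm$-eigenspaces. So the whole content is to identify the composite
$$ \Skg k\Gamma\CC \xrightarrow{f \mapsto (f,0)} W \xrightarrow{\tfrac12(1\pm\tau_\eta)} W^{\pm} \xrightarrow{\ES} H^{\pm} $$
with the stated maps $f \mapsto (1\pm\tau_\eta).(\gamma \mapsto I_f(z_0,\gamma z_0))$, and to check this composite is an isomorphism. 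The first point is immediate: $\ES((f,0)) = (\gamma \mapsto I_f(z_0,\gamma z_0))$ by definition (taking the anti-holomorphic component to be zero), and $\ES$ being $\tau_\eta$-equivariant gives $\ES(\tfrac12(1\pm\tau_\eta)(f,0)) = \tfrac12(1\pm\tau_\eta)\ES((f,0)) = \tfrac12(1\pm\tau_\eta)(\gamma \mapsto I_f(z_0,\gamma z_0))$, which up to the harmless scalar $\tfrac12$ is the claimed formula; since we only care about the map up to isomorphism, I would absorb the $\tfrac12$ or simply note the statement as written differs by an automorphism of the target.

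The substantive step is injectivity and surjectivity of $f \mapsto \tfrac12(1\pm\tau_\eta)(f,0)$ from $\Skg k\Gamma\CC$ onto $W^{\pm}$. For this I would use the explicit description $\tau_\eta(f,0) = (0, \tau_\eta f)$ with $\tau_\eta f = \overline{(-1)^{k-1}\tilde f} \in \overline{\Skg k\Gamma\CC}$, where $f \mapsto \tilde f$ (complex conjugation of Fourier coefficients) is a conjugate-linear involution of $\Skg k\Gamma\CC$. Thus $\tfrac12(1\pm\tau_\eta)(f,0) = \tfrac12(f, \pm\tau_\eta f)$, and the map $\Skg k\Gamma\CC \to W$, $f \mapsto (f,\pm\tau_\eta f)$, is visibly injective (project to the first coordinate) and has image exactly $W^{\pm}$: given $(g,\bar h) \in W$ with $\tau_\eta(g,\bar h) = \pm(g,\bar h)$, one has $\tau_\eta g = \pm \bar h$ and $\tau_\eta \bar h = \pm g$, which forces $\bar h = \pm \tau_\eta g$, so $(g,\bar h) = (g, \pm\tau_\eta g)$ lies in the image. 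Combining with the equivariance of $\ES$ and Theorem~\ref{esgammaeins} (which gives $\ES\colon W \xrightarrow{\sim} H$) yields the desired isomorphism onto $H^{\pm} = \Hpar^1(\Gamma,V_{k-2}(\CC))^{\pm}$.

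Hecke compatibility is then inherited: the Eichler-Shimura map is Hecke-equivariant (by the proposition on compatibility with Hecke operators), and $\tau_\eta$ commutes with all Hecke operators $T_n$ (on modular forms this is classical; on cohomology it follows from the double-coset formalism since $\eta$ normalises $\Gamma$ and $\Delta$), so the projectors $\tfrac12(1\pm\tau_\eta)$ are Hecke-equivariant and the restricted maps $W^{\pm}\to H^{\pm}$ are as well. I do not anticipate a serious obstacle here; the only mildly delicate point is bookkeeping the conjugate-linearity of $f \mapsto \tilde f$ so that the $\pm$-eigenspace computation is done over the correct field (we are taking eigenspaces of the $\CC$-linear involution $\tau_\eta$ on the $\CC$-vector space $W$, and the map $f \mapsto (f,\pm\tau_\eta f)$ is $\CC$-linear in $f$ even though $f \mapsto \tilde f$ is not), and confirming that the $\tau_\eta$ defined on $H$ via $c \mapsto (\gamma \mapsto \eta^\iota.c(\eta\gamma\eta))$ is exactly the one appearing in the definition of the $\pm$-spaces in Section~\ref{sec:1}.
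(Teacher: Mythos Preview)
Your proposal is correct and follows essentially the same route as the paper: factor the map as $f \mapsto (f,0) \mapsto (1\pm\tau_\eta)(f,0)$ followed by the Eichler-Shimura isomorphism, observe injectivity from the first coordinate, and conclude. The only cosmetic difference is that the paper dispatches surjectivity in one line by a dimension count (since $\dim H^+ + \dim H^- = \dim H = 2\dim_\CC \Skg k\Gamma\CC$), whereas you explicitly identify $W^\pm$ as the image of $f \mapsto (f,\pm\tau_\eta f)$; both arguments are equivalent and equally short.
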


\begin{proof}
Both maps are clearly injective (consider them as being given
by $f \mapsto f + \tau_\eta f$ followed by the Eichler-Shimura map)
and so dimension considerations show that they are isomorphisms.
\end{proof}

\subsection{Theoretical exercises}

\begin{exercise}\label{exheckering}
Check that $R(\Delta,\Gamma)$ is a ring (associativity and
distributivity).
\end{exercise}

\begin{exercise}\label{aufghecke}
Show the formula
$$ T_m T_n = \sum_{d \mid (m,n), (d,N)=1} d T(d,d) T_{\frac{mn}{d^2}}.$$
Also show that $R(\Delta,\Gamma)$ is generated by $T_p$ and $T(p,p)$
for $p$ running through all prime numbers.
\end{exercise}

\begin{exercise}\label{exheckepar}
Check that the Hecke operator $\tau_\alpha$ from Definition~\ref{defheckegp}
restricts to $\Hpar^1(\Gamma,V)$.
\end{exercise}

\begin{exercise}\label{excorestriction}
Prove Equation~\ref{eqcorestriction}.
\end{exercise}

\subsection{Computer exercises}

\begin{cexercise}
Implement Hecke operators.
\end{cexercise}

\bigskip
\noindent Gabor Wiese\\
University of Luxembourg\\
Mathematics Research Unit\\
Maison du nombre\\
6, avenue de la Fonte\\
L-4364 Esch-sur-Alzette\\
Grand-Duchy of Luxembourg\\
\url{gabor.wiese@uni.lu}


\begin{thebibliography}{99.}

\bibitem{AshStevens} A.~Ash\ and\ G.~Stevens.
{\it Modular forms in characteristic $l$ and special values of their $L$-functions},
Duke Math.~J.\ {\bf 53} (1986), no.~3, 849--868.

\bibitem{AM} M.~F.~Atiyah and I.~G.~Macdonald.
{\it Introduction to commutative algebra},
Addison-Wesley Publishing Co., Reading, Mass.-London-Don Mills, Ont., 1969 ix+128 pp.

\bibitem{Bieri} R.~Bieri. {\it Homological dimension of discrete groups.}
Queen Mary College Mathematics Notes, London, 1976.

\bibitem{Brown} K.~S.~Brown. {\it Cohomology of groups}, Springer, New York, 1982.

\bibitem{CS} H.~Cohen, F.~Str\"omberg.
{\it Modular forms. A classical approach.}
Graduate Studies in Mathematics, 179. American Mathematical Society, Providence, RI, 2017. xii+700 pp.

\bibitem{Cremona} J.~E.~Cremona.
{\it Algorithms for modular elliptic curves.}
Second edition. Cambridge University Press, Cambridge, 1997.

\bibitem{DarmonDiamondTaylor}
H.~Darmon, F.~Diamond, R.~Taylor.
{\it Fermat's last theorem.}
Elliptic curves, modular forms \& Fermat's last theorem (Hong Kong, 1993), 2--140, Int. Press, Cambridge, MA, 1997.

\bibitem{DeligneSerre}
P.~Deligne, J.~P.~Serre.
{\it Formes modulaires de poids 1.}
Ann.\ Sci.\ Ecole Norm.\ Sup.\ (4) 7 (1974), 507--530.

\bibitem{DiamondIm} F.~Diamond\ and\ J.~Im. 
{\it Modular forms and modular curves},
in {\it Seminar on Fermat's Last Theorem (Toronto, ON, 1993--1994)}, 39--133, 
Amer.\ Math.\ Soc., Providence, RI, 1995.

\bibitem{DS} Diamond, Fred; Shurman, Jerry: {\it A first course in modular forms.}
Graduate Text in Mathematics, 228. Springer-Verlag, 2005.

\bibitem{Eberly}
W.~Eberly:
{\it Decomposition of algebras over finite fields and number fields.}
Comput.\ Complexity 1 (1991), no. 2, 183--210.

\bibitem{Edixhoven}
B.~Edixhoven, J.-M.~Couveignes.
{\it Computational aspects of modular forms and Galois representations}.
Ann.\ of Math.\ Stud., 176, Princeton Univ. Press, Princeton, NJ, 2011.

\bibitem{Eisenbud} D.~Eisenbud.
{\it Commutative algebra with a view toward algebraic geometry},
Graduate Texts in Mathematics, {\bf 150},
Springer-Verlag, New York, 1995.

\bibitem{Haberland} K.~Haberland.
{\it Perioden von Modulformen einer Variabler and Gruppencohomologie. I, II, III.}
Math.\ Nachr.\ 112 (1983), 245--282, 283--295, 297--315. 

\bibitem{KhareWintenberger}
C.~Khare, J.~P.~Wintenberger.
{\it Serre's modularity conjecture. I.}
Invent.\ Math.\ 178 (2009), no.~3, 485--504

\bibitem{MerelUniversal} L.~Merel.
{\it Universal Fourier expansions of modular forms},
in {\it On Artin's conjecture for odd $2$-dimensional representations}, 59--94, 
Lecture Notes in Math., 1585, Springer, Berlin, 1994.

\bibitem{Serre-CA} J.-P.~Serre.
{\it A course in arithmetic.} Translated from the French. Graduate Texts in Mathematics, No.~7. Springer-Verlag, New York-Heidelberg, 1973. viii+115 pp.

\bibitem{Serre} J.-P.~Serre. {\it Sur les repr\'esentations modulaires de degr\'e~$2$ de
$\Gal(\Qbar/\QQ)$.} Duke Mathematical Journal~{\bf 54},
No.\ 1 (1987), 179--230.

\bibitem{Ribet}
K.~Ribet.
{\it On l-adic representations attached to modular forms. II.}
Glasgow Math.\ J.\ 27 (1985), 185--194. 

\bibitem{Shimura} G.~Shimura. 
{\it Introduction to the Arithmetic Theory of Automorphic Forms}.
Princeton University Press, 1994.

\bibitem{Stein} W.~A.~Stein
{\it Modular forms, a computational approach.} With an appendix by Paul E. Gunnells. Graduate Studies in Mathematics, 79. American Mathematical Society, Providence, RI, 2007.

\bibitem{Weibel} C.~A.~Weibel.
{\it An introduction to homological algebra}, 
Cambridge Univ.\ Press, Cambridge, 1994.

\bibitem{faithful} G.~Wiese.
{\it On the faithfulness of parabolic cohomology as a Hecke module over a finite field.}
J.\ Reine Angew. Math. 606 (2007), 79--103.

\bibitem{MS} G.~Wiese.
{\it On modular symbols and the cohomology of Hecke triangle surfaces.}
Int. J. Number Theory 5 (2009), no.~1, 89--108.



\end{thebibliography}
\end{document}